\newtheorem{thm}{Theorem}[section]
\newtheorem{lemma}[thm]{Lemma}
\newtheorem{cor}[thm]{Corollary}
\newtheorem{prop}[thm]{Proposition}
\theoremstyle{definition} 
\newtheorem{mydef}[thm]{Definition}
\newtheorem{example}[thm]{Example}
\theoremstyle{remark}
\newtheorem{rmk}[thm]{Remark}
\newcommand\Ann{{\rm Ann}}
\newcommand\arc{{\rm arc}}
\newcommand\wAss{{\rm wAss}}
\newcommand\con{{\rm con}}
\newcommand\ddiv{{\rm div}}
\newcommand\ev{{\rm ev}}
\newcommand\Frac{{\rm Frac}}
\newcommand\gen{{\rm gen}}
\newcommand\height{{\rm ht}}
\newcommand\Max{{\rm Max}}
\newcommand\Min{{\rm Min}}
\newcommand\ord{{\rm ord}}
\newcommand\spc{{\rm sp}}
\newcommand\Spa{{\rm Spa}}
\newcommand\Spec{{\rm Spec}} 
\newcommand\Spf{{\rm Spf}}
\begin{document}
\renewcommand{\thefootnote}{\fnsymbol{footnote}}

\title{On Shilov boundaries, Rees valuations and integral extensions\footnotemark}
\author{Dimitri Dine}
\date{}
\maketitle

\begin{abstract}We explore an analogy between, on one hand, the notions of integral closure of ideals and Rees valuations in commutative algebra and, on the other hand, the notions of spectral seminorm and Shilov boundary in nonarchimedean geometry. For any Tate ring $\mathcal{A}$ with a Noetherian ring of definition $\mathcal{A}_{0}$ and pseudo-uniformizer $\varpi\in\mathcal{A}_{0}$, we prove that the Shilov boundary for $\mathcal{A}$ naturally coincides with the set of Rees valuation rings of the principal ideal $(\varpi)_{\mathcal{A}_{0}}$ of $\mathcal{A}_{0}$. Furthermore, we characterize the Shilov boundary for a wide class of Tate rings by means of minimal open prime ideals in the subring of power-bounded elements. For affinoid algebras, in the sense of Tate, whose underlying rings are integral domains, this recovers a well-known result of Berkovich. Moreover, under some mild assumptions, we prove stability of our characterization of the Shilov boundary under (completed) integral extensions. In particular, for every mixed-characteristic Noetherian domain $R$, we obtain a description of the Shilov boundary for the Tate ring $\widehat{R^{+}}[p^{-1}]$, where $\widehat{R^{+}}$ is the $p$-adic completion of the absolute integral closure of the domain $R$. \end{abstract}

\tableofcontents
\setcounter{footnote}{1}
\footnotetext{Keywords: Tate rings, Shilov boundary, Rees valuations, integral closure, associated primes. \\ \indent{2020 Mathematics Subject Classification:} 14G22, 14G45, 13A15, 13A18.}

\section{Introduction}\label{sec:introduction}

The contents of this paper lie at the interface between commutative algebra and nonarchimedean geometry.

\subsection{Background}

The starting point for this research was the following classical relation between the notion of integral closure of an ideal in a Noetherian ring $A$ and the so-called asymptotic Samuel function, an invariant which has recently come to play a role in constructive resolution of singularities (see \cite{Benito-Bravo-Encinas}).
\begin{prop}[\cite{Swanson-Huneke}, Cor.~6.9.1 and Lemma 6.9.2]\label{Starting point}For any ideal $I$ in a Noetherian ring $A$ and any $f\in A$ the limit \begin{equation*}\overline{\nu}_{I}(f)\coloneqq\lim_{m\to\infty}\frac{\ord_{I}(f^{m})}{m},\end{equation*}where \begin{equation*}\ord_{I}(f)\coloneqq \max\{\, n\mid f\in I^{n}\,\},\end{equation*}exists. Moreover, for any integer $n>0$, an element $f\in A$ belongs to the integral closure $\overline{I^{n}}$ of $I^{n}$ if and only if $\overline{\nu_{I}}(f)\geq n$.\end{prop}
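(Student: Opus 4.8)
The plan is to treat the existence of the limit and the characterization of $\overline{I^{n}}$ separately, splitting the latter into an elementary module-theoretic implication and a valuative converse.

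\textbf{Existence of $\overline\nu_{I}(f)$.} Set $a_{m}\coloneqq\ord_{I}(f^{m})\in\mathbb{Z}_{\geq 0}\cup\{\infty\}$. Since $I^{a}I^{b}\subseteq I^{a+b}$, from $f^{m}\in I^{a}$ and $f^{m'}\in I^{b}$ we get $f^{m+m'}\in I^{a+b}$, so the sequence $(a_{m})$ is superadditive: $a_{m+m'}\geq a_{m}+a_{m'}$. By Fekete's lemma, $\lim_{m\to\infty}a_{m}/m$ exists in $[0,\infty]$ and equals $\sup_{m}a_{m}/m$; in particular it coincides with the limit along any subsequence, a fact I use below.

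\textbf{The implication $f\in\overline{I^{n}}\Rightarrow\overline\nu_{I}(f)\geq n$.} Pick an integral equation $f^{d}+c_{1}f^{d-1}+\dots+c_{d}=0$ with $c_{i}\in I^{ni}$, and put $M\coloneqq\sum_{i=0}^{d-1}I^{ni}f^{d-1-i}$, a finitely generated ideal of $A$. One checks directly that $fM\subseteq I^{n}M$ (for the generator $I^{ni}f^{d-1-i}$ with $i\geq 1$ this is immediate since $I^{ni}f^{d-i}=I^{n}\cdot I^{n(i-1)}f^{d-i}$, and for $i=0$ one substitutes $f^{d}=-\sum_{j\geq 1}c_{j}f^{d-j}$), and hence $f^{m}M\subseteq I^{nm}M$ for all $m\geq 1$ by iteration. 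Since $f^{d-1}\in M$ and $M\subseteq A$, this gives $f^{m+d-1}\in I^{nm}$, so $\ord_{I}(f^{m+d-1})\geq nm$; dividing by $m+d-1$ and letting $m\to\infty$ (using that the limit exists) yields $\overline\nu_{I}(f)\geq n$. This direction uses no hypothesis on $A$ beyond Noetherianity, and in fact not even that.

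\textbf{The converse $\overline\nu_{I}(f)\geq n\Rightarrow f\in\overline{I^{n}}$.} Here I would invoke the valuative criterion for the integral closure of an ideal: since $A$ is Noetherian, $f\in\overline{I^{n}}$ holds if and only if, for every minimal prime $\mathfrak{p}$ of $A$ and every \emph{discrete} valuation ring $V$ with a homomorphism $A\to V$ of kernel $\mathfrak{p}$, the image of $f$ lies in $I^{n}V$ (one may even restrict to the finitely many Rees valuations of $I$). Fix such a $\mathfrak{p}$ and $V$, with normalized valuation $v$, and write $\bar{f},\bar{I}$ for the images in $A/\mathfrak{p}$; from $f^{m}\in I^{\ord_{I}(f^{m})}$ we get $\bar{f}^{m}\in\bar{I}^{\,\ord_{I}(f^{m})}V$. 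If $\bar{I}=0$, then since $n\geq 1$ and $\overline\nu_{I}(f)\geq 1$ we have $f^{m}\in I\subseteq\mathfrak{p}$ for $m\gg 0$, so $\bar{f}=0\in I^{n}V$. If $\bar{I}\neq 0$, set $e\coloneqq\min_{g\in I}v(\bar{g})\geq 1$; then $m\,v(\bar{f})\geq e\cdot\ord_{I}(f^{m})$, so $v(\bar{f})/e\geq\ord_{I}(f^{m})/m$ for every $m$, hence $v(\bar{f})\geq en$ and $\bar{f}\in I^{n}V$. The criterion then gives $f\in\overline{I^{n}}$.

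\textbf{Main obstacle.} The only non-formal ingredient is the valuative criterion for integral closure in the last step, together with the point that — $A$ being Noetherian — it is enough to test it on \emph{discrete} valuation rings; this discreteness is precisely what licenses dividing the order inequality by $v(I)$ and passing to the limit (for general valuation rings, with non-archimedean value groups, the argument breaks down). The Fekete argument and the module-theoretic implication are elementary and self-contained.
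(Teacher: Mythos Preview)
The paper does not give its own proof of this proposition; it is quoted as background from Swanson--Huneke (Cor.~6.9.1 and Lemma~6.9.2) and only reformulated in seminorm language. Your argument is correct and is essentially the standard one found in that reference: superadditivity plus Fekete for existence, the determinantal/module trick for the forward implication, and the valuative criterion (restricting to DVRs in the Noetherian case) for the converse. One small slip: in the converse, you write $e=\min_{g\in I}v(\bar g)\geq 1$, but $e$ can be $0$ when $I\not\subseteq\mathfrak p$; this case is harmless since then $I^{n}V=V$ and the conclusion is trivial, so the argument still goes through.
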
Setting $\lVert f\rVert_{I}\coloneqq 2^{-\ord_{I}(f)}$ for every $f\in A$ and \begin{equation*}\vert f\vert_{\spc,I}\coloneqq\lim_{m\to\infty}\lVert f^{m}\rVert_{I}^{1/m}\in\mathbb{R}_{\geq0}\cup\{\infty\},\end{equation*}we can rephrase the above proposition as follows.
\begin{prop}[Reformulation of Proposition \ref{Starting point}]\label{Starting point 2}Let $I$ be an ideal in a Noetherian ring $A$. For every $f\in A$ the limit \begin{equation*}\vert f\vert_{\spc,I}\coloneqq\lim_{m\to\infty}\lVert f^{m}\rVert_{I}^{1/m}\end{equation*}exists in $\mathbb{R}_{\geq0}$. Moreover, for every integer $n>0$, an element $f\in A$ belongs to the integral closure $\overline{I^{n}}$ of $I^{n}$ if and only if $\vert f\vert_{\spc,I}\leq2^{-n}$.\end{prop}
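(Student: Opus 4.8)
The plan is to deduce Proposition \ref{Starting point 2} from Proposition \ref{Starting point} by transporting the latter along the order-reversing homeomorphism $t\mapsto 2^{-t}$ of $[0,\infty]$ onto $[0,1]$ (with the convention $2^{-\infty}=0$). Concretely, for every $f\in A$ and every $m\geq 1$ one has the tautological identity $\lVert f^{m}\rVert_{I}^{1/m}=\bigl(2^{-\ord_{I}(f^{m})}\bigr)^{1/m}=2^{-\ord_{I}(f^{m})/m}$, where the right-hand exponent is understood to be $0$ precisely when $f^{m}\in\bigcap_{k}I^{k}$ (in which case both sides vanish). Thus the sequence $\bigl(\lVert f^{m}\rVert_{I}^{1/m}\bigr)_{m\geq 1}$ is the image under the continuous map $t\mapsto 2^{-t}$ of the sequence $\bigl(\ord_{I}(f^{m})/m\bigr)_{m\geq 1}$, and the whole statement becomes a translation of Proposition \ref{Starting point}.

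First I would record existence of the limit. By Proposition \ref{Starting point} the limit $\overline{\nu}_{I}(f)=\lim_{m\to\infty}\ord_{I}(f^{m})/m$ exists in $\mathbb{R}_{\geq 0}\cup\{\infty\}$ (it is $\geq 0$ since $\ord_{I}(g)\geq 0$ for all $g$, as $g\in A=I^{0}$). Applying the continuous function $t\mapsto 2^{-t}$ shows that $\vert f\vert_{\spc,I}=\lim_{m\to\infty}\lVert f^{m}\rVert_{I}^{1/m}$ exists and equals $2^{-\overline{\nu}_{I}(f)}$; since $\lVert f^{m}\rVert_{I}\leq 1$ for every $m$, this value lies in $[0,1]$, in particular in $\mathbb{R}_{\geq 0}$ as claimed. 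If one prefers to argue existence directly rather than quote Proposition \ref{Starting point}, one uses that $\ord_{I}$ is superadditive on powers, $\ord_{I}(f^{m+k})\geq\ord_{I}(f^{m})+\ord_{I}(f^{k})$, so that $\bigl(\ord_{I}(f^{m})\bigr)_{m}$ is superadditive and Fekete's lemma gives $\lim_{m}\ord_{I}(f^{m})/m=\sup_{m}\ord_{I}(f^{m})/m$; the degenerate case $f^{m_{0}}\in\bigcap_{k}I^{k}$, where $\ord_{I}(f^{m})=\infty$ for all $m\geq m_{0}$, is handled separately and yields limit $0$.

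It remains to identify the membership condition. Since $t\mapsto 2^{-t}$ is strictly decreasing on $[0,\infty]$, we have $\vert f\vert_{\spc,I}\leq 2^{-n}$ if and only if $2^{-\overline{\nu}_{I}(f)}\leq 2^{-n}$, i.e. if and only if $\overline{\nu}_{I}(f)\geq n$; and by Proposition \ref{Starting point} this last inequality holds precisely when $f\in\overline{I^{n}}$. This closes the chain of equivalences. The only points needing care are bookkeeping ones: checking that the convention $2^{-\infty}=0$ keeps both the identity $\lVert f^{m}\rVert_{I}^{1/m}=2^{-\ord_{I}(f^{m})/m}$ and the equivalence $\vert f\vert_{\spc,I}\leq 2^{-n}\Leftrightarrow\overline{\nu}_{I}(f)\geq n$ valid in the case $f\in\bigcap_{k}I^{k}$, and keeping in mind that the genuinely nontrivial content — existence of the limit together with the Artin--Rees-type linear growth bound on $\ord_{I}(f^{m})$ that forces $\overline{\nu}_{I}(f)<\infty$ when $f\notin\bigcap_{k}I^{k}$ — is already supplied by Proposition \ref{Starting point} (equivalently, by \cite{Swanson-Huneke}). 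So beyond invoking that result there is no real obstacle; the present statement is, as the text says, a genuine reformulation.
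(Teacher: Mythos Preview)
Your proposal is correct and takes essentially the same approach as the paper: the paper does not give a separate proof but simply introduces the notation $\lVert f\rVert_{I}=2^{-\ord_{I}(f)}$ and $\vert f\vert_{\spc,I}=\lim_{m}\lVert f^{m}\rVert_{I}^{1/m}$ and declares Proposition \ref{Starting point 2} to be a rephrasing of Proposition \ref{Starting point}, which is exactly the transport along $t\mapsto 2^{-t}$ that you carry out in detail.
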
Now, the function $\lVert\cdot\rVert_{I}: A\to\mathbb{R}_{\geq0}$ is a seminorm, known as the $I$-adic seminorm, and the limit \begin{equation*}\vert f\vert_{\spc}\coloneqq \lim_{m\to\infty}\lVert f^{m}\rVert^{1/m}\end{equation*}is known to exist in $\mathbb{R}_{\geq0}$ for any seminorm $\lVert\cdot\rVert$ on a ring $A$ and for any element $f\in A$. The function $\vert\cdot\vert_{\spc}: f\mapsto \vert f\vert_{\spc}$ is again a seminorm on $A$, called the spectral seminorm (derived from the seminorm $\lVert\cdot\rVert$) and plays an important role in nonarchimedean analytic geometry (see, for example, the book \cite{Berkovich} by Berkovich or \S2.8 in the work \cite{Kedlaya-Liu} of Kedlaya-Liu). Thus Proposition \ref{Starting point 2} provides a link between the notion of integral closure of ideals in commutative algebra and nonarchimedean geometry. 

By a theorem of Berkovich (\cite{Berkovich}, Theorem 1.3.1), the spectral seminorm $\vert\cdot\vert_{\spc}$ on any seminormed ring $\mathcal{A}$ can also be described as \begin{equation*}\vert f\vert_{\spc}=\sup_{v\in\mathcal{M}(\mathcal{A})}v(f)=\max_{v\in\mathcal{M}(\mathcal{A})}v(f),\end{equation*}where $\mathcal{M}(\mathcal{A})$ is the Berkovich spectrum of $\mathcal{A}$ (called Gelfand spectrum in \cite{Berkovich}), defined as the space of all multiplicative seminorms on $\mathcal{A}$ bounded above by the given seminorm $\lVert\cdot\rVert$ and endowed with the weakest topology with respect to which all evaluation functions \begin{equation*}\ev_{f}: \mathcal{M}(\mathcal{A})\to\mathbb{R}_{\geq0}, v\mapsto v(f),\end{equation*}for $f\in \mathcal{A}$, are continuous. By \cite{Berkovich}, Theorem 1.2.1, $\mathcal{M}(\mathcal{A})$ is always a compact Hausdorff space, so the above supremum is indeed a maximum. It is then natural to look for the smallest possible compact subsets of $\mathcal{M}(\mathcal{A})$ on which this maximum is achieved for all elements $f\in \mathcal{A}$.
\begin{mydef}[Boundary, Shilov boundary]\label{Boundary, Shilov boundary}Let $\mathcal{A}$ be a seminormed ring. A subset $\mathcal{S}$ of the Berkovich spectrum $\mathcal{M}(\mathcal{A})$ of $\mathcal{A}$ is called a boundary for $\mathcal{A}$ if every element $f\in \mathcal{A}$, viewed as a continuous function $\mathcal{M}(\mathcal{A})\to\mathbb{R}_{\geq0}$, attains its maximum on $\mathcal{S}$. The subset $\mathcal{S}$ is called the Shilov boundary for $\mathcal{A}$ if it is the smallest closed boundary for $\mathcal{A}$.\end{mydef}It has been shown by Guennebaud in \cite{Guennebaud} and by Escassut-Maïnetti in \cite{EM1} that the Shilov boundary always exists. Moreover, it follows from \cite{Guennebaud}, Ch.~I, Proposition 4, that the Shilov boundary is the same thing as a minimal closed boundary. It can be shown that the notions of spectral seminorm, Berkovich spectrum, boundary and Shilov boundary depend only on the seminorm topology of $\mathcal{A}$ (rather than on a particular choice of seminorm) as long as one restricts one's attention to seminorms having the same value on a topologically nilpotent unit $\varpi$ of $\mathcal{A}$ which is moreover a multiplicative element for those seminorms (see Lemma \ref{Boundedness vs. continuity 2}). This allows us to speak of the Shilov boundary of a Tate ring $\mathcal{A}$ without specifying a seminorm.
\begin{mydef}[Shilov boundary for a Tate ring]\label{Shilov boundary, introduction}Let $\mathcal{A}$ be a Tate ring with topologically nilpotent unit $\varpi$. The Berkovich spectrum $\mathcal{M}_{\varpi}(\mathcal{A})=\mathcal{M}(\mathcal{A})$ of $\mathcal{A}$ relative to $\varpi$ is the Berkovich spectrum of the seminormed ring obtained by equipping $\mathcal{A}$ with any seminorm defining the topology on $\mathcal{A}$ such that \begin{enumerate}[(1)]\item $\lVert\varpi\rVert=\frac{1}{2}$ and \item $\lVert \varpi f\rVert=\lVert \varpi\rVert\lVert f\rVert$ for all $f\in\mathcal{A}$.\end{enumerate}A boundary (respectively, the Shilov boundary) for this seminormed ring (with any seminorm as above) is called a boundary (respectively, the Shilov boundary) for the Tate ring $\mathcal{A}$ relative to $\varpi$.\end{mydef}
We suppress the words 'relative to $\varpi$' whenever the topologically nilpotent unit $\varpi$ is understood from the context. Note that a seminorm $\lVert\cdot\rVert$ defining the topology and satisfying (1) and (2) can always be found, for any Tate ring $\mathcal{A}$: For example, choose a ring of definition $\mathcal{A}_{0}$ of $\mathcal{A}$ containing $\varpi$ and let $\lVert\cdot\rVert$ be the canonical extension to $\mathcal{A}$ of the $\varpi$-adic seminorm on $\mathcal{A}_{0}$.   

\subsection{First main theorem: Rees valuations}

Returning to commutative algebra, there is the fairly classical notion of Rees valuations of an ideal $I$ in a Noetherian ring $A$: These form the smallest possible (up to renormalizing) finite family $\mathcal{RV}(I)$ of discrete valuations of rank $1$ on $A$ such that, for every $n>0$, the integral closure $\overline{I^{n}}$ of $I^{n}$ is 'cut out' by $\mathcal{RV}(I)$, in the sense that \begin{equation*}\overline{I^{n}}=\bigcap_{v\in \mathcal{RV}(I)}\{\, f\in A\mid v(f)\leq v(g)~\textrm{for some}~g\in I^{n}\,\}.\end{equation*}By a theorem of Rees (\cite{Swanson-Huneke}, Theorem 10.2.2), every ideal in a Noetherian ring has a set of Rees valuations. Then, Proposition \ref{Starting point 2} suggests an analogy between the Rees valuations of an ideal $I$ and the notion of Shilov boundary for the seminormed ring $(A, \lVert\cdot\rVert_{I})$. The first main result of this paper makes this analogy precise in the case when $I$ is the principal ideal generated by a non-zero-divisor $\varpi$.
\begin{thm}[Theorem \ref{Rees valuations}]\label{Main theorem 0}Let $\varpi$ be a non-zero-divisor and a non-unit in a Noetherian ring $A$. Then the Shilov boundary for the Tate ring $\mathcal{A}=A[\varpi^{-1}]$ is given by the set $\mathcal{RV}(\varpi)$ of $\varpi$-normalized Rees valuations of $(\varpi)_{A}$.\end{thm}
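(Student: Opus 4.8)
The plan is to derive the theorem from three inputs: Berkovich's formula $\lvert f\rvert_{\spc}=\max_{v\in\mathcal M(\mathcal A)}v(f)$ (\cite{Berkovich}, Theorem 1.3.1), the reformulation in Proposition \ref{Starting point 2}, and the existence-and-minimality property defining the set $\mathcal{RV}(\varpi)$ of Rees valuations of $(\varpi)_{A}$ (\cite{Swanson-Huneke}, Theorem 10.2.2). Equip $\mathcal A=A[\varpi^{-1}]$ with the canonical extension of the $(\varpi)$-adic seminorm on $A$; this is a seminorm as in Definition \ref{Shilov boundary, introduction}, so $\mathcal M(\mathcal A)=\mathcal M_{\varpi}(\mathcal A)$, and its spectral seminorm $\lvert\cdot\rvert_{\spc}$ restricts on $A$ to the $\lvert\cdot\rvert_{\spc,(\varpi)}$ of Proposition \ref{Starting point 2}. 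I would first observe that each $\varpi$-normalized Rees valuation $w$ of $(\varpi)_{A}$ — nonnegative on $A$, with $w(\varpi)=1$ — yields a point of $\mathcal M(\mathcal A)$ via the multiplicative seminorm $f\mapsto 2^{-w(f)}$ (extended to $\mathcal A$ using $w(\varpi)=1$): it sends $\varpi$ to $1/2$, and it is bounded by the $(\varpi)$-adic seminorm because $w(f)\geq\ord_{(\varpi)}(f)$ for all $f\in A$ (write $f=\varpi^{\ord_{(\varpi)}(f)}g$ with $g\in A$). Write $w_{1},\dots,w_{r}$ for the Rees valuations of $(\varpi)_{A}$, so $\mathcal{RV}(\varpi)=\{w_{1},\dots,w_{r}\}\subseteq\mathcal M(\mathcal A)$.

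The key step is the identity $\lvert f\rvert_{\spc}=\max_{1\leq i\leq r}2^{-w_{i}(f)}$ for all $f\in\mathcal A$. Both sides are power-multiplicative and scale by $2$ under multiplication by $\varpi^{-1}$, so it is enough to prove it for $f\in A$. By Proposition \ref{Starting point 2}, for every integer $n\geq1$ we have $\lvert f\rvert_{\spc}\leq 2^{-n}$ iff $f\in\overline{(\varpi^{n})}$; by the defining property of Rees valuations, $f\in\overline{(\varpi^{n})}$ iff $w_{i}(f)\geq n$ for all $i$, i.e. iff $\max_{i}2^{-w_{i}(f)}\leq 2^{-n}$. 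Hence $\lvert\cdot\rvert_{\spc}$ and $\max_{i}2^{-w_{i}(\cdot)}$ have the same sublevel sets at every $2^{-n}$, $n\geq1$; applying this to powers $f^{m}$ and letting $2^{-n/m}$ approximate an arbitrary point of $(0,1)$ upgrades it to the claimed equality. (Equivalently, this is the classical formula $\overline\nu_{(\varpi)}=\min_{i}w_{i}$ for the asymptotic Samuel function of $(\varpi)$, together with $\lvert f\rvert_{\spc}=2^{-\overline\nu_{(\varpi)}(f)}$.) Combined with Berkovich's formula, the identity says exactly that $\mathcal{RV}(\varpi)=\{w_{1},\dots,w_{r}\}$ is a boundary for $\mathcal A$; being a finite subset of the Hausdorff space $\mathcal M(\mathcal A)$ it is closed, so the Shilov boundary — which exists by \cite{Guennebaud} and \cite{EM1} — is contained in it.

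It remains to show that none of the $w_{i}$ can be dropped, and here I would invoke the minimality clause in the definition of $\mathcal{RV}(\varpi)$. Fix $i_{0}$. Since $\{w_{i}:i\neq i_{0}\}$ fails to cut out $\overline{(\varpi^{n})}$ for some $n\geq1$, there is $x\in A$ with $w_{j}(x)\geq n$ for all $j\neq i_{0}$ but $w_{i_{0}}(x)<n$; hence $2^{-w_{i_{0}}(x)}>2^{-w_{j}(x)}$ for every $j\neq i_{0}$. By the identity above, $\lvert x\rvert_{\spc}=\max_{i}2^{-w_{i}(x)}=2^{-w_{i_{0}}(x)}$, and $w_{i_{0}}$ is the only one of $w_{1},\dots,w_{r}$ achieving it. The Shilov boundary is a boundary contained in $\{w_{1},\dots,w_{r}\}$, so some $w_{j}$ belonging to it satisfies $2^{-w_{j}(x)}=\lvert x\rvert_{\spc}$, forcing $j=i_{0}$; thus $w_{i_{0}}$ lies in the Shilov boundary. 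As $i_{0}$ was arbitrary, the Shilov boundary equals $\{w_{1},\dots,w_{r}\}=\mathcal{RV}(\varpi)$.

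I expect the one genuinely delicate point to be the passage from equality of sublevel sets to equality of seminorms (the $m$-th power argument, or equivalently the Rees-valuation formula for the asymptotic Samuel function); the rest is formal once the Rees-valuation package of \cite{Swanson-Huneke} and Berkovich's maximum-modulus theorem are in place. A secondary point to handle with care is that $A$ is an arbitrary Noetherian ring rather than a domain, so that the defining property $\overline{(\varpi^{n})}=\bigcap_{i}\{\,f\in A\mid w_{i}(f)\geq n\,\}$ (and the attendant normalization conventions) must be quoted in the generality of \cite{Swanson-Huneke}, Chapter 10.
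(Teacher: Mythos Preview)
Your proof is correct and follows essentially the same strategy as the paper: both use the link between the spectral seminorm and the integral closures $\overline{(\varpi^n)}$, together with the defining and minimality properties of Rees valuations, and a density argument with the values $2^{-n/m}$. The only difference is organizational: the paper packages the sublevel-set computation into a general boundary criterion via the $\arc_\varpi$-closure (Theorem~\ref{Boundaries and arc-closure}, combined with Proposition~\ref{Integral closure and arc-closure} identifying the $\arc_\varpi$-closure with integral closure in the Noetherian case), whereas you invoke Proposition~\ref{Starting point 2} directly and carry out the density step inline.
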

Since the set of Rees valuations of any ideal $I$ in a Noetherian ring $A$ is finite by definition, this immediately yields the following finiteness result for the Shilov boundary.
\begin{cor}[Corollary \ref{Finite Shilov boundary}]\label{Corollary of Main theorem 0}For any Tate ring $\mathcal{A}$ which admits a Noetherian ring of definition, the Shilov boundary is finite.\end{cor}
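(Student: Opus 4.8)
The plan is to deduce this directly from Theorem \ref{Main theorem 0}, after which finiteness is immediate from the definition of Rees valuations. Fix a topologically nilpotent unit $\varpi$ of $\mathcal{A}$; it suffices to prove that the Shilov boundary for $\mathcal{A}$ relative to $\varpi$ is finite, and then let $\varpi$ vary.

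Let $\mathcal{A}_{0}\subseteq\mathcal{A}$ be a Noetherian ring of definition. The first step is to pass to a Noetherian ring of definition that actually contains $\varpi$. Since $\mathcal{A}_{0}$ is open in $\mathcal{A}$ and $\varpi^{n}\to 0$, some power $\varpi^{N}$ lies in $\mathcal{A}_{0}$; hence $\varpi$ is integral over $\mathcal{A}_{0}$ and $A\coloneqq\mathcal{A}_{0}[\varpi]=\sum_{i=0}^{N-1}\mathcal{A}_{0}\varpi^{i}$ is a module-finite $\mathcal{A}_{0}$-algebra, in particular Noetherian. One checks that $A$ is again an open and bounded subring of $\mathcal{A}$, i.e.\ a ring of definition, now containing $\varpi$. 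The second step records that $\varpi$ is a non-zero-divisor in $A$, because it is a unit in the overring $\mathcal{A}$, and that it is a non-unit in $A$ — otherwise $A=\mathcal{A}$ would be bounded, which, since $\varpi$ is a topologically nilpotent unit, forces $1\in\overline{\{0\}}$ and hence the indiscrete topology on $\mathcal{A}$, a case in which $\mathcal{M}(\mathcal{A})$ and the Shilov boundary are empty and there is nothing to prove. Finally, $\mathcal{A}=\bigcup_{n}\varpi^{-n}A=A[\varpi^{-1}]$ as topological rings, by the standard description of a Tate ring via a ring of definition together with a pseudo-uniformizer contained in it (the subspace topology on the open bounded subring $A$ is the $\varpi$-adic one).

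Thus the pair $(A,\varpi)$ satisfies the hypotheses of Theorem \ref{Main theorem 0}, which identifies the Shilov boundary for $\mathcal{A}=A[\varpi^{-1}]$ relative to $\varpi$ with the set $\mathcal{RV}(\varpi)$ of $\varpi$-normalized Rees valuations of the principal ideal $(\varpi)_{A}$. This set is finite by the very definition of Rees valuations, its existence being guaranteed by Rees's theorem (\cite{Swanson-Huneke}, Theorem 10.2.2); since $\varpi$ was an arbitrary topologically nilpotent unit of $\mathcal{A}$, the claim follows. I do not anticipate a real obstacle: the mathematical content is entirely in Theorem \ref{Main theorem 0}, and the only points requiring care are the bookkeeping in the first two steps — in particular verifying that $\mathcal{A}_{0}[\varpi]$ stays open and bounded, so that it remains a ring of definition, and disposing of the degenerate indiscrete case.
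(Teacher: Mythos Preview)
Your proof is correct and follows essentially the same approach as the paper: reduce to Theorem \ref{Main theorem 0} and invoke finiteness of the set of Rees valuations. The only difference is bookkeeping --- the paper simply asserts that $\mathcal{A}=A[\varpi^{-1}]$ for some pair of definition $(A,\varpi)$ with $A$ Noetherian (implicitly choosing $\varpi$ after $A$, e.g.\ by taking a power of any topologically nilpotent unit that lands in $\mathcal{A}_{0}$), whereas you fix $\varpi$ first and enlarge $\mathcal{A}_{0}$ to the module-finite extension $\mathcal{A}_{0}[\varpi]$; both are routine and lead to the same conclusion.
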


\subsection{Second main theorem: Berkovich's description of the Shilov boundary}

For general Tate rings $\mathcal{A}$, little appears to be known on the relation between the Shilov boundary and ring-theoretic properties of $\mathcal{A}$ or its its rings of definition. By contrast, for affinoid algebras $\mathcal{A}$ (in the sense of Tate) over a nonarchimedean field $K$, Berkovich obtained an explicit description of the Shilov boundary in terms of the minimal prime ideals of $\mathcal{A}^{\circ}/(\varpi)_{\mathcal{A}^{\circ}}$, where $\mathcal{A}^{\circ}$ is the subring of power-bounded elements of $\mathcal{A}$.

For any Tate ring $A$, we consider the continuous specialization map \begin{equation*}\spc: \Spa(\mathcal{A}, \mathcal{A}^{\circ})\to \Spf(\mathcal{A}^{\circ})=\Spec(\mathcal{A}^{\circ}/(\varpi)_{\mathcal{A}^{\circ}})\end{equation*}which sends a continuous valuation $v$ on $\mathcal{A}$ to the open prime ideal \begin{equation*}\spc(v)\coloneqq\{\, f\in \mathcal{A}^{\circ}\mid v(f)<1\,\}.\end{equation*}We can identify the Berkovich spectrum $\mathcal{M}(\mathcal{A})$ for the Tate ring $\mathcal{A}$ with the set of rank-$1$ points of $\Spa(\mathcal{A}, \mathcal{A}^{\circ})$. One can then state Berkovich's result as follows (the reformulation using the adic spectrum is due to Bhatt and Hansen in \cite{Bhatt-Hansen}).
\begin{prop}[Berkovich \cite{Berkovich}, Prop.~2.4.4, see also Bhatt-Hansen \cite{Bhatt-Hansen}, Prop.~2.2]\label{Berkovich's result}Let $\mathcal{A}$ be an affinoid algebra (in the sense of Tate) over a nonarchimedean field $K$. Then the pre-image $\spc^{-1}(\{\mathfrak{p}\})$ of the generic point $\mathfrak{p}$ of any irreducible component of $\Spf(\mathcal{A}^{\circ})$ is a single rank-$1$ point and the Shilov boundary $\mathcal{S}$ for the Tate ring $\mathcal{A}$ is equal to the set $\spc^{-1}(\Spf(\mathcal{A}^{\circ})_{\gen})$ of pre-images of generic points. In particular, the Shilov boundary is finite. \end{prop}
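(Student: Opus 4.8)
The plan is to prove both assertions by a direct commutative-algebra study of the ring $\mathcal{A}^{\circ}$, in the spirit of our first main theorem on Rees valuations. First I would reduce to the case where $\mathcal{A}$ is reduced: passing from $\mathcal{A}$ to $\mathcal{A}/\sqrt{0}$ changes neither $\mathcal{M}(\mathcal{A})$, nor the spectral seminorm $\vert\cdot\vert_{\spc}$, nor $\Spf(\mathcal{A}^{\circ})$ up to homeomorphism, the crucial point being that $(\mathcal{A}^{\circ})_{\mathrm{red}}=(\mathcal{A}/\sqrt{0})^{\circ}$ because the power-bounded subring of a strictly affinoid algebra is the integral closure of the image of $K^{\circ}\langle T_{1},\dots,T_{n}\rangle$. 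So I may assume $\mathcal{A}$ reduced; then $\mathcal{A}^{\circ}$ is reduced, $\mathcal{A}=\mathcal{A}^{\circ}[\varpi^{-1}]$, and $\mathcal{A}^{\circ}$ is integrally closed in $\mathcal{A}$. I would then invoke from affinoid theory that the reduction $\widetilde{\mathcal{A}}=\mathcal{A}^{\circ}/\mathcal{A}^{\circ\circ}$ is a reduced, finitely generated $\widetilde{K}$-algebra, and that $\mathcal{A}^{\circ\circ}/(\varpi)$ is the nilradical of $\mathcal{A}^{\circ}/(\varpi)$, so that $\Spf(\mathcal{A}^{\circ})=\Spec(\mathcal{A}^{\circ}/(\varpi))$ and $\Spec\widetilde{\mathcal{A}}$ have the same underlying topological space. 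In particular the generic points of the irreducible components of $\Spf(\mathcal{A}^{\circ})$ are precisely the finitely many primes $\mathfrak{P}$ of $\mathcal{A}^{\circ}$ minimal over $(\varpi)$, and each has height $1$ since $\varpi$ is a non-zero-divisor.

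The heart of the matter is to show that, for such a minimal prime $\mathfrak{P}$, the localization $R:=\mathcal{A}^{\circ}_{\mathfrak{P}}$ is a valuation ring of rank $1$. Here is how I would argue. Since $R$ is one-dimensional local and its only prime containing $\varpi$ is the maximal ideal $\mathfrak{m}$, we get $\sqrt{\varpi R}=\mathfrak{m}$, so inverting $\varpi$ localizes $R$ away from $\mathfrak{m}$: thus $R[\varpi^{-1}]$ is the total ring of fractions $\Frac(R)$ of $R$ and equals the localization $\mathcal{A}_{\mathfrak{P}}$ of $\mathcal{A}$. Localizing the inclusion $\mathcal{A}^{\circ}\subseteq\mathcal{A}$, the ring $R$ is integrally closed in $\Frac(R)$; but $\Frac(R)$, being the total ring of fractions of a reduced ring with finitely many minimal primes, is a finite product of fields, and $R$, being local, has no nontrivial idempotents, so $R$ must be a domain. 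Hence $R$ is a one-dimensional normal local domain, and I would conclude that it is a rank-$1$ valuation ring — a discrete one when $\mathcal{A}^{\circ}$ is Noetherian, for instance whenever $K$ is discretely valued. Letting $v_{\mathfrak{P}}$ be the associated valuation, normalized so that $v_{\mathfrak{P}}(\varpi)=\tfrac{1}{2}$, one has $v_{\mathfrak{P}}\le 1$ on $\mathcal{A}^{\circ}$, hence $v_{\mathfrak{P}}$ is dominated by the $\varpi$-adic seminorm of any ring of definition inside $\mathcal{A}^{\circ}$; so $v_{\mathfrak{P}}\in\mathcal{M}_{\varpi}(\mathcal{A})$, and by construction $\spc(v_{\mathfrak{P}})=\{\,f\in\mathcal{A}^{\circ}\mid v_{\mathfrak{P}}(f)<1\,\}=\mathfrak{P}$.

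For uniqueness I would take any rank-$1$ point $w$ with $\spc(w)=\mathfrak{P}$; then $w(f)=1$ for every $f\in\mathcal{A}^{\circ}\setminus\mathfrak{P}$, so $w$ extends to a nontrivial valuation on $R$ that is $\le 1$ on $R$ and $<1$ exactly on $\mathfrak{m}$, i.e. the valuation ring of $w$ in $\Frac(R)$ contains $R$ and is not all of $\Frac(R)$. Since the only overrings of the rank-$1$ valuation ring $R$ inside $\Frac(R)$ are $R$ and $\Frac(R)$, we get $w=v_{\mathfrak{P}}$. This proves the first assertion and shows that $\mathcal{S}_{0}:=\spc^{-1}(\Spf(\mathcal{A}^{\circ})_{\gen})$ is the finite — hence closed — set $\{\,v_{\mathfrak{P}}\mid\mathfrak{P}\text{ minimal over }(\varpi)\,\}$. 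It remains to identify $\mathcal{S}_{0}$ with the Shilov boundary. To see it is a boundary, let $f\in\mathcal{A}$; rescaling by a scalar and, if needed, passing to a suitable power (which is harmless since $f\mapsto\max_{v\in\mathcal{S}_{0}}v(f)$ is power-multiplicative and $\vert f\vert_{\spc}\in\vert K^{\times}\vert^{\mathbb{Q}}\cup\{0\}$ for strictly affinoid $\mathcal{A}$), I reduce to $f\in\mathcal{A}^{\circ}$ with $\vert f\vert_{\spc}=1$; then $f$ is not topologically nilpotent, its image in $\widetilde{\mathcal{A}}$ is nonzero, so $f\notin\mathfrak{P}$ for some minimal prime $\mathfrak{P}$ and $v_{\mathfrak{P}}(f)=1=\vert f\vert_{\spc}$. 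To see $\mathcal{S}_{0}$ is the \emph{smallest} closed boundary: using that $\widetilde{\mathcal{A}}$ is reduced with finitely many minimal primes, for each $\mathfrak{P}$ I pick $g\in\mathcal{A}^{\circ}$ lying in every minimal prime over $(\varpi)$ other than $\mathfrak{P}$; then $\vert g\vert_{\spc}=1$, and among the points of $\mathcal{S}_{0}$ the maximum of $v(g)$ is attained only at $v_{\mathfrak{P}}$, so a closed boundary omitting $v_{\mathfrak{P}}$ could not be a boundary. As $\mathcal{S}_{0}$ is itself a closed boundary, this gives $\mathcal{S}=\mathcal{S}_{0}$, which is finite.

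The step I expect to be the main obstacle is the claim that the one-dimensional normal local domain $R=\mathcal{A}^{\circ}_{\mathfrak{P}}$ is a valuation ring. For discretely valued $K$ this is immediate, as $\mathcal{A}^{\circ}$ is then Noetherian and $R$ is a one-dimensional normal Noetherian local domain, hence a discrete valuation ring; in general one must appeal to the finer structure theory of $\mathcal{A}^{\circ}$ (a one-dimensional normal local domain need not be a valuation ring without a Noetherian hypothesis), or else fall back on Berkovich's original argument via the reduction map $\mathcal{M}(\mathcal{A})\to\Spec\widetilde{\mathcal{A}}$. The remaining ingredients — that $\mathcal{A}^{\circ}$ is integrally closed in $\mathcal{A}=\mathcal{A}^{\circ}[\varpi^{-1}]$, and that $\widetilde{\mathcal{A}}$ is a reduced finitely generated $\widetilde{K}$-algebra (so that $\mathcal{A}^{\circ}/(\varpi)$ has only finitely many minimal primes, each of height one) — are standard.
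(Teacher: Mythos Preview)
The paper does not give its own proof of this proposition; it is cited as a known result of Berkovich (with the adic-spectrum reformulation attributed to Bhatt--Hansen), and the paper's contribution is to \emph{partially recover} it from its general machinery: case (2) of Theorem~\ref{Main theorem 1} gives it over a discretely valued field, case (4) over a separably closed field (via the Reduced Fiber Theorem and Fujiwara--Gabber--Kato coherence, see Remark~\ref{Comparison with Berkovich's result}), and cases (3)/(5) via Noether normalization when $\mathcal{A}$ is a domain. So there is no single ``paper's proof'' to compare against, only these indirect routes.

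Your proposal is honest about its own gap, and that gap is real. The step ``$R=\mathcal{A}^{\circ}_{\mathfrak{P}}$ is a one-dimensional normal local domain, hence a rank-$1$ valuation ring'' fails without a Noetherian hypothesis: there exist one-dimensional normal local domains that are not valuation rings. (In fact even your height-$1$ claim ``since $\varpi$ is a non-zero-divisor'' tacitly invokes Krull's principal ideal theorem, which again needs Noetherianity.) You correctly note that for discretely valued $K$ the argument goes through because $\mathcal{A}^{\circ}$ is then Noetherian; for general $K$ you would indeed need extra input. The paper's own route around this obstacle is precisely the content of Sections~\ref{sec:star-operations}--\ref{sec:coherence}: rather than proving the localization is a valuation ring directly, one shows (under coherence or $\varpi$-v-coherence of $\mathcal{A}^{\circ}$, plus the value-group hypothesis $(\ast)$) that it is $\varpi$-\emph{valuative}, which by Corollary~\ref{Valuative rings and completion 2} is exactly what is needed. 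Your boundary and minimality arguments in the last paragraph are fine and closely mirror the proof of Theorem~\ref{Strongly Shilov} and Proposition~\ref{Multiplicative norms}.
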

This result of Berkovich motivates the following definition.
\begin{mydef}[Berkovich's description of the Shilov boundary, Definition \ref{Berkovich's description of the Shilov boundary}]We say that a Tate ring $\mathcal{A}$ satisfies Berkovich's description of the Shilov boundary if for every irreducible component of $\Spf(\mathcal{A}^{\circ})$ with generic point $\mathfrak{p}$ the pre-image $\spc^{-1}(\{\mathfrak{p}\})$ consists of exactly one point, which is of rank one, and the closure in $\mathcal{M}(\mathcal{A})$ of the subset \begin{equation*}\spc^{-1}(\Spf(\mathcal{A}^{\circ})_{\gen})\subseteq\mathcal{M}(\mathcal{A})\end{equation*}is the Shilov boundary for $\mathcal{A}$. We say that $\mathcal{A}$ satisfies Berkovich's description of the Shilov boundary for $\mathcal{A}$ in the strong sense if, moreover, $\spc^{-1}(\Spf(\mathcal{A}^{\circ})_{\gen})$ is itself closed in $\mathcal{M}(\mathcal{A})$ and thus equal to the Shilov boundary for $\mathcal{A}$.\end{mydef}
Motivated by a result of Bhatt-Hansen (\cite{Bhatt-Hansen}, Proposition 2.2), our next theorem characterizes the Tate rings which satisfy Berkovich's description of the Shilov boundary algebraically. Note that, by definition, the Shilov boundary for a Tate ring $\mathcal{A}$ with topologically nilpotent unit $\varpi$ coincides with the Shilov boundary for the uniform Tate ring $\mathcal{A}^{u}=\mathcal{A}^{\circ}[\varpi^{-1}]$, with pair of definition $(\mathcal{A}^{\circ}, \varpi)$, so it is harmless to restrict our attention to uniform Tate rings. For a uniform Tate ring $\mathcal{A}$ with topologically nilpotent unit $\varpi$, we denote by $\vert\cdot\vert_{\spc,\varpi}$ the spectral seminorm derived from any seminorm $\lVert\cdot\rVert$ defining the topology on $\mathcal{A}$ and satisfying the assumption of Definition \ref{Shilov boundary, introduction} (by Lemma \ref{Boundedness vs. continuity 2}, this does not depend on a particular choice of $\lVert\cdot\rVert$).  
\begin{thm}[Theorem \ref{Strongly Shilov}, Proposition \ref{Large value groups and strong Shilov rings}]\label{Main theorem 0.5}Let $\mathcal{A}$ be a uniform Tate ring. Then $\mathcal{A}$ satisfies Berkovich's description of the Shilov boundary if and only if for every weakly associated prime ideal $\mathfrak{p}$ of $\varpi$ in $A=\mathcal{A}^{\circ}$ the $\varpi$-adic completion of the local ring $A_{\mathfrak{p}}$ is a valuation ring of rank one. Moreover, if the spectral seminorm $\vert\cdot\vert_{\spc,\varpi}$ of $\mathcal{A}$ satisfies \begin{equation*}\vert \mathcal{A}\vert_{\spc,\varpi}\subseteq \sqrt{\vert \mathcal{A}^{\times,m}\vert_{\spc,\varpi}}\cup\{0\},\end{equation*}then it suffices to check the above condition only for local rings at minimal primes $\mathfrak{p}$ of $\varpi$ in $\mathcal{A}^{\circ}$. \end{thm}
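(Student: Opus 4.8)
The plan is to reduce everything to a statement about the asymptotic Samuel function of the ideal $(\varpi)$ in $A\coloneqq\mathcal{A}^{\circ}$ and then to match the two sides of the desired equivalence through the specialization map. Since $\mathcal{A}$ is uniform, $A=\mathcal{A}^{\circ}$ is a ring of definition, $\varpi$ is a non-zero-divisor in $A$, and $A$ is integrally closed in $\mathcal{A}=A[\varpi^{-1}]$; hence every $(\varpi^{n})_{A}$ is integrally closed and, by Proposition~\ref{Starting point 2}, the spectral seminorm is $\vert f\vert_{\spc,\varpi}=2^{-\overline{\nu}_{(\varpi)}(f)}$ with $\overline{\nu}_{(\varpi)}(f)=\lim_{m}\ord_{(\varpi)}(f^{m})/m$ computed in $A$. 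The proof then hinges on two computations: (i) a description of the fibres of $\spc\colon\Spa(\mathcal{A},A)\to\Spf(A)$ in terms of completed local rings, and (ii) a computation of $\overline{\nu}_{(\varpi)}$ along the weakly associated primes of $\varpi$.

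For (i), I would observe that a point of $\spc^{-1}(\mathfrak{p})$, with $\mathfrak{p}\in\Spf(A)=\Spec(A/\varpi A)$, is precisely a continuous valuation $v$ on $\mathcal{A}$ with $v\leq 1$ on $A$ and $\{\,f\in A\mid v(f)<1\,\}=\mathfrak{p}$; such a $v$ is invertible away from $\mathfrak{p}$ and $\varpi$-adically continuous, hence extends uniquely along $A\to\widehat{A_{\mathfrak{p}}}$ to the $\varpi$-adic completion of the localization, and conversely. So $\spc^{-1}(\mathfrak{p})$ is the set of valuations on $\widehat{A_{\mathfrak{p}}}$ centred at the maximal ideal whose support avoids $\varpi$. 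Since $\widehat{A_{\mathfrak{p}}}$ is $\varpi$-torsion-free, no minimal prime of it contains the non-zero-divisor $\varpi$; combining this with Zariski's criterion for a local domain to be a valuation ring (uniqueness of the dominating valuation ring, equivalently of the centre) and with the integral closedness of $A_{\mathfrak{p}}$ in $\mathcal{A}$, I expect to conclude that $\spc^{-1}(\mathfrak{p})$ is a single rank-one point if and only if $\widehat{A_{\mathfrak{p}}}$ is a valuation ring of rank one. Pinning down this last equivalence precisely --- in particular controlling the minimal primes of $\widehat{A_{\mathfrak{p}}}$ and the behaviour of integral closure under $\varpi$-adic completion of the possibly non-Noetherian ring $A$ --- is one delicate point.

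For (ii), the key is that the $\varpi$-adic filtration on $A$ has all successive quotients isomorphic to $A/\varpi A$ (as $\varpi$ is a non-zero-divisor), so $\wAss(A/\varpi^{n}A)\subseteq\wAss(A/\varpi A)$; since ideal membership can be tested after localizing at the weakly associated primes of the quotient and localizing can only increase $\varpi$-divisibility, this yields $\ord_{(\varpi)}(f)=\min_{\mathfrak{q}}\ord_{(\varpi)}^{A_{\mathfrak{q}}}(f)$ over $\mathfrak{q}\in\wAss(A/\varpi A)$, and after passing to powers and using $A_{\mathfrak{q}}/\varpi^{n}\cong\widehat{A_{\mathfrak{q}}}/\varpi^{n}$, also $\overline{\nu}_{(\varpi)}(f)=\inf_{\mathfrak{q}}\overline{\nu}_{(\varpi)}^{\widehat{A_{\mathfrak{q}}}}(f)$. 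If every such $\widehat{A_{\mathfrak{q}}}$ is a rank-one valuation ring with $\varpi$-normalized valuation $v_{\mathfrak{q}}$, this becomes $\vert f\vert_{\spc,\varpi}=\max_{\mathfrak{q}}v_{\mathfrak{q}}(f)$, so $\{\,v_{\mathfrak{q}}\mid\mathfrak{q}\in\wAss(A/\varpi A)\,\}$ is a boundary; moreover there are then no embedded primes among the $\mathfrak{q}$, since for $\mathfrak{p}\subsetneq\mathfrak{q}$ both weakly associated to $\varpi$ the ring $\widehat{A_{\mathfrak{q}}}/\mathfrak{p}\widehat{A_{\mathfrak{q}}}\cong(A/\mathfrak{p})_{\mathfrak{q}}$ has Krull dimension $\geq 1$, yet as a quotient of the rank-one valuation ring $\widehat{A_{\mathfrak{q}}}$ by an ideal containing the non-zero-divisor $\varpi$ it has dimension $0$. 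Hence $\wAss(A/\varpi A)=\Min(A/\varpi A)$, the $v_{\mathfrak{q}}$ are exactly the points of $\spc^{-1}(\Spf(A)_{\gen})$ --- each generic fibre being a single rank-one point by (i) --- so $\overline{\spc^{-1}(\Spf(A)_{\gen})}$ is a closed boundary and contains the Shilov boundary; for the reverse inclusion I would, for each generic point $\mathfrak{p}$, pick $g\in A$ lying in every other minimal prime of $\varpi$ but not in $\mathfrak{p}$, so that $\vert g\vert_{\spc,\varpi}=1$ is attained among the $v_{\mathfrak{q}}$ only at $v_{\mathfrak{p}}$, forcing $v_{\mathfrak{p}}$ into every closed boundary and showing $\overline{\spc^{-1}(\Spf(A)_{\gen})}$ is the smallest one. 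This proves the ``if'' direction.

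For ``only if'', Berkovich's description gives that each generic fibre of $\spc$ is a single rank-one point, hence by (i) that $\widehat{A_{\mathfrak{p}}}$ is a rank-one valuation ring for every minimal prime $\mathfrak{p}$ of $\varpi$; what remains is to promote this to every weakly associated prime, and I expect ruling out a ``bad'' embedded weakly associated prime $\mathfrak{q}$ to be the main obstacle --- the argument should be that, via $\overline{\nu}_{(\varpi)}^{A_{\mathfrak{q}}}\leq\overline{\nu}_{(\varpi)}^{A_{\mathfrak{p}}}$ for a minimal $\mathfrak{p}\subseteq\mathfrak{q}$, such a prime would produce an element whose spectral seminorm is not attained on $\overline{\spc^{-1}(\Spf(A)_{\gen})}$, contradicting that this set is the Shilov boundary. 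Finally, for the ``moreover'' statement I would use the hypothesis $\vert\mathcal{A}\vert_{\spc,\varpi}\subseteq\sqrt{\vert\mathcal{A}^{\times,m}\vert_{\spc,\varpi}}\cup\{0\}$: it makes the value group of every point of $\mathcal{M}(\mathcal{A})$ as large as the ambient value group, so that after multiplying a given element by a suitable multiplicative unit and extracting a root, $\overline{\nu}_{(\varpi)}$ is recovered to integral precision by $\ord_{(\varpi)}$ itself; this collapses the distinction between $\wAss(A/\varpi A)$ and $\Min(A/\varpi A)$ used above, so that checking the valuation-ring condition at the minimal primes of $\varpi$ already forces it at every weakly associated prime, hence --- by the first part --- Berkovich's description.
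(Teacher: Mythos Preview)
Your overall strategy is close to the paper's, but there are several genuine gaps.

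\textbf{Minimality of the closed boundary.} You propose to show each $v_{\mathfrak{p}}$ lies in every closed boundary by choosing $g\in A$ lying in every minimal prime of $\varpi$ except $\mathfrak{p}$. This fails when $\Min_{A}(\varpi)$ is infinite: such a $g$ need not exist, and nothing in the hypotheses forces finitely many minimal primes. The paper (proof of Theorem~\ref{Strongly Shilov}) uses compactness instead: since $\{v_{\mathfrak{p}}\}=\bigcap_{f\in A\setminus\mathfrak{p}}\{\,w\mid w(f)=1\,\}$, if a closed $\mathcal{S}'\subsetneq\mathcal{M}(\mathcal{A})$ omits $v_{\mathfrak{p}}$ then finitely many of the open sets $\{\,w\mid w(f)<1\,\}$ already cover the compact set $\mathcal{S}'$; taking the product of the corresponding $f$'s gives an element with $\vert f\vert_{\spc,\varpi}=1$ but $w(f)<1$ for all $w\in\mathcal{S}'$.

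\textbf{Ruling out embedded primes.} The isomorphism $\widehat{A_{\mathfrak{q}}}/\mathfrak{p}\widehat{A_{\mathfrak{q}}}\cong(A/\mathfrak{p})_{\mathfrak{q}}$ you invoke is not correct. The right argument (Lemma~\ref{Weakly associated vs. minimal}) is simpler: if $\widehat{A_{\mathfrak{q}}}$ is a rank-one valuation ring, its unique prime containing $\varpi$ is the maximal ideal, and since $\widehat{A_{\mathfrak{q}}}/\varpi\cong A_{\mathfrak{q}}/\varpi$ the same holds for $A_{\mathfrak{q}}$, so $\mathfrak{q}$ is minimal over $\varpi$.

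\textbf{The ``only if'' direction.} Your treatment here is a hope, not an argument: you say an embedded weakly associated prime ``should'' produce an element whose maximum is not attained on the generic fibres, without giving a mechanism. The paper's route is different and clean: since $A=\mathcal{A}^{\circ}$ is the spectral unit ball (uniformity) and $\spc^{-1}(\Spf(A)_{\gen})$ is a boundary, Theorem~\ref{Boundaries and arc-closure} yields $A=\bigcap_{\mathfrak{p}\in\Min(\varpi)}\widehat{\varphi}_{\mathfrak{p}}^{-1}(\widehat{A_{\mathfrak{p}}})$; then Proposition~\ref{Minimal prime ideals and boundaries} shows this intersection formula forces every weakly associated prime of $\varpi$ to be minimal.

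\textbf{The ``moreover'' clause.} Your intuition is right but the phrasing is off: the value hypothesis does not collapse $\wAss$ and $\Min$ via a comparison of $\ord$ and $\overline{\nu}$. Rather (Proposition~\ref{Large value groups and strong Shilov rings}), it reduces checking that the generic fibres form a boundary to the case $\vert f\vert_{\spc,\varpi}=1$; then $f\notin\sqrt{(\varpi)_{A}}$, so $f\notin\mathfrak{p}$ for some minimal prime $\mathfrak{p}$, and the corresponding valuation gives $v_{\mathfrak{p}}(f)=1$. One then feeds this into the converse direction of Theorem~\ref{Strongly Shilov}.
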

The notation $\mathcal{A}^{\times,m}$ refers to the group of invertible elements $c$ of $\mathcal{A}$ which are multiplicative for the seminorm $\vert\cdot\vert_{\spc,\varpi}$, i.e., $\vert cf\vert_{\spc,\varpi}=\vert c\vert_{\spc,\varpi}\vert f\vert_{\spc,\varpi}$ for all $f\in\mathcal{A}$. The main result of this paper, stated below, is a list of rather general sufficient conditions for a Tate ring to satisfy Berkovich's description of the Shilov boundary. By a Tate domain we mean a Tate ring whose underlying ring is an integral domain.    
\begin{thm}[Theorem \ref{Reduced Noetherian rings are strongly Shilov}, Proposition \ref{Multiplicative norms}, Theorem \ref{Big theorem}, Proposition \ref{Large value groups and integral extensions}, Theorem \ref{Integral extensions of Noetherian rings 2}]\label{Main theorem 1}Let $\mathcal{A}$ be a uniform Tate ring with topologically nilpotent unit $\varpi\in \mathcal{A}$. Suppose that one of the following is true: \begin{enumerate}[(1)]\item $\mathcal{A}$ is a domain and $\mathcal{A}^{\circ}$ is an integral extension of a Noetherian domain $\mathcal{A}_{0}$ with $\varpi\in\mathcal{A}_{0}$ having the same field of fractions as $\mathcal{A}$, or \item $\mathcal{A}^{\circ}$ is a Noetherian ring, or \item $\mathcal{A}$ is a normal domain and $\mathcal{A}^{\circ}$ is an integral extension of $\mathcal{A}'^{\circ}$, where $\mathcal{A}'$ is a uniform Tate domain which admits a Noetherian ring of definition $\mathcal{A}_{0}$ with $\varpi\in \mathcal{A}_{0}$, or \item There is only one minimal prime ideal of $\varpi$ in $\mathcal{A}^{\circ}$ and the spectral seminorm $\vert\cdot\vert_{\spc,\varpi}$ on $\mathcal{A}$ satisfies \begin{equation*}(\ast)~\ ~ \ ~\vert \mathcal{A}\vert_{\spc,\varpi}\subseteq \sqrt{\vert \mathcal{A}^{\times,m}\vert_{\spc,\varpi}}\cup\{0\},\end{equation*}or \item $\mathcal{A}^{\circ}$ is a coherent ring and the spectral seminorm $\vert\cdot\vert_{\spc,\varpi}$ on $\mathcal{A}$ satisfies $(\ast)$, or \item the spectral seminorm $\vert\cdot\vert_{\spc,\varpi}$ on $\mathcal{A}$ satisfies $(\ast)$, and $\mathcal{A}^{\circ}$ is a torsion-free integral extension of $\mathcal{A}'^{\circ}$, where $\mathcal{A}'$ is a uniform normal Tate domain satisfying Berkovich's description of the Shilov boundary. \end{enumerate}Then $\mathcal{A}$ satisfies Berkovich's description of the Shilov boundary. Moreover, in the cases (1), (2), (3) and (4), $\mathcal{A}$ satisfies Berkovich's description of the Shilov boundary in the strong sense.\end{thm}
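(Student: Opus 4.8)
This theorem is a compendium, assembling the five results cited in its heading, so the plan is to explain how, in each of the six cases, one is reduced to verifying the local criterion of Theorem~\ref{Main theorem 0.5}: a uniform Tate ring $\mathcal{A}$ satisfies Berkovich's description of the Shilov boundary exactly when, for every weakly associated prime $\mathfrak{p}$ of $\varpi$ in $A=\mathcal{A}^{\circ}$, the $\varpi$-adic completion of $A_{\mathfrak{p}}$ is a valuation ring of rank one; and, when the spectral seminorm satisfies $(\ast)$, by Proposition~\ref{Large value groups and strong Shilov rings} it suffices to test this at the minimal primes of $\varpi$ in $\mathcal{A}^{\circ}$. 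Two facts are in force throughout: $\varpi$ is a non-zero-divisor in $A$ (it is invertible in $\mathcal{A}\supseteq A$), and $\mathcal{A}$, being uniform, is reduced, so $A$ is a reduced ring integrally closed in $\mathcal{A}=A[\varpi^{-1}]$.

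I would first dispose of the Noetherian cases. In case (2), for a weakly associated — here simply associated — prime $\mathfrak{p}$ of $\varpi$, the local ring $A_{\mathfrak{p}}$ has depth one along the regular element $\varpi$ and remains integrally closed in $A_{\mathfrak{p}}[\varpi^{-1}]$; a normality argument (Theorem~\ref{Reduced Noetherian rings are strongly Shilov}) identifies it with a discrete valuation ring, whose $\varpi$-adic completion is again discrete, hence of rank one, and as there are only finitely many such $\mathfrak{p}$ the strong form follows. Case (1) is Theorem~\ref{Integral extensions of Noetherian rings 2}, whose proof rests on the Rees-valuation theorem: $\mathcal{A}=\mathcal{A}^{\circ}[\varpi^{-1}]$ is an integral extension of $\mathcal{A}_{0}[\varpi^{-1}]$, whose Shilov boundary is the finite set of $\varpi$-normalized Rees valuation rings of $(\varpi)_{\mathcal{A}_{0}}$ by Theorem~\ref{Main theorem 0} — discrete of rank one — so $\mathcal{A}_{0}[\varpi^{-1}]$ satisfies Berkovich's description in the strong sense, and this is transported up the integral extension to $\mathcal{A}$.

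Next come the cases governed by the ``large value group'' hypothesis $(\ast)$. In case (3), a single minimal prime of $\varpi$ together with $(\ast)$ forces the spectral seminorm $\vert\cdot\vert_{\spc,\varpi}$ to be multiplicative (Proposition~\ref{Multiplicative norms}); since $\mathcal{A}$ is reduced, $\mathcal{A}$ is then a Tate domain, $\vert\cdot\vert_{\spc,\varpi}$ is a rank-one absolute value on $\Frac(\mathcal{A})$, and one checks directly that the single point $\{\vert\cdot\vert_{\spc,\varpi}\}$ of $\mathcal{M}(\mathcal{A})$ is simultaneously the Shilov boundary and the preimage under $\spc$ of the unique generic point of $\Spf(\mathcal{A}^{\circ})$, giving the strong form. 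Cases (4), (5), (6) are the stability statements (Theorem~\ref{Big theorem}, Proposition~\ref{Large value groups and integral extensions}): when $\mathcal{A}'$ satisfies Berkovich's description and $\mathcal{A}'^{\circ}\hookrightarrow\mathcal{A}^{\circ}$ is integral, the induced map $\Spec(\mathcal{A}^{\circ}/\varpi)\to\Spec(\mathcal{A}'^{\circ}/\varpi)$ is integral and surjective, so minimal primes of $\varpi$ upstairs lie over minimal primes downstairs; $(\ast)$ restricts attention to those primes, the domain hypothesis in (5) or flatness in (6) controls the weakly associated primes and supplies the going-down needed, and one then argues that the $\varpi$-adic completion of the local ring upstairs, being integral over a complete rank-one valuation ring and contained in the reduced total ring of fractions, is again a valuation ring of rank one. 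In case (4), coherence of $\mathcal{A}^{\circ}$ replaces Noetherianity, providing just enough finiteness and control of weakly associated primes to run the same argument under $(\ast)$.

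The principal obstacle, I expect, lies precisely in this propagation step of (5)–(6): inferring that $\widehat{(\mathcal{A}^{\circ})_{\mathfrak{q}}}$ is a \emph{rank-one} valuation ring from the sole knowledge that $\widehat{(\mathcal{A}'^{\circ})_{\mathfrak{p}}}$ is. Completion does not commute with integral extension, integral closures behave badly in the non-Noetherian setting, and one must prevent the value group or residue field from jumping in rank on passing upstairs — this is where $(\ast)$ (keeping value groups divisible enough) together with flatness or the domain hypothesis (keeping the extension tame, and identifying $\mathcal{A}^{\circ}$ with a genuine integral closure rather than an opaque ring of power-bounded elements) does the real work. A secondary nuisance, and the reason $(\ast)$ is imposed in all of (3)–(6), is the bookkeeping between weakly associated primes and minimal primes of $\varpi$ outside the Noetherian world.
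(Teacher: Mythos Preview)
Your outline is broadly correct in spirit for cases (2)--(6), but case (1) has a genuine gap.

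For case (1) you propose to invoke the Rees-valuation theorem (Theorem~\ref{Main theorem 0}) to conclude that $\mathcal{A}_{0}[\varpi^{-1}]$ satisfies Berkovich's description in the strong sense, and then ``transport up the integral extension''. Neither step works as stated. First, Theorem~\ref{Main theorem 0} identifies the Shilov boundary for $\mathcal{A}_{0}[\varpi^{-1}]$ with the Rees valuations of $(\varpi)_{\mathcal{A}_{0}}$, but Berkovich's description is phrased in terms of $\Spf$ of the ring of \emph{power-bounded elements}, not of the chosen ring of definition $\mathcal{A}_{0}$; there is no reason $\mathcal{A}_{0}=(\mathcal{A}_{0}[\varpi^{-1}])^{\circ}$, nor that $\mathcal{A}_{0}[\varpi^{-1}]$ is even uniform. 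Second, the only propagation results available --- your cases (5), (6) and Theorem~\ref{Main theorem 2} --- require respectively the value-group hypothesis $(\ast)$, $(\ast)$ plus flatness, or normality of the base; none of these is assumed in case (1). So ``transported up'' is not justified.

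The paper's route for case (1) is quite different: it never passes through $\mathcal{A}_{0}[\varpi^{-1}]$ satisfying Berkovich's description. Instead, Theorem~\ref{Integral extensions of Noetherian rings} shows \emph{directly} that $\mathcal{A}^{\circ}$ is strongly $\varpi$-Shilov, by comparing $\mathcal{A}^{\circ}$ with the normalization $\overline{\mathcal{A}_{0}}$ (a Krull domain, by Mori--Nagata), reducing to finite subextensions, and using that for an integral extension with the same fraction field the ideal $(\varpi)$ contracts compatibly. Closedness of $\spc^{-1}(\Spf(\mathcal{A}^{\circ})_{\gen})$ is then argued separately (Lemma~\ref{Integral extensions and closed subsets}) by writing $\mathcal{A}^{\circ}$ as a filtered colimit of integral closures of finite subextensions, each of which has finitely many minimal primes of $\varpi$.

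Two smaller points. In case (3) you assert that multiplicativity of $\vert\cdot\vert_{\spc,\varpi}$ forces $\mathcal{A}$ to be a domain; this is unnecessary and not obviously correct (the kernel of a multiplicative seminorm is prime, but need not vanish). Proposition~\ref{Multiplicative norms} gets the strong form directly from multiplicativity without this detour. In cases (5)--(6) you speak of ``going-down'' and of the domain/flatness hypotheses ``controlling weakly associated primes''; the paper's argument (Proposition~\ref{Large value groups and integral extensions}) is simpler: those hypotheses are used only in Lemma~\ref{Minimal primes and integral extensions} to show that a minimal prime of $\varpi$ upstairs contracts to a minimal prime downstairs, after which Proposition~\ref{Integral extensions and valuation rings of rank one, first result} propagates the rank-one valuation-ring property; $(\ast)$ then upgrades weak $\varpi$-Shilov to strong via Proposition~\ref{Large value groups and strong Shilov rings}.
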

By Noether normalization for affinoid algebras, cases (4) and (6) of the above theorem imply Berkovich's original result on affinoid algebras (Proposition \ref{Berkovich's result} above) for those Tate-affinoid algebras whose underlying rings are integral domains. We also note that our theorem implies two other important special cases of Berkovich's result. Case (2) of the theorem recovers Berkovich's result for affinoid algebras in the sense of Tate over a discretely valued nonarchimedean field, while case (5) recovers the same result for affinoid algebras in the sense of Tate over a separably closed nonarchimedean field (Remark \ref{Comparison with Berkovich's result}). A different generalization of Berkovich's description of the Shilov boundary, to affinoid algebras in the sense of Berkovich, was found by Temkin in \cite{Temkin04}, Proposition 3.3, where the concept of so-called graded reductions is used to avoid a hypothesis on values of the spectral norm. It would be interesting whether our approach to Theorem \ref{Main theorem 1}(5) can be combined with Temkin's notion of graded reductions to obtain a common generalization of both results.

In another direction, case (1) of Theorem \ref{Main theorem 1} includes all uniform Tate domains which have a Noetherian ring of definition.
\begin{cor}\label{Corollary 1 of Main theorem 1}Let $\mathcal{A}$ be a uniform Tate domain and suppose that $\mathcal{A}$ admits a Noetherian ring of definition $\mathcal{A}_{0}$. Then $\mathcal{A}$ satisfies Berkovich's description of the Shilov boundary in the strong sense.\end{cor}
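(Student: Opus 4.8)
The plan is to derive this as a direct consequence of case~(1) of Theorem~\ref{Main theorem 1}. Concretely, I want to produce a Noetherian domain $\mathcal{A}_{0}'$ with $\varpi\in\mathcal{A}_{0}'$ such that $\mathcal{A}^{\circ}$ is an integral extension of $\mathcal{A}_{0}'$; then the hypothesis of Theorem~\ref{Main theorem 1}(1) is met and the theorem yields Berkovich's description of the Shilov boundary in the strong sense. We are given a Noetherian ring of definition $\mathcal{A}_{0}$ of $\mathcal{A}$, and note at the outset that $\mathcal{A}_{0}$ is automatically a domain, being a nonzero subring of the domain $\mathcal{A}$.

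The first step is to arrange $\varpi\in\mathcal{A}_{0}$. Since $\mathcal{A}_{0}$ is open and $\varpi$ is topologically nilpotent, there is $k_{0}$ with $\varpi^{k}\in\mathcal{A}_{0}$ for all $k\geq k_{0}$, whence $\mathcal{A}_{0}':=\mathcal{A}_{0}[\varpi]=\sum_{i=0}^{k_{0}-1}\mathcal{A}_{0}\varpi^{i}$ is a finitely generated $\mathcal{A}_{0}$-module. It follows that $\mathcal{A}_{0}'$ is again a ring of definition of $\mathcal{A}$: it is open since it contains $\mathcal{A}_{0}$, and bounded as a finite sum of bounded subsets; moreover $\mathcal{A}_{0}'$ is Noetherian, being module-finite over the Noetherian ring $\mathcal{A}_{0}$, and it is a domain, being a subring of $\mathcal{A}$. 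Replacing $\mathcal{A}_{0}$ by $\mathcal{A}_{0}'$, I may assume from now on that $\varpi\in\mathcal{A}_{0}$, so that $\{\varpi^{N}\mathcal{A}_{0}\}_{N\geq1}$ is a fundamental system of neighborhoods of $0$ in $\mathcal{A}$.

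The main point is then that $\mathcal{A}^{\circ}$ is integral over $\mathcal{A}_{0}$ (in fact it equals the integral closure of $\mathcal{A}_{0}$ in $\mathcal{A}$, but only this inclusion is needed). Let $f\in\mathcal{A}^{\circ}$. Since $\{f^{n}\}_{n\geq0}$ is bounded, there is $N$ with $\varpi^{N}f^{n}\in\mathcal{A}_{0}$ for every $n$. The $\mathcal{A}_{0}$-submodule $M\subseteq\mathcal{A}_{0}$ generated by $\{\varpi^{N}f^{n}\}_{n\geq0}$ is finitely generated because $\mathcal{A}_{0}$ is Noetherian, so the ascending chain $M_{d}:=\sum_{i=0}^{d}\mathcal{A}_{0}\varpi^{N}f^{i}$ stabilizes; choosing $d$ with $M_{d+1}=M_{d}$ gives an identity $\varpi^{N}f^{d+1}=\sum_{i=0}^{d}a_{i}\varpi^{N}f^{i}$ with $a_{i}\in\mathcal{A}_{0}$. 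Since $\varpi^{N}$ is a unit in $\mathcal{A}$, we may cancel it, obtaining the monic relation $f^{d+1}=\sum_{i=0}^{d}a_{i}f^{i}$ over $\mathcal{A}_{0}$. Hence $\mathcal{A}^{\circ}$ is an integral extension of the Noetherian domain $\mathcal{A}_{0}$ with $\varpi\in\mathcal{A}_{0}$, so $\mathcal{A}$ satisfies hypothesis~(1) of Theorem~\ref{Main theorem 1}, and the conclusion follows.

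I do not anticipate a serious obstacle: all the substantive work is hidden in Theorem~\ref{Main theorem 1}(1). The only things to watch are the harmless enlargement of the ring of definition to contain $\varpi$, and the fact that the cancellation of $\varpi^{N}$ is legitimate precisely because $\varpi$ is invertible in $\mathcal{A}$ — this is what lets us pass from ``$\varpi^{N}f^{n}\in\mathcal{A}_{0}$ for all $n$'' to an honest integral equation even though $\mathcal{A}^{\circ}$ itself need not be Noetherian.
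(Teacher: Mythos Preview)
Your proof is correct and follows exactly the route the paper intends: the corollary is stated immediately after Theorem~\ref{Main theorem 1} with the remark that case~(1) ``includes all uniform Tate domains which have a Noetherian ring of definition,'' and your argument supplies precisely the verification that $\mathcal{A}^{\circ}$ is integral over a Noetherian domain containing $\varpi$ (using that almost-integral elements over a Noetherian ring are integral). The enlargement $\mathcal{A}_{0}\rightsquigarrow\mathcal{A}_{0}[\varpi]$ and the cancellation of $\varpi^{N}$ are both handled correctly.
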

Besides affinoid algebras, we note that Theorem \ref{Main theorem 1} and Corollary \ref{Corollary 1 of Main theorem 1} also apply to many Tate rings for which a description of the Shilov boundary was not previously known. For example, pseudoaffinoid algebras in the sense of Lourenço \cite{Lourenco17} have Noetherian rings of definition and thus Theorem \ref{Main theorem 1}(1) applies to all pseudoaffinoid algebras which are integral domains. Moreover, cases (1), (3) and (6) of Theorem \ref{Main theorem 1} also yield results on many Tate Banach rings which are completions of integral extensions of Noetherian Tate rings (note that Berkovich's description of the Shilov boundary is readily seen to be preserved under completion). We exemplify this by the following corollary, which might have some applications in commutative algebra (but see also Theorem \ref{Main theorem 2} below).
\begin{cor}[Corollary \ref{Absolute integral closure}]\label{Corollary 2 of Main theorem 1}Let $R$ be a Noetherian domain, let $R^{+}$ be the absolute integral closure of $R$ and let $\widehat{R^{+}}$ be the $\varpi$-adic completion of $R^{+}$ for some non-zero non-unit $\varpi\in R$. Then the uniform Tate ring $\widehat{R^{+}}[\varpi^{-1}]$ satisfies Berkovich's description of the Shilov boundary in the strong sense.  \end{cor}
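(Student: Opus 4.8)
The plan is to deduce the corollary from case~(1) of Theorem~\ref{Main theorem 1}, applied not directly to $\widehat{R^{+}}[\varpi^{-1}]$ but to the \emph{incomplete} Tate ring $\mathcal{A}\coloneqq R^{+}[\varpi^{-1}]$ with pair of definition $(R^{+},\varpi)$, and then to pass to the completion. Here one uses that $\widehat{R^{+}}[\varpi^{-1}]$ is exactly the completion of $\mathcal{A}$: the completion of a Tate ring with pair of definition $(R^{+},\varpi)$ has pair of definition $(\widehat{R^{+}},\varpi)$, where $\widehat{R^{+}}$ is the $\varpi$-adic completion. Since, as noted above, Berkovich's description of the Shilov boundary — and its strong form — is preserved under completion, it is enough to prove that $\mathcal{A}$ itself satisfies Berkovich's description in the strong sense.

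To apply Theorem~\ref{Main theorem 1}(1) to $\mathcal{A}$, the one substantial point is the identification $\mathcal{A}^{\circ}=R^{+}$. This would simultaneously show that $\mathcal{A}$ is uniform (its ring of power-bounded elements, being a ring of definition, is bounded) and that $\mathcal{A}^{\circ}$ is an integral extension of the Noetherian domain $R$ with $\varpi\in R$; moreover $\mathcal{A}$ is a Tate domain because $R^{+}$, as a subring of an algebraic closure of $\Frac(R)$, is a domain. The inclusion $R^{+}\subseteq\mathcal{A}^{\circ}$ is clear. Conversely, if $f\in\mathcal{A}^{\circ}$, then boundedness of $\{f^{n}\}_{n\geq0}$ produces an integer $N$ with $\varpi^{N}f^{n}\in R^{+}$ for all $n$; since $R^{+}$ is a domain and $\varpi\neq0$, this exhibits $f\in\Frac(R^{+})$ as an element almost integral over $R^{+}$. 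Hence $\mathcal{A}^{\circ}=R^{+}$ will follow once $R^{+}$ is shown to be completely integrally closed.

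To prove this, I would argue as follows. Suppose $z\in\Frac(R^{+})$ and $c\in R^{+}\setminus\{0\}$ satisfy $cz^{n}\in R^{+}$ for all $n$. Every element of $\Frac(R^{+})$ is algebraic over $\Frac(R)$, so $L\coloneqq\Frac(R)(z,c)$ is a finite extension of $\Frac(R)$; let $R'$ be the integral closure of $R$ in $L$. Then $c\in R'$, and each $cz^{n}$ — being integral over $R$ and lying in $L$ — belongs to $R'$, so $z$ is almost integral over $R'$. By the Mori--Nagata theorem, the integral closure of a Noetherian domain in a finite extension of its fraction field is a Krull domain, hence completely integrally closed; therefore $z\in R'\subseteq R^{+}$. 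Thus $R^{+}$ is completely integrally closed, the hypotheses of Theorem~\ref{Main theorem 1}(1) are met for $\mathcal{A}$, and that theorem gives the conclusion for $\mathcal{A}$, hence for its completion $\widehat{R^{+}}[\varpi^{-1}]$.

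The hard part will be precisely the equality $\mathcal{A}^{\circ}=R^{+}$, that is, the complete integral closedness of the (highly non-Noetherian) ring $R^{+}$: the reduction to a finite subextension followed by the Mori--Nagata structure theorem is the essential input. The only other point requiring (routine) care is checking that formation of the $\varpi$-adic completion is compatible with the Berkovich spectrum, the specialization map, and the subrings of power-bounded elements, so that Berkovich's description genuinely descends from $\mathcal{A}$ to $\widehat{R^{+}}[\varpi^{-1}]$.
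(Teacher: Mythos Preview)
Your proposal is correct and follows the same route as the paper: the corollary is stated in the introduction precisely as a consequence of case~(1) of Theorem~\ref{Main theorem 1}, and you apply that case to $\mathcal{A}=R^{+}[\varpi^{-1}]$ and then pass to the completion. Your Mori--Nagata argument for the complete integral closedness of $R^{+}$ is exactly the verification needed to identify $\mathcal{A}^{\circ}=R^{+}$ and thereby meet the hypothesis of Theorem~\ref{Main theorem 1}(1); the paper's body (Theorem~\ref{Integral extensions of Noetherian rings}, from which Corollary~\ref{Absolute integral closure} is deduced) takes a marginally shorter path by requiring only that $R^{+}$ be \emph{integrally} closed in $R^{+}[\varpi^{-1}]$---automatic since $R^{+}$ is absolutely integrally closed---with complete integral closedness then falling out a posteriori via Corollary~\ref{Strongly Shilov implies completely integrally closed}.
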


\subsection{Third main theorem: Completed integral extensions}

Since the introduction of perfectoid spaces by Scholze in \cite{Scholze}, there has been an increased interest in the study of non-Noetherian Tate Banach rings, both in commutative algebra and in $p$-adic arithmetic geometry. This is due to the fact that perfectoid Tate rings, which form the local building blocks for the theory of perfectoid spaces, are almost never Noetherian (more precisely, a perfectoid Tate ring is Noetherian if and only if it is isomorphic to a finite product of perfectoid fields, by \cite{Kedlaya17}, Corollary 2.9.3). Still, many examples of perfectoid Tate rings $\mathcal{A}$ arising in applications (such as in André's and Bhatt's proofs \cite{Andre18-2}, \cite{Bhatt} of the direct summand conjecture, or in $p$-adic Hodge theory) satisfy the following finiteness property: They are completions, with respect to a suitably chosen (semi)norm, of an integral extension of some Noetherian Tate ring $\mathcal{A}'$.

Note that cases (1), (3) and (6) of Theorem \ref{Main theorem 1} already imply that many Tate Banach rings of this kind satisfy Berkovich's description of the Shilov boundary. Indeed, cases (1) and (3) of the theorem implies that this is true if the Noetherian Tate ring $\mathcal{A}'$ is a domain, its corresponding integral extension is a normal domain and $\mathcal{A}'$ has a Noetherian ring of definition (or is the normalization of a Tate ring which has a Noetherian ring of definition). For example, these assumptions are satisfied for $\mathcal{A}=\widehat{R^{+}}[\varpi^{-1}]$ in Corollary \ref{Corollary 2 of Main theorem 1}, or for the perfectoid Tate rings arising in André's and Bhatt's work on the direct summand conjecture. Similarly, case (6) of the theorem yields Berkovich's description of the Shilov boundary for $\mathcal{A}$ if $\mathcal{A}$ is a domain, the Noetherian subring $\mathcal{A}'$ is an affinoid algebra in the sense of Tate over some nonarchimedean field and moreover the spectral norm on $\mathcal{A}$ satisfies the assumption $(\ast)$. Our last main theorem removes the assumption on the spectral norm in Theorem \ref{Main theorem 1}(6) at the cost of assuming that the integral extension $\mathcal{B}$ is also a domain.
\begin{thm}[Theorem \ref{Strongly Shilov rings and integral extensions}]\label{Main theorem 2}Let $\mathcal{A}\hookrightarrow \mathcal{B}$ be an integral extension of uniform Tate domains such that $\mathcal{A}$ is a normal domain and $\mathcal{B}^{\circ}$ is integral over $\mathcal{A}^{\circ}$. If $\mathcal{A}$ satisfies Berkovich's description of the Shilov boundary, then so does $\mathcal{B}$ (and hence also its completion $\widehat{\mathcal{B}}$). \end{thm}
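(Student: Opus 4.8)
The plan is to apply the local criterion of Theorem~\ref{Main theorem 0.5}: it suffices to prove that, for every weakly associated prime $\mathfrak{q}$ of $\varpi$ in $\mathcal{B}^{\circ}$, the $\varpi$-adic completion $\widehat{(\mathcal{B}^{\circ})_{\mathfrak{q}}}$ is a valuation ring of rank one, and we will obtain this by transporting the corresponding statement for $\mathcal{A}$ along the integral extension $\mathcal{A}^{\circ}\hookrightarrow\mathcal{B}^{\circ}$. First I would record the consequences of the hypotheses that are needed. The ring $\mathcal{A}^{\circ}$ is a normal domain: it is integrally closed in $\mathcal{A}$, and since $\mathcal{A}$ is integrally closed in $\Frac(\mathcal{A})=\Frac(\mathcal{A}^{\circ})$, an element of the latter that is integral over $\mathcal{A}^{\circ}$ lies in $\mathcal{A}$ and, being integral over a ring of definition, is power-bounded; hence the going-down theorem holds for $\mathcal{A}^{\circ}\hookrightarrow\mathcal{B}^{\circ}$. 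By Theorem~\ref{Main theorem 0.5} applied to $\mathcal{A}$, the completion $\widehat{(\mathcal{A}^{\circ})_{\mathfrak{p}}}$ is a rank-one valuation ring for every $\mathfrak{p}\in\wAss_{\mathcal{A}^{\circ}}(\varpi)$; since then $(\mathcal{A}^{\circ})_{\mathfrak{p}}/\varpi=\widehat{(\mathcal{A}^{\circ})_{\mathfrak{p}}}/\varpi$ has a unique prime, $\mathfrak{p}$ is minimal over $\varpi$, and one checks further that $(\mathcal{A}^{\circ})_{\mathfrak{p}}$ is itself a valuation ring of rank one. In particular $\wAss_{\mathcal{A}^{\circ}}(\varpi)=\Min_{\mathcal{A}^{\circ}}(\varpi)$.

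Now fix $\mathfrak{q}\in\wAss_{\mathcal{B}^{\circ}}(\varpi)$ and put $\mathfrak{p}:=\mathfrak{q}\cap\mathcal{A}^{\circ}$. The first --- and technically central --- step is to show $\mathfrak{p}\in\wAss_{\mathcal{A}^{\circ}}(\varpi)$. Localizing at $\mathfrak{q}$, one has $\mathfrak{q}(\mathcal{B}^{\circ})_{\mathfrak{q}}$ minimal over $\Ann(\bar{b})$ for some $b\in(\mathcal{B}^{\circ})_{\mathfrak{q}}$ integral over $(\mathcal{A}^{\circ})_{\mathfrak{p}}$; combining the going-down theorem for the integral extension $(\mathcal{A}^{\circ})_{\mathfrak{p}}\hookrightarrow(\mathcal{B}^{\circ})_{\mathfrak{q}}$ (available because $(\mathcal{A}^{\circ})_{\mathfrak{p}}$ is normal) with lying-over and incomparability, one deduces that $\mathfrak{p}(\mathcal{A}^{\circ})_{\mathfrak{p}}$ is minimal over the contracted annihilator, i.e. $\mathfrak{p}\in\wAss_{\mathcal{A}^{\circ}}(\varpi)$; and, using in addition $\wAss_{\mathcal{A}^{\circ}}(\varpi)=\Min_{\mathcal{A}^{\circ}}(\varpi)$, that $\mathfrak{q}$ is itself minimal over $\varpi$, so that $\mathcal{B}^{\circ}$ likewise has no embedded primes of $\varpi$. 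Thus $V_{0}:=(\mathcal{A}^{\circ})_{\mathfrak{p}}$ is a rank-one valuation ring.

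It then remains to show that $(\mathcal{B}^{\circ})_{\mathfrak{q}}$ is a valuation ring of rank one --- its $\varpi$-adic completion will then automatically be one, since the $\varpi$-adic and valuation topologies coincide on a rank-one valuation ring. The ring $(\mathcal{B}^{\circ})_{\mathfrak{q}}$ is a local domain, integral over $V_{0}$, with fraction field $\Frac(\mathcal{B})$ algebraic over $\Frac(\mathcal{A})=\Frac(V_{0})$ since $\mathcal{B}$ is integral over $\mathcal{A}$. If $(\mathcal{B}^{\circ})_{\mathfrak{q}}$ is normal, then it equals the integral closure of the valuation ring $V_{0}$ in this algebraic extension, a Pr\"ufer domain, and being local it is a valuation ring; dominating the rank-one valuation ring $V_{0}$ along an algebraic extension, it has rank one. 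To obtain normality one uses uniformity of $\mathcal{B}$: because $\mathcal{B}^{\circ}$ is integrally closed in $\mathcal{B}=\mathcal{B}^{\circ}[\varpi^{-1}]$, an element of $\Frac(\mathcal{B})$ integral over $\mathcal{B}^{\circ}$ that is carried into $\mathcal{B}^{\circ}$ on multiplication by $\varpi$ already lies in $\mathcal{B}^{\circ}$; hence $\varpi$ is a nonzerodivisor on $\widetilde{\mathcal{B}^{\circ}}/\mathcal{B}^{\circ}$, where $\widetilde{\mathcal{B}^{\circ}}$ is the integral closure of $\mathcal{A}^{\circ}$ in $\Frac(\mathcal{B})$, and from this one concludes that the conductor of the normalization is not contained in the minimal prime $\mathfrak{q}$, so that $(\mathcal{B}^{\circ})_{\mathfrak{q}}=(\widetilde{\mathcal{B}^{\circ}})_{\mathfrak{q}}$ is normal. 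By Theorem~\ref{Main theorem 0.5} this proves that $\mathcal{B}$ satisfies Berkovich's description of the Shilov boundary, and since this property is preserved under completion, so does $\widehat{\mathcal{B}}$.

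I expect the main obstacles to be: (a) the descent $\mathfrak{q}\mapsto\mathfrak{q}\cap\mathcal{A}^{\circ}$ for weakly associated primes (together with the resulting absence of embedded primes of $\varpi$ in $\mathcal{B}^{\circ}$), which is exactly where the normality hypothesis on $\mathcal{A}$ --- equivalently, the going-down theorem --- is indispensable; and (b) the normality of $(\mathcal{B}^{\circ})_{\mathfrak{q}}$, which cannot be assumed since $\mathcal{B}^{\circ}$ need not be normal and must instead be forced out of uniformity of $\mathcal{B}$, the point being that the non-normal locus of $\mathcal{B}^{\circ}$ stays away from the minimal primes of $\varpi$. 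The reduction to Theorem~\ref{Main theorem 0.5}, the verification that $(\mathcal{A}^{\circ})_{\mathfrak{p}}$ is already a valuation ring, the elementary valuation theory, and the behaviour under completion are comparatively routine.
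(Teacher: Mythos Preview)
Your strategy---verify the local criterion of Theorem~\ref{Main theorem 0.5} directly by descending weakly associated primes along $\mathcal{A}^{\circ}\hookrightarrow\mathcal{B}^{\circ}$---is genuinely different from the paper's, and it has real gaps at exactly the two places you flagged.

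\textbf{Step (a) does not go through as written.} You establish that $\mathfrak{p}(\mathcal{A}^{\circ})_{\mathfrak{p}}$ is minimal over the contraction $\bigl((\varpi):_{(\mathcal{B}^{\circ})_{\mathfrak{q}}}b\bigr)\cap(\mathcal{A}^{\circ})_{\mathfrak{p}}$, but this ideal is \emph{not} of the form $(\varpi):_{(\mathcal{A}^{\circ})_{\mathfrak{p}}}a$ for any $a\in(\mathcal{A}^{\circ})_{\mathfrak{p}}$, because $b$ lives in $\mathcal{B}^{\circ}$, not in $\mathcal{A}^{\circ}$. So you have not exhibited $\mathfrak{p}$ as a weakly associated prime of $\varpi$ in $\mathcal{A}^{\circ}$, and going-down does not supply the missing witness. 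Without step~(a) you cannot conclude that $\mathfrak{q}$ is minimal over~$\varpi$, and the rest of the argument collapses. (In the Noetherian case the paper handles the analogous issue, Theorem~\ref{Integral extensions of Noetherian rings}, by passing to a finite subextension containing~$b$; no such device is available here.)

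\textbf{Step (b) is also flawed, and in any case unnecessary.} That $\varpi$ is a non-zero-divisor on $\widetilde{\mathcal{B}^{\circ}}/\mathcal{B}^{\circ}$ does \emph{not} force the support of this module (which need not equal $V(\mathrm{conductor})$ when the normalization is not module-finite) to avoid minimal primes of~$\varpi$. Separately, your claim that $(\mathcal{A}^{\circ})_{\mathfrak{p}}$ is itself a valuation ring---rather than just its $\varpi$-adic completion---is unjustified: strongly $\varpi$-Shilov is strictly weaker than strongly $\varpi$-Krull (Example~\ref{Example of Heitmann-Ma}), and normality of $\mathcal{A}^{\circ}$ is not known to close this gap. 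Fortunately, once $\mathfrak{q}\in\Min(\varpi)$ is established, Proposition~\ref{Integral extensions, first result} already gives that $\widehat{(\mathcal{B}^{\circ})_{\mathfrak{q}}}$ is a rank-one valuation ring, so you would not need $(\mathcal{B}^{\circ})_{\mathfrak{q}}$ to be normal or $(\mathcal{A}^{\circ})_{\mathfrak{p}}$ to be a valuation ring.

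The paper circumvents step~(a) entirely. Instead of proving $\wAss_{\mathcal{B}^{\circ}}(\varpi)=\Min_{\mathcal{B}^{\circ}}(\varpi)$ algebraically, it uses the equivalence in Theorem~\ref{Strongly Shilov} and shows \emph{analytically} that $\spc^{-1}(\Min_{\mathcal{B}^{\circ}}(\varpi))$ is a boundary: for $f\in\mathcal{B}$ one has $\vert f\vert_{\spc,\varpi}=\max_{i}\vert a_{i}\vert_{\spc,\varpi}^{1/i}$ where the $a_{i}$ are the coefficients of the minimal polynomial of $f$ over $\Frac(\mathcal{A})$ (Proposition~\ref{Spectral seminorms and the minimal polynomial 2}, the analog of a result of Guennebaud, proved via the generalized-gauge machinery of Section~\ref{sec:power-multiplicative seminorms}). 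Normality of $\mathcal{A}$ enters here, ensuring the $a_{i}$ lie in $\mathcal{A}^{\circ}$ and allowing one to pick $v\in\spc^{-1}(\Min_{\mathcal{A}^{\circ}}(\varpi))$ attaining $\vert a_{i_{0}}\vert_{\spc,\varpi}$, then lift $v$ to $w\in\spc^{-1}(\Min_{\mathcal{B}^{\circ}}(\varpi))$ with $w(f)=\vert f\vert_{\spc,\varpi}$. This is where the real work of Section~\ref{sec:integral extensions} lies, and it is not visible from your purely commutative-algebraic plan.
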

As noted earlier, preservation of Berkovich's description of the Shilov boundary under completion is automatic, so the actual content of this theorem is preservation under integral extensions. Combining Theorem \ref{Main theorem 2} with Noether normalization for affinoid algebras, we obtain the following result which covers many examples of perfectoid Tate rings appearing "in nature".
\begin{cor}[Corollary \ref{Integral extensions}]\label{Corollary 1 of Main theorem 2}Let $\mathcal{B}$ be a uniform Tate Banach ring. Suppose that there exists a dense subring $\mathcal{B}'$ of $\mathcal{B}$ such that $\mathcal{B}'$ is a domain and $\mathcal{B}'^{\circ}$ is an integral extension of $\mathcal{A}^{\circ}$ for some affinoid algebra $\mathcal{A}$ (in the sense of Tate) over a nonarchimedean field. Then $\mathcal{B}$ satisfies Berkovich's description of the Shilov boundary in the strong sense.\end{cor}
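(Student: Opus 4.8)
The plan is to manoeuvre into a situation where Theorem \ref{Main theorem 2} applies; the only real obstruction is that $\mathcal{A}$ is merely an affinoid algebra, not a \emph{normal} domain, and this will be repaired by Noether normalization down to a Tate algebra. First, since Berkovich's description of the Shilov boundary (including in the strong sense) is preserved under completion, and $\mathcal{B}$ is the completion of the dense subring $\mathcal{B}'$ (after replacing $\mathcal{B}'$ by $\mathcal{B}'[\varpi^{-1}]$, still a dense subdomain whose ring of power-bounded elements is integral over $\mathcal{A}^{\circ}$), it suffices to prove the statement for the uniform Tate domain $\mathcal{B}'$: here $\mathcal{B}'^{\circ}=\mathcal{B}^{\circ}\cap\mathcal{B}'$ is bounded (as $\mathcal{B}$ is uniform) and open (it contains $\mathcal{A}^{\circ}$), hence a ring of definition. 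Because $\mathcal{A}^{\circ}\hookrightarrow\mathcal{B}'^{\circ}$ is an injection into a domain, $\mathcal{A}^{\circ}$, and therefore $\mathcal{A}=\mathcal{A}^{\circ}[\varpi^{-1}]$, is a domain; in particular $\mathcal{A}$ is a reduced affinoid algebra, so it is uniform and, for any Noether normalization $\mathcal{A}_{0}:=K\langle T_{1},\dots,T_{d}\rangle\hookrightarrow\mathcal{A}$, the subring $\mathcal{A}^{\circ}$ is precisely the integral closure of $\mathcal{A}_{0}^{\circ}=\mathcal{O}_{K}\langle T_{1},\dots,T_{d}\rangle$ in $\mathcal{A}$ (a standard fact for reduced affinoids over an arbitrary nonarchimedean base, provable by bounding the coefficients of minimal polynomials via the sup-seminorm and symmetric functions of the roots).

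Next I record that $\mathcal{A}_{0}$ is a normal Noetherian Tate domain, that $\mathcal{A}_{0}^{\circ}=\mathcal{O}_{K}\langle T_{1},\dots,T_{d}\rangle$ is integrally closed in $\mathcal{A}_{0}$ (if $x\in\Frac(\mathcal{A}_{0})$ is integral over $\mathcal{A}_{0}^{\circ}$ then $x\in\mathcal{A}_{0}$ by normality of $\mathcal{A}_{0}$, and $|x|_{\mathrm{Gauss}}\le 1$ by the usual leading-term argument), hence a normal domain, and that $\Spf(\mathcal{A}_{0}^{\circ})$ is irreducible (its reduction is an affine space over the residue field of $K$), with a unique generic point $\mathfrak{q}_{0}$. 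By Proposition \ref{Berkovich's result}, $\mathcal{A}_{0}$ satisfies Berkovich's description of the Shilov boundary in the strong sense, with Shilov boundary the single Gauss point $\eta_{0}=\spc^{-1}(\mathfrak{q}_{0})$. By the first paragraph the composite $\mathcal{A}_{0}^{\circ}\hookrightarrow\mathcal{A}^{\circ}\hookrightarrow\mathcal{B}'^{\circ}$ is an integral extension, so inverting $\varpi$ gives an integral extension $\mathcal{A}_{0}\hookrightarrow\mathcal{B}'$ of uniform Tate domains with $\mathcal{B}'^{\circ}$ integral over $\mathcal{A}_{0}^{\circ}$. Thus the hypotheses of Theorem \ref{Main theorem 2} hold with $\mathcal{A}_{0}$ as the normal base, and we conclude that $\mathcal{B}'$ satisfies Berkovich's description of the Shilov boundary.

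It remains to upgrade this to the strong sense, i.e.\ to show that $\spc^{-1}(\Spf(\mathcal{B}'^{\circ})_{\gen})$ is closed in $\mathcal{M}(\mathcal{B}')$. Let $\pi\colon\mathcal{M}(\mathcal{B}')\to\mathcal{M}(\mathcal{A}_{0})$ be the continuous map induced by $\mathcal{A}_{0}\hookrightarrow\mathcal{B}'$; it fits into a commuting square with the two specialization maps and the (integral) morphism $\Spf(\mathcal{B}'^{\circ})\to\Spf(\mathcal{A}_{0}^{\circ})$. Since $\mathcal{A}_{0}^{\circ}$ is normal with unique minimal prime $\mathfrak{q}_{0}$ over $\varpi$ and $\mathcal{A}_{0}^{\circ}\hookrightarrow\mathcal{B}'^{\circ}$ is integral, the going-down and incomparability theorems give: a prime $\mathfrak{p}\ni\varpi$ of $\mathcal{B}'^{\circ}$ is minimal over $\varpi$ if and only if $\mathfrak{p}\cap\mathcal{A}_{0}^{\circ}=\mathfrak{q}_{0}$. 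Reading this through the square yields $\spc^{-1}(\Spf(\mathcal{B}'^{\circ})_{\gen})=\pi^{-1}(\{\eta_{0}\})$, which is the preimage of a single point under a continuous map, hence closed. Combined with the previous paragraph, $\spc^{-1}(\Spf(\mathcal{B}'^{\circ})_{\gen})$ equals its own closure, namely the Shilov boundary of $\mathcal{B}'$; after completing we obtain the statement for $\mathcal{B}$. (Finally, Noether normalization for affinoid algebras is exactly the ``Combining Theorem \ref{Main theorem 2} with Noether normalization'' invoked in the discussion above.)

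The hard part is conceptual rather than computational: Theorem \ref{Main theorem 2} is formulated for a normal-domain base, but the affinoid $\mathcal{A}$ we are handed need not be normal, so one cannot feed $\mathcal{A}\hookrightarrow\mathcal{B}'$ into it directly. Rerouting through the Tate algebra $K\langle T_{1},\dots,T_{d}\rangle$ is clean, but it rests on the nonformal input that $\mathcal{A}^{\circ}$ is \emph{exactly} the integral closure of $\mathcal{O}_{K}\langle T_{1},\dots,T_{d}\rangle$ in the reduced affinoid $\mathcal{A}$, so that integrality of $\mathcal{B}'^{\circ}$ propagates all the way down to $\mathcal{O}_{K}\langle T_{1},\dots,T_{d}\rangle$; the remaining bootstrap from ``Berkovich's description'' to its strong form is the short going-down argument above (and is in any case the mechanism already present in the proof of Theorem \ref{Main theorem 2}).
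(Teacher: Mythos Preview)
Your proof is correct. The main reduction---Noether normalization down to a Tate algebra $\mathcal{A}_0=K\langle T_1,\dots,T_d\rangle$, then Theorem~\ref{Main theorem 2}---matches the paper's strategy exactly.

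Where you diverge is in the upgrade to the \emph{strong} sense. You exploit that $\mathcal{A}_0^{\circ}/\sqrt{(\varpi)}\cong k[T_1,\dots,T_d]$ is a domain, so $\Spf(\mathcal{A}_0^{\circ})$ has a unique generic point $\mathfrak{q}_0$ and the Shilov boundary of $\mathcal{A}_0$ is the single Gauss point $\eta_0$; then incomparability together with Lemma~\ref{Minimal primes and integral extensions} identifies $\spc^{-1}(\Spf(\mathcal{B}'^{\circ})_{\gen})$ with the continuous preimage $\pi^{-1}(\{\eta_0\})$, which is closed. The paper instead writes $\mathcal{B}'^{+}$ as a filtered colimit of finite $\mathcal{A}^{\circ}$-subextensions $C_i$, observes that each $C_i[\varpi^{-1}]$ is itself an affinoid with finite (hence closed) Shilov boundary, and passes to the limit via Lemmas~\ref{Minimal primes and integral extensions 2} and~\ref{Closed subsets of the Berkovich spectrum}. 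Your route is shorter and avoids the limit machinery entirely; the paper's route reuses the apparatus already set up for the Noetherian-base case (Lemma~\ref{Integral extensions and closed subsets}) and does not depend on the single-generic-point feature of the Tate algebra, so it would still work verbatim for a normal affinoid base whose special fibre has several components.
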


\subsection{Main ideas behind the proofs}

Throughout most of the proof of Theorem \ref{Main theorem 1}, we work with the algebraic characterization of Tate rings satisfying Berkovich's description of the Shilov boundary given in Theorem \ref{Main theorem 0.5}. Moreover, to deduce the conclusion of Theorem \ref{Main theorem 1}(5) from the coherence of the ring of power-bounded elements (plus the assumption on the values of the spectral seminorm) we use a ``$\varpi$-local" analog of the notion of $\ast$-operations from the multiplicative ideal theory of integral domains. We hope that the resulting formalism of $\varpi$-$\ast$-operations (which is actually a special case of a very general notion of $\ast$-operations on a ring extension introduced by Knebusch and Kaiser in \cite{Knebusch-Zhang2}, Ch.~3) and our related results on what we call $\varpi$-v-multiplication rings (such as Theorem \ref{Houston-Malik-Mott}) could be of independent interest. 

We emphasize that, while we formulated most of the results in this introduction in terms of Tate rings, in the body of the paper we mostly take the commutative algebra point of view. That is, we fix a ring $A$, playing the role of a ring of definition, and a non-zero-divisor and non-unit $\varpi$ in $A$ and consider the Tate ring $\mathcal{A}=A[\varpi^{-1}]$ whose topology is determined by the pair of definition $(A, \varpi)$. We can then talk about the Berkovich spectrum and the Shilov boundary for this Tate ring, as we did above. Now we give an overview of the individual sections of the paper.

\subsection{Outline of the paper}

In Section \ref{sec:power-multiplicative seminorms} we discuss the basics of power-multiplicative seminorms on commutative (multiplicative) monoids, recall results of Guennebaud on the Shilov boundary for a seminormed monoid and give a generalization of the theory of gauges of subsets of vector spaces over a nonarchimedean field to the setting of power-multiplicative monoid seminorms. In particular, we prove that any power-multiplicative function $\nu: X\to\mathbb{R}_{\geq0}$ on a monoid $X$ which admits a 'multiplicative topologically nilpotent unit' $\varpi$ can be described as a generalized gauge of a suitable submonoid (see Proposition \ref{Description of seminorms} for the precise statement). The additional generality of seminormed monoids (as opposed to seminormed rings) as well as our results on generalized gauges are necessary for our study of integral extensions in Section \ref{sec:integral extensions}. In particular, they are used in the proof of Proposition \ref{Spectral seminorms on integral extensions}, which generalizes an analogous result of Guennebaud (\cite{Guennebaud}, Ch.~II, Proposition 2) from the case of seminormed algebras over a nonarchimedean field to general seminormed rings.

In Section \ref{sec:Rees valuations} we introduce, for every non-zero-divisor and non-unit $\varpi$ in a ring $A$, an operation $I\mapsto I^{\arc_{\varpi}}$ on $A$-submodules of $A[\varpi^{-1}]$, called the $\arc_{\varpi}$-operation or $\arc_{\varpi}$-closure, which has the same relationship to the spectral seminorm $\vert\cdot\vert_{\spc,A,\varpi}$ derived from the $\varpi$-adic seminorm on $A$ as the operation of integral closure in the case of Noetherian rings (see Proposition \ref{Arc-closure and the spectral seminorm} and compare it to Proposition \ref{Starting point 2}). We then use this relationship to give a criterion for a subset of the Berkovich spectrum of $(A[\varpi^{-1}], \lVert\cdot\rVert)$, where $\lVert\cdot\rVert$ is a seminorm defining the topology on the Tate ring $A$ such that $\lVert\varpi\rVert=\frac{1}{2}$ and $\varpi$ is multiplicative with respect to $\lVert\cdot\rVert$, to be a boundary (Theorem \ref{Boundaries and arc-closure}). As an application of this criterion, we prove Theorem \ref{Main theorem 0} (Theorem \ref{Rees valuations}). 

In Section \ref{sec:valuative rings} we gather some facts on $\varpi$-fractional ideals (a '$\varpi$-local' analog of fractional ideals). We also study the notion of $\varpi$-valuative rings, introduced by Fujiwara and Kato in \cite{FK}, and relate it to the notion of Prüfer extensions studied by Knebusch and Zhang in \cite{Knebusch-Zhang}. In our study of Shilov boundaries of Tate rings, the class of $\varpi$-valuative local rings plays an important role since they are precisely those local rings whose $\varpi$-adic completion is a valuation ring (equivalently, whose $\varpi$-adically separated quotient is a valuation ring; see Proposition \ref{Valuative rings and valuation rings} and Corollary \ref{Valuative rings and completion}).

Section \ref{sec:star-operations} is devoted to the study of so-called $\varpi$-$\ast$-operations, the $\varpi$-local analogs of (semi-)star-operations on fractional ideals of an integral domain. Some examples of $\varpi$-$\ast$-operations are the $\arc_{\varpi}$-operation considered in Section \ref{sec:Rees valuations} as well as the $\varpi$-v- and $\varpi$-t-operations, which are the coarsest strict $\varpi$-$\ast$-operation and the coarsest strict $\varpi$-$\ast$-operation of finite type, respectively (see Proposition \ref{v- and t-operations are star-operations}). As we noted earlier, our notion of $\varpi$-$\ast$-operations is a special case of the notion of $\ast$-operations on $A$-submodules of a ring extension $A\subseteq R$ introduced by Knebusch and Kaiser in \cite{Knebusch-Zhang2}, Ch.~3 (namely, the special case when $R=A[\varpi^{-1}]$ for a non-zero-divisor $\varpi\in A$), and we freely use results from Knebusch's and Kaiser's book. However, we also prove a series of results whose analogs for $\ast$-operations on the set of fractional ideals of a domain have been known for a long time, but which appear to be new in the $\varpi$-local setting. For example, we prove that, for any $A$ and any non-zero-divisor $\varpi$, weakly associated prime ideals of $\varpi$ are $\varpi$-t-ideals (Lemma \ref{Weakly associated primes and maximal t-ideals}, Proposition \ref{Prime ideals minimal over a star-ideal}), a fact which, together with Theorem \ref{Main theorem 0.5}, motivates our use of $\varpi$-$\ast$-operations in the study of the Shilov boundary. 

Section \ref{sec:heart of the paper} is the heart of the paper. For a ring $A$ with a non-zero-divisor and non-unit $\varpi\in A$, we study the condition on local rings at weakly associated primes of $\varpi$ which appears in Theorem \ref{Main theorem 0.5}. We call rings $A$ satisfying this condition for some non-zero-divisor non-unit $\varpi$ strongly $\varpi$-Shilov. We discuss some basic examples of strongly $\varpi$-Shilov rings (such as Krull domains) and prove Theorem \ref{Main theorem 0.5} (Theorem \ref{Strongly Shilov}) as well as Theorem \ref{Main theorem 1}(1) (Theorem \ref{Reduced Noetherian rings are strongly Shilov}) and Theorem \ref{Main theorem 1}(4) (Proposition \ref{Multiplicative norms}). Our proof of Theorem \ref{Main theorem 0.5} is based on the main result of Section \ref{sec:Rees valuations}, Theorem \ref{Boundaries and arc-closure}. We also introduce the notion of strongly $\varpi$-Krull rings which is more restrictive than the notion of strongly $\varpi$-Shilov rings (see Example \ref{Example of Heitmann-Ma}, based on the work of Heitmann and Ma \cite{Heitmann-Ma25}), but which still includes many important examples such as Krull domains and Noetherian rings $A$ integrally closed in $A[\varpi^{-1}]$.

In Section \ref{sec:coherence} we use our results on $\varpi$-$\ast$-operations (in particular, on the $\varpi$-v-operation and the $\varpi$-t-operation) from Section \ref{sec:star-operations} to establish sufficient conditions for a ring $A$ to be strongly $\varpi$-Shilov with respect to some fixed $\varpi$. As a consequence, we establish cases (2) and (5) of Theorem \ref{Main theorem 1}. Our approach to proving this result is inspired by the proofs of several classical characterizations of Prüfer v-multiplication domains, due to Houston-Malik-Mott and Zafrullah (see \cite{Houston-Malik-Mott}, Theorem 1.1, and \cite{Zafrullah78}, Theorem 2, respectively). Hereby, the role played in the classical setting by valuation domains is now occupied by Fujiwara's and Kato's notion of $\varpi$-valuative rings which we studied in Section \ref{sec:valuative rings}. 

Finally, in Section \ref{sec:integral extensions} we prove cases (3) and (6) of Theorem \ref{Main theorem 1} (Proposition \ref{Large value groups and integral extensions}) as well as Theorem \ref{Main theorem 2} (Theorem \ref{Strongly Shilov rings and integral extensions}). The main technical ingredients in the proof of Theorem \ref{Main theorem 2} are Proposition \ref{Spectral seminorms on integral extensions} and Proposition \ref{Spectral seminorms and the minimal polynomial 2} whose proofs follow the proofs of analogous results from Guennebaud's thesis (\cite{Guennebaud}, Ch.~II, Proposition 2 and 3), but rely on our results on generalized gauges from Section \ref{sec:power-multiplicative seminorms} to avoid the hypothesis that the seminormed rings in question be seminormed algebras over a nonarchimedean field. This section, along with Section \ref{sec:power-multiplicative seminorms}, is the most nonarchimedean-analytic in the paper. 

\subsection{Notation and terminology}

We use the same terminological conventions as in \cite{Dine22} (see the subsection "Notation and terminology" in the introduction to that paper). Moreover, in this paper we use the multiplicative (not additive) notation for valuations. For a ring $A$ and a non-zero-divisor $\varpi\in A$, the expression 'the Tate ring $A[\varpi^{-1}]$' means the ring $A[\varpi^{-1}]$ equipped with the topology defined by the pair of definition $(A, \varpi)$. In this situation, the seminorm \begin{equation*}\lVert f\rVert_{A,\varpi}=\inf\{ 2^{-n}\mid n\in\mathbb{Z}, f\in \varpi^{n}A\}, f\in A[\varpi^{-1}],\end{equation*}on $A[\varpi^{-1}]$ (which defines this topology) is called the canonical extension of the $\varpi$-adic seminorm on $A$.

\subsection{Acknowledgements}

I am much indebted to my advisor, Kiran Kedlaya, for all of his support, guidance and encouragement as well as for his feedback on multiple preliminary versions of this paper. I would like to express my gratitude to Linquan Ma for some very helpful conversations on the ring $\widehat{R^{+}}$, the $p$-adic completion of the absolute integral closure of a Noetherian complete local domain $R$ of mixed-characteristic $(0, p)$, which served as one of the motivating examples for this project, for his comments on a draft of my paper, as well as for sharing with me a preliminary version of his work with Raymond Heitmann \cite{Heitmann-Ma25}, which, among other results, shows that $\widehat{R^{+}}$ is an example of a strongly $p$-Shilov ring which is not strongly $p$-Krull (\cite{Heitmann-Ma25}, Proposition 4.10). This result is discussed in Example \ref{Example of Heitmann-Ma} below and I also hope to study other connections between the strong $p$-Shilov property and the work of Heitmann and Ma in the future. Furthermore, I am grateful to Konstantin Ardakov for pointing out an error in a previous version of this paper, and to Kazuma Shimomoto for helpful comments and, in particular, for bringing to my attention the analogy between Proposition \ref{Complete integral closure and integral extensions} and Krull's classical work \cite{Krull36}. Finally, I would like to thank Ryo Ishizuka for a fruitful discussion of the relationship between the strong $p$-Shilov property and perfectoid towers generated from prisms as in his work \cite{Ishizuka25}, and Jack J Garzella for many enlightening conversations on various topics in nonarchimedean geometry, including on topics close to the contents of this paper.                    

\section{Power-multiplicative seminorms}\label{sec:power-multiplicative seminorms}

In this paper the words 'semigroup' and 'monoid' will always mean a commutative semigroup and a commutative monoid and the word 'ring' will mean a commutative unital ring. Let $\Gamma$ be a totally ordered (commutative) monoid such that every subset of $\Gamma$ which is bounded below has an $\inf$ and every subset of $\Gamma$ which is bounded above has a $\sup$. We write $\Gamma$ multiplicatively, except where explicitly stated otherwise. We consider $\Gamma'=\Gamma\cup\{0\}$ as a totally ordered monoid by letting $0\cdot\gamma=0$ and $0\leq\gamma$ for all $\gamma$. 
\begin{mydef}Given a (commutative) totally ordered monoid $\Gamma$, we call a function \begin{equation*}\mu: X\to \Gamma\end{equation*}on a (commutative) monoid $X$ (which we also write multiplicatively) a submultiplicative function if \begin{equation*}\mu(fg)\leq \mu(f)\mu(g)\end{equation*}for all $f, g\in X$. We call the function $\mu$ power-multiplicative if $\mu(f^{m})=\mu(f)^{m}$ for all integers $m>0$ and we call it multiplicative if $\mu(fg)=\mu(f)\mu(g)$.\end{mydef} 
The most important special cases of these notions are when $\mu$ is a ring seminorm $\lVert\cdot\rVert$ on a (commutative) ring $\mathcal{A}$. However, in this section we do not restrict our attention to this case. 
\begin{mydef}[Seminormed monoid]\label{Seminormed monoid}Let $\Gamma$ be a totally ordered monoid.\begin{itemize}\item A $\Gamma$-seminorm (more precisely, a $\Gamma$-valued monoid seminorm) on a (commutative) monoid $X$ is a submutliplicative function $\nu: X\to\Gamma$ with $\nu(1)=1$. If $\Gamma=\mathbb{R}_{\geq0}=\mathbb{R}_{>0}\cup\{0\}$, where $\mathbb{R}_{>0}$ is the multiplicative monoid of positive real numbers, we call a $\Gamma$-valued monoid seminorm $\nu$ on $X$ simply a (monoid) seminorm on $X$. 
\item A pair $(X, \mu)$ consisting of a commutative monoid $X$ and a $\Gamma$-seminorm $\mu$ on $X$ is called a $\Gamma$-seminormed monoid. If $\Gamma=\mathbb{R}_{\geq0}$, such a pair is simply called a seminormed monoid. \item A morphism of $\Gamma$-seminormed monoids (or bounded monoid homomorphism) \begin{equation*}\varphi: (X, \mu)\to (X', \mu')\end{equation*}is a morphism of monoids $\varphi: X\to X'$ such that there exists an element $C\in\Gamma$ which is not the least element in $\Gamma$ satisfying \begin{equation*}\mu'(\varphi(f))\leq C\mu(f)\end{equation*}for all $f\in X$. The morphism of $\Gamma$-seminormed monoids is called submetric if the above constant $C$ can be chosen to be equal to $1$.\end{itemize}\end{mydef}
\begin{example}[Seminormed abelian group]If $X$ is an abelian group and $\Gamma$ is the additive monoid $(\mathbb{R}_{\geq0}, +)$ of non-negative real numbers, with the binary operation written additively and with neutral element $0$, then we recover the usual notion of a (not necessarily nonarchimedean) abelian group seminorm.\end{example}
\begin{example}[Seminormed ring]If $\mathcal{A}$ is a ring, then a (not necessarily nonarchimedean) ring seminorm on $\mathcal{A}$ is precisely a function $\lVert\cdot\rVert: \mathcal{A}\to \mathbb{R}_{\geq0}$ which is an $(\mathbb{R}_{\geq0}, +)$-seminorm on the underlying additive abelian group of $\mathcal{A}$ and at the same also a monoid seminorm (i.e., an $(\mathbb{R}_{\geq0}, \cdot)$-seminorm) on the multiplicative monoid of $\mathcal{A}$. Note, however, that in this paper we mostly restrict our attention to seminormed rings which are nonarchimedean, i.e., whose seminorm satisfies the stronger nonarchimedean triangle inequality $\lVert f+g\rVert\leq\max(\lVert f\rVert, \lVert g\rVert)$ for all $f, g\in \mathcal{A}$.\end{example}
\begin{example}[Closed unit ball]For a seminormed monoid $(X, \mu)$, consider the closed unit ball \begin{equation*}X_{\mu\leq1}=\{\, f\in X\mid \mu(f)\leq1\,\}.\end{equation*}Then the restriction of the seminorm $\mu$ endows $X_{\mu\leq1}$ with the structure of a seminormed monoid.\end{example}
\begin{example}[Punctured closed unit ball]In the situation of the above example, the restriction of $\mu$ also endows $X_{\mu\leq1}\setminus\{0\}$ with the structure of a seminormed monoid.\end{example}
For a seminormed monoid $(X, \mu)$ there is the following analog of the notion of Berkovich spectrum of a seminormed ring, which was studied by Guennebaud in his thesis \cite{Guennebaud} and which we therefore call the Guennebaud spectrum.
\begin{mydef}[Guennebaud spectrum]\label{Guennebaud spectrum}Fix an (as always, commutative) totally ordered monoid $\Gamma$ and set $\Gamma'=\Gamma\cup\{0\}$. For a $\Gamma'$-seminormed monoid $(X, \mu)$ the Guennebaud spectrum $M(X, \mu)$ of $(X, \mu)$ is the space of all multiplicative functions (multiplicative $\Gamma'$-seminorms) $\nu: X\to \Gamma'$ satisfying $\nu\leq\mu$, endowed with the weakest topology making all evaluation maps $M(X, \mu)\to\Gamma'$, $\nu\mapsto \nu(f)$, for $f\in A$, continuous, for the order topology on $\Gamma'$.\end{mydef}
It is readily seen that the topology on $M(X, \mu)$ is the topology of pointwise convergence and that is is equal to the subspace topology induced from the canonical inclusion \begin{equation*}M(X, \mu)\subseteq \Gamma'^{X}.\end{equation*}If $\Gamma'=\mathbb{R}_{\geq0}$, this topology on $M(X, \mu)$ turns out to have quite reasonable properties.
\begin{thm}[Guennebaud \cite{Guennebaud}, Ch.~I, second paragraph of \S4]\label{The Guennebaud spectrum is compact}For every seminormed monoid $(X, \mu)$ the Guennebaud spectrum $M(X, \mu)$ is a compact Hausdorff space.\end{thm}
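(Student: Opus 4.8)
The plan is to show that $M(X,\mu)$, as a subspace of the product space $\Gamma'^{X} = (\mathbb{R}_{\geq 0})^{X}$, is both Hausdorff and compact. Hausdorffness is immediate: $\mathbb{R}_{\geq 0}$ with the order (= Euclidean) topology is Hausdorff, an arbitrary product of Hausdorff spaces is Hausdorff, and a subspace of a Hausdorff space is Hausdorff. So the substance is compactness. First I would observe that, since $\mu$ is a seminorm, every $\nu \in M(X,\mu)$ satisfies $0 \leq \nu(f) \leq \mu(f)$ for each $f \in X$, so $M(X,\mu)$ is contained in the product of compact intervals
\begin{equation*}
K \;=\; \prod_{f \in X} [\,0,\ \mu(f)\,],
\end{equation*}
which is compact by Tychonoff's theorem. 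Hence it suffices to prove that $M(X,\mu)$ is a \emph{closed} subset of $K$ (equivalently, closed in $\Gamma'^{X}$ with the topology of pointwise convergence), since a closed subset of a compact space is compact.

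To see closedness, I would describe $M(X,\mu)$ inside $\Gamma'^{X}$ by the conditions cutting out multiplicative monoid seminorms bounded by $\mu$: a function $\nu \colon X \to \mathbb{R}_{\geq 0}$ lies in $M(X,\mu)$ if and only if (i) $\nu(1) = 1$; (ii) $\nu(fg) = \nu(f)\nu(g)$ for all $f,g \in X$; and (iii) $\nu(f) \leq \mu(f)$ for all $f \in X$. (Note that multiplicativity together with $\nu(1)=1$ already forces the submultiplicativity required of a seminorm, so no separate inequality is needed; and the bound $\nu \le \mu$ is part of the definition of the Guennebaud spectrum.) Each of these is a closed condition in the topology of pointwise convergence: condition (i) is the preimage of the closed point $\{1\}$ under the continuous evaluation $\ev_{1}$; for fixed $f,g$, condition (ii) is the vanishing locus of the continuous function $\nu \mapsto \ev_{fg}(\nu) - \ev_{f}(\nu)\ev_{g}(\nu)$ on $\Gamma'^{X}$ (continuous because the evaluations are continuous and multiplication and subtraction on $\mathbb{R}$ are continuous); and for fixed $f$, condition (iii) is $\{\nu : \ev_{f}(\nu) \leq \mu(f)\}$, closed because $\ev_{f}$ is continuous and $[0,\mu(f)]$ is closed. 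Therefore $M(X,\mu)$ is an intersection of closed subsets of $\Gamma'^{X}$, hence closed, and being also contained in the compact set $K$, it is compact.

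The only place requiring a little care — and the main (mild) obstacle — is the point-set topology bookkeeping: verifying that the topology on $M(X,\mu)$ defined as the weakest topology making all $\ev_{f}$ continuous genuinely agrees with the subspace topology from $\Gamma'^{X}$, so that closedness in the product yields compactness in $M(X,\mu)$ itself. This is exactly the remark made in the excerpt just before the theorem (the topology on $M(X,\mu)$ ``is equal to the subspace topology induced from the canonical inclusion $M(X,\mu) \subseteq \Gamma'^{X}$''), and it follows from the universal property of the product topology together with the universal property of the initial (weak) topology. Granting that identification, the argument above is complete; I would present it in the order: (1) Hausdorff via subspace of a product of Hausdorff spaces; (2) embed in the Tychonoff-compact product $K$ of intervals $[0,\mu(f)]$; (3) express $M(X,\mu)$ as an intersection of closed conditions (unit, multiplicativity, bound) and conclude it is closed in $K$; (4) closed subset of compact is compact.
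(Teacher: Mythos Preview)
Your proposal is correct and follows essentially the same approach as the paper: embed $M(X,\mu)$ in the Tychonoff-compact product $\prod_{f\in X}[0,\mu(f)]$ and show it is closed there. The paper phrases closedness in terms of pointwise limits preserving multiplicativity and the bound $\nu\leq\mu$, whereas you spell it out as an intersection of closed conditions via continuity of the evaluations; your version is slightly more explicit (and you are right to include the condition $\nu(1)=1$, which the paper leaves implicit).
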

\begin{proof}It is readily seen from the definition that $M(X, \mu)$ is a subspace of the product \begin{equation*}\prod_{f\in X}[0, \mu(f)].\end{equation*}By Tychonov's theorem, this product is a compact Hausdorff space, so it suffices to show that $M(X, \mu)$ is closed in $\prod_{f\in X}[0, \mu(f)]$ or, equivalently, closed in $\mathbb{R}_{\geq0}^{X}$. But the pointwise limit of a sequence of multiplicative functions $X\to\mathbb{R}_{\geq0}$ is again multiplicative and the pointwise limit of functions bounded above by $\mu$ is again bounded above by $\mu$. \end{proof}     
In what follows we restrict our attention to the case when the totally ordered monoid $\Gamma$ is the multiplicative monoid of positive real numbers $\mathbb{R}_{>0}$ (so, $\Gamma'=\mathbb{R}_{\geq0}$). For any seminormed monoid $(X, \mu)$ there exists a largest power-multiplicative seminorm $\vert\cdot\vert_{\spc, (X, \mu)}$ on $X$ bounded with respect to the given seminorm $\mu$, namely, 
\begin{equation*}\vert f\vert_{\spc, (X, \mu)}=\inf_{n}\mu(f^{n})^{1/n}=\lim_{n\to\infty}\mu(f^{n})^{1/n}, f\in X.\end{equation*}The seminorm $\vert\cdot\vert_{\spc, (X, \mu)}$ is called the spectral seminorm of the seminormed monoid $(X, \mu)$ or the spectral seminorm derived from the seminorm $\mu$ on $X$. When the seminormed monoid $(X, \mu)$ is understood from the context, we simply write $\vert\cdot\vert_{\spc}$ instead of $(X, \mu)$. It is clear that a seminorm $\mu$ on a monoid $X$ is equal to its spectral seminorm if and only if it is itself power-multiplicative. By a result of Guennebaud, the spectral seminorm can be alternatively described as follows.
\begin{thm}[Guennebaud \cite{Guennebaud}, Ch.~I, Proposition 10]\label{Spectral seminorm}For any seminormed monoid $(X, \mu)$, the spectral seminorm $\vert\cdot\vert_{\spc, (X, \mu)}$ of $(X, \mu)$ is given by\begin{equation*}\vert f\vert_{\spc, (X, \mu)}=\sup_{\nu\in M(X, \mu)}\nu(f), f\in X,\end{equation*}where $M(X, \mu)$ is the Guennebaud spectrum of $(X, \mu)$.\end{thm}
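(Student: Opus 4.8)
The plan is to prove the two inequalities $\vert f\vert_{\spc}\geq\sup_{\nu}\nu(f)$ and $\vert f\vert_{\spc}\leq\sup_{\nu}\nu(f)$ separately, where $\vert\cdot\vert_{\spc}=\vert\cdot\vert_{\spc,(X,\mu)}$. The first inequality is the easy direction: for any multiplicative seminorm $\nu\in M(X,\mu)$ and any $f\in X$ we have $\nu(f)^{n}=\nu(f^{n})\leq\mu(f^{n})$ for every $n>0$ (since $\nu\leq\mu$), hence $\nu(f)\leq\mu(f^{n})^{1/n}$ for all $n$, and taking the infimum over $n$ gives $\nu(f)\leq\vert f\vert_{\spc}$. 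Taking the supremum over $\nu\in M(X,\mu)$ yields $\sup_{\nu}\nu(f)\leq\vert f\vert_{\spc}$. (Note this already uses that $M(X,\mu)$ is nonempty, which follows because the trivial-ish seminorm or, more to the point, by the second direction producing a point achieving the value.)

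The substance is the reverse inequality: given $f\in X$ and any real $c>\sup_{\nu\in M(X,\mu)}\nu(f)$, one must show $\vert f\vert_{\spc}\leq c$, i.e. $\mu(f^{n})\leq c^{n}$ for some (equivalently all large) $n$. The natural approach is a rescaling-plus-Berkovich-spectrum-nonemptiness argument. Fix $\epsilon>0$ and set $c=\vert f\vert_{\spc}-\epsilon$ if $\vert f\vert_{\spc}>0$ (the case $\vert f\vert_{\spc}=0$ is handled by the same argument with $c>0$ arbitrary small, or trivially); I want to produce $\nu\in M(X,\mu)$ with $\nu(f)\geq c$, contradicting $c>\sup_{\nu}\nu(f)$ when $c$ is chosen just below $\vert f\vert_{\spc}$. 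To build such a $\nu$, consider the localized/rescaled seminormed monoid: adjoin a formal inverse to $f$ (or work with the submonoid generated by $f$) and define, on the monoid $X_{f}$ obtained by inverting $f$, the seminorm $\mu_{f}(g)=\inf\{\mu(gf^{n})\,c^{-n}\mid n\geq 0\}$ — this is the largest submultiplicative extension of $\mu$ to $X_{f}$ in which $f$ has seminorm $\leq c$; crucially $\mu_{f}(1)=\inf_{n}\mu(f^{n})c^{-n}=1$ precisely because $c\leq\vert f\vert_{\spc}=\inf_{n}\mu(f^{n})^{1/n}$ forces $\mu(f^{n})\geq c^{n}$ for all $n$, so $\mu_{f}$ is genuinely a seminorm (takes the value $1$ at $1$, does not collapse). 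Then $M(X_{f},\mu_{f})$ is a nonempty compact Hausdorff space by Theorem \ref{The Guennebaud spectrum is compact} (nonemptiness of the Guennebaud spectrum of a seminormed monoid is itself one of Guennebaud's basic results, which I would cite or recall), and for any $\nu\in M(X_{f},\mu_{f})$ we have $\nu(f)\leq c$ while also $\nu(f^{-1})\leq\mu_{f}(f^{-1})\leq 1/c$, hence $\nu(f)=c$... more carefully $\nu(f)\geq c$ since $\nu(f)\nu(f^{-1})=1$ and $\nu(f^{-1})\leq 1/c$. Restricting such a $\nu$ to $X$ gives a point of $M(X,\mu)$ with $\nu(f)\geq c$ (boundedness $\nu\leq\mu$ on $X$ follows from $\mu_{f}\leq\mu$ on $X$, which holds by taking $n=0$ in the infimum). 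Letting $\epsilon\to 0$ (equivalently $c\to\vert f\vert_{\spc}$) and using compactness of $M(X,\mu)$ to pass to a limit point shows $\sup_{\nu}\nu(f)\geq\vert f\vert_{\spc}$.

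The main obstacle, and the step deserving the most care, is the nonemptiness of $M(X_{f},\mu_{f})$ — more precisely, that every seminormed monoid with a nontrivial seminorm admits at least one bounded multiplicative seminorm. This is a Zorn's-lemma/maximality argument (a multiplicative seminorm on a monoid corresponds to a prime-like structure, and one maximizes along a chain) and is exactly the content needed from Guennebaud's thesis; assuming it, the rest is the rescaling bookkeeping above. A secondary technical point is checking that $\mu_{f}$ is well-defined and submultiplicative on $X_{f}$ and restricts to something $\leq\mu$ on $X$, and that the seminorm value $\mu_{f}(1)=1$ fails to degenerate — all of which hinge on the inequality $\mu(f^{n})\geq c^{n}$ that is available precisely because $c\leq\vert f\vert_{\spc}$. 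I would present the rescaling explicitly but relegate the monoid-localization verifications to a line or two, citing \cite{Guennebaud} for the existence of multiplicative seminorms.
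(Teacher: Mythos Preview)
The paper does not give its own proof of this theorem; it is stated as a citation to Guennebaud's thesis (Ch.~I, Proposition 10) and used as a black box. So there is no ``paper's proof'' to compare against, and your proposal should be assessed on its own merits.

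Your argument is essentially correct and follows the standard localization/rescaling strategy (the monoid analog of Berkovich's proof of his Theorem 1.3.1). The easy inequality is fine. For the hard direction, your construction of $\mu_f$ on $X_f=X[f^{-1}]$ works: submultiplicativity and the bound $\mu_f\leq\mu$ on $X$ both follow as you indicate, and the crucial point $\mu_f(1)=1$ (so that $\mu_f$ does not collapse) is exactly the inequality $\mu(f^n)\geq c^n$ guaranteed by $c<\vert f\vert_{\spc}$. Given $\nu\in M(X_f,\mu_f)$, multiplicativity forces $\nu(f)\nu(f^{-1})=\nu(1)=1$, and since $\nu(f^{-1})\leq\mu_f(f^{-1})\leq c^{-1}$ you get $\nu(f)\geq c$; pulling back along $X\to X_f$ lands in $M(X,\mu)$. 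Two small cleanups: your description of $\mu_f$ as ``the largest extension in which $f$ has seminorm $\leq c$'' is not quite accurate (in fact $\mu_f(f)\geq c$, not $\leq c$), but this is irrelevant---what you actually use is only $\mu_f(f^{-1})\leq c^{-1}$ and $\mu_f\vert_X\leq\mu$. Also, the final compactness/limit step is unnecessary: once you have $\sup_\nu\nu(f)\geq c$ for every $c<\vert f\vert_{\spc}$, you are done.

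You are right that the only genuinely nontrivial input is the nonemptiness of $M(X_f,\mu_f)$ for a seminormed monoid with $\mu_f(1)=1$. Note that Theorem~\ref{The Guennebaud spectrum is compact} in the paper asserts only compactness, not nonemptiness; you correctly flag this and attribute nonemptiness to a separate Zorn-type argument from Guennebaud. That is the honest place where the real work sits, and your proof is a valid reduction to it.
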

Note that, by compactness of the Guennebaud spectrum, the supremum in the above description of the spectral seminorm is actually a maximum (an actually stronger statement follows \cite{Guennebaud}, Ch.~I, Corollaire du Théorème 1).

Power-multiplicative seminorms enjoy the following useful property, whose (straightforward) proof we recall for the reader's convenience.
\begin{lemma}[Cf.~\cite{BGR}, Proposition 1.3.1/2, in the case of seminormed rings]\label{Bounded implies submetric}For any morphism of seminormed monoids $\varphi: (X, \mu)\to (X', \mu')$ we have\begin{equation*}\vert\varphi(f)\vert_{\spc, (X', \mu')}\leq \vert f\vert_{\spc, (X, \mu)}\end{equation*}for all $f\in X$, where $\vert\cdot\vert_{\spc, (X, \mu)}$, $\vert\cdot\vert_{\spc, (X', \mu')}$ are the spectral seminorms derived from $\mu$, $\mu'$. In particular, if $\mu'$ is power-multiplicative, $\varphi$ is submetric.\end{lemma}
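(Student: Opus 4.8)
The plan is to unwind the definition of a morphism of seminormed monoids and feed it into the limit defining the spectral seminorm. By Definition \ref{Seminormed monoid}, since $\Gamma=\mathbb{R}_{>0}$ has no least element, a morphism $\varphi\colon (X,\mu)\to(X',\mu')$ of seminormed monoids is a monoid homomorphism for which there exists a constant $C\in\mathbb{R}_{>0}$ with $\mu'(\varphi(g))\leq C\,\mu(g)$ for all $g\in X$. First I would fix $f\in X$ and apply this bound to the elements $g=f^{m}$ for every integer $m\geq 1$; since $\varphi$ is a homomorphism of monoids we have $\varphi(f^{m})=\varphi(f)^{m}$, so $\mu'(\varphi(f)^{m})\leq C\,\mu(f^{m})$, and taking $m$-th roots gives $\mu'(\varphi(f)^{m})^{1/m}\leq C^{1/m}\,\mu(f^{m})^{1/m}$ for all $m\geq 1$.

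Next I would pass to the limit $m\to\infty$. By the description of the spectral seminorm recalled just before Theorem \ref{Spectral seminorm}, the limits $\lim_{m\to\infty}\mu'(\varphi(f)^{m})^{1/m}=\vert\varphi(f)\vert_{\spc,(X',\mu')}$ and $\lim_{m\to\infty}\mu(f^{m})^{1/m}=\vert f\vert_{\spc,(X,\mu)}$ exist, and since $C\in\mathbb{R}_{>0}$ we have $C^{1/m}\to 1$. Hence the displayed inequality survives the limit and yields $\vert\varphi(f)\vert_{\spc,(X',\mu')}\leq\vert f\vert_{\spc,(X,\mu)}$, which is the first assertion. For the last assertion, if $\mu'$ is power-multiplicative then $\mu'=\vert\cdot\vert_{\spc,(X',\mu')}$ (a power-multiplicative seminorm coincides with its own spectral seminorm), while taking $n=1$ in $\vert f\vert_{\spc,(X,\mu)}=\inf_{n}\mu(f^{n})^{1/n}$ shows $\vert f\vert_{\spc,(X,\mu)}\leq\mu(f)$; combining these with the inequality just proved gives $\mu'(\varphi(f))\leq\mu(f)$ for all $f\in X$, so one may take $C=1$ and $\varphi$ is submetric.

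There is essentially no serious obstacle here; the proof is a routine manipulation of the limit defining $\vert\cdot\vert_{\spc}$. The only points worth a moment's care are that the constant $C$ lies in $\mathbb{R}_{>0}$ rather than $\mathbb{R}_{\geq 0}$ (so that $C^{1/m}\to 1$ and the multiplicative distortion disappears in the limit), and that the two relevant limits exist at all, which is already built into the definition of the spectral seminorm as recalled in the excerpt.
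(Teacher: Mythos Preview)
Your proof is correct and follows essentially the same approach as the paper: both apply the bound $\mu'(\varphi(f^{n}))\leq C\,\mu(f^{n})$, take $n$-th roots, and let $n\to\infty$ so that $C^{1/n}\to 1$. The only cosmetic difference is that the paper first replaces $\mu'(\varphi(f)^{n})^{1/n}$ by $\vert\varphi(f)\vert_{\spc,(X',\mu')}$ using power-multiplicativity of the spectral seminorm before taking the limit, whereas you take the limit directly; the arguments are otherwise identical.
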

\begin{proof}Choose a constant $C>0$ such that $\mu'(\varphi(f))\leq C\mu(f)$ for all $f\in X$. Fix some $f\in X$. Then, for all $n\geq1$, we have \begin{equation*}\vert \varphi(f)\vert_{\spc, (X', \mu')}^{n}=\vert\varphi(f^{n})\vert_{\spc, (X', \mu')}\leq\mu'(\varphi(f^{n}))\leq C\mu(f^{n})\end{equation*}or, equivalently, $\vert \varphi(f)\vert_{\spc, (X, \mu)}\leq C^{1/n}\mu(f^{n})^{1/n}$ for all $n$. Taking limits on both sides yields $\vert\varphi(f)\vert_{\spc, (X', \mu')}\leq \vert f\vert_{\spc, (X, \mu)}$, as desired.\end{proof}
Using the concept of the spectral seminorm, one can prove the following criterion for a given seminorm to be power-multiplicative.
\begin{lemma}\label{Power-multiplicative}Let $(X, \mu)$ be a seminormed monoid. The seminorm $\mu$ is power-multiplicative if and only if there exists an integer $k\geq2$ such that $\mu(f^{k})=\mu(f)^{k}$ for all $f\in X$.\end{lemma}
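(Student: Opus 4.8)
The plan is to reduce the statement to the remark, already made in the text, that a monoid seminorm coincides with its spectral seminorm precisely when it is power-multiplicative. One implication is immediate: if $\mu$ is power-multiplicative, then $\mu(f^k)=\mu(f)^k$ holds for \emph{every} integer $k\geq 1$, in particular for some $k\geq 2$.

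For the converse I would fix an integer $k\geq 2$ with $\mu(f^k)=\mu(f)^k$ for all $f\in X$ and first prove, by induction on $n$, that $\mu(f^{k^n})=\mu(f)^{k^n}$ for all $n\geq 0$ and all $f\in X$. The case $n=0$ is trivial, and for the inductive step one applies the hypothesis to the element $f^{k^{n-1}}$, obtaining $\mu(f^{k^n})=\mu\bigl((f^{k^{n-1}})^k\bigr)=\mu(f^{k^{n-1}})^k=(\mu(f)^{k^{n-1}})^k=\mu(f)^{k^n}$. Hence $\mu(f^{k^n})^{1/k^n}=\mu(f)$ for all $n$.

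Now I would invoke the existence of the limit $\vert f\vert_{\spc,(X,\mu)}=\lim_{n\to\infty}\mu(f^n)^{1/n}$: since this limit exists, it can be evaluated along the subsequence indexed by $(k^n)_{n\geq 0}$, along which the terms are constantly equal to $\mu(f)$. Therefore $\vert f\vert_{\spc,(X,\mu)}=\mu(f)$ for every $f\in X$, so $\mu$ equals its own spectral seminorm and is consequently power-multiplicative.

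Both directions are elementary, so there is no real obstacle; the only point that needs a moment's care is the passage to the subsequence $(k^n)$ in the limit defining the spectral seminorm, which is legitimate only because that full limit is already known to exist (Guennebaud). One could instead argue directly, writing an arbitrary exponent $m$ in base $k$ and combining submultiplicativity with the relations $\mu(f^{k^n})=\mu(f)^{k^n}$, but routing through the spectral seminorm keeps the bookkeeping minimal.
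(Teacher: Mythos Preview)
Your proof is correct and follows essentially the same route as the paper: one first obtains $\mu(f^{k^n})=\mu(f)^{k^n}$ for all $n$, then uses that the spectral limit $\lim_n \mu(f^n)^{1/n}$ exists to evaluate it along the subsequence $(k^n)$ (the paper phrases this as $(k^n)_n$ being cofinal in $\mathbb{Z}_{>1}$), concluding that $\mu$ equals its spectral seminorm.
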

\begin{proof}If there exists an integer $k\geq2$ as above, then $\mu(f^{k^{n}})=\mu(f)^{k^{n}}$ for all $n\geq1$ and $f\in X$. But $(k^{n})_{n}$ is a cofinal subset of $\mathbb{Z}_{>1}$, so we have $\vert f\vert_{\spc}=\lim_{n}\mu(f^{n})^{1/n}=\lim_{n}\mu(f^{k^{n}})^{1/k^{n}}=\mu(f)$. \end{proof}
When the seminormed monoid in question is the underlying multiplicative monoid of a seminormed ring, there is the following more well-known analog of the Guennebaud spectrum, which was introduced (for Banach rings) by Berkovich in \cite{Berkovich}.
\begin{mydef}[Berkovich spectrum]\label{Berkovich spectrum}For a (nonarchimedean) seminormed ring $(\mathcal{A}, \lVert\cdot\rVert)$, the Berkovich spectum $\mathcal{M}((\mathcal{A}, \lVert\cdot\rVert))$ is the space of bounded multiplicative ring seminorms $\phi: \mathcal{A}\to \mathbb{R}_{\geq0}$ on $\mathcal{A}$ endowed with the weakest topology making all evaluation maps $\phi\mapsto \phi(f)$, for $f\in \mathcal{A}$, continuous.\end{mydef}
By Lemma \ref{Bounded implies submetric}, every bounded multiplicative ring seminorm $\phi$ actually satisfies $\phi\leq \lVert\cdot\rVert$, so the Berkovich spectrum is naturally a subspace of the Guennebaud spectrum of the multiplicative seminormed monoid of $(\mathcal{A}, \lVert\cdot\rVert)$. Again, the topology on the Berkovich spectrum can be described as the topology of pointwise convergence or, equivalently, the subspace topology induced from the inclusion \begin{equation*}\mathcal{M}((\mathcal{A}, \lVert\cdot\rVert))\subseteq \mathbb{R}_{\geq0}^{\mathcal{A}}.\end{equation*}Similar to Theorem \ref{The Guennebaud spectrum is compact}, there is the following result due to Berkovich.
\begin{thm}[Berkovich \cite{Berkovich}, Theorem 1.2.1]\label{The Berkovich spectrum is compact}For every seminormed ring $(\mathcal{A}, \lVert\cdot\rVert)$, the Berkovich spectrum $\mathcal{M}((\mathcal{A}, \lVert\cdot\rVert))$ is a non-empty compact Hausdorff space.\end{thm}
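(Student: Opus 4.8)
The plan is to treat the two halves of the statement separately: compactness together with the Hausdorff property follows the same Tychonov-plus-closedness recipe already used for the Guennebaud spectrum in Theorem~\ref{The Guennebaud spectrum is compact}, while non-emptiness requires a genuine ring-theoretic input, which I would supply via a Zorn's lemma argument extracting a minimal bounded seminorm and then showing that minimality forces multiplicativity.

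For the topological assertions, first note that by Lemma~\ref{Bounded implies submetric}, applied to the identity morphism $(\mathcal{A},\lVert\cdot\rVert)\to(\mathcal{A},\lVert\cdot\rVert)$ and using that a multiplicative seminorm is in particular power-multiplicative, every $\phi\in\mathcal{M}((\mathcal{A},\lVert\cdot\rVert))$ satisfies $\phi(f)\leq\lVert f\rVert$ for all $f\in\mathcal{A}$. Hence $\mathcal{M}((\mathcal{A},\lVert\cdot\rVert))$ is a subspace of $\prod_{f\in\mathcal{A}}[0,\lVert f\rVert]$, which is compact Hausdorff by Tychonov's theorem, and the given topology on $\mathcal{M}((\mathcal{A},\lVert\cdot\rVert))$ coincides with the subspace topology, both being the topology of pointwise convergence. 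It then remains to check that $\mathcal{M}((\mathcal{A},\lVert\cdot\rVert))$ is closed in that product: it is the intersection, over all $f,g\in\mathcal{A}$, of the sets cut out by $v(1)=1$, $v(fg)=v(f)v(g)$ and the nonarchimedean inequality $v(f+g)\leq\max(v(f),v(g))$, each of which is closed because the evaluation maps $v\mapsto v(h)$ are continuous; an intersection of closed sets in a compact Hausdorff space is again compact Hausdorff. This is verbatim the argument of Theorem~\ref{The Guennebaud spectrum is compact}, with the two additivity conditions appended.

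For non-emptiness --- the substantive part --- I would proceed as follows. Let $\mathfrak{P}$ be the set of all bounded nonarchimedean ring seminorms $\sigma$ on $\mathcal{A}$ with $\sigma\leq\lVert\cdot\rVert$, partially ordered by $\sigma_{1}\preceq\sigma_{2}\iff\sigma_{1}(f)\leq\sigma_{2}(f)$ for all $f$. It is non-empty, as $\lVert\cdot\rVert\in\mathfrak{P}$. Given a chain in $\mathfrak{P}$, its pointwise infimum is again a bounded nonarchimedean ring seminorm with value $1$ at $1$ --- the only point requiring care is that for a totally ordered family one has $\inf_{i}\bigl(\sigma_{i}(f)\sigma_{i}(g)\bigr)=\bigl(\inf_{i}\sigma_{i}(f)\bigr)\bigl(\inf_{i}\sigma_{i}(g)\bigr)$ and $\inf_{i}\max\bigl(\sigma_{i}(f),\sigma_{i}(g)\bigr)=\max\bigl(\inf_{i}\sigma_{i}(f),\inf_{i}\sigma_{i}(g)\bigr)$ --- so the chain has a lower bound in $\mathfrak{P}$, and Zorn's lemma furnishes a minimal element $v\in\mathfrak{P}$.

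It then suffices to show that every minimal $v\in\mathfrak{P}$ is multiplicative. First, $v$ is power-multiplicative: the associated spectral seminorm $\vert\cdot\vert_{\spc,(\mathcal{A},v)}$ is again a bounded nonarchimedean ring seminorm (a standard fact, cf.\ \cite{BGR}), satisfies $\vert\cdot\vert_{\spc,(\mathcal{A},v)}\leq v\leq\lVert\cdot\rVert$, and hence lies in $\mathfrak{P}$ and is $\preceq v$, so by minimality it equals $v$. Second, for multiplicativity, fix $g\in\mathcal{A}$ with $v(g)\neq0$ and consider $v_{g}(f)\coloneqq\inf_{n\geq0}v(fg^{n})v(g)^{-n}$, which is also the limit of the non-increasing sequence $v(fg^{n})v(g)^{-n}$. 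One checks that $v_{g}$ is a bounded nonarchimedean ring seminorm with $v_{g}\preceq v$ and $v_{g}(g)=v(g)$ (using power-multiplicativity of $v$ to get $v_{g}(1)=1$), so $v_{g}\in\mathfrak{P}$ and minimality forces $v_{g}=v$; then $v(f)=v_{g}(f)\leq v(fg)v(g)^{-1}$ together with submultiplicativity $v(fg)\leq v(f)v(g)$ gives $v(fg)=v(f)v(g)$, and the case $v(g)=0$ is immediate from $v(fg)\leq v(f)v(g)=0$. Thus $v$ is a bounded multiplicative ring seminorm, i.e.\ $v\in\mathcal{M}((\mathcal{A},\lVert\cdot\rVert))$. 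The main obstacle is exactly this final block: verifying that $\mathfrak{P}$ is stable under the infima needed for Zorn's lemma, and that the localized seminorm $v_{g}$ genuinely lies in $\mathfrak{P}$ and is small enough to contradict minimality unless $v$ was already multiplicative at $g$. Alternatively, non-emptiness may simply be cited from \cite{Berkovich}, Theorem~1.2.1, or deduced from Guennebaud's analysis of the spectrum of a seminormed monoid together with the observation that one can select a point respecting addition.
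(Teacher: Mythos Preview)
The paper does not supply its own proof of this theorem: it simply states the result with a citation to \cite{Berkovich}, Theorem~1.2.1, and moves on. So there is nothing to compare against on the paper's side beyond that reference.

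Your argument is correct and is, in outline, precisely Berkovich's original proof. The compactness and Hausdorff portion is the same Tychonov-plus-closedness argument the paper \emph{does} give for the Guennebaud spectrum (Theorem~\ref{The Guennebaud spectrum is compact}), with the additive conditions appended; your invocation of Lemma~\ref{Bounded implies submetric} to get $\phi\leq\lVert\cdot\rVert$ is the right way to land inside the Tychonov cube. The non-emptiness portion---Zorn's lemma on bounded seminorms, minimality forcing power-multiplicativity via the spectral seminorm, and then the localized seminorm $v_{g}(f)=\inf_{n}v(fg^{n})v(g)^{-n}$ forcing multiplicativity---is exactly the argument in \cite{Berkovich}, \S1.2. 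One small remark: for the Zorn step you only need the inequalities $\inf_{i}\max(\sigma_{i}(f),\sigma_{i}(g))\leq\max(\inf_{i}\sigma_{i}(f),\inf_{i}\sigma_{i}(g))$ and $\inf_{i}(\sigma_{i}(f)\sigma_{i}(g))\leq(\inf_{i}\sigma_{i}(f))(\inf_{i}\sigma_{i}(g))$, not the equalities you wrote, and these follow readily from the chain being totally ordered (pass to a decreasing net and take limits). Everything else is in order.
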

For a seminormed ring $(\mathcal{A}, \lVert\cdot\rVert)$, the spectral seminorm $\vert\cdot\vert_{\spc}$ derived from $\lVert\cdot\rVert$ is a ring seminorm and we have the following analog of Theorem \ref{Spectral seminorm}.
\begin{thm}[Berkovich \cite{Berkovich}, Theorem 1.3.1]\label{Spectral seminorm 2}Let $(\mathcal{A}, \lVert\cdot\rVert)$ be a seminormed ring. Then the spectral seminorm $\vert\cdot\vert_{\spc}$ derived from $\lVert\cdot\rVert$ is given by \begin{equation*}\vert f\vert_{\spc}=\sup_{\phi\in\mathcal{M}((\mathcal{A}, \lVert\cdot\rVert))}\phi(f), f\in \mathcal{A}.\end{equation*}\end{thm}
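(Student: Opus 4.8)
The plan is to prove the two inequalities separately, obtaining ``$\le$'' from Guennebaud's Theorem~\ref{Spectral seminorm} and ``$\ge$'' from a Banach-ring construction essentially due to Berkovich. For the easy direction, note that every $\phi\in\mathcal{M}((\mathcal{A},\lVert\cdot\rVert))$ is in particular a bounded multiplicative monoid seminorm on the underlying multiplicative monoid of $\mathcal{A}$, so $\mathcal{M}((\mathcal{A},\lVert\cdot\rVert))$ sits inside the Guennebaud spectrum of that monoid; hence $\phi(f)\le\sup_{\nu}\nu(f)=\vert f\vert_{\spc}$ by Theorem~\ref{Spectral seminorm}. (Equivalently, $\phi(f)^{n}=\phi(f^{n})\le\lVert f^{n}\rVert$ for every $n$, and one lets $n\to\infty$.) For the reverse inequality it suffices to produce, for each real $r'$ with $0<r'<\vert f\vert_{\spc}$, a point $\phi\in\mathcal{M}((\mathcal{A},\lVert\cdot\rVert))$ with $\phi(f)\ge r'$; the degenerate case $\vert f\vert_{\spc}=0$ is vacuous because $\mathcal{M}((\mathcal{A},\lVert\cdot\rVert))\ne\emptyset$ by Theorem~\ref{The Berkovich spectrum is compact}. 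Granting this, $\sup_{\phi}\phi(f)\ge r'$ for all such $r'$, hence $\sup_{\phi}\phi(f)\ge\vert f\vert_{\spc}$; and since the sets $\{\phi:\phi(f)\ge r'\}$ form a nested family of non-empty closed subsets of the compact space $\mathcal{M}((\mathcal{A},\lVert\cdot\rVert))$, their intersection $\{\phi:\phi(f)\ge\vert f\vert_{\spc}\}$ is non-empty, so the supremum is in fact attained.

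So fix $r'\in(0,\vert f\vert_{\spc})$ and put $\rho=1/r'$. Let $R$ be the nonarchimedean Banach ring obtained by completing the polynomial ring $\mathcal{A}[T]$ with respect to the (nonarchimedean) Gauss seminorm $\lVert\sum_{i}a_{i}T^{i}\rVert_{R}\coloneqq\max_{i}\lVert a_{i}\rVert\rho^{i}$; then $\mathcal{A}\hookrightarrow R$ is isometric and $\lVert T\rVert_{R}=\rho$. I claim that $fT-1$ is not a unit in $R$: if $u=\sum_{i\ge0}u_{i}T^{i}\in R$ satisfied $(fT-1)u=1$, then comparing coefficients would force $u_{0}=-1$ and $u_{i}=fu_{i-1}$, hence $u_{i}=-f^{i}$ and $\lVert u_{i}\rVert\rho^{i}=\lVert f^{i}\rVert\rho^{i}=(\lVert f^{i}\rVert^{1/i}/r')^{i}$; but $\lVert f^{i}\rVert^{1/i}\to\vert f\vert_{\spc}>r'$ by definition of the spectral seminorm, so $\lVert u_{i}\rVert\rho^{i}\to\infty$, contradicting $u\in R$. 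Consequently the closed ideal $\overline{(fT-1)R}$ is proper: were it all of $R$, there would exist $h\in R$ with $\lVert(fT-1)h-1\rVert_{R}<1$, and then $(fT-1)h$ would be invertible in $R$ via the convergent series $\sum_{n}\bigl(1-(fT-1)h\bigr)^{n}$, making $fT-1$ a unit. Hence $\mathcal{B}\coloneqq R/\overline{(fT-1)R}$ is a non-zero nonarchimedean Banach ring.

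By Theorem~\ref{The Berkovich spectrum is compact} we may pick $\psi\in\mathcal{M}(\mathcal{B})$, and composing with the bounded ring homomorphism $\mathcal{A}\to R\to\mathcal{B}$ gives $\phi\in\mathcal{M}((\mathcal{A},\lVert\cdot\rVert))$. In $\mathcal{B}$ the image $\overline{T}$ of $T$ satisfies $f\overline{T}=1$ and $\lVert\overline{T}\rVert_{\mathcal{B}}\le\lVert T\rVert_{R}=\rho$; since every bounded multiplicative ring seminorm is dominated by the ambient norm (Lemma~\ref{Bounded implies submetric} and the remark following Definition~\ref{Berkovich spectrum}), we get $\psi(\overline{T})\le\rho=1/r'$, whence $\phi(f)\psi(\overline{T})=\psi(f\overline{T})=\psi(1)=1$ forces $\phi(f)=1/\psi(\overline{T})\ge r'$, which is exactly what was needed. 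The single real obstacle is the non-invertibility of $fT-1$ in $R$, which is precisely where the hypothesis $r'<\vert f\vert_{\spc}=\inf_{n}\lVert f^{n}\rVert^{1/n}$ enters; everything else (non-emptiness of the Berkovich spectrum, openness of the units of a Banach ring, isometry of $\mathcal{A}\hookrightarrow R$) is formal bookkeeping. The nonarchimedean hypothesis is used only to make the Gauss seminorm submultiplicative and in the geometric-series step; in an archimedean setting one would instead use the $\ell^{1}$-type norm $\sum_{i}\lVert a_{i}\rVert\rho^{i}$, following Berkovich's original argument.
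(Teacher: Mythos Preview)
Your proof is correct and is essentially Berkovich's original argument from \cite{Berkovich}, Theorem~1.3.1. Note, however, that the paper itself does not supply a proof of this statement at all: it merely cites Berkovich and moves on, so there is no ``paper's own proof'' to compare against. One minor remark: when you write $\mathcal{A}\hookrightarrow R$, the map need not be injective if $\lVert\cdot\rVert$ is only a seminorm rather than a norm, but this is harmless---what you actually use is that the composite $\mathcal{A}\to\mathcal{B}$ is a bounded ring homomorphism, and that is all that is needed to pull $\psi$ back to an element of $\mathcal{M}((\mathcal{A},\lVert\cdot\rVert))$.
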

By compactness of the Berkovich spectrum, the supremum in the above theorem is again a maximum. It is natural to study subsets on which this maximum is attained.
\begin{mydef}[Boundary, Shilov boundary]Let $(X, \mu)$ (respectively, $(\mathcal{A}, \lVert\cdot\rVert)$) be a seminormed monoid (respectively, a seminormed ring). A subset $\mathcal{S}$ of $M(X, \mu)$ (respectively, of $\mathcal{M}((\mathcal{A}, \lVert\cdot\rVert))$) is called a boundary for $(X, \mu)$ (respectively, for $(\mathcal{A}, \lVert\cdot\rVert)$) if for every $f\in X$ (respectively, for every $f\in \mathcal{A}$) there exists some $\nu\in\mathcal{S}$ with \begin{equation*}\vert f\vert_{\spc}=\nu(f).\end{equation*}If there exists a smallest closed boundary $\mathcal{S}$ for $(X, \mu)$ (respectively, for $(\mathcal{A}, \lVert\cdot\rVert)$), it is called the Shilov boundary for $(X, \mu)$ (respectively, for $(\mathcal{A}, \lVert\cdot\rVert)$).\end{mydef}
We also sometimes speak of a boundary or of the Shilov boundary of $M(X, \mu)$ (respectively, of $\mathcal{M}((\mathcal{A}, \lVert\cdot\rVert))$) when we mean a boundary or the Shilov boundary for the seminormed monoid $(X, \mu)$ (respectively, the seminormed ring $(\mathcal{A}, \lVert\cdot\rVert)$). Following Guennebaud in \cite{Guennebaud}, we denote by $\Min(X, \mu)$ the set of multiplicative monoid seminorms $\nu\in M(X, \mu)$ which are constructible from $\mu$ (see \cite{Guennebaud}, Ch.~I, \S2, or the introduction of \cite{EM1} for the latter notion). For simplicity in referring to it, we give a name to this subspace of $M(X, \mu)$.
\begin{mydef}[Constructible spectrum]\label{Constructible spectrum}For a seminormed monoid $(X, \mu)$, we call the subspace $\Min(X, \mu)$ of multiplicative monoid seminorms on $X$ which are constructible from $\mu$ in the sense of Guennebaud the constructible spectrum of $(X, \mu)$. If $(\mathcal{A}, \lVert\cdot\rVert)$ is a seminormed ring, we write $\Min(\mathcal{A}, \lVert\cdot\rVert)$ for the constructible spectrum of the underlying multiplicative seminormed monoid of $(\mathcal{A}, \lVert\cdot\rVert)$ and call it the constructible spectrum of the seminormed ring $(\mathcal{A}, \lVert\cdot\rVert)$\end{mydef}
It turns out that for a seminormed ring $(\mathcal{A}, \lVert\cdot\rVert)$ the constructible spectrum is a subset of the Berkovich spectrum.
\begin{prop}[See \cite{Guennebaud}, Ch.~I, Prop.~2 and Exemple 1]\label{Constructible spectrum and Berkovich spectrum}If $(\mathcal{A}, \lVert\cdot\rVert)$ is a seminormed ring, then every multiplicative monoid seminorm on $\mathcal{A}$ which is constructible from $\lVert\cdot\rVert$ is actually a (nonarchimedean) ring seminorm. In symbols: \begin{equation*}\Min(\mathcal{A}, \lVert\cdot\rVert)\subseteq \mathcal{M}((\mathcal{A}, \lVert\cdot\rVert)).\end{equation*}\end{prop}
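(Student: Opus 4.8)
The plan is to reduce the statement to a single stable property and then push it through Guennebaud's construction of the constructible seminorms. Let $\nu \in \Min(\mathcal{A}, \lVert\cdot\rVert)$. Being an element of the Guennebaud spectrum, $\nu$ is a multiplicative monoid seminorm on $\mathcal{A}$ with $\nu \leq \lVert\cdot\rVert$ and $\nu(1) = 1$; from $\nu(-1)^2 = \nu(1) = 1$ we get $\nu(-1) = 1$ and hence $\nu(-f) = \nu(f)$, while $\nu(0)^2 = \nu(0)$ together with $\nu(0) \leq \lVert 0 \rVert = 0$ forces $\nu(0) = 0$. So the \emph{only} thing to check in order to conclude that $\nu$ is a (nonarchimedean) ring seminorm, i.e.\ an element of $\mathcal{M}((\mathcal{A}, \lVert\cdot\rVert))$, is the nonarchimedean additive inequality $\nu(f+g) \leq \max(\nu(f), \nu(g))$ for all $f, g \in \mathcal{A}$.

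Now I would argue by transfinite induction along Guennebaud's generation of $\Min(X, \mu)$ (Ch.~I, \S2 of \cite{Guennebaud}), in the case $X = \mathcal{A}$, $\mu = \lVert\cdot\rVert$. The constructible seminorms are built starting from the power-multiplicative seminorm $\vert\cdot\vert_{\spc,(\mathcal{A}, \lVert\cdot\rVert)}$ derived from $\mu$ by iterating (i) passage to pointwise limits of nets of seminorms bounded by $\mu$, and (ii) the ``localization at an element'' operations $\nu \mapsto \nu_{a}$ attached to $a \in \mathcal{A}$, of the shape $\nu_{a}(x) = \sup_{n}\nu(xa^{n})/\nu(a)^{n}$ (or the corresponding $\inf$/$\lim$ normalization) when $\nu(a)\neq 0$. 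I claim each of these steps preserves the class of monoid seminorms that satisfy the nonarchimedean additive inequality. For the base object, this holds because $\lVert\cdot\rVert$ is a ring seminorm and hence so is its spectral seminorm (as recalled just before Theorem \ref{Spectral seminorm 2}). The inequality $\nu(f+g) \leq \max(\nu(f), \nu(g))$ is a closed condition, so it passes to pointwise limits, handling step (i). For step (ii), if $\nu$ satisfies the inequality then for all $f, g$ and all $n$
\[
\frac{\nu((f+g)a^{n})}{\nu(a)^{n}} = \frac{\nu(fa^{n} + ga^{n})}{\nu(a)^{n}} \leq \frac{\max(\nu(fa^{n}), \nu(ga^{n}))}{\nu(a)^{n}} = \max\!\left(\frac{\nu(fa^{n})}{\nu(a)^{n}}, \frac{\nu(ga^{n})}{\nu(a)^{n}}\right),
\]
and since the binary $\max$ commutes with $\sup_{n}$, $\inf_{n}$ and $\lim_{n}$, we get $\nu_{a}(f+g) \leq \max(\nu_{a}(f), \nu_{a}(g))$. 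Thus every constructible seminorm satisfies the inequality, and combined with the first paragraph this gives $\Min(\mathcal{A}, \lVert\cdot\rVert) \subseteq \mathcal{M}((\mathcal{A}, \lVert\cdot\rVert))$.

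The only delicate point is step (ii): one must make sure that the precise family of operations Guennebaud uses to produce new multiplicative seminorms is assembled solely from $\nu$-values of $\mathcal{A}$-linear combinations of the inputs together with the monotone operations $\sup$, $\inf$, $\lim$ and multiplication/division by fixed positive reals — for then stability of the nonarchimedean inequality is automatic, since each such operation is monotone and commutes with $\max$. Granting this, the rest of the argument is purely formal; alternatively, one may simply cite \cite{Guennebaud}, Ch.~I, Prop.~2 and Exemple~1, of which the above is a reconstruction.
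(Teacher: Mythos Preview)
The paper does not give its own proof of this proposition; it is recorded purely as a citation of Guennebaud's thesis (Ch.~I, Prop.~2 and Exemple~1). Your reconstruction --- isolating the nonarchimedean triangle inequality as the only nontrivial condition and showing it is preserved by each generating operation in Guennebaud's transfinite construction --- is precisely the content of those two references, so your approach matches what is being cited.

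One correction is in order. Your claim that ``the binary $\max$ commutes with $\sup_n$, $\inf_n$ and $\lim_n$'' is false for $\inf_n$: in general one only has $\inf_n\max(a_n,b_n)\geq\max(\inf_n a_n,\inf_n b_n)$, which is the wrong direction for what you need (e.g.\ take $a_n=1,0,1,0,\dots$ and $b_n=1-a_n$). What rescues the argument is that the sequences $n\mapsto\nu(xa^n)/\nu(a)^n$ arising in the construction are monotone decreasing whenever $\nu$ is submultiplicative (since $\nu(xa^{n+1})\leq\nu(xa^n)\nu(a)$), so that $\inf_n$ coincides with $\lim_n$, and $\max$ does commute with limits by continuity. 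You should either restrict your assertion to $\sup$ and $\lim$, or make the monotonicity explicit; as written, step~(ii) of your induction has a gap if one reads the ``$\inf$'' variant literally. Your own caveat in the final paragraph already flags the dependence on the exact form of Guennebaud's operations, and this is the place where that dependence bites.
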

We can now recall a result of Guennebaud \cite{Guennebaud} and Escassut-Mainetti \cite{EM1} which ensures the existence of the Shilov boundary for every seminormed monoid and every seminormed ring.
\begin{thm}[Guennebaud \cite{Guennebaud}, Ch.~I, Cor.~du Théorème 1 on p.~8, Théorème 3 on p.~14, Escassut-Mainetti \cite{EM1}, Theorem C]\label{Guennebaud's theorem}For every seminormed monoid $(X, \mu)$, the constructible spectrum $\Min(X, \mu)$ is a boundary for $(X, \mu)$ (and, in particular, non-empty). Moreover, the Shilov boundary for $(X, \mu)$ exists and is equal to the (topological) closure of $\Min(X, \mu)$ inside $M(X, \mu)$. In particular, for every seminormed ring $(\mathcal{A}, \lVert\cdot\rVert)$, the Shilov boundary for $(\mathcal{A}, \lVert\cdot\rVert)$ exists and is equal to the closure of $\Min(\mathcal{A}, \lVert\cdot\rVert)$ inside $\mathcal{M}((\mathcal{A}, \lVert\cdot\rVert))$.\end{thm}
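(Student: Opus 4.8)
The plan is to reconstruct Guennebaud's argument (in the form later also used by Escassut--Maïnetti). First I would reduce to the case that $\mu$ is power-multiplicative: replacing $\mu$ by $\vert\cdot\vert_{\spc,(X,\mu)}$ leaves the Guennebaud spectrum unchanged, since every multiplicative $\nu\leq\mu$ is itself power-multiplicative and hence already satisfies $\nu\leq\vert\cdot\vert_{\spc,(X,\mu)}$, and it likewise leaves unchanged the spectral seminorm and the notions of boundary and of constructibility. So I assume $\mu=\vert\cdot\vert_{\spc,(X,\mu)}$, so that by Theorem \ref{Spectral seminorm} one has $\mu(f)=\max_{\nu\in M(X,\mu)}\nu(f)$ for all $f\in X$. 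The basic operation is: for a nonempty closed $F\subseteq M(X,\mu)$ and $f\in X$, the peak set
\begin{equation*}F_{f}\coloneqq\{\,\nu\in F\mid\nu(f)=\max_{\nu'\in F}\nu'(f)\,\}\end{equation*}
is again nonempty (compactness of $F$ and continuity of $\ev_{f}$) and closed. In Guennebaud's terminology the elements of $\Min(X,\mu)$ are exactly the points of $M(X,\mu)$ arising as the intersection of a transfinite tower $M(X,\mu)=F_{0}\supseteq F_{1}\supseteq\cdots$ in which every $F_{\alpha+1}$ is a peak set of $F_{\alpha}$ and $F_{\lambda}=\bigcap_{\alpha<\lambda}F_{\alpha}$ at limit ordinals.

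Next I would show $\Min(X,\mu)$ is a nonempty boundary. Nonemptiness of $M(X,\mu)$ itself is a separate Zorn's lemma argument: a pointwise-minimal submultiplicative function $\nu\leq\mu$ with $\nu(1)=1$ is power-multiplicative (its spectral seminorm is a smaller competitor) and then multiplicative (for fixed $x_{0}$ with $\nu(x_{0})>0$ the function $x\mapsto\lim_{n}\nu(x_{0}^{n}x)/\nu(x_{0})^{n}$ is a smaller competitor, and its equality with $\nu$ forces $\nu(x_{0}x)\geq\nu(x_{0})\nu(x)$), and such minimal $\nu$ exist because pointwise infima along chains remain submultiplicative. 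Now fix $f\in X$ and start a tower with $F_{1}$ the $f$-peak set of $M(X,\mu)$, so that every $\nu\in F_{1}$ satisfies $\nu(f)=\mu(f)=\vert f\vert_{\spc}$; then keep passing to peak sets. A Zorn's lemma argument on towers refining $F_{1}$ --- intersections along chains of nonempty compacta are nonempty compacta and again of tower type --- yields a minimal one, which must be a singleton $\{\nu_{0}\}$ (if it had two distinct points, some $\ev_{g}$ would separate them and its peak set would be strictly smaller); then $\nu_{0}\in\Min(X,\mu)$ and $\nu_{0}(f)=\vert f\vert_{\spc}$.

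It remains to show $\overline{\Min(X,\mu)}$ is the smallest closed boundary. It is a boundary, since it contains the boundary $\Min(X,\mu)$. For minimality, let $\mathcal{S}$ be any closed boundary; I must show $\Min(X,\mu)\subseteq\mathcal{S}$. Fix $\nu\in\Min(X,\mu)$ with a witnessing tower $(F_{\alpha})$, and prove by transfinite induction that each $\mathcal{S}\cap F_{\alpha}$ is a nonempty compact set which is a \emph{boundary within} $F_{\alpha}$, meaning $\max_{\mathcal{S}\cap F_{\alpha}}\ev_{g}=\max_{F_{\alpha}}\ev_{g}$ for all $g\in X$; then taking $\alpha$ with $F_{\alpha}=\{\nu\}$ gives $\nu\in\mathcal{S}$, whence $\overline{\Min(X,\mu)}\subseteq\overline{\mathcal{S}}=\mathcal{S}$. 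The base case is the hypothesis on $\mathcal{S}$. At a successor, set $c=\max_{F_{\alpha}}\ev_{f_{\alpha}}$ (we may assume $c>0$, else $F_{\alpha+1}=F_{\alpha}$) and $F_{\alpha+1}=\{\nu'\in F_{\alpha}\mid\nu'(f_{\alpha})=c\}$. The key device is to feed the inductive hypothesis the elements $f_{\alpha}^{n}g$: since $\nu'(f_{\alpha}^{n}g)=\nu'(f_{\alpha})^{n}\nu'(g)$ with $\nu'(f_{\alpha})\leq c$ on $F_{\alpha}$, a Dini-type interchange (a pointwise-decreasing sequence of continuous functions on a compact space satisfies $\lim_{n}\max=\max\lim$) yields $\lim_{n}c^{-n}\max_{F_{\alpha}}\ev_{f_{\alpha}^{n}g}=\max_{F_{\alpha+1}}\ev_{g}$ with all left-hand terms $\geq\max_{F_{\alpha+1}}\ev_{g}$; choosing $\nu_{n}\in\mathcal{S}\cap F_{\alpha}$ realizing $\max_{F_{\alpha}}\ev_{f_{\alpha}^{n}g}$ then forces $\nu_{n}(g)\geq\max_{F_{\alpha+1}}\ev_{g}$ and (using $g=1$ to make the relevant maximum positive) $\nu_{n}(f_{\alpha})\to c$, so any cluster point of $(\nu_{n})$ lies in $\mathcal{S}\cap F_{\alpha+1}$ and realizes $\max_{F_{\alpha+1}}\ev_{g}$. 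At a limit $\lambda$, $\mathcal{S}\cap F_{\lambda}$ is a decreasing intersection of nonempty compacta, hence nonempty and compact; a cluster-point argument on the nets of maximizers shows $\max_{F_{\lambda}}\ev_{g}=\inf_{\alpha<\lambda}\max_{F_{\alpha}}\ev_{g}$ and that this value is attained on $\mathcal{S}\cap F_{\lambda}$. Finally, the ring case follows since $\Min(\mathcal{A},\lVert\cdot\rVert)\subseteq\mathcal{M}((\mathcal{A},\lVert\cdot\rVert))$ by Proposition \ref{Constructible spectrum and Berkovich spectrum}, while $\mathcal{M}((\mathcal{A},\lVert\cdot\rVert))$ is closed in the Guennebaud spectrum of the underlying multiplicative monoid, so boundaries, closedness, and hence the Shilov boundary, agree in the two spectra.

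I expect the successor step of the transfinite induction to be the main obstacle: one wants to "push a boundary through a peak set", and the naive hope that a peak set is itself a Guennebaud spectrum is false (a non-minimal multiplicative seminorm need not be the largest multiplicative seminorm it dominates, as a simple one-generator example shows). Hence the $f_{\alpha}^{n}g$-device and the Dini-type interchange of $\lim$ and $\max$ are genuinely needed, as is the care in checking that a cluster point of the $\nu_{n}$ both lands in the peak set (this uses $\nu_{n}(f_{\alpha})\to c$, which is why one also runs the argument with $g=1$) and realizes the correct maximum. The remaining pieces --- the reduction to power-multiplicative $\mu$, nonemptiness of the Guennebaud spectrum, the Zorn's lemma bookkeeping for towers of peak sets, and the limit-ordinal step --- are routine once the successor step is in place.
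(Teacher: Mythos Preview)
The paper does not supply its own proof of this theorem: it is stated purely as a citation of results from Guennebaud's thesis and Escassut--Ma\"inetti, with no argument given. There is therefore nothing in the paper to compare your proposal against.

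That said, your reconstruction is a faithful rendering of Guennebaud's original approach (towers of peak sets, transfinite induction, the $f_{\alpha}^{n}g$ trick at the successor step), and the outline is correct. One small point of care: in the successor step, the sequence $(\nu_{n})$ you build depends on the element $g$, so the parenthetical ``using $g=1$ to make the relevant maximum positive'' should be read as a separate preliminary run of the argument (to establish $\mathcal{S}\cap F_{\alpha+1}\neq\varnothing$), not as a property of the same sequence; once nonemptiness is secured, the case $\max_{F_{\alpha+1}}\ev_{g}=0$ is trivial, and for $\max_{F_{\alpha+1}}\ev_{g}>0$ your cluster-point argument goes through since $(\nu_{n_{k}}(f_{\alpha})/c)^{n_{k}}\nu_{n_{k}}(g)\to\max_{F_{\alpha+1}}\ev_{g}>0$ forces $\nu_{\infty}(f_{\alpha})=c$.
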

We also note the following related result from Guennebaud's thesis. 
\begin{thm}[Guennebaud \cite{Guennebaud}, Ch.~I, Théorème 2 on p.~8]Let $(X, \mu)\hookrightarrow (X', \mu')$ be an isometry of seminormed monoids. Then every element of $\Min(X, \mu)$ is the restriction to $X$ of some element of $\Min(X', \mu')$.\end{thm}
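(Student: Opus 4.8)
The plan is to lift Guennebaud's construction of the constructible spectrum from $(X,\mu)$ to $(X',\mu')$: given $\nu\in\Min(X,\mu)$, I will build a constructible seminorm $\nu'$ on $X'$ with $\nu'\circ\iota=\nu$, so that $\Min(X,\mu)$ lands in the image of the restriction map $\rho\colon M(X',\mu')\to M(X,\mu)$, $\rho(\nu')=\nu'\circ\iota$. First I would check that $\rho$ is well defined and continuous: if $\nu'\le\mu'$ then $\nu'\circ\iota\le\mu'\circ\iota=\mu$ because $\iota$ is an isometry, and continuity is just pointwise convergence, so by Theorem~\ref{The Guennebaud spectrum is compact} $\rho$ is in fact a closed map. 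I would also record the two elementary consequences of $\iota$ being a monoid homomorphism \emph{and} an isometry that drive everything: for every $f\in X$ and every $n$ one has $\mu'(\iota(f)^{n})=\mu'(\iota(f^{n}))=\mu(f^{n})$, hence $\vert\iota(f)\vert_{\spc,(X',\mu')}=\vert f\vert_{\spc,(X,\mu)}$ and more generally $\rho$ carries the spectral seminorm of $(X',\mu')$ to that of $(X,\mu)$; and $\rho$ commutes with pointwise infima of totally ordered families of seminorms.

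Next I would recall the recipe producing the elements of $\Min(X,\mu)$: one first passes from $\mu$ to its spectral seminorm and then applies a (transfinite) decreasing sequence of elementary operations — the basic one being the localization $\lambda\mapsto\lambda_{g}$, $\lambda_{g}(h)=\lim_{n}\lambda(hg^{n})/\lambda(g)^{n}$, of a power-multiplicative $\lambda$ in a direction $g$ with $\lambda(g)>0$ (plus residue-type operations, and passage to pointwise infima at limit stages) — until a multiplicative seminorm is reached. I would then run the identical procedure on $(X',\mu')$, replacing each instruction ``$g_{\alpha}\in X$'' by ``$\iota(g_{\alpha})\in X'$''; note that the localizations stay defined since $\lambda'_{\alpha}(\iota(g_{\alpha}))=(\lambda'_{\alpha}\circ\iota)(g_{\alpha})=\nu_{\alpha}(g_{\alpha})>0$. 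The crucial point is that these operations commute with $\rho$; for the localization this is the one-line computation
\begin{equation*}(\lambda')_{\iota(g)}(\iota(h))=\lim_{n}\frac{\lambda'(\iota(h)\iota(g)^{n})}{\lambda'(\iota(g))^{n}}=\lim_{n}\frac{\lambda'(\iota(hg^{n}))}{(\lambda'\circ\iota)(g)^{n}}=\lim_{n}\frac{(\lambda'\circ\iota)(hg^{n})}{(\lambda'\circ\iota)(g)^{n}}=(\lambda'\circ\iota)_{g}(h),\end{equation*}
valid whenever $\lambda'\circ\iota=\lambda$, and the residue-type steps are compatible for the same structural reason (an isometric monoid homomorphism induces the appropriate maps on residue monoids, respecting the seminorms). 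By transfinite induction, using the base case $\vert\cdot\vert_{\spc,(X',\mu')}\circ\iota=\vert\cdot\vert_{\spc,(X,\mu)}$ and compatibility with infima at limit stages, the procedure on $X'$ then yields a power-multiplicative seminorm $\lambda'\le\mu'$ with $\lambda'\circ\iota=\nu$.

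The seminorm $\lambda'$ is multiplicative ``in all directions coming from $X$'' but need not itself lie in $\Min(X',\mu')$, so the final step is to push it down without disturbing its restriction. Using Theorem~\ref{Spectral seminorm} applied to $(X',\lambda')$ together with Theorem~\ref{Guennebaud's theorem}, the set $\{\nu'\in M(X',\mu')\mid\nu'\le\lambda'\}$ is non-empty, closed (an intersection of closed conditions), and stable under pointwise infima of chains, so by Zorn's lemma it has a minimal element $\nu'$; such a $\nu'$ is then minimal in all of $M(X',\mu')$ and hence belongs to $\Min(X',\mu')$, the constructible spectrum being exactly the set of minimal multiplicative seminorms (cf.~\cite{Guennebaud}, Ch.~I). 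Now $\rho(\nu')=\nu'\circ\iota\le\lambda'\circ\iota=\nu$, and since $\nu$ is itself minimal in $M(X,\mu)$ (being constructible) while $\rho(\nu')\in M(X,\mu)$, we must have $\rho(\nu')=\nu$. This exhibits $\nu$ as the restriction of an element of $\Min(X',\mu')$, as required.

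I expect the real work to be concentrated in the successor step of the transfinite induction: making the elementary operations generating the constructible spectrum precise and checking that each commutes with restriction along $\iota$. This is exactly where one must use that $\iota$ is an isometry rather than merely a bounded embedding, because the quantities $\lambda(g^{n})$ feeding the localization — and the residue monoids used in the other operations — are destroyed by distortion of the seminorm. A secondary, more bookkeeping-level point is the identification of $\Min$ with the pointwise-minimal multiplicative seminorms, which is what lets us both complete the construction on $X'$ and rigidify the restriction of the final seminorm; this is the only place the argument leans on facts about the constructible spectrum beyond its defining construction.
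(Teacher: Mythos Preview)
The paper does not prove this theorem: it is stated with attribution to Guennebaud's thesis (Ch.~I, Th\'eor\`eme~2) and no proof is given. So there is no ``paper's own proof'' to compare against; the result is imported as a black box.

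Your sketch is a reasonable reconstruction of how such a statement is proved, and the strategy of lifting the constructive recipe along $\iota$ is natural. Two points deserve tightening. First, your description of the elementary steps (``plus residue-type operations'') is vague; since the whole argument is a transfinite induction on these steps, you would need to enumerate them precisely from Guennebaud's definition and verify compatibility with $\rho$ for each one, not just for the localization $\lambda\mapsto\lambda_g$. Second, and more seriously, your final step leans on the identification of $\Min(X',\mu')$ with the set of pointwise-minimal elements of $M(X',\mu')$. In the paper $\Min$ is \emph{defined} as the set of seminorms constructible from $\mu'$, and the equality ``constructible $=$ minimal'' is itself a theorem of Guennebaud that you are invoking without proof. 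If you are willing to import that equality, then much of your transfinite lifting becomes redundant: once you have \emph{any} power-multiplicative $\lambda'\le\mu'$ with $\lambda'\circ\iota=\nu$, the Zorn argument alone finishes. Conversely, if you want a self-contained argument, you should carry the lifting all the way to a genuinely constructible $\nu'$ (not merely a minimal one) and drop the appeal to ``constructible $=$ minimal''. As written, the proof mixes both strategies and ends up depending on external facts at the same depth as the theorem you are proving.
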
  
In the rest of this section, we aim to give, under some mild conditions, an explicit description of any power-multiplicative ring seminorm which will be useful in our study of integral extensions in Section 8.

For a seminormed ring $(\mathcal{A}, \lVert\cdot\rVert)$ we call an element $\varpi\in \mathcal{A}$ seminorm-multiplicative (or multiplicative with respect to $\lVert\cdot\rVert$) if $\lVert\varpi f\rVert=\lVert \varpi\rVert\lVert f\rVert$ for all $f\in \mathcal{A}$. We denote by $(\mathcal{A}, \lVert\cdot\rVert)^{m,\times}\subseteq \mathcal{A}^{\times}$ the subgroup of seminorm-multiplicative units in a seminormed ring $(\mathcal{A}, \lVert\cdot\rVert)$ and we also write $\mathcal{A}^{m,\times}$ instead of $(\mathcal{A}, \lVert\cdot\rVert)^{m,\times}$ when the seminorm $\lVert\cdot\rVert$ is understood from the context. The presence of a seminorm-multiplicative topologically nilpotent unit $\varpi$ in a seminormed ring $(\mathcal{A}, \lVert\cdot\rVert)$ ensures that, up to bounded-equivalence, the seminorm on $\mathcal{A}$ is determined by the topology it defines and its value on $\varpi$.
\begin{lemma}\label{Boundedness vs. continuity}Let $(\mathcal{A}, \lVert\cdot\rVert)$ be a seminormed ring and let $\phi$ be a seminorm on $\mathcal{A}$ which is continuous with respect to the topology on $\mathcal{A}$ defined by $\lVert\cdot\rVert$. Suppose that there exists a topologically nilpotent unit $\varpi\in \mathcal{A}$ which is multiplicative with respect to both $\lVert\cdot\rVert$ and $\phi$. If $\phi(\varpi)=\lVert\varpi\rVert$, then the seminorm $\phi$ is bounded with respect to $\lVert\cdot\rVert$. In particular, every continuous multiplicative seminorm $\phi$ on $\mathcal{A}$ which satisfies $\phi(\varpi)=\lVert\varpi\rVert$ is bounded.\end{lemma}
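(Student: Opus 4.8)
The plan is to use the topologically nilpotent unit $\varpi$ to rescale an arbitrary element of $\mathcal{A}$ into the region where continuity of $\phi$ already supplies a bound, and to observe that because $\varpi$ is multiplicative for \emph{both} seminorms this single bound propagates, with a constant independent of the element, to all of $\mathcal{A}$.

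First I would record the book-keeping facts. Since $\varpi$ is a unit and is multiplicative for $\lVert\cdot\rVert$, iterating the identity $\lVert\varpi g\rVert=\lVert\varpi\rVert\lVert g\rVert$ (and applying it to $\varpi^{-n}\cdot(\varpi^{n}f)$ when $n<0$) gives $\lVert\varpi^{n}f\rVert=\lVert\varpi\rVert^{n}\lVert f\rVert$ for all $f\in\mathcal{A}$ and $n\in\mathbb{Z}$, and likewise $\phi(\varpi^{n}f)=\phi(\varpi)^{n}\phi(f)$ for all $n\in\mathbb{Z}$. We may assume $\lVert\cdot\rVert$ is not the zero seminorm, since otherwise the topology is indiscrete, continuity forces $\phi\equiv0$, and there is nothing to prove; then $\lVert1\rVert\neq0$, hence $\lVert\varpi\rVert\neq0$ (because $\lVert\varpi\rVert\lVert\varpi^{-1}\rVert=\lVert1\rVert$), and $\lVert\varpi\rVert<1$ because $\lVert\varpi\rVert^{n}=\lVert\varpi^{n}\rVert\to0$ by topological nilpotence. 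Set $t\coloneqq\lVert\varpi\rVert=\phi(\varpi)\in(0,1)$.

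Next I would invoke continuity of $\phi$ at $0$: the sets $\{g\in\mathcal{A}:\lVert g\rVert<\varepsilon\}$ form a neighbourhood basis of $0$, so there is a $\delta>0$ with $\lVert g\rVert<\delta\Rightarrow\phi(g)\leq1$. Now fix $f\in\mathcal{A}$. If $\lVert f\rVert=0$, then $\lVert\varpi^{n}f\rVert=0<\delta$ for every $n$, so $t^{n}\phi(f)=\phi(\varpi^{n}f)\leq1$ for all $n\in\mathbb{Z}$, and letting $n\to-\infty$ (where $t^{n}\to\infty$) forces $\phi(f)=0$. If $\lVert f\rVert>0$, pick an integer $n$ with $t^{n}\lVert f\rVert\in[\delta t,\delta)$; this is possible because $t\in(0,1)$, so the powers $t^{n}$ sweep through arbitrarily large and arbitrarily small positive values. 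Then $\lVert\varpi^{n}f\rVert<\delta$, hence $\phi(\varpi^{n}f)\leq1$, i.e.\ $t^{n}\phi(f)\leq1$, so $\phi(f)\leq t^{-n}\leq\lVert f\rVert/(\delta t)$. Thus $\phi\leq C\lVert\cdot\rVert$ with $C\coloneqq(\delta t)^{-1}$, which is exactly boundedness of $\phi$ with respect to $\lVert\cdot\rVert$. The ``in particular'' assertion follows immediately: a multiplicative seminorm $\phi$ automatically satisfies $\phi(\varpi g)=\phi(\varpi)\phi(g)$, so $\varpi$ is multiplicative for $\phi$, and the hypotheses of the first part are met as soon as $\phi(\varpi)=\lVert\varpi\rVert$.

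The argument is short and essentially routine; the one point that needs a moment's care is the uniformity of the constant $C$ in $f$, and that is precisely what the choice of $n$ placing $\lVert\varpi^{n}f\rVert$ in the fixed range $[\delta t,\delta)$ accomplishes — continuity at the single scale $\delta$ is bootstrapped to every scale exactly because $\varpi$ acts multiplicatively on both $\lVert\cdot\rVert$ and $\phi$.
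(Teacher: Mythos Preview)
Your argument is correct and complete. The paper itself does not give a proof here but only cites \cite{Dine22}, Lemma 2.11; your self-contained rescaling argument (use continuity at $0$ to get a single bound on a shell $[\delta t,\delta)$ and propagate it via the multiplicativity of $\varpi$ for both seminorms) is the standard one and almost certainly what that reference contains. One tiny remark: under the paper's conventions a ring seminorm satisfies $\lVert 1\rVert=1$, so the ``zero seminorm'' case you dismiss cannot actually occur and that sentence can simply be dropped.
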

\begin{proof}Follows from the proof of \cite{Dine22}, Lemma 2.11.\end{proof}
We need the following special case of the above lemma.
\begin{lemma}\label{Boundedness vs. continuity 2}Let $\mathcal{A}$ be a Tate ring with topologically nilpotent unit and let $\lVert\cdot\rVert$, $\lVert\cdot\rVert'$ be seminorms defining the topology on $\mathcal{A}$ such that $\lVert\varpi\rVert=\lVert\varpi\rVert'=\frac{1}{2}$ and such that $\varpi$ is multiplicative with respect to both $\lVert\cdot\rVert$ and $\lVert\cdot\rVert'$. Then $\lVert\cdot\rVert$, $\lVert\cdot\rVert'$ are bounded-equivalent and their respective spectral seminorms are equal.\end{lemma}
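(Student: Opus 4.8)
The plan is to reduce Lemma~\ref{Boundedness vs. continuity 2} to Lemma~\ref{Boundedness vs. continuity} by applying the latter twice, once in each direction, and then to deduce the equality of spectral seminorms from bounded-equivalence together with the hypothesis that both norms agree on $\varpi$ and make $\varpi$ multiplicative. First I would observe that both $\lVert\cdot\rVert$ and $\lVert\cdot\rVert'$ define the \emph{same} topology on $\mathcal{A}$ (the given Tate topology), so each is continuous with respect to the topology defined by the other. Applying Lemma~\ref{Boundedness vs. continuity} with the roles $(\lVert\cdot\rVert, \phi) = (\lVert\cdot\rVert', \lVert\cdot\rVert)$ — noting that $\varpi$ is a topologically nilpotent unit multiplicative for both seminorms and that $\lVert\varpi\rVert = \tfrac12 = \lVert\varpi\rVert'$ — yields a constant $C > 0$ with $\lVert f\rVert \le C\lVert f\rVert'$ for all $f$. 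Swapping the two seminorms and applying the lemma again gives a constant $C'>0$ with $\lVert f\rVert' \le C'\lVert f\rVert$ for all $f$. Hence $\lVert\cdot\rVert$ and $\lVert\cdot\rVert'$ are bounded-equivalent.

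For the statement about spectral seminorms, I would pass to the limit. Since $\lVert\cdot\rVert$ and $\lVert\cdot\rVert'$ are bounded-equivalent, say $C^{-1}\lVert f\rVert' \le \lVert f\rVert \le C\lVert f\rVert'$, then for each $f \in \mathcal{A}$ and each $n \ge 1$ we have $C^{-1}\lVert f^{n}\rVert' \le \lVert f^{n}\rVert \le C\lVert f^{n}\rVert'$; taking $n$-th roots and letting $n \to \infty$, the factors $C^{\pm 1/n} \to 1$, so the two spectral seminorms $\vert f\vert_{\spc}$ and $\vert f\vert'_{\spc}$ (defined as $\lim_n \lVert f^{n}\rVert^{1/n}$ and $\lim_n \lVert f^{n}\rVert'^{1/n}$ respectively, which exist as noted in the discussion preceding Theorem~\ref{Spectral seminorm}) coincide. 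I should note here that bounded-equivalence alone already forces the spectral seminorms to agree, so in fact the hypotheses on $\varpi$ are only used to establish bounded-equivalence in the first place.

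The main obstacle is the application of Lemma~\ref{Boundedness vs. continuity}: one must check that its hypotheses are genuinely met in this symmetric situation, in particular that $\lVert\cdot\rVert$ (playing the role of the second seminorm $\phi$) is \emph{continuous} for the topology defined by $\lVert\cdot\rVert'$. This is immediate because both seminorms define \emph{the same} topology on the Tate ring $\mathcal{A}$ by hypothesis, so the identity map is a homeomorphism and continuity is automatic in both directions; but it is worth stating explicitly, since Lemma~\ref{Boundedness vs. continuity} is asymmetric in its two seminorm arguments (one gives the topology, the other need only be continuous and agree on $\varpi$), and the symmetry of the present conclusion comes precisely from invoking that asymmetric lemma twice. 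A minor point to record is that $\varpi$ being a topologically nilpotent unit is a property of the topology, hence shared by both seminorms, so no additional verification is needed there. Everything else is the routine limit argument above.
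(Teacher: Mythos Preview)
Your proposal is correct and follows essentially the same approach as the paper: the paper also deduces bounded-equivalence as a special case of Lemma~\ref{Boundedness vs. continuity} (applied symmetrically, since both seminorms define the same topology), and then obtains equality of spectral seminorms from bounded-equivalence. The only cosmetic difference is that the paper cites Lemma~\ref{Bounded implies submetric} for the second step, whereas you spell out the underlying $n$-th-root limit argument directly---but that argument is precisely the proof of Lemma~\ref{Bounded implies submetric}.
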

\begin{proof}The statement that the seminorms are bounded-equivalent is a special case of Lemma \ref{Boundedness vs. continuity}. The equality of spectral seminorms follows from this and Lemma \ref{Bounded implies submetric}.\end{proof}
\begin{mydef}[Spectral seminorm on a Tate ring]For a Tate ring $\mathcal{A}$ and a topologically nilpotent unit $\varpi\in\mathcal{A}$, we call the spectral seminorm $\vert\cdot\vert_{\spc, \varpi}$ derived from any seminorm on $\mathcal{A}$ satisfying the assumptions of Lemma \ref{Boundedness vs. continuity 2} the spectral seminorm on $\mathcal{A}$ (relative to $\varpi$).\end{mydef} 
Recall that a Tate ring (or, more generally, a Huber ring) $\mathcal{A}$ is called uniform if its subring of power-bounded elements $\mathcal{A}^{\circ}$ is bounded.
\begin{lemma}\label{Uniform Tate rings}Let $\mathcal{A}$ be a uniform Tate ring and let $\varpi\in \mathcal{A}$ be a topologically nilpotent unit. Then the spectral seminorm $\vert\cdot\vert_{\spc,\varpi}$ defines the topology on $\mathcal{A}$ and is the unique power-multiplicative seminorm defining the topology on $\mathcal{A}$ for which $\varpi$ is seminorm-multiplicative and $\vert\varpi\vert_{\spc, \varpi}=\frac{1}{2}$. In particular, for any ring of definition $\mathcal{A}_{0}$ of $\mathcal{A}$ with $\varpi\in \mathcal{A}_{0}$ the spectral seminorm derived from the canonical extension $\lVert\cdot\rVert_{\mathcal{A}_{0}, \varpi}$ of the $\varpi$-adic seminorm on $\mathcal{A}$ is equal to $\vert\cdot\vert_{\spc, \varpi}$.\end{lemma}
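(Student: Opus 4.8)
The plan is to fix once and for all the seminorm $\lVert\cdot\rVert \coloneqq \lVert\cdot\rVert_{\mathcal{A}^{\circ},\varpi}$, the canonical extension of the $\varpi$-adic seminorm on the ring of power-bounded elements. This is legitimate precisely because $\mathcal{A}$ is uniform: then $\mathcal{A}^{\circ}$ is open (it contains any ring of definition) and bounded, hence is itself a ring of definition, and it contains $\varpi$ since topologically nilpotent elements are power-bounded. First I would record the routine bookkeeping: $\lVert\cdot\rVert$ defines the topology of $\mathcal{A}$; one has $\lVert\varpi\rVert = \tfrac12$, which amounts to $\varpi$ not being a unit of $\mathcal{A}^{\circ}$ (otherwise $\varpi^{-1}$ would be power-bounded, contradicting topological nilpotence of $\varpi$); and $\varpi$ is multiplicative for $\lVert\cdot\rVert$, which uses that $\varpi$, being a unit of $\mathcal{A}$, is a non-zero-divisor so that $\varpi f\in\varpi^{n}\mathcal{A}^{\circ}\Leftrightarrow f\in\varpi^{n-1}\mathcal{A}^{\circ}$. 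Thus $\lVert\cdot\rVert$ satisfies the hypotheses of Lemma \ref{Boundedness vs. continuity 2}, so by the Definition preceding the statement $\vert\cdot\vert_{\spc,\varpi}$ is precisely the spectral seminorm derived from $\lVert\cdot\rVert$. The three ``easy'' assertions now follow immediately: $\vert\cdot\vert_{\spc,\varpi}$ is power-multiplicative because every spectral seminorm is; $\vert\varpi\vert_{\spc,\varpi} = \lim_{n}\lVert\varpi^{n}\rVert^{1/n} = \tfrac12$ since $\varpi$ is $\lVert\cdot\rVert$-multiplicative; and, similarly, $\varpi$ is seminorm-multiplicative for $\vert\cdot\vert_{\spc,\varpi}$, since $\lVert(\varpi f)^{n}\rVert = \lVert\varpi\rVert^{n}\lVert f^{n}\rVert$ for all $n$, and one takes $n$-th roots and passes to the limit.

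The substance of the proof is the assertion that $\vert\cdot\vert_{\spc,\varpi}$ defines the topology of $\mathcal{A}$, and this is where uniformity is genuinely used. One direction is free: $\vert\cdot\vert_{\spc,\varpi}\leq\lVert\cdot\rVert$, so the $\vert\cdot\vert_{\spc,\varpi}$-topology is coarser than the given one. For the converse I would first observe that the open unit ball of $\vert\cdot\vert_{\spc,\varpi}$ lies inside $\mathcal{A}^{\circ}$: if $\vert f\vert_{\spc,\varpi} < 1$, then $\lVert f^{n}\rVert \leq r^{n}$ for some $r<1$ and all large $n$, so $\sup_{n}\lVert f^{n}\rVert < \infty$; since a subset of $\mathcal{A}$ is bounded if and only if it is $\lVert\cdot\rVert$-bounded (because $\varpi$ is a multiplicative topologically nilpotent unit), the sequence $(f^{n})_{n}$ is bounded, i.e.\ $f\in\mathcal{A}^{\circ} = \{\,g\mid\lVert g\rVert\leq1\,\}$. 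Granting this, if $\vert f\vert_{\spc,\varpi}\leq 2^{-k}$ then $\varpi^{-(k-1)}f$ has spectral seminorm $2^{k-1}\vert f\vert_{\spc,\varpi}\leq\tfrac12<1$, hence $\varpi^{-(k-1)}f\in\mathcal{A}^{\circ}$, so $f\in\varpi^{k-1}\mathcal{A}^{\circ}$ and $\lVert f\rVert\leq 2^{-(k-1)}$. Therefore $\{\,f\mid\vert f\vert_{\spc,\varpi}\leq 2^{-k}\,\}\subseteq\{\,f\mid\lVert f\rVert\leq 2^{-(k-1)}\,\}$ for all $k$, which shows the given topology is coarser than the $\vert\cdot\vert_{\spc,\varpi}$-topology; the two thus agree. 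I expect this step to be the only genuine obstacle — the ``trick'' of replacing $f$ by $\varpi^{-(k-1)}f$ in order to land strictly inside the unit ball, where one actually has control, is the one place uniformity cannot be circumvented.

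Uniqueness is then formal. If $\mu$ is any power-multiplicative seminorm defining the topology of $\mathcal{A}$ for which $\varpi$ is seminorm-multiplicative and $\mu(\varpi) = \tfrac12$, then $\mu$ and $\lVert\cdot\rVert$ both satisfy the hypotheses of Lemma \ref{Boundedness vs. continuity 2}, so they have the same spectral seminorm; but a power-multiplicative seminorm equals its own spectral seminorm, so $\mu$ equals the spectral seminorm derived from $\lVert\cdot\rVert$, namely $\vert\cdot\vert_{\spc,\varpi}$. Finally, for the last assertion: given any ring of definition $\mathcal{A}_{0}$ with $\varpi\in\mathcal{A}_{0}$, the same bookkeeping as in the first paragraph shows that $\lVert\cdot\rVert_{\mathcal{A}_{0},\varpi}$ defines the topology of $\mathcal{A}$, takes the value $\tfrac12$ at $\varpi$, and makes $\varpi$ multiplicative, so a further application of Lemma \ref{Boundedness vs. continuity 2} identifies the spectral seminorm derived from it with $\vert\cdot\vert_{\spc,\varpi}$.
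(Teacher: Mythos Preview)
Your proof is correct and follows essentially the same approach as the paper's: both fix $\lVert\cdot\rVert_{\mathcal{A}^{\circ},\varpi}$, verify the bookkeeping on $\varpi$, reduce the topology question to showing that the open unit ball of $\vert\cdot\vert_{\spc,\varpi}$ lies in $\mathcal{A}^{\circ}$ (via seminorm-multiplicativity of $\varpi$), and then deduce uniqueness from Lemma~\ref{Boundedness vs. continuity 2}. The only cosmetic difference is that the paper phrases the key step as ``$\vert f\vert_{\spc,\varpi}<1\Rightarrow f$ topologically nilpotent $\Rightarrow f$ power-bounded'' whereas you go directly to ``$\sup_{n}\lVert f^{n}\rVert<\infty$''; these are the same observation.
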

\begin{proof}Suppose that there is a power-multiplicative seminorm $\vert\cdot\vert_{\spc, \varpi}$ defining the topology on $\mathcal{A}$ for which $\varpi$ is seminorm-multiplicative and $\vert\varpi\vert_{\spc, \varpi}=\frac{1}{2}$. Then uniqueness of such a seminorm immediately follows from Lemma \ref{Boundedness vs. continuity 2}. Thus it remains to prove existence. Since $\mathcal{A}$ is uniform, $\mathcal{A}^{\circ}$ is a ring of definition of $\mathcal{A}$ and thus the canonical extension $\lVert\cdot\rVert_{\mathcal{A}^{\circ}, \varpi}$ of the $\varpi$-adic seminorm on $\mathcal{A}^{\circ}$ defines the topology on the Tate ring $\mathcal{A}$. Let $\vert\cdot\vert_{\spc, \varpi}$ be the spectral seminorm derived from $\lVert\cdot\rVert_{\mathcal{A}^{\circ}, \varpi}$. Since $\varpi$ is multiplicative with respect to $\lVert\cdot\rVert_{\mathcal{A}^{\circ}, \varpi}$ and $\lVert\varpi\rVert_{\mathcal{A}^{\circ}, \varpi}=\frac{1}{2}$, we must have $\vert\varpi\vert_{\spc, \varpi}=\frac{1}{2}$. Since $(\mathcal{A}^{\circ}, \varpi)$ is a pair of definition of $\mathcal{A}$, to prove that $\vert\cdot\vert_{\spc, \varpi}$ defines the topology on $\mathcal{A}$ it suffices to prove that for every $n\geq1$ the ideal $\varpi^{n}\mathcal{A}^{\circ}$ of $\mathcal{A}^{\circ}$ contains some open $\vert\cdot\vert_{\spc, \varpi}$-ball. It is readily seen from the definition of the spectral seminorm that $\varpi$ being multiplicative with respect to $\lVert\cdot\rVert_{\mathcal{A}^{\circ}, \varpi}$ implies that $\varpi$ is also multiplicative with respect to $\vert\cdot\vert_{\spc, \varpi}$. Hence it suffices to prove that $\mathcal{A}^{\circ}$ itself contains some open $\vert\cdot\vert_{\spc, \varpi}$-ball. However, if $\vert f\vert_{\spc, \varpi}<1$, then $\lVert f^{n}\rVert_{\mathcal{A}^{\circ}, \varpi}<1$ for all large enough integers $n$ and thus $f$ is topologically nilpotent and, a fortiori, power-bounded.\end{proof}
We can also use Lemma \ref{Boundedness vs. continuity} to define the Berkovich spectrum and Shilov boundary of a Tate ring $\mathcal{A}$ with a pseudo-uniformizer $\varpi$ independently of a choice of seminorm.
\begin{mydef}[Berkovich spectrum of a Tate ring]\label{Berkovich spectrum of a Tate ring}For a Tate ring $\mathcal{A}$ with topologically nilpotent unit $\varpi$, the Berkovich spectrum \begin{equation*}\mathcal{M}(\mathcal{A})=\mathcal{M}_{\varpi}(\mathcal{A})\end{equation*}of $\mathcal{A}$ relative to $\varpi$ is the Berkovich spectrum of the seminormed ring $(\mathcal{A}, \lVert\cdot\rVert)$, where $\lVert\cdot\rVert$ is any seminorm defining the topology on $\mathcal{A}$ such that $\lVert\varpi\rVert=\frac{1}{2}$ and $\varpi$ is multiplicative with respect to $\varpi$. Equivalently, \begin{equation*}\mathcal{M}_{\varpi}(\mathcal{A})=\mathcal{M}((\mathcal{A}, \vert\cdot\vert_{\spc,\varpi})).\end{equation*}We just speak of the Berkovich spectrum $\mathcal{M}(\mathcal{A})$ of the Tate ring $\mathcal{A}$ when a choice of topologically nilpotent unit $\varpi\in\mathcal{A}$ is understood.\end{mydef}
\begin{mydef}[Shilov boundary for a Tate ring]\label{Shilov boundary for a Tate ring}For a Tate ring $\mathcal{A}$ with topologically nilpotent unit $\varpi$, a subset $\mathcal{S}$ of $\mathcal{M}_{\varpi}(\mathcal{A})$ is called a boundary (respectively, the Shilov boundary) for $\mathcal{A}$ relative to $\varpi$ is a boundary (respectively, the Shilov boundary) for the seminormed ring $(\mathcal{A}, \lVert\cdot\rVert)$, where $\lVert\cdot\rVert$ is any seminorm defining the topology on $\mathcal{A}$ such that $\lVert\varpi\rVert=\frac{1}{2}$ and such that $\varpi$ is multiplicative with respect to $\lVert\cdot\rVert$.

Equivalently, $\mathcal{S}$ is a boundary (respectively, the Shilov boundary) relative to $\varpi$ if it is a boundary (respectively, the Shilov boundary) for the seminormed ring $(\mathcal{A}, \vert\cdot\vert_{\spc,\varpi})$. We simply speak of the Shilov boundary for the Tate ring $\mathcal{A}$ when the choice of a topologically nilpotent unit $\varpi$ is understood from the context.\end{mydef}
Motivated by \cite{Berkovich}, Proposition 2.4.4, we introduce the following definition which plays a central role in this paper.
\begin{mydef}[Berkovich's description of the Shilov boundary]\label{Berkovich's description of the Shilov boundary}We say that a Tate ring $\mathcal{A}$ satisfies Berkovich's description of the Shilov boundary if for every irreducible component of $\Spf(\mathcal{A}^{\circ})$ with generic point $\mathfrak{p}$ the pre-image $\spc^{-1}(\{\mathfrak{p}\})$ consists of exactly one point, which is of rank one, and the closure in $\mathcal{M}(\mathcal{A})$ of the subset \begin{equation*}\spc^{-1}(\Spf(\mathcal{A}^{\circ})_{\gen})\subseteq\mathcal{M}(\mathcal{A})\end{equation*}is the Shilov boundary for $\mathcal{A}$. We say that $\mathcal{A}$ satisfies Berkovich's description of the Shilov boundary for $\mathcal{A}$ in the strong sense if, moreover, $\spc^{-1}(\Spf(\mathcal{A}^{\circ})_{\gen})$ is the Shilov boundary.\end{mydef}
In the presence of a seminorm-multiplicative topologically nilpotent unit, we can actually describe any power-multiplicative seminorm in explicit terms. We give this description in the more general setting of monoids. The following definition is inspired by the theory of Tate rings.
\begin{mydef}[Weak pair of definition of a monoid]\label{Weak pair of definition of a monoid}A weak pair of definition of a monoid $X$ is a pair $(X_{0}, \varpi)$ consisting of a subsemigroup $X_{0}$ of $X$ and an element $\varpi\in X_{0}$ which is a unit in $X$ but whose inverse $\varpi^{-1}$ does not belong to $X_{0}$.\end{mydef}
\begin{mydef}[Pair of definition of a monoid]\label{Pair of definition of a monoid}A weak pair of definition $(X_{0}, \varpi)$ of a monoid $X$ is called a pair of definition if \begin{equation*}X=X_{0}[\varpi^{-1}]=\bigcup_{n\in\mathbb{Z}}\varpi^{n}X_{0}.\end{equation*}\end{mydef}
\begin{mydef}[Generalized gauge]\label{Generalized gauge}Let $(X_{0}, \varpi)$ be a weak pair of definition of a monoid $X$. We define a function $p_{X_{0},\varpi}: X\to\mathbb{R}_{\geq0}$ by \begin{equation*}p_{X_{0},\varpi}(f)=\inf\{\, 2^{-n/m}\mid n\in\mathbb{Z}, m\in\mathbb{Z}_{>0}, f^{m}\in\varpi^{n}X_{0}\,\}\end{equation*}and call it the generalized gauge of $(X_{0}, \varpi)$.\end{mydef}
\begin{rmk}Since for a weak pair of definition $(X_{0}, \varpi)$ the element $\varpi^{-1}$ does not belong to $X_{0}$, we always have $p_{X_{0},\varpi}(1)=1$.\end{rmk}
\begin{prop}\label{Properties of the generalized gauge}For every weak pair of definition $(X_{0}, \varpi)$ of a monoid $X$ the generalized gauge $p_{X_{0},\varpi}$ is a monoid seminorm (i.e., the function $p_{X_{0},\varpi}$ is submultiplicative).\end{prop}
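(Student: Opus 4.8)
The plan is to verify the two conditions defining a monoid seminorm: the normalization $p_{X_0,\varpi}(1)=1$, which is precisely the content of the Remark immediately preceding the statement, and submultiplicativity $p_{X_0,\varpi}(fg)\le p_{X_0,\varpi}(f)\,p_{X_0,\varpi}(g)$ for all $f,g\in X$, which is where the work lies. Throughout I read $\inf\emptyset=+\infty$, so that $p_{X_0,\varpi}$ is a priori a $[0,+\infty]$-valued function; submultiplicativity is then trivial whenever $p_{X_0,\varpi}(f)$ or $p_{X_0,\varpi}(g)$ equals $+\infty$, so from now on I assume both are finite, and the subcase where one of them is $0$ will fall out of the argument below.

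The heart of the matter is a single algebraic manipulation. Fix $f,g\in X$ and choose admissible data for the two infima: integers $n,n'\in\mathbb{Z}$ and $m,m'\in\mathbb{Z}_{>0}$ with $f^{m}=\varpi^{n}x$ and $g^{m'}=\varpi^{n'}y$ for some $x,y\in X_0$. Using that $X_0$, being a subsemigroup, is closed under finite products --- in particular under raising to positive powers --- one gets $(f^{m})^{m'}=\varpi^{nm'}x^{m'}\in\varpi^{nm'}X_0$ and $(g^{m'})^{m}=\varpi^{n'm}y^{m}\in\varpi^{n'm}X_0$, and hence
\[
(fg)^{mm'}=(f^{m})^{m'}\,(g^{m'})^{m}=\varpi^{\,nm'+n'm}\,(x^{m'}y^{m})\in\varpi^{\,nm'+n'm}X_0 ,
\]
once more because $x^{m'}y^{m}\in X_0$. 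Thus $(nm'+n'm,\;mm')$ is an admissible pair in the infimum defining $p_{X_0,\varpi}(fg)$, and so, using $\tfrac{nm'+n'm}{mm'}=\tfrac{n}{m}+\tfrac{n'}{m'}$,
\[
p_{X_0,\varpi}(fg)\;\le\;2^{-(nm'+n'm)/(mm')}\;=\;2^{-n/m}\cdot 2^{-n'/m'} .
\]

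It then remains to pass to the infimum on the right-hand side, which is pure bookkeeping with two independent infima. For each fixed admissible pair $(n',m')$ the factor $2^{-n'/m'}>0$ is a constant, so taking the infimum of the last inequality over all admissible $(n,m)$ gives $p_{X_0,\varpi}(fg)\le 2^{-n'/m'}\,p_{X_0,\varpi}(f)$ (this step also covers the subcase $p_{X_0,\varpi}(f)=0$, forcing $p_{X_0,\varpi}(fg)=0$); then, with $p_{X_0,\varpi}(f)$ now a fixed nonnegative constant, taking the infimum over all admissible $(n',m')$ yields $p_{X_0,\varpi}(fg)\le p_{X_0,\varpi}(f)\,p_{X_0,\varpi}(g)$. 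I do not anticipate a genuine obstacle: the whole proof rests on the elementary closure of $X_0$ under finite products, used three times, plus careful handling of the degenerate values; the only mildly delicate point is the interchange of multiplication by the positive constant $2^{-n'/m'}$ with the infimum over $(n,m)$, and the order in which the two infima are taken.
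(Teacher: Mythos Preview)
Your proof is correct and follows essentially the same approach as the paper: both arguments hinge on the observation that $f^{m}\in\varpi^{n}X_{0}$ and $g^{m'}\in\varpi^{n'}X_{0}$ together with $X_{0}$ being a subsemigroup imply $(fg)^{mm'}\in\varpi^{nm'+n'm}X_{0}$, yielding $p_{X_{0},\varpi}(fg)\le 2^{-n/m}\cdot 2^{-n'/m'}$, after which one passes to the infimum. The paper compresses the two sequential infima into a single line using $\inf(A)\cdot\inf(B)=\inf\{ab:a\in A,\,b\in B\}$, whereas you take them one at a time; you are also a bit more explicit about the degenerate cases $0$ and $+\infty$, but the substance is identical.
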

\begin{proof}For any $f, g\in X$ we compute: \begin{align*}p_{X_{0},\varpi}(f)p_{X_{0},\varpi}(g)\\ =\inf\{\, 2^{-n/m}\mid n, m\in\mathbb{Z}, m>0, f^{m}\in\varpi^{n}X_{0}\,\}\cdot\inf\{\, 2^{-n'/m'}\mid n', m'\in\mathbb{Z}, m'>0, g^{m'}\in\varpi^{n'}X_{0}\,\} \\=\inf\{\, 2^{-n/m}2^{-n'/m'}\mid m, n, m', n'\in\mathbb{Z}, m, m'>0, f^{m}\in\varpi^{n}X_{0}, g^{m'}\in\varpi^{n'}X_{0}\,\} \\ =\inf\{\, 2^{-(nm'+n'm)/mm'}\mid n, m, n', m'\in\mathbb{Z}, m, m'>0, f^{m}\in\varpi^{n}X_{0}, g^{m'}\in\varpi^{n'}X_{0}\,\} \\\geq\inf\{\, 2^{-k/l}\mid k, l\in\mathbb{Z}, l>0, (fg)^{l}\in\varpi^{k}X_{0}\,\}=p_{X_{0},\varpi}(fg).\end{align*}where in the last inequality we used that the conditions $f^{m}\in\varpi^{n}X_{0}, g^{m'}\in\varpi^{n'}X_{0}$ imply \begin{equation*}(fg)^{mm'}=(f^{m})^{m'}(g^{m'})^{m}\in (\varpi^{nm'}X_{0})\cdot(\varpi^{n'm}X_{0})=\varpi^{nm'+n'm}X_{0},\end{equation*}for any $f, g\in X$.\end{proof}
We now consider functions on a monoid $X$ which can be described as generalized gauges of pairs of definition of $X$. Note that if $\varpi\in X^{\times}$ is a unit in a monoid $X$ and $\nu: X\to \mathbb{R}_{\geq0}$ be a function such that \begin{equation*}\nu(\varpi^{n}f)\leq2^{-n}\nu(f)\end{equation*}for all $f\in X$ and $n\in\mathbb{Z}$, then we actually have \begin{equation*}\nu(\varpi^{n}f)=2^{-n}\nu(f)\end{equation*}for all $f\in X$ and all integers $n$. In particular, the open unit ball \begin{equation*}X_{\nu<1}=\{\, f\in X\mid \nu(f)<1\,\}\end{equation*}and the closed unit ball \begin{equation*}X_{\nu\leq1}=\{\, f\in X\mid \nu(f)\leq1\,\}\end{equation*}both contain $\varpi$ but do not contain $\varpi^{-1}$. It follows that $(X_{\nu\leq1}, \varpi)$ and $(X_{\nu<1}, \varpi)$ are weak pairs of definition of the monoid $X$ whenever $X_{\nu<1}$ and $X_{\nu\leq1}$ are subsemigroups of $X$ (for example, if $\nu$ is submultiplicative). Moreover, for every $f\in X$ we can find $n\in\mathbb{Z}$ with $\nu(f)<2^{-n}$ and then $\nu(\varpi^{-n}f)=2^{n}\nu(f)<1$, showing that the weak pairs of definition $(X_{\leq1}, \varpi)$ and $(X_{\nu<1}, \varpi)$ are pairs of definition.
\begin{rmk}\label{Pair of definition associated with a weak pair of definition}Note that for any weak pair of definition $(X_{0}, \varpi)$ of a monoid $X$, it is evident from the definition of the generalized gauge $p_{X_{0},\varpi}$ that $p_{X_{0},\varpi}(\varpi^{n}f)=2^{-n}p_{X_{0},\varpi}(f)$ for all $f\in X$ and all integers $n$. In particular, the above observations apply to $p_{X_{0},\varpi}$, so, for any weak pair of definition $(X_{0}, \varpi)$ the pairs $(X_{p_{X_{0},\varpi}<1}, \varpi)$ and $(X_{p_{X_{0},\varpi}\leq1}, \varpi)$ are pairs of definition.\end{rmk}       
\begin{lemma}\label{Explicit power-multiplicative seminorms 1}Let $\varpi\in X^{\times}$ be a unit in a monoid $X$ and let $\nu: X\to \mathbb{R}_{\geq0}$ be a function satisfying the following properties \begin{enumerate}[(1)]\item $\nu(\varpi^{n}f)\leq 2^{-n} \nu(f)$ for all $n\in \mathbb{Z}$, $f\in X$. \item $\nu(f)^{m}\leq \nu(f^{m})$ for all $f\in X$ and all integers $m\geq1$. \item The open unit ball $X_{\nu<1}$ (respectively, the closed unit ball $X_{\nu\leq1}$) are subsemigroups of $X$. \end{enumerate}Then \begin{equation*}\nu(f)\leq p_{X_{\nu<1},\varpi}(f)\end{equation*}for all $f\in X$ (respectively, $\nu(f)\leq p_{X_{\nu\leq1},\varpi}(f)$ for all $f\in X$). \end{lemma}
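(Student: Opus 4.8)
The plan is to unwind the definition of the generalized gauge and reduce the claimed inequality to a single short chain of estimates, applied to each competitor in the infimum. Throughout, fix $f\in X$ and write $X_{0}$ for $X_{\nu\leq1}$ (the closed-ball case); the open-ball case with $X_{0}=X_{\nu<1}$ is handled verbatim with $\leq1$ replaced by $<1$, the resulting strict inequalities only helping. By hypothesis (3) and the discussion preceding the lemma (applied to $\nu$, which satisfies $\nu(\varpi^{n}f)\leq2^{-n}\nu(f)$ for all $n\in\mathbb{Z}$), the pair $(X_{0},\varpi)$ is a weak pair of definition, so $p_{X_{0},\varpi}$ is defined (and is a monoid seminorm by Proposition \ref{Properties of the generalized gauge}); recall that
$$p_{X_{0},\varpi}(f)=\inf\{\,2^{-n/m}\mid n\in\mathbb{Z},\ m\in\mathbb{Z}_{>0},\ f^{m}\in\varpi^{n}X_{0}\,\}.$$

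Next I would fix an arbitrary admissible pair $(n,m)$, that is, $m\in\mathbb{Z}_{>0}$ and $n\in\mathbb{Z}$ with $f^{m}\in\varpi^{n}X_{0}$, and write $f^{m}=\varpi^{n}g$ with $g\in X_{0}$, so $\nu(g)\leq1$. Then one estimates, in order: $\nu(f)^{m}\leq\nu(f^{m})$ by hypothesis (2); $\nu(f^{m})=\nu(\varpi^{n}g)\leq2^{-n}\nu(g)$ by hypothesis (1), valid for the given $n$ whatever its sign; and $2^{-n}\nu(g)\leq2^{-n}$ since $g\in X_{0}$. Chaining these three inequalities gives $\nu(f)^{m}\leq2^{-n}$, and taking $m$-th roots (legitimate since $\nu(f)\geq0$) yields $\nu(f)\leq2^{-n/m}$.

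Finally, since this bound holds for every admissible pair $(n,m)$, passing to the infimum over all of them gives $\nu(f)\leq p_{X_{0},\varpi}(f)$, which is exactly the assertion; if there is no admissible pair at all, the right-hand side is $+\infty$ and there is nothing to prove. I do not expect a genuine obstacle here: the argument is a direct computation, and the only point needing care is the bookkeeping of which hypothesis enters where --- (2) to pass from $\nu(f)$ to $\nu(f^{m})$, (1) to extract the power of $\varpi$, and membership in $X_{0}$ (meaningful thanks to (3)) to bound $\nu(g)$ --- together with the elementary principle that a quantity bounded above by every member of a set is bounded above by that set's infimum.
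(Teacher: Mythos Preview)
Your proof is correct and follows essentially the same approach as the paper's: fix an admissible pair $(n,m)$ with $f^{m}=\varpi^{n}g\in\varpi^{n}X_{0}$, use hypothesis (1) to get $\nu(f^{m})\leq 2^{-n}\nu(g)\leq 2^{-n}$, then hypothesis (2) to conclude $\nu(f)\leq 2^{-n/m}$, and pass to the infimum. The only cosmetic differences are that the paper treats the open-ball case explicitly (you do the closed-ball case) and that the paper observes the admissible set is always nonempty because $(X_{\nu<1},\varpi)$ is in fact a pair of definition, whereas you cover the empty case by noting the infimum would be $+\infty$.
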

\begin{proof}We prove the assertions for the open unit ball, the case of the closed unit ball being analogous. Since we have seen that, assuming properties (1) and (3), the pair $(X_{\nu<1}, \varpi)$ is a pair of definition of $X$, the set over which the infimum in the definition of $p_{X_{\nu<1},\varpi}(f)$ is taken is non-empty for every $f\in X$. 

Let $f\in X$ and let $s>0$ such that $2^{-n/m}<s$ and $f^{m}\in \varpi^{n}X_{\nu<1}$ for some integers $n, m>0$. Then $\nu(f^{m})<2^{-n}$ by property (1). By property (2) this implies that $\nu(f)<2^{-n/m}$. The desired inequality follows from this since $s>2^{-n/m}$ was arbitrary.\end{proof}
\begin{lemma}\label{Explicit power-multiplicative seminorms 2}Let $\varpi\in X^{\times}$ be a unit in a monoid $X$ and let $\nu: X\to \mathbb{R}_{\geq0}$ be a function satisfying: \begin{enumerate}[(1)] \item $\nu(\varpi^{n}f)\leq 2^{-n}\nu(f)$ for all $n\in\mathbb{Z}$ and $f\in X$. \item $\nu(f^{m})\leq \nu(f)^{m}$ for all $f\in X$ and all integers $m>0$. \item The open unit ball $X_{\nu<1}$ is a subsemigroup of $X$.\end{enumerate}Then \begin{equation*}\nu(f)\geq p_{X_{\nu<1},\varpi}(f)\end{equation*}for all $f\in X$ and analogously for the closed unit ball $X_{\nu\leq1}$ instead of $X_{\nu<1}$ if $X_{\nu\leq1}$ is a subsemigroup.\end{lemma}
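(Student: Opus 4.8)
The plan is to prove, for every $f \in X$, the inequality $p_{X_{\nu<1},\varpi}(f) \leq \nu(f)$ by exhibiting, for every real number $s > \nu(f)$, a pair of integers $n \in \mathbb{Z}$, $m \in \mathbb{Z}_{>0}$ with $f^{m} \in \varpi^{n}X_{\nu<1}$ and $2^{-n/m} < s$. Once this is done, the infimum defining $p_{X_{\nu<1},\varpi}(f)$ is $< s$ for every $s > \nu(f)$, hence $\leq \nu(f)$, which is the claim. Exactly as in the proof of Lemma \ref{Explicit power-multiplicative seminorms 1}, hypotheses (1) and (3) guarantee that $(X_{\nu<1}, \varpi)$ is a pair of definition of $X$, so that $p_{X_{\nu<1},\varpi}$ is defined and the relevant infima are over non-empty sets.

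Write $c = \nu(f)$ and fix $s > c$. For a pair $(n,m)$ with $m > 0$, hypothesis (1) applied with exponent $-n$ and hypothesis (2) yield
\begin{equation*}\nu(\varpi^{-n}f^{m}) \leq 2^{n}\nu(f^{m}) \leq 2^{n}\nu(f)^{m} = 2^{n}c^{m}.\end{equation*}
Hence $\varpi^{-n}f^{m} \in X_{\nu<1}$, and therefore $f^{m} = \varpi^{n}\cdot(\varpi^{-n}f^{m}) \in \varpi^{n}X_{\nu<1}$, as soon as $2^{n}c^{m} < 1$, i.e., as soon as $n/m < -\log_{2}c$; here we adopt the convention $-\log_{2}0 = +\infty$, so that the case $c = 0$ is included (and there one may simply take $m = 1$ and $n \to +\infty$). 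On the other hand, $2^{-n/m} < s$ is equivalent to $n/m > -\log_{2}s$. Since $s > c \geq 0$ we have $-\log_{2}s < -\log_{2}c$, so the open interval $(-\log_{2}s,\,-\log_{2}c) \subseteq \mathbb{R}$ is non-empty; by density of $\mathbb{Q}$ in $\mathbb{R}$ we may choose a rational number $q = n/m$ in it with $m \in \mathbb{Z}_{>0}$ and $n \in \mathbb{Z}$. For this pair both requirements $f^{m} \in \varpi^{n}X_{\nu<1}$ and $2^{-n/m} < s$ hold, so $p_{X_{\nu<1},\varpi}(f) \leq 2^{-n/m} < s$; as $s > c$ was arbitrary, $p_{X_{\nu<1},\varpi}(f) \leq c = \nu(f)$.

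The statement for the closed unit ball follows from the same construction: the pair $(n,m)$ chosen above also satisfies $\nu(\varpi^{-n}f^{m}) \leq 2^{n}c^{m} < 1$, so $\varpi^{-n}f^{m} \in X_{\nu\leq1}$ and $f^{m} \in \varpi^{n}X_{\nu\leq1}$, and then the same estimate yields $p_{X_{\nu\leq1},\varpi}(f) \leq \nu(f)$ provided $X_{\nu\leq1}$ is a subsemigroup (so that $p_{X_{\nu\leq1},\varpi}$ is defined in the first place). There is no serious obstacle here; the only point requiring a little care is the bookkeeping with the base-$2$ logarithms — in particular, the endpoint convention when $c = 0$ and the verification that the chosen rational $q$ genuinely lies in the \emph{open} interval, so that all inequalities remain strict and $m$ may be taken positive — while everything else is a direct application of hypotheses (1) and (2) together with the freedom to let $n$ range over all of $\mathbb{Z}$ in the membership $f^{m} \in \varpi^{n}X_{\nu<1}$.
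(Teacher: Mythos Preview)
Your proof is correct and follows essentially the same approach as the paper: fix $s>\nu(f)$, pick a rational $n/m$ in the interval between $-\log_2 s$ and $-\log_2 \nu(f)$ (equivalently, choose $n,m$ with $\nu(f)<2^{-n/m}<s$), and then apply hypotheses (2) and (1) in that order to land $\varpi^{-n}f^m$ in $X_{\nu<1}$. The only cosmetic difference is that you phrase the choice via logarithms while the paper writes the inequality $\nu(f)<2^{-n/m}<s$ directly.
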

\begin{proof}As in the proof of the previous lemma, we see that set over which the infimum in the definition of $p_{X_{\nu<1},\varpi}(f)$ is taken is non-empty for every $f\in X$. Suppose that $\nu(f)<s$ for some $s>0$. Choose integers $n, m\in\mathbb{Z}$, $m>0$, such that \begin{equation*}\nu(f)<2^{-n/m}<s.\end{equation*}Using property (2) we obtain $\nu(f^{m})<2^{-n}$. Applying property (1), we conclude that $\nu(\varpi^{-n}f^{m})\leq 2^{n}\nu(f^{m})<1$ and, consequently, $f^{m}=\varpi^{n}(\varpi^{-n}f^{m})\in \varpi^{n}X_{\nu<1}$. The assertion follows.\end{proof}
Putting the two lemmas together, we obtain the following statement about power-multiplicative functions on a monoid $X$.
\begin{prop}\label{Description of seminorms}Let $\varpi\in X^{\times}$ be a unit in a monoid $X$ and let $\nu: X\to\mathbb{R}_{\geq0}$ be a power-multiplicative function on $X$ satisfying $\nu(\varpi^{n}f)\leq2^{-n}\nu(f)$ for all $f\in X$ and all integers $n$. Then \begin{equation*}\nu(f)=p_{X_{\nu<1},\varpi}(f)=p_{X_{\nu\leq1},\varpi}(f)\end{equation*}for all $f\in X$ provided that $X_{\nu<1}$ and $X_{\nu\leq1}$ are subsemigroups of $X$. 

In particular, any two power-multiplicative functions $\nu_{1}, \nu_{2}: X\to\mathbb{R}_{\geq0}$ satisfying $\nu_{i}(\varpi^{n}f)\leq2^{-n}\nu_{i}(f)$ for all $f\in X$, $n\in\mathbb{Z}$, $i=1, 2$, which have the same open or closed unit ball that is a subsemigroup of $X$ must be equal.\end{prop}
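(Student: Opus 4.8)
The plan is to obtain the proposition as a formal consequence of Lemma \ref{Explicit power-multiplicative seminorms 1} and Lemma \ref{Explicit power-multiplicative seminorms 2}, the point being that a power-multiplicative function satisfies, with equality, both of the one-sided power inequalities that appear as hypothesis (2) in those two lemmas.

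First I would record the elementary observation: since $\nu$ is power-multiplicative we have $\nu(f^{m})=\nu(f)^{m}$ for all $f\in X$ and all integers $m\geq1$; in particular the inequality $\nu(f)^{m}\leq\nu(f^{m})$ (hypothesis (2) of Lemma \ref{Explicit power-multiplicative seminorms 1}) and the inequality $\nu(f^{m})\leq\nu(f)^{m}$ (hypothesis (2) of Lemma \ref{Explicit power-multiplicative seminorms 2}) both hold. Hypothesis (1) of both lemmas is literally the standing assumption $\nu(\varpi^{n}f)\leq2^{-n}\nu(f)$, and hypothesis (3) is the assumption that the relevant unit ball is a subsemigroup of $X$.

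Next, assuming $X_{\nu<1}$ is a subsemigroup, I would apply the open-unit-ball version of Lemma \ref{Explicit power-multiplicative seminorms 1} to get $\nu(f)\leq p_{X_{\nu<1},\varpi}(f)$ for all $f\in X$, and the open-unit-ball version of Lemma \ref{Explicit power-multiplicative seminorms 2} to get the reverse inequality $\nu(f)\geq p_{X_{\nu<1},\varpi}(f)$, whence $\nu=p_{X_{\nu<1},\varpi}$. Assuming instead that $X_{\nu\leq1}$ is a subsemigroup, the ``respectively'' branches of the same two lemmas give $\nu=p_{X_{\nu\leq1},\varpi}$ in exactly the same way. When both unit balls are subsemigroups one obtains the full chain $\nu=p_{X_{\nu<1},\varpi}=p_{X_{\nu\leq1},\varpi}$. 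For the final ``in particular'' clause: if $\nu_{1},\nu_{2}$ are power-multiplicative, each satisfies $\nu_{i}(\varpi^{n}f)\leq2^{-n}\nu_{i}(f)$, and $X_{\nu_{1}<1}=X_{\nu_{2}<1}=:X_{0}$ is a subsemigroup, then by the first part $\nu_{1}=p_{X_{0},\varpi}=\nu_{2}$; the case of a common closed unit ball is identical.

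There is no substantive obstacle here — all the real work is already contained in Lemmas \ref{Explicit power-multiplicative seminorms 1} and \ref{Explicit power-multiplicative seminorms 2}, and the only thing to verify is the trivial remark that power-multiplicativity supplies both directions of their hypothesis (2). The one point deserving care is the bookkeeping of which unit ball (open versus closed) is being used, so that the ``respectively'' branches of the two lemmas are invoked consistently; past that the statement is purely formal.
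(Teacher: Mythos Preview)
Your proposal is correct and matches the paper's own proof exactly: the paper simply says ``Follows from Lemma \ref{Explicit power-multiplicative seminorms 1} and Lemma \ref{Explicit power-multiplicative seminorms 2},'' and your write-up is a faithful unpacking of that one-line reference.
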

\begin{proof}Follows from Lemma \ref{Explicit power-multiplicative seminorms 1} and Lemma \ref{Explicit power-multiplicative seminorms 2}.\end{proof}
Conversely, we can show that the generalized gauge of any weak pair of definition $(X_{0}, \varpi)$ of a monoid $X$ is a power-multiplicative function on $X$.
\begin{prop}\label{Properties of the generalized gauge 2}For every weak pair of definition $(X_{0}, \varpi)$ of a monoid $X$, the generalized gauge $p_{X_{0},\varpi}$ of $(X_{0},\varpi)$ is power-multiplicative.\end{prop}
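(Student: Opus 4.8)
The plan is to prove power-multiplicativity by establishing, for an arbitrary $f\in X$ and an arbitrary integer $k\ge 1$, the two inequalities $p_{X_0,\varpi}(f^k)\le p_{X_0,\varpi}(f)^k$ and $p_{X_0,\varpi}(f)^k\le p_{X_0,\varpi}(f^k)$ directly from Definition \ref{Generalized gauge}. Write $p=p_{X_0,\varpi}$ and, for $g\in X$, put $T(g)=\{(n,m)\in\mathbb Z\times\mathbb Z_{>0}\mid g^m\in\varpi^{n}X_0\}$, so that $p(g)=\inf\{2^{-n/m}\mid (n,m)\in T(g)\}$ (with the convention $\inf\varnothing=+\infty$). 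The heart of the matter is the purely formal observation that the assignments $(n,m)\mapsto(nk,m)$ and $(n,m)\mapsto(n,km)$ relate $T(f)$ and $T(f^k)$ to one another; everything else is bookkeeping.

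For $p(f^k)\le p(f)^k$: given $(n,m)\in T(f)$, I would write $f^m=\varpi^{n}x$ with $x\in X_0$; since $X_0$ is a subsemigroup we have $x^k\in X_0$, and $(f^k)^m=(f^m)^k=\varpi^{nk}x^k$, so $(nk,m)\in T(f^k)$. Hence $p(f^k)\le 2^{-nk/m}=(2^{-n/m})^k$, i.e. $p(f^k)^{1/k}\le 2^{-n/m}$, and letting $(n,m)$ range over $T(f)$ gives $p(f^k)^{1/k}\le p(f)$. (Alternatively, this inequality is immediate from submultiplicativity of $p$, already proved in Proposition \ref{Properties of the generalized gauge}.) For $p(f)^k\le p(f^k)$: given $(n,m)\in T(f^k)$, we have $f^{km}=(f^k)^m\in\varpi^{n}X_0$ with $km>0$, so $(n,km)\in T(f)$; hence $p(f)\le 2^{-n/(km)}$, i.e. $p(f)^k\le 2^{-n/m}$, and letting $(n,m)$ range over $T(f^k)$ gives $p(f)^k\le p(f^k)$. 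Together these yield $p(f^k)=p(f)^k$.

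I do not expect any genuine obstacle; the one place needing a word of care is that $(X_0,\varpi)$ is only assumed to be a \emph{weak} pair of definition, so $T(f)$ may be empty and $p(f)$ may be $+\infty$. This is harmless: the two assignments above show $T(f)=\varnothing$ if and only if $T(f^k)=\varnothing$, in which case both $p(f)$ and $p(f^k)$ equal $+\infty$ and the asserted identity holds trivially, while when the index sets are non-empty the two infimum computations above proceed exactly as written. Hence $p(f^k)=p(f)^k$ for all $f\in X$ and all integers $k\ge 1$, which is exactly the claim.
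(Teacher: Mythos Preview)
Your proof is correct and takes essentially the same approach as the paper: the paper cites submultiplicativity (Proposition \ref{Properties of the generalized gauge}) for the inequality $p(f^k)\le p(f)^k$ and then, for the reverse, observes that $(f^k)^l\in\varpi^{n}X_0$ means $f^{kl}\in\varpi^{n}X_0$, so $p(f)\le 2^{-n/(kl)}$---exactly your map $(n,m)\mapsto(n,km)$. Your additional direct argument for the first inequality and your explicit handling of the possibly empty index set are minor elaborations, not a different route.
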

\begin{proof}By Proposition \ref{Properties of the generalized gauge}, $p_{X_{0},\varpi}$ is submultiplicative, so, in particular, $p_{X_{0},\varpi}(f^{m})\leq p_{X_{0},\varpi}(f)^{m}$ for all $f\in X$, $m\in\mathbb{Z}_{>0}$. Conversely, let $m\in\mathbb{Z}_{>0}$, $f\in X$ and $s>0$ such that $p_{X_{0},\varpi}(f^{m})<s$. Then there exist $k, l\in\mathbb{Z}$, $l>0$, such that $f^{ml}\in\varpi^{k}X_{0}$ and $2^{-k/l}<s$. Then, by definition, $p_{X_{0},\varpi}(f)\leq 2^{-k/ml}<s^{1/m}$. \end{proof}
We say that a subset $U$ of a ring $A$ is integrally closed in $A$ if every element $f\in A$ which satisfies an equation of integral dependence with coefficients in $U$ belongs to $U$.
\begin{prop}\label{Properties of the generalized gauge 3}Let $A$ be a ring, let $A_{0}$ be a subring and let $I\subseteq A_{0}$ be a (not necessarily proper) ideal of $A_{0}$. Let $\varpi\in I$ be an element such that $(A_{0}, \varpi)$ and $(I, \varpi)$ are weak pairs of definition of the underlying multiplicative monoid of $A$. If $I$ is an integrally closed subset of $A$, then $p_{I,\varpi}$ is a ring seminorm on $A$.\end{prop}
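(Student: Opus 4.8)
The plan is to verify that $p\coloneqq p_{I,\varpi}$ satisfies the nonarchimedean triangle inequality; the remaining axioms of a ring seminorm are then immediate. By Proposition~\ref{Properties of the generalized gauge}, $p$ is submultiplicative with $p(1)=1$, i.e.\ a monoid seminorm on the multiplicative monoid of $A$; moreover $p(0)=0$ since $0\in\varpi^{n}I$ for all $n$, and by power-multiplicativity (Proposition~\ref{Properties of the generalized gauge 2}) one has $p(-1)^{2}=p(1)=1$, hence $p(-1)=1$ and therefore $p(-f)=p(f)$ for all $f$. So the whole content is the estimate $p(f+g)\le\max(p(f),p(g))$ for $f,g\in A$.

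First I would reduce this to a clean statement. Fix $f,g$ and a real number $c>\max(p(f),p(g))$ (if one of $p(f),p(g)$ is $+\infty$ the inequality is trivial). Choose integers $n_{1},m_{1},n_{2},m_{2}$ with $m_{1},m_{2}>0$, $f^{m_{1}}\in\varpi^{n_{1}}I$, $g^{m_{2}}\in\varpi^{n_{2}}I$ and $2^{-n_{1}/m_{1}},\,2^{-n_{2}/m_{2}}<c$. Raising the first containment to the power $m_{2}$ and using $I^{m_{2}}\subseteq I$ (recall $I$ is an ideal of $A_{0}$), and symmetrically for the second, I may assume $m_{1}=m_{2}=:m$ without changing the ratios $n_{i}/m_{i}$. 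Since $\varpi\in A_{0}$ we have $\varpi^{k}I\subseteq I$ for $k\ge 0$, so after possibly swapping $f$ and $g$ I may assume $n_{1}\le n_{2}$, and then $f^{m},g^{m}\in\varpi^{n}I$ with $n\coloneqq n_{1}$ and $2^{-n/m}<c$. Thus it suffices to prove: whenever $f^{m}\in\varpi^{n}I$ and $g^{m}\in\varpi^{n}I$, then $(f+g)^{m}\in\varpi^{n}I$, for this yields $p(f+g)\le 2^{-n/m}<c$, and letting $c\downarrow\max(p(f),p(g))$ gives the claim.

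The decisive step — and the only place where integral closedness of $I$ enters — is this. Write $f^{m}=\varpi^{n}u$ and $g^{m}=\varpi^{n}v$ with $u,v\in I$ (legitimate since $\varpi$ is a unit of $A$, so $\varpi^{-n}\in A$), and for $0\le j\le m$ set $x_{j}\coloneqq f^{j}g^{m-j}\varpi^{-n}\in A$. Then
\[ x_{j}^{m}=(f^{m})^{j}(g^{m})^{m-j}\varpi^{-nm}=(\varpi^{n}u)^{j}(\varpi^{n}v)^{m-j}\varpi^{-nm}=u^{j}v^{m-j}\in I^{m}\subseteq I. \]
Hence $x_{j}$ satisfies the monic equation $x_{j}^{m}-u^{j}v^{m-j}=0$, which is an equation of integral dependence over $I$ (its non-leading coefficients are $0,\dots,0,-u^{j}v^{m-j}\in I$); since $I$ is integrally closed in $A$, we conclude $x_{j}\in I$ for every $j$. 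As $I$ is an $A_{0}$-submodule of $A$, in particular an additive subgroup, it follows that
\[ (f+g)^{m}\varpi^{-n}=\sum_{j=0}^{m}\binom{m}{j}f^{j}g^{m-j}\varpi^{-n}=\sum_{j=0}^{m}\binom{m}{j}x_{j}\in I, \]
i.e.\ $(f+g)^{m}\in\varpi^{n}I$, as needed.

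I expect the only genuinely non-formal point to be the recognition that integral closedness of $I$ is exactly what one needs in order to ``extract $m$-th roots inside $I$'': the mixed monomials $f^{j}g^{m-j}\varpi^{-n}$ need not lie in $A_{0}$, let alone in $I$, but their $m$-th powers do, and that is enough to push the binomial expansion of $(f+g)^{m}\varpi^{-n}$ back into $I$. Everything else is exponent bookkeeping together with the elementary properties of $p$ recalled in the first paragraph.
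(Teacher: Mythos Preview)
Your proof is correct and genuinely different from the paper's argument. The paper passes to an absolutely integrally closed extension $A\hookrightarrow A^{+}$, so that fractional powers $\varpi^{n/m}$ exist, lets $J$ be the integral closure of $I$ in $A_{0}^{+}$, invokes the Cohen--Seidenberg lemma to identify $J=\sqrt{IA_{0}^{+}}$ (hence an additive subgroup), and rewrites $p_{I,\varpi}(f)=\inf\{2^{-n/m}:f\in\varpi^{n/m}J\}$; from this presentation the triangle inequality follows because the sets $\varpi^{N}J$ are already closed under addition. Your argument, by contrast, stays entirely inside $A$: after the bookkeeping reduction to $f^{m},g^{m}\in\varpi^{n}I$, the observation that each mixed term $x_{j}=f^{j}g^{m-j}\varpi^{-n}$ has $x_{j}^{m}\in I$ and hence lies in $I$ by integral closedness is a clean and self-contained way to push the binomial expansion of $(f+g)^{m}\varpi^{-n}$ back into $I$. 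This is more elementary---no auxiliary extension, no Cohen--Seidenberg---and arguably makes the role of the integral-closedness hypothesis more transparent. The paper's approach, on the other hand, has the conceptual advantage of reinterpreting $p_{I,\varpi}$ as a genuine ``$\varpi^{1/m}$-adic'' gauge, which may be useful elsewhere.
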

\begin{proof}In view of Proposition \ref{Properties of the generalized gauge}, we only need to prove that $p_{X_{0},\varpi}$ satisfies the nonarchimedean triangle inequality. Let $A\subseteq A^{+}$ be an absolutely integrally closed integral ring extension of $A$ (such an integral extension always exists by \cite{Stacks}, Tag 0DCR) and let $A_{0}^{+}$ be the integral closure of $A_{0}$ inside $A^{+}$. Let $J\subseteq A_{0}^{+}$ be the integral closure of $I$ in $A_{0}^{+}$ (equivalently, in $A^{+}$), i.e., the set of elements of $A_{0}^{+}$ (equivalently, of $A^{+}$) which satisfy an equation of integral dependence with coefficients in $I$. Since $I$ is integrally closed in $A$, we have $J\cap A=I$. By \cite{Cohen-Seidenberg}, Lemma 1, $J$ is equal to the ideal $\sqrt{IA_{0}^{+}}$ of $A_{0}^{+}$; in particular, $J$ is an additive subgroup and a multiplicative subsemigroup of $A^{+}$. For every $f\in A$, viewed as an element of the absolutely integrally closed ring $A^{+}$, we have \begin{equation*}p_{I,\varpi}(f)=\inf\{\, 2^{-n/m}\mid n\in\mathbb{Z}, m\in\mathbb{Z}_{>0}, f\in \varpi^{n/m}J\,\}.\end{equation*}Indeed, if $f^{m}\in \varpi^{n}I$, then clearly $f\in \varpi^{n/m}J$ and, conversely, if $f\in \varpi^{n/m}J$, then $\varpi^{-n}f^{m}\in J\cap A=I$, so $f^{m}\in\varpi^{n}I$. 

Now, let $r>0$ be arbitrary and suppose that $f, g\in A$ are elements of $A$ satisfying \begin{equation*}\max(p_{I,\varpi}(f), p_{I,\varpi}(g))<r.\end{equation*}Suppose that $\max(p_{I,\varpi}(f), p_{I,\varpi}(g))=p_{I,\varpi}(f)$. Choose $n, n'\in\mathbb{Z}$, $m, m'\in\mathbb{Z}_{>0}$ such that $2^{-n/m}<r$, $2^{-n'/m'}<r$ and $f\in\varpi^{n/m}J$, $g\in \varpi^{n'/m'}J$. Let $N\coloneqq \min(\frac{n}{m}, \frac{n'}{m'})$, $l\coloneqq\max(\frac{n}{m}, \frac{n'}{m'})$. Since $\varpi\in I$ and $l-N>0$, we have $\varpi^{l-N}\in J$. Hence, if some $h\in A$ satisfies $h\in\varpi^{l}J$, then, a fortiori, \begin{equation*}h\in \varpi^{l-N}\varpi^{N}J\subseteq \varpi^{N}J,\end{equation*}since $J$ is a multiplicative subsemigroup of $A^{+}$. Consequently, $f, g\in \varpi^{N}J$. Since we have seen that $J$ is an additive subgroup of $A^{+}$, so is $\varpi^{N}J$. It follows that $f+g\in \varpi^{N}J$ and thus $p_{I,\varpi}(f+g)\leq 2^{-N}<r$. Since $r>\max(p_{I,\varpi}(f), p_{I,\varpi}(g))$ was arbitrary, we conclude that $p_{I,\varpi}(f+g)\leq\max(p_{I,\varpi}(f), p_{I,\varpi}(g))$, as desired.\end{proof}
We conclude this section with some remarks on complete integral closure and power-bounded elements in a Tate ring. Let $A\subseteq B$ be a ring extension. Recall that an element $f\in B$ is said to be almost integral over $A$ if there exists a finitely generated $A$-submodule $I$ of $B$ such that all powers $f^{n}$ of $f$ belong to $I$ (equivalently, there exists a finitely generated $A$-submodule $I$ of $B$ with $A[f]\subseteq I$). If $B=A[\varpi^{-1}]$ for some non-zero-divisor $\varpi$, this is equivalent to the existence of an integer $m\geq0$ such that $f^{n}\in \varpi^{-m}A$ for all integers $n\geq0$, i.e., an element $f\in A[\varpi^{-1}]$ is almost integral over $A$ if and only if it is power-bounded. The set of all elements of $B$ which are almost integral over $A$ is called the complete integral closure of $A$ in $B$; this is a subring of $B$ containing $A$. The ring $A$ is said to be completely integrally closed in $B$ if $A$ is equal to its complete integral closure in $B$. From the definitions, we obtain the following lemma.
\begin{lemma}\label{Completely integrally closed}Let $A$ be a ring and let $\varpi\in A$ be a non-zero-divisor. Then the complete integral closure of $A$ in $A[\varpi^{-1}]$ is equal to the ring of power-bounded elements $\mathcal{A}^{\circ}$ of the Tate ring $\mathcal{A}=A[\varpi^{-1}]$. In particular, if $A$ is completely integrally closed in $\mathcal{A}=A[\varpi^{-1}]$, then $A=\mathcal{A}^{\circ}$.\end{lemma}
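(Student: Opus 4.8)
The plan is to unwind the two definitions involved and observe that they literally describe the same subset of $\mathcal{A}=A[\varpi^{-1}]$. Recall from the discussion immediately preceding the statement that an element $f\in A[\varpi^{-1}]$ is almost integral over $A$ precisely when there is a finitely generated $A$-submodule $I\subseteq A[\varpi^{-1}]$ with $A[f]\subseteq I$. Since any finitely generated $A$-submodule of $A[\varpi^{-1}]$ is contained in $\varpi^{-m}A$ for some integer $m\geq0$ — take $m$ large enough to clear denominators of a finite generating set — while, conversely, each $\varpi^{-m}A$ is itself a cyclic $A$-module, the condition is equivalent to the existence of an integer $m\geq0$ with $f^{n}\in\varpi^{-m}A$ for all integers $n\geq0$. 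This is exactly the reduction already sketched in the text; I would just spell it out.

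First I would record that, for the Tate ring $\mathcal{A}=A[\varpi^{-1}]$ with pair of definition $(A,\varpi)$, the ideals $\varpi^{k}A$ with $k\geq1$ form a basis of open neighbourhoods of $0$, so a subset $S\subseteq\mathcal{A}$ is bounded if and only if $S\subseteq\varpi^{-m}A$ for some $m\geq0$. Applying this to $S=\{\,f^{n}\mid n\geq0\,\}$ shows that $f$ is power-bounded in $\mathcal{A}$ if and only if $f^{n}\in\varpi^{-m}A$ for all $n\geq0$ and some fixed $m\geq0$ — precisely the condition characterizing almost integrality over $A$ obtained above. Hence the complete integral closure of $A$ in $A[\varpi^{-1}]$ coincides with $\mathcal{A}^{\circ}$, and the "in particular" clause follows at once: if $A$ equals its complete integral closure in $\mathcal{A}$, the first part gives $A=\mathcal{A}^{\circ}$.

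There is essentially no serious obstacle here; the only points that need a word of care are (i) that $\varpi$ being a non-zero-divisor guarantees that $A\hookrightarrow A[\varpi^{-1}]$ and that $(A,\varpi)$ genuinely is a pair of definition, so that "power-bounded" is measured against the intended topology, and (ii) the elementary equivalence between "contained in a finitely generated $A$-submodule of $A[\varpi^{-1}]$" and "contained in $\varpi^{-m}A$ for some $m\geq0$", which is exactly where the special structure $B=A[\varpi^{-1}]$ (as opposed to an arbitrary ring extension) enters.
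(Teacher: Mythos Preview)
Your proposal is correct and follows exactly the approach the paper takes: the paper does not give a separate proof of this lemma at all, merely noting that it follows ``from the definitions,'' with the key equivalence (almost integral over $A$ $\Leftrightarrow$ all powers lie in some $\varpi^{-m}A$ $\Leftrightarrow$ power-bounded) already spelled out in the paragraph immediately preceding the lemma. You have simply expanded that sentence into a careful argument, including the observation that boundedness in the Tate ring with pair of definition $(A,\varpi)$ means containment in some $\varpi^{-m}A$.
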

\begin{rmk}\label{Andre's example}Just as in the classical theory for integral domains, the complete integral closure $\mathcal{A}^{\circ}$ of $A$ in $A[\varpi^{-1}]$ need not in general be completely integrally closed. Since the closed unit ball with respect to any power-multiplicative seminorm on $\mathcal{A}$ is readily seen to be completely integrally closed, this is related to the phenomenon that $A\mathcal{A}^{\circ}$ need not coincide with the closed unit ball of the spectral seminorm $\vert\cdot\vert_{\spc,\varpi}$ on $\mathcal{A}$. An example of both phenomena has been exhibited by André in \cite{Andre18}, Remarques 2.3.3(2). On the other hand, if the Tate ring $\mathcal{A}=A[\varpi^{-1}]$ (with topology defined by the pair $(A, \varpi)$) is uniform, then $\mathcal{A}^{\circ}$ coincides with the closed unit ball of $(\mathcal{A}, \vert\cdot\vert_{\spc, \varpi})$, by \cite{Dine22}, Lemma 2.24; thus completely integrally closed in $\mathcal{A}$.\end{rmk}  

\section{The $\arc_{\varpi}$-operation, Shilov boundary and Rees valuations}\label{sec:Rees valuations}

Fix a ring $A$ and an element $\varpi\in A$ which is a non-unit and a non-zero-divisor. We consider the Tate ring $\mathcal{A}=A[\varpi^{-1}]$ which has $(A, \varpi)$ for a pair of definition. If not explicitly stated otherwise, by the Berkovich spectrum $\mathcal{M}(\mathcal{A})$ of $\mathcal{A}$ and by the Shilov boundary for $\mathcal{A}$ we always mean the Berkovich spectrum and Shilov boundary relative to the given non-zero-divisor $\varpi$ of $A$ (which is a topologically nilpotent unit in $\mathcal{A}$). With this set-up, we use the symbols $\mathcal{A}$ and $A[\varpi^{-1}]$ interchangeably. Recall that in this paper we write all valuations on a ring multiplicatively, not additively. Moreover, starting in this section, we view elements of the Berkovich spectrum $\mathcal{M}(\mathcal{A})$ as rank $1$ valuations on $\mathcal{A}$ and denote them by letters $v, w$ to emphasize the valuative point of view. For convenience, we also introduce the following piece of terminology.
\begin{mydef}[$\varpi$-normalized valuation]For $A$ and $\varpi$ as above, a valuation $v$ on $A$ (or a valuation $v$ on $\mathcal{A}=A[\varpi^{-1}]$) is called $\varpi$-normalized if $2v(\varpi)=1$.\end{mydef}
The following lemma shows in particular that every rank $1$ valuation $v$ on $A[\varpi^{-1}]$ which belongs to the Berkovich spectrum $\mathcal{M}(A[\varpi^{-1}])$ is $\varpi$-normalized.  
\begin{lemma}\label{Continuous means normalized}For a rank $1$ valuation on $\mathcal{A}=A[\varpi^{-1}]$ the following are equivalent: \begin{enumerate}[(1)]\item $v(f)\leq1$ for all $f\in A$ and $v(\varpi)=\frac{1}{2}$; \item $v$ is continuous (with respect to the topology on $\mathcal{A}$ defined by the pair of definition $(A, \varpi)$) and $v(\varpi)=\frac{1}{2}$; \item $v$ belongs to $\mathcal{M}(\mathcal{A})$. \end{enumerate}\end{lemma}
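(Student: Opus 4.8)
The plan is to run the cycle of implications $(1)\Rightarrow(3)\Rightarrow(2)\Rightarrow(1)$. Throughout I would fix the canonical extension $\lVert\cdot\rVert_{A,\varpi}$ of the $\varpi$-adic seminorm on $A$ as a concrete seminorm defining the topology on $\mathcal{A}$, noting that $\lVert\varpi\rVert_{A,\varpi}=\tfrac12$ and that $\varpi$ is multiplicative for it, so that $\mathcal{M}(\mathcal{A})=\mathcal{M}((\mathcal{A},\lVert\cdot\rVert_{A,\varpi}))$ (independence of the choice of seminorm being Lemma \ref{Boundedness vs. continuity 2}). I would also record at the outset the two elementary facts $\lVert\varpi\rVert_{A,\varpi}=\tfrac12$ and $\lVert\varpi^{-1}\rVert_{A,\varpi}=2$; the second holds precisely because $\varpi$ is a non-unit and non-zero-divisor in $A$, so that $\varpi^{-1}\in\varpi^{n}A$ if and only if $n\le-1$.

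For $(1)\Rightarrow(3)$ I would show $v\le\lVert\cdot\rVert_{A,\varpi}$ pointwise. Given $f\in\mathcal{A}=A[\varpi^{-1}]$ and any $n\in\mathbb{Z}$ with $f\in\varpi^{n}A$, write $f=\varpi^{n}a$ with $a\in A$; since $\varpi\in\mathcal{A}^{\times}$ and $v$ is multiplicative we have $v(\varpi^{n})=v(\varpi)^{n}=2^{-n}$, whence $v(f)=2^{-n}v(a)\le 2^{-n}$, and taking the infimum over all admissible $n$ gives $v(f)\le\lVert f\rVert_{A,\varpi}$. Thus $v$ is a bounded multiplicative seminorm, i.e.\ $v\in\mathcal{M}(\mathcal{A})$. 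For $(3)\Rightarrow(2)$: membership in $\mathcal{M}(\mathcal{A})$ means $v\le\lVert\cdot\rVert_{A,\varpi}$, and since $\lVert\cdot\rVert_{A,\varpi}$ defines the topology on $\mathcal{A}$, this boundedness forces $v$ to be continuous; moreover $v(\varpi)\le\lVert\varpi\rVert_{A,\varpi}=\tfrac12$, while $v(\varpi)^{-1}=v(\varpi^{-1})\le\lVert\varpi^{-1}\rVert_{A,\varpi}=2$ gives $v(\varpi)\ge\tfrac12$, so $v(\varpi)=\tfrac12$. For $(2)\Rightarrow(1)$: continuity of $v$ at $0$, together with the fact that the ideals $\varpi^{n}A$ ($n\ge1$) form a neighbourhood basis of $0$ in $\mathcal{A}$, produces an integer $n$ with $v(\varpi^{n}a)<1$ for all $a\in A$, i.e.\ $v(a)<2^{n}$ for all $a\in A$; hence $v$ is bounded on $A$, and if some $a\in A$ satisfied $v(a)>1$ then $v(a^{k})=v(a)^{k}\to\infty$ with $a^{k}\in A$, a contradiction, so $v(f)\le1$ for all $f\in A$.

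The argument is elementary and I do not anticipate a genuine obstacle; the only two points needing mild care are the computation $\lVert\varpi^{-1}\rVert_{A,\varpi}=2$, which is exactly where the standing hypothesis that $\varpi$ be a non-unit in $A$ is used and which pins down the normalization $v(\varpi)=\tfrac12$ in $(3)\Rightarrow(2)$, and the power trick $v(a)>1\Rightarrow v(a^{k})\to\infty$ converting boundedness of $v$ on $A$ into the bound $v|_{A}\le1$ in $(2)\Rightarrow(1)$. As an alternative, $(2)\Rightarrow(3)$ could be obtained in one line from Lemma \ref{Boundedness vs. continuity} (a continuous multiplicative seminorm $\phi$ with $\phi(\varpi)=\lVert\varpi\rVert$ is automatically bounded), but the self-contained route above is barely longer and keeps the three conditions visibly on equal footing.
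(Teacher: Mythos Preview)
Your proof is correct and uses essentially the same ingredients as the paper's proof, just organized as the reverse cycle: the paper runs $(3)\Rightarrow(1)$, $(2)\Leftrightarrow(3)$ via Lemma~\ref{Boundedness vs. continuity}, and $(1)\Rightarrow(2)$, whereas you run $(1)\Rightarrow(3)\Rightarrow(2)\Rightarrow(1)$. The only substantive difference is that where the paper invokes Lemma~\ref{Boundedness vs. continuity} for $(2)\Rightarrow(3)$, you instead prove $(2)\Rightarrow(1)$ by hand using the power trick $v(a^{k})\to\infty$; you already note this alternative yourself, and both routes are equally short.
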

\begin{proof}(3)$\Rightarrow$(1): Let $\lVert\cdot\rVert$ be the canonical extension to $\mathcal{A}$ of the $\varpi$-adic seminorm on $A$; note that this a seminorm on $\mathcal{A}$ satisfying the conditions in Definition \ref{Berkovich spectrum of a Tate ring}. Then $v(f)\leq\lVert f\rVert\leq1$ for every $v\in\mathcal{M}(\mathcal{A})$ and every $f\in A$. Moreover, for $v\in\mathcal{M}(A[\varpi^{-1}])$, the assumption $\lVert \varpi\rVert=\frac{1}{2}$ implies $v(\varpi)\leq\lVert\varpi\rVert=\frac{1}{2}$ and the assumption that $\varpi$ is seminorm-multiplicative implies $\lVert\varpi^{-1}\rVert=\lVert\varpi\rVert^{-1}$, so  $v(\varpi)^{-1}=v(\varpi^{-1})\leq \lVert\varpi\rVert^{-1}=2$. Consequently, $\frac{1}{2}\leq v(\varpi)$ and we conclude that $v(\varpi)=\frac{1}{2}$. 

(2)$\Leftrightarrow$(3): Follows from Lemma \ref{Boundedness vs. continuity}. 

(1)$\Rightarrow$(2): To prove that every rank $1$ valuation $v$ satisfying the assumptions in $(1)$ is continuous, it suffices to prove that, for every $r>0$, the subset $\{\, f\in \mathcal{A}\mid v(f)<r\,\}$ is open with respect to the topology on $\mathcal{A}$ defined by the pair of definition $(A, \varpi)$. Given $r>0$, choose $n\geq1$ such that $\lVert\varpi^{n}\rVert=2^{-n}<r$. Then $v(\varpi^{n})=v(\varpi)^{n}=2^{-n}<r$, so the condition $v(f)\leq1$ for all $f\in A$ implies \begin{equation*}\varpi^{n}A\subseteq \{\, f\in \mathcal{A}\mid v(f)<r\,\}.\end{equation*}It follows that $\{\, f\in \mathcal{A}\mid v(f)<r\,\}$ is an open subset of $\mathcal{A}$ for every $r>0$, as desired.\end{proof}
We say that a valuation ring $V$ which is not a field has pseudo-uniformizer $\varpi$ if $\varpi$ is a non-zero non-unit in $V$ and $V$ is $\varpi$-adically separated. By \cite{FK}, Ch.~0, Proposition 6.7.2, this is equivalent to the field of fractions $\Frac(V)$ of $V$ being equal to $V[\varpi^{-1}]$ and then $\varpi$ is a pseudo-uniformizer of the Tate ring $\Frac(V)$ in our usual sense. Recall that every valuation ring $V$ determines (and is determined by) an equivalence class of valuations on the fraction field $\Frac(V)$ of $V$. If $\varpi$ is a pseudo-uniformizer of $V$, this equivalence class contains a $\varpi$-normalized representative.
\begin{lemma}\label{Valuations and valuation rings}For every valuation $v$ on $A$ with $v(f)\leq1$ for all $f\in A$ there exists a valuation ring $A\to V$ over $A$ such that $v$ is the restriction along $A\to V\hookrightarrow\Frac(V)$ of the (suitably normalized) valuation of $\Frac(V)$ determined by $V$. Moreover, if $v$ extends to a continuous valuation on $A[\varpi^{-1}]$ (i.e., if $v(\varpi)\neq0$), then there exists $A\to V$ as above such that $\varpi$ is a pseudo-uniformizer of $V$, and if $v$ is a valuation of rank $1$, we can in addition assume that $V$ is a valuation ring of rank $1$.\end{lemma}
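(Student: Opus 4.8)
The plan is to realize $V$ as the canonical valuation ring attached to $v$; since $v$ is given, no Zorn-type existence argument is needed, and the same $V$ will handle all three assertions. First I would form the support $\mathfrak{p}\coloneqq\{\,f\in A\mid v(f)=0\,\}$, a prime ideal of $A$ by multiplicativity and the ultrametric inequality, observe that $v$ factors through the domain $A/\mathfrak{p}$ and extends uniquely, via $a/b\mapsto v(a)/v(b)$, to a valuation on $K\coloneqq\Frac(A/\mathfrak{p})$, and set $V\coloneqq\{\,x\in K\mid v(x)\leq1\,\}$. I would then invoke the standard facts that $V$ is a valuation ring of $K$ with maximal ideal $\{\,x\in K\mid v(x)<1\,\}$ and with $\Frac(V)=K$, and that $v$ is a representative of the equivalence class of valuations of $K$ determined by $V$. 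The hypothesis $v(f)\leq1$ for all $f\in A$ says precisely that the composite $A\twoheadrightarrow A/\mathfrak{p}\hookrightarrow K$ factors through $V$; this is the structure map $A\to V$, and by construction $v$ is the restriction of the valuation of $\Frac(V)$ along $A\to V\hookrightarrow\Frac(V)$, up to the normalization implicit in "the valuation determined by $V$". This settles the first assertion.

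For the "moreover" part, I would assume $v$ extends to a continuous valuation on $\mathcal{A}=A[\varpi^{-1}]$. Extendability already forces $v(\varpi)\neq0$, i.e. $\varpi\notin\mathfrak{p}$, so $\varpi$ is a nonzero element of $V$. The key point is the quantitative form of continuity: since the sets $\varpi^{n}A$ form a neighbourhood basis of $0$ in $\mathcal{A}$ and $v(f)\leq1$ for $f\in A$, continuity of $v$ means that for every $\gamma$ in the value group of $v$ there is an $n$ with $v(\varpi)^{n}<\gamma$. In particular $v(\varpi)\neq1$, hence $v(\varpi)<1$ and $v(\varpi^{-1})=v(\varpi)^{-1}>1$, so $\varpi^{-1}\notin V$ and $\varpi$ is a non-unit of $V$. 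Moreover, $x\in\varpi^{n}V$ holds precisely when $v(x)\leq v(\varpi)^{n}$, so applying the displayed statement to $\gamma=v(x)$ for an arbitrary nonzero $x\in K$ shows $x\notin\varpi^{n}V$ for suitable $n$; hence $\bigcap_{n}\varpi^{n}V=0$ and $V$ is $\varpi$-adically separated. Therefore $\varpi$ is a pseudo-uniformizer of $V$ (in agreement with \cite{FK}, Ch.~0, Proposition 6.7.2, which then also gives $\Frac(V)=V[\varpi^{-1}]$). Finally, if $v$ has rank $1$ its value group is archimedean, so $V$ has exactly the two primes $0$ and its maximal ideal, i.e. it is already a rank-$1$ valuation ring.

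The bulk of this is the standard dictionary between valuations on a ring, their supports, and valuation subrings of a fraction field, and is routine. The one step I expect to need genuine care is the extraction of the quantitative consequence of continuity, since it is exactly this that simultaneously yields $v(\varpi)<1$ (hence the non-unit property) and the $\varpi$-adic separatedness of $V$. I note that one could instead try to work only from $v(\varpi)\neq0$, at the cost of having to coarsen $v$ along a suitable convex subgroup of its value group so as to make $\varpi$ a non-unit of the resulting valuation ring and that ring separated; the continuity hypothesis in the lemma makes this detour unnecessary.
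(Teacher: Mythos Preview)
Your proof is correct and follows the same construction as the paper: form the support $\mathfrak{p}=\ker(v)$, pass to $K=\Frac(A/\mathfrak{p})$, and take $V$ to be the valuation ring of the induced valuation on $K$. The one point of divergence is in handling the pseudo-uniformizer condition. The paper only extracts $0<v(\varpi)<1$ from continuity and then passes to the quotient $V'=V/\bigcap_{n}\varpi^{n}V$ to force $\varpi$-adic separatedness, whereas you use the full cofinality consequence of continuity (for every $\gamma$ in the value group there is $n$ with $v(\varpi)^{n}<\gamma$) to show directly that $\bigcap_{n}\varpi^{n}V=0$, so the original $V$ already works without quotienting. Your route is slightly cleaner and in fact supplies the justification the paper's last step implicitly needs: if $\bigcap_{n}\varpi^{n}V$ were nonzero, the valuation determined by $V'$ would pull back to a proper coarsening of $v$ rather than to $v$ itself, so the paper's claim that it ``pulls back to the valuation $v$ on $A$'' tacitly relies on exactly the separatedness you establish.
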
\begin{proof}The kernel $\ker(v)$ is a prime ideal and $v$ induces a valuation $\overline{v}$ on the integral domain $A/\ker(v)$; we can then extend this induced valuation to $\Frac(A/\ker(v))$, which corresponds to a valuation ring $V$ of $\Frac(A/\ker(v))$ containing $A/\ker(v)$. If $v$ is of rank $1$, then so is $\overline{v}$ and then the corresponding valuation ring $V$ of $\Frac(A/\ker(v))$ has rank $1$. If $v$ extends to a continuous valuation on $A[\varpi^{-1}]$, then $0<v(\varpi)<1$ and thus $0<\overline{v}(\varpi)<1$. It follows that $\varpi$ belongs to the maximal ideal of the valuation ring $V$ corresponding to the valuation $\overline{v}$ on $\Frac(A/\ker(v))$. We can then set $V'=V/\bigcap_{n>0}\varpi^{n}V$ and then $V'$ is a valuation ring with pseudo-uniformizer $\varpi$ such that the (suitably normalized) valuation on $\Frac(V')$ determined by $V'$ pulls back to the valuation $v$ on $A$.\end{proof}  
For $A$ and $\varpi$ as before, we denote by $J_{\varpi}(A)$ the set of all $A$-submodules of $A[\varpi^{-1}]$. For any $I\in J_{\varpi}(A)$ and any valuation $v$ on $A[\varpi^{-1}]$ we write \begin{equation*}I^{v}:=\{\, f\in A[\varpi^{-1}]\mid v(f)\leq v(g)~\textrm{for some} g\in I\,\}.\end{equation*}Note that $I\subseteq I^{v}$, for every $I\in J_{\varpi}(A)$. This $A$-submodule of $A[\varpi^{-1}]$ can be equivalently described by means of valuation rings over $A$.     
\begin{lemma}\label{Valuative ideals}Let $I\in J_{\varpi}(A)$ and let $v$ be a continuous valuation on $A[\varpi^{-1}]$. Let $A\to V$ be a valuation ring over $A$ with pseudo-uniformizer $\varpi$ such that $v$ is the restriction along $A[\varpi^{-1}]\to \Frac(V)$ of the valuation on $\Frac(V)$ determined by $V$. Then $f\in I^{v}$ if and only if the image of $f$ in $V$ belongs to the $V$-submodule of $\Frac(V)$ generated by the image of $I$.\end{lemma}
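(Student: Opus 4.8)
The plan is to unwind both sides of the asserted equivalence in terms of the valuation $w$ of $\Frac(V)$ determined by $V$ — that is, the valuation with $V=\{\,x\in\Frac(V)\mid w(x)\le 1\,\}$ — and then to read off the equivalence from this defining property of $V$ together with the nonarchimedean inequality for $w$. To set things up: since $\varpi$ is a pseudo-uniformizer of $V$, it is in particular a non-zero element of $V$ and $\Frac(V)=V[\varpi^{-1}]$ by \cite{FK}, Ch.~0, Proposition 6.7.2; hence the structure map $A\to V$ extends to a ring homomorphism $\phi\colon A[\varpi^{-1}]\to\Frac(V)$, and by hypothesis $v=w\circ\phi$. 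I write $M\subseteq\Frac(V)$ for the $V$-submodule generated by $\phi(I)$ — this is what the statement calls the $V$-submodule of $\Frac(V)$ generated by the image of $I$, and "the image of $f$ in $V$" is to be read as the element $\phi(f)\in\Frac(V)$.

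First I would prove the implication $\phi(f)\in M\Rightarrow f\in I^{v}$. If $\phi(f)=0$, then $v(f)=0=w(0)$ and $0\in I$, so $f\in I^{v}$. Otherwise write $\phi(f)=\sum_{i=1}^{n}a_{i}\phi(g_{i})$ with $n\ge 1$, $a_{i}\in V$ and $g_{i}\in I$. Since $w(a_{i})\le 1$ for every $i$, the nonarchimedean inequality for $w$ yields
\[v(f)=w(\phi(f))\le\max_{1\le i\le n}w\bigl(a_{i}\phi(g_{i})\bigr)=\max_{1\le i\le n}w(a_{i})w(\phi(g_{i}))\le\max_{1\le i\le n}w(\phi(g_{i}))=\max_{1\le i\le n}v(g_{i}),\]
so $v(f)\le v(g_{j})$ for some index $j$ with $g_{j}\in I$; that is, $f\in I^{v}$.

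For the converse, suppose $f\in I^{v}$ and pick $g\in I$ with $v(f)\le v(g)$. If $v(g)=0$, then $v(f)=0$, hence $\phi(f)=0\in M$. If $v(g)\ne 0$, then $\phi(g)\ne 0$ in $\Frac(V)$ and $w\bigl(\phi(f)/\phi(g)\bigr)=v(f)/v(g)\le 1$, so $\phi(f)/\phi(g)\in V$ by the defining property of the valuation ring $V$; therefore $\phi(f)=\bigl(\phi(f)/\phi(g)\bigr)\phi(g)\in V\phi(g)\subseteq M$. This closes the loop.

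I do not expect a genuine obstacle: the argument is essentially a dictionary between divisibility in $V$ and inequalities of valuations. The only points needing attention are the mild abuse of language in "the image of $f$ in $V$" (the image lies in $\Frac(V)$, not in $V$ in general), the reduction in the forward direction from a finite $V$-linear combination to a single generator $g\in I$, and the degenerate cases $\phi(f)=0$ and $\phi(g)=0$ — all handled above, and these also take care of the extreme cases $I=\{0\}$ and $I=A[\varpi^{-1}]$. The sole external input is the identity $\Frac(V)=V[\varpi^{-1}]$ from \cite{FK}, which is used only to extend $A\to V$ to $A[\varpi^{-1}]$ and thereby to make sense of $v=w\circ\phi$.
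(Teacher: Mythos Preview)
Your proof is correct and follows essentially the same route as the paper: both reduce the question to the dictionary between divisibility in $V$ and the inequality $w(x/y)\le 1$. The paper's version is terser — it phrases the reduction as ``it suffices to prove that for a fractional ideal $I$ of a valuation ring $V$ \dots'' and then only writes out the converse direction explicitly — whereas you spell out both directions, the passage from a finite $V$-linear combination to a single generator via the ultrametric inequality, and the degenerate case $\phi(g)=0$.
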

\begin{proof}It suffices to prove that for a fractional ideal $I$ of a valuation ring $V$ and for $v$ the valuation on $\Frac(V)$ determined by $V$ we have $f\in I$ if and only if $v(f)\leq v(g)$ for some $g\in I$. But, in this case, if $v(f)\leq v(g)$ for some $g\in I$, then $v(\frac{f}{g})\leq1$ which means that $\frac{f}{g}\in V$. In particular, $f\in (g)_{V}\subseteq I$.\end{proof}
\begin{cor}\label{Valuative ideals are ideals}If $I$ is an ideal of $A$ and $v$ is a continuous valuation on $A[\varpi^{-1}]$, then $I^{v}$ is also an ideal of $A$. In other words, $I^{v}\subseteq A$.\end{cor}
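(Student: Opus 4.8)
The plan is to read the statement off of Lemma \ref{Valuative ideals}, the only extra input being that $I$, being an ideal of $A$, sits inside $A$ and not merely inside $A[\varpi^{-1}]$. Since $v$ is continuous it has $v(\varpi)\neq 0$, so Lemma \ref{Valuations and valuation rings} produces a valuation ring $A\to V$ over $A$ with pseudo-uniformizer $\varpi$ such that $v$ is the pullback along $A[\varpi^{-1}]\to\Frac(V)=V[\varpi^{-1}]$ of the valuation attached to $V$. Because $I\subseteq A$, the structure map $A\to V$ carries $I$ into $V$; hence the $V$-submodule $J$ of $\Frac(V)$ generated by the image of $I$ is an integral ideal of $V$, so in particular $J\subseteq V$. (Note also that, directly from the definition, $I^{v}$ is an $A$-submodule of $A[\varpi^{-1}]$, so the entire content of the corollary is the inclusion $I^{v}\subseteq A$.)

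By Lemma \ref{Valuative ideals}, an element $f\in A[\varpi^{-1}]$ lies in $I^{v}$ if and only if its image in $\Frac(V)$ lies in $J$. Combined with $J\subseteq V$ this shows that every $f\in I^{v}$ maps into $V$, i.e. satisfies $v(f)\leq 1$, and — since $V$ is a valuation ring and the inequality $v(f)\leq v(g)$ witnessing $f\in I^{v}$ is realised by a single $g\in I\subseteq A$ — that the image of $f$ in $\Frac(V)$ is actually a $V$-multiple of the image of that $g$. What remains is to transfer this back across $A[\varpi^{-1}]\to\Frac(V)$, namely to conclude $f\in A$; I expect this comparison between $A$ and the valuation ring $V$ to be the only step requiring real care. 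Concretely, writing $f=a\varpi^{-n}$ with $a\in A$ and $n\geq 0$ minimal, one must use that $(A,\varpi)$ is a pair of definition — and that $V$ is the specific valuation ring built in Lemma \ref{Valuations and valuation rings}, obtained from $v$ on $\Frac(A/\ker v)$ — to rule out $n>0$ given that $v(a)\leq v(\varpi^{n}g)$ with $\varpi^{n}g\in I$. Everything preceding that step is a formal consequence of Lemma \ref{Valuative ideals} together with the elementary observation that an ideal of $A$ is carried into $V$.
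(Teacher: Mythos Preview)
Your approach is identical to the paper's: produce $A\to V$ via Lemma~\ref{Valuations and valuation rings}, note that the image of $I$ generates an ideal of $V$ (since $I\subseteq A$ maps into $V$), and invoke Lemma~\ref{Valuative ideals}. The paper's proof ends at ``now apply Lemma~\ref{Valuative ideals}'' and treats $I^{v}\subseteq A$ as immediate from there.

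You are right to flag the remaining passage --- from ``the image of $f$ lies in $V$'' back to ``$f\in A$'' --- as the only step requiring care, and your caution is warranted: this step cannot be filled in general, because the statement as written is false. Take $A=\mathbb{Z}_p[x]$, $\varpi=p$, and the continuous rank-one valuation $v(f)=\lvert f(0)\rvert_p$ on $A[\varpi^{-1}]=\mathbb{Q}_p[x]$ (which satisfies $v\vert_A\leq 1$). For $I=(x)$ every $g\in I$ has $v(g)=0$, so $I^{v}=\{f\in\mathbb{Q}_p[x]:f(0)=0\}=x\,\mathbb{Q}_p[x]$, which contains $p^{-1}x\notin A$. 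The obstruction is exactly the one you isolate: the preimage of $V$ under $A[\varpi^{-1}]\to\Frac(V)$ is generally strictly larger than $A$, so knowing that the image of $f$ lands in an ideal of $V$ does not force $f\in A$. Your proposed endgame --- rule out $n>0$ from $v(a)\leq v(\varpi^{n}g)$ --- cannot succeed either, since a single valuation does not detect membership in $\varpi^{n}A$. So both your argument and the paper's share the same unfillable gap; fortunately the corollary does not appear to be used elsewhere in the paper.
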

\begin{proof}Let $A\to V$ be a valuation ring over $A$ with pseudo-uniformizer $\varpi$ such that $v$ is the restriction to $A[\varpi^{-1}]$ of the (suitably normalized) valuation on $\Frac(V)$ corresponding to $V$. If $I$ is an ideal of $A$, then the fractional ideal of $V$ generated by the image of $I$ in $\Frac(V)$ is an ideal of $V$. Now apply Lemma \ref{Valuative ideals}.\end{proof}
The following closure operation for $A$-submodules of $A[\varpi^{-1}]$ is an analog of the notion of integral closure of ideals. It is related to the spectral seminorm on $A[\varpi^{-1}]$ in the same way as the integral closure of ideals in a Noetherian ring is related to the asymptotic Samuel function (cf.~\cite{Swanson-Huneke}, Corollary 6.9.1). 

Our name for the closure operation was inspired by the notion of $\arc_{\varpi}$-covers due to Bhatt and Mathew (\cite{Bhatt-Mathew21}, Definition 6.14).
\begin{mydef}[$\arc_{\varpi}$-closure]We define a map \begin{equation*}J_{\varpi}(A)\to J_{\varpi}(A), I\mapsto I^{\arc_{\varpi}},\end{equation*}by setting \begin{equation*}I^{\arc_{\varpi}}\coloneqq\bigcap_{v\in\mathcal{M}(A[\varpi^{-1}])}I^{v}.\end{equation*}For any $I\in J_{\varpi}(A)$, we call $I^{\arc_{\varpi}}$ the $\arc_{\varpi}$-closure of $I$, and we call the map $I\mapsto I^{\arc_{\varpi}}$ the $\arc_{\varpi}$-operation. We say that an $A$-submodule $I$ of $A[\varpi^{-1}]$ is $\arc_{\varpi}$-closed if $I^{\arc_{\varpi}}=I$.\end{mydef}
The $\arc_{\varpi}$-operation is an example of a $\varpi$-$\ast$-operation in the sense of Section 5 of this paper. However, we do not need this observation right now.
\begin{example}[The $\arc_{\varpi}$-closure of the ring $A$]\label{Complete integral closure and arc-closure}If $I=A$, then \begin{equation*}I^{v}=A^{v}=\{\, f\in \mathcal{A}=A[\varpi^{-1}]\mid v(f)\leq1\,\}\end{equation*}for any $v\in\mathcal{M}(\mathcal{A})$ and thus $A^{\arc_{\varpi}}$ is precisely the closed unit ball $\mathcal{A}_{\vert\cdot\vert_{\spc,\varpi}\leq1}$ of the spectral seminorm $\vert\cdot\vert_{\spc,\varpi}$ on $\mathcal{A}$, by virtue of \cite{Berkovich}, Theorem 1.3.1. If the Tate ring $\mathcal{A}$ is uniform, then this closed unit ball is equal to $\mathcal{A}^{\circ}$, by \cite{Dine22}, Lemma 2.24, or, equivalently, to the complete integral closure of $A$ inside $\mathcal{A}$. In particular, if $A$ is completely integrally closed in $\mathcal{A}$, then $A^{\arc_{\varpi}}=A$. \end{example}
One can think of the notion of $\arc_{\varpi}$-closure as a coarsening of the classical notion of integral closure for finitely generated ideals.
\begin{lemma}\label{Integral closure is contained in the arc-closure}For any finitely generated ideal $I$ of $A$, the integral closure $\overline{I}$ of $I$ in $A$ is contained in the $\arc_{\varpi}$-closure $I^{\arc_{\varpi}}$.\end{lemma}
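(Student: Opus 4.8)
The plan is to unwind both closure operations and verify the inclusion one valuation at a time. Since $I^{\arc_\varpi}=\bigcap_{v\in\mathcal{M}(A[\varpi^{-1}])}I^{v}$, it suffices to fix $v\in\mathcal{M}(A[\varpi^{-1}])$ and $f\in\overline{I}$ and to produce some $g\in I$ with $v(f)\le v(g)$, which is exactly the condition $f\in I^{v}$. By the defining property of the integral closure of an ideal there are an integer $n\ge 1$ and elements $a_{i}\in I^{i}$ for $1\le i\le n$ such that $f^{n}+a_{1}f^{n-1}+\dots+a_{n}=0$ in $A$.

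The key step is to feed this equation of integral dependence into $v$. Rewriting it as $f^{n}=-\sum_{i=1}^{n}a_{i}f^{n-i}$ and using that $v$ is a rank one valuation (written multiplicatively), the ultrametric inequality gives $v(f)^{n}=v(f^{n})\le\max_{1\le i\le n}v(a_{i})\,v(f)^{n-i}$, so that $v(f)^{n}\le v(a_{i_{0}})\,v(f)^{n-i_{0}}$ for some index $i_{0}$. If $v(f)=0$ the inequality $v(f)\le v(g)$ holds for any $g\in I$ (and in the degenerate case $I=0$ one has $\overline{I}=\operatorname{nil}(A)$, on which every element of $\mathcal{M}(A[\varpi^{-1}])$ vanishes by multiplicativity), so I may assume $v(f)\ne 0$ and cancel to get $v(f)^{i_{0}}\le v(a_{i_{0}})$.

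It then remains to bound $v(a_{i_{0}})$ above by a power of the value of a single element of $I$. Since $I^{i_{0}}$ consists of finite sums $\sum_{k}b_{k,1}\cdots b_{k,i_{0}}$ of products of $i_{0}$ elements $b_{k,l}\in I$, multiplicativity of $v$ and the ultrametric inequality yield $v(a_{i_{0}})\le\max_{k}\prod_{l}v(b_{k,l})\le\max_{k,l}v(b_{k,l})^{i_{0}}=v(g)^{i_{0}}$, where $g\in I$ realizes $\max_{k,l}v(b_{k,l})$. (If one writes $I=(g_{1},\dots,g_{r})$ one may take $g$ among the generators $g_{j}$; the finite generation hypothesis is convenient here but not essential for this particular inclusion.) Combining with the previous step gives $v(f)^{i_{0}}\le v(g)^{i_{0}}$, hence $v(f)\le v(g)$ because $t\mapsto t^{i_{0}}$ is monotone on $\mathbb{R}_{\geq 0}$. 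Thus $f\in I^{v}$, and since $v$ was arbitrary, $f\in I^{\arc_\varpi}$.

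I do not expect a serious obstacle: this is the standard translation between the equational definition of the integral closure of an ideal and the valuative picture, and the only points needing a little care are the bookkeeping in the ultrametric estimate (and using that the valuation is genuinely multiplicative so that $v(f^{n})=v(f)^{n}$) together with the degenerate cases $v(f)=0$ and $I=0$. An alternative route would pass through Lemma \ref{Valuative ideals}: realizing $v$ by a valuation ring $A\to V$ with pseudo-uniformizer $\varpi$, the equation of integral dependence shows the image of $f$ lies in the integral closure of the ideal $IV$ of $V$, and one concludes by the classical fact that a finitely generated ideal of a valuation ring is principal, hence integrally closed; but the direct estimate above is shorter.
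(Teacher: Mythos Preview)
Your proof is correct and follows essentially the same approach as the paper: fix $v$, feed an equation of integral dependence into the ultrametric inequality, and deduce $v(f)^{i}\le v(a_{i})\le v(g)^{i}$ for some $g\in I$. The only cosmetic difference is that the paper packages the right-hand bound as $v(I)=\max_{g\in I}v(g)$ (invoking finite generation to ensure the maximum exists), whereas you extract a specific $g$ from a finite-sum expression for $a_{i_{0}}$; as you correctly note, your version shows that the finite generation hypothesis is not actually needed for this inclusion.
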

\begin{proof}Since $v(f)\leq1$ for all $f\in A$ and $v\in \mathcal{M}(A[\varpi^{-1}])$ and since $I$ is assumed to be finitely generated, the maximum $v(I)=\max_{f\in I}v(f)$ exists for every $v\in\mathcal{M}(A[\varpi^{-1}])$, and then \begin{equation*}I^{v}=\{\, f\in A[\varpi^{-1}]\mid v(f)\leq v(I)\,\}\end{equation*}for every $v\in \mathcal{M}(A[\varpi^{-1}])$. Let $f\in \overline{I}$ and choose an equation of integral dependence \begin{equation*}f^{n}+a_{1}f^{n-1}+\dots+a_{n-1}f+a_{n}=0\end{equation*}with $a_{i}\in I^{i}$ for all $i=1,\dots,n$. Then, for every $v\in\mathcal{M}(A[\varpi^{-1}])$, there exists an index $i$ such that \begin{equation*}v(f)^{n}=v(f^{n})\leq v(a_{i})v(f)^{n-i}\end{equation*}and thus $v(f)^{i}\leq v(a_{i})\leq v(I)^{i}$. It follows that $v(f)\leq v(I)$ and thus $f\in I^{v}$ for every $v\in\mathcal{M}(A[\varpi^{-1}])$. In other words, $f\in I^{\arc_{\varpi}}$.\end{proof}
In the world of Noetherian rings (and under a mild additional assumption) the $\arc_{\varpi}$-closure of ideals coincides with integral closure.
\begin{prop}\label{Integral closure and arc-closure}If $A$ is Noetherian and $\varpi$-adically Zariskian (i.e., $\varpi$ belongs to the Jacobson radical of $A$), then for every ideal $I$ of $A$ we have $I^{\arc_{\varpi}}=\overline{I}$, the integral closure of the ideal $I$.\end{prop}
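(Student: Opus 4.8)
The inclusion $\overline{I}\subseteq I^{\arc_{\varpi}}$ is immediate from Lemma \ref{Integral closure is contained in the arc-closure}, since $A$ Noetherian forces $I$ to be finitely generated. For the reverse inclusion (which, by Corollary \ref{Valuative ideals are ideals}, again takes place inside $A$) it suffices to show: if $f\in A$ and $f\notin\overline{I}$, then there is $v\in\mathcal{M}(A[\varpi^{-1}])$ with $v(f)>v(I)$, where $v(I)=\max_{g\in I}v(g)$ (this maximum exists as $I$ is finitely generated and $v\leq 1$ on $A$); indeed, then $f\notin I^{v}$, and since $I^{\arc_{\varpi}}=\bigcap_{w\in\mathcal{M}(A[\varpi^{-1}])}I^{w}\subseteq I^{v}$ we get $f\notin I^{\arc_{\varpi}}$. (We tacitly take $I\subsetneq A$; for the unit ideal $\overline{A}=A$, while $A^{\arc_{\varpi}}$ is the closed unit ball of the spectral seminorm and can be strictly larger, cf.\ Example \ref{Complete integral closure and arc-closure}, so this case must be excluded or $A$ assumed completely integrally closed in $A[\varpi^{-1}]$.) First I would reduce to $A$ a Noetherian local domain: since $\varpi$ is a non-zero-divisor it avoids every $\mathfrak{p}\in\Min(A)$, so its image in the Noetherian domain $A/\mathfrak{p}$ is again a non-zero-divisor lying in $\operatorname{Jac}(A/\mathfrak{p})$, and $\overline{I}$ is the intersection over $\mathfrak{p}\in\Min(A)$ of the preimages of $\overline{I(A/\mathfrak{p})}$ (\cite{Swanson-Huneke}, Prop.~1.1.5); a witnessing valuation in $\mathcal{M}((A/\mathfrak{p})[\bar\varpi^{-1}])$ pulls back along $A\twoheadrightarrow A/\mathfrak{p}$ to one in $\mathcal{M}(A[\varpi^{-1}])$ by Lemma \ref{Continuous means normalized}. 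Then, as integral closure commutes with localization and $\varpi$ lies in every maximal ideal, restriction of valuations from $A_{\mathfrak{m}}[\varpi^{-1}]$ reduces us to $A=A_{\mathfrak{m}}$ Noetherian local with $\varpi\in\mathfrak{m}$ and $I\subsetneq A$.

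Next I would upgrade an arbitrary witnessing valuation to one dominating $A$. By the valuative criterion for integral closure (\cite{Swanson-Huneke}, Thm.~6.8.3) there is a valuation ring $V$ of $K=\Frac(A)$ with $A\subseteq V$ and $f\notin IV$, i.e.\ $v(f)>v(I)$. If the center $\mathfrak{q}=\mathfrak{m}_{V}\cap A$ is not $\mathfrak{m}$, I would apply a standard composition trick: choose a valuation ring $U$ of $\Frac(A/\mathfrak{q})$ dominating the Noetherian local domain $A/\mathfrak{q}$, extend it to a valuation ring $\widetilde U$ of the residue field $\kappa(V)\supseteq\Frac(A/\mathfrak{q})$, and let $W$ be the preimage of $\widetilde U$ under $V\twoheadrightarrow\kappa(V)$. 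One checks that $W$ is a valuation ring of $K$ with $A\subseteq W\subseteq V$, that the center of $W$ on $A$ is $\mathfrak{m}$, and that $IW\subseteq IV$ so that $f\notin IW$. In all cases we obtain a valuation ring $W\supseteq A$ dominating $A$ with $w(f)>w(I)$.

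Now I would discretize. Since $W$ dominates $A$, every nonzero element of $\mathfrak{m}$ has $w$-value in $(0,1)$, so $w(\mathfrak{m}^{N})=w(\mathfrak{m})^{N}\to 0$; picking $N$ with $w(\mathfrak{m})^{N}<w(f)$ (possible since $f\neq 0$ forces $w(f)>0$, and $w(f)>w(I)$), we get $w(f)>\max(w(I),w(\mathfrak{m})^{N})=w(I+\mathfrak{m}^{N})$, whence $f\notin\overline{I+\mathfrak{m}^{N}}$ because $W\supseteq A$ is a valuation ring. But $I+\mathfrak{m}^{N}$ is $\mathfrak{m}$-primary (it contains $\mathfrak{m}^{N}$ and, as $I\subsetneq A$, is contained in $\mathfrak{m}$), so by Rees's theorem (\cite{Swanson-Huneke}, Thm.~10.2.2) it has finitely many Rees valuations $w_{1},\dots,w_{s}$, each a discrete rank-one valuation whose valuation ring contains $A$, with $\overline{I+\mathfrak{m}^{N}}=\bigcap_{j}\{x\in A\mid w_{j}(x)\leq w_{j}(I+\mathfrak{m}^{N})\}$; moreover the center of each $w_{j}$ on $\Spec A$, being a prime containing $\sqrt{I+\mathfrak{m}^{N}}=\mathfrak{m}$, equals $\mathfrak{m}$. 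As $f\notin\overline{I+\mathfrak{m}^{N}}$, some $w_{j}$ satisfies $w_{j}(f)>w_{j}(I+\mathfrak{m}^{N})\geq w_{j}(I)$. Since $\varpi\in\mathfrak{m}$ is nonzero and lies in the center of $w_{j}$, we have $w_{j}(\varpi)\in(0,1)$; rescaling so $w_{j}(\varpi)=\tfrac12$, Lemma \ref{Continuous means normalized} gives $w_{j}\in\mathcal{M}(A[\varpi^{-1}])$, and $w_{j}(f)>w_{j}(I)$, as needed.

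The main obstacle is precisely the second step together with the rank issue in the third: an abstract witness of $f\notin\overline{I}$ need not see $\varpi$ and need not be of rank one, and the composition trick only fixes the first defect (at the cost of possibly raising the rank). The point is that replacing $I$ by its $\mathfrak{m}$-primary approximations $I+\mathfrak{m}^{N}$ forces all relevant Rees valuations to be centered at $\mathfrak{m}$ — hence to see $\varpi$ — and simultaneously makes them discrete of rank one, so that they land in the Berkovich spectrum; the hypothesis that $\varpi$ lies in the Jacobson radical is exactly what guarantees that valuations centered at maximal ideals (equivalently, the Rees valuations of the ideals $I+\mathfrak{m}^{N}$, after localizing at a suitable maximal ideal) see $\varpi$.
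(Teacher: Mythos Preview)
Your main argument is correct but considerably more elaborate than the paper's. After the same reduction modulo minimal primes, the paper directly invokes \cite{Swanson-Huneke}, Prop.~6.8.4, which already expresses $\overline{J}$ for any ideal $J$ in a Noetherian domain $D$ as $\bigcap_V(JV\cap D)$, with $V$ ranging over the discrete rank-one valuation rings of $\Frac(D)$ containing $D$ whose center on $D$ is a \emph{maximal} ideal. The Zariskian hypothesis then forces $0<v_V(\varpi)<1$ for each such $V$, so after $\varpi$-normalization each $v_V$ lands in $\mathcal{M}(A[\varpi^{-1}])$, and the desired inclusion follows immediately. Your localization at a maximal ideal, the composition trick to force the witnessing valuation to dominate $A$, and the subsequent passage to Rees valuations of the $\mathfrak{m}$-primary approximations $I+\mathfrak{m}^N$ are all valid, but together they amount to reproving Prop.~6.8.4 by hand.

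Your caveat about $I=A$ is correct, and in fact the issue is broader than you indicate: your appeal to Corollary~\ref{Valuative ideals are ideals} to ensure $I^{\arc_\varpi}\subseteq A$ is not justified in general, as that corollary can fail even for proper $I$ when $A$ is not integrally closed in $A[\varpi^{-1}]$. For instance, with $A=k[[t^2,t^5]]$, $\varpi=t^2$, $I=(\varpi)$, one finds $I^{\arc_\varpi}=t^2k[[t]]\ni t^3\notin A$, whereas $\overline{I}=(t^2,t^5)_A$. What both your argument and the paper's actually establish is $I^{\arc_\varpi}\cap A=\overline{I}$; the paper's own proof has the same oversight (its claim that the preimage of $((I+\mathfrak{p})/\mathfrak{p})V$ ``in $A$'' equals $I^{v_V}$ is off by an intersection with $A$), so this is a gap in the proposition as stated rather than in your method.
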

\begin{proof}In view of Lemma \ref{Integral closure is contained in the arc-closure}, there remains only one inclusion to verify. By \cite{Swanson-Huneke}, Proposition 1.1.5, $f\in\overline{I}$ if and only if the image of $f$ in $A/\mathfrak{p}$ is in the integral closure of $(I+\mathfrak{p})/\mathfrak{p}$ for every minimal prime ideal $\mathfrak{p}$ of $A$. Since $\varpi$ is a non-zero-divisor in $A$, it does not belong to any minimal prime ideal $\mathfrak{p}$ of $A$. Since $A$ is $\varpi$-adically Zariskian, so is $A/\mathfrak{p}$ for every minimal prime $\mathfrak{p}$ of $A$. In particular, $\varpi$ is a non-zero non-unit in $A/\mathfrak{p}$ for every $\mathfrak{p}$. Let $\mathfrak{p}$ be some minimal prime ideal of $A$. Since $A/\mathfrak{p}$ is Noetherian, the integral closure of $(I+\mathfrak{p})/\mathfrak{p}$ in $A/\mathfrak{p}$ is equal to the intersection \begin{equation*}\bigcap_{V\in\mathcal{V}^{(\mathfrak{p})}}((I+\mathfrak{p})/\mathfrak{p})V\cap (A/\mathfrak{p}),\end{equation*}where $\mathcal{V}^{(\mathfrak{p})}$ is the family of all discrete rank $1$ valuation rings $V$ between $A/\mathfrak{p}$ and its field of fractions with the property that the maximal ideal of $V$ contracts to a maximal ideal of $A/\mathfrak{p}$ (see \cite{Swanson-Huneke}, Proposition 6.8.4). Since $A/\mathfrak{p}$ is $\varpi$-adically Zariskian, $\varpi$ is a non-unit in every $V\in\mathcal{V}^{(\mathfrak{p})}$. Moreover, since all $V\in\mathcal{V}^{(\mathfrak{p})}$ are of rank $1$, they are $a$-adically separated for every non-unit $a\in V$. In particular, $\varpi$ is a pseudo-uniformizer of each $V\in\mathcal{V}^{(\mathfrak{p})}$. By Lemma \ref{Valuative ideals}, for any minimal prime ideal $\mathfrak{p}$ and any $V\in\mathcal{V}^{(\mathfrak{p})}$, the pre-image of $((I+\mathfrak{p})/\mathfrak{p})V$ in $A$ is equal to $I^{v_{V}}$, where $v_{V}$ is the continuous $\varpi$-normalized rank $1$ valuation on $A$ obtained by pulling back the $\varpi$-normalized rank $1$ valuation on $\Frac(V)$ corresponding to the valuation ring $V$. We conclude that \begin{equation*}\overline{I}=\bigcap_{\mathfrak{p}\in\Min(A)}\bigcap_{V\in\mathcal{V}^{(\mathfrak{p})}}I^{v_{V}},\end{equation*}and, in particular, $I^{\arc_{\varpi}}\subseteq \overline{I}$. \end{proof}  
\begin{prop}\label{Arc-closure of ideals and rings}For any $f\in A$, $n\geq1$, we have \begin{equation*}f\in (\varpi^{n})_{A}^{\arc_{\varpi}}\end{equation*}if and only if \begin{equation*}\frac{f}{\varpi^{n}}\in \mathcal{A}_{\vert\cdot\vert_{\spc,\varpi}\leq1}.\end{equation*}\end{prop}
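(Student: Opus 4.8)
The plan is to reduce both conditions to statements about the values $v(f)$ as $v$ ranges over the Berkovich spectrum $\mathcal{M}(\mathcal{A})$, and then to invoke the identification of $A^{\arc_{\varpi}}$ with the closed unit ball of the spectral seminorm from Example~\ref{Complete integral closure and arc-closure} (equivalently, Berkovich's description of the spectral seminorm, Theorem~\ref{Spectral seminorm 2}).

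First I would compute the valuative ideal $(\varpi^{n})_{A}^{v}$ for a fixed $v\in\mathcal{M}(\mathcal{A})$. Any $g\in(\varpi^{n})_{A}$ has the form $g=a\varpi^{n}$ with $a\in A$, and by Lemma~\ref{Continuous means normalized} the restriction of $v$ to $A$ takes values in $[0,1]$ while $v(\varpi)=\frac{1}{2}$; hence $v(g)=v(a)v(\varpi)^{n}\leq 2^{-n}$, with equality when $g=\varpi^{n}$. Therefore the supremum of $v$ over $(\varpi^{n})_{A}$ is attained at the generator $\varpi^{n}$ and
\[
(\varpi^{n})_{A}^{v}=\{\, h\in\mathcal{A}\mid v(h)\leq 2^{-n}\,\}=\varpi^{n}\cdot\{\, h\in\mathcal{A}\mid v(h)\leq 1\,\}=\varpi^{n}A^{v}.
\]

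Next I would intersect over $v\in\mathcal{M}(\mathcal{A})$. Since $\varpi$ is a unit in $\mathcal{A}=A[\varpi^{-1}]$, multiplication by $\varpi^{n}$ is a bijection of $\mathcal{A}$ and so commutes with arbitrary intersections of subsets; thus
\[
(\varpi^{n})_{A}^{\arc_{\varpi}}=\bigcap_{v\in\mathcal{M}(\mathcal{A})}(\varpi^{n})_{A}^{v}=\bigcap_{v\in\mathcal{M}(\mathcal{A})}\varpi^{n}A^{v}=\varpi^{n}\bigcap_{v\in\mathcal{M}(\mathcal{A})}A^{v}=\varpi^{n}A^{\arc_{\varpi}}.
\]
By Example~\ref{Complete integral closure and arc-closure}, $A^{\arc_{\varpi}}$ is precisely the closed unit ball $\mathcal{A}_{\vert\cdot\vert_{\spc,\varpi}\leq1}$. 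Hence $f\in(\varpi^{n})_{A}^{\arc_{\varpi}}$ if and only if $f\in\varpi^{n}\mathcal{A}_{\vert\cdot\vert_{\spc,\varpi}\leq1}$, which (again because $\varpi$ is a unit in $\mathcal{A}$) is equivalent to $f/\varpi^{n}\in\mathcal{A}_{\vert\cdot\vert_{\spc,\varpi}\leq1}$, as claimed.

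There is no substantial obstacle here; the one point deserving care is the first step, where the normalization $v(a)\leq 1$ for $a\in A$ (Lemma~\ref{Continuous means normalized}) is exactly what guarantees that the supremum defining the valuative ideal of the principal ideal $(\varpi^{n})_{A}$ is attained at the generator. One can also bypass Example~\ref{Complete integral closure and arc-closure} and argue directly: $f\in(\varpi^{n})_{A}^{\arc_{\varpi}}$ iff $v(f)\leq 2^{-n}$ for all $v\in\mathcal{M}(\mathcal{A})$ iff $\vert f\vert_{\spc,\varpi}=\sup_{v}v(f)\leq 2^{-n}$ by Theorem~\ref{Spectral seminorm 2}, and since each $v$ is multiplicative with $v(\varpi)=\frac{1}{2}$ one has $\vert f/\varpi^{n}\vert_{\spc,\varpi}=2^{n}\vert f\vert_{\spc,\varpi}$, so this says exactly $f/\varpi^{n}\in\mathcal{A}_{\vert\cdot\vert_{\spc,\varpi}\leq1}$. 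This mirrors the classical equivalence in Proposition~\ref{Starting point 2}.
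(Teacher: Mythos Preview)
Your proposal is correct and essentially matches the paper's approach. In fact, the direct argument you sketch at the end (use multiplicativity of each $v$ and $v(\varpi)=\tfrac{1}{2}$ to pass from $v(f)\leq v(\varpi^{n})$ to $v(f/\varpi^{n})\leq 1$, then invoke Berkovich's Theorem~1.3.1) is exactly the paper's two-line proof; your first route via $(\varpi^{n})_{A}^{\arc_{\varpi}}=\varpi^{n}A^{\arc_{\varpi}}$ is just a slightly longer repackaging of the same observation.
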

\begin{proof}For any $v\in\mathcal{M}(\mathcal{A})$ we have $v(f)\leq v(\varpi^{n})$ if and only if $v(\frac{f}{\varpi^{n}})\leq1$. Now apply \cite{Berkovich}, Theorem 1.3.1.\end{proof}
\begin{cor}\label{Arc-closure of ideals and rings 2}The ring $A$ is completely integrally closed in $\mathcal{A}$ if and only if \begin{equation*}(\varpi^{n})_{A}^{\arc_{\varpi}}=(\varpi^{n})_{A}\end{equation*}for some (or, equivalently, all) $n\geq1$.\end{cor}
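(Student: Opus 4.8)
The plan is to reduce the statement to Proposition~\ref{Arc-closure of ideals and rings}, which asserts that for $f\in A$ and $n\geq1$ one has $f\in(\varpi^{n})_{A}^{\arc_{\varpi}}$ if and only if $f/\varpi^{n}\in\mathcal{A}_{\vert\cdot\vert_{\spc,\varpi}\leq1}$, combined with two facts already in the excerpt: the closed unit ball $\mathcal{A}_{\vert\cdot\vert_{\spc,\varpi}\leq1}$ is the $\arc_{\varpi}$-closure $A^{\arc_{\varpi}}$ of $A$ (Example~\ref{Complete integral closure and arc-closure}), and the complete integral closure of $A$ in $\mathcal{A}$ is $\mathcal{A}^{\circ}$ (Lemma~\ref{Completely integrally closed}). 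I will use throughout that $v(\varpi)=\tfrac12$ for every $v\in\mathcal{M}(\mathcal{A})$ (Lemma~\ref{Continuous means normalized}), so that, $(\varpi^{n})_{A}$ being principal, $f\in(\varpi^{n})_{A}^{\arc_{\varpi}}$ is equivalent to $v(f)\leq2^{-n}$ for all $v\in\mathcal{M}(\mathcal{A})$; in particular $(\varpi^{n+1})_{A}^{\arc_{\varpi}}\subseteq(\varpi^{n})_{A}^{\arc_{\varpi}}\subseteq A$ for every $n$, the last inclusion by Corollary~\ref{Valuative ideals are ideals}.

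For the forward implication, suppose $A$ is completely integrally closed in $\mathcal{A}$. Then $A^{\arc_{\varpi}}=A$ by Example~\ref{Complete integral closure and arc-closure}, that is $\mathcal{A}_{\vert\cdot\vert_{\spc,\varpi}\leq1}=A$, so Proposition~\ref{Arc-closure of ideals and rings} gives, for every $n\geq1$ and $f\in A$, that $f\in(\varpi^{n})_{A}^{\arc_{\varpi}}$ iff $f/\varpi^{n}\in A$ iff $f\in(\varpi^{n})_{A}$. Hence $(\varpi^{n})_{A}^{\arc_{\varpi}}=(\varpi^{n})_{A}$ for all $n\geq1$.

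For the converse I would first promote the hypothesis from a single exponent to all exponents. Assume $(\varpi^{n})_{A}^{\arc_{\varpi}}=(\varpi^{n})_{A}$ for some $n\geq1$. If $a\in(\varpi)_{A}^{\arc_{\varpi}}$, then $a\in A$ and $v(a)\leq\tfrac12$ for all $v\in\mathcal{M}(\mathcal{A})$, hence $v(\varpi^{n-1}a)\leq2^{-n}$ for all such $v$, so $\varpi^{n-1}a\in(\varpi^{n})_{A}^{\arc_{\varpi}}=\varpi^{n}A$; since $\varpi$, and therefore $\varpi^{n-1}$, is a non-zero-divisor, this forces $a\in\varpi A$. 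Thus $(\varpi)_{A}^{\arc_{\varpi}}=(\varpi)_{A}$ (the inclusion $\supseteq$ is automatic). An easy induction on $m$, with base case $m=1$ just treated, then gives $(\varpi^{m})_{A}^{\arc_{\varpi}}=(\varpi^{m})_{A}$ for all $m\geq1$: for $m\geq2$, an element $f\in(\varpi^{m})_{A}^{\arc_{\varpi}}$ lies in $(\varpi^{m-1})_{A}^{\arc_{\varpi}}=\varpi^{m-1}A$ by the inductive hypothesis, say $f=\varpi^{m-1}a$ with $a\in A$, and then $v(a)\leq\tfrac12$ for all $v$ forces $a\in(\varpi)_{A}^{\arc_{\varpi}}=\varpi A$, so $f\in\varpi^{m}A$.

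It then remains to prove $\mathcal{A}^{\circ}=A$. Given $g\in\mathcal{A}^{\circ}$, power-boundedness supplies $c\geq0$ with $g^{k}\in\varpi^{-c}A$ for all $k$, so $\lVert g^{k}\rVert_{A,\varpi}\leq2^{c}$ and $\vert g\vert_{\spc,\varpi}=\lim_{k}\lVert g^{k}\rVert_{A,\varpi}^{1/k}\leq1$, i.e.\ $g\in\mathcal{A}_{\vert\cdot\vert_{\spc,\varpi}\leq1}$. Writing $g=f/\varpi^{m}$ with $f\in A$ and $m\geq0$, Proposition~\ref{Arc-closure of ideals and rings} yields $f\in(\varpi^{m})_{A}^{\arc_{\varpi}}=(\varpi^{m})_{A}=\varpi^{m}A$, hence $g\in A$. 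Together with the automatic inclusion $A\subseteq\mathcal{A}^{\circ}$ (elements of a ring of definition are power-bounded) this gives $\mathcal{A}^{\circ}=A$, i.e.\ $A$ is completely integrally closed in $\mathcal{A}$ by Lemma~\ref{Completely integrally closed}. The only step beyond assembling earlier results is the passage from one exponent to all exponents, and there the non-zero-divisor hypothesis on $\varpi$ does the work with very little effort; I do not expect any genuine obstacle.
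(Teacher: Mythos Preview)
Your proof is correct and follows essentially the same route as the paper's: both directions reduce to Proposition~\ref{Arc-closure of ideals and rings} together with the identification $A^{\arc_{\varpi}}=\mathcal{A}_{\vert\cdot\vert_{\spc,\varpi}\leq1}$ from Example~\ref{Complete integral closure and arc-closure}. Your handling of the ``some $n$ implies all $n$'' step is in fact more explicit than the paper's (which simply asserts that it suffices to treat $n=1$ without justifying the reduction from a general $n$ down to $1$), and your converse shows $\mathcal{A}^{\circ}=A$ directly whereas the paper shows $A$ equals the closed unit ball; these are minor cosmetic differences rather than genuinely different approaches.
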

\begin{proof}We begin by showing that $(\varpi^{n})_{A}$ is $\arc_{\varpi}$-closed for all $n\geq1$ if it is closed for some $n$. For this it suffices to suppose that $(\varpi)_{A}$ is $\arc_{\varpi}$-closed and prove that $(\varpi^{n})_{A}$ is $\arc_{\varpi}$-closed for $n\geq2$. Let $f\in(\varpi^{n})_{A}^{\arc_{\varpi}}$ for some $n\geq2$, i.e., $v(f)\leq v(\varpi^{n})$ for all $v\in\mathcal{M}(\mathcal{A})$. Then $\frac{f}{\varpi^{n-1}}\in\mathcal{A}$ satisfies $v(\frac{f}{\varpi^{n-1}})\leq v(\varpi)$ for all $v\in\mathcal{M}(\mathcal{A})$. Since $(\varpi)_{A}$ is $\arc_{\varpi}$-closed, $\frac{f}{\varpi^{n-1}}\in (\varpi)_{A}$ and thus $f\in (\varpi^{n})_{A}$, as claimed.

Now let $f\in \mathcal{A}$ be an element and suppose that the ideal $(\varpi^{n})_{A}$ is $\arc_{\varpi}$-closed for all $n\geq1$. If $\frac{f}{\varpi^{n}}\in \mathcal{A}$ (with $f\in A$, $n\geq1$) belongs to the closed unit ball of the spectral seminorm $\vert\cdot\vert_{\spc,\varpi}$, then, by Proposition \ref{Arc-closure of ideals and rings}, $f\in (\varpi^{n})_{A}^{\arc_{\varpi}}=(\varpi^{n})_{A}$ and thus $\frac{f}{\varpi^{n}}$ actually belongs to $A$. This shows that $A$ is equal to the closed unit ball $\mathcal{A}_{\vert\cdot\vert_{\spc,\varpi}\leq1}$ of $\vert\cdot\vert_{\spc,\varpi}$ and is thus completely integrally closed in $\mathcal{A}$. Conversely, if $A$ is completely integrally closed in $A[\varpi^{-1}]$, then $A=A[\varpi^{-1}]^{\circ}$ and $A[\varpi^{-1}]$ is a uniform Tate ring. By \cite{Dine22}, Lemma 2.24, this implies that $A=A_{\vert\cdot\vert_{\spc,\varpi}\leq1}$. If now $f\in (\varpi^{n})_{A}^{\arc_{\varpi}}$ for some $n\geq1$, then, by Proposition \ref{Arc-closure of ideals and rings}, $\frac{f}{\varpi^{n}}\in A_{\vert\cdot\vert_{\spc,\varpi}\leq1}=A$. But then $f\in (\varpi^{n})_{A}$, showing that $(\varpi^{n})_{A}$ is indeed $\arc_{\varpi}$-closed. \end{proof}
This is similar to the following basic property of integral closure, an analog of \cite{Swanson-Huneke}, Proposition 1.5.2.
\begin{prop}\label{Rings of integral elements}Let $A$, $\varpi$ be as before. Then for every (not necessarily proper) ideal $I$ of $A$ and every $n\geq1$, an element $f\in A$ belongs to the integral closure $\overline{\varpi^{n}\cdot I}$ of $\varpi^{n}\cdot I$ in $A$ if and only if $\frac{f}{\varpi^{n}}$ belongs to the integral closure of $I$ in $A[\varpi^{-1}]$. In particular, $A$ is integrally closed in $A[\varpi^{-1}]$ if and only if $(\varpi^{n})_{A}$ is integrally closed in $A$ for some (or, equivalently) every $n\geq1$.\end{prop}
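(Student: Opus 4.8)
The plan is to prove the main equivalence by a direct translation, carried out inside $\mathcal{A}=A[\varpi^{-1}]$, between equations of integral dependence over the ideal $\varpi^{n}I$ of $A$ and equations of integral dependence over $I$; the ``in particular'' clause then follows by bookkeeping with powers of $\varpi$. Two elementary facts are used throughout: $(\varpi^{n}I)^{i}=\varpi^{ni}I^{i}$ for all $i\geq1$, and $\varpi$ is a non-zero-divisor in $A$ (so that $A\hookrightarrow\mathcal{A}$) which becomes a unit in $\mathcal{A}$, so that identities in $A$ may be freely divided by powers of $\varpi$ in $\mathcal{A}$ and, when the resulting coefficients again lie in $A$, read back as identities in $A$.

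First I would prove the equivalence itself. Given $f\in\overline{\varpi^{n}I}$, I would pick an equation $f^{m}+a_{1}f^{m-1}+\dots+a_{m}=0$ with $a_{i}\in(\varpi^{n}I)^{i}=\varpi^{ni}I^{i}$, write $a_{i}=\varpi^{ni}b_{i}$ with $b_{i}\in I^{i}$, and divide through by $\varpi^{nm}$ in $\mathcal{A}$; the $i$-th term becomes $b_{i}(f/\varpi^{n})^{m-i}$, so $f/\varpi^{n}$ satisfies an equation of integral dependence with coefficients in $I^{i}$, i.e.\ it lies in the integral closure of $I$ in $\mathcal{A}$. Conversely, starting from $(f/\varpi^{n})^{m}+c_{1}(f/\varpi^{n})^{m-1}+\dots+c_{m}=0$ with $c_{i}\in I^{i}$ and multiplying by $\varpi^{nm}$, the $i$-th term becomes $(\varpi^{ni}c_{i})f^{m-i}$ with $\varpi^{ni}c_{i}\in\varpi^{ni}I^{i}=(\varpi^{n}I)^{i}\subseteq A$, so $f\in\overline{\varpi^{n}I}$. (These two operations are mutually inverse, though only the stated implications are needed.)

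Next I would deduce the ``in particular'' clause by specializing $I=A$, so that the equivalence reads: $f\in\overline{(\varpi^{n})_{A}}$ if and only if $f/\varpi^{n}$ is integral over the subring $A$ of $\mathcal{A}$. If $A$ is integrally closed in $\mathcal{A}$, then for every $n$ and every $f\in\overline{(\varpi^{n})_{A}}$ we obtain $f/\varpi^{n}\in A$, hence $f\in(\varpi^{n})_{A}$, so each $(\varpi^{n})_{A}$ is integrally closed in $A$. For the converse I would first record the auxiliary fact that, for $k\geq1$ and $y\in A$, one has $\varpi y\in\overline{(\varpi^{k})_{A}}$ iff $y\in\overline{(\varpi^{k-1})_{A}}$ (with $(\varpi^{0})_{A}=A$), proved by the same substitution-and-cancellation using that $\varpi^{m}$ is a non-zero-divisor. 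From this: if $(\varpi^{k})_{A}$ is integrally closed then so is $(\varpi^{k-1})_{A}$ (given $f\in\overline{(\varpi^{k-1})_{A}}$ one gets $\varpi f\in\overline{(\varpi^{k})_{A}}=(\varpi^{k})_{A}$, whence $f\in(\varpi^{k-1})_{A}$); and, descending repeatedly with the same auxiliary fact while using integral-closedness of $(\varpi)_{A}$ at each step, $(\varpi)_{A}$ integrally closed forces $(\varpi^{k})_{A}$ integrally closed for all $k$. Hence $(\varpi^{n_{0}})_{A}$ integrally closed for a single $n_{0}\geq1$ implies $(\varpi^{n})_{A}$ integrally closed for every $n\geq1$; and then, writing an arbitrary element of $\mathcal{A}$ as $f/\varpi^{n}$ with $f\in A$, the main equivalence shows that any such element integral over $A$ satisfies $f\in\overline{(\varpi^{n})_{A}}=(\varpi^{n})_{A}$, i.e.\ $f/\varpi^{n}\in A$, so $A$ is integrally closed in $\mathcal{A}$.

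I do not expect a serious obstacle. The only point needing genuine care is tracking the coefficients under the division/multiplication by $\varpi^{nm}$ and checking that they land in the correct ideals ($I^{i}$ when passing down to $\mathcal{A}$, and $(\varpi^{n}I)^{i}$ when passing back up to $A$) — this is exactly where the non-zero-divisor hypothesis enters; everything else is the routine propagation of integral-closedness along the chain of ideals $(\varpi^{n})_{A}$ via the auxiliary equivalence $\varpi y\in\overline{(\varpi^{k})_{A}}\Leftrightarrow y\in\overline{(\varpi^{k-1})_{A}}$.
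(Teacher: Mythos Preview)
Your argument for the main equivalence is exactly the paper's: write the equation of integral dependence over $\varpi^{n}I$ with coefficients $\varpi^{ni}b_{i}$ ($b_{i}\in I^{i}$), divide by $\varpi^{nm}$ to get an equation for $f/\varpi^{n}$ over $I$, and multiply back for the converse. The paper actually stops there and leaves the ``in particular'' clause to the reader; you go further and supply a careful proof of the some/all equivalence via the auxiliary fact $\varpi y\in\overline{(\varpi^{k})_{A}}\Leftrightarrow y\in\overline{(\varpi^{k-1})_{A}}$, which is correct and a nice addition.
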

\begin{proof}Assume that $f$ is integral over $\varpi^{n}\cdot I$. Let \begin{equation*}f^{m}+b_{1}\varpi^{n} f^{m-1}+\dots+b_{m-1}\varpi^{n(m-1)}f+b_{m}\varpi^{nm}=0\end{equation*}be an equation of integral dependence of $f$ over $\varpi^{n}\cdot I$ with $b_{i}\in I^{i}$. Dividing by $\varpi^{nm}$ we obtain \begin{equation*}(\frac{f}{\varpi^{n}})^{m}+b_{1}(\frac{f}{\varpi^{n}})^{m-1}+\dots+b_{m-1}(\frac{f}{\varpi^{n}})+b_{m}=0,\end{equation*}an equation of integral dependence of $\frac{f}{\varpi^{n}}\in A[\varpi^{-1}]$ over $I$. 

For the converse, suppose that $g=\frac{f}{\varpi^{n}}$ is in the integral closure of $I$ in $A[\varpi^{-1}]$. Let \begin{equation*}g^{m}+b_{1}g^{m-1}+\dots+b_{m-1}g+b_{m}=0\end{equation*}be an equation of integral dependence of $g$ over $I$ (i.e., $b_{i}\in I^{i}$). Multiplying by $\varpi^{nm}$, we obtain an equation of integral dependence of $f=\varpi^{n}\cdot g$ over $\varpi^{n}\cdot I$. \end{proof} 
The following corollary of Proposition \ref{Arc-closure of ideals and rings} can be thought of as analogous to \cite{Swanson-Huneke}, Corollary 6.8.11, or to loc.~cit., Corollary 6.8.12.
\begin{cor}\label{Uniform Tate rings and arc-closure}Suppose that the Tate ring $\mathcal{A}=A[\varpi^{-1}]$ is uniform. For $f\in \mathcal{A}$, $n\geq1$, the element $f$ belongs to the $\arc_{\varpi}$-closure of $(\varpi^{n})_{A}$ if and only if there exists some integer $k\geq0$ such that \begin{equation*}f^{m}\in (\varpi^{nm-k})_{A}\end{equation*}for all integers $m>k$.\end{cor}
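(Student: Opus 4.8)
The plan is to run a short chain of equivalences that reduces the $\arc_{\varpi}$-membership first to the closed unit ball of the spectral seminorm, then (using uniformity) to power-boundedness, and finally to the divisibility condition in the statement.

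First I would reduce to the spectral seminorm. For any $v\in\mathcal{M}(\mathcal{A})$ one has $v(\varpi^{n})=2^{-n}$, and since $v(b)\leq1$ for all $b\in A$ (Lemma \ref{Continuous means normalized}) with $v(1)=1$, the submodule $(\varpi^{n})_{A}^{v}$ equals $\{\,h\in\mathcal{A}\mid v(h)\leq 2^{-n}\,\}$; intersecting over $v\in\mathcal{M}(\mathcal{A})$ and invoking Theorem \ref{Spectral seminorm 2}, this gives
\[(\varpi^{n})_{A}^{\arc_{\varpi}}=\{\,f\in\mathcal{A}\mid \vert f/\varpi^{n}\vert_{\spc,\varpi}\leq1\,\}=\varpi^{n}\mathcal{A}_{\vert\cdot\vert_{\spc,\varpi}\leq1}\]
(this is Proposition \ref{Arc-closure of ideals and rings}, whose one-line proof applies verbatim to arbitrary $f\in\mathcal{A}$). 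Since $\mathcal{A}$ is uniform, Example \ref{Complete integral closure and arc-closure} (via \cite{Dine22}, Lemma 2.24) identifies $\mathcal{A}_{\vert\cdot\vert_{\spc,\varpi}\leq1}$ with $\mathcal{A}^{\circ}$, so $f\in(\varpi^{n})_{A}^{\arc_{\varpi}}$ becomes: $f/\varpi^{n}$ is power-bounded. By the characterization of power-bounded (equivalently, almost integral) elements of $A[\varpi^{-1}]$ recalled just before Lemma \ref{Completely integrally closed}, this in turn means that there is an integer $c\geq0$ with $(f/\varpi^{n})^{m}\in\varpi^{-c}A$ for all $m\geq0$, i.e.\ $f^{m}\in\varpi^{nm-c}A$ (as $A$-submodules of $A[\varpi^{-1}]$) for all $m\geq0$.

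It then remains to check that this last condition is equivalent to the one in the statement. For one direction take $k=c$: for every $m>c$ one has $nm-c\geq m-c>0$ (using $n\geq1$), so $\varpi^{nm-c}A$ is an honest ideal $(\varpi^{nm-c})_{A}$ of $A$ containing $f^{m}$. For the converse, suppose $k\geq0$ is such that $f^{m}\in(\varpi^{nm-k})_{A}$ for all $m>k$; for the finitely many $0\leq m\leq k$ choose $d_{m}\geq0$ with $f^{m}\varpi^{d_{m}}\in A$ (possible since $A[\varpi^{-1}]=\bigcup_{d}\varpi^{-d}A$), and set $c\coloneqq\max\bigl(\{k\}\cup\{\,nm+d_{m}\mid 0\leq m\leq k\,\}\bigr)$. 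Then for $m>k$ we get $f^{m}\in\varpi^{nm-k}A\subseteq\varpi^{nm-c}A$ (as $c\geq k$), while for $m\leq k$ we get $f^{m}\varpi^{\,c-nm}=(f^{m}\varpi^{d_{m}})\varpi^{\,c-nm-d_{m}}\in A$, i.e.\ $f^{m}\in\varpi^{nm-c}A$; hence $f^{m}\in\varpi^{nm-c}A$ for all $m\geq0$. Reading the chain of equivalences backwards yields $f\in(\varpi^{n})_{A}^{\arc_{\varpi}}$.

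I do not expect a serious obstacle here; the substantive input is the passage from the spectral unit ball to $\mathcal{A}^{\circ}$ supplied by uniformity, and the only delicate points are the routine bookkeeping with negative powers of $\varpi$ (one repeatedly uses $\varpi^{j}A\subseteq\varpi^{i}A$ for $j\geq i$) together with the observation that restricting to exponents $m>k$ is precisely what makes $nm-k>0$, so that $(\varpi^{nm-k})_{A}$ is genuinely an ideal of $A$ rather than a $\varpi$-fractional ideal.
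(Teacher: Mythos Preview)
Your proof is correct and follows essentially the same approach as the paper: reduce to $f/\varpi^{n}\in\mathcal{A}_{\vert\cdot\vert_{\spc,\varpi}\leq1}$ via Proposition~\ref{Arc-closure of ideals and rings}, identify this closed unit ball with $\mathcal{A}^{\circ}$ using uniformity and \cite{Dine22}, Lemma~2.24, and then unwind the definition of power-boundedness. The paper compresses your final two paragraphs of exponent bookkeeping into the phrase ``by the definition of a power-bounded element'', but the argument is the same.
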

\begin{proof}By the assumption that $\mathcal{A}$ is uniform and \cite{Dine22}, Lemma 2.24, the $\arc_{\varpi}$-closure $A^{\arc_{\varpi}}=\mathcal{A}_{\vert\cdot\vert_{\spc,\varpi}\leq1}$ of $A$ coincides with the ring of power-bounded elements $\mathcal{A}^{\circ}$. The assertion follows from this by Proposition \ref{Arc-closure of ideals and rings} and by the definition of a power-bounded element.\end{proof}
We think of the following proposition as analogous to \cite{Swanson-Huneke}, Corollary 6.9.1. While simple to prove, this proposition was our main motivation for introducing the $\arc_{\varpi}$-operation: In fact, we will use it to get an algebraic description of the notion of Shilov boundary for $A[\varpi^{-1}]$ in Theorem \ref{Boundaries and arc-closure}. 
\begin{prop}\label{Arc-closure and the spectral seminorm}Let $A$, $\varpi$ be arbitrary. For $n\geq1$, an element $f\in \mathcal{A}=A[\varpi^{-1}]$ belongs to the $\arc_{\varpi}$-closure of $(\varpi^{n})_{A}$ if and only if $\vert f\vert_{\spc,\varpi}\leq2^{-n}$.\end{prop}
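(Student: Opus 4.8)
The plan is to reduce the statement directly to Berkovich's theorem (Theorem \ref{Spectral seminorm 2}) via the explicit shape of $(\varpi^{n})_{A}^{v}$ for $v$ in the Berkovich spectrum. First I would observe that, since $(\varpi^{n})_{A}$ is the principal ideal generated by $\varpi^{n}$ and since every $v\in\mathcal{M}(\mathcal{A})$ satisfies $v(a)\leq 1$ for all $a\in A$ by Lemma \ref{Continuous means normalized}, we have $v(a\varpi^{n})=v(a)v(\varpi^{n})\leq v(\varpi^{n})$ for all $a\in A$, so the supremum $\max_{g\in(\varpi^{n})_{A}}v(g)$ is attained at $g=\varpi^{n}$. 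Hence $(\varpi^{n})_{A}^{v}=\{\,f\in\mathcal{A}\mid v(f)\leq v(\varpi^{n})\,\}$ for each such $v$, and intersecting over all $v\in\mathcal{M}(\mathcal{A})$ gives that $f\in(\varpi^{n})_{A}^{\arc_{\varpi}}$ if and only if $v(f)\leq v(\varpi^{n})$ for every $v\in\mathcal{M}(\mathcal{A})$.

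Next I would use that, again by Lemma \ref{Continuous means normalized}, $v(\varpi)=\frac{1}{2}$ for every $v\in\mathcal{M}(\mathcal{A})$, so $v(\varpi^{n})=v(\varpi)^{n}=2^{-n}$ independently of $v$. Thus the condition above becomes $v(f)\leq 2^{-n}$ for all $v\in\mathcal{M}(\mathcal{A})$, i.e. $\sup_{v\in\mathcal{M}(\mathcal{A})}v(f)\leq 2^{-n}$. Finally, Berkovich's theorem in the form of Theorem \ref{Spectral seminorm 2}, applied to $\mathcal{A}$ equipped with the canonical extension of the $\varpi$-adic seminorm on $A$, identifies this supremum with the spectral seminorm $\vert f\vert_{\spc,\varpi}$, so the condition is exactly $\vert f\vert_{\spc,\varpi}\leq 2^{-n}$, which is what we wanted.

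There is essentially no substantive obstacle: the only point requiring care is the identification $\max_{g\in(\varpi^{n})_{A}}v(g)=v(\varpi^{n})$, which is immediate from boundedness of the valuations in $\mathcal{M}(\mathcal{A})$ on $A$; everything else is a direct appeal to cited results. Alternatively, one could deduce the statement from Proposition \ref{Arc-closure of ideals and rings} together with the fact that $\varpi$ is multiplicative for $\vert\cdot\vert_{\spc,\varpi}$ with $\vert\varpi\vert_{\spc,\varpi}=\frac{1}{2}$ (so that $\vert f/\varpi^{n}\vert_{\spc,\varpi}=2^{n}\vert f\vert_{\spc,\varpi}$), after writing a general $f\in\mathcal{A}$ as $f=\varpi^{-k}f_{0}$ with $f_{0}\in A$ to pass from the case $f_{0}\in A$ treated there to $f\in\mathcal{A}$; but the direct argument above avoids this reduction entirely.
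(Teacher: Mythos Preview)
Your proof is correct and follows essentially the same route as the paper: both arguments combine Lemma \ref{Continuous means normalized} (to get $v(\varpi^{n})=2^{-n}$ for all $v\in\mathcal{M}(\mathcal{A})$) with Berkovich's Theorem 1.3.1 (cited in the paper directly, and by you as Theorem \ref{Spectral seminorm 2}) identifying $\vert f\vert_{\spc,\varpi}$ with $\sup_{v}v(f)$. Your explicit justification that $(\varpi^{n})_{A}^{v}=\{\,f\mid v(f)\leq v(\varpi^{n})\,\}$ is a small elaboration of what the paper leaves implicit, but the argument is otherwise identical.
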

\begin{proof}By \cite{Berkovich}, Theorem 1.3.1, $\vert f\vert_{\spc,\varpi}=\sup_{v\in\mathcal{M}(\mathcal{A})}v(f)$ for all $f\in \mathcal{A}$. By Lemma \ref{Continuous means normalized}, $v(\varpi^{n})=v(\varpi)^{n}=2^{-n}$ for all $v\in\mathcal{M}(\mathcal{A})$ and $n\geq1$. It follows that $\vert f\vert_{\spc,\varpi}\leq2^{-n}$ if and only if $v(f)\leq v(\varpi^{n})$ for all $v\in\mathcal{M}(\mathcal{A})$. \end{proof}
We can now state and prove the promised algebraic description of the notion of Shilov boundary. 
\begin{thm}\label{Boundaries and arc-closure}Let $\varpi$ be a non-zero-divisor and a non-unit in a ring $A$ and consider the Tate ring $\mathcal{A}=A[\varpi^{-1}]$. For a subset $\mathcal{S}$ of the Berkovich spectrum $\mathcal{M}(\mathcal{A})$, choose a set of rank $1$ valuation rings over $A$ \begin{equation*}(\varphi_{v}: A\to V_{v})_{v\in\mathcal{S}}\end{equation*}with pseudo-uniformizer $\varpi$, indexed by $\mathcal{S}$, such that each $v$ is the pullback along the map $A\to V_{v}\hookrightarrow\Frac(V_{v})$ of the (suitably normalized) valuation of $\Frac(V_{v})$ determined by $V_{v}$ (cf. Lemma \ref{Valuations and valuation rings}). Then the following are equivalent: \begin{enumerate}[(1)]\item $\mathcal{S}$ is a boundary for $\mathcal{A}$, \item \begin{equation*}\mathcal{A}_{\vert\cdot\vert_{\spc,\varpi}\leq1}=\bigcap_{v\in\mathcal{S}}\varphi_{v}^{-1}(V_{v}),\end{equation*}\item \begin{equation*}(\varpi^{n})_{A}^{\arc_{\varpi}}=\bigcap_{v\in\mathcal{S}}\varphi_{v}^{-1}(\varpi^{n}V_{v})\end{equation*}holds for some integer $n\geq0$, \item \begin{equation*}(\varpi^{n})_{A}^{\arc_{\varpi}}=\bigcap_{v\in\mathcal{S}}\varphi_{v}^{-1}(\varpi^{n}V_{v})\end{equation*}holds for all integers $n\geq0$. \end{enumerate}\end{thm}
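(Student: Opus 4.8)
The plan is to rewrite all four conditions in terms of the evaluation functions $\ev_{f}\colon v\mapsto v(f)$ on $\mathcal{M}(\mathcal{A})$ and the spectral seminorm $\vert\cdot\vert_{\spc,\varpi}$, then obtain $(2)\Leftrightarrow(3)\Leftrightarrow(4)$ from a scaling argument and treat $(1)\Leftrightarrow(2)$ directly. First I would record, using Lemma~\ref{Valuative ideals}, that $\varphi_{v}^{-1}(\varpi^{n}V_{v})=(\varpi^{n})_{A}^{v}$, and, since every $v\in\mathcal{M}(\mathcal{A})$ satisfies $v(\varpi)=\tfrac12$ by Lemma~\ref{Continuous means normalized}, that $(\varpi^{n})_{A}^{v}=\{\,f\in\mathcal{A}\mid v(f)\leq 2^{-n}\,\}=\varpi^{n}\cdot\varphi_{v}^{-1}(V_{v})$. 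On the other side, unwinding the definition of $\arc_{\varpi}$-closure gives $(\varpi^{n})_{A}^{\arc_{\varpi}}=\bigcap_{v\in\mathcal{M}(\mathcal{A})}\{\,f\mid v(f)\leq 2^{-n}\,\}=\{\,f\in\mathcal{A}\mid\vert f\vert_{\spc,\varpi}\leq 2^{-n}\,\}$ by Theorem~\ref{Spectral seminorm 2} (equivalently Proposition~\ref{Arc-closure and the spectral seminorm}); since $v(\varpi)=\tfrac12$ for all $v$, the same theorem yields $\vert\varpi g\vert_{\spc,\varpi}=\tfrac12\vert g\vert_{\spc,\varpi}$, whence $(\varpi^{n})_{A}^{\arc_{\varpi}}=\varpi^{n}\cdot\mathcal{A}_{\vert\cdot\vert_{\spc,\varpi}\leq1}$, while $\mathcal{A}_{\vert\cdot\vert_{\spc,\varpi}\leq1}=A^{\arc_{\varpi}}$ by Example~\ref{Complete integral closure and arc-closure}.

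Granting these identifications, condition $(3)$ for a fixed $n\geq0$ reads $\varpi^{n}\cdot A^{\arc_{\varpi}}=\varpi^{n}\cdot\bigcap_{v\in\mathcal{S}}\varphi_{v}^{-1}(V_{v})$; as multiplication by the unit $\varpi^{n}$ is a bijection of $\mathcal{A}$ commuting with intersections, this is equivalent, for every $n$, to condition $(2)$ --- which is $(3)$ with $n=0$, since $(\varpi^{0})_{A}=A$. Hence $(2)\Leftrightarrow(3)\Leftrightarrow(4)$. The implication $(1)\Rightarrow(2)$ is then immediate: the inclusion $\mathcal{A}_{\vert\cdot\vert_{\spc,\varpi}\leq1}\subseteq\bigcap_{v\in\mathcal{S}}\varphi_{v}^{-1}(V_{v})$ always holds because $v\leq\vert\cdot\vert_{\spc,\varpi}$ on $\mathcal{M}(\mathcal{A})\supseteq\mathcal{S}$, and conversely, if $v(f)\leq1$ for all $v\in\mathcal{S}$ then the boundary property supplies some $v_{0}\in\mathcal{S}$ with $\vert f\vert_{\spc,\varpi}=v_{0}(f)\leq1$.

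The substance is $(2)\Rightarrow(1)$. I would first extract from $(2)$ the supremum formula $\vert f\vert_{\spc,\varpi}=\sup_{v\in\mathcal{S}}v(f)$ for every $f\in\mathcal{A}$: applying $(2)$ (equivalently the scaled versions in $(4)$) to each element $f^{q}\varpi^{-p}$ with $p\in\mathbb{Z}$, $q\in\mathbb{Z}_{>0}$ shows that $\vert f\vert_{\spc,\varpi}\leq 2^{-p/q}$ holds precisely when $v(f)\leq 2^{-p/q}$ for all $v\in\mathcal{S}$, and letting $2^{-p/q}$ range over the dense set $2^{\mathbb{Q}}\subseteq\mathbb{R}_{>0}$ forces the two suprema to coincide. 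It then remains to upgrade this supremum to a maximum attained on $\mathcal{S}$ itself, and this is the step I expect to be the main obstacle. The peak set $\{\,v\in\mathcal{M}(\mathcal{A})\mid v(f)=\vert f\vert_{\spc,\varpi}\,\}$ is a non-empty closed, hence compact, subset of $\mathcal{M}(\mathcal{A})$ by Theorem~\ref{Spectral seminorm 2} together with compactness of $\mathcal{M}(\mathcal{A})$, and the goal is to see that it meets $\mathcal{S}$. When $\mathcal{S}$ is closed --- in particular when it is finite, the situation relevant to Theorem~\ref{Rees valuations} --- continuity of $\ev_{f}$ on the compact set $\mathcal{S}$ makes this automatic; for a general $\mathcal{S}$ I would combine the supremum formula with Guennebaud's results on the constructible spectrum (Theorem~\ref{Guennebaud's theorem}), which already exhibit a boundary $\Min(\mathcal{A},\lVert\cdot\rVert)$ lying inside the closure of $\mathcal{S}$. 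Pinning down this attainment step in full generality is the part of the argument I would expect to require the most care.
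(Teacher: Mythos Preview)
Your approach is essentially identical to the paper's: the scaling identities $(\varpi^{n})_{A}^{\arc_{\varpi}}=\varpi^{n}\cdot\mathcal{A}_{\vert\cdot\vert_{\spc,\varpi}\le 1}$ and $\varphi_{v}^{-1}(\varpi^{n}V_{v})=\varpi^{n}\cdot\varphi_{v}^{-1}(V_{v})$ give $(2)\Leftrightarrow(3)\Leftrightarrow(4)$ exactly as you describe, and $(1)\Rightarrow(2)$ is the same direct check. The paper runs the cycle as $(1)\Rightarrow(2)\Rightarrow(4)\Rightarrow(1)$ with $(4)\Rightarrow(3)$ trivial, but the content is the same.

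The one place you and the paper diverge is the converse $(4)\Rightarrow(1)$. The paper argues: given $r<\vert f\vert_{\spc,\varpi}$, pick $n,m$ with $r<2^{-n/m}<\vert f\vert_{\spc,\varpi}$; then $f^{m}\notin(\varpi^{n})_{A}^{\arc_{\varpi}}$, so by $(4)$ some $v\in\mathcal{S}$ has $v(f^{m})>2^{-n}$, whence $v(f)>r$. This is precisely your supremum formula $\sup_{v\in\mathcal{S}}v(f)=\vert f\vert_{\spc,\varpi}$, and the paper stops there, declaring the maximum ``attained on $\mathcal{S}$''. So your hesitation is well-placed: read against the paper's own definition of \emph{boundary} (which requires existence of $v\in\mathcal{S}$ with $v(f)=\vert f\vert_{\spc,\varpi}$), the paper's proof does not supply the attainment step either. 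In every downstream application, however, $\mathcal{S}$ is either finite (Theorem~\ref{Rees valuations}) or one immediately passes to the closure (Theorem~\ref{Strongly Shilov}), where your remark about continuity of $\ev_{f}$ on a compact set settles attainment. Your proposed detour through $\Min(\mathcal{A},\lVert\cdot\rVert)$ for general $\mathcal{S}$ would in any case only show that $\overline{\mathcal{S}}$ is a boundary, not $\mathcal{S}$ itself, so it does not close the gap; but neither does the paper.
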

\begin{proof}The implication $(1)\Rightarrow (2)$ holds by the definition of a boundary and the implication $(2)\Rightarrow (3)$ and $(4)\Rightarrow (3)$ are trivial. It remains to prove that (2) implies (4) and that (4) implies (1).

To see that (2) implies (4), suppose that (2) holds and that $f\in \mathcal{A}$ satisfies $\varphi_{v}(f)\in \varpi^{n}V_{v}$ for all $v\in\mathcal{S}$, for some $n\geq1$. Then $\varphi_{v}(\frac{f}{\varpi^{n}})\in V$ for all $v\in\mathcal{S}$ and thus $\vert\frac{f}{\varpi^{n}}\vert_{\spc,\varpi}\leq1$, by (2). This means that $v(f)\leq v(\varpi^{n})$ for all $v\in\mathcal{M}(\mathcal{A})$, i.e., $f\in (\varpi^{n})_{A}^{\arc_{\varpi}}$. 

Now suppose that $\mathcal{S}$ satisfies condition (4) in the theorem. We want to prove that $\mathcal{S}$ is a boundary for $\mathcal{A}$. That is, we want to prove that every element of $\mathcal{A}$, viewed as a function $\mathcal{M}(\mathcal{A})\to\mathbb{R}_{\geq0}$ attains its maximum on $\mathcal{S}$. To this end, $f\in \mathcal{A}$ and let $r>0$ be such that $\vert f\vert_{\spc,\varpi}>r$. We prove that there exists $v\in\mathcal{S}$ such that $v(f)>r$. Choose $n, m\in\mathbb{Z}_{>0}$ such that \begin{equation*}2^{-n/m}\in (r, \vert f\vert_{\spc, \varpi}).\end{equation*}Then $\vert f^{m}\vert_{\spc,\varpi}>2^{-n}$. By Proposition \ref{Arc-closure and the spectral seminorm}, this means that \begin{equation*}f^{m}\not\in (\varpi^{n})_{A}^{\arc_{\varpi}}.\end{equation*}By our assumption on $\mathcal{S}$ this implies that there exists some $v\in\mathcal{S}$ such that \begin{equation*}\varphi_{v}(f^{m})\not\in \varpi^{n} V_{v}.\end{equation*}But, in view of Lemma \ref{Continuous means normalized}, this means that the valuation $v$ on $A$ satisfies $v(f^{m})>v(\varpi^{n})=2^{-n}$ and thus $v(f)>2^{-n/m}>r$, as claimed.\end{proof}
Recall (for example, from \cite{Swanson-Huneke}, Ch.~10) the notions of Rees valuation rings and Rees valuations of an ideal in a ring. By a theorem of Rees (\cite{Swanson-Huneke}, Theorem 10.2.2), every ideal in a Noetherian ring has a (necessarily finite) set of Rees valuation rings and, by \cite{Swanson-Huneke}, Theorem 10.1.6, the set of Rees valuation rings is uniquely determined by the ideal in question. Theorem \ref{Boundaries and arc-closure} allows us to show that, for the ideal generated by $\varpi$, this set can be naturally identified with the Shilov boundary for $\mathcal{A}=A[\varpi^{-1}]$.    
\begin{thm}\label{Rees valuations}Let $\varpi$ be a non-zero-divisor and a non-unit in a Noetherian ring $A$. Then the Shilov boundary for the Tate ring $\mathcal{A}=A[\varpi^{-1}]$ is given by the set $\mathcal{RV}(\varpi)$ of $\varpi$-normalized Rees valuations of $(\varpi)_{A}$.\end{thm}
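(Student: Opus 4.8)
The plan is to deduce the theorem from the algebraic characterization of boundaries in Theorem~\ref{Boundaries and arc-closure}, played off against the minimality built into the notion of Rees valuations. First I would record the basic compatibilities. Each $v\in\mathcal{RV}(\varpi)$ is the $\varpi$-normalized representative of a rank-$1$ discrete valuation attached to a valuation ring $V_{v}$ with structure map $\varphi_{v}\colon A\to V_{v}$ in which $\varpi$ is a non-zero non-unit, hence a pseudo-uniformizer, so $\Frac(V_{v})=V_{v}[\varpi^{-1}]$. Since $\varphi_{v}$ lands in $V_{v}$, the valuation $v$ satisfies $v(f)\leq1$ for all $f\in A$ and $v(\varpi)=\frac{1}{2}$, so $v\in\mathcal{M}(\mathcal{A})$ by Lemma~\ref{Continuous means normalized}; thus $\mathcal{RV}(\varpi)$ is a finite — hence closed — subset of $\mathcal{M}(\mathcal{A})$. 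Also, for such $v$ and any $n$, one checks that $\varphi_{v}^{-1}(\varpi^{n}V_{v})=(\varpi^{n})_{A}^{v}=\{f\in\mathcal{A}\mid v(f)\leq v(\varpi^{n})\}$, and, by the theorem of Rees (\cite{Swanson-Huneke}, Theorem~10.2.2), intersecting these over $v\in\mathcal{RV}(\varpi)$ and then with $A$ gives the integral closure: $\bigcap_{v\in\mathcal{RV}(\varpi)}\bigl((\varpi^{n})_{A}^{v}\cap A\bigr)=\overline{(\varpi^{n})_{A}}$ for all $n\geq1$.

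Next I would show $\mathcal{RV}(\varpi)$ is a boundary by verifying that $(\varpi^{n})_{A}^{\arc_{\varpi}}=\bigcap_{v\in\mathcal{RV}(\varpi)}(\varpi^{n})_{A}^{v}$ for all $n\geq1$, which is condition~(3) of Theorem~\ref{Boundaries and arc-closure}. The inclusion ``$\subseteq$'' is automatic from $\mathcal{RV}(\varpi)\subseteq\mathcal{M}(\mathcal{A})$. For ``$\supseteq$'', given $f$ in the right-hand side, write $f=g/\varpi^{k}$ with $g\in A$; then $v(g)\leq v(\varpi^{n+k})$ for every $v\in\mathcal{RV}(\varpi)$, so $g\in\overline{(\varpi^{n+k})_{A}}$ by the Rees valuation property, so $g\in(\varpi^{n+k})_{A}^{\arc_{\varpi}}$ by Lemma~\ref{Integral closure is contained in the arc-closure} (the ideal being principal), i.e.\ $\vert g\vert_{\spc,\varpi}\leq 2^{-(n+k)}$ by Proposition~\ref{Arc-closure and the spectral seminorm}; multiplying by $\vert\varpi^{-k}\vert_{\spc,\varpi}=2^{k}$ gives $\vert f\vert_{\spc,\varpi}\leq 2^{-n}$, hence $f\in(\varpi^{n})_{A}^{\arc_{\varpi}}$, again by Proposition~\ref{Arc-closure and the spectral seminorm}. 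Thus $\mathcal{RV}(\varpi)$ is a boundary, and in particular $(\varpi^{n})_{A}^{\arc_{\varpi}}\cap A=\overline{(\varpi^{n})_{A}}$ for all $n\geq1$.

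It then remains to prove minimality. Since the Shilov boundary is the same as a minimal closed boundary (Guennebaud) and every subset of the finite set $\mathcal{RV}(\varpi)$ is closed, it suffices to show no proper subset of $\mathcal{RV}(\varpi)$ is a boundary. If $\mathcal{S}\subsetneq\mathcal{RV}(\varpi)$ were a boundary, then, choosing for $v\in\mathcal{S}$ its Rees valuation ring $V_{v}$, Theorem~\ref{Boundaries and arc-closure} would give $(\varpi^{n})_{A}^{\arc_{\varpi}}=\bigcap_{v\in\mathcal{S}}(\varpi^{n})_{A}^{v}$ for all $n$; intersecting with $A$ and using the previous paragraph, this yields $\overline{(\varpi^{n})_{A}}=\bigcap_{v\in\mathcal{S}}\bigl((\varpi^{n})_{A}^{v}\cap A\bigr)$ for every $n\geq1$, so the proper subfamily $\{V_{v}\}_{v\in\mathcal{S}}$ of Rees valuation rings would already cut out the integral closures of all powers of $(\varpi)_{A}$ — contradicting the minimality of the set of Rees valuation rings (\cite{Swanson-Huneke}, Theorem~10.1.6). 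Hence $\mathcal{RV}(\varpi)$ is the Shilov boundary for $\mathcal{A}$.

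The step I expect to be the main obstacle is the inclusion ``$\supseteq$'' in the boundary part: it is \emph{not} a formal consequence of $\mathcal{RV}(\varpi)\subseteq\mathcal{M}(\mathcal{A})$, since the $\arc_{\varpi}$-closure is an intersection over the whole Berkovich spectrum, and one really needs the Rees valuation property together with the comparison between integral closure of principal ideals and the spectral seminorm (Lemma~\ref{Integral closure is contained in the arc-closure}, Proposition~\ref{Arc-closure and the spectral seminorm}) to see that the finitely many Rees valuations already control it. Note that no Zariskian hypothesis on $A$ is assumed, so Proposition~\ref{Integral closure and arc-closure} cannot be invoked directly; the argument above is arranged to avoid it. A minor additional point is to match our notion of minimal closed boundary with Swanson--Huneke's notion of an irredundant set of Rees valuation rings — harmless here because the valuations of the hypothetical smaller boundary are literally a subfamily of the Rees valuations.
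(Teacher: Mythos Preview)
Your proof is correct and follows the same overall strategy as the paper's: verify the boundary criterion of Theorem~\ref{Boundaries and arc-closure} for $\mathcal{RV}(\varpi)$, observe that finiteness gives closedness, and invoke minimality of the Rees valuation set (via \cite{Guennebaud}, Ch.~I, Proposition~4) to exclude proper subsets. The one substantive difference lies in how the identity $(\varpi^{n})_{A}^{\arc_{\varpi}}=\bigcap_{v\in\mathcal{RV}(\varpi)}\varphi_{v}^{-1}(\varpi^{n}V_{v})$ is established. The paper simply invokes Proposition~\ref{Integral closure and arc-closure} to get $(\varpi^{n})_{A}^{\arc_{\varpi}}=\overline{(\varpi^{n})_{A}}$, whereas you --- correctly noting that Proposition~\ref{Integral closure and arc-closure} is stated under a $\varpi$-adically Zariskian hypothesis the theorem does not assume --- instead clear denominators and pass through Lemma~\ref{Integral closure is contained in the arc-closure} and Proposition~\ref{Arc-closure and the spectral seminorm}. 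Your route is the more careful one; for the particular ideals $(\varpi^{n})$ the Zariskian hypothesis in the proof of Proposition~\ref{Integral closure and arc-closure} is in fact harmless (any DVR in which $\varpi$ becomes a unit contributes all of $A$ to the intersection cutting out $\overline{(\varpi^{n})_{A}}$ and so may be discarded), but your argument makes this point explicit rather than leaving it to the reader.
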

\begin{proof}Since $A$ is Noetherian, Proposition \ref{Integral closure and arc-closure} implies that the $\arc_{\varpi}$-closure $(\varpi^{n})_{A}^{\arc_{\varpi}}$ coincides with the integral closure of $(\varpi^{n})_{A}$ in $A$, for all $n\geq1$. By Theorem \ref{Boundaries and arc-closure} and the definition of Rees valuation rings, we then know that $\mathcal{RV}(\varpi)$ is a boundary. Since $\mathcal{RV}(\varpi)$ is a finite set, it must be a closed boundary, $\mathcal{M}(A[\varpi^{-1}])$ being a Hausdorff space. But no proper subset of $\mathcal{RV}(\varpi)$ can be a boundary, by Theorem \ref{Boundaries and arc-closure} and the minimality property of the set of Rees valuation rings (property (3) in \cite{Swanson-Huneke}, Definition 10.1.1). We conclude by \cite{Guennebaud}, Ch.~1, Proposition 4, that $\mathcal{RV}(\varpi)$ is the Shilov boundary for $\mathcal{A}$.\end{proof}
\begin{cor}\label{Finite Shilov boundary}If $\mathcal{A}$ is a Tate ring which admits a Noetherian ring of definition, then the Shilov boundary for $\mathcal{A}$ is finite.\end{cor}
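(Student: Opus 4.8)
The plan is to reduce the statement immediately to Theorem \ref{Rees valuations}; the only real content is the formal passage from a Noetherian ring of definition to a \emph{pair} of definition $(A,\varpi)$ with $A$ Noetherian and $\varpi$ a non-zero-divisor and non-unit, after which the corollary drops out.

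First I would fix a Noetherian ring of definition $\mathcal{A}_{0}\subseteq\mathcal{A}$. Since $\mathcal{A}$ is a Tate ring it admits a topologically nilpotent unit $\varpi_{0}$, and because $\mathcal{A}_{0}$ is open in $\mathcal{A}$ and $\varpi_{0}$ is topologically nilpotent, some power $\varpi:=\varpi_{0}^{N}$ lies in $\mathcal{A}_{0}$; this $\varpi$ is again a topologically nilpotent unit of $\mathcal{A}$. A short and standard argument, using that $\mathcal{A}_{0}$ is open and bounded, then shows that $(\mathcal{A}_{0},\varpi)$ is a pair of definition of $\mathcal{A}$: one has $\mathcal{A}_{0}[\varpi^{-1}]=\mathcal{A}$, and the $\varpi^{n}\mathcal{A}_{0}$, $n\geq 0$, form a fundamental system of open neighborhoods of $0$ in $\mathcal{A}$, so the topology on $\mathcal{A}$ is exactly the one attached to $(\mathcal{A}_{0},\varpi)$ in the set-up of Section \ref{sec:Rees valuations}. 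I would then check the two ring-theoretic properties of $\varpi$ that Theorem \ref{Rees valuations} requires: $\varpi$ is a non-zero-divisor in $\mathcal{A}_{0}$, because $\mathcal{A}_{0}$ is a subring of $\mathcal{A}$ and $\varpi$ is invertible in $\mathcal{A}$, so $\varpi x=0$ in $\mathcal{A}_{0}$ forces $x=\varpi^{-1}(\varpi x)=0$; and $\varpi$ is a non-unit in $\mathcal{A}_{0}$ unless $\mathcal{A}=\mathcal{A}_{0}[\varpi^{-1}]=\mathcal{A}_{0}$ is adic and carries the indiscrete topology, in which case the Berkovich spectrum is a single point and the assertion is trivial. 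So we may assume that $A:=\mathcal{A}_{0}$ is a Noetherian ring and $\varpi\in A$ is a non-zero-divisor and a non-unit.

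At this point Theorem \ref{Rees valuations} applies verbatim and identifies the Shilov boundary for $\mathcal{A}=A[\varpi^{-1}]$ (relative to $\varpi$) with the set $\mathcal{RV}(\varpi)$ of $\varpi$-normalized Rees valuations of $(\varpi)_{A}$, which is finite by Rees's theorem (\cite{Swanson-Huneke}, Theorem 10.2.2). Since this construction furnishes such a $\varpi\in\mathcal{A}_{0}$ starting from any topologically nilpotent unit of $\mathcal{A}$, and passing from a pseudo-uniformizer to a power only rescales the Berkovich spectrum homeomorphically, the finiteness conclusion is insensitive to the choice of pseudo-uniformizer used to form $\mathcal{M}(\mathcal{A})$. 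I do not expect any genuine obstacle: the one place that warrants a line of care is the reduction to the pair of definition $(\mathcal{A}_{0},\varpi)$ with $\varpi\in\mathcal{A}_{0}$ together with the handling of the degenerate adic case; once these bookkeeping points are dispatched, the corollary is an immediate consequence of Theorem \ref{Rees valuations}.
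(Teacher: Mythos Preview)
Your proposal is correct and takes essentially the same approach as the paper: reduce to a pair of definition $(A,\varpi)$ with $A$ Noetherian and then invoke Theorem \ref{Rees valuations} together with finiteness of the set of Rees valuations. The paper's proof is a one-liner that simply asserts the existence of such a pair of definition, whereas you spell out the bookkeeping (passing to a power of a pseudo-uniformizer to land in $\mathcal{A}_0$, checking non-zero-divisor and non-unit, handling the degenerate case); this extra care is harmless and does not constitute a different strategy.
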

\begin{proof}By assumption, $\mathcal{A}$ is of the form $\mathcal{A}=A[\varpi^{-1}]$ for some pair of definition $(A, \varpi)$ with $A$ Noetherian. Hence the assertion follows from Theorem \ref{Rees valuations} and finiteness of the set of Rees valuations of any ideal in a Noetherian ring.\end{proof}
The assertion of Theorem \ref{Boundaries and arc-closure} also motivates us to consider the following more general notion of boundary for Tate Huber pairs.
\begin{mydef}[Boundary for a Tate Huber pair]Let $(\mathcal{A}, \mathcal{A}^{+})$ be a Tate Huber pair and let $\varpi\in \mathcal{A}^{+}$ be a topologically nilpotent unit in $\mathcal{A}$ contained in $\mathcal{A}^{+}$. A subset $\mathcal{S}$ of the adic spectrum $\Spa(\mathcal{A}, \mathcal{A}^{+})$ is called a boundary for $(\mathcal{A}, \mathcal{A}^{+})$ (relative to $\varpi$) if for every $f\in \mathcal{A}$ and $n\in\mathbb{Z}$ with $f\not\in\varpi^{n}\mathcal{A}^{+}$ there exists a continuous valuation $v\in\mathcal{S}$ with $v(f)>v(\varpi^{n})$. If there exists a unique minimal constructible-closed boundary for $(\mathcal{A}, \mathcal{A}^{+})$ (relative to $\varpi$), we call it the Shilov boundary for $(\mathcal{A}, \mathcal{A}^{+})$ (relative to $\varpi$). \end{mydef}
It turns out that the term 'relative to $\varpi$' can be omitted from the above definition.
\begin{lemma}\label{Boundaries and subrings}For a subset $\mathcal{S}$ of the adic spectrum $\Spa(\mathcal{A}, \mathcal{A}^{+})$ of a Tate Huber pair $(\mathcal{A}, \mathcal{A}^{+})$, the following are equivalent: \begin{enumerate}[(1)]\item For every $f\in \mathcal{A}$ with $f\not\in\mathcal{A}^{+}$ there exists a continuous valuation $v\in\mathcal{S}$ with $v(f)>1$. \item $\mathcal{S}$ is a boundary for $(\mathcal{A}, \mathcal{A}^{+})$ relative to some topologically nilpotent unit $\varpi$ of $\mathcal{A}$ contained in $\mathcal{A}^{+}$. \item $\mathcal{S}$ is a boundary for $(\mathcal{A}, \mathcal{A}^{+})$ relative to any topologically nilpotent unit $\varpi$ of $\mathcal{A}$ contained in $\mathcal{A}^{+}$.\end{enumerate}\end{lemma}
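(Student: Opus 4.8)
The plan is to run the cycle of implications $(3)\Rightarrow(2)\Rightarrow(1)\Rightarrow(3)$, all three of which turn out to be essentially formal once one records a single translation-by-$\varpi^{n}$ observation. First I would dispose of $(3)\Rightarrow(2)$: it suffices to note that a Tate Huber pair $(\mathcal{A},\mathcal{A}^{+})$ always possesses at least one topologically nilpotent unit lying in $\mathcal{A}^{+}$, obtained from any topologically nilpotent unit $\varpi_{0}$ of $\mathcal{A}$ by passing to a sufficiently high power $\varpi_{0}^{N}$, which lies in the open subring $\mathcal{A}^{+}$ and is still a topologically nilpotent unit. So $(3)$, applied to such a $\varpi$, gives $(2)$. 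Next, $(2)\Rightarrow(1)$ is the boundary condition specialized to $n=0$: since $\varpi^{0}\mathcal{A}^{+}=\mathcal{A}^{+}$ and $v(\varpi^{0})=1$, an element $f\in\mathcal{A}$ with $f\notin\mathcal{A}^{+}$ is exactly an element not in $\varpi^{0}\mathcal{A}^{+}$, so it is witnessed by some $v\in\mathcal{S}$ with $v(f)>1$.

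The only implication carrying any content is $(1)\Rightarrow(3)$, and even this reduces to the translation trick. Fixing an arbitrary topologically nilpotent unit $\varpi\in\mathcal{A}^{+}$, I would take $f\in\mathcal{A}$ and $n\in\mathbb{Z}$ with $f\notin\varpi^{n}\mathcal{A}^{+}$. Because $\varpi$ is a unit in $\mathcal{A}$, multiplication by $\varpi^{n}$ is a bijection of $\mathcal{A}$ sending $\mathcal{A}^{+}$ onto $\varpi^{n}\mathcal{A}^{+}$; hence $f\notin\varpi^{n}\mathcal{A}^{+}$ is equivalent to $\varpi^{-n}f\notin\mathcal{A}^{+}$. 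Applying hypothesis $(1)$ to the element $\varpi^{-n}f$ produces a continuous valuation $v\in\mathcal{S}$ with $v(\varpi^{-n}f)>1$. Since $\varpi$ is a unit, $v(\varpi)$ is a nonzero element of the value group of $v$ (this is the one point where one must be slightly careful, as $v$ may have rank $>1$), so $v(\varpi^{-n}f)=v(\varpi)^{-n}v(f)$ and the inequality $v(\varpi^{-n}f)>1$ rearranges to $v(f)>v(\varpi)^{n}=v(\varpi^{n})$. This is precisely the defining property needed to conclude that $\mathcal{S}$ is a boundary relative to $\varpi$, and since $\varpi$ was arbitrary, $(3)$ follows.

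I do not expect a genuine obstacle here: the statement is a bookkeeping consequence of the definition of boundary for a Tate Huber pair together with the fact that $\varpi$ is a unit (so that quotients by $\varpi^{n}$ make sense both in $\mathcal{A}^{+}$ and inside the value group of any $v\in\Spa(\mathcal{A},\mathcal{A}^{+})$). The only step worth a sentence of justification is the identity $\{f\notin\varpi^{n}\mathcal{A}^{+}\}=\{\varpi^{-n}f\notin\mathcal{A}^{+}\}$ and the compatibility of this with evaluation, namely $v(\varpi^{-n}f)>1 \Leftrightarrow v(f)>v(\varpi^{n})$; everything else is immediate. The final write-up should therefore take only a few lines.
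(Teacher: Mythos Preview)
Your proposal is correct and follows essentially the same approach as the paper: the paper simply states that it suffices to prove $(1)\Rightarrow(3)$ (treating $(3)\Rightarrow(2)\Rightarrow(1)$ as trivial) and then carries out the identical translation trick, passing from $f\notin\varpi^{n}\mathcal{A}^{+}$ to $\varpi^{-n}f\notin\mathcal{A}^{+}$ and using multiplicativity of $v$ to rewrite $v(\varpi^{-n}f)>1$ as $v(f)>v(\varpi^{n})$. Your added justifications for the easy implications (existence of a pseudo-uniformizer in $\mathcal{A}^{+}$, specializing to $n=0$) are fine but not strictly needed.
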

\begin{proof}It suffices to show that (1) implies (3). Assume (1) and let $\varpi$ be a topologically nilpotent unit of $\mathcal{A}$ contained in $\mathcal{A}^{+}$ and let $f\in\mathcal{A}$ and $n\in\mathbb{Z}$ be such that $f\not\in \varpi^{n}\mathcal{A}^{+}$. Then $\varpi^{-n}f\not\in\mathcal{A}^{+}$ and, by (1), there exists some $v\in\mathcal{S}$ with $v(\varpi^{-n})v(f)=v(\varpi^{-n}f)>1$. But this is equivalent to $v(f)>v(\varpi^{n})$.\end{proof}
\begin{prop}\label{Minimal closed boundaries}For every Tate Huber pair $(\mathcal{A}, \mathcal{A}^{+})$, there exist minimal constructible-closed boundaries for $(\mathcal{A}, \mathcal{A}^{+})$. \end{prop}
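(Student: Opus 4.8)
The plan is to realize the minimal constructible-closed boundaries as minimal elements of a poset and obtain them via Zorn's lemma. Let $\mathfrak{B}$ denote the collection of all constructible-closed boundaries for $(\mathcal{A}, \mathcal{A}^{+})$, partially ordered by inclusion; I want to show that, ordered by reverse inclusion, $\mathfrak{B}$ satisfies the hypothesis of Zorn's lemma, so that it has elements minimal with respect to $\subseteq$. First I would check that $\mathfrak{B}$ is non-empty: since $\mathcal{A}^{+}$ is an open, integrally closed subring of $\mathcal{A}$, one has $\mathcal{A}^{+}=\{\, f\in\mathcal{A}\mid v(f)\leq1~\text{for all}~v\in\Spa(\mathcal{A}, \mathcal{A}^{+})\,\}$, which is exactly the assertion that $\Spa(\mathcal{A}, \mathcal{A}^{+})$ itself is a boundary, and it is trivially closed in the constructible topology. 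By Lemma \ref{Boundaries and subrings}, it is convenient to use the reformulation that a subset $\mathcal{S}\subseteq\Spa(\mathcal{A}, \mathcal{A}^{+})$ is a boundary if and only if $\mathcal{S}\cap U_{f}\neq\emptyset$ for every $f\in\mathcal{A}\setminus\mathcal{A}^{+}$, where $U_{f}\coloneqq\{\, v\in\Spa(\mathcal{A}, \mathcal{A}^{+})\mid v(f)>1\,\}$.

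The next step is to invoke the standard fact that $\Spa(\mathcal{A}, \mathcal{A}^{+})$ is a spectral space whose constructible (patch) topology is compact and Hausdorff. For each $f\in\mathcal{A}$ the subset $\{\, v\mid v(f)\leq1\,\}$ is a rational subset, hence a quasi-compact open of the spectral space $\Spa(\mathcal{A}, \mathcal{A}^{+})$; consequently its complement $U_{f}$ is constructible, so it is clopen — in particular closed — in the constructible topology, and the same then holds for $\mathcal{S}\cap U_{f}$ whenever $\mathcal{S}$ is constructible-closed.

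Now I would verify the Zorn hypothesis. Let $(\mathcal{S}_{i})_{i\in I}$ be a chain in $\mathfrak{B}$, totally ordered by inclusion, and set $\mathcal{S}=\bigcap_{i\in I}\mathcal{S}_{i}$. Since an arbitrary intersection of sets closed in the constructible topology is again closed in that topology, $\mathcal{S}$ is constructible-closed. To see that $\mathcal{S}$ is a boundary, fix $f\in\mathcal{A}\setminus\mathcal{A}^{+}$; then $(\mathcal{S}_{i}\cap U_{f})_{i\in I}$ is a chain of non-empty closed subsets of the compact space $\Spa(\mathcal{A}, \mathcal{A}^{+})$ equipped with the constructible topology, so by the finite intersection property $\mathcal{S}\cap U_{f}=\bigcap_{i}(\mathcal{S}_{i}\cap U_{f})$ is non-empty. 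Hence $\mathcal{S}\in\mathfrak{B}$, and $\mathcal{S}$ is a lower bound for the chain. Zorn's lemma then yields minimal elements of $\mathfrak{B}$, that is, minimal constructible-closed boundaries for $(\mathcal{A}, \mathcal{A}^{+})$.

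The one point requiring care — and the step I expect to be the main obstacle — is the bookkeeping around the two competing notions of "closed": one must run the compactness argument entirely in the constructible topology rather than the spectral topology, and for this it is essential that each $U_{f}$ be constructible (equivalently, that $\{\, v\mid v(f)\leq1\,\}$ be quasi-compact open). This is precisely what guarantees that the sets $\mathcal{S}_{i}\cap U_{f}$ are compact in the patch topology, so that their intersection along the chain is non-empty; without it the intersection of the $\mathcal{S}_i$ need not remain a boundary.
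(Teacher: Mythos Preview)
Your argument is correct and follows the same overall strategy as the paper (Zorn's lemma plus compactness of the constructible topology on $\Spa(\mathcal{A},\mathcal{A}^{+})$), but your execution is cleaner. The paper argues by picking, for each $f\notin\mathcal{A}^{+}$, elements $v_{n}\in\mathcal{S}_{n}$ with $v_{n}(f)>1$, passing to a subsequence converging in the patch topology, and checking that the limit lies in the intersection and still satisfies $v(f)>1$; moreover the paper only writes this out for countable decreasing sequences of boundaries. Your observation that $U_{f}=\{v\mid v(f)>1\}$ is constructible (being the complement of the rational, hence quasi-compact open, subset $\{v\mid v(f)\leq1\}$) lets you apply the finite intersection property to the chain $(\mathcal{S}_{i}\cap U_{f})_{i}$ of nonempty patch-closed sets directly, which handles arbitrary chains without any limit argument. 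This is the more natural way to package the compactness, and it sidesteps the sequential-vs-net issue entirely.
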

\begin{proof}We imitate the existence proof in \cite{Gelfand-Raikov-Shilov}, Chapter II, \S11, proof of Theorem 1. Note that the set of closed boundaries is always non-empty, since $\Spa(\mathcal{A}, \mathcal{A}^{+})$ is a closed boundary. By Zorn's lemma, it suffices to prove that for every strictly decreasing sequence of closed boundaries $\mathcal{S}_{1}\supsetneq \mathcal{S}_{2}\supsetneq \dots$ the intersection $\mathcal{S}=\bigcap_{n}\mathcal{S}_{n}$ is a boundary. Note that $\mathcal{S}$ is non-empty since the constructible topology on the spectral space $\Spa(\mathcal{A}, \mathcal{A}^{+})$ is compact Hausdorff. For an element $f\in\mathcal{A}$ with $f\notin\mathcal{A}^{+}$ choose elements $v_{n}\in\mathcal{S}_{n}$ such that $v_{n}(f)>1$ for all $n$. If one of $v_{n}$ lies in $\mathcal{S}$, we are done. If not, then, for every $n$, there exists $m_{n}$ such that $v_{n}\in\mathcal{S}_{n}\setminus\mathcal{S}_{n+m_{n}}$. Possibly after passing to subsequences of $(\mathcal{S}_{n})_{n}$ and $(v_{n})_{n}$, we obtain a sequence $(v_{n})_{n}$ such that $v_{n}(f)>1$ and $v_{n}\in\mathcal{S}_{n}\setminus\mathcal{S}_{n+1}$ for all $n$. Up to passing to a subsequence again, we may assume that $(v_{n})_{n}$ converges to some $v\in \Spa(\mathcal{A}, \mathcal{A}^{+})$ with respect to the constructible topology on $\Spa(\mathcal{A}, \mathcal{A}^{+})$. Note that $v\in\mathcal{S}$ (if not, there exists $n$ with $v\not\in\mathcal{S}_{n}$, but then the complement of $\mathcal{S}_{n}$ is an open neighborhood of $v$ which does not contain $v_{m}$ for any $m\geq n$, a contradiction). If $v(f)\leq1$, then $\{\, w\mid w(f)\leq1\,\}$ is an open neighborhood of $v$ and thus must contain some $v_{n}$, which contradicts the assumption that $v_{n}(f)>1$ for all $n$. It follows that $v$ is an element of $\mathcal{S}$ with $v(f)>1$, which proves that $\mathcal{S}$ is a boundary.\end{proof}
We do not know whether a minimal constructible-closed boundary for a Tate Huber pair $(\mathcal{A}, \mathcal{A}^{+})$ is always unique (the proof of uniqueness in \cite{Gelfand-Raikov-Shilov}, Chapter II, \S11, proof of Theorem 1, does not seem to carry over to our situation). We conclude this section by proving an analog of Theorem \ref{Boundaries and arc-closure} for integral closure.
\begin{thm}\label{Boundaries and integral closure}Let $\varpi$ be a non-zero-divisor and non-unit in a ring $A$ and consider the Tate Huber pair $(\mathcal{A}, \mathcal{A}^{+})$, where $\mathcal{A}$ is the Tate ring $A[\varpi^{-1}]$ and where $\mathcal{A}^{+}$ is the integral closure of $A$ inside $\mathcal{A}$. For a subset $\mathcal{S}$ of the adic spectrum $\Spa(\mathcal{A}, \mathcal{A}^{+})$ choose a set of valuation rings over $A$ \begin{equation*}(\varphi_{v}: A\to V_{v})_{v\in\mathcal{S}}\end{equation*}with pseudo-uniformizer $\varpi$, indexed by $\mathcal{S}$, such that each $v$ is the pullback along the map $A\to V_{v}\hookrightarrow \Frac(V_{v})$ of the (suitably normalized) valuation of $\Frac(V_{v})$ determined by $V_{v}$ (cf. Lemma \ref{Valuations and valuation rings}). Then the following are equivalent: \begin{enumerate}[(1)]\item $\mathcal{S}$ is a boundary for $(\mathcal{A}, \mathcal{A}^{+})$, \item \begin{equation*}\mathcal{A}^{+}=\bigcap_{v\in\mathcal{S}}\varphi_{v}^{-1}(V_{v}),\end{equation*}\item \begin{equation*}\overline{(\varpi^{n})_{A}}=\bigcap_{v\in\mathcal{S}}\varphi_{v}^{-1}(\varpi^{n}V_{v})\end{equation*}(where the bar denotes integral closure of the idea) holds for some integer $n\geq0$, \item \begin{equation*}\overline{(\varpi^{n})_{A}}=\bigcap_{v\in\mathcal{S}}\varphi_{v}^{-1}(\varpi^{n}V_{v})\end{equation*}holds for all integers $n\geq0$.\end{enumerate}\end{thm}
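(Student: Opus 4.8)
The plan is to follow the proof of Theorem~\ref{Boundaries and arc-closure} almost line by line, replacing the $\arc_{\varpi}$-closure and its description via the spectral seminorm (Proposition~\ref{Arc-closure and the spectral seminorm}) by ordinary integral closure of ideals together with the elementary bridge between it and integral elements of $\mathcal{A}=A[\varpi^{-1}]$, namely Proposition~\ref{Rings of integral elements}: for $g\in A$ and $n\geq1$ one has $g\in\overline{(\varpi^{n})_{A}}$ if and only if $g/\varpi^{n}\in\mathcal{A}^{+}$. Throughout I will use Lemma~\ref{Boundaries and subrings} in the form ``$\mathcal{S}$ is a boundary for $(\mathcal{A},\mathcal{A}^{+})$ if and only if every $f\in\mathcal{A}\setminus\mathcal{A}^{+}$ satisfies $v(f)>1$ for some $v\in\mathcal{S}$''. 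Each $v\in\mathcal{S}\subseteq\Spa(\mathcal{A},\mathcal{A}^{+})$ is continuous, so $0<v(\varpi)<1$ and Lemma~\ref{Valuations and valuation rings} genuinely provides the chosen $\varphi_{v}\colon A\to V_{v}$ with pseudo-uniformizer $\varpi$; for these, $\varphi_{v}(f)\in\varpi^{n}V_{v}$ is equivalent to $v(f)\leq v(\varpi^{n})$.

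Exactly as in the $\arc_{\varpi}$-case, the implications $(2)\Rightarrow(3)$ and $(4)\Rightarrow(3)$ are immediate, and $(1)\Rightarrow(2)$ holds because $v(g)\leq1$ for all $g\in\mathcal{A}^{+}$ and all $v\in\Spa(\mathcal{A},\mathcal{A}^{+})$ yields $\mathcal{A}^{+}\subseteq\bigcap_{v\in\mathcal{S}}\varphi_{v}^{-1}(V_{v})$, whereas the reverse inclusion is precisely the boundary condition of Lemma~\ref{Boundaries and subrings}. For $(2)\Rightarrow(4)$, fix $n\geq1$: if $g\in\overline{(\varpi^{n})_{A}}$ then $g/\varpi^{n}\in\mathcal{A}^{+}$ by Proposition~\ref{Rings of integral elements}, so $\varphi_{v}(g)\in\varpi^{n}V_{v}$ for all $v\in\mathcal{S}$ by $(2)$; conversely, if $g\in A$ satisfies $\varphi_{v}(g)\in\varpi^{n}V_{v}$ for all $v\in\mathcal{S}$, then $v(g/\varpi^{n})\leq1$ for all $v\in\mathcal{S}$, hence $g/\varpi^{n}\in\bigcap_{v\in\mathcal{S}}\varphi_{v}^{-1}(V_{v})=\mathcal{A}^{+}$ by $(2)$, and Proposition~\ref{Rings of integral elements} gives $g\in\overline{(\varpi^{n})_{A}}$. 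This is $(4)$.

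It remains to prove $(4)\Rightarrow(1)$. Assume $(4)$ and let $f\in\mathcal{A}\setminus\mathcal{A}^{+}$; since $A\subseteq\mathcal{A}^{+}$ we have $f\notin A$, so $f=g/\varpi^{n}$ for some $g\in A$ and $n\geq1$. If $g$ were in $\overline{(\varpi^{n})_{A}}$, Proposition~\ref{Rings of integral elements} would force $f=g/\varpi^{n}\in\mathcal{A}^{+}$, a contradiction; hence $g\notin\overline{(\varpi^{n})_{A}}$, so by $(4)$ there is $v\in\mathcal{S}$ with $\varphi_{v}(g)\notin\varpi^{n}V_{v}$, i.e.\ $v(g)>v(\varpi^{n})$. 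Then $v(f)=v(g)v(\varpi)^{-n}>1$, and Lemma~\ref{Boundaries and subrings} shows $\mathcal{S}$ is a boundary for $(\mathcal{A},\mathcal{A}^{+})$.

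I do not expect a serious obstacle: the argument is in fact slightly shorter than the proof of Theorem~\ref{Boundaries and arc-closure}, because integral closure — unlike the $\arc_{\varpi}$-closure, which is built from rank-one valuations together with the $\mathbb{R}_{\geq0}$-valued spectral seminorm — interacts directly with valuations of arbitrary rank, so one never has to pass to powers $f^{m}$. The one mildly delicate point is purely bookkeeping, exactly as in Theorem~\ref{Boundaries and arc-closure}: for $n\geq1$ the set $\varphi_{v}^{-1}(\varpi^{n}V_{v})$ is read as an ideal of $A$, while $\varphi_{v}^{-1}(V_{v})$ is read as the subring $\{\,h\in\mathcal{A}\mid v(h)\leq1\,\}$ of $\mathcal{A}$; and the fact that $f\notin\mathcal{A}^{+}$ is detected by some valuation of $\Spa(\mathcal{A},\mathcal{A}^{+})$ — which is what makes Lemma~\ref{Boundaries and subrings} applicable — is subsumed in the proof by the direct construction of $v$ out of condition $(4)$.
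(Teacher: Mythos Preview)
Your proof is correct and takes essentially the same approach as the paper: both use Lemma~\ref{Boundaries and subrings} for $(1)\Leftrightarrow(2)$ and Proposition~\ref{Rings of integral elements} (the paper supplements this with \cite{Swanson-Huneke}, Proposition~1.6.1, to phrase the bridge as $\overline{(\varpi^{n})_{A}}=(\varpi^{n})_{\mathcal{A}^{+}}\cap A$, but your direct application of Proposition~\ref{Rings of integral elements} with $I=A$ already yields the biconditional you need). The one small omission --- shared with the paper's parallel proof of Theorem~\ref{Boundaries and arc-closure} --- is that you never explicitly close the cycle through $(3)$; this is easy: given $(3)$ for some $n\geq1$, any $h\in\bigcap_{v}\varphi_{v}^{-1}(V_{v})$ satisfies $\varpi^{n}h\in\bigcap_{v}\varphi_{v}^{-1}(\varpi^{n}V_{v})=\overline{(\varpi^{n})_{A}}$, whence $h\in\mathcal{A}^{+}$ by Proposition~\ref{Rings of integral elements}, giving $(2)$.
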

\begin{proof}The equivalence of (1) and (2) follows from Lemma \ref{Boundaries and subrings}. The equivalence between (2) and (4) follows from this and the equality \begin{equation*}\overline{(\varpi^{n})_{A}}=(\varpi^{n})_{\mathcal{A}^{+}}\cap A, n>0,\end{equation*}which is a consequence of Proposition \ref{Rings of integral elements} and \cite{Swanson-Huneke}, Proposition 1.6.1. The remaining implications are trivial.\end{proof}  

\section{Fractional ideals and $\varpi$-valuative rings}\label{sec:valuative rings}

Fix a ring $A$ and a non-zero-divisor $\varpi$ of $A$.\begin{mydef}[$\varpi$-fractional ideal]\label{Fractional ideals}A $\varpi$-fractional ideal $I$ of $A$ is an open and bounded $A$-submodule $I$ of the Tate ring $A[\varpi^{-1}]$. In other words, $I$ is an $A$-submodule of $A[\varpi^{-1}]$ such that there exist integers $n, m>0$ with \begin{equation*}\varpi^{n}\in I\subseteq \varpi^{-m}A.\end{equation*}A $\varpi$-fractional ideal is called integral if it is contained in $A$ (and is thus just a usual ideal of $A$ which is open with respect to the $\varpi$-adic topology).\end{mydef}
\begin{rmk}The above definition is a special case of the general notion of $R$-fractional ideals of a ring extension $A\subseteq R$ introduced by Knebusch and Kaiser (\cite{Knebusch-Zhang2}, Ch.~3, \S4, Definition 5). Indeed, for a ring extension $A\subseteq R$, Knebusch and Kaiser call an $A$-submodule $I$ of $R$ an $R$-fractional ideal if there exists an $R$-invertible ideal $\mathfrak{a}$ of $A$ with \begin{equation*}\mathfrak{a}\subseteq I\subseteq \mathfrak{a}^{-1}.\end{equation*}But in the special case $R=A[\varpi^{-1}]$ for a non-zero-divisor $\varpi\in A$, any invertible $A$-submodule $\mathfrak{a}$ of $A[\varpi^{-1}]$ is necessarily $\varpi$-adically open (as $\mathfrak{a}A[\varpi^{-1}]\supseteq \mathfrak{a}\mathfrak{a}^{-1}=A[\varpi^{-1}]$) and thus its inverse \begin{equation*}\mathfrak{a}^{-1}=A:_{A[\varpi^{-1}]}\mathfrak{a}\end{equation*}is necessarily bounded in the Tate ring $A[\varpi^{-1}]$.\end{rmk}  
\begin{example}If $\varpi$ is a unit in $A$, then the unit ideal is the only $\varpi$-fractional ideal of $A$.\end{example}
\begin{example}If $A$ is a valuation ring and $\varpi\in A$ is a non-unit such that $A$ is $\varpi$-adically separated, then $A[\varpi^{-1}]$ is the fraction field of $A$ (\cite{FK}, Ch.~0, Proposition 6.7.2), so every non-zero $A$-submodule $I$ of $A[\varpi^{-1}]$ satisfies $IA[\varpi^{-1}]=A[\varpi^{-1}]$ and is thus open. For any non-zero $A$-submodule $I$ of $A[\varpi^{-1}]$ this in particular entails that the $\varpi$-residual $A:_{A[\varpi^{-1}]}I$ (see Definition \ref{Residual} below) is open. Choose $n>0$ with $\varpi^{n}\in A:_{A[\varpi^{-1}]}I$. Then \begin{equation*}\varpi^{n}(A:_{A[\varpi^{-1}]}(A:_{A[\varpi^{-1}]}I))\subseteq A,\end{equation*}so \begin{equation*}\varpi^{n}I\subseteq \varpi^{n}(A:_{A[\varpi^{-1}]}(A:_{A[\varpi^{-1}]}I))\subseteq A,\end{equation*}proving that $I$ is bounded. Thus every non-zero $A$-submodule $I$ of $A[\varpi^{-1}]$ is a $\varpi$-fractional ideal and $\varpi$-fractional ideals are the same as fractional ideals of $A$. \end{example}
\begin{mydef}[$\varpi$-residual]\label{Residual}Let $\varpi$ be a non-zero-divisor in a ring $A$. For two $A$-submodules $I, J$ of $A[\varpi^{-1}]$ we define the $\varpi$-residual of $I, J$ to be the $A$-submodule 
\begin{equation*}I:_{A[\varpi^{-1}]}J:=\{\, f\in A[\varpi^{-1}]\mid fJ\subseteq I\,\}.\end{equation*}\end{mydef}
\begin{lemma}\label{Residual is a fractional ideal}For any two $\varpi$-fractional ideals $I, J$ the $\varpi$-residual $I:_{A[\varpi^{-1}]}J$ is a $\varpi$-fractional ideal.\end{lemma}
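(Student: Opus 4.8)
The plan is to unwind the definitions and verify the two defining conditions of a $\varpi$-fractional ideal — openness and boundedness — directly. Write $K = I :_{A[\varpi^{-1}]} J$. That $K$ is an $A$-submodule of $A[\varpi^{-1}]$ is immediate from the definition of the $\varpi$-residual, so the content is to produce integers $a, b > 0$ with $\varpi^{a} \in K \subseteq \varpi^{-b} A$. By hypothesis there are integers $n, m, n', m' > 0$ with
\begin{equation*}
\varpi^{n} \in I \subseteq \varpi^{-m} A, \qquad \varpi^{n'} \in J \subseteq \varpi^{-m'} A.
\end{equation*}

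For boundedness I would use the openness of $J$: if $f \in K$, then $f \varpi^{n'} \in f J \subseteq I \subseteq \varpi^{-m} A$, hence $f \in \varpi^{-(m + n')} A$. Thus $K \subseteq \varpi^{-(m+n')} A$, so $K$ is bounded in $A[\varpi^{-1}]$. For openness I would use the boundedness of $J$ together with the openness of $I$: since $\varpi^{n} \in I$ and $I$ is an $A$-module we have $\varpi^{n} A \subseteq I$, and therefore
\begin{equation*}
\varpi^{\,n + m'} J \subseteq \varpi^{\,n + m'}\varpi^{-m'} A = \varpi^{n} A \subseteq I,
\end{equation*}
which says exactly that $\varpi^{\,n+m'} \in K$. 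Taking $a = n + m'$ and $b = m + n'$ shows $\varpi^{a} \in K \subseteq \varpi^{-b} A$, so $K$ is a $\varpi$-fractional ideal.

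There is no real obstacle here: the statement is a formal consequence of the symmetric interplay between "open" (contains a power of $\varpi$) and "bounded" (contained in a negative power of $\varpi$ times $A$). The only point to be a little careful about is to invoke openness of $J$ for the upper bound on $K$ and boundedness of $J$ for the lower bound on $K$ — i.e., the two roles of $J$ get swapped relative to what one might naively expect. Everything else is a one-line module computation, and the same argument in fact shows more generally that $I :_{A[\varpi^{-1}]} J$ is a $\varpi$-fractional ideal whenever $I$ is a $\varpi$-fractional ideal and $J$ is merely open and bounded (indeed, merely a nonzero finitely generated — or bounded — $A$-submodule containing some $\varpi^{n'}$).
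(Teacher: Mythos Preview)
Your proof is correct and follows essentially the same approach as the paper's own proof: both choose integers witnessing the openness and boundedness of $I$ and $J$ and then verify directly that $\varpi^{n+m'}$ lies in the residual and that the residual is contained in $\varpi^{-(m+n')}A$. Your write-up simply spells out the two verifications in a bit more detail than the paper's one-line justification.
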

\begin{proof}Choose integers $m, n, k, l>0$ such that \begin{equation*}\varpi^{n}A\subseteq I\subseteq \varpi^{-m}A\end{equation*}and \begin{equation*}\varpi^{k}A\subseteq J\subseteq \varpi^{-l}A.\end{equation*}Then $\varpi^{n+l}\in I:_{A[\varpi^{-1}]}J$ and $\varpi^{m+k}(I:_{A[\varpi^{-1}]}J)\subseteq A$.\end{proof}
For any $A$, $\varpi$, we denote by $J_{\varpi}(A)$ the set of $A$-submodules of $A[\varpi^{-1}]$ and we denote by $\mathcal{F}_{\varpi}(A)$ the set of $\varpi$-fractional ideals of $A$. We have just observed that $\mathcal{F}_{\varpi}(A)$ is always closed under $\varpi$-residuals. Note that each of $J_{\varpi}(A)$ and $\mathcal{F}_{\varpi}(A)$ is also closed under finite sums, intersections and products and forms a semigroup whose multiplication operation is the product of $A$-submodules of $A[\varpi^{-1}]$.
\begin{mydef}[Cancellative $A$-submodules]An $A$-submodule $I$ of $A[\varpi^{-1}]$ is called cancellative if it is cancellative as an element of the semigroup $J_{\varpi}(A)$. \end{mydef}
\begin{lemma}\label{Products of fractional ideals}For any $I, J\in J_{\varpi}(A)$ we have $I(IJ:_{A[\varpi^{-1}]}I)=IJ$.\end{lemma}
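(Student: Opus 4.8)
The plan is to prove the two inclusions separately; both are elementary and follow directly from the definition of the $\varpi$-residual (Definition \ref{Residual}) together with the monotonicity of the product of $A$-submodules of $A[\varpi^{-1}]$.

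First I would dispose of the inclusion $I(IJ:_{A[\varpi^{-1}]}I)\subseteq IJ$. By definition of the $\varpi$-residual, every $f\in IJ:_{A[\varpi^{-1}]}I$ satisfies $fI\subseteq IJ$. Now the product $I\cdot(IJ:_{A[\varpi^{-1}]}I)$ is the $A$-submodule of $A[\varpi^{-1}]$ generated by the elements $fi$ with $i\in I$ and $f\in IJ:_{A[\varpi^{-1}]}I$; since each such $fi$ lies in $fI\subseteq IJ$ and $IJ$ is an $A$-submodule, the inclusion follows at once.

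For the reverse inclusion $IJ\subseteq I(IJ:_{A[\varpi^{-1}]}I)$, the key observation is that $J\subseteq IJ:_{A[\varpi^{-1}]}I$: indeed, for any $j\in J$ the set $jI$ consists of the products $ji$ with $i\in I$, each of which lies in $IJ$ by definition of the product of submodules, so $jI\subseteq IJ$, i.e.\ $j\in IJ:_{A[\varpi^{-1}]}I$. Since the product of $A$-submodules of $A[\varpi^{-1}]$ is monotone in each factor, $J\subseteq IJ:_{A[\varpi^{-1}]}I$ yields $IJ\subseteq I(IJ:_{A[\varpi^{-1}]}I)$. Combining the two inclusions gives the claimed equality.

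I do not expect any genuine obstacle here: the identity is the $\varpi$-local counterpart of the classical fact $\mathfrak{a}(\mathfrak{a}\mathfrak{b}:\mathfrak{a})=\mathfrak{a}\mathfrak{b}$ for ideals, and its proof is purely formal — it requires neither that $I$ and $J$ be $\varpi$-fractional (as opposed to arbitrary $A$-submodules of $A[\varpi^{-1}]$) nor any cancellativity hypothesis on $I$. The only point to be slightly careful about is to unwind the definition of the module product $I\cdot M$ correctly in both directions, which is routine.
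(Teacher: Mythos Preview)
Your proof is correct and follows essentially the same approach as the paper: both directions are argued exactly as you describe, via $J\subseteq IJ:_{A[\varpi^{-1}]}I$ for one inclusion and the defining property $fI\subseteq IJ$ of the residual for the other. The only cosmetic difference is the order in which the two inclusions are treated.
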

\begin{proof}Clearly, $J\subseteq IJ:_{A[\varpi^{-1}]}I$, so $IJ\subseteq I(IJ:_{A[\varpi^{-1}]}I)$. For the converse, let $f_1,\dots, f_n\in A[\varpi^{-1}]$, $g_1,\dots, g_{n}\in I$ such that $f_{i}I\subseteq IJ$ for all $i$. Then $\sum_{i=1}^{n}f_{i}g_{i}\in IJ$. \end{proof}
The following proposition is an analog of \cite{Fuchs-Salce}, Ch.~1, Proposition 2.1. 
\begin{prop}\label{Properties of cancellative fractional ideals}For $I\in J_{\varpi}(A)$, the following are equivalent: \begin{enumerate}[(1)]\item $I$ is cancellative. \item $IJ:_{A[\varpi^{-1}]}I=J$ for all $J\in J_{\varpi}(A)$. \item For $J, K\in J_{\varpi}(A)$, $IJ\subseteq IK$ implies $J\subseteq K$. \end{enumerate} \end{prop}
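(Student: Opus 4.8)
The plan is to prove the equivalences by running the cycle $(2)\Rightarrow(1)\Rightarrow(3)\Rightarrow(2)$, each step being a short formal manipulation of sums, products and $\varpi$-residuals of $A$-submodules of $A[\varpi^{-1}]$. Throughout one uses that $J_{\varpi}(A)$ is a semigroup which is also closed under finite sums, intersections and $\varpi$-residuals, so that every module constructed along the way is again an element of $J_{\varpi}(A)$; the only genuinely non-formal ingredient is Lemma~\ref{Products of fractional ideals}, and it enters only in the last implication.

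For $(2)\Rightarrow(1)$: if $IJ=IK$ for $J,K\in J_{\varpi}(A)$, then applying the hypothesis twice gives $J=IJ:_{A[\varpi^{-1}]}I=IK:_{A[\varpi^{-1}]}I=K$, so $I$ is cancellative in $J_{\varpi}(A)$. For $(1)\Rightarrow(3)$: given $IJ\subseteq IK$, add $IK$ to both sides; since the product of $A$-submodules of $A[\varpi^{-1}]$ distributes over sums of submodules, $I(J+K)=IJ+IK=IK$, and cancellativity of $I$ forces $J+K=K$, i.e.\ $J\subseteq K$.

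For $(3)\Rightarrow(2)$: the inclusion $J\subseteq IJ:_{A[\varpi^{-1}]}I$ holds for every $J\in J_{\varpi}(A)$ simply because $JI\subseteq IJ$. For the reverse inclusion, set $K\coloneqq IJ:_{A[\varpi^{-1}]}I\in J_{\varpi}(A)$. Lemma~\ref{Products of fractional ideals}, applied to the pair $I,J$, gives $IK=I(IJ:_{A[\varpi^{-1}]}I)=IJ$; in particular $IK\subseteq IJ$, so hypothesis $(3)$ yields $K\subseteq J$, whence $K=J$. This closes the cycle and proves the proposition.

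I do not expect any real obstacle here: the statement is of the same purely formal nature as the classical fractional-ideal result \cite{Fuchs-Salce}, Ch.~1, Proposition~2.1, and all of the substantive content—namely the identity $I(IJ:_{A[\varpi^{-1}]}I)=IJ$—has already been isolated as Lemma~\ref{Products of fractional ideals}. The only minor point demanding care is that each of $J+K$, $IK$ and $IJ:_{A[\varpi^{-1}]}I$ is again an $A$-submodule of $A[\varpi^{-1}]$ (not merely a $\varpi$-fractional ideal), which is automatic from the stability of $J_{\varpi}(A)$ under the operations used.
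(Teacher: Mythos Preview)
Your proof is correct and uses essentially the same ideas as the paper's: both arguments rely on Lemma~\ref{Products of fractional ideals} exactly once and on elementary residual/product manipulations otherwise. The only cosmetic difference is the direction of the cycle—the paper runs $(1)\Rightarrow(2)\Rightarrow(3)\Rightarrow(1)$ (using the lemma together with cancellativity for $(1)\Rightarrow(2)$, and antisymmetry for $(3)\Rightarrow(1)$), whereas you run $(2)\Rightarrow(1)\Rightarrow(3)\Rightarrow(2)$ and insert the distributivity trick $I(J+K)=IJ+IK$ for $(1)\Rightarrow(3)$.
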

\begin{proof}(1)$\Rightarrow$(2): Follows from the above lemma. 

(2)$\Rightarrow$(3): If $IJ\subseteq IK$, then $J=IJ:_{A[\varpi^{-1}]}I\subseteq IK:_{A[\varpi^{-1}]}I=K$. 

(3)$\Rightarrow$ (1): By (anti-)symmetry. \end{proof}
The following is an analog of \cite{Fuchs-Salce}, Ch.~1, Proposition 2.3.
\begin{prop}\label{Cancellative fractional ideals and maximal ideals}If $I$ is a cancellative $A$-submodule of $A[\varpi^{-1}]$ and $\mathfrak{m}_{1},\dots, \mathfrak{m}_{k}$ are distinct maximal ideals of $A$, then \begin{equation*}\bigcup_{i=1}^{n}\mathfrak{m}_{i}I\subsetneq I.\end{equation*}\end{prop}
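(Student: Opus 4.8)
The plan is to adapt the classical prime-avoidance argument behind the analogous statement \cite{Fuchs-Salce}, Ch.~1, Proposition 2.3, by producing by hand an element of $I$ that lies outside every $\mathfrak{m}_{i}I$. Throughout I write $k$ for the number of distinct maximal ideals $\mathfrak{m}_{1},\dots,\mathfrak{m}_{k}$.

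The first step is to note that cancellativity already forces $\mathfrak{m}_{i}I\subsetneq I$ for each single index $i$. Indeed, if $\mathfrak{m}_{i}I=I$, then, since also $AI=I$, we have $AI\subseteq\mathfrak{m}_{i}I$, so Proposition \ref{Properties of cancellative fractional ideals}(3), applied with $J=A$ and $K=\mathfrak{m}_{i}$ (both of which lie in $J_{\varpi}(A)$), yields $A\subseteq\mathfrak{m}_{i}$, contradicting that $\mathfrak{m}_{i}$ is a proper ideal. This is the only place where the cancellativity hypothesis enters; the remainder of the argument is purely ring-theoretic bookkeeping.

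Next I would use that the $\mathfrak{m}_{i}$ are pairwise comaximal, so the Chinese Remainder Theorem supplies elements $e_{1},\dots,e_{k}\in A$ with $e_{i}\equiv1\pmod{\mathfrak{m}_{i}}$ and $e_{i}\equiv0\pmod{\mathfrak{m}_{j}}$ for all $j\neq i$. For each $i$, choose (by the first step) some $x_{i}\in I\setminus\mathfrak{m}_{i}I$, and set $x\coloneqq\sum_{i=1}^{k}e_{i}x_{i}$, which lies in $I$ since each $e_{i}x_{i}\in I$. Fix $\ell$. For $j\neq\ell$ we have $e_{j}\in\mathfrak{m}_{\ell}$, hence $e_{j}x_{j}\in\mathfrak{m}_{\ell}I$; and writing $e_{\ell}=1+m$ with $m\in\mathfrak{m}_{\ell}$ gives $e_{\ell}x_{\ell}-x_{\ell}=mx_{\ell}\in\mathfrak{m}_{\ell}I$. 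Adding these, $x-x_{\ell}\in\mathfrak{m}_{\ell}I$, so $x\in\mathfrak{m}_{\ell}I$ would force $x_{\ell}=x-(x-x_{\ell})\in\mathfrak{m}_{\ell}I$, a contradiction. As $\ell$ was arbitrary, $x\in I\setminus\bigcup_{i=1}^{k}\mathfrak{m}_{i}I$, which is the asserted strict inclusion.

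I do not expect a genuine obstacle: the only subtlety is that one cannot shortcut the argument by invoking a statement of the form ``a module is never a finite union of proper submodules'' (which is false in general), so the witnessing element $x$ must be built explicitly as above, with cancellativity invoked precisely to guarantee that each $\mathfrak{m}_{i}I$ is a \emph{proper} submodule of $I$ in the first place.
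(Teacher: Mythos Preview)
Your proof is correct and follows essentially the same prime-avoidance construction as the paper. The only cosmetic difference is that the paper applies cancellativity (Proposition~\ref{Properties of cancellative fractional ideals}(3)) directly to the non-inclusion $\bigcap_{i\neq j}\mathfrak{m}_{i}\not\subseteq\mathfrak{m}_{j}$ to pick $f_{j}\in I(\bigcap_{i\neq j}\mathfrak{m}_{i})\setminus I\mathfrak{m}_{j}$ and then sums the $f_{j}$, whereas you apply cancellativity to $A\not\subseteq\mathfrak{m}_{i}$ and then use the CRT idempotents $e_{i}$ to move your chosen $x_{i}$ into $(\bigcap_{j\neq i}\mathfrak{m}_{j})I$; the resulting summands $e_{i}x_{i}$ play exactly the role of the paper's $f_{i}$.
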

\begin{proof}We have $\prod_{i\neq j}\mathfrak{m}_{i}\not\subseteq \mathfrak{m}_{j}$ and thus \begin{equation*}\bigcap_{i\neq j}\mathfrak{m}_{i}\not\subseteq\mathfrak{m}_{j}.\end{equation*}Then, since $I$ is cancellative,\begin{equation*}I(\bigcap_{i\neq j}\mathfrak{m}_{i})\not\subseteq I\mathfrak{m}_{j}.\end{equation*}Thus, for each $j$, we can select $f_{j}\in I(\bigcap_{i\neq j}\mathfrak{m}_{i})\setminus I\mathfrak{m}_{j}$. Then the element \begin{equation*}f=\sum_{j=1}^{n}f_{j}\end{equation*}satisfies $f\in I$ but $f\not\in \bigcup_{i=1}^{n}I\mathfrak{m}_{i}$. \end{proof}
Besides the notion of cancellative $A$-submodules of $A[\varpi^{-1}]$, there is the stronger notion of invertible $A$-submodules.
\begin{mydef}[Invertible $A$-submodule]\label{Invertible submodule}An $A$-submodule $I$ of $A[\varpi^{-1}]$ is said to be invertible if there exists an $A$-submodule $J$ of $A[\varpi^{-1}]$ with $IJ=A$.\end{mydef}
\begin{rmk}~\begin{itemize}\item Invertible $A$-submodules of $A[\varpi^{-1}]$ are cancellative. \item The $A$-submodule $J$ of $A[\varpi^{-1}]$ with $IJ=A$ is uniquely determined by $I$; this $J$ is called the inverse $I^{-1}$ of $I$.\end{itemize}\end{rmk}
\begin{example}\label{Regular principal implies invertible}Every $\varpi$-adically open principal $A$-submodule $I$ of $A[\varpi^{-1}]$ is invertible. Indeed, choosing $f\in A[\varpi^{-1}]$ with $I=(f)_{A}$, the openness of $I$ entails that $f$ is a unit in $A[\varpi^{-1}]$ and then $(f^{-1})_{A}$ is an $A$-submodule of $A[\varpi^{-1}]$ with $(f)_{A}(f^{-1})_{A}=A$.\end{example}
\begin{example}[Cancellative submodules which are not invertible]Let $A$, $\varpi$ be arbitrary, with the only condition that $A[\varpi^{-1}]$ is not equal to the total ring of fractions $\Frac(A)$. Let $f\in A[\varpi^{-1}]$ be any non-zero-divisor and non-unit (so, the principal $A$-submodule $(f)_{A}$ is not $\varpi$-adically open). Then $(f)_{A}$ is a cancellative $A$-submodule of $A[\varpi^{-1}]$ (since it is invertible as an $A$-submodule of $\Frac(A)$), but it is not invertible as an $A$-submodule of $A[\varpi^{-1}]$ since $f^{-1}\in\Frac(A)$ does not belong to $A[\varpi^{-1}]$.\end{example} 
\begin{lemma}\label{Invertible implies regular}Every invertible $A$-submodule of $A[\varpi^{-1}]$ is a $\varpi$-fractional ideal.\end{lemma}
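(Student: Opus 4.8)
Let $I\in J_{\varpi}(A)$ be invertible, with inverse $J$, so that $IJ=A$. The plan is to extract from the single relation $1\in IJ$ both of the containments $\varpi^{m}\in I$ and $I\subseteq \varpi^{-n}A$ that characterize a $\varpi$-fractional ideal, by a clearing-of-denominators argument.

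First I would prove openness of $I$. Since $1\in IJ$, write $1=\sum_{i=1}^{k}f_{i}g_{i}$ with $f_{i}\in I$ and $g_{i}\in J\subseteq A[\varpi^{-1}]$. Each $g_{i}$ has the form $g_{i}=b_{i}\varpi^{-t_{i}}$ with $b_{i}\in A$ and $t_{i}\geq0$; set $m=\max_{i}t_{i}$. Then $\varpi^{m}g_{i}=b_{i}\varpi^{m-t_{i}}\in A$ for every $i$, and since $I$ is an $A$-submodule we get $f_{i}\cdot(\varpi^{m}g_{i})\in I$, whence $\varpi^{m}=\sum_{i}f_{i}(\varpi^{m}g_{i})\in I$. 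Thus $\varpi^{m}A\subseteq I$, i.e.\ $I$ is $\varpi$-adically open.

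Next I would prove boundedness of $I$. Because $JI=IJ=A$, the submodule $J$ is itself invertible with inverse $I$, so the argument just given (applied to $J$) yields an integer $n>0$ with $\varpi^{n}\in J$, i.e.\ $\varpi^{n}A\subseteq J$. Multiplying by $I$ gives $\varpi^{n}I=(\varpi^{n}A)I\subseteq JI=A$, so $I\subseteq \varpi^{-n}A$; hence $I$ is bounded in the Tate ring $A[\varpi^{-1}]$. Combining the two steps, $\varpi^{m}\in I\subseteq\varpi^{-n}A$, which is exactly the condition in Definition \ref{Fractional ideals} for $I$ to be a $\varpi$-fractional ideal.

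There is no real obstacle here; the only point requiring a little care is that \emph{both} defining properties of a $\varpi$-fractional ideal (openness and boundedness) must be derived, and that boundedness is obtained by exploiting the symmetry of the relation $IJ=A$ — that is, by observing that the inverse $J$ is again invertible and applying the openness argument to it. One could alternatively invoke the already-established fact (Remark after Definition \ref{Invertible submodule}) that invertible submodules are cancellative, but the direct computation above is shorter and self-contained.
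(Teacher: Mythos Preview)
Your proof is correct and follows essentially the same approach as the paper: write $1$ as a finite sum of products from $I$ and $J$, clear denominators to obtain $\varpi^{m}\in I$, and then invoke symmetry to get openness of $J$ and hence boundedness of $I$. Your presentation is in fact slightly more direct than the paper's, which detours through the observation $IA[\varpi^{-1}]=A[\varpi^{-1}]$ before concluding openness.
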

\begin{proof}Choose an $A$-submodule $J$ of $A[\varpi^{-1}]$ such that $IJ=A$. Then $IA[\varpi^{-1}]\supseteq IJ=A$. Then, for any $f\in A$ and $n\geq1$, we have $\varpi^{-n}f=\varpi^{-n}(\sum_{i=1}^{m}a_{i}f_{i})$ with $a_1,\dots, a_m\in I$ and $f_1,\dots, f_m\in A[\varpi^{-1}]$, so \begin{equation*}\varpi^{-n}f=\sum_{i=1}^{m}a_{i}\varpi^{-n}f_{i}\in IA[\varpi^{-1}].\end{equation*}It follows that $I$ is open in the Tate ring $A[\varpi^{-1}]$. By symmetry, $J$ is also open and thus $I$ is bounded.\end{proof}
Now we prove an analog of \cite{Fuchs-Salce}, Ch.~1, Proposition 2.5, in the setting of $\varpi$-fractional ideals. Our proof follows the proof of loc.~cit.  
\begin{prop}\label{Properties of invertible fractional ideals}Let $I$ be an invertible $\varpi$-fractional ideal of $A$. Then: \begin{enumerate}[(1)]\item $I^{-1}=A:_{A[\varpi^{-1}]}I$. \item $I$ is finitely generated. \item If $A$ is semilocal, then $I$ is a principal $\varpi$-fractional ideal. Moreover, if $A$ is local, then every generating set of $I$ contains an element generating $I$. \item If $I\subseteq A$ and $\mathfrak{p}$ is a minimal prime ideal of $I$, then every generating set of $I$ contains an element $z$ such that $\mathfrak{p}$ is minimal over $z$. \item If $I\subseteq A$ and there exists an element $f\in I$ contained in only finitely many maximal ideals of $A$, then $I=(f, g)_{A}$ for some $g\in A$.\end{enumerate}\end{prop}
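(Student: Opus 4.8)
The plan is to follow the proof of \cite{Fuchs-Salce}, Ch.~1, Proposition 2.5, throughout exploiting the identity $II^{-1}=A$ for the inverse $I^{-1}$ of $I$, localization at maximal ideals, and Nakayama's lemma. Part (1) is immediate: $I^{-1}\subseteq A:_{A[\varpi^{-1}]}I$ because $I^{-1}I=A$, and conversely if $fI\subseteq A$ then $f\in fA=f(II^{-1})=(fI)I^{-1}\subseteq AI^{-1}=I^{-1}$. For part (2) I would write $1=\sum_{i=1}^{n}a_{i}b_{i}$ with $a_{i}\in I$ and $b_{i}\in I^{-1}$; then for every $x\in I$ we have $x=\sum_{i}(xb_{i})a_{i}$ with $xb_{i}\in II^{-1}=A$, so $I=(a_{1},\dots,a_{n})_{A}$ is finitely generated.

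For part (3), recall that invertible $A$-submodules of $A[\varpi^{-1}]$ are cancellative (the remark following Definition \ref{Invertible submodule}), so Proposition \ref{Cancellative fractional ideals and maximal ideals} supplies an element $f\in I\setminus\bigcup_{i=1}^{k}\mathfrak{m}_{i}I$, where $\mathfrak{m}_{1},\dots,\mathfrak{m}_{k}$ are the maximal ideals of the semilocal ring $A$. Localizing the relation $II^{-1}=A$ at $\mathfrak{m}_{i}$ shows that $I_{\mathfrak{m}_{i}}$ is an invertible $A_{\mathfrak{m}_{i}}$-submodule of $A_{\mathfrak{m}_{i}}[\varpi^{-1}]$, and over a local ring such a module is principal: from $1=\sum_{j}a_{j}b_{j}$ in $A_{\mathfrak{m}_{i}}$ with $a_{j}\in I_{\mathfrak{m}_{i}}$, $b_{j}\in I_{\mathfrak{m}_{i}}^{-1}$, some $a_{j_{0}}b_{j_{0}}$ is a unit, and one checks $I_{\mathfrak{m}_{i}}=(a_{j_{0}})_{A_{\mathfrak{m}_{i}}}$. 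Since the image of $f$ in $I_{\mathfrak{m}_{i}}/\mathfrak{m}_{i}I_{\mathfrak{m}_{i}}$ is nonzero and $I_{\mathfrak{m}_{i}}$ is finitely generated by part (2), Nakayama's lemma gives $I_{\mathfrak{m}_{i}}=(f)_{A_{\mathfrak{m}_{i}}}$ at every maximal ideal, hence $I=(f)_{A}$, which is a principal $\varpi$-fractional ideal by Lemma \ref{Invertible implies regular}. Running the same Nakayama argument with an arbitrary generating set in place of $\{f\}$, in the case $A$ is local, shows that one of its members already generates $I$. Part (4) reduces to this local case: localizing at $\mathfrak{p}$ (note $\varpi\in\mathfrak{p}$ because $\varpi^{n}\in I\subseteq\mathfrak{p}$ for some $n$, so the standing hypotheses on $\varpi$ pass to $A_{\mathfrak{p}}$), the module $I_{\mathfrak{p}}$ is invertible, so by (3) any generating set of $I_{\mathfrak{p}}$—in particular the image of a generating set of $I$—contains an element $z$ with $I_{\mathfrak{p}}=(z)_{A_{\mathfrak{p}}}$; minimality of $\mathfrak{p}$ over $I$ then translates into $\mathfrak{p}A_{\mathfrak{p}}=\sqrt{(z)_{A_{\mathfrak{p}}}}$, i.e.\ $\mathfrak{p}$ is minimal over $(z)_{A}$.

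Part (5) is where the real work lies. Let $\mathfrak{m}_{1},\dots,\mathfrak{m}_{r}$ be the finitely many maximal ideals containing $f$ (these include every maximal ideal containing $I$, since $f\in I\subseteq A$); at any other maximal ideal $\mathfrak{m}$ the element $f$ is a unit, so $I_{\mathfrak{m}}=A_{\mathfrak{m}}=(f)_{A_{\mathfrak{m}}}$. Thus the finitely generated module $M:=I/(f)_{A}$ is supported, among maximal ideals, only on $\{\mathfrak{m}_{1},\dots,\mathfrak{m}_{r}\}$, and each $M_{\mathfrak{m}_{i}}=I_{\mathfrak{m}_{i}}/(f)_{A_{\mathfrak{m}_{i}}}$ is cyclic because $I_{\mathfrak{m}_{i}}$ is principal by part (3). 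I would then show $M$ is cyclic: since the $\mathfrak{m}_{i}$ are pairwise comaximal, the Chinese remainder theorem gives $M/(\mathfrak{m}_{1}\cdots\mathfrak{m}_{r})M\cong\prod_{i=1}^{r}M/\mathfrak{m}_{i}M$, and each factor is a quotient of $I/\mathfrak{m}_{i}I$, which is at most one-dimensional over the field $A/\mathfrak{m}_{i}$ by part (3); hence the product is cyclic over $A$. Lifting a generator to $g\in M$ and checking $M=Ag$ locally at every maximal ideal (Nakayama at the $\mathfrak{m}_{i}$, triviality elsewhere) finishes it, and a preimage $g\in I\subseteq A$ of this generator satisfies $I=(f)_{A}+Ag=(f,g)_{A}$.

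The step I expect to be the main obstacle is this last one: turning the finitely many local principal generations into a single global two-element generating set. The key is to recognize that it suffices to prove $I/(f)_{A}$ is cyclic—a naive prime-avoidance argument inside $I$ does not quite work because the sets $\mathfrak{m}_{i}I$ and $\mathfrak{m}_{i}\cap I$ are not prime—and one must be careful to run the argument over the maximal ideals containing $f$ rather than the possibly smaller set of those containing $I$, since at a maximal ideal containing $f$ but not $I$ the localization $I_{\mathfrak{m}}$ is already the unit ideal while $(f)_{A_{\mathfrak{m}}}$ need not be.
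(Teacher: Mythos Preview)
Your proof is correct. Parts (1), (2), and (4) match the paper's arguments closely (the paper derives (1) from Proposition \ref{Properties of cancellative fractional ideals}(2) rather than by the direct computation you give, but the content is the same).

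For parts (3) and (5) you take a genuinely different route. The paper argues multiplicatively throughout: in (3), having chosen $f\in I\setminus\bigcup_{i}\mathfrak{m}_{i}I$ via Proposition \ref{Cancellative fractional ideals and maximal ideals}, it observes directly that $fI^{-1}\subseteq A$ is contained in no maximal ideal, hence $fI^{-1}=A$; in (5), it shows the ideal $fI^{-1}\subseteq A$ lies in only finitely many maximal ideals $\mathfrak{m}_{1},\dots,\mathfrak{m}_{k}$, picks $g\in I\setminus\bigcup_{i}\mathfrak{m}_{i}I$ again by Proposition \ref{Cancellative fractional ideals and maximal ideals}, and checks $fI^{-1}+gI^{-1}$ avoids every maximal ideal. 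Your approach is module-theoretic instead: you localize, use that $I_{\mathfrak{m}}$ is principal, and apply Nakayama (and in (5) the Chinese remainder theorem on $M=I/(f)_{A}$). The paper's method stays closer to the multiplicative ideal theory of \cite{Fuchs-Salce} and never needs Nakayama; your method is perhaps more familiar from general commutative algebra and makes the local-to-global passage explicit. Both are clean, and your identification of the key reduction in (5)---proving $I/(f)_{A}$ is cyclic rather than attempting prime avoidance inside $I$---is a nice way to organize the argument.
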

\begin{proof}(1) By Proposition \ref{Properties of cancellative fractional ideals}(2), $I^{-1}=II^{-1}:_{A[\varpi^{-1}]}I=A:_{A[\varpi^{-1}]}I$. 

(2) We can write $1=\sum_{i=1}^{n}f_{i}g_{i}$ for some $f_{1},\dots, f_{n}\in I$, $g_{1},\dots, g_{n}\in I^{-1}=A:_{A[\varpi^{-1}]}I$. Thus every $x\in I$ can be written as $x=\sum_{i=1}^{n}f_{i}g_{i}x$ with $g_{i}x\in A$ since $g_{i}\in A:_{A[\varpi^{-1}]}I$. It follows that $I$ is generated by $f_1,\dots, f_n$. 

(3) Let $\mathfrak{m}_{1},\dots, \mathfrak{m}_{k}$ be the maximal ideals of the semilocal ring $A$. By Prop.~\ref{Cancellative fractional ideals and maximal ideals}, $I\supsetneq \bigcup_{i=1}^{k}I\mathfrak{m}_{i}$. We claim that every $f\in I\setminus\bigcup_{i=1}^{k}I\mathfrak{m}_{i}$ generates $I$. Indeed, for such $f$ we have $(f)_{A}\not\subseteq \bigcup_{i=1}^{k}I\mathfrak{m}_{i}$ and, consequently, \begin{equation*}fI^{-1}\not\subseteq \bigcup_{i=1}^{k}\mathfrak{m}_{i}.\end{equation*}It follows that $fI^{-1}=A$ (note that $fI^{-1}\subseteq A$ since $f\in I$). In other words, any such $f$ generates $I$, as claimed. 

Assume now that $A$ is local with maximal ideal $\mathfrak{m}$ and that $I=(f_1,\dots, f_n)_{A}$. Let $f$ be an element as in the previous paragraph. Since $f$ generates $I$, we can find $e_1,\dots, e_n$ such that $f_{i}=e_{i}f$ for all $i$. Moreover, we can choose $r_1,\dots, r_n\in A$ with $f=\sum_{i=1}^{n}r_{i}f_{i}$. If one of the $e_{i}$ is a unit, then $I=(f_{i})_{A}$ and we are done. Hence we may assume that $e_{i}\in\mathfrak{m}$ for all $i$. In this case $1-\sum_{i=1}^{n}r_{i}e_{i}$ is a unit in $A$, the ring $A$ being local. Therefore, the equation $f=\sum_{i=1}^{n}r_{i}e_{i}f$ implies $f=0$ and thus $I=0$, a contradiction. 

(4) If $I=(f_1,\dots, f_n)_{A}\subseteq A$, then $IA_{\mathfrak{p}}=(\frac{f_1}{1},\dots, \frac{f_{n}}{1})_{A_{\mathfrak{p}}}$. By (3), $IA_{\mathfrak{p}}=(\frac{f_{i}}{1})_{A_{\mathfrak{p}}}$ for some $i$, so $\mathfrak{p}A_{\mathfrak{p}}$ is minimal over $\frac{f_{i}}{1}$ and $\mathfrak{p}$ is minimal over $f_{i}$. 

(5) If $f=0$, then $A$ is semilocal, so the claim follows from (3). Thus we may assume that $f\neq0$. If $I=(f)_{A}$, we are done, so we may assume that $(f)_{A}\subsetneq I$. Then $(f)_{A}I^{-1}\subsetneq A$ and thus $(f)_{A}=fII^{-1}=I(fI^{-1})\subseteq fI^{-1}$. If $fI^{-1}$ is contained in infinitely many maximal ideals of $A$, then so is $f$, since we assumed that $I\subseteq A$. Thus $fI^{-1}$ is contained in only finitely many maximal ideals $\mathfrak{m}_{1},\dots, \mathfrak{m}_{k}$. By Proposition \ref{Cancellative fractional ideals and maximal ideals}, there exists some $g\in I\setminus\bigcup_{i=1}^{k}I\mathfrak{m}_{i}$. Then $gI^{-1}\subseteq A$ and $gI^{-1}\not\subseteq \mathfrak{m}_{i}$ for all $i$. Consequently, $fI^{-1}+gI^{-1}\not\subseteq \mathfrak{m}_{i}$ for all $i$. If $\mathfrak{m}$ is any maximal ideal of $A$ but one of the $\mathfrak{m}_{1},\dots, \mathfrak{m}_{k}$, then $fI^{-1}\not\subseteq \mathfrak{m}$ and thus $fI^{-1}+gI^{-1}\not\subseteq \mathfrak{m}$. Summing up, we have seen that $fI^{-1}+gI^{-1}$ is not contained in any maximal ideal of $A$, i.e., $fI^{-1}+gI^{-1}=A$. Consequently, $I=(f, g)_{A}$.\end{proof}
\begin{lemma}\label{Locally invertible implies invertible}An $A$-submodule $I$ of $A[\varpi^{-1}]$ is invertible if and only if for every maximal ideal $\mathfrak{m}$ of $A$ the $A$-submodule $I_{\mathfrak{m}}$ of $A_{\mathfrak{m}}[\varpi^{-1}]$ is invertible.\end{lemma}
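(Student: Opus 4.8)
The plan is to prove the substantive implication --- that invertibility of every localization $I_{\mathfrak m}$ forces invertibility of $I$ --- by checking the identity $I\cdot(A:_{A[\varpi^{-1}]}I)=A$ one maximal ideal at a time. The reverse implication is immediate: if $IJ=A$ for some $A$-submodule $J$ of $A[\varpi^{-1}]$, then localizing at a maximal ideal $\mathfrak m$ and using that localization commutes with products of $A$-submodules of $A[\varpi^{-1}]$, together with $(A[\varpi^{-1}])_{\mathfrak m}=A_{\mathfrak m}[\varpi^{-1}]$, yields $I_{\mathfrak m}J_{\mathfrak m}=A_{\mathfrak m}$, so $I_{\mathfrak m}$ is invertible over $A_{\mathfrak m}$.

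For the converse I would set $J:=A:_{A[\varpi^{-1}]}I$ and aim for $IJ=A$. Since $IJ\subseteq A$ always holds, and since an $A$-submodule of $A$ whose localization at every maximal ideal is the whole local ring must equal $A$, it suffices to show $I_{\mathfrak m}J_{\mathfrak m}=A_{\mathfrak m}$ for every maximal $\mathfrak m$. The crux is the localization formula
\begin{equation*}
J_{\mathfrak m}=\bigl(A:_{A[\varpi^{-1}]}I\bigr)_{\mathfrak m}=A_{\mathfrak m}:_{A_{\mathfrak m}[\varpi^{-1}]}I_{\mathfrak m},
\end{equation*}
whose inclusion $\subseteq$ is formal (if $f\in J$ and $s\notin\mathfrak m$ then $(f/s)I_{\mathfrak m}\subseteq (fI)_{\mathfrak m}\subseteq A_{\mathfrak m}$), and whose inclusion $\supseteq$ follows once $I$ is known to be finitely generated: writing $I=(x_{1},\dots,x_{n})_{A}$, an element $g$ of the right-hand side satisfies $gx_{i}\in A_{\mathfrak m}$, so there are $s_{i}\notin\mathfrak m$ with $s_{i}gx_{i}\in A$, and then $s:=s_{1}\cdots s_{n}$ gives $sg\in J$, hence $g\in J_{\mathfrak m}$. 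Granting this formula, Proposition \ref{Properties of invertible fractional ideals}(1) applied over the local ring $A_{\mathfrak m}$ identifies $J_{\mathfrak m}$ with the inverse of the invertible ideal $I_{\mathfrak m}$, so $I_{\mathfrak m}J_{\mathfrak m}=I_{\mathfrak m}(I_{\mathfrak m})^{-1}=A_{\mathfrak m}$, which completes this direction.

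The argument therefore hinges on finite generation of $I$, and that is the step I expect to be the main obstacle. To obtain it I would first note that each $I_{\mathfrak m}$, being invertible and hence an open $A_{\mathfrak m}$-submodule of $A_{\mathfrak m}[\varpi^{-1}]$ (Lemma \ref{Invertible implies regular}), satisfies $I_{\mathfrak m}A_{\mathfrak m}[\varpi^{-1}]=A_{\mathfrak m}[\varpi^{-1}]$; localizing over all $\mathfrak m$ gives $IA[\varpi^{-1}]=A[\varpi^{-1}]$, so some power $\varpi^{M}$ lies in $I$ and $I$ is $\varpi$-adically open. By Proposition \ref{Properties of invertible fractional ideals}(3) each $I_{\mathfrak m}$ is principal, generated by the image of some $y_{\mathfrak m}\in I$, and I would then try to extract, by a quasi-compactness argument on $\Spec(A)$, finitely many indices $\mathfrak m_{1},\dots,\mathfrak m_{r}$ for which $(\varpi^{M},y_{\mathfrak m_{1}},\dots,y_{\mathfrak m_{r}})_{A}$ already agrees with $I$ after localizing at every maximal ideal, and hence equals $I$; in the Noetherian case (or whenever $I$ is a priori finitely generated) this last point is automatic. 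Everything else in the proof is routine localization bookkeeping.
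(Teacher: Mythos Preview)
Your approach is exactly the paper's: set $J=A:_{A[\varpi^{-1}]}I$, then show $IJ=A$ by checking $(IJ)_{\mathfrak m}=A_{\mathfrak m}$ at every maximal ideal, invoking Proposition~\ref{Properties of invertible fractional ideals}(1) to identify the local inverse with $A_{\mathfrak m}:_{A_{\mathfrak m}[\varpi^{-1}]}I_{\mathfrak m}$. The paper's proof simply passes from $1\in I_{\mathfrak m}\bigl(A_{\mathfrak m}:_{A_{\mathfrak m}[\varpi^{-1}]}I_{\mathfrak m}\bigr)$ to ``there exists $s_{\mathfrak m}\in I\bigl(A:_{A[\varpi^{-1}]}I\bigr)$ with $s_{\mathfrak m}\notin\mathfrak m$'' without comment; that passage is precisely your localization identity $(A:_{A[\varpi^{-1}]}I)_{\mathfrak m}=A_{\mathfrak m}:_{A_{\mathfrak m}[\varpi^{-1}]}I_{\mathfrak m}$, and you are correct that it requires $I$ to be finitely generated. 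So you have isolated a point the paper glosses over rather than diverged from it.

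Your quasi-compactness attempt does not close the gap: the set of primes at which a fixed $y_{\mathfrak m}\in I$ generates $I$ locally is the complement of the support of $I/(y_{\mathfrak m})_A$, which need not be open when $I$ is not already known to be finitely generated. Indeed the lemma as stated appears to fail in general: in an almost Dedekind domain that is not Dedekind there is (by Cohen's theorem) a non-finitely-generated maximal ideal $\mathfrak m_0$, and for any nonzero $\varpi\in\mathfrak m_0$ the ideal $I=\mathfrak m_0$ has $I_{\mathfrak m}=A_{\mathfrak m}$ for $\mathfrak m\neq\mathfrak m_0$ and $I_{\mathfrak m_0}$ equal to the principal maximal ideal of the DVR $A_{\mathfrak m_0}$, so every localization is invertible, yet $I$ itself cannot be invertible since that would force finite generation by Proposition~\ref{Properties of invertible fractional ideals}(2). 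The paper's only use of the lemma is in Corollary~\ref{Invertible fractional ideals}, where $I$ is assumed invertible as an $A$-module and is therefore finitely generated; in that situation both your argument and the paper's go through cleanly.
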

\begin{proof}One implication is obvious. If $I_{\mathfrak{m}}$ is invertible, then, by Proposition \ref{Properties of invertible fractional ideals}(1), its inverse is $A_{\mathfrak{m}}:_{A_{\mathfrak{m}}[\varpi^{-1}]}I_{\mathfrak{m}}$. Thus \begin{equation*}1\in I_{\mathfrak{m}}(A_{\mathfrak{m}}:_{A[\varpi^{-1}]}I_{\mathfrak{m}})\end{equation*}for all maximal ideals $\mathfrak{m}$. That is, for every maximal ideal $\mathfrak{m}$ of $A$ there exists $s_{\mathfrak{m}}\in A\setminus\mathfrak{m}$ with $s_{\mathfrak{m}}\in I(A:_{A[\varpi^{-1}]}I)$. Then \begin{equation*}A=(s_{\mathfrak{m}}\mid \mathfrak{m}\in\Max(A))_{A}\subseteq I(A:_{A[\varpi^{-1}]}I).\end{equation*} \end{proof} 
\begin{cor}\label{Invertible fractional ideals}An open $A$-submodule $I$ of the Tate ring $A[\varpi^{-1}]$ is invertible in the sense of Definition \ref{Invertible submodule} if and only if it is invertible as an $A$-module.\end{cor}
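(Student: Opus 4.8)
The plan is to prove both implications by reducing to a local ring via Lemma~\ref{Locally invertible implies invertible}, exploiting that being an invertible $A$-module is likewise a local property (finite generation together with $I_{\mathfrak{m}}\cong A_{\mathfrak{m}}$ for every maximal ideal $\mathfrak{m}$ of $A$). The only genuinely new ingredient beyond the results already established is the observation that, for an \emph{open} submodule, being cyclic over a local ring is the same as being invertible in the sense of Definition~\ref{Invertible submodule}; this is precisely the content of Example~\ref{Regular principal implies invertible}, and it is at this point that the openness of $I$ is used (it would fail without that hypothesis, since one could otherwise only conclude that $I$ is cancellative).

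For the implication ``invertible $A$-module $\Rightarrow$ invertible in the sense of Definition~\ref{Invertible submodule}'', assume $I$ is finitely generated with $I_{\mathfrak{m}}\cong A_{\mathfrak{m}}$ for every maximal ideal $\mathfrak{m}$. Fix such an $\mathfrak{m}$. Then $I_{\mathfrak{m}}$ is a cyclic $A_{\mathfrak{m}}$-module, say $I_{\mathfrak{m}}=(f)_{A_{\mathfrak{m}}}$ with $f\in A_{\mathfrak{m}}[\varpi^{-1}]$. Since $I$ is $\varpi$-adically open it contains $\varpi^{n}A$ for some $n$, so $I_{\mathfrak{m}}$ contains $\varpi^{n}A_{\mathfrak{m}}$ and is open in $A_{\mathfrak{m}}[\varpi^{-1}]$; by Example~\ref{Regular principal implies invertible}, $I_{\mathfrak{m}}$ is then invertible as an $A_{\mathfrak{m}}$-submodule of $A_{\mathfrak{m}}[\varpi^{-1}]$. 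As this holds for every $\mathfrak{m}$, Lemma~\ref{Locally invertible implies invertible} shows that $I$ is invertible in the sense of Definition~\ref{Invertible submodule}.

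For the converse, suppose $IJ=A$ for some $A$-submodule $J$ of $A[\varpi^{-1}]$, and write $1=\sum_{i=1}^{n}a_{i}b_{i}$ with $a_{i}\in I$, $b_{i}\in J$. The $A$-linear maps $\psi\colon A^{n}\to I$, $e_{i}\mapsto a_{i}$, and $\sigma\colon I\to A^{n}$, $x\mapsto(b_{1}x,\dots,b_{n}x)$ (well defined since $b_{i}x\in JI=A$) satisfy $\psi\circ\sigma=\id_{I}$, so $I$ is a direct summand of $A^{n}$, in particular a finitely generated projective $A$-module. To see it is invertible as a module it remains to check $I_{\mathfrak{m}}\cong A_{\mathfrak{m}}$ for each maximal ideal $\mathfrak{m}$; here $I_{\mathfrak{m}}J_{\mathfrak{m}}=A_{\mathfrak{m}}$, so $I_{\mathfrak{m}}$ is an invertible $\varpi$-fractional ideal of the local ring $A_{\mathfrak{m}}$ (Lemma~\ref{Invertible implies regular}) and hence principal, say $I_{\mathfrak{m}}=(f)_{A_{\mathfrak{m}}}$, by Proposition~\ref{Properties of invertible fractional ideals}(3); since invertible submodules are cancellative, Proposition~\ref{Properties of cancellative fractional ideals}(3) forces the surjection $A_{\mathfrak{m}}\twoheadrightarrow I_{\mathfrak{m}}$, $a\mapsto af$, to be injective, whence $I_{\mathfrak{m}}\cong A_{\mathfrak{m}}$. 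Thus $I$ is an invertible $A$-module. I do not expect a real obstacle: the one point needing care is that the hypothesis $IJ=A$ concerns the \emph{product} of submodules rather than an abstract tensor isomorphism, so the splitting $\sigma$ must be produced explicitly as above before the local results of this section (and the standard equivalence between invertible modules and finitely generated projectives of rank one) can be brought to bear.
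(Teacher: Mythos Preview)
Your proof is correct and follows essentially the same route as the paper. For the direction ``invertible as $A$-module $\Rightarrow$ invertible in the sense of Definition~\ref{Invertible submodule}'' you argue exactly as the paper does: localize, use that an open principal submodule is invertible (Example~\ref{Regular principal implies invertible}), and conclude via Lemma~\ref{Locally invertible implies invertible}. For the converse direction the paper simply asserts ``a fortiori'', whereas you spell out the standard splitting argument showing $I$ is a finitely generated projective $A$-module of rank one; this is a legitimate (and arguably more careful) elaboration of what the paper takes for granted. Your appeal to cancellativity to deduce that $a\mapsto af$ is injective is correct, though one could shortcut it by noting that $f$ is a unit in $A_{\mathfrak{m}}[\varpi^{-1}]$ (since $I_{\mathfrak{m}}$ is open) and hence a non-zero-divisor.
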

\begin{proof}If $I$ is invertible in the sense of Definition \ref{Invertible submodule}, it is a fortiori invertible as an $A$-module. Conversely, if $I$ is invertible as an $A$-module, then for every maximal ideal $\mathfrak{m}$ of $A$ there exists $f_{\mathfrak{m}}\in \Frac(A_{\mathfrak{m}})=\Frac(A)$ with $I_{\mathfrak{m}}=(f_{\mathfrak{m}})_{A_{\mathfrak{m}}}$. Since $I\subseteq A[\varpi^{-1}]$, we have $I_{\mathfrak{m}}\subseteq A_{\mathfrak{m}}[\varpi^{-1}]$ and thus $f_{\mathfrak{m}}\in A_{\mathfrak{m}}[\varpi^{-1}]$. Since $I$ is open in $A[\varpi^{-1}]$ (that is, contains some power of $\varpi$), the $A_{\mathfrak{m}}$-submodule $I_{\mathfrak{m}}$ is also open in $A_{\mathfrak{m}}[\varpi^{-1}]$ and thus $I_{\mathfrak{m}}=(f_{\mathfrak{m}})_{A_{\mathfrak{m}}}$ is invertible as a $\varpi$-fractional ideal of $A_{\mathfrak{m}}$ (see Example \ref{Regular principal implies invertible}). By Lemma \ref{Locally invertible implies invertible}, $I$ is an invertible $\varpi$-fractional ideal. \end{proof}
Recall the following generalization of the notion of a $\varpi$-adically separated valuation ring, due to Fujiwara and Kato.
\begin{mydef}[\cite{FK}, Ch.~0, Definition 8.7.1]\label{Valuative rings}A ring $A$ with a non-zero-divisor $\varpi$ is said to be $\varpi$-valuative if every $\varpi$-adically open finitely generated ideal of $A$ is invertible.\end{mydef}
Recall from \cite{Knebusch-Zhang} the notion of a Prüfer subring $A$ of a ring $R$ (loc.~cit., Ch.~I, \S5, Definition 1). In our case (with $R=A[\varpi^{-1}]$ for some non-zero-divisor $\varpi\in A$), we can relate this notion to the notion of a $\varpi$-valuative ring as follows. 
\begin{prop}\label{Pruefer subrings and valuative rings}Let $\varpi$ be a non-zero-divisor in a ring $A$. Then $A$ is a Prüfer subring of $A[\varpi^{-1}]$ if and only if $A$ is a $\varpi$-valuative ring.\end{prop}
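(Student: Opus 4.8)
The plan is to reduce the statement to a scaling argument by first rephrasing both hypotheses in terms of invertibility of finitely generated $\varpi$-adically open $A$-submodules of $A[\varpi^{-1}]$. Recall from \cite{Knebusch-Zhang} that, among the equivalent characterizations of Prüfer extensions, one has the following: for a ring extension $A\subseteq R$, the subring $A$ is Prüfer in $R$ if and only if every finitely generated $A$-submodule $M$ of $R$ with $MR=R$ (an \emph{$R$-regular} finitely generated $A$-submodule) is invertible, i.e.\ there is an $A$-submodule $N$ of $R$ with $MN=A$. I would use this as the working characterization of ``Prüfer subring'' throughout the proof.

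The first step is the elementary observation that, for $R=A[\varpi^{-1}]$, an $A$-submodule $M$ of $A[\varpi^{-1}]$ satisfies $MA[\varpi^{-1}]=A[\varpi^{-1}]$ if and only if $M$ is $\varpi$-adically open, i.e.\ $\varpi^{j}\in M$ for some $j\geq 0$. Indeed, if $1=\sum_{i}m_{i}f_{i}$ with $m_{i}\in M$ and $f_{i}\in A[\varpi^{-1}]$, then multiplying by a sufficiently large power $\varpi^{N}$ makes all $\varpi^{N}f_{i}$ lie in $A$, whence $\varpi^{N}=\sum_{i}m_{i}(\varpi^{N}f_{i})\in M$; the converse is clear. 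Consequently the finitely generated $R$-regular $A$-submodules of $A[\varpi^{-1}]$ are precisely the finitely generated $\varpi$-adically open $A$-submodules of $A[\varpi^{-1}]$.

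Given this, the implication from ``Prüfer'' to ``$\varpi$-valuative'' is immediate: a $\varpi$-adically open finitely generated ideal of $A$ is in particular a finitely generated $R$-regular $A$-submodule of $A[\varpi^{-1}]$, hence invertible when $A$ is Prüfer in $A[\varpi^{-1}]$. For the converse, assume $A$ is $\varpi$-valuative and let $M=(f_{1},\dots,f_{n})_{A}$ be a finitely generated $\varpi$-adically open $A$-submodule of $A[\varpi^{-1}]$, say with $\varpi^{j}\in M$. Choose $k\geq 0$ with $\varpi^{k}f_{i}\in A$ for all $i$; then $\varpi^{k}M=(\varpi^{k}f_{1},\dots,\varpi^{k}f_{n})_{A}$ is an ideal of $A$ which is $\varpi$-adically open, since it contains $\varpi^{k}\cdot\varpi^{j}=\varpi^{k+j}$. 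By the definition of $\varpi$-valuative, $\varpi^{k}M$ is invertible. Since the principal $\varpi$-adically open $A$-submodule $(\varpi^{-k})_{A}$ is invertible as well (Example \ref{Regular principal implies invertible}), and a product of invertible $A$-submodules is invertible (from $IJ=A$ and $I'J'=A$ one gets $(II')(JJ')=A$), the module $M=(\varpi^{-k})_{A}\cdot(\varpi^{k}M)$ is invertible. Hence every finitely generated $R$-regular $A$-submodule of $A[\varpi^{-1}]$ is invertible, so $A$ is Prüfer in $A[\varpi^{-1}]$.

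The only subtle point is matching the definition of ``Prüfer subring'' from \cite{Knebusch-Zhang} with the invertibility-of-finitely-generated-regular-submodules formulation used above; once that equivalence is in hand, the argument is purely a matter of clearing denominators, and I do not expect any genuine obstacle.
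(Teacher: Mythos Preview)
Your proposal is correct and follows essentially the same route as the paper: both directions hinge on the characterization of Pr\"ufer extensions via invertibility of finitely generated $R$-regular $A$-submodules (this is \cite{Knebusch-Zhang}, Ch.~II, Theorem 1.13, which the paper also invokes), together with the observation that $R$-regular means $\varpi$-adically open when $R=A[\varpi^{-1}]$.

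The only noteworthy difference is in the converse. You argue directly by scaling: given a finitely generated open $A$-submodule $M$ of $A[\varpi^{-1}]$, multiply by $\varpi^{k}$ to land in $A$, apply the $\varpi$-valuative hypothesis to the resulting open ideal, and then multiply back by the invertible module $(\varpi^{-k})_{A}$. The paper instead quotes a second characterization from \cite{Knebusch-Zhang}, Ch.~II, Theorem 2.1 (Pr\"ufer iff every finitely generated ideal containing an invertible ideal is invertible), and then uses Lemma~\ref{Invertible implies regular} to see that any invertible ideal is automatically $\varpi$-adically open. Your scaling argument is more self-contained and avoids the second citation; the paper's version has the advantage of staying entirely within ideals of $A$ rather than passing to submodules of $A[\varpi^{-1}]$. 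Either way the content is the same, and indeed the paper records your version of the equivalence as Corollary~\ref{Valuative rings via invertible submodules}.
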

\begin{proof}If $A$ is a Prüfer subring of $A[\varpi^{-1}]$, then every finitely generated open $A$-submodule of $A[\varpi^{-1}]$ is an invertible $\varpi$-fractional ideal, by \cite{Knebusch-Zhang}, Ch.~II, Theorem 1.13. By Cor.~\ref{Invertible fractional ideals}, this implies that every finitely generated open $A$-submodule of $A[\varpi^{-1}]$ is invertible as an $A$-module. In particular, every finitely generated ideal $I$ of $A$ containing a power of $\varpi$ is invertible, so $A$ is a $\varpi$-valuative ring. 

Conversely, suppose that $A$ is $\varpi$-valuative. By \cite{Knebusch-Zhang}, Ch.~II, Theorem 2.1(1)$\Leftrightarrow$(3), to prove that $A$ is Prüfer in $A[\varpi^{-1}]$ it suffices to prove that every finitely generated ideal $I$ of $A$ which contains an invertible ideal $J$ of $A$ must itself be invertible. But, by Lemma \ref{Invertible implies regular}, the invertible ideal $J$ is a $\varpi$-fractional ideal and in particular open. Thus every finitely generated ideal $I$ as above contains some power of $\varpi$. Since $A$ is $\varpi$-valuative, we conclude that every such $I$ is invertible and thus that $A$ is Prüfer in $A[\varpi^{-1}]$. \end{proof}
\begin{cor}\label{Integral closure as intersection of valuative rings}Let $\varpi$ be a non-zero-divisor in a ring $A$. Then the integral closure $\mathcal{A}^{+}$ of $A$ inside $\mathcal{A}=A[\varpi^{-1}]$ is equal to the intersection of all subrings $V\subseteq \mathcal{A}$ with $A\subseteq V$ which are $\varpi$-valuative rings.\end{cor}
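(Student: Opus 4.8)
The plan is to prove the two inclusions separately. For $\mathcal{A}^{+}\subseteq\bigcap V$ (intersection over all $\varpi$-valuative subrings $V$ of $\mathcal{A}=A[\varpi^{-1}]$ containing $A$) I would use Proposition \ref{Pruefer subrings and valuative rings}, and for the reverse inclusion I would exploit the boundary machinery of Section \ref{sec:Rees valuations}, in particular Theorem \ref{Boundaries and integral closure}.

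For $\mathcal{A}^{+}\subseteq\bigcap V$: fix a $\varpi$-valuative subring $V$ with $A\subseteq V\subseteq\mathcal{A}$. Since $V\subseteq\mathcal{A}$, the element $\varpi$ is a non-zero-divisor of $V$, and from $A[\varpi^{-1}]\subseteq V[\varpi^{-1}]\subseteq\mathcal{A}$ we get $V[\varpi^{-1}]=\mathcal{A}$. I would then show $V$ is integrally closed in $\mathcal{A}$: if $x\in\mathcal{A}$ satisfies $x^{n}+a_{1}x^{n-1}+\cdots+a_{n}=0$ with $a_{i}\in V$, then $I=(1,x,\dots,x^{n-1})_{V}$ is a finitely generated, $\varpi$-adically open (it contains $V\ni\varpi$) and bounded $V$-submodule of $\mathcal{A}=V[\varpi^{-1}]$, hence a $\varpi$-fractional ideal of $V$; because $V$ is $\varpi$-valuative, $I$ is invertible, and the integral equation gives $xI\subseteq I$, so cancellativity (Proposition \ref{Properties of cancellative fractional ideals}) forces $x\in V$. (Alternatively one may quote Proposition \ref{Pruefer subrings and valuative rings} together with the fact that a Prüfer subring is integrally closed in the ambient ring, cf.\ \cite{Knebusch-Zhang}, Ch.~I.) Since $A\subseteq V$ and $V$ is integrally closed in $\mathcal{A}$, we get $\mathcal{A}^{+}\subseteq V$, and intersecting over all such $V$ gives the inclusion.

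For $\bigcap V\subseteq\mathcal{A}^{+}$: note first that $(\mathcal{A},\mathcal{A}^{+})$ is a Tate Huber pair (integral elements over $A$ are power-bounded) and that $\Spa(\mathcal{A},\mathcal{A}^{+})$ is a closed boundary for it, as recorded in the proof of Proposition \ref{Minimal closed boundaries}. Applying Theorem \ref{Boundaries and integral closure}, equivalence $(1)\Leftrightarrow(2)$, with $\mathcal{S}=\Spa(\mathcal{A},\mathcal{A}^{+})$ and choosing for each $v$ a valuation ring $\varphi_{v}\colon A\to V_{v}$ with pseudo-uniformizer $\varpi$ as in Lemma \ref{Valuations and valuation rings}, we obtain
\begin{equation*}
\mathcal{A}^{+}=\bigcap_{v\in\Spa(\mathcal{A},\mathcal{A}^{+})}W_{v},\qquad W_{v}:=\{\,f\in\mathcal{A}\mid v(f)\leq1\,\},
\end{equation*}
where $\varphi_{v}$ is extended along $\mathcal{A}=A[\varpi^{-1}]\to V_{v}[\varpi^{-1}]=\Frac(V_{v})$, so that $\varphi_{v}^{-1}(V_{v})=W_{v}$. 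It then suffices to check that each $W_{v}$ is a $\varpi$-valuative subring of $\mathcal{A}$ containing $A$: we have $A\subseteq W_{v}$ since $v(a)\leq1$ for $a\in A\subseteq\mathcal{A}^{+}$; the element $\varpi$ is a non-unit non-zero-divisor of $W_{v}$ with $0<v(\varpi)<1$ (as $\varpi$ is a topologically nilpotent unit of $\mathcal{A}$) and $W_{v}[\varpi^{-1}]=\mathcal{A}$; and if $I=(f_{1},\dots,f_{n})_{W_{v}}$ is a finitely generated $\varpi$-adically open ideal of $W_{v}$, choosing $f_{1}$ with $v(f_{1})=\max_{i}v(f_{i})$, the relation $\varpi^{m}\in I$ forces $v(f_{1})\geq v(\varpi)^{m}>0$, whence $v(f_{i}/f_{1})\leq1$, i.e.\ $f_{i}/f_{1}\in W_{v}$ for all $i$, so $I=(f_{1})_{W_{v}}$ is a $\varpi$-adically open principal $W_{v}$-submodule of $W_{v}[\varpi^{-1}]$ and hence invertible by Example \ref{Regular principal implies invertible}. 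Therefore $\bigcap_{V}V\subseteq\bigcap_{v}W_{v}=\mathcal{A}^{+}$, which together with the first inclusion proves the claim.

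I expect the last part of the second inclusion to be the main obstacle, namely verifying that the valuation rings $W_{v}$ attached to the (possibly higher-rank) points of $\Spa(\mathcal{A},\mathcal{A}^{+})$ are genuinely $\varpi$-valuative — equivalently, that finitely generated $\varpi$-adically open ideals in them are principal — and making sure that Theorem \ref{Boundaries and integral closure} is being invoked for a bona fide boundary; the first inclusion is comparatively routine once Proposition \ref{Pruefer subrings and valuative rings} is available.
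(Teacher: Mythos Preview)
Your first inclusion is fine: the argument via invertibility of the finitely generated open $V$-submodule $I=(1,x,\dots,x^{n-1})_{V}$ and cancellation works (using Corollary~\ref{Valuative rings via invertible submodules}), and the alternative via Proposition~\ref{Pruefer subrings and valuative rings} plus the fact that Pr\"ufer subrings are integrally closed is equally valid. The paper's proof is very different from yours: it simply combines Proposition~\ref{Pruefer subrings and valuative rings} with an external reference (Picavet, \cite{Picavet23}, Theorem~4.7), which already gives that the integral closure of $A$ in $\mathcal{A}$ equals the intersection of all Pr\"ufer overrings.

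Your second inclusion, however, has a genuine gap. The step ``$v(f_i/f_1)\le 1$, i.e.\ $f_i/f_1\in W_v$'' tacitly assumes $f_1$ is a unit in $\mathcal{A}$, so that $f_i/f_1$ makes sense there; but nothing forces this. Worse, the conclusion that $W_v$ is $\varpi$-valuative is \emph{false} in general. Take $\mathcal{A}=\mathbb{Q}_p\langle T\rangle$, $\varpi=p$, and let $v$ be the rank-$2$ specialization of the Gauss point with $v(\sum a_nT^n)=(\max_n|a_n|_p,\,-n_0)$, where $n_0$ is the smallest index realizing the maximum, ordered lexicographically. Then $v\in\Spa(\mathcal{A},\mathcal{A}^{+})$ and $W_v=\{f:\|f\|_{\mathrm{Gauss}}\le 1\}=\mathbb{Z}_p\langle T\rangle=\mathcal{A}^{+}$. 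The ideal $(p,T)$ of $\mathbb{Z}_p\langle T\rangle$ is finitely generated and $p$-adically open, but it is \emph{not} invertible: at the maximal ideal $(p,T)$ the localization is a two-dimensional regular local ring whose maximal ideal is not principal. Hence $W_v$ is not $\varpi$-valuative, and your inclusion $\bigcap_V V\subseteq \bigcap_v W_v$ does not follow. To salvage the strategy you would need to restrict to a subclass of valuations $v$ for which $W_v$ \emph{is} Pr\"ufer in $\mathcal{A}$ (Manis valuations in the sense of Knebusch--Zhang) while still cutting out $\mathcal{A}^{+}$; this is essentially what the Picavet reference packages.
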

\begin{proof}Follows from the above theorem and \cite{Picavet23}, Theorem 4.7. \end{proof}
\begin{cor}\label{Valuative rings via invertible submodules}The ring $A$ is $\varpi$-valuative if and only if every finitely generated open $A$-submodule of $A[\varpi^{-1}]$ is invertible.\end{cor}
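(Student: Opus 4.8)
The plan is to prove the two implications separately; one follows at once from the definitions and the other requires only clearing denominators by a suitable power of $\varpi$. For the easy direction, suppose every finitely generated open $A$-submodule of $A[\varpi^{-1}]$ is invertible. Then in particular every $\varpi$-adically open finitely generated ideal of $A$ is invertible, since such an ideal is a finitely generated open $A$-submodule of $A[\varpi^{-1}]$ that happens to lie inside $A$; hence $A$ is $\varpi$-valuative by Definition \ref{Valuative rings}.

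For the converse, I would let $I=(f_{1},\dots,f_{n})_{A}$ be an arbitrary finitely generated open $A$-submodule of $A[\varpi^{-1}]$ and first observe that $I$ is automatically bounded: each $f_{i}$ lies in $\varpi^{-m_{i}}A$ for some $m_{i}\geq0$, so $I\subseteq\varpi^{-m}A$ with $m=\max_{i}m_{i}$. Consequently $\varpi^{m}I$ is a finitely generated ideal of $A$, and it is $\varpi$-adically open: $I$ being open contains some power $\varpi^{\ell}$, so $\varpi^{m}I$ contains $\varpi^{m+\ell}$. Since $A$ is $\varpi$-valuative, $\varpi^{m}I$ is invertible, so there is an $A$-submodule $J$ of $A[\varpi^{-1}]$ with $(\varpi^{m}I)J=A$; then $I(\varpi^{m}J)=A$, which shows that $I$ is invertible, as desired.

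I do not expect any genuine difficulty here. The only points requiring a moment's care are that a finitely generated open $A$-submodule of $A[\varpi^{-1}]$ is automatically a $\varpi$-fractional ideal (so that $\varpi^{m}I$ really is an ideal of $A$, and the notion of invertibility for it is unambiguous by Corollary \ref{Invertible fractional ideals}), and that multiplication by the unit $\varpi^{m}$ of $A[\varpi^{-1}]$ transports invertibility back and forth between $I$ and $\varpi^{m}I$. Alternatively, the statement can be deduced from Proposition \ref{Pruefer subrings and valuative rings} together with \cite{Knebusch-Zhang}, Ch.~II, Theorem 1.13, which describes the invertible $R$-fractional ideals of a Prüfer extension $A\subseteq R$; but the direct argument above is shorter and self-contained.
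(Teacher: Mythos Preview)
Your proposal is correct. The paper's proof is exactly the alternative you mention at the end: it simply cites Proposition~\ref{Pruefer subrings and valuative rings} together with \cite{Knebusch-Zhang}, Ch.~II, Theorem~1.13. Your primary argument, by contrast, is a direct and self-contained ``clear denominators'' computation: multiply a finitely generated open $A$-submodule $I$ of $A[\varpi^{-1}]$ by a suitable $\varpi^{m}$ to land inside $A$, apply the definition of $\varpi$-valuative to the resulting open ideal, and transport invertibility back via the unit $\varpi^{m}$. This is more elementary and avoids the detour through the Pr\"ufer-extension machinery; the paper's route, on the other hand, embeds the statement into the general Knebusch--Zhang framework, which is the viewpoint used elsewhere in the section (e.g.\ in the proof of Proposition~\ref{Pruefer subrings and valuative rings} itself). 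Both are valid; your direct argument is arguably preferable for this particular corollary since nothing beyond the definition is really needed.
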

\begin{proof}Follows from Proposition \ref{Pruefer subrings and valuative rings} and \cite{Knebusch-Zhang}, Ch.~II, Theorem 1.13.\end{proof}
\begin{prop}\label{Valuative rings and valuation rings}Let $\varpi$ be a non-zero-divisor in a ring $A$ such that $A$ is $\varpi$-adically Zariskian. The following are equivalent: \begin{enumerate}[(1)]\item $A$ is a local $\varpi$-valuative ring. \item $A$ is a local ring and for every $n\geq1$ and $f\in A$ either $\varpi^{n}\in (f)_{A}$ or $f\in (\varpi^{n})_{A}$. \item The quotient $A/\bigcap_{n\geq1}\varpi^{n}A$ is a valuation ring.\end{enumerate}\end{prop}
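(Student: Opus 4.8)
The plan is to prove the cycle of implications $(1)\Rightarrow(2)\Rightarrow(3)\Rightarrow(1)$. Throughout I would write $B=A/\bigcap_{n\ge 1}\varpi^{n}A$ for the $\varpi$-adically separated quotient, and begin by observing two things: $\varpi$ is still a non-zero-divisor in $B$ (if $\varpi x\in\bigcap_n\varpi^nA$ then $x\in\bigcap_n\varpi^nA$ because $\varpi$ is a non-zero-divisor in $A$), and, since $A$ is $\varpi$-adically Zariskian, $\bigcap_n\varpi^nA\subseteq\varpi A$ lies in every maximal ideal of $A$, so $A$ is local precisely when $B$ is. For $(1)\Rightarrow(2)$: if $A$ is local and $\varpi$-valuative, then for $f\in A$ and $n\ge 1$ the finitely generated $\varpi$-adically open ideal $(f,\varpi^{n})_{A}$ is invertible by Definition \ref{Valuative rings}, and Proposition \ref{Properties of invertible fractional ideals}(3) then says one of the generators $f$, $\varpi^{n}$ already generates it; this is exactly the dichotomy in (2).

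For $(2)\Rightarrow(3)$ I would first note that condition (2) descends to $B$, which is thus a local, $\varpi$-adically separated ring in which $\varpi$ is a non-zero-divisor lying in the maximal ideal. The next step is to show $B$ is a domain: if $ab=0$ with $a,b\ne 0$, write $a=\varpi^{p}\tilde a$, $b=\varpi^{q}\tilde b$ with $\tilde a,\tilde b\notin\varpi B$ (possible by separatedness); cancelling $\varpi^{p+q}$ gives $\tilde a\tilde b=0$, and since $\tilde a\notin\varpi^{k}B$ for all $k\ge 1$ condition (2) forces $\varpi\in(\tilde a)_{B}$, say $\varpi=\tilde a c$, whence $\varpi\tilde b=c\tilde a\tilde b=0$ and so $\tilde b=0$, a contradiction. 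Then I would prove that every finitely generated $\varpi$-adically open ideal of $B$ is principal: given $I=(f_{1},\dots,f_{r})_{B}$ with $\varpi^{n}\in I$, condition (2) gives that either every $f_i\in\varpi^{n}B$ (and then $I=\varpi^{n}B$ is principal) or some $(f_{i_{0}})_{B}$ contains $\varpi^{n}$, in which case every generator lying in $\varpi^{n}B$ is redundant and we reduce to the case $\varpi^{n}\in(f_{i})_{B}$ for all $i$; one then inducts on $r$, the base case $r=2$ being the heart: writing $\varpi^{n}=f_{2}h_{2}$ with $h_{2}\ne 0$ and applying condition (2) to $f_{1}h_{2}$, either $f_{1}h_{2}\in\varpi^{n}B$, so $f_{1}/f_{2}=f_{1}h_{2}/\varpi^{n}\in B$ and $(f_{1},f_{2})_{B}=(f_{2})_{B}$, or $\varpi^{n}\in(f_{1}h_{2})_{B}$, say $\varpi^{n}=f_{1}h_{2}x$, and cancelling $h_{2}$ in the domain $B$ gives $f_{2}=f_{1}x$, so $(f_{1},f_{2})_{B}=(f_{1})_{B}$; in either case $(f_{1},f_{2})_{B}$ is one of $(f_{1})_{B},(f_{2})_{B}$, still $\varpi$-adically open, and the induction continues.

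To finish $(2)\Rightarrow(3)$, given $a,b\in B\setminus\{0\}$ the ideal $(a,b,\varpi)_{B}$ is, by the previous step (applied with $n=1$, after discarding redundant generators), equal to $(a)_{B}$, $(b)_{B}$ or $(\varpi)_{B}$; the first two yield $a\mid b$ or $b\mid a$, while in the third one writes $a=\varpi a'$, $b=\varpi b'$ and repeats, the recursion terminating because $B$ is $\varpi$-adically separated and $a,b\ne 0$. Hence $B$ is a valuation ring. For $(3)\Rightarrow(1)$: $B$ is local, so $A$ is local, and every finitely generated ideal of the valuation ring $B$ is principal; given a finitely generated $\varpi$-adically open ideal $I=(f_{1},\dots,f_{r})_{A}$ with $\varpi^{m}\in I$, some $\bar f_{i}$, say $\bar f_{1}$, divides all the others in $B$, so $\bar I=(\bar f_{1})_{B}$ and hence $I\subseteq (f_{1})_{A}+\bigcap_{j}\varpi^{j}A$. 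From $\varpi^{m}\in I$ we get $\varpi^{m}=f_{1}t+c$ with $c\in\bigcap_j\varpi^jA\subseteq\varpi^{m+1}A$, and $\varpi$-adic Zariskianness makes $1-\varpi d$ a unit, giving $\varpi^{m}\in(f_{1})_{A}$; therefore $\bigcap_j\varpi^jA\subseteq\varpi^{m}A\subseteq(f_{1})_{A}$ and $I=(f_{1})_{A}$, which is invertible by Example \ref{Regular principal implies invertible}. So $A$ is $\varpi$-valuative.

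The main obstacle is the implication $(2)\Rightarrow(3)$, and within it the lemma that every finitely generated $\varpi$-adically open ideal of the separated domain $B$ is principal. The difficulty is that condition (2) only compares an element of $A$ with powers of $\varpi$, so comparability of two \emph{arbitrary} generators is not handed to us directly; it has to be extracted via the inductive reduction together with the case analysis on $f_{1}h_{2}$, and the cancellation in that analysis is exactly why one must first establish that $B$ is a domain (which in turn uses $\varpi$-adic separatedness). Everything else is routine bookkeeping with the $\varpi$-adic topology and the Zariskian hypothesis.
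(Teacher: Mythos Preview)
Your proof is correct, and the implications $(1)\Rightarrow(2)$ and $(3)\Rightarrow(1)$ match the paper almost verbatim (the paper lifts an arbitrary generator of the image ideal rather than using one of the original $f_i$, but the unit-argument with $1-\varpi d$ is the same).

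For $(2)\Rightarrow(3)$, however, the paper takes a much shorter route. Rather than separately proving that $B$ is a domain, then that finitely generated open ideals of $B$ are principal, and then running a recursion on $(a,b,\varpi)$, the paper observes in one stroke that $B[\varpi^{-1}]$ is the fraction field of $B$: for any nonzero $f\in B$, separatedness gives $n$ with $f\notin(\varpi^{n})_{B}$, so condition (2) forces $\varpi^{n}\in(f)_{B}$, i.e.\ $f$ becomes a unit in $B[\varpi^{-1}]$. This immediately makes $B$ a domain and identifies $B[\varpi^{-1}]$ with $\mathrm{Frac}(B)$. Then the valuation ring condition is immediate: any $x=f/\varpi^{n}\in B[\varpi^{-1}]$ satisfies $x\in B$ or $x^{-1}\in B$ exactly by the dichotomy in (2). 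Your argument recovers the same conclusion but via an explicit inductive comparison of generators; the paper's insight that condition (2) is literally the statement ``$x\in B$ or $x^{-1}\in B$'' once one knows $B[\varpi^{-1}]=\mathrm{Frac}(B)$ bypasses all of that. What your longer argument buys is an explicit identification of which generator survives, which is pleasant but not needed here.
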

\begin{proof}(1)$\Rightarrow$ (2): Suppose that $A$ is a local $\varpi$-valuative ring and let $f\in A$, $n\geq1$. Then the open ideal $(f, \varpi^{n})_{A}$ is invertible. By Proposition \ref{Properties of invertible fractional ideals}(3), this implies that $(f, \varpi^{n})_{A}=(f)_{A}$ or $(f, \varpi^{n})_{A}=(\varpi^{n})_{A}$. 

(2)$\Rightarrow$(3): Set $V=\bigcap_{n\geq1}A/\varpi^{n}A$. Since the property (2) holds for $A$, it also holds for $V$. On the other hand, for every $f\in V\setminus\{0\}$, we can choose $n\geq1$ such that $f\not\in(\varpi^{n})_{V}$. Then $\varpi^{n}\in (f)_{V}$, so $f$ is invertible in $V[\varpi^{-1}]$, so every non-zero element of $V$ is invertible in $V[\varpi^{-1}]$. Therefore, $V[\varpi^{-1}]$ is equal to the field of fractions of $V$. Furthermore, the property that every $f\in V$ and $n\geq1$ satisfy either $f\in(\varpi^{n})_{V}$ or $\varpi^{n}\in (f)_{V}$ means precisely that every \begin{equation*}x=\frac{f}{\varpi^{n}}\in V[\varpi^{-1}]\end{equation*}satisfies either $x\in V$ or $x^{-1}\in V$. Thus $V$ is a valuation ring. 

(3)$\Rightarrow$(1): If $V$ is a valuation ring (and, in particular, a local ring), $A$ must be a local ring, by the assumption that $A$ is also $\varpi$-adically Zariskian. To prove that $A$ is $\varpi$-valuative, we follow the argument in the last paragraph of the proof of \cite{FK}, Ch.~0, Theorem 8.7.8(2). First, observe that $V$ being a valuation ring implies that \begin{equation*}I\mapsto I/\bigcap_{n\geq1}\varpi^{n}A\end{equation*}is an inclusion-preserving bijection between the set of finitely generated open ideals of $A$ and the set of principal ideals of $V$. Let $I$ be a finitely generated ideal of $A$ containing some power $\varpi^{n}$ of $\varpi$, and let $\overline{f}\in V$ be an element generating the corresponding principal ideal of $V$. Let $f\in A$ be an element lifting $\overline{f}$. To show that $I=(f)_{A}$ (and thus that $I$ is invertible), it suffices to prove that $(f)_{A}$ contains $\bigcap_{n\geq1}\varpi^{n}A$. Since \begin{equation*}(\overline{f})_{V}=I/\bigcap_{n\geq1}\varpi^{n}A\supseteq (\varpi^{n})_{V},\end{equation*}there exists $c\in A$ such that \begin{equation*}\varpi^{n}-cf\in \bigcap_{n\geq1}\varpi^{n}A.\end{equation*}Choose $g\in \bigcap_{n\geq1}\varpi^{n}A$ with $cf=\varpi^{n}-g$ and write $g=\varpi^{n}g_{0}$ for some $g_{0}\in A$. Then $g_{0}\in \bigcap_{n\geq1}\varpi^{n}A$ and $cf=\varpi^{n}(1-g_{0})$. Since $A$ is a local ring, $1-g_{0}$ is a unit in $A$, proving that $\varpi^{n}\in (f)_{A}$ and, a fortiori, that $\bigcap_{n\geq1}\varpi^{n}A\subseteq (f)_{A}$.\end{proof}
\begin{cor}\label{Valuative rings and completion}Let $A$ be a local ring and let $\varpi$ be a non-zero-divisor in the maximal ideal of $A$. The ring $A$ is $\varpi$-valuative if and only if the $\varpi$-adic completion $\widehat{A}$ of $A$ is a valuation ring (with pseudo-uniformizer $\varpi$).\end{cor}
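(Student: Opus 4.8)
The plan is to deduce the corollary from Proposition~\ref{Valuative rings and valuation rings}, applied both to $A$ and to its $\varpi$-adic completion $\widehat{A}$. Since $A$ is local and $\varpi$ lies in the maximal ideal, $A$ is $\varpi$-adically Zariskian, so that proposition applies to $A$ and identifies ``$A$ is $\varpi$-valuative'' with condition (2) there: for every $n\geq1$ and every $f\in A$, either $\varpi^{n}\in(f)_{A}$ or $f\in(\varpi^{n})_{A}$. I would then check that $\widehat{A}$ is itself a local ring in which $\varpi$ is a non-zero-divisor contained in the maximal ideal, so that Proposition~\ref{Valuative rings and valuation rings} applies to $\widehat{A}$ as well. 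Here one uses the standard fact (valid because $(\varpi)$ is finitely generated) that $\widehat{A}/\varpi^{n}\widehat{A}=A/\varpi^{n}A$ for all $n$, from which local-ness of $\widehat{A}$ and $\varpi\in\mathfrak{m}_{\widehat{A}}$ follow by the usual successive-approximation argument for units in a complete ring, and that $\widehat{A}$ is $\varpi$-adically separated, i.e.\ $\bigcap_{n}\varpi^{n}\widehat{A}=0$; that $\varpi$ remains a non-zero-divisor in $\widehat{A}$ is a short direct computation in $\varprojlim_{n}A/\varpi^{n}A$. Consequently Proposition~\ref{Valuative rings and valuation rings}(3) for $\widehat{A}$ reads simply: $\widehat{A}$ is $\varpi$-valuative if and only if $\widehat{A}$ is a valuation ring (with pseudo-uniformizer $\varpi$, since $\varpi$ is then a non-zero non-unit of the $\varpi$-adically separated ring $\widehat{A}$).

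It therefore remains to show that condition (2) of Proposition~\ref{Valuative rings and valuation rings} holds for $A$ if and only if it holds for $\widehat{A}$. For the forward implication, take $g\in\widehat{A}$ and $n\geq1$ with $g\notin\varpi^{n}\widehat{A}$; using $\widehat{A}/\varpi^{n}\widehat{A}=A/\varpi^{n}A$, lift $g$ to $g'\in A$ with $g'\notin\varpi^{n}A$ and $g-g'\in\varpi^{n}\widehat{A}$. Condition (2) for $A$ gives $\varpi^{n}=g'c$ with $c\in A$, and necessarily $c\in\mathfrak{m}_{A}$ (else $(g')_{A}=(\varpi^{n})_{A}$, contradicting $g'\notin\varpi^{n}A$). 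Writing $g=g'+\varpi^{n}h=g'(1+ch)$ with $h\in\widehat{A}$ and $1+ch\in\widehat{A}^{\times}$ (as $ch\in\mathfrak{m}_{\widehat{A}}$), we get $\varpi^{n}=g'c\in(g)_{\widehat{A}}$. For the reverse implication, take $f\in A$ and $n\geq1$ with $f\notin\varpi^{n}A$; then $f\notin\varpi^{n}\widehat{A}$ because $\varpi^{n}\widehat{A}\cap A=\varpi^{n}A$, so condition (2) for $\widehat{A}$ yields $\varpi^{n}=fw$ with $w\in\widehat{A}$. Reducing modulo $\varpi^{N}$ for some $N>n$ and lifting $w$ to $w_{N}\in A$, one obtains $fw_{N}=\varpi^{n}(1+\varpi^{N-n}a_{N})$ for some $a_{N}\in A$, where $1+\varpi^{N-n}a_{N}$ is a unit of the local ring $A$; hence $\varpi^{n}\in(f)_{A}$.

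Putting the pieces together: $A$ is $\varpi$-valuative $\iff$ condition (2) holds for $A$ $\iff$ it holds for $\widehat{A}$ $\iff$ $\widehat{A}$ is $\varpi$-valuative $\iff$ $\widehat{A}$ is a valuation ring. The only genuinely non-formal ingredient is the ``unit trick''---that $1+\varpi x$ is invertible in any local ring with $\varpi$ in the maximal ideal---which is what makes both lifting arguments close; I expect the main care to be needed not in a single step but in assembling the completion-theoretic facts (local-ness, separatedness, and preservation of the non-zero-divisor property under completion) so that Proposition~\ref{Valuative rings and valuation rings} is legitimately applicable to $\widehat{A}$.
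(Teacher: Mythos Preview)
Your proof is correct and follows the same overall framework as the paper: both reduce to characterization~(2) of Proposition~\ref{Valuative rings and valuation rings} and transfer it between $A$ and $\widehat{A}$. The execution differs in a useful way. For the forward direction, the paper invokes (1)$\Leftrightarrow$(3) to see that $A/\bigcap_{n}\varpi^{n}A$ is a valuation ring and then uses that the $\varpi$-adic completion of a valuation ring is again a valuation ring; you instead verify condition~(2) for $\widehat{A}$ directly via the lift-and-unit argument, which is more self-contained. For the backward direction, the paper first reduces to the separated case and then, to pass from $\varpi^{n}\in(f)_{\widehat{A}}$ to $\varpi^{n}\in(f)_{A}$, observes that $(f)_{\widehat{A}}\cap A$ is the $\varpi$-adic closure of $(f)_{A}$, notes this closure is open, and appeals to \cite{FGK}, Lemma~5.1.3(2), to conclude $(f)_{A}$ itself is open. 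Your argument replaces this citation by the elementary computation $fw_{N}=\varpi^{n}(1+\varpi^{N-n}a_{N})$ with $1+\varpi^{N-n}a_{N}\in A^{\times}$, and in fact works without first passing to the separated quotient. So your route is slightly more elementary, at the cost of spelling out the completion-theoretic preliminaries (localness of $\widehat{A}$, $\varpi$ a non-zero-divisor there, separatedness) that the paper leaves implicit.
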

\begin{proof}If $A$ is $\varpi$-valuative, then by Proposition \ref{Valuative rings and valuation rings} the Hausdorff quotient $A/\bigcap_{n\geq1}\varpi^{n}A$ is a valuation ring, so its $\varpi$-adic completion, which coincides with the $\varpi$-adic completion of $A$, is a valuation ring. Thus it suffices to prove that $A$ is $\varpi$-valuative whenever $\widehat{A}$ is. Moreover, it suffices to assume that $A$ is $\varpi$-adically separated, so that the map $A\to\widehat{A}$ is injective. Let $f\in A$. If $\widehat{A}$ is $\varpi$-valuative, then (by Proposition \ref{Valuative rings and valuation rings}), for every $n\geq1$, we either have $f\in(\varpi^{n})_{\widehat{A}}$ or $\varpi^{n}\in (f)_{\widehat{A}}$. In the first case, $f\in (\varpi^{n})_{\widehat{A}}\cap A=(\varpi^{n})_{A}$. In the second case, note that $(f)_{\widehat{A}}\cap A$ is the $\varpi$-adic closure (in $A$) of $(f)_{A}$, so $\varpi^{n}$ is in the $\varpi$-adic closure of $(f)_{A}$. In particular, the $\varpi$-adic closure of $(f)_{A}$ in $A$ is $\varpi$-adically open in $A$. By \cite{FGK}, Lemma 5.1.3(2), this implies that $(f)_{A}$ is itself $\varpi$-adically open and then $(f)_{A}=(f)_{\widehat{A}}\cap A$, so that $\varpi^{n}\in (f)_{A}$. We conclude by applying Proposition \ref{Valuative rings and valuation rings} that $A$ is $\varpi$-valuative.\end{proof}
\begin{cor}\label{Valuative rings and completion 2}Let $\varpi$ be a non-zero-divisor and a non-unit in a local ring $A$. Then the $\varpi$-adic completion $\widehat{A}$ of $A$ is a valuation ring of rank $1$ if and only if $A$ is $\varpi$-valuative and $\sqrt{(\varpi)_{A}}$ is the maximal ideal of $A$.\end{cor}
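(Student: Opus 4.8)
The plan is to deduce this from Corollary~\ref{Valuative rings and completion}, which already identifies ``$A$ is $\varpi$-valuative'' with ``$\widehat{A}$ is a valuation ring with pseudo-uniformizer $\varpi$'', together with a purely valuation-theoretic characterization of rank one. So the first step is to observe: if $V$ is a valuation ring with pseudo-uniformizer $\varpi$ — so that $\Frac(V)=V[\varpi^{-1}]$ by \cite{FK}, Ch.~0, Proposition 6.7.2 — then $V$ has rank one if and only if $\sqrt{(\varpi)_{V}}$ equals the maximal ideal $\mathfrak{m}_{V}$. For the non-trivial implication I would suppose $\sqrt{(\varpi)_{V}}=\mathfrak{m}_{V}$ and take a prime $\mathfrak{p}\subsetneq\mathfrak{m}_{V}$: then $\varpi\notin\mathfrak{p}$, hence $\varpi^{n}\notin\mathfrak{p}$ for all $n$, and for any $x\in\mathfrak{p}$ one of $x/\varpi^{n},\varpi^{n}/x$ lies in $V$, the latter being impossible since it would force $\varpi^{n}\in\mathfrak{p}$; thus $x\in\bigcap_{n\geq1}\varpi^{n}V=0$ by $\varpi$-adic separatedness, so $\mathfrak{m}_{V}$ is the only non-zero prime. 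The converse is immediate: $(\varpi)_{V}$ is a non-zero proper ideal, so $\sqrt{(\varpi)_{V}}$ is a non-zero prime, hence equals $\mathfrak{m}_{V}$ when $V$ has rank one.

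The second step is to transport the condition $\sqrt{(\varpi)_{\widehat{A}}}=\mathfrak{m}_{\widehat{A}}$ back to $A$. Since $\varpi$ is a non-zero-divisor in $A$, the canonical surjection $\widehat{A}\to A/\varpi A$ has kernel exactly $\varpi\widehat{A}$, so $\widehat{A}/\varpi\widehat{A}\cong A/\varpi A$; in particular $\widehat{A}$ is local (as $\varpi$ lies in its Jacobson radical), its maximal ideal restricts to $\mathfrak{m}_{A}$ along $A\to\widehat{A}$, and $\sqrt{(\varpi)_{\widehat{A}}}=\mathfrak{m}_{\widehat{A}}$ holds precisely when the nilradical of $A/\varpi A$ equals its maximal ideal, i.e.\ precisely when $\sqrt{(\varpi)_{A}}=\mathfrak{m}_{A}$. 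If one prefers to avoid appealing to $\widehat{A}/\varpi\widehat{A}\cong A/\varpi A$, one can instead pass to the separated quotient $V=A/\bigcap_{n\geq1}\varpi^{n}A$, which is a valuation ring by Proposition~\ref{Valuative rings and valuation rings} once $A$ is $\varpi$-valuative, note that $\widehat{A}=\widehat{V}$ and $\sqrt{(\varpi)_{A}}=\mathfrak{m}_{A}\Leftrightarrow\sqrt{(\varpi)_{V}}=\mathfrak{m}_{V}$, and apply the first step directly to $\widehat{V}$.

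Putting the two steps together gives the corollary. If $\widehat{A}$ is a rank-one valuation ring (then automatically with pseudo-uniformizer $\varpi$, since $\varpi$ is a non-zero non-unit in $\widehat{A}$ — otherwise $\widehat{A}$ would be a field — and $\widehat{A}$ is $\varpi$-adically separated), then in particular $\widehat{A}$ is a valuation ring with pseudo-uniformizer $\varpi$, so $A$ is $\varpi$-valuative by Corollary~\ref{Valuative rings and completion}, and the first step yields $\sqrt{(\varpi)_{\widehat{A}}}=\mathfrak{m}_{\widehat{A}}$, whence $\sqrt{(\varpi)_{A}}=\mathfrak{m}_{A}$ by the second step. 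Conversely, if $A$ is $\varpi$-valuative then $\widehat{A}$ is a valuation ring with pseudo-uniformizer $\varpi$ by Corollary~\ref{Valuative rings and completion}; the hypothesis $\sqrt{(\varpi)_{A}}=\mathfrak{m}_{A}$ translates into $\sqrt{(\varpi)_{\widehat{A}}}=\mathfrak{m}_{\widehat{A}}$ by the second step, and the first step then shows that $\widehat{A}$ has rank one. The only point requiring any care is the identification $\widehat{A}/\varpi\widehat{A}\cong A/\varpi A$: it holds because $\varpi$ is a non-zero-divisor, so that the inverse system $(\varpi^{n-1}A/\varpi^{n}A)_{n}$ — the kernel of multiplication by $\varpi$ on the tower $(A/\varpi^{n}A)_{n}$ — is pro-zero; everything else is formal given Corollary~\ref{Valuative rings and completion} and the rank-one criterion of the first step, and, as noted, this delicate point can be bypassed altogether through the separated quotient $V$.
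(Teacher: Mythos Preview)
Your proof is correct and follows essentially the same route as the paper: reduce to Corollary~\ref{Valuative rings and completion} for the ``valuation ring'' part, then use the rank-one criterion $\sqrt{(\varpi)_{\widehat{A}}}=\mathfrak{m}_{\widehat{A}}$ and transport it back to $A$ via $\widehat{A}/\varpi\widehat{A}\cong A/\varpi A$. The only difference is that the paper cites \cite{FK}, Ch.~0, Proposition~6.7.3 for the rank-one criterion, whereas you prove it directly, and you spell out the passage from $\widehat{A}$ to $A$ that the paper leaves implicit.
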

\begin{proof}By Corollary \ref{Valuative rings and completion}, $\widehat{A}$ is a valuation ring if and only if $A$ is $\varpi$-valuative. In this case, the valuation ring $\widehat{A}$ is of rank $1$ if and only if $\sqrt{(\varpi)_{\widehat{A}}}$ is the maximal ideal of $\widehat{A}$, by \cite{FK}, Ch.~0, Proposition 6.7.3. But this happens if and only if $\sqrt{(\varpi)_{A}}$ is the maximal ideal of $A$. \end{proof}
We now want to characterize the property of $A$ being a $\varpi$-valuative ring by means of the Berkovich spectrum (respectively, the adic spectrum) of the Tate ring $A[\varpi^{-1}]$ (respectively, of the Tate Huber pair $(A[\varpi^{-1}], A)$ in case $A$ is integrally closed in $A[\varpi^{-1}]$). We first record the following elementary lemma which is presumably well-known but whose proof we include for the reader's convenience.
\begin{lemma}\label{Almost power-bounded}Let $(\mathcal{A}, \mathcal{A}^{+})$ be a Tate Huber pair. Then, for every pseudo-uniformizer $\varpi\in \mathcal{A}^{+}$ of $\mathcal{A}$ we have $\varpi \mathcal{A}^{\circ}\subseteq \mathcal{A}^{+}$.\end{lemma}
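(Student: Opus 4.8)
The plan is to reduce the claim, element by element, to the fact that $\mathcal{A}^{+}$ is integrally closed in $\mathcal{A}$. First I would fix a ring of definition $\mathcal{A}_{0}$ of $\mathcal{A}$ with $\varpi\in\mathcal{A}_{0}$ — such a ring of definition exists for any Tate ring with pseudo-uniformizer $\varpi$, and then $\mathcal{A}=\mathcal{A}_{0}[\varpi^{-1}]$ and the submodules $\varpi^{k}\mathcal{A}_{0}$, $k\geq 0$, form a basis of open neighborhoods of $0$. Given $f\in\mathcal{A}^{\circ}$, the goal is to produce an integer $d\geq 1$ with $(\varpi f)^{d}\in\mathcal{A}^{+}$. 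Once this is done, $\varpi f$ is a root of the monic polynomial $X^{d}-(\varpi f)^{d}\in\mathcal{A}^{+}[X]$, hence integral over $\mathcal{A}^{+}$, and therefore lies in $\mathcal{A}^{+}$ because $\mathcal{A}^{+}$ is integrally closed in $\mathcal{A}$.

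The existence of such a $d$ combines two elementary inputs. First, since $f$ is power-bounded, the characterization of power-bounded elements recalled before Lemma~\ref{Completely integrally closed} (power-bounded $=$ almost integral over $\mathcal{A}_{0}$) yields an integer $m\geq 0$ with $f^{n}\in\varpi^{-m}\mathcal{A}_{0}$ for all $n\geq 0$; crucially, $m$ is allowed to depend on $f$, so no uniformity hypothesis on $\mathcal{A}$ is needed. Second, since $\mathcal{A}^{+}$ is open it contains $\varpi^{N}\mathcal{A}_{0}$ for some integer $N$, and we may take $N\geq 1$ since $\varpi^{N+1}\mathcal{A}_{0}\subseteq\varpi^{N}\mathcal{A}_{0}$. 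Setting $d=N+m\geq 1$ one then gets
\[
(\varpi f)^{d}=\varpi^{N}(\varpi^{m}f^{d})\in\varpi^{N}\mathcal{A}_{0}\subseteq\mathcal{A}^{+},
\]
which is exactly what was required.

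There is no serious obstacle here: the entire argument is the observation that a sufficiently high power of $\varpi f$ is absorbed into the open subring $\mathcal{A}^{+}$, after which integral-closedness of $\mathcal{A}^{+}$ finishes the proof. The only points that need care are (i) invoking the right characterization of power-boundedness relative to a ring of definition containing $\varpi$, and (ii) arranging the exponent $d$ to be positive so that $X^{d}-(\varpi f)^{d}$ is a genuine monic polynomial — both handled above. Equivalently, one could phrase the conclusion by noting that $\mathcal{A}^{+}[\varpi f]$ is generated as an $\mathcal{A}^{+}$-module by $1,\varpi f,\dots,(\varpi f)^{d-1}$, hence is a finite $\mathcal{A}^{+}$-module, so $\varpi f$ is integral over $\mathcal{A}^{+}$.
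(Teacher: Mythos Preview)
Your proof is correct and follows essentially the same approach as the paper: show that some power $(\varpi f)^{d}$ lies in $\mathcal{A}^{+}$ and then invoke integral closedness of $\mathcal{A}^{+}$. The only cosmetic difference is that the paper skips the auxiliary ring of definition $\mathcal{A}_{0}$ and uses openness of $\mathcal{A}^{+}$ directly: since $\{f^{m}\}_{m\geq 0}$ is bounded and $\mathcal{A}^{+}$ is open, there is $n\geq 1$ with $\varpi^{n}f^{m}\in\mathcal{A}^{+}$ for all $m$, and in particular $(\varpi f)^{n}\in\mathcal{A}^{+}$.
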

\begin{proof}Let $f\in \mathcal{A}^{\circ}$. By the definition  of a power-bounded element, there exists $n\geq1$ such that $\varpi^{n}f^{m}\in \mathcal{A}^{+}$ for all $m\geq0$. In particular, $(\varpi f)^{n}=\varpi^{n}f^{n}\in \mathcal{A}^{+}$. Since $\mathcal{A}^{+}$ is integrally closed in $\mathcal{A}$, we conclude that $\varpi f\in \mathcal{A}^{+}$. \end{proof}
\begin{lemma}\label{Valuative rings and Berkovich spectrum}Let $\varpi$ be a non-zero-divisor and a non-unit in a local ring $A$ and suppose that $A$ is completely integrally closed in $\mathcal{A}=A[\varpi^{-1}]$. Then $A$ is $\varpi$-valuative if and only if the Berkovich spectrum $\mathcal{M}(\mathcal{A})$ consists of a single point.\end{lemma}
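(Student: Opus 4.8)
The claim is an equivalence between ($A$ being $\varpi$-valuative) and ($\mathcal{M}(\mathcal{A})$ being a single point), under the standing hypothesis that $A$ is local with maximal ideal $\mathfrak{m}$, $\varpi$ a non-zero non-unit, and $A$ completely integrally closed in $\mathcal{A}=A[\varpi^{-1}]$. I plan to argue both directions by working with valuation rings over $A$ and the $\arc_{\varpi}$-closure. The first step is to record that, by Lemma \ref{Completely integrally closed} and Example \ref{Complete integral closure and arc-closure}, the complete integral closedness of $A$ gives $A=\mathcal{A}^{\circ}=A^{\arc_{\varpi}}=\mathcal{A}_{\vert\cdot\vert_{\spc,\varpi}\leq1}$, so that $A=\bigcap_{v\in\mathcal{M}(\mathcal{A})}\varphi_v^{-1}(V_v)$ for a choice of rank-$1$ valuation rings $V_v$ attached to the points of $\mathcal{M}(\mathcal{A})$ (Lemma \ref{Valuations and valuation rings}); equivalently, $\mathcal{M}(\mathcal{A})$ is itself a boundary and, more importantly, every $\varpi$-fractional ideal question can be tested against $\mathcal{M}(\mathcal{A})$ via Theorem \ref{Boundaries and arc-closure}.

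\emph{($\Leftarrow$)} Suppose $\mathcal{M}(\mathcal{A})=\{v\}$ is a single point, with associated rank-$1$ valuation ring $A\to V=V_v$ having pseudo-uniformizer $\varpi$. Then $A=\varphi_v^{-1}(V)$, and by Theorem \ref{Boundaries and arc-closure} (applied to the boundary $\mathcal{S}=\{v\}$) we get $(\varpi^n)_A^{\arc_\varpi}=\varphi_v^{-1}(\varpi^n V)$ for all $n$. To show $A$ is $\varpi$-valuative it suffices, by Proposition \ref{Valuative rings and valuation rings}, to show that for every $f\in A$ and $n\geq1$ either $f\in(\varpi^n)_A$ or $\varpi^n\in(f)_A$. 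Since $V$ is a valuation ring, either $\varphi_v(f)\in\varpi^nV$, i.e. $v(f)\le v(\varpi^n)$, or $\varphi_v(\varpi^n)\in\varphi_v(f)V$, i.e. $v(\varpi^n)\le v(f)$. In the first case $f\in(\varpi^n)_A^{\arc_\varpi}$; but I still need $(\varpi^n)_A^{\arc_\varpi}=(\varpi^n)_A$, which follows from Corollary \ref{Arc-closure of ideals and rings 2} since $A$ is completely integrally closed in $\mathcal{A}$. So $f\in(\varpi^n)_A$. In the second case, I want $\varpi^n\in(f)_A$: here I would argue that $v(\varpi^n)\le v(f)$ means $\varpi^n/f\in\varphi_v^{-1}(V)\cdot(\text{something})$; more carefully, $\varphi_v(\varpi^n/f)\le 1$ in $V$ need not immediately give membership in $A$ unless $(f)_A$ is $\arc_\varpi$-closed. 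The clean way is: the ideal $(f,\varpi^n)_A$ is $\varpi$-adically open, so it is cancellative-or-not; I would instead directly use that $\varphi_v$ is the \emph{only} point, so $(f)_A^{\arc_\varpi}=\varphi_v^{-1}(\varphi_v(f)V)$, and $v(\varpi^n)\le v(f)$ gives $\varpi^n\in(f)_A^{\arc_\varpi}$. Then invoke that $(f)_A$ need not be $\arc_\varpi$-closed — this is the subtle point — so instead I take the route through $(f,\varpi^n)_A$: this open ideal equals its own $\arc_\varpi$-closure $\varphi_v^{-1}((\varphi_v(f),\varpi^n)V)=\varphi_v^{-1}(\varphi_v(f)V)$ (as $v(f)\le v(\varpi^n)$ or vice versa), and since $\mathfrak{m}=\sqrt{(\varpi)_A}$... actually I should just invoke Proposition \ref{Properties of invertible fractional ideals}(3) once I know $(f,\varpi^n)_A$ is invertible. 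So the right logical order is: show $\mathcal{M}(\mathcal{A})=\{v\}$ forces every open finitely generated ideal $I$ to satisfy $I^{\arc_\varpi}=\varphi_v^{-1}(\varphi_v(I)V)$ is principal in $V$, hence $I^{\arc_\varpi}$ is determined by a single generator; combined with $I\subseteq I^{\arc_\varpi}$ and the local hypothesis, Nakayama-type reasoning (as in the proof of Proposition \ref{Valuative rings and valuation rings}, (3)$\Rightarrow$(1)) upgrades this to $I$ itself being principal and invertible.

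\emph{($\Rightarrow$)} Suppose $A$ is $\varpi$-valuative and local; then by Corollary \ref{Valuative rings and completion} the $\varpi$-adic completion $\widehat A$ is a valuation ring with pseudo-uniformizer $\varpi$. I claim $\mathcal{M}(\mathcal{A})$ has exactly one point. Any $v\in\mathcal{M}(\mathcal{A})$ is a continuous $\varpi$-normalized rank-$1$ valuation with $v(f)\le 1$ for all $f\in A$ (Lemma \ref{Continuous means normalized}); it extends to a valuation on $\widehat A[\varpi^{-1}]=\Frac(\widehat A)$ since $v$ is $\varpi$-adically continuous, hence factors through the valuation-ring quotient. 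The valuation ring $\widehat A$ of rank... well, it may have rank $>1$, but its rank-$1$ coarsening is unique, and a $\varpi$-normalized rank-$1$ continuous valuation on $\Frac(\widehat A)$ must be that unique rank-$1$ coarsening (normalized so $v(\varpi)=1/2$). Concretely: $\widehat A$ being a valuation ring, its value group is totally ordered, and the condition $0<v(\varpi)<1$ together with rank $1$ pins down $v$ up to the normalization $v(\varpi)=1/2$ — because a rank-$1$ valuation is determined by its restriction to the convex subgroup generated by $v(\varpi)$, i.e. essentially by the map sending $\varpi\mapsto 1/2$ on the value group modulo larger convex subgroups, and there is only one such. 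Hence $\mathcal{M}(\mathcal{A})=\{v\}$. \textbf{The main obstacle} I anticipate is exactly this last uniqueness argument: showing a $\varpi$-valuative local ring admits only \emph{one} rank-$1$ continuous $\varpi$-normalized valuation. I expect it to come down to: $\widehat A$ is a valuation ring, so valuations on $\Frac(\widehat A)$ correspond to convex subgroups of its value group; a rank-$1$ valuation is a minimal nonzero coarsening; and continuity (equivalently $0<v(\varpi)<1$) forces the corresponding convex subgroup to be the unique minimal convex subgroup containing the image of $\varpi$ — and after $\varpi$-normalization this is a single valuation. I would phrase this cleanly using Corollary \ref{Valuative rings and completion 2} if I additionally knew $\sqrt{(\varpi)_A}=\mathfrak m$, but since that is not assumed, the argument must instead produce the rank-$1$ point as the minimal coarsening of the canonical valuation of $\widehat A$ and show any $v\in\mathcal{M}(\mathcal{A})$ equals it. Combining the two directions completes the proof. $\qed$
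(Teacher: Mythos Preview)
Your $(\Rightarrow)$ direction is essentially correct and close to the paper's: both pass to the $\varpi$-adic completion $\widehat{A}$, which is a valuation ring by Corollary~\ref{Valuative rings and completion}. Your argument via the unique rank-$1$ coarsening works (primes of a valuation ring are totally ordered, so there is at most one height-$1$ prime, and $\varpi$-adic separatedness forces $\varpi$ to lie in it); the paper is terser, simply citing the known fact that the Berkovich spectrum of a nonarchimedean field is a singleton (\cite{Kedlaya18}, Remark 2.11). Contrary to what you flag, this direction is not the main obstacle.

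The genuine gap is in your $(\Leftarrow)$ direction. You correctly handle the case $v(f)\le v(\varpi^n)$ via Corollary~\ref{Arc-closure of ideals and rings 2}. But in the case $v(\varpi^n)\le v(f)$ you never establish $\varpi^n\in(f)_A$: your own observation that $(f)_A$ need not be $\arc_\varpi$-closed is precisely the obstruction, and the workarounds you sketch are either incomplete (you do not show $(f,\varpi^n)_A$ is invertible) or circular (the Nakayama argument from Proposition~\ref{Valuative rings and valuation rings}(3)$\Rightarrow$(1) \emph{assumes} the separated quotient is a valuation ring). The underlying problem is that $f$ need not be a unit in $\mathcal{A}$, so $\varpi^n/f$ need not even lie in $\mathcal{A}$.

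The paper's $(\Leftarrow)$ argument supplies the missing ingredient: by \cite{Kedlaya18}, Lemma~2.4, the hypothesis $\mathcal{M}(\mathcal{A})=\{v\}$ forces $\mathcal{A}/\ker(v)$ to be a \emph{field}. One then identifies $\ker(v)=\bigcap_n(\varpi^n)_A^{\arc_\varpi}=\bigcap_n(\varpi^n)_A$ via Proposition~\ref{Arc-closure and the spectral seminorm} and Corollary~\ref{Arc-closure of ideals and rings 2}, and since $A$ is the closed unit ball for $v=\vert\cdot\vert_{\spc,\varpi}$, the quotient $A/\bigcap_n\varpi^nA$ is the valuation ring of the induced rank-$1$ valuation on that field. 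Proposition~\ref{Valuative rings and valuation rings} then applies cleanly. Without this field input, your route cannot close.
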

\begin{proof}Suppose that $A$ is $\varpi$-valuative. Since $\mathcal{M}(A[\varpi^{-1}])\simeq\mathcal{M}(\widehat{A}[\varpi^{-1}])$, where $\widehat{A}$ is the $\varpi$-adic completion of $A$, we may assume (using Corollary \ref{Valuative rings and completion}) that $A$ is actually a complete valuation ring of rank $1$ with pseudo-uniformizer $\varpi$. In this case, $\mathcal{A}$ is a nonarchimedean field and the assertion is known (see, for example, \cite{Kedlaya18}, Remark 2.11). 

Conversely, suppose that the Berkovich spectrum $\mathcal{M}(\mathcal{A})$ consists of a single point which we denote by $v$. By \cite{Kedlaya18}, Lemma 2.4, we know that $\mathcal{A}/\ker(v)$ is a field. By \cite{Berkovich}, Theorem 1.3.1, $\vert\cdot\vert_{\spc,\varpi}=v$. Then Proposition \ref{Arc-closure and the spectral seminorm} tells us that\begin{equation*}\ker(v)=\bigcap_{n\geq1}(\varpi^{n})_{A}^{\arc_{\varpi}}.\end{equation*}By Corollary \ref{Arc-closure of ideals and rings 2} and the assumption that $A$ is completely integrally closed in $\mathcal{A}$, this entails that \begin{equation*}\ker(v)=\bigcap_{n\geq1}(\varpi^{n})_{A}.\end{equation*}Since $A$ is completely integrally closed in $\mathcal{A}$ and $v=\vert\cdot\vert_{\spc,\varpi}$, the subring $A$ of $\mathcal{A}$ is equal to the closed unit ball $\mathcal{A}_{v\leq1}$ with respect to $v$. Therefore, $A/\bigcap_{n\geq1}\varpi^{n}A=A/\ker(v)$ is the closed unit ball of $\mathcal{A}/\ker(v)$ with respect to the rank $1$ valuation induced by $v$, showing that $A/\bigcap_{n\geq1}\varpi^{n}A$ is completely integrally closed in $\mathcal{A}/\bigcap_{n\geq1}\varpi^{n}A$. Thus, in view of Proposition \ref{Valuative rings and valuation rings}, it suffices to prove that $A$ is $\varpi$-valuative in the case when $\ker(v)=0$. Since, in this case, $\mathcal{A}$ is a field, $\mathcal{A}^{\circ}$ is a valuation ring of rank $1$. But by the assumption that $A$ is completely integrally closed in $\mathcal{A}$ we have $A=\mathcal{A}^{\circ}$. \end{proof}
\begin{lemma}\label{Valuative rings and adic spectrum}Let $\varpi$ be a non-zero-divisor and a non-unit in a local ring $A$. Suppose that $A$ is integrally closed in $A[\varpi^{-1}]$. The adic spectrum $\Spa(A[\varpi^{-1}], A)$ consists of a single point if and only if $A$ is $\varpi$-valuative and $\widehat{A}$ is a valuation ring of rank $1$.\end{lemma}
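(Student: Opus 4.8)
The plan is to compare $\Spa(\mathcal{A},A)$ with the Berkovich spectrum $\mathcal{M}(\mathcal{A})$ and to use two standard facts about the adic spectrum of a Tate Huber pair. First, $\mathcal{M}(\mathcal{A})$ is exactly the set of rank-one points of $\Spa(\mathcal{A},A)$ (by Lemma~\ref{Continuous means normalized}, together with the observation that any continuous rank-one valuation on $\mathcal{A}$ is equivalent to a $\varpi$-normalized one), and in particular it is nonempty by Theorem~\ref{The Berkovich spectrum is compact}. Second, $\Spa(\mathcal{A},A)$ is itself a boundary for $(\mathcal{A},A)$ (this is used in the proof of Proposition~\ref{Minimal closed boundaries}, and amounts to Huber's description of the ring of integral elements), so by Lemma~\ref{Boundaries and subrings} every $f\in\mathcal{A}\setminus A$ admits some $v\in\Spa(\mathcal{A},A)$ with $v(f)>1$. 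I will also use that $\Spa$ is unchanged by completion, so $\Spa(\mathcal{A},A)=\Spa(\widehat{\mathcal{A}},\widehat{A})$ with $\widehat{\mathcal{A}}=\widehat{A}[\varpi^{-1}]$; when $\widehat{A}$ is a valuation ring with pseudo-uniformizer $\varpi$ this equals $\Frac(\widehat{A})$ by \cite{FK}, Ch.~0, Proposition~6.7.2.

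Suppose first that $\Spa(\mathcal{A},A)$ is a single point $x$. Then $\emptyset\neq\mathcal{M}(\mathcal{A})\subseteq\Spa(\mathcal{A},A)$ forces $\mathcal{M}(\mathcal{A})=\{x\}$, so $x$ has rank one; writing $v$ for the corresponding valuation, Theorem~\ref{Spectral seminorm 2} gives $\vert\cdot\vert_{\spc,\varpi}=v$. Since $\Spa(\mathcal{A},A)$ is a boundary consisting only of $x$, every $f\in\mathcal{A}\setminus A$ has $v(f)>1$, which together with $v\le 1$ on $A$ shows $A=\{f\in\mathcal{A}\mid v(f)\le 1\}$. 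From this I deduce $A=\mathcal{A}^{\circ}$, i.e.\ $A$ is completely integrally closed in $\mathcal{A}$ (Lemma~\ref{Completely integrally closed}): one has $A\subseteq\mathcal{A}^{\circ}$ because $A$ is a bounded subring, and if $f\in\mathcal{A}^{\circ}$ then $\varpi^{k}f^{n}\in A$ for some $k$ and all $n$, whence $v(f)^{n}\le 2^{k}$ for all $n$ and so $v(f)\le 1$, i.e.\ $f\in A$. Now Lemma~\ref{Valuative rings and Berkovich spectrum} applies ($A$ is local and completely integrally closed in $\mathcal{A}$) and, since $\mathcal{M}(\mathcal{A})$ is a single point, yields that $A$ is $\varpi$-valuative; by Corollary~\ref{Valuative rings and completion}, $\widehat{A}$ is then a valuation ring with pseudo-uniformizer $\varpi$ and $\widehat{\mathcal{A}}=\Frac(\widehat{A})$.

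It remains to see that $\widehat{A}$ has rank one, and for this I will describe $\Spa(\Frac(\widehat{A}),\widehat{A})$ explicitly, a description which will also give the converse. A valuation on $\Frac(\widehat{A})$ whose valuation ring contains $\widehat{A}$ is a coarsening of the valuation attached to $\widehat{A}$, hence of the form $\widehat{A}_{\mathfrak{p}}$ for a unique $\mathfrak{p}\in\Spec\widehat{A}$, and it is continuous for the $\varpi$-adic topology on the Tate ring $\Frac(\widehat{A})=\widehat{A}[\varpi^{-1}]$ precisely when $\varpi$ lies in its maximal ideal $\mathfrak{p}\widehat{A}_{\mathfrak{p}}$, i.e.\ when $\varpi\in\mathfrak{p}$. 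Thus $\Spa(\mathcal{A},A)=\Spa(\widehat{\mathcal{A}},\widehat{A})$ is in bijection with $\{\mathfrak{p}\in\Spec\widehat{A}\mid\varpi\in\mathfrak{p}\}$; being a singleton, this set equals $\{\mathfrak{m}_{\widehat{A}}\}$, which says exactly that $\sqrt{(\varpi)_{\widehat{A}}}=\mathfrak{m}_{\widehat{A}}$, and by \cite{FK}, Ch.~0, Proposition~6.7.3, this means $\widehat{A}$ is a valuation ring of rank one. Conversely, if $A$ is $\varpi$-valuative and $\widehat{A}$ is a valuation ring of rank one (with pseudo-uniformizer $\varpi$), then $\Spec\widehat{A}=\{0,\mathfrak{m}_{\widehat{A}}\}$ and $\varpi$ is a nonzero non-unit, so the same description shows $\Spa(\mathcal{A},A)=\Spa(\Frac(\widehat{A}),\widehat{A})$ has exactly one point.

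The step I expect to be most delicate is the identification $A=\{f\in\mathcal{A}\mid v(f)\le 1\}$: it is here, and essentially only here, that a one-point \emph{adic} spectrum (rather than a one-point Berkovich spectrum) is genuinely used, through the fact that $\Spa(\mathcal{A},A)$ is a boundary for $(\mathcal{A},A)$ — equivalently, Huber's characterization of the ring of integral elements as the functions of absolute value $\le 1$ everywhere on the adic spectrum. The other point needing care is rank bookkeeping: one must note that the single point of $\Spa(\mathcal{A},A)$ is automatically of rank one (since it must coincide with the nonempty $\mathcal{M}(\mathcal{A})$), and, in the other direction, that a valuation ring $\widehat{A}$ of rank $\ge 2$ with $\varpi$ in its maximal ideal always produces a second continuous bounded valuation (a proper coarsening); both are subsumed in the $\Spec\widehat{A}$-description above.
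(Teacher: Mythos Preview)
Your proof is correct and takes a somewhat different route from the paper's.

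For the direction ``singleton $\Rightarrow$ valuation ring of rank $1$'', you use the boundary property of $\Spa(\mathcal{A},A)$ (Huber's characterization of $A^{+}$) to get $A=\{f:v(f)\le 1\}$, hence $A=\mathcal{A}^{\circ}$ is completely integrally closed, and then invoke Lemma~\ref{Valuative rings and Berkovich spectrum} as a black box to conclude that $A$ is $\varpi$-valuative. The paper does not pass through complete integral closedness: instead it uses Kedlaya's field lemma directly to see that $\widehat{A}[\varpi^{-1}]$ is a field, writes the normal domain $\widehat{A}$ as an intersection of valuation rings of that field, and observes that each such valuation ring gives a point of $\Spa$, so uniqueness forces $\widehat{A}$ itself to be the (necessarily rank-$1$) valuation ring. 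Your approach has the virtue of reusing the already-proved Lemma~\ref{Valuative rings and Berkovich spectrum}, at the price of importing the extra input that $\Spa$ is a boundary.

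For the converse, your bijection $\Spa(\Frac(\widehat{A}),\widehat{A})\simeq\{\mathfrak{p}\in\Spec\widehat{A}:\varpi\in\mathfrak{p}\}$ (coarsenings of a valuation are localizations at primes, and continuity is equivalent to $\varpi$ lying in the corresponding prime because $\varpi$ is a pseudo-uniformizer of each such localization) is a clean systematic alternative to the paper's argument via specializations and containment of valuation rings. Note that since $\widehat{A}[\varpi^{-1}]=\Frac(\widehat{A})$, \emph{every} non-zero prime of $\widehat{A}$ contains $\varpi$, so your set is simply $\Spec(\widehat{A})\setminus\{0\}$; this makes the rank-$1$ conclusion immediate in both directions. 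Both arguments are short; yours is perhaps more transparent about why rank governs the cardinality.
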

\begin{proof}If $\widehat{A}$ is a valuation ring of rank $1$, then $\mathcal{M}(A[\varpi^{-1}])\simeq\mathcal{M}(\widehat{A}[\varpi^{-1}])$ consists of a single point by \cite{Kedlaya18}, Remark 2.11. Thus there is a unique rank $1$ point $v$ in $\Spa(A[\varpi^{-1}], A)\simeq\Spa(\widehat{A}[\varpi^{-1}], \widehat{A})$ and every other point is a specialization of this rank $1$ point. However, on one hand, a valuation $w$ on the field $\widehat{A}[\varpi^{-1}]$ defining a specialization of $v$ means that the valuation ring of $w$ is contained in the valuation ring $\widehat{A}$ of $v$ and, on the other hand, a valuation $w$ on $\widehat{A}[\varpi^{-1}]$ defining an element of $\Spa(\widehat{A}[\varpi^{-1}], \widehat{A})$ means that the valuation ring of $w$ contains $\widehat{A}$. It follows that the valuation ring of each $w\in \Spa(\widehat{A}[\varpi^{-1}], \widehat{A})$ is equal to $\widehat{A}$. But this means that every $w$ is of rank $1$. Since we have seen that $v$ is the unique rank $1$ point of $\Spa(\widehat{A}[\varpi^{-1}], \widehat{A})$, we conclude that $\Spa(\widehat{A}[\varpi^{-1}], \widehat{A})$ consists of a single point.

Conversely, if $\Spa(A[\varpi^{-1}], A)$ consists of a single point $v$, so does $\mathcal{M}(A[\varpi^{-1}])$ and thus \begin{equation*}v=\vert\cdot\vert_{\spc,\varpi}.\end{equation*}Since $A$ is integrally closed in $A[\varpi^{-1}]$, the Tate ring $A[\varpi^{-1}]$ is uniform (by Lemma \ref{Almost power-bounded}), so the topology on $A[\varpi^{-1}]$ is defined by $v=\vert\cdot\vert_{\spc,\varpi}$. By \cite{Kedlaya18}, Lemma 2.4, $\widehat{A}[\varpi^{-1}]$ is a field. Since $A$ is integrally closed in $A[\varpi^{-1}]$, the $\varpi$-adic completion $\widehat{A}$ is integrally closed in the field $\widehat{A}[\varpi^{-1}]$. Thus $\widehat{A}$ is a normal domain. Consequently, we can write $\widehat{A}$ as \begin{equation*}\widehat{A}=\bigcap_{\widehat{A}\subseteq V}V,\end{equation*}where $V$ ranges over the valuation rings of $\widehat{A}[\varpi^{-1}]=\Frac(\widehat{A})$ containing $\widehat{A}$. But every such valuation ring defines a continuous valuation on $\widehat{A}[\varpi^{-1}]$ which is $\leq1$ on $\widehat{A}$. By hypothesis, there exists only one such valuation $v$ on $\widehat{A}[\varpi^{-1}]$ (and thus this unique continuous valuation $v$ is of rank $1$). It follows that $\widehat{A}$ is equal to the valuation ring of $v$ and then $A$ is $\varpi$-valuative, by Corollary \ref{Valuative rings and completion}. \end{proof}
The notion of $\varpi$-fractional ideals also gives rise to a useful characterization of the complete integral closure $\mathcal{A}^{\circ}$ of $A$ inside $\mathcal{A}=A[\varpi^{-1}]$. Note that, by \cite{Swanson-Huneke}, Lemma 2.1.8, an element $f\in \mathcal{A}$ such that $fI\subseteq I$ for some finitely generated, $\varpi$-adically open $A$-submodule $I$ of $\mathcal{A}$ must be integral over $A$ (the assumption that $I$ is $\varpi$-adically open, i.e., contains a power of $\varpi$, ensures that $I$ is a faithful $A[f]$-module, so that the hypotheses of \cite{Swanson-Huneke}, Lemma 2.1.8, are satisfied). The following lemma is an analog of this fact for the complete integral closure. Our formulation of the lemma and its proof were inspired by \cite{Swanson-Huneke}, Proposition 2.4.8. 
\begin{lemma}\label{Complete integral closure and fractional ideals}Let $\varpi$ be a non-zero-divisor and a non-unit in a ring $A$ and consider the Tate ring $\mathcal{A}=A[\varpi^{-1}]$. Then \begin{equation*}\mathcal{A}^{\circ}=\bigcup_{J\in \mathcal{F}_{\varpi}(A)}J:_{\mathcal{A}}J=\bigcup_{J\in\mathcal{F}_{\varpi}(A)}(A:_{\mathcal{A}}J):_{\mathcal{A}}(A:_{\mathcal{A}}J),\end{equation*}where $\mathcal{F}_{\varpi}(A)$ is the set of $\varpi$-fractional ideals of $A$ (see Definition \ref{Fractional ideals}). In particular, if $f\in \mathcal{A}$ satisfies $fJ\subseteq J$ for some (not necessarily finitely generated) open ideal of $A$, then $f$ is power-bounded.\end{lemma}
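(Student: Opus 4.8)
The plan is to prove the two inclusions $\mathcal{A}^{\circ}\subseteq\bigcup_{J\in\mathcal{F}_{\varpi}(A)}J:_{\mathcal{A}}J$ and $\bigcup_{J\in\mathcal{F}_{\varpi}(A)}J:_{\mathcal{A}}J\subseteq\mathcal{A}^{\circ}$, then to identify the two unions appearing in the statement with one another, and finally to read off the ``in particular'' clause. For the first inclusion, take $f\in\mathcal{A}^{\circ}$. By Lemma \ref{Completely integrally closed} (or, more precisely, by the characterization of power-bounded elements of $A[\varpi^{-1}]$ recalled just before it), there is an integer $m\geq 1$ with $f^{n}\in\varpi^{-m}A$ for all $n\geq 0$. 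Consider the $A$-submodule $J=A[f]=\sum_{n\geq 0}Af^{n}$ of $\mathcal{A}$: it contains $A$, hence $\varpi$, and it is contained in $\varpi^{-m}A$, so it is $\varpi$-adically open and bounded, i.e.\ a $\varpi$-fractional ideal. Since $fJ\subseteq J$ by construction, $f\in J:_{\mathcal{A}}J$. The substance of this step is just the observation that power-boundedness of $f$ says precisely that the $A$-algebra $A[f]$ is a bounded $A$-submodule of $\mathcal{A}$.

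For the reverse inclusion, let $f\in J:_{\mathcal{A}}J$ for some $\varpi$-fractional ideal $J$, and choose integers $k,m>0$ with $\varpi^{k}\in J\subseteq\varpi^{-m}A$. An immediate induction gives $f^{n}J\subseteq J$ for all $n\geq 0$, so $f^{n}\varpi^{k}\in f^{n}J\subseteq\varpi^{-m}A$ and hence $f^{n}\in\varpi^{-m-k}A$ for every $n$; thus $f$ is power-bounded. To see that the two unions coincide, note that $A$ is itself a $\varpi$-fractional ideal, so by Lemma \ref{Residual is a fractional ideal} the module $A:_{\mathcal{A}}J$ is a $\varpi$-fractional ideal for every $J\in\mathcal{F}_{\varpi}(A)$; therefore the second union runs over a subfamily of the fractional ideals occurring in the first and is contained in it. Conversely, if $fJ\subseteq J$ with $J\in\mathcal{F}_{\varpi}(A)$, then for any $g\in A:_{\mathcal{A}}J$ we have $(fg)J=g(fJ)\subseteq gJ\subseteq A$, so $f\cdot(A:_{\mathcal{A}}J)\subseteq A:_{\mathcal{A}}J$, i.e.\ $f\in(A:_{\mathcal{A}}J):_{\mathcal{A}}(A:_{\mathcal{A}}J)$; since $A:_{\mathcal{A}}J\in\mathcal{F}_{\varpi}(A)$, the first union is contained in the second.

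Finally, suppose $f\in\mathcal{A}$ satisfies $fJ\subseteq J$ for some (not necessarily finitely generated) open ideal $J$ of $A$. Then $J$ is an $A$-submodule of $\mathcal{A}$ contained in $A$ (hence bounded) and containing a power of $\varpi$ (hence open), so $J$ is a $\varpi$-fractional ideal; by the inclusion just proved, $f\in J:_{\mathcal{A}}J\subseteq\mathcal{A}^{\circ}$, so $f$ is power-bounded. I do not anticipate a genuine obstacle here: the whole argument is an unwinding of the definitions, and the only points that deserve a moment's attention are that $A$ itself (and therefore each $A:_{\mathcal{A}}J$) qualifies as a $\varpi$-fractional ideal, and that the subalgebra $A[f]$ generated by a power-bounded element $f$ is a bounded submodule — both of which are immediate.
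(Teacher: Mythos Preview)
Your proof is correct and follows essentially the same approach as the paper. The only minor variation is in the choice of witness for the inclusion $\mathcal{A}^{\circ}\subseteq\bigcup_{J}J:_{\mathcal{A}}J$: you take $J=A[f]$ directly, whereas the paper takes $I=A:_{\mathcal{A}}A[f]$, which has the advantage of being simultaneously a $\varpi$-fractional ideal with $fI\subseteq I$ \emph{and} already of the form $A:_{\mathcal{A}}J$, so the second equality comes for free; your separate argument that $fJ\subseteq J$ implies $f(A:_{\mathcal{A}}J)\subseteq A:_{\mathcal{A}}J$ achieves the same end.
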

\begin{proof}Let $J\in \mathcal{F}_{\varpi}(A)$ and choose $m, k\geq1$ with $\varpi^{k}\in J\subseteq \varpi^{-m}A$. Suppose that $f\in A[\varpi^{-1}]$ satisfies $fJ\subseteq J$. This implies that $f^{n}J\subseteq J$ for all integers $n\geq0$. Then, in particular, $f^{n}\in \varpi^{-k}J\subseteq \varpi^{-(k+m)}A$ for all $n$, so $f$ is power-bounded. Since for any $J\in\mathcal{F}_{\varpi}(A)$ the $\varpi$-residual $A:_{\mathcal{A}}J$ is again a $\varpi$-fractional ideal (Lemma \ref{Residual is a fractional ideal}), we conclude that \begin{equation*}J:_{\mathcal{A}}J, (A:_{\mathcal{A}}J):_{\mathcal{A}}(A:_{\mathcal{A}}J)\subseteq \mathcal{A}^{\circ}\end{equation*}for every $J\in\mathcal{F}_{\varpi}(A)$. Conversely, let $f\in \mathcal{A}^{\circ}\setminus\{0\}$. Then $A[f]$ is bounded in $\mathcal{A}$, so it is a $\varpi$-fractional ideal of $A$ (note that $A[f]$ is trivially open in $\mathcal{A}$ since $A$ is). Set $I=A:_{\mathcal{A}}A[f]$. Then $I$ is a $\varpi$-fractional ideal of $A$, by Lemma \ref{Residual is a fractional ideal}, with $fI\subseteq I$, so \begin{equation*}\mathcal{A}^{\circ}\subseteq \bigcup_{J\in\mathcal{F}_{\varpi}(A)}J:_{\mathcal{A}}J.\end{equation*}On the other hand, since $I$ is the $\varpi$-residual of the $\varpi$-fractional ideal $A[f]$, we also see that \begin{equation*}\mathcal{A}^{\circ}\subseteq \bigcup_{J\in\mathcal{F}_{\varpi}(A)}(A:_{\mathcal{A}}J):_{\mathcal{A}}(A:_{\mathcal{A}}J),\end{equation*}as desired.\end{proof}

\section{Generalities on $\varpi$-$\ast$-operations}\label{sec:star-operations}

In multiplicative ideal theory of integral domains, a prominent role is played by the so-called star- and semi-star-operations. In this section we study an analog of these notions for a ring $A$ with a fixed non-zero-divisor $\varpi$, where the Tate ring $A[\varpi^{-1}]$ now occupies the role played by the fraction field of an integral domain in the classical theory. Our resulting notion of $\varpi$-$\ast$-operations is actually a special case of a general notion of $\ast$-operations introduced by Knebusch and Kaiser in their book \cite{Knebusch-Zhang2}. 

As in the classical theory, every $\varpi$-$\ast$-operation defines a distinguished class of ideals of $A$, the $\ast$-ideals. Before we introduce $\varpi$-$\ast$-operations, we consider in Definition \ref{Star operations} the simplest example of a class of ideals defined by a $\varpi$-$\ast$-operation, which is an analog of the class of divisorial ideals used in the multiplicative ideal theory of integral domains.
\begin{mydef}[$\varpi$-divisorial $A$-submodule]For a non-zero-divisor $\varpi$ in a ring $A$, an $A$-submodule $I$ of $A[\varpi^{-1}]$ is called $\varpi$-divisorial if \begin{equation*}I=A:_{A[\varpi^{-1}]}(A:_{A[\varpi^{-1}]}I).\end{equation*}\end{mydef}
\begin{lemma}\label{Properties of divisorial ideals}Let $\varpi$ be a non-zero-divisor in a ring $A$.\begin{enumerate}[(1)] \item The intersection of two $\varpi$-divisorial $A$-submodules $I, J$ of $A[\varpi^{-1}]$ is $\varpi$-divisorial. \item For any $A$-submodule $I$ of $A[\varpi^{-1}]$ the $\varpi$-residual $A:_{A[\varpi^{-1}]}I$ is a $\varpi$-divisorial $A$-submodule of $A[\varpi^{-1}]$.\end{enumerate}\end{lemma}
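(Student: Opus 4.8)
The plan is to work with the abstract ``residuation'' $I \mapsto I^{*} \coloneqq A :_{A[\varpi^{-1}]} I$ (Definition \ref{Residual}) and to derive everything from three purely formal facts, valid for \emph{arbitrary} $A$-submodules of $A[\varpi^{-1}]$: (a) $I \subseteq J$ implies $J^{*} \subseteq I^{*}$; (b) $I \subseteq I^{**}$; and (c) $I^{*} = I^{***}$. Property (a) is immediate from the definition of the $\varpi$-residual. For (b): if $f \in I$ then $gf \in A$ for every $g \in I^{*}$, so $f \in A :_{A[\varpi^{-1}]} I^{*} = I^{**}$. For (c): applying $*$ to the inclusion $I \subseteq I^{**}$ and invoking (a) gives $I^{***} \subseteq I^{*}$, whereas (b) applied to the submodule $I^{*}$ gives $I^{*} \subseteq (I^{*})^{**} = I^{***}$; hence $I^{*} = I^{***}$. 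Note that $\varpi$ enters only through the requirement that it be a non-zero-divisor, which is what makes $A[\varpi^{-1}]$ and its $A$-submodules the intended objects; no further hypothesis is used.

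For part (2), being $\varpi$-divisorial means exactly $J = J^{**}$. Taking $J = I^{*} = A :_{A[\varpi^{-1}]} I$ (which is tautologically an $A$-submodule of $A[\varpi^{-1}]$), property (c) yields $J^{**} = (I^{*})^{**} = I^{***} = I^{*} = J$, so $A :_{A[\varpi^{-1}]} I$ is $\varpi$-divisorial. For part (1), first combine (a) with itself to see that $I \mapsto I^{**}$ is inclusion-preserving: $I \subseteq J$ gives $J^{*} \subseteq I^{*}$ and then $I^{**} \subseteq J^{**}$. Now if $I$ and $J$ are $\varpi$-divisorial, then from $I \cap J \subseteq I$ and $I \cap J \subseteq J$ we get $(I \cap J)^{**} \subseteq I^{**} = I$ and $(I \cap J)^{**} \subseteq J^{**} = J$, hence $(I \cap J)^{**} \subseteq I \cap J$; together with (b) this forces $(I \cap J)^{**} = I \cap J$. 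The identical argument shows that an arbitrary intersection of $\varpi$-divisorial $A$-submodules is $\varpi$-divisorial, which it is worth recording for later use.

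I do not anticipate any genuine difficulty: the content is the standard observation that $I \mapsto I^{**}$ is a closure operator on the $A$-submodules of $A[\varpi^{-1}]$ and that its family of closed objects (the $\varpi$-divisorial submodules) is stable under intersection and contains every residual $A :_{A[\varpi^{-1}]} I$. The only point worth double-checking while writing it up is that none of the steps secretly uses that $I$ or $J$ is a $\varpi$-fractional ideal — they do not, so the lemma holds for all $A$-submodules of $A[\varpi^{-1}]$ exactly as stated.
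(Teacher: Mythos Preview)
Your proof is correct and takes essentially the same approach as the paper: both arguments amount to verifying that $I\mapsto A:_{A[\varpi^{-1}]}(A:_{A[\varpi^{-1}]}I)$ is a closure operator via the antitone Galois-connection properties you call (a), (b), (c). The paper carries out the identical reasoning by direct element-chasing rather than isolating (a)--(c) as separate lemmas, but the content is the same.
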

\begin{proof}(1) Let $g\in A:_{A[\varpi^{-1}]}(A:_{A[\varpi^{-1}]}I\cap J)$. Then $gf\in A$ for all $f\in A[\varpi^{-1}]$ with $f(I\cap J)\subseteq A$. A fortiori, $gf\in A$ for all $f\in A[\varpi^{-1}]$ with $fI\subseteq A$ and for all $f\in A[\varpi^{-1}]$ with $fJ\subseteq A$. This proves the inclusion \begin{align*}A:_{A[\varpi^{-1}]}(A:_{A[\varpi^{-1}]}I\cap J)\\ \subseteq (A:_{A[\varpi^{-1}]}(A:_{A[\varpi^{-1}]}I))\cap (A:_{A[\varpi^{-1}]}(A:_{A[\varpi^{-1}]}J))=I\cap J,\end{align*}where the last equality follows from the assumption that $I$ and $J$ are $\varpi$-divisorial. But the opposite inclusion holds for any $A$-submodule of $A[\varpi^{-1}]$, by definition.

(2) Let $I$ be any $A$-submodule of $A[\varpi^{-1}]$ and let $g\in A_{A[\varpi^{-1}]}:(A:_{A[\varpi^{-1}]}(A:_{A[\varpi^{-1}]}I))$. This means: If $h\in A[\varpi^{-1}]$ such that $fI\subseteq A$ implies $hf\in A$, then $gh\in A$. In particular, $gI\subseteq A$, i.e., $g\in A:_{A[\varpi^{-1}]}I$. \end{proof}
\begin{example}\label{Colon ideals are divisorial}In particular, the above lemma shows that for any $A$-submodule $I$ of $A[\varpi^{-1}]$ the ideal \begin{equation*}A:_{A}I=(A:_{A[\varpi^{-1}]}I)\cap A\end{equation*}of $A$ is $\varpi$-divisorial.\end{example}
\begin{mydef}[Maximal $\varpi$-divisorial ideal]A maximal $\varpi$-divisorial ideal is a maximal element (with respect to inclusion) in the set of all (integral) proper ideals of $A$ which are $\varpi$-divisorial $A$-submodules of $A[\varpi^{-1}]$.\end{mydef}
\begin{lemma}\label{Description of maximal divisorial ideals}Every maximal $\varpi$-divisorial ideal $\mathfrak{p}$ is of the form $(\varpi^{n})_{A}:_{A[\varpi^{-1}]}f$ for some $f\in A$, $n\geq1$.\end{lemma}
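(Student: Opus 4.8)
The statement to prove is that every maximal $\varpi$-divisorial ideal $\mathfrak{p}$ of $A$ has the form $(\varpi^{n})_{A}:_{A[\varpi^{-1}]}f$ for some $f\in A$ and some $n\geq 1$. The plan is to exhibit $\mathfrak{p}$ as a $\varpi$-residual of a principal $\varpi$-fractional ideal and then clear denominators. First I would recall from Example \ref{Colon ideals are divisorial} (and Lemma \ref{Properties of divisorial ideals}(2)) the key mechanism: for any $A$-submodule $I$ of $A[\varpi^{-1}]$, the ideal $A:_{A}I=(A:_{A[\varpi^{-1}]}I)\cap A$ is automatically $\varpi$-divisorial; and if $I$ is a $\varpi$-fractional ideal, then $A:_{A[\varpi^{-1}]}I$ is again a $\varpi$-fractional ideal by Lemma \ref{Residual is a fractional ideal}, hence $\varpi$-adically open, so it already lies in $A$ after multiplying by a power of $\varpi$.

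The core step is to produce, for the given maximal $\varpi$-divisorial ideal $\mathfrak{p}$, an element $x\in A[\varpi^{-1}]\setminus A$ with $\mathfrak{p}=A:_{A[\varpi^{-1}]}(A[x])$, or more simply $\mathfrak{p}=A:_{A}x$ for a suitable $x$. Pick any $x\in A[\varpi^{-1}]$ with $x\notin A$; since $\mathfrak{p}$ is a proper $\varpi$-divisorial ideal it is contained in $A:_A x$ for appropriate $x$, but I want the containment to be an equality. The standard argument (mirroring the theory of divisorial ideals over a domain) runs as follows. Since $\mathfrak{p}$ is a $\varpi$-divisorial proper ideal, $\mathfrak{p}=A:_{A[\varpi^{-1}]}(A:_{A[\varpi^{-1}]}\mathfrak{p})$ and $A:_{A[\varpi^{-1}]}\mathfrak{p}$ strictly contains $A$ (otherwise $\mathfrak{p}$ would be $\varpi$-divisorial equal to $A:_{A[\varpi^{-1}]}A = A$, contradicting properness — here one uses that $\mathfrak{p}$ is $\varpi$-adically open as a $\varpi$-divisorial integral ideal, which forces $\mathfrak{p}\subseteq A$ to be a $\varpi$-fractional ideal). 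Choose $x\in (A:_{A[\varpi^{-1}]}\mathfrak{p})\setminus A$. Then $x\mathfrak{p}\subseteq A$, i.e. $\mathfrak{p}\subseteq A:_{A[\varpi^{-1}]}x$, and $A:_{A[\varpi^{-1}]}x\cap A = A:_A x$ is a $\varpi$-divisorial ideal of $A$ by Example \ref{Colon ideals are divisorial}; it is proper because $x\notin A$ means $1\notin A:_A x$. By maximality of $\mathfrak{p}$ among proper $\varpi$-divisorial ideals, $\mathfrak{p}=A:_A x$.

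Finally I would clear denominators: write $x=f/\varpi^{n}$ with $f\in A$ and $n\geq 1$ (possible since $x\in A[\varpi^{-1}]$, and $n\geq 1$ may be arranged because $x\notin A$). Then for $a\in A$ we have $ax\in A$ iff $af\in \varpi^{n}A$ iff $a\in (\varpi^{n})_{A}:_{A[\varpi^{-1}]}f$, so $\mathfrak{p}=A:_A x=(\varpi^{n})_{A}:_{A[\varpi^{-1}]}f$, as claimed. The main obstacle I anticipate is the bookkeeping ensuring that $\mathfrak{p}$, being a proper $\varpi$-divisorial \emph{integral} ideal, really is $\varpi$-adically open (so that it is a genuine $\varpi$-fractional ideal and the residual machinery applies) and that $A:_{A[\varpi^{-1}]}\mathfrak{p}$ is strictly larger than $A$ — this is where one must rule out the degenerate case and where a maximal $\varpi$-divisorial ideal being nonempty/nonzero is implicitly used; everything else is the formal $\varpi$-residual calculus from Section \ref{sec:valuative rings}.
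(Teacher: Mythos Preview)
Your proposal is correct and follows essentially the same approach as the paper: observe that $A:_{A[\varpi^{-1}]}\mathfrak{p}\supsetneq A$ (since otherwise $\mathfrak{p}=A:_{A[\varpi^{-1}]}A=A$ by $\varpi$-divisoriality), pick $x=f/\varpi^{n}$ in the complement, and use maximality against the $\varpi$-divisorial ideal $(\varpi^{n})_{A}:_{A}f$ from Example~\ref{Colon ideals are divisorial}. Your worry about $\varpi$-adic openness of $\mathfrak{p}$ is unnecessary---the argument that $A:_{A[\varpi^{-1}]}\mathfrak{p}\neq A$ follows directly from $\mathfrak{p}$ being proper and $\varpi$-divisorial, without any appeal to $\mathfrak{p}$ being a $\varpi$-fractional ideal.
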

\begin{proof}If $A:_{A[\varpi^{-1}]}\mathfrak{p}=A$, then $A:_{A[\varpi^{-1}]}(A:_{A[\varpi^{-1}]}\mathfrak{p})=A$, contradicting the hypothesis that $\mathfrak{p}$ is $\varpi$-divisorial. Hence there exists an element $g\in (A:_{A[\varpi^{-1}]}\mathfrak{p})\setminus A$. Write $g=\frac{f}{\varpi^{n}}$ for some $f\in A$, $n\geq1$, with $f\not\in(\varpi^{n})_{A}$. Then $\frac{f}{\varpi^{n}}\mathfrak{p}\subseteq A$, i.e., \begin{equation*}\mathfrak{p}\subseteq A:_{A[\varpi^{-1}]}\frac{f}{\varpi^{n}}=(\varpi^{n})_{A}:_{A[\varpi^{-1}]}f.\end{equation*}But the ideal \begin{equation*}(\varpi^{n})_{A}:_{A}f=(A:_{A[\varpi^{-1}]}\frac{f}{\varpi^{n}})\cap A\end{equation*}is $\varpi$-divisorial by Example \ref{Colon ideals are divisorial}, so we conclude that $\mathfrak{p}=(\varpi^{n})_{A}:_{A}f$, by the maximality of $\mathfrak{p}$. \end{proof}
As promised, we now introduce the analog in our context of the notion of $\ast$-operations on the set of fractional ideals of an integral domain, which is in fact a special case of a general notion of $\ast$-operations on $A$-submodules for a ring extension $A\subseteq R$ developed in the book \cite{Knebusch-Zhang2} by Knebusch and Kaiser (in our case we have $R=A[\varpi^{-1}]$). 
\begin{mydef}[$\varpi$-$\ast$-operations; a special case of \cite{Knebusch-Zhang2}, Ch.~3, Definition 3.1]\label{Star operations}A $\ast$-operation on $J_{\varpi}(A)$, or a $\varpi$-$\ast$-operation, is a map \begin{equation*}\ast: J_{\varpi}(A)\to J_{\varpi}(A),~I\mapsto I^{\ast},\end{equation*}such that for any $I, J\in J_{\varpi}(A)$ the following properties are satisfied: \begin{enumerate}[(1)]\item $I\subseteq I^{\ast}$ \item $I\subseteq J$ $\Rightarrow$ $I^{\ast}\subseteq J^{\ast}$, \item $(I^{\ast})^{\ast}=I^{\ast}$, and \item $JI^{\ast}\subseteq (JI)^{\ast}$.\end{enumerate}A $\varpi$-$\ast$-operation $\ast$ is called strict if $A^{\ast}=A$. Given a $\varpi$-$\ast$-operation $\ast$, we call an $A$-submodule of $A[\varpi^{-1}]$ (respectively, an ideal $I\subseteq A$) a $\ast$-module (respectively, a $\ast$-ideal ) if $I^{\ast}=I$. \end{mydef}
Following \cite{Knebusch-Zhang2}, we also denote $\varpi$-$\ast$-operations by Greek letters $\alpha, \beta, \gamma\dots$ when multiple $\varpi$-$\ast$-operations are involved and for every $I\in J_{\varpi}(A)$ we denote the images of $I$ under $\alpha, \beta, \gamma\dots$ by $I^{\alpha}$, $I^{\beta}$, $I^{\gamma}\dots$. 
\begin{rmk}If in the above definition $J$ is an invertible $\varpi$-fractional ideal, the inclusion in property (4) of the above definition is actually an equality (apply property (4) to the inverse $J^{-1}$). Moreover, by \cite{Knebusch-Zhang2}, Ch.~3, Remark 3.1, property (4) can be replaced by the seemingly weaker requirement that $fI^{\ast}\subseteq (fI)^{\ast}$ for all $f\in A[\varpi^{-1}]$ and $I\in J_{\varpi}(A)$. Since principal fractional ideals of an integral domain are invertible, we see that the above definition indeed specializes to the usual definition of $\ast$-operations in the case when $A$ is a domain and $A[\varpi^{-1}]$ is its field of fractions.\end{rmk}
\begin{example}The identity operation is a trivial example of a $\varpi$-$\ast$-operation.\end{example}
\begin{example}[Special case of \cite{Knebusch-Zhang2}, Ch.~3, Example 3.3]\label{Star operation induced by a map}For an arbitrary ring map $\varphi: A\to B$ the map $\ast: J_{\varpi}(A)\to J_{\varpi}(A)$ given by \begin{equation*}I^{\ast}:=\varphi_{\eta}^{-1}(\varphi_{\eta}(I)B),\end{equation*}where $\varphi_{\eta}$ is the map $A[\varpi^{-1}]\to B[\varpi^{-1}]$ induced by $\varphi$, is a $\varpi$-$\ast$-operation. It is a strict $\varpi$-$\ast$-operation if and only if $\varphi$ is a $\varpi$-adic isometry, i.e., if and only if $\varphi$ induces injective ring maps $A/\varpi^{n}A\to B/\varpi^{n}B$ for all integers $n\geq1$.\end{example}
\begin{example}As a special case of the above example, the map $J_{\varpi}(A)\to J_{\varpi}(A)$, $I\mapsto IA[\varpi^{-1}]$, sending an $A$-submodule of $A[\varpi^{-1}]$ to the ideal of $A[\varpi^{-1}]$ that it generates is a $\varpi$-$\ast$-operation which is not strict unless $\varpi$ is a unit in $A$.\end{example} 
\begin{example}[Special case of \cite{Knebusch-Zhang2}, Ch.~3, Prop.~3.7]\label{Infimum of star operations}If $(\alpha_{\lambda})_{\lambda}$ is a family of $\varpi$-$\ast$-operations, then \begin{equation*}\alpha: J_{\varpi}(A)\to J_{\varpi}(A), I^{\alpha}:=\bigcap_{\lambda}I^{\alpha_{\lambda}},\end{equation*}is again a $\varpi$-$\ast$-operation which is strict if one of the $\alpha_{\lambda}$ is strict. \end{example}
\begin{example}[Special case of \cite{Knebusch-Zhang2}, Ch.~3, discussion preceding Theorem 5.2]\label{Star operation defined by a valuation}If $v$ is a valuation on $A[\varpi^{-1}]$ (not necessarily of rank one), then \begin{equation*}I\mapsto I^{v}\coloneqq\{\, f\in A[\varpi^{-1}]\mid v(f)\leq v(g)~\textrm{for some}~g\in I\,\}\end{equation*}is a $\varpi$-$\ast$-operation.\end{example}
\begin{example}[Special case of \cite{Knebusch-Zhang2}, Ch.~3, \S5, Definition 3]\label{Convex hull}Combining the two preceding examples, we see that for every family $\Phi$ of valuations on $A[\varpi^{-1}]$ the map \begin{equation*}J_{\varpi}(A)\to J_{\varpi}(A), I\mapsto \con_{\Phi}(I)\coloneqq \bigcap_{v\in\Phi}I^{v},\end{equation*}is a $\varpi$-$\ast$-operation.\end{example}
\begin{example}[The $\arc_{\varpi}$-operation]Choosing $\Phi=\mathcal{M}(A[\varpi^{-1}])$ in the above example, we see that the $\arc_{\varpi}$-operation studied in Section 3 of this paper is a $\varpi$-$\ast$-operation. By Example \ref{Complete integral closure and arc-closure}, this $\varpi$-$\ast$-operation is strict if and only if $A$ is completely integrally closed in $A[\varpi^{-1}]$.\end{example} 
\begin{example}[Integral closure as a $\varpi$-$\ast$-operation]\label{Integral closure as a star-operation}Let the family of valuations $\Phi$ in Example \ref{Convex hull} be the family of all valuations $v$ on $A[\varpi^{-1}]$ such that $v$ is the restriction of a valuation on $\Frac(A/\mathfrak{p})$, where $\mathfrak{p}$ runs over the minimal prime ideals of $A$ (since $\varpi$ is a non-zero-divisor, it does not belong to any minimal prime ideal and thus every such valuation on $A$ indeed extends to a valuation on $A[\varpi^{-1}]$). By Example \ref{Convex hull} and \cite{Swanson-Huneke}, Theorem 6.8.3, we see that the map $I\mapsto \overline{I}$ which takes an ideal of $A$ to its integral closure in $A$ extends to a $\varpi$-$\ast$-valuation $J_{\varpi}(A)\to J_{\varpi}(A)$. This $\varpi$-$\ast$-operation is strict if and only if $A$ is integrally closed in $A[\varpi^{-1}]$. \end{example}
\begin{example}[Topological closure]\label{Topological closure as a star-operation}It is readily seen that the map \begin{equation*}J_{\varpi}(A)\to J_{\varpi}(A), I\mapsto \overline{I}^{\varpi},\end{equation*}which takes an $A$-submodule $I$ of $A[\varpi^{-1}]$ to its (topological) closure inside the Tate ring $A[\varpi^{-1}]$ is a $\varpi$-$\ast$-operation which is always strict since the ring of definition $A$ of $A[\varpi^{-1}]$ is open and, a fortiori, closed in $A[\varpi^{-1}]$.\end{example} 
For our purposes, the two most important examples of $\varpi$-$\ast$-operations are introduced in the following definition; these are the natural analogs of the v- and t-operations on the set of fractional ideals of an integral domain.
\begin{mydef}[$\varpi$-v-operation and $\varpi$-t-operation]We define a map \begin{equation*}J_{\varpi}(A)\to J_{\varpi}(A),~I\mapsto I_{v},\end{equation*}by setting \begin{equation*}I_{v}:=A:_{A[\varpi^{-1}]}(A:_{A[\varpi^{-1}]}I).\end{equation*}We call $I\mapsto I_{v}$ the $\varpi$-v-operation. We also define a map \begin{equation*}J_{\varpi}(A)\to J_{\varpi}(A),~I\mapsto I_{t},\end{equation*}by setting \begin{equation*}I_{t}=\bigcup_{F\subseteq I}F_{v},\end{equation*}where $F\subseteq I$ ranges over the finitely generated $A$-submodules of $I$. We call $I\mapsto I_{t}$ the $\varpi$-t-operation.\end{mydef}  
\begin{prop}\label{v- and t-operations are star-operations}The $\varpi$-v-operation $I\mapsto I_{v}$ and the $\varpi$-t-operation $I\mapsto I_{t}$ are strict $\varpi$-$\ast$-operations. Moreover, any strict $\varpi$-$\ast$-operation $\ast$ satisfies $I^{\ast}\subseteq I_{v}$ for all $I\in J_{\varpi}(A)$.\end{prop}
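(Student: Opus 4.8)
The plan is to verify the four axioms of Definition \ref{Star operations} for each of the two operations, and then prove the universal property.

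\emph{The $\varpi$-v-operation.} Write $I^{-1}$ for $A :_{A[\varpi^{-1}]} I$, so that $I_{v} = (I^{-1})^{-1}$. First I would record the elementary formal properties of the residual operation $I \mapsto I^{-1}$: it is inclusion-reversing, and it satisfies $I \subseteq (I^{-1})^{-1}$ (since every $f \in I$ multiplies $I^{-1}$ into $A$ by definition). These two facts, applied twice, immediately give $(I^{-1})^{-1} \supseteq I$, which is property (1); they also give property (2), since $I \subseteq J$ forces $J^{-1} \subseteq I^{-1}$ and hence $(I^{-1})^{-1} \subseteq (J^{-1})^{-1}$. For property (3), the standard "triple residual equals single residual" identity applies: from $I \subseteq I_{v}$ we get $I_{v}^{-1} \subseteq I^{-1}$, while applying $(-)^{-1}$ to the inclusion $I^{-1} \subseteq (I^{-1})_{v} = ((I^{-1})^{-1})^{-1}$ (which is property (1) for the module $I^{-1}$) gives $I^{-1} \supseteq (I_{v})^{-1}$; hence $(I_{v})^{-1} = I^{-1}$, and applying $(-)^{-1}$ once more yields $(I_{v})_{v} = I_{v}$. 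Strictness, $A_{v} = A$, is clear since $A^{-1} = A :_{A[\varpi^{-1}]} A = A$. The one genuinely substantive point is property (4), $J I_{v} \subseteq (JI)_{v}$; by the remark following Definition \ref{Star operations} it suffices to check $f I_{v} \subseteq (fI)_{v}$ for a single element $f \in A[\varpi^{-1}]$, and for a \emph{principal} fractional ideal this is an equality because $(f)_A$ is invertible (Example \ref{Regular principal implies invertible} when $f$ is a unit, and in general $f$ is a non-zero-divisor so $(f)_A$ is cancellative): one checks directly that $(fI)^{-1} = f^{-1} I^{-1}$ and hence $(fI)_{v} = f\, I_{v}$.

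\emph{The $\varpi$-t-operation.} Here $I_{t} = \bigcup_{F \subseteq I \text{ f.g.}} F_{v}$, the union over finitely generated $A$-submodules $F$ of $I$. First I would check that $I_{t}$ is again an $A$-submodule of $A[\varpi^{-1}]$: the family of finitely generated submodules of $I$ is directed under inclusion, and $F \subseteq F'$ implies $F_{v} \subseteq F'_{v}$ by property (2) for the $\varpi$-v-operation, so the union is filtered and hence a submodule. Property (1) is immediate since every $x \in I$ lies in the finitely generated submodule $(x)_A \subseteq (x)_A{}_v$. Property (2) follows because a finitely generated submodule of $I$ is a finitely generated submodule of $J$ when $I \subseteq J$. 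For property (4), given a finitely generated $F \subseteq I$ and the principal equality $J_0 F_{v} = (J_0 F)_{v}$ for $J_0 = (f)_A$, one sees $f F_{v} = (fF)_{v} \subseteq (fI)_{t}$ since $fF$ is a finitely generated submodule of $fI$; taking the union over $F$ gives $f I_{t} \subseteq (fI)_{t}$, which by the same remark suffices. The subtle axiom is idempotence, property (3): one must show $(I_{t})_{t} = I_{t}$. The key lemma is that a finitely generated submodule $G$ of $I_{t}$ is already contained in $F_{v}$ for some single finitely generated $F \subseteq I$ — indeed, if $G = (g_1,\dots,g_n)_A$ with each $g_j \in F_j{}_v$ for finitely generated $F_j \subseteq I$, then $F := F_1 + \dots + F_n$ is finitely generated, $F \subseteq I$, and each $g_j \in F_j{}_v \subseteq F_v$, so $G \subseteq F_v$. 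Then $G_v \subseteq (F_v)_v = F_v \subseteq I_t$ by idempotence of the $\varpi$-v-operation, and taking the union over all such $G$ gives $(I_t)_t \subseteq I_t$; the reverse inclusion is property (1). Strictness $A_t = A$ follows from $A_v = A$ since $A$ itself is one of the finitely generated submodules in the union (or note $A_t \subseteq A_v = A$, using that $A_t \supseteq A$ forces equality — but one should be slightly careful: $A$ is a finitely generated $A$-module, generated by $1$, so $A_t = A_v = A$ directly).

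\emph{The universal property.} For the final claim, let $\ast$ be any strict $\varpi$-$\ast$-operation and $I \in J_{\varpi}(A)$; we must show $I^{\ast} \subseteq I_{v} = (I^{-1})^{-1}$. Equivalently, we must show that every $f \in I^{-1} = A :_{A[\varpi^{-1}]} I$ satisfies $f I^{\ast} \subseteq A$. Fix such an $f$. Since $f$ is a non-zero-divisor (it lies in $A[\varpi^{-1}]$, whose non-units among the relevant elements are non-zero-divisors — more precisely, any $f \in A[\varpi^{-1}]$ with $fI \subseteq A$ and $I \neq 0$ is either zero or a non-zero-divisor, and the zero case is trivial), the principal submodule $(f)_A$ is cancellative, so property (4) of $\ast$ gives $(f)_A I^{\ast} \subseteq (fI)^{\ast}$. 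Now $fI \subseteq A$, so by property (2), $(fI)^{\ast} \subseteq A^{\ast} = A$, using strictness of $\ast$. Hence $f I^{\ast} \subseteq A$, which is exactly what we wanted. Taking the intersection over all $f \in I^{-1}$ yields $I^{\ast} \subseteq A :_{A[\varpi^{-1}]} I^{-1} = I_{v}$, as claimed.

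I expect the main obstacle to be the idempotence of the $\varpi$-t-operation (property (3)), where one must pass from an arbitrary finitely generated submodule of $I_t$ to a single finitely generated submodule of $I$ whose $\varpi$-v-closure contains it; the directedness argument sketched above handles this, but it is the one place where the filtered-colimit structure of the $\varpi$-t-operation genuinely enters, and care is needed to confirm that $I_t$ is closed under the module operations before speaking of its finitely generated submodules. The reduction in property (4) to principal fractional ideals, and the resulting identity $(fI)_v = f\,I_v$ for $f$ a non-zero-divisor, should be checked once carefully and then reused throughout.
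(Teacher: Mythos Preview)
The paper's proof simply cites Knebusch--Kaiser (\cite{Knebusch-Zhang2}, Ch.~3, Propositions 3.6 and 6.3), so your direct axiom-by-axiom verification is a different, more self-contained route. Your treatment of the $\varpi$-t-operation, in particular the idempotence via directedness, is correct and is essentially the standard argument.

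There is one genuine gap. In verifying axiom (4) for the $\varpi$-v-operation you reduce (correctly) to $fI_v \subseteq (fI)_v$ for $f \in A[\varpi^{-1}]$, then assert ``in general $f$ is a non-zero-divisor'' and write $(fI)^{-1} = f^{-1}I^{-1}$. But $A$ is an arbitrary commutative ring here, so $A[\varpi^{-1}]$ may well have zero-divisors and $f^{-1}$ need not make sense. The fix is direct and needs no hypothesis on $f$: for $g \in I_v$ and $h \in (fI)^{-1}$, the condition $h(fI) \subseteq A$ says exactly $hf \in I^{-1}$, whence $(hf)g \in A$ since $g \in (I^{-1})^{-1}$; thus $fg \in (fI)_v$. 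The same confusion reappears in your universal-property paragraph, where you invoke ``$f$ is a non-zero-divisor, hence $(f)_A$ is cancellative'' to justify $(f)_A I^{\ast} \subseteq (fI)^{\ast}$. That premise is false in general but also unnecessary: axiom (4) in Definition~\ref{Star operations} is stated for \emph{every} $J \in J_\varpi(A)$, so taking $J = (f)_A$ gives the inclusion immediately. Once you drop these unfounded non-zero-divisor claims and use the direct arguments above, the proof goes through.
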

\begin{proof}The assertions about the $\varpi$-v-operation are a special case of \cite{Knebusch-Zhang2}, Ch.~3, Proposition 3.6. The assertion about the $\varpi$-t-operation is a special case of op.~cit., Ch.~3, Proposition 6.3.\end{proof}
More generally, the $\varpi$-t-operation is an example of a companion of finite type (in the terminology of \cite{Knebusch-Zhang2}) of a $\varpi$-$\ast$-operation. The following definition is a special case of \cite{Knebusch-Zhang2}, Ch.~3, \S6, Definition 1. 
\begin{mydef}[$\varpi$-$\ast$-operation of finite type]\label{Star operation of finite type}A $\varpi$-$\ast$-operation is called of finite type if for every $I\in J_{\varpi}(A)$ the $A$-module $I^{\ast}$ is the union of the $A$-submodules $F^{\ast}$, where $F$ ranges over the finitely generated $A$-submodules of $I$.\end{mydef}
\begin{prop}[Special case of \cite{Knebusch-Zhang2}, Ch.~3, Proposition 6.3]\label{Companion of finite type}For any $\varpi$-$\ast$-operation $\ast$ define a map $\ast_{f}: J_{\varpi}(A)\to J_{\varpi}(A)$ by \begin{equation*}I^{\ast_{f}}:=\bigcup_{\substack{F\subseteq I\\ F~f.~g.}}F^{\ast}.\end{equation*}Then $\ast_{f}$ is a $\varpi$-$\ast$-operation of finite type.\end{prop}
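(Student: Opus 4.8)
The plan is to verify directly that $\ast_{f}$ satisfies the four axioms of Definition \ref{Star operations} together with the finite-type condition of Definition \ref{Star operation of finite type}; this seems more transparent than translating the general statement \cite{Knebusch-Zhang2}, Ch.~3, Proposition 6.3, into the present notation.

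First I would check that $\ast_{f}$ is well-defined, i.e.\ that $I^{\ast_{f}}$ is an $A$-submodule of $A[\varpi^{-1}]$ (hence an element of $J_{\varpi}(A)$). The point is that the family of finitely generated $A$-submodules $F\subseteq I$ is directed under inclusion: for two such $F_{1},F_{2}$ the sum $F_{1}+F_{2}$ is again finitely generated and contained in $I$, and by monotonicity (property (2) of $\ast$) one has $F_{i}^{\ast}\subseteq (F_{1}+F_{2})^{\ast}$. A directed union of $A$-submodules is an $A$-submodule, and the index set is non-empty since $0\subseteq I$ is finitely generated.

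Next I would verify the axioms. Property (1) follows because each $f\in I$ lies in the finitely generated submodule $(f)_{A}\subseteq I$, hence in $(f)_{A}^{\ast}\subseteq I^{\ast_{f}}$. Property (2) is immediate: if $I\subseteq J$, every finitely generated $F\subseteq I$ is also a finitely generated submodule of $J$, so $F^{\ast}\subseteq J^{\ast_{f}}$, and taking the union gives $I^{\ast_{f}}\subseteq J^{\ast_{f}}$. For property (4) I would use that $JI^{\ast_{f}}$ is generated by elements $jx$ with $j\in J$ and $x\in I^{\ast_{f}}$; writing $x\in F^{\ast}$ for some finitely generated $F\subseteq I$, property (4) for $\ast$ gives $jx\in (j)_{A}F^{\ast}\subseteq ((j)_{A}F)^{\ast}$, and $(j)_{A}F=jF$ is a finitely generated submodule of $JI$, so $jx\in (JI)^{\ast_{f}}$; since $(JI)^{\ast_{f}}$ is an $A$-module this forces $JI^{\ast_{f}}\subseteq (JI)^{\ast_{f}}$. (Alternatively one may invoke the remark following Definition \ref{Star operations} and only check $fI^{\ast_{f}}\subseteq (fI)^{\ast_{f}}$ for $f\in A[\varpi^{-1}]$.)

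The step requiring the most care is idempotency, property (3). Given a finitely generated $F=(x_{1},\dots,x_{n})_{A}\subseteq I^{\ast_{f}}$, I would pick finitely generated $F_{i}\subseteq I$ with $x_{i}\in F_{i}^{\ast}$, set $F'=\sum_{i}F_{i}$ (finitely generated, contained in $I$), and observe $x_{i}\in F_{i}^{\ast}\subseteq F'^{\ast}$ by monotonicity, whence $F\subseteq F'^{\ast}$ and therefore $F^{\ast}\subseteq (F'^{\ast})^{\ast}=F'^{\ast}\subseteq I^{\ast_{f}}$ by properties (2) and (3) of $\ast$. Taking the union over all such $F$ yields $(I^{\ast_{f}})^{\ast_{f}}\subseteq I^{\ast_{f}}$, the reverse inclusion being property (1). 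Finally, for the finite-type statement: when $F$ is finitely generated one has $F^{\ast_{f}}=F^{\ast}$ (since $F$ is its own largest finitely generated submodule and $G^{\ast}\subseteq F^{\ast}$ for $G\subseteq F$ by monotonicity), so $I^{\ast_{f}}=\bigcup_{F\subseteq I\text{ f.g.}}F^{\ast}=\bigcup_{F\subseteq I\text{ f.g.}}F^{\ast_{f}}$, which is exactly the condition of Definition \ref{Star operation of finite type}.
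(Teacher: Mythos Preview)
Your proof is correct. The paper does not give its own proof of this proposition at all: it simply records the statement as a special case of \cite{Knebusch-Zhang2}, Ch.~3, Proposition 6.3, and moves on. So your direct verification of the axioms is not so much a different route as the only route actually written out; it supplies what the paper leaves to the reference. Each step you give (directedness of the family of finitely generated submodules, the idempotency argument via $F\subseteq F'^{\ast}\Rightarrow F^{\ast}\subseteq F'^{\ast}$, and the observation $F^{\ast_{f}}=F^{\ast}$ for finitely generated $F$) is the standard one and goes through without issue in the $\varpi$-setting.
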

\begin{mydef}[Special case of \cite{Knebusch-Zhang2}, Ch.~3, \S6, Definition 2]In the situation of the above proposition, we call the $\varpi$-$\ast$-operation $\ast_{f}$ the companion of $\ast$ of finite type or the finite-type companion of $\ast$.\end{mydef}  
Our reason for studying the $\varpi$-v-operation and the $\varpi$-t-operation is their relation to weakly associated prime ideals of $\varpi$.
\begin{lemma}\label{Maximal t-ideals}Let $\ast: J_{\varpi}(A)\to J_{\varpi}(A)$ be a $\varpi$-$\ast$-operation of finite type. Every $\ast$-ideal in the ring $A$ is contained in a maximal $\ast$-ideal. In particular, every $\varpi$-t-ideal of $A$ (and thus also every $\varpi$-divisorial ideal of $A$) is contained in a maximal $\varpi$-t-ideal.\end{lemma}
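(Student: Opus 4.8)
The plan is to run the standard Zorn's-lemma argument for the existence of maximal ideals, the single non-formal ingredient being that a directed union of $\ast$-ideals is again a $\ast$-ideal once $\ast$ is of finite type.

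Concretely, I would fix a proper $\ast$-ideal $I$ of $A$ and let $\mathcal{S}$ be the set, ordered by inclusion, of all proper ideals $K$ of $A$ with $I \subseteq K$ and $K^{\ast} = K$; it is nonempty because $I \in \mathcal{S}$. Given a chain $(K_{\lambda})_{\lambda}$ in $\mathcal{S}$, set $K = \bigcup_{\lambda} K_{\lambda}$, which is an ideal of $A$ containing $I$ (the $K_{\lambda}$ being totally ordered) and is proper since $1 \notin K_{\lambda}$ for every $\lambda$. The point is that $K$ is again a $\ast$-ideal: the inclusion $K \subseteq K^{\ast}$ is automatic, and conversely, writing $K^{\ast} = \bigcup_{F} F^{\ast}$ over the finitely generated $A$-submodules $F \subseteq K$ (this is exactly the finite-type hypothesis), each such $F$ already lies in some $K_{\lambda}$ because $F$ has finitely many generators and the $K_{\lambda}$ form a chain, whence $F^{\ast} \subseteq K_{\lambda}^{\ast} = K_{\lambda} \subseteq K$ by monotonicity of $\ast$. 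Thus $K \in \mathcal{S}$ is an upper bound for the chain, and Zorn's lemma produces a maximal element of $\mathcal{S}$, which is a maximal $\ast$-ideal (maximal among proper $\ast$-ideals) containing $I$.

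For the ``in particular'', I would invoke Proposition \ref{v- and t-operations are star-operations} together with Proposition \ref{Companion of finite type}: the $\varpi$-t-operation is the finite-type companion of the $\varpi$-v-operation, hence a (strict) $\varpi$-$\ast$-operation of finite type, so the statement just proved applies to it and every $\varpi$-t-ideal lies in a maximal $\varpi$-t-ideal. Finally, every $\varpi$-divisorial ideal $I \subseteq A$ is a $\varpi$-t-ideal: since $I \mapsto I_{t}$ is the finite-type companion of $I \mapsto I_{v}$ we have $I_{t} \subseteq I_{v}$ for all $I \in J_{\varpi}(A)$, while $I \subseteq I_{t}$ always, so $I_{v} = I$ forces $I_{t} = I$. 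I do not anticipate any real obstacle here; the only subtle point is the passage through the directed union, which is precisely where the finite-type hypothesis enters and where the analogous statement for an arbitrary $\varpi$-$\ast$-operation would fail.
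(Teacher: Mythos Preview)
Your proof is correct and follows essentially the same approach as the paper: Zorn's lemma, with the key step being that the union of a chain of $\ast$-ideals is again a $\ast$-ideal precisely because $\ast$ is of finite type. Your write-up is in fact slightly more careful than the paper's (you treat arbitrary chains rather than countable ones and spell out the ``in particular'' clause), but the argument is the same.
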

\begin{proof}By Zorn's lemma, it suffices to prove that every strictly ascending chain of $\ast$-ideals has an upper bound. Let \begin{equation*}I_{0}\subsetneq I_{1}\subsetneq I_{1}\subsetneq\dots\subsetneq I_{i}\subsetneq\dots\end{equation*}be a strictly ascending chain of $\ast$-ideals in $A$. Let $F$ be a finitely generated subideal of the sum $\sum_{i}I_{i}=\bigcup_{i}I_{i}$. Since $F$ is finitely generated, there exists some index $i$ such that $F\subseteq I_{i}$. Since $I_{i}$ is a $\ast$-ideal and $\ast$ is of finite type, this entails that $F^{\ast}\subseteq I_{i}\subseteq \sum_{i}I_{i}$. Consequently, $\sum_{i}I_{i}$ is a $\ast$-ideal and is thus the desired upper bound of our strictly ascending chain. \end{proof}
The following fact has been noticed (in the context of $\ast$-operations on general ring extensions $A\subseteq R$) by Tchamna.
\begin{lemma}[Special case of \cite{Tchamna20}, Remark 2.4]\label{Maximal star-ideals are prime}For a strict $\varpi$-$\ast$-operation of finite type $\ast: J_{\varpi}(A)\to J_{\varpi}(A)$ every maximal $\ast$-ideal of $A$ is a prime ideal.\end{lemma}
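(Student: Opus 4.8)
The plan is to argue by contradiction, imitating the classical proof that maximal $t$-ideals of an integral domain are prime. Let $\mathfrak{m}$ be a maximal $\ast$-ideal of $A$, which exists and is proper by Lemma~\ref{Maximal t-ideals} and the hypothesis that $\ast$ is strict of finite type (strictness guarantees $A^{\ast}=A$, so $\mathfrak{m}\neq A$). Suppose $\mathfrak{m}$ is not prime, so there exist $a, b\in A\setminus\mathfrak{m}$ with $ab\in\mathfrak{m}$. First I would consider the ideals $\mathfrak{m}+(a)_{A}$ and $\mathfrak{m}+(b)_{A}$, each of which strictly contains $\mathfrak{m}$; applying the $\ast$-operation and using the maximality of $\mathfrak{m}$ among proper $\ast$-ideals, I would conclude that $(\mathfrak{m}+(a)_{A})^{\ast}=A$ and $(\mathfrak{m}+(b)_{A})^{\ast}=A$ — here one must check that $(\mathfrak{m}+(a)_{A})^{\ast}$ is itself a $\ast$-ideal (true by axiom (3), $(I^{\ast})^{\ast}=I^{\ast}$) and that if it were proper it would, being a $\ast$-ideal containing $\mathfrak{m}$ strictly, contradict maximality.

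Next I would exploit the finite-type hypothesis to descend from $A=(\mathfrak{m}+(a)_{A})^{\ast}$ to a finitely generated piece: since $1\in A = \bigcup_{F} F^{\ast}$ where $F$ ranges over finitely generated $A$-submodules of $\mathfrak{m}+(a)_{A}$, there is a finitely generated ideal $F_{a}\subseteq\mathfrak{m}+(a)_{A}$ with $F_{a}^{\ast}=A$, and similarly a finitely generated $F_{b}\subseteq\mathfrak{m}+(b)_{A}$ with $F_{b}^{\ast}=A$. Each $F_{a}$, $F_{b}$ is generated by finitely many elements lying in $\mathfrak{m}+(a)_{A}$, resp.\ $\mathfrak{m}+(b)_{A}$; enlarging, I may assume $F_{a}=(a_{1},\dots,a_{k},a)_{A}$ with $a_{i}\in\mathfrak{m}$ and $F_{b}=(b_{1},\dots,b_{l},b)_{A}$ with $b_{j}\in\mathfrak{m}$. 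Now form the product $F_{a}F_{b}$, which is again finitely generated. On one hand, using axiom (4) twice (in the form $JI^{\ast}\subseteq(JI)^{\ast}$, or rather the consequence that $(IJ)^{\ast}=(I^{\ast}J)^{\ast}=(I^{\ast}J^{\ast})^{\ast}$ — I would spell out that $(F_{a}F_{b})^{\ast}=(F_{a}^{\ast}F_{b}^{\ast})^{\ast}=(A\cdot A)^{\ast}=A^{\ast}=A$, invoking strictness at the end). On the other hand, every generator $a_{i}b_{j}$, $a_{i}b$, $ab_{j}$, $ab$ of $F_{a}F_{b}$ lies in $\mathfrak{m}$: the first three because $\mathfrak{m}$ is an ideal and $a_{i}, b_{j}\in\mathfrak{m}$, and the crucial one $ab\in\mathfrak{m}$ by our assumption. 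Hence $F_{a}F_{b}\subseteq\mathfrak{m}$, and since $\mathfrak{m}$ is a $\ast$-ideal, $(F_{a}F_{b})^{\ast}\subseteq\mathfrak{m}^{\ast}=\mathfrak{m}$. Combining, $A=(F_{a}F_{b})^{\ast}\subseteq\mathfrak{m}$, contradicting $\mathfrak{m}\neq A$.

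The main obstacle I anticipate is purely bookkeeping around axiom (4): one needs the multiplicativity statement $(IJ)^{\ast}=(I^{\ast}J^{\ast})^{\ast}$, which is not literally one of the four axioms but follows from them (from $JI^{\ast}\subseteq(JI)^{\ast}\subseteq\big((JI)^{\ast}\big)^{\ast}=(JI)^{\ast}$ applied symmetrically, together with monotonicity and idempotence), and one must be careful that the finitely generated approximants $F_{a}, F_{b}$ can be chosen to be ideals of $A$ lying inside $\mathfrak{m}+(a)_{A}$, $\mathfrak{m}+(b)_{A}$ rather than arbitrary $A$-submodules of $A[\varpi^{-1}]$ — but since $\mathfrak{m}+(a)_{A}\subseteq A$, any finitely generated $A$-submodule of it is automatically an ordinary ideal of $A$, so this causes no trouble. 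Everything else is a routine transcription of the domain case, with "fraction field" replaced by $A[\varpi^{-1}]$ and the role of principal fractional ideals (automatically invertible in the classical setting) played by honest ideals of $A$ so that no invertibility is needed.
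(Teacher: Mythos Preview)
Your proof is correct. The paper itself gives no proof of this lemma, only citing it as a special case of \cite{Tchamna20}, Remark 2.4, so there is nothing in the paper to compare against; your argument is the standard one transcribed from the classical domain setting, and it goes through cleanly in the $\varpi$-$\ast$-operation framework.

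One minor simplification: the passage to finitely generated $F_{a}$, $F_{b}$ via the finite-type hypothesis is actually unnecessary for this particular argument. Once you know $(\mathfrak{m}+(a)_{A})^{\ast}=A=(\mathfrak{m}+(b)_{A})^{\ast}$, you can apply the identity $(IJ)^{\ast}=(I^{\ast}J^{\ast})^{\ast}$ (which, as you note, follows from the axioms and is recorded in the paper via \cite{Knebusch-Zhang2}, Ch.~3, Proposition 4.1a)) directly to $I=\mathfrak{m}+(a)_{A}$ and $J=\mathfrak{m}+(b)_{A}$: their product already lies in $\mathfrak{m}$ because $(\mathfrak{m}+(a)_{A})(\mathfrak{m}+(b)_{A})\subseteq\mathfrak{m}+(ab)_{A}\subseteq\mathfrak{m}$, and applying $\ast$ yields the same contradiction $A\subseteq\mathfrak{m}$. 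So the finite-type hypothesis in the lemma is really there to guarantee that maximal $\ast$-ideals exist (Lemma~\ref{Maximal t-ideals}) rather than to make the primality argument work. Your version is not wrong, just slightly longer than needed.
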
 
\begin{lemma}[Analog of \cite{Zafrullah78}, Lemma 6]\label{Weakly associated primes and maximal t-ideals}Let $\mathfrak{p}$ be a weakly associated prime ideal of $(\varpi^{n})_{A}$ for some $n\geq1$. Then $\mathfrak{p}A_{\mathfrak{p}}$ is a $\varpi$-t-ideal of $A_{\mathfrak{p}}$.\end{lemma}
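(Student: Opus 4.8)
The plan is to reduce at once to the case where $A$ is local with maximal ideal $\mathfrak{p}$, and then to run a Mori-type argument with the $\varpi$-v-operation. Since weakly associated primes are stable under localization (equivalently: $\mathfrak{p}A_{\mathfrak{p}}$ is weakly associated to $(A/(\varpi^{n})_{A})_{\mathfrak{p}}=A_{\mathfrak{p}}/(\varpi^{n})_{A_{\mathfrak{p}}}$), and since $\varpi$ remains a non-zero-divisor in $A_{\mathfrak{p}}$ and, because $(\varpi^{n})_{A}\subseteq\bigl((\varpi^{n})_{A}:_{A}x\bigr)\subseteq\mathfrak{p}$, also remains a non-unit there, I may replace $A$ by $A_{\mathfrak{p}}$ and assume that $(A,\mathfrak{p})$ is local, $\varpi$ is a non-zero-divisor and a non-unit, and $\mathfrak{p}$ is weakly associated to $(\varpi^{n})_{A}$. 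The goal is then $\mathfrak{p}_{t}=\mathfrak{p}$, and since $\mathfrak{p}\subseteq\mathfrak{p}_{t}$ always holds it suffices to show $\mathfrak{p}_{t}\subseteq\mathfrak{p}$. By the definition of weak association together with locality, $\mathfrak{p}$ is minimal over $I:=(\varpi^{n})_{A}:_{A}x$ for some $x\in A\setminus(\varpi^{n})_{A}$, and $\mathfrak{p}$ being the only prime containing $I$ gives $\sqrt{I}=\mathfrak{p}$. Put $g:=x/\varpi^{n}\in A[\varpi^{-1}]$; then $g\notin A$ (as $x\notin(\varpi^{n})_{A}$) and $Ig\subseteq A$, i.e. $I\subseteq A:_{A[\varpi^{-1}]}g$.

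The main step is the claim that for every finitely generated ideal $F\subseteq\mathfrak{p}$ one has $A:_{A[\varpi^{-1}]}F\supsetneq A$, i.e. there exists $h\in A[\varpi^{-1}]\setminus A$ with $hF\subseteq A$. Granting this, the proof concludes quickly: writing $F^{-1}:=A:_{A[\varpi^{-1}]}F$, which contains $A$ since $F\subseteq A$, one has $1\in F_{v}=A:_{A[\varpi^{-1}]}F^{-1}$ if and only if $F^{-1}\subseteq A$, i.e. if and only if $F^{-1}=A$; the claim rules this out, so $1\notin F_{v}$. As $F^{-1}\supseteq A$ also gives $F_{v}\subseteq A:_{A[\varpi^{-1}]}A=A$, the ideal $F_{v}$ is proper in the local ring $A$, hence $F_{v}\subseteq\mathfrak{p}$. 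Taking the union over all finitely generated $F\subseteq\mathfrak{p}$ yields $\mathfrak{p}_{t}\subseteq\mathfrak{p}$.

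To prove the claim I would write $F=(a_{1},\dots,a_{r})_{A}$ with each $a_{i}\in\mathfrak{p}=\sqrt{I}$, pick $N$ with $a_{i}^{N}\in I$ for all $i$, and note by a pigeonhole argument on exponents that $F^{m}\subseteq I$ whenever $m\geq r(N-1)+1$; hence $F^{m}g\subseteq Ig\subseteq A$ for all such $m$. The set $\{\,j\geq0\mid F^{j}g\subseteq A\,\}$ is upward closed, since $F^{j}g\subseteq A$ implies $F^{j+1}g=F(F^{j}g)\subseteq F\subseteq A$; it is nonempty by the previous sentence; and it omits $0$ because $g\notin A$. Let $j_{0}\geq1$ be its least element and choose any $h\in F^{\,j_{0}-1}g\setminus A$, which is nonempty by minimality of $j_{0}$. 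Then $hF\subseteq F^{\,j_{0}}g\subseteq A$ with $h\notin A$, which is precisely the claim.

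The part requiring the most care is the localization reduction—making sure that weak association, the hypothesis that $\varpi$ is a non-zero-divisor, and the identification $\sqrt{I}=\mathfrak{p}$ all descend to $A_{\mathfrak{p}}$—together with the minor but essential point that, because $A$ is not assumed Noetherian, one cannot invoke $\mathfrak{p}^{k}\subseteq I$ and must instead extract a single exponent $N$ valid for all of the (finitely many) generators of $F$. The remaining manipulations are routine bookkeeping with $\varpi$-residuals in the spirit of Section \ref{sec:star-operations}.
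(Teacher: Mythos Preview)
Your proof is correct but follows a genuinely different route from the paper's. The paper localizes at $\mathfrak{p}$, observes that the colon ideal $(\varpi^{n})_{A_{\mathfrak{p}}}:_{A_{\mathfrak{p}}}f$ is $\varpi$-divisorial (hence a $\varpi$-t-ideal), invokes the existence of maximal $\varpi$-t-ideals via a Zorn's lemma argument (Lemma~\ref{Maximal t-ideals}), and then uses that maximal $\varpi$-t-ideals are prime (Lemma~\ref{Maximal star-ideals are prime}) to force the maximal $\varpi$-t-ideal containing the colon ideal to equal $\mathfrak{p}A_{\mathfrak{p}}$. Your argument bypasses this abstract machinery entirely: you directly construct, for each finitely generated $F\subseteq\mathfrak{p}$, an element $h\in(A:_{A[\varpi^{-1}]}F)\setminus A$ by a Mori-type descent on powers of $F$ applied to $g=x/\varpi^{n}$, and deduce $F_{v}\subsetneq A$, hence $F_{v}\subseteq\mathfrak{p}$.

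Your approach is more elementary and self-contained; it avoids the appeal to Zorn's lemma and to the separate primality result. Interestingly, it is much closer in spirit to the paper's proof of the \emph{more general} Proposition~\ref{Prime ideals minimal over a star-ideal} (which the paper explicitly says subsumes this lemma): that proof also exploits $F^{m}\subseteq I$ for large $m$ and argues via $(F^{m})^{\ast}$. The paper's route for this specific lemma, on the other hand, showcases the structural machinery of maximal $\ast$-ideals, which is reused elsewhere in Section~\ref{sec:star-operations}.
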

\begin{proof}By the definition of weakly associated primes, $\mathfrak{p}$ is minimal over $(\varpi^{n}):_{A}f$ for some $f\in A$. Then $\mathfrak{p}A_{\mathfrak{p}}$ is minimal over $(\varpi^{n})_{A_{\mathfrak{p}}}:_{A_{\mathfrak{p}}}\frac{f}{1}$ and thus, being the maximal ideal of $A_{\mathfrak{p}}$, it is the unique minimal prime ideal over $(\varpi^{n})_{A_{\mathfrak{p}}}:_{A_{\mathfrak{p}}}\frac{f}{1}$. The ideal $(\varpi^{n})_{A_{\mathfrak{p}}}:_{A_{\mathfrak{p}}}\frac{f}{1}$ is $\varpi$-divisorial by Example \ref{Colon ideals are divisorial} and, in particular, it is a $\varpi$-t-ideal. By Lemma \ref{Maximal t-ideals}, $(\varpi^{n})_{A_{\mathfrak{p}}}:_{A_{\mathfrak{p}}}\frac{f}{1}$ is contained in a maximal $\varpi$-t-ideal $\mathfrak{m}$ of $A_{\mathfrak{p}}$. By Lemma \ref{Maximal star-ideals are prime}, $\mathfrak{m}$ is a prime ideal, so $\mathfrak{p}A_{\mathfrak{p}}\subseteq \mathfrak{m}$, by the fact that $\mathfrak{p}A_{\mathfrak{p}}$ is the unique minimal prime over $(\varpi^{n})_{A_{\mathfrak{p}}}:_{A_{\mathfrak{p}}}\frac{f}{1}$, and thus $\mathfrak{p}A_{\mathfrak{p}}=\mathfrak{m}$ since $\mathfrak{p}A_{\mathfrak{p}}$ is the maximal ideal of $A_{\mathfrak{p}}$. \end{proof}
The above lemma is actually a special case of the following more general observation.
\begin{prop}[Analog of \cite{Hedstrom-Houston}, Proposition 1.1(5)]\label{Prime ideals minimal over a star-ideal}If $\ast$ is a $\varpi$-$\ast$-operation and $\mathfrak{p}$ is a prime ideal minimal over a $\varpi$-$\ast$-ideal $I$, then $\mathfrak{p}$ is a $\varpi$-$\ast_{f}$-ideal.\end{prop}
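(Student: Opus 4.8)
The containment $\mathfrak{p}\subseteq\mathfrak{p}^{\ast_{f}}$ is immediate from property (1) of Definition~\ref{Star operations}, so the whole point is the reverse containment $\mathfrak{p}^{\ast_{f}}\subseteq\mathfrak{p}$. Since $\ast_{f}$ is of finite type, $\mathfrak{p}^{\ast_{f}}=\bigcup_{F}F^{\ast}$ where $F$ ranges over the finitely generated $A$-submodules of $\mathfrak{p}$; because $\mathfrak{p}\subseteq A$, each such $F$ is a finitely generated (integral) ideal of $A$. So it suffices to show $F^{\ast}\subseteq\mathfrak{p}$ for every finitely generated ideal $F\subseteq\mathfrak{p}$. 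The plan is: (i) show that the $\ast$-operation is compatible with taking powers; (ii) use minimality of $\mathfrak{p}$ over the $\ast$-ideal $I$ to push a suitable power of $F^{\ast}$ into $I$; (iii) conclude via primality of $\mathfrak{p}$.

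For (i) I would prove $(F^{\ast})^{m}\subseteq(F^{m})^{\ast}$ for all $m\geq1$ by induction, the inductive step being
\[
(F^{\ast})^{m}=F^{\ast}(F^{\ast})^{m-1}\subseteq F^{\ast}(F^{m-1})^{\ast}\subseteq\bigl(F^{\ast}F^{m-1}\bigr)^{\ast}\subseteq\bigl((F\cdot F^{m-1})^{\ast}\bigr)^{\ast}=(F^{m})^{\ast},
\]
where one repeatedly uses property (4) of Definition~\ref{Star operations} together with monotonicity (2) and idempotence (3) (and the inductive hypothesis in the first inclusion). The same property (4), applied to the principal $A$-submodule $(f)_{A}$, gives $f\,(F^{m})^{\ast}\subseteq(fF^{m})^{\ast}$ for any $f\in A$.

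For (ii), since $\mathfrak{p}$ is minimal over $I$ the maximal ideal $\mathfrak{p}A_{\mathfrak{p}}$ is the unique prime of $A_{\mathfrak{p}}$ containing $IA_{\mathfrak{p}}$ that is minimal over it, so $\mathfrak{p}A_{\mathfrak{p}}=\sqrt{IA_{\mathfrak{p}}}$. As $FA_{\mathfrak{p}}$ is a finitely generated ideal contained in this radical, $F^{m}A_{\mathfrak{p}}=(FA_{\mathfrak{p}})^{m}\subseteq IA_{\mathfrak{p}}$ for some $m\geq1$, and clearing denominators (using that $F^{m}$ is finitely generated) produces $s\in A\setminus\mathfrak{p}$ with $sF^{m}\subseteq I$. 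Combining with (i), for any $a\in F^{\ast}$ we get
\[
s\,a^{m}\in s\,(F^{\ast})^{m}\subseteq s\,(F^{m})^{\ast}\subseteq(sF^{m})^{\ast}\subseteq I^{\ast}=I\subseteq\mathfrak{p}.
\]
Since $F\subseteq A$ we have $F^{\ast}\subseteq A^{\ast}$, which equals $A$ whenever $\ast$ is strict — the case relevant to the applications — so $a\in A$; then $sa^{m}\in\mathfrak{p}$ with $s\notin\mathfrak{p}$ forces $a^{m}\in\mathfrak{p}$ and hence $a\in\mathfrak{p}$ by primality of $\mathfrak{p}$. This yields $F^{\ast}\subseteq\mathfrak{p}$, and therefore $\mathfrak{p}^{\ast_{f}}=\mathfrak{p}$, i.e., $\mathfrak{p}$ is a $\varpi$-$\ast_{f}$-ideal. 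Specializing $\ast$ to the $\varpi$-v-operation (whose finite-type companion is the $\varpi$-t-operation) recovers Lemma~\ref{Weakly associated primes and maximal t-ideals}, since a weakly associated prime of $(\varpi^{n})_{A}$ is minimal over an ideal of the form $(\varpi^{n})_{A}:_{A}f$, which is $\varpi$-divisorial and hence a $\varpi$-v-ideal.

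I expect the only genuinely delicate points to be establishing the power-compatibility $(F^{\ast})^{m}\subseteq(F^{m})^{\ast}$ from the axioms and making sure the concluding primality step really takes place inside $A$ (automatic for strict $\ast$, and one should record that the operations appearing in the applications are strict); the localization argument in step (ii) is routine.
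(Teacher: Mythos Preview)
Your argument is correct and follows essentially the same route as the paper's proof: reduce to finitely generated $F\subseteq\mathfrak{p}$, use minimality of $\mathfrak{p}$ over $I$ to find $s\in A\setminus\mathfrak{p}$ with $sF^{m}\subseteq I$, then push $s(F^{\ast})^{m}$ into $I^{\ast}=I\subseteq\mathfrak{p}$ and conclude by primality. The only cosmetic difference is that where you prove $(F^{\ast})^{m}\subseteq (F^{m})^{\ast}$ by induction from the axioms, the paper instead invokes the equality $((J^{\ast})^{n})^{\ast}=(J^{n})^{\ast}$ from \cite{Knebusch-Zhang2}, Ch.~3, Proposition~4.1a); your concern about needing $F^{\ast}\subseteq A$ (i.e., strictness) in the final primality step is well taken---the paper does not comment on this, but all the $\ast$-operations to which the proposition is actually applied (the $\varpi$-v-operation, etc.) are strict.
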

\begin{proof}We follow the proof of \cite{Hedstrom-Houston}, Proposition 1.1(5). Let $J$ be a finitely generated subideal of $\mathfrak{p}$. We want to show that $J^{\ast}\subseteq \mathfrak{p}$. Since $\mathfrak{p}$ is minimal over $I$, the maximal ideal $\mathfrak{p}A_{\mathfrak{p}}$ of $A_{\mathfrak{p}}$ is equal to the radical of $IA_{\mathfrak{p}}$. Since $JA_{\mathfrak{p}}\subseteq \mathfrak{p}A_{\mathfrak{p}}$ and $J$ is finitely generated, there is an integer $n>0$ with $J^{n}A_{\mathfrak{p}}\subseteq IA_{\mathfrak{p}}$. Thus, using that $J^{n}$ is finitely generated, we see that there exists an element $s\in A\setminus\mathfrak{p}$ with $sJ^{n}\subseteq I$. Then \begin{equation*}s(J^{\ast})^{n}\subseteq s((J^{\ast})^{n})^{\ast}=s(J^{n})^{\ast}\subseteq (sJ^{n})^{\ast}\subseteq I^{\ast}\subseteq\mathfrak{p},\end{equation*}where for the first inclusion we used property (1) in the definition of a $\varpi$-$\ast$-operation, for the inclusion $s(J^{n})^{\ast}\subseteq (sJ^{n})^{\ast}$ we used property (4) in that definition and for the middle equality we used \cite{Knebusch-Zhang2}, Ch.~3, Proposition 4.1a). Since $s\not\in\mathfrak{p}$ and $\mathfrak{p}$ is a prime ideal, we have $J^{\ast}\subseteq\mathfrak{p}$, as desired.\end{proof}
For any $A$ and $\varpi$, the family of weakly associated prime ideals of $\varpi^{n}$ for $n\geq1$ (which actually coincides with the family $\wAss(\varpi)$ of weakly associated prime ideals of $\varpi$, by \cite{Brewer-Heinzer}, Theorem 3) is readily seen to have the property that $A$ is equal to the set of elements of $A[\varpi^{-1}]$ whose images in $A_{\mathfrak{p}}[\varpi^{-1}]$ belong to $A_{\mathfrak{p}}$ for all $\mathfrak{p}\in \wAss(\varpi)$ (see the proof of Proposition \ref{Weakly associated prime ideals and boundaries} for the argument). Since the ideal $(\varpi)_{A}:_{A}f$ is $\varpi$-divisorial for all $f\in A$ (see Example \ref{Colon ideals are divisorial}), Proposition \ref{Prime ideals minimal over a star-ideal} implies that the same is also true for the family of all prime $\varpi$-t-ideals. The following proposition, which is an analog of \cite{Griffin67}, Prop.~4, puts this observation into a broader context.
\begin{prop}[Analog of \cite{Griffin67}, Proposition 4]\label{Griffin's intersection property}Let $\ast: J_{\varpi}(A)\to J_{\varpi}(A)$ be a strict $\varpi$-$\ast$-operation of finite type. Then, for every $I\in J_{\varpi}(A)$, we have \begin{equation*}I^{\ast}=\bigcap_{\mathfrak{p}\in\Max_{\ast}(A)}\varphi_{\mathfrak{p}}^{-1}(\varphi_{\mathfrak{p}}(I^{\ast})A_{\mathfrak{p}}),\end{equation*}where we denote by $\Max_{\ast}(A)$ the set of maximal $\ast$-ideals of $A$ and where, for every prime ideal $\mathfrak{p}$, we denote by $\varphi_{\mathfrak{p}}$ the map $A[\varpi^{-1}]\to A_{\mathfrak{p}}[\varpi^{-1}]$ induced by the localization map. If $I\in J_{\varpi}(A)$ satisfies $I^{\ast}=A$, then \begin{equation*}A=\bigcap_{\mathfrak{p}\in\Max_{\ast}(A)}\varphi_{\mathfrak{p}}^{-1}(\varphi_{\mathfrak{p}}(I)A_{\mathfrak{p}})\end{equation*}and $\varphi_{\mathfrak{p}}(I)A_{\mathfrak{p}}=A_{\mathfrak{p}}$ for all $\mathfrak{p}\in\Max_{\ast}(A)$.\end{prop}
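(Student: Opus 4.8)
The plan is to fix $I\in J_{\varpi}(A)$ and prove the displayed equality by establishing the non-trivial inclusion
$\bigcap_{\mathfrak{p}\in\Max_{\ast}(A)}\varphi_{\mathfrak{p}}^{-1}(\varphi_{\mathfrak{p}}(I^{\ast})A_{\mathfrak{p}})\subseteq I^{\ast}$, the reverse inclusion being immediate since the image of $I^{\ast}$ under $\varphi_{\mathfrak{p}}$ lies in $\varphi_{\mathfrak{p}}(I^{\ast})A_{\mathfrak{p}}$ for every $\mathfrak{p}$. Following the classical pattern of \cite{Griffin67}, Proposition 4, I would take $x$ in the intersection on the left and pass to the conductor ideal $\mathfrak{c}:=(I^{\ast}:_{A}x)=\{\,a\in A\mid ax\in I^{\ast}\,\}$, with the goal of showing $\mathfrak{c}=A$; this gives $x=1\cdot x\in I^{\ast}$.

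The first key point is that $\mathfrak{c}$ is a $\ast$-ideal of $A$. Writing $K:=(I^{\ast}:_{A[\varpi^{-1}]}x)\in J_{\varpi}(A)$, so that $xK\subseteq I^{\ast}$, the form $fM^{\ast}\subseteq(fM)^{\ast}$ of axiom (4) (permissible by the remark following Definition \ref{Star operations}) together with monotonicity and idempotence gives $xK^{\ast}\subseteq(xK)^{\ast}\subseteq(I^{\ast})^{\ast}=I^{\ast}$, hence $K^{\ast}\subseteq K$; since $\mathfrak{c}\subseteq K\cap A$ and $\ast$ is strict ($A^{\ast}=A$), we obtain $\mathfrak{c}^{\ast}\subseteq K^{\ast}\cap A^{\ast}=K\cap A=\mathfrak{c}$. (Equivalently, $\mathfrak{c}$ is $\varpi$-divisorial by Example \ref{Colon ideals are divisorial}, hence a $\ast$-ideal by Proposition \ref{v- and t-operations are star-operations}.) Now suppose $\mathfrak{c}\neq A$. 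Because $\ast$ is of finite type, Lemma \ref{Maximal t-ideals} provides a maximal $\ast$-ideal $\mathfrak{p}\supseteq\mathfrak{c}$, and $\mathfrak{p}$ is prime by Lemma \ref{Maximal star-ideals are prime}. The hypothesis $\varphi_{\mathfrak{p}}(x)\in\varphi_{\mathfrak{p}}(I^{\ast})A_{\mathfrak{p}}$ means, after clearing denominators in $A_{\mathfrak{p}}[\varpi^{-1}]=(A\setminus\mathfrak{p})^{-1}(A[\varpi^{-1}])$, that there exist $y\in I^{\ast}$ and $s,t\in A\setminus\mathfrak{p}$ with $tsx=ty$ in $A[\varpi^{-1}]$; since $ty\in I^{\ast}$ this gives $ts\in\mathfrak{c}\subseteq\mathfrak{p}$, contradicting primality of $\mathfrak{p}$. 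Hence $\mathfrak{c}=A$ and $x\in I^{\ast}$, which finishes the proof of the equality.

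For the final two assertions, assume $I^{\ast}=A$. Since $\ast$ is of finite type there is a finitely generated $F\subseteq I\subseteq A$ with $1\in F^{\ast}$. If $\mathfrak{p}$ is any maximal $\ast$-ideal, then $F\not\subseteq\mathfrak{p}$ (otherwise $F^{\ast}\subseteq\mathfrak{p}^{\ast}=\mathfrak{p}$, forcing $1\in\mathfrak{p}$), so some generator of $F$ maps to a unit of $A_{\mathfrak{p}}$ and thus $\varphi_{\mathfrak{p}}(F)A_{\mathfrak{p}}=A_{\mathfrak{p}}$; a fortiori $\varphi_{\mathfrak{p}}(I)A_{\mathfrak{p}}=A_{\mathfrak{p}}$ for all $\mathfrak{p}\in\Max_{\ast}(A)$. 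Then $\bigcap_{\mathfrak{p}}\varphi_{\mathfrak{p}}^{-1}(\varphi_{\mathfrak{p}}(I)A_{\mathfrak{p}})=\bigcap_{\mathfrak{p}}\varphi_{\mathfrak{p}}^{-1}(A_{\mathfrak{p}})$, and the right-hand side equals $A$ by the equality already proved, applied to the $\ast$-module $A$ itself (using $A^{\ast}=A$ and $\varphi_{\mathfrak{p}}(A)A_{\mathfrak{p}}=A_{\mathfrak{p}}$).

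I expect the main obstacle to be the verification that the conductor $\mathfrak{c}=(I^{\ast}:_{A}x)$ is $\ast$-closed in exactly the strict, finite-type sense needed (rather than merely $\varpi$-divisorial or $\ast_{f}$-closed), since this is where axiom (4), idempotence and strictness must be combined correctly; a secondary technical point is handling the localization $A_{\mathfrak{p}}[\varpi^{-1}]$ over $A[\varpi^{-1}]$ carefully, so that the "clearing denominators" step is legitimate. Everything else is a routine application of the finite-type machinery of Section \ref{sec:star-operations}, modeled on Griffin's original argument.
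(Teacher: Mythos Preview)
Your argument is correct and follows essentially the same conductor-ideal pattern as the paper's proof (both modeled on Griffin). The only organizational difference is that the paper does not pause to verify that $\mathfrak{c}=I^{\ast}:_{A}x$ is a $\ast$-ideal; instead it observes that $\mathfrak{c}\not\subseteq\mathfrak{p}$ for every $\mathfrak{p}\in\Max_{\ast}(A)$, concludes $\mathfrak{c}^{\ast}=A$ via Lemma~\ref{Maximal t-ideals}, and then finishes with the single line $x\in x\,\mathfrak{c}^{\ast}\subseteq(x\mathfrak{c})^{\ast}\subseteq(I^{\ast})^{\ast}=I^{\ast}$. Your route---proving $\mathfrak{c}$ is $\ast$-closed and reaching a contradiction---is a harmless rearrangement of the same ingredients, and your treatment of the final assertions (via a finitely generated $F\subseteq I$ with $1\in F^{\ast}$) is likewise equivalent to the paper's.
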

\begin{proof}By abuse of notation, we denote the $A_{\mathfrak{p}}$-submodule $\varphi_{\mathfrak{p}}(J)A_{\mathfrak{p}}$ of $A_{\mathfrak{p}}[\varpi^{-1}]$, for any $A$-submodule $J$ of $A[\varpi^{-1}]$, by $JA_{\mathfrak{p}}$. Similarly, for $J\in J_{\varpi}(A_{\mathfrak{p}})$ we sometimes denote the pre-image $\varphi_{\mathfrak{p}}^{-1}(J)$ in $A[\varpi^{-1}]$ by $J\cap A[\varpi^{-1}]$.

If a non-zero element $\frac{f}{\varpi^{n}}$ of $A[\varpi^{-1}]$ (with $f\in A$, $n\geq1$) maps into $I^{\ast}A_{\mathfrak{p}}$ for some $\mathfrak{p}\in\Max_{\ast}(A)$, write $\frac{f}{\varpi^{n}}=\frac{g}{h}$ with $g\in I^{\ast}$ and $h\in A\setminus\mathfrak{p}$. Then $h\frac{f}{\varpi^{n}}=g\in I^{\ast}$, so \begin{equation*}I^{\ast}:_{A}\frac{f}{\varpi^{n}}\not\subseteq\mathfrak{p}.\end{equation*}Therefore, $\frac{f}{\varpi^{n}}$ maps to an element of $I^{\ast}A_{\mathfrak{p}}$ for every $\mathfrak{p}\in\Max_{\ast}(A)$, we see that the ideal $I^{\ast}:_{A}\frac{f}{\varpi^{n}}$ is not contained in any maximal $\ast$-ideal $\mathfrak{p}$ and thus \begin{equation*}(I^{\ast}:_{A}\frac{f}{\varpi^{n}})^{\ast}=A,\end{equation*}by Lemma \ref{Maximal t-ideals}. But then \begin{equation*}\frac{f}{\varpi^{n}}\in \frac{f}{\varpi^{n}}(I^{\ast}:_{A}\frac{f}{\varpi^{n}})^{\ast} \subseteq (\frac{f}{\varpi^{n}}(I^{\ast}:_{A}\frac{f}{\varpi^{n}}))^{\ast}\subseteq I^{\ast}.\end{equation*}This shows that \begin{equation*}\bigcap_{\mathfrak{p}\in\Max_{\ast}(A)}(I^{\ast}A_{\mathfrak{p}}\cap A[\varpi^{-1}])\subseteq I^{\ast}\end{equation*}for every $I\in J_{\varpi}(A)$. The opposite inclusion is obvious. 

Finally, suppose that $I\in J_{\varpi}(A)$ is such that $I^{\ast}=A$ and $\mathfrak{p}\in\Max_{\ast}(A)$. If $I^{\ast}A_{\mathfrak{p}}\subseteq\mathfrak{p}A_{\mathfrak{p}}$, we have \begin{equation*}\mathfrak{p}=\varphi_{\mathfrak{p}}^{-1}(\mathfrak{p}A_{\mathfrak{p}})\cap A\supseteq \varphi_{\mathfrak{p}}^{-1}(I^{\ast}A_{\mathfrak{p}})\cap A\supseteq I^{\ast},\end{equation*}a contradiction. Thus $I^{\ast}A_{\mathfrak{p}}=A_{\mathfrak{p}}$ for all $\mathfrak{p}\in\Max_{\ast}(A)$. \end{proof}
Following the exposition in \cite{Fuchs-Salce}, Chapter 1, \S3, we introduce an analog of the notion of divisors of an integral domain.
\begin{mydef}[$\varpi$-divisors]\label{Divisors}For a non-zero-divisor $\varpi$ in a ring $A$, define a congruence relation $\sim_{\varpi}$ on $\mathcal{F}_{\varpi}(A)$ by $I\sim_{\varpi} J$ if and only if $A:_{A[\varpi^{-1}]}I=A:_{A[\varpi^{-1}]}J$. Recall that $A:_{A[\varpi^{-1}]}I, I_{v}\in \mathcal{F}_{\varpi}(A)$ for all $I\in\mathcal{F}_{\varpi}(A)$, by Lemma \ref{Residual is a fractional ideal}. Note that $I\sim_{\varpi} I_{v}$ for all $I\in \mathcal{F}_{\varpi}(A)$ and that distinct $\varpi$-divisorial $\varpi$-fractional ideals are incongruent. We call the equivalence classes under $\sim_{\varpi}$ the $\varpi$-divisors of $A$.\end{mydef}
We denote the $\varpi$-divisor associated with $I\in \mathcal{F}_{\varpi}(A)$ by $\ddiv I$ and denote the set $\mathcal{F}_{\varpi}(A)/\sim_{\varpi}$ of $\varpi$-divisors by $D_{\varpi}(A)$. For $0\neq f\in A[\varpi^{-1}]$ with $\varpi^{n}\in (f)_{A}$ for some $n\geq1$ denote the $\varpi$-divisor $\ddiv((f)_{A})$ by $\ddiv(f)$. The set $D_{\varpi}(A)$ becomes an additive monoid under the operation $\ddiv I+\ddiv J\coloneqq\ddiv IJ$. This operation is well-defined. Indeed, using Lemma \ref{Properties of divisorial ideals}(2) and \cite{Knebusch-Zhang2}, Ch.~3, Proposition 4.4,  
\begin{align*}A:_{A[\varpi^{-1}]}IJ=(A:_{A[\varpi^{-1}]}I):_{A[\varpi^{-1}]}J=(A:_{A[\varpi^{-1}]}I_{v}):_{A[\varpi^{-1}]}J \\=(A:_{A[\varpi^{-1}]}I_{v}):_{A[\varpi^{-1}]}J_{v}=A:_{A[\varpi^{-1}]}I_{v}J_{v}\end{align*}and thus $\ddiv I+\ddiv J=\ddiv I_{v}+\ddiv J_{v}$. Moreover, $D_{\varpi}(A)=\mathcal{F}_{\varpi}(A)/\sim_{\varpi}$ is a partially ordered monoid by defining $\ddiv I\leq \ddiv J$ if $A:_{A[\varpi^{-1}]}I\subseteq A:_{A[\varpi^{-1}]}J$. In fact, for $J\in J_{\varpi}(A)$, the inclusion \begin{equation*}A:_{A[\varpi^{-1}]}I\subseteq A:_{A[\varpi^{-1}]}I'\end{equation*}implies $A:_{A[\varpi^{-1}]}IJ\subseteq A:_{A[\varpi^{-1}]}I'J$, so the above partial order is compatible with the addition operation of $D_{\varpi}(A)$.
\begin{prop}[Analog of \cite{Fuchs-Salce}, Proposition 3.11]The monoid $D_{\varpi}(A)$ is a lattice-ordered monoid. For $I, I'\in \mathcal{F}_{\varpi}(A)$ the supremum of $\ddiv I$ and $\ddiv I'$ in $D_{\varpi}(A)$ is $\ddiv(I\cap I')$ and the infimum is $\ddiv(I+I')$.\end{prop}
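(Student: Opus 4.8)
The plan is to prove the two claimed identities $\sup(\ddiv I,\ddiv I')=\ddiv(I\cap I')$ and $\inf(\ddiv I,\ddiv I')=\ddiv(I+I')$ directly from the definition of the partial order, using the fact that, by Definition \ref{Divisors}, $\ddiv I\leq\ddiv J$ means exactly $A:_{A[\varpi^{-1}]}I\subseteq A:_{A[\varpi^{-1}]}J$, together with the basic identity $A:_{A[\varpi^{-1}]}(I+J)=(A:_{A[\varpi^{-1}]}I)\cap(A:_{A[\varpi^{-1}]}J)$, which holds for all $A$-submodules of $A[\varpi^{-1}]$ and is immediate: an element $f$ kills $I+J$ iff it kills both $I$ and $J$. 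Once both formulas are established, lattice-orderedness of the monoid $D_{\varpi}(A)$ follows, and compatibility of the lattice operations with addition is the part already recorded in the paragraph preceding the statement (the partial order is compatible with $+$).

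First I would verify that $I\cap I'$ and $I+I'$ are again $\varpi$-fractional ideals, so that their divisor classes are defined: this is the remark (in Section \ref{sec:valuative rings}) that $\mathcal{F}_{\varpi}(A)$ is closed under finite sums and intersections. Next, for the infimum, I compute
\begin{equation*}
A:_{A[\varpi^{-1}]}(I+I')=(A:_{A[\varpi^{-1}]}I)\cap(A:_{A[\varpi^{-1}]}I'),
\end{equation*}
so $\ddiv(I+I')\leq\ddiv I$ and $\ddiv(I+I')\leq\ddiv I'$; and if $\ddiv K\leq\ddiv I$ and $\ddiv K\leq\ddiv I'$, i.e.\ $A:_{A[\varpi^{-1}]}K$ is contained in both $A:_{A[\varpi^{-1}]}I$ and $A:_{A[\varpi^{-1}]}I'$, then it is contained in their intersection $A:_{A[\varpi^{-1}]}(I+I')$, so $\ddiv K\leq\ddiv(I+I')$. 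Hence $\ddiv(I+I')$ is the infimum. This part is essentially formal.

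For the supremum the argument is less immediate because there is no equally clean formula for $A:_{A[\varpi^{-1}]}(I\cap I')$; the natural inclusion only gives $(A:_{A[\varpi^{-1}]}I)+(A:_{A[\varpi^{-1}]}I')\subseteq A:_{A[\varpi^{-1}]}(I\cap I')$, which need not be an equality. The way around this is to pass to $\varpi$-divisorial representatives and use that $\sim_{\varpi}$ identifies $I$ with $I_v$: replacing $I,I'$ by $I_v,I'_v$ does not change the divisor classes, and $I_v\cap I'_v$ is $\varpi$-divisorial by Lemma \ref{Properties of divisorial ideals}(1). One then shows, using that for $\varpi$-divisorial $A$-submodules $J$, $J'$ the formula $J\cap J'=(A:_{A[\varpi^{-1}]}(A:_{A[\varpi^{-1}]}J+A:_{A[\varpi^{-1}]}J'))$ holds — apply $A:_{A[\varpi^{-1}]}(-)$ to the sum-intersection identity and use that $J$, $J'$, hence $J\cap J'$, equal their $\varpi$-v-closures — that $A:_{A[\varpi^{-1}]}(I\cap I')=A:_{A[\varpi^{-1}]}(I_v\cap I'_v)$ is the smallest $\varpi$-divisorial $A$-submodule containing both $A:_{A[\varpi^{-1}]}I$ and $A:_{A[\varpi^{-1}]}I'$, i.e.\ it realizes the join. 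Concretely: $\ddiv I\leq\ddiv(I\cap I')$ and $\ddiv I'\leq\ddiv(I\cap I')$ are clear from $I\cap I'\subseteq I,I'$ and antitonicity of $A:_{A[\varpi^{-1}]}(-)$; and if $\ddiv I\leq\ddiv K$ and $\ddiv I'\leq\ddiv K$, then $A:_{A[\varpi^{-1}]}I$ and $A:_{A[\varpi^{-1}]}I'$ both contain $A:_{A[\varpi^{-1}]}K$, hence so does their sum, hence (applying $A:_{A[\varpi^{-1}]}(-)$ twice and using $K\sim_\varpi K_v$) $A:_{A[\varpi^{-1}]}(I\cap I')\supseteq A:_{A[\varpi^{-1}]}K_v=A:_{A[\varpi^{-1}]}K$, i.e.\ $\ddiv(I\cap I')\leq\ddiv K$.

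The main obstacle I anticipate is exactly this last point — checking that $A:_{A[\varpi^{-1}]}(I\cap I')$ genuinely computes the join rather than merely an upper bound. The subtlety is that intersection of $A$-submodules does not commute with the $\varpi$-residual operation in general, and one must invoke $\varpi$-divisoriality (Lemma \ref{Properties of divisorial ideals}) and the relation $\sim_{\varpi}$ to reduce to divisorial representatives where the double-residual becomes the identity. Everything else — well-definedness, the two inequalities exhibiting $\ddiv(I\cap I')$ and $\ddiv(I+I')$ as upper/lower bounds, and the universal property — is a routine diagram chase with the antitone Galois-type operation $I\mapsto A:_{A[\varpi^{-1}]}I$, entirely parallel to \cite{Fuchs-Salce}, Proposition 3.11.
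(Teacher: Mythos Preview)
Your infimum argument is correct and is actually a slight variant of the paper's: you work directly with the identity $A:_{A[\varpi^{-1}]}(I+I')=(A:_{A[\varpi^{-1}]}I)\cap(A:_{A[\varpi^{-1}]}I')$, whereas the paper passes to $\varpi$-divisorial representatives and uses that $\ddiv I\leq\ddiv J$ is equivalent to $J_v\subseteq I_v$, so that for divisorial $I,I',J$ the condition $\ddiv J\leq\ddiv I,\ddiv I'$ reads $I,I'\subseteq J$ and hence $I+I'\subseteq J$.

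Your supremum argument, however, has the inclusions reversed at the crucial step. You write that $\ddiv I\leq\ddiv K$ implies that $A:_{A[\varpi^{-1}]}I$ \emph{contains} $A:_{A[\varpi^{-1}]}K$; by the definition in the paper it is the other way around, $A:_{A[\varpi^{-1}]}I\subseteq A:_{A[\varpi^{-1}]}K$. Tracing your chain of inclusions with the wrong direction, one ends up only re-proving that $\ddiv(I\cap I')$ is an upper bound, not that it is the least one; and your final translation ``$A:_{A[\varpi^{-1}]}(I\cap I')\supseteq A:_{A[\varpi^{-1}]}K$, i.e.\ $\ddiv(I\cap I')\leq\ddiv K$'' is again the wrong direction. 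The fix is easy and in fact makes the argument shorter: from $A:_{A[\varpi^{-1}]}I\subseteq A:_{A[\varpi^{-1}]}K$ one gets $K_v\subseteq I_v$, and likewise $K_v\subseteq I'_v$; replacing $I,I'$ by their divisorial hulls (which does not change $\ddiv$), this gives $K_v\subseteq I\cap I'$ and hence $A:_{A[\varpi^{-1}]}(I\cap I')\subseteq A:_{A[\varpi^{-1}]}K$, i.e.\ $\ddiv(I\cap I')\leq\ddiv K$. This is exactly the paper's proof, which sidesteps your residual-sum formula entirely by using the order-reversing bijection between $D_\varpi(A)$ and the set of $\varpi$-divisorial $\varpi$-fractional ideals (Proposition~\ref{Monoid of divisors}).
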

\begin{proof}Let $J\in \mathcal{F}_{\varpi}(A)$ with $\ddiv I\leq \ddiv J$ and $\ddiv I'\leq \ddiv J$. Since $\ddiv L=\ddiv L_{v}$ for every $L\in \mathcal{F}_{\varpi}(A)$, we may assume that $I$, $I'$ and $J$ are $\varpi$-divisorial. Then $J=J_{v}\subseteq I_{v}=I$ and $J=J_{v}\subseteq I'_{v}=I'$, showing that the upper bound $\ddiv(I\cap I')$ of $\ddiv I$ and $\ddiv I'$ is a supremum. 

Now let $J\in \mathcal{F}_{\varpi}(A)$ with $\ddiv J\leq \ddiv I$ and $\ddiv J\leq \ddiv I'$. Assuming again that $I$, $I'$ and $J$ are $\varpi$-divisorial, we have $I=I_{v}\subseteq J_{v}=J$ and $I'=I'_{v}\subseteq J_{v}=J$, so that $I+I'\subseteq J$ and, consequently, $\ddiv J\leq \ddiv(I+I')$. This shows that the lower bound $\ddiv(I+I')$ is indeed an infimum in $D_{\varpi}(A)$. \end{proof}
\begin{mydef}[Special case of \cite{Knebusch-Zhang2}, Ch.~3, \S4, Def.~1]Given a $\varpi$-$\ast$-operation \begin{equation*}\ast: J_{\varpi}(A)\to J_{\varpi}(A)\end{equation*}we define the $\ast$-product of any two $A$-submodules $I, J\in J_{\varpi}(A)$ by \begin{equation*}I\circ_{\ast}J=(IJ)^{\ast}.\end{equation*}An $A$-submodule $I$ of $A[\varpi^{-1}]$ is said to be $\ast$-invertible if there exists $J\in J_{\varpi}(A)$ with $I\circ_{\ast}J=A^{\ast}$. When $\ast$ is the $\varpi$-v-operation (respectively, the $\varpi$-t-operation) we call $\circ_{\ast}=\circ_{v}$ (respectively, $\circ_{\ast}=\circ_{t}$) the $\varpi$-v-multiplication (respectively, the $\varpi$-t-multiplication). \end{mydef}
\begin{prop}\label{Monoid of divisors}The map $I\mapsto \ddiv I$ is an order-reversing bijection between the monoid of $\varpi$-divisorial $\varpi$-fractional ideals of $A$ with the $\varpi$-v-multiplication \begin{equation*}(I, J)\mapsto (IJ)_{v},\end{equation*}partially ordered by inclusion, and the partially ordered monoid $D_{\varpi}(A)$.\end{prop}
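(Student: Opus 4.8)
The plan is to show that the assignment $I\mapsto\ddiv I$, restricted to the set of $\varpi$-divisorial $\varpi$-fractional ideals, is a bijection onto $D_{\varpi}(A)$ which carries $\circ_{v}$ to the addition of $\varpi$-divisors and reverses inclusion. Almost everything will follow from two facts recorded earlier: that $L\sim_{\varpi}L_{v}$ for every $L\in\mathcal{F}_{\varpi}(A)$ (see Definition \ref{Divisors}), and that the $\varpi$-v-operation is a (strict) $\varpi$-$\ast$-operation (Proposition \ref{v- and t-operations are star-operations}), together with the elementary observation that $J\mapsto A:_{A[\varpi^{-1}]}J$ is inclusion-reversing. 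First I would check that the $\varpi$-divisorial $\varpi$-fractional ideals form a monoid under $\circ_{v}$: since $\mathcal{F}_{\varpi}(A)$ is closed under products, the product $IJ$ of two $\varpi$-fractional ideals is again a $\varpi$-fractional ideal, so $(IJ)_{v}=A:_{A[\varpi^{-1}]}(A:_{A[\varpi^{-1}]}IJ)$ lies in $\mathcal{F}_{\varpi}(A)$ by Lemma \ref{Residual is a fractional ideal}, and it is $\varpi$-divisorial by idempotence of the $\varpi$-v-operation (property (3) in Definition \ref{Star operations}); the identity element is $A$, which is $\varpi$-divisorial and satisfies $A\circ_{v}I=(AI)_{v}=I_{v}=I$ for $\varpi$-divisorial $I$. (Associativity and commutativity of $\circ_{v}$ on this set will then be inherited for free once the correspondence with $D_{\varpi}(A)$ is established.)

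Next I would verify the bijection together with the homomorphism property. Injectivity is immediate: if $I$, $J$ are $\varpi$-divisorial with $\ddiv I=\ddiv J$, then $A:_{A[\varpi^{-1}]}I=A:_{A[\varpi^{-1}]}J$ by definition of $\sim_{\varpi}$, hence $I=I_{v}=A:_{A[\varpi^{-1}]}(A:_{A[\varpi^{-1}]}I)=A:_{A[\varpi^{-1}]}(A:_{A[\varpi^{-1}]}J)=J_{v}=J$. For surjectivity, any class in $D_{\varpi}(A)$ is $\ddiv I$ for some $I\in\mathcal{F}_{\varpi}(A)$; since $I\sim_{\varpi}I_{v}$ and $I_{v}$ is a $\varpi$-divisorial $\varpi$-fractional ideal, that class equals $\ddiv I_{v}$ and lies in the image. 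The homomorphism property is then $\ddiv(I\circ_{v}J)=\ddiv\big((IJ)_{v}\big)=\ddiv(IJ)=\ddiv I+\ddiv J$, where the middle equality uses $IJ\sim_{\varpi}(IJ)_{v}$ and the last is the definition of addition in $D_{\varpi}(A)$. A bijective monoid homomorphism is an isomorphism, so the map is in particular a monoid isomorphism.

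Finally, for the order-reversal: if $J\subseteq I$ then $A:_{A[\varpi^{-1}]}I\subseteq A:_{A[\varpi^{-1}]}J$, hence $\ddiv I\leq\ddiv J$ by the definition of the order on $D_{\varpi}(A)$; conversely, if $\ddiv I\leq\ddiv J$, i.e. $A:_{A[\varpi^{-1}]}I\subseteq A:_{A[\varpi^{-1}]}J$, then applying $A:_{A[\varpi^{-1}]}(-)$ once more and using that it reverses inclusions gives $J_{v}=A:_{A[\varpi^{-1}]}(A:_{A[\varpi^{-1}]}J)\subseteq A:_{A[\varpi^{-1}]}(A:_{A[\varpi^{-1}]}I)=I_{v}$, so $J\subseteq I$ because $I$, $J$ are $\varpi$-divisorial. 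I do not expect a genuine obstacle here; the only points requiring care are bookkeeping of which inclusions get reversed under the residual, and making sure at each step that $I_{v}$ is indeed a $\varpi$-fractional ideal (Lemma \ref{Residual is a fractional ideal}) and is $\varpi$-divisorial (idempotence of the $\varpi$-v-operation), both of which are available from the earlier material.
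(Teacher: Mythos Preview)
Your proposal is correct and follows exactly the approach the paper intends: the paper's proof is the single line ``Follows from the definitions and the fact that $I\sim_{\varpi}I_{v}$ for every $I\in\mathcal{F}_{\varpi}(A)$,'' and your argument is precisely the unpacking of this sentence. There is nothing to add or change.
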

\begin{proof}Follows from the definitions and the fact that $I\sim_{\varpi}I_{v}$ for every $I\in\mathcal{F}_{\varpi}(A)$. \end{proof}
\begin{lemma}\label{Star inverse of a fractional ideal}Let $\ast: J_{\varpi}(A)\to J_{\varpi}(A)$ be a strict $\varpi$-$\ast$-operation. If $I$ is a $\ast$-invertible $\varpi$-fractional ideal and $J\in J_{\varpi}(A)$ is such that $I\circ_{\ast}J=A$, then $J$ is again a $\varpi$-fractional ideal.\end{lemma}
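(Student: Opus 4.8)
The plan is to verify the two defining properties of a $\varpi$-fractional ideal for $J$, namely that it is an open and bounded $A$-submodule of $A[\varpi^{-1}]$. Boundedness is the routine half: since $\ast$ is strict, property (1) of a $\varpi$-$\ast$-operation applied to $I\circ_{\ast}J=(IJ)^{\ast}=A$ gives $IJ\subseteq A$, that is, $J\subseteq A:_{A[\varpi^{-1}]}I$. Both $A$ and $I$ are $\varpi$-fractional ideals, so by Lemma \ref{Residual is a fractional ideal} the residual $A:_{A[\varpi^{-1}]}I$ is a $\varpi$-fractional ideal; in particular it is bounded, hence so is $J$. (By Lemma \ref{Properties of divisorial ideals}(2) this residual is moreover $\varpi$-divisorial, which will matter below.)

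For openness the idea is to exploit the boundedness of $I$. Choose $m\geq1$ with $\varpi^{m}I\subseteq A$, i.e.\ $(\varpi^{m})_{A}I\subseteq A$. The principal $A$-submodule $(\varpi^{m})_{A}$ is $\varpi$-adically open, hence invertible by Example \ref{Regular principal implies invertible}, so by the Remark following Definition \ref{Star operations} multiplication by $(\varpi^{m})_{A}$ commutes with $\ast$:
\begin{equation*}(\varpi^{m})_{A}=(\varpi^{m})_{A}(IJ)^{\ast}=\big((\varpi^{m})_{A}IJ\big)^{\ast}=\big((\varpi^{m}I)J\big)^{\ast}.\end{equation*}
As $\varpi^{m}I\subseteq A$ and $J$ is an $A$-module, $(\varpi^{m}I)J\subseteq J$, so monotonicity (property (2)) gives $(\varpi^{m})_{A}\subseteq J^{\ast}$, i.e.\ $\varpi^{m}\in J^{\ast}$. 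Combining this with the inclusion $J\subseteq A:_{A[\varpi^{-1}]}I$ and the fact (Proposition \ref{v- and t-operations are star-operations}) that every strict $\varpi$-$\ast$-operation satisfies $M^{\ast}\subseteq M_{v}$ — so the $\varpi$-divisorial module $A:_{A[\varpi^{-1}]}I$ is $\ast$-closed — we obtain $J^{\ast}\subseteq A:_{A[\varpi^{-1}]}I$. Thus $J^{\ast}$ is open and bounded, hence a $\varpi$-fractional ideal, and a short computation using property (4) ($IJ^{\ast}\subseteq(IJ)^{\ast}$) together with $IJ\subseteq IJ^{\ast}$ shows $I\circ_{\ast}J^{\ast}=(IJ^{\ast})^{\ast}=A$ as well.

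The step I expect to be the main obstacle is the passage from $J^{\ast}$ back to $J$: the hypothesis $I\circ_{\ast}J=A$ only pins $J$ down up to $\ast$-closure, and a general $A$-submodule $J$ with $J\subsetneq J^{\ast}$ need not be open. I anticipate this being handled by assuming $J$ is $\ast$-closed from the start — which loses no generality, since the computation above shows $J^{\ast}$ is again a $\ast$-inverse of $I$ — or, what amounts to the same, by taking for the inverse the canonical choice $J=A:_{A[\varpi^{-1}]}I$, which is $\varpi$-divisorial by Lemma \ref{Properties of divisorial ideals}(2) and hence already equal to its own $\ast$-closure. Once $J=J^{\ast}$, the chain $\varpi^{m}A\subseteq J^{\ast}=J\subseteq A:_{A[\varpi^{-1}]}I$ exhibits $J$ as open and bounded, which completes the argument.
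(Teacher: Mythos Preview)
Your direct verification that $J^{\ast}$ is open and bounded is correct. The paper takes a shorter route: it cites \cite{Knebusch-Zhang2}, Ch.~3, Proposition~4.5a) to obtain $J=A:_{A[\varpi^{-1}]}I$ outright, and then Lemma~\ref{Residual is a fractional ideal} finishes immediately. This is exactly your ``canonical choice'' remedy, so your explicit openness computation via $(\varpi^{m})_{A}(IJ)^{\ast}=((\varpi^{m}I)J)^{\ast}\subseteq J^{\ast}$ is bypassed once one grants that identification.

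That said, your worry about the passage from $J^{\ast}$ to $J$ is entirely justified: without the hypothesis $J=J^{\ast}$ the lemma as literally stated admits counterexamples. Take $A=k[x,y]$, $\varpi=x$, $\ast$ the $\varpi$-$v$-operation, $I=A$, and $J=(y)_{A}$. Since $y$ is a non-zero-divisor and a non-unit in $A[\varpi^{-1}]=k[x,x^{-1},y]$ one finds $A:_{A[\varpi^{-1}]}(y)_{A}=A$, whence $(IJ)_{v}=(y)_{v}=A$; but $(y)_{A}$ contains no power of $x$ and is therefore not open. The cited Proposition~4.5a) should accordingly be read as yielding $J^{\ast}=A:_{A[\varpi^{-1}]}I$ rather than $J=A:_{A[\varpi^{-1}]}I$ --- and indeed the paper's other application of that same proposition, in the proof of Theorem~\ref{Multiplication rings and v-coherent rings}, feeds in $J_{v}$ and draws the conclusion for $J_{v}$. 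So both your argument and the paper's in fact establish that $J^{\ast}$ (equivalently, the canonical $\ast$-inverse $A:_{A[\varpi^{-1}]}I$) is a $\varpi$-fractional ideal, which is the correct statement.
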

\begin{proof}By \cite{Knebusch-Zhang2}, Ch.~3, Proposition 4.1a), $I^{\ast}\circ_{\ast}J=I\circ_{\ast}J=A$. By op.~cit., Ch.~3, Proposition 4.5a), $J=A:_{A[\varpi^{-1}]}I$. Hence the assertion follows from Lemma \ref{Residual is a fractional ideal}.\end{proof}
We can now prove a partial analog of a classical result (\cite{Fuchs-Salce}, Ch.~1, Theorem 3.12) characterizing completely integrally closed domains by means of their monoids of divisors.
\begin{thm}\label{Rings of power-bounded elements and divisors}Suppose that $A$ is completely integrally closed in $A[\varpi^{-1}]$. The partially ordered monoid $D_{\varpi}(A)$ of $\varpi$-divisors of $A$ is a lattice-ordered group.\end{thm}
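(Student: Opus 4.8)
The plan is to leverage what is already established: by the preceding proposition $D_{\varpi}(A)$ is a lattice-ordered commutative monoid with identity $\ddiv A$, so the only thing left to prove is that it is a \emph{group}, i.e.\ that every $\varpi$-divisor has an additive inverse. Since $\ddiv I=\ddiv I_{v}$ and each $\sim_{\varpi}$-class contains the $\varpi$-divisorial representative $I_{v}$ (note $v$ is a $\varpi$-$\ast$-operation, so $I_{v}$ is $\varpi$-divisorial), Proposition \ref{Monoid of divisors} lets me rephrase the goal: the monoid of $\varpi$-divisorial $\varpi$-fractional ideals under the $\varpi$-v-multiplication $(I,J)\mapsto(IJ)_{v}$ is a group. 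Concretely, I will show that for every $\varpi$-divisorial $\varpi$-fractional ideal $I$, writing $I^{-1}:=A:_{A[\varpi^{-1}]}I$, one has $(II^{-1})_{v}=A$; then $\ddiv I^{-1}$ is inverse to $\ddiv I$ in $D_{\varpi}(A)$.

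For the key computation, first note $I^{-1}\in\mathcal{F}_{\varpi}(A)$ by Lemma \ref{Residual is a fractional ideal}, that $II^{-1}\subseteq A$ by definition of the $\varpi$-residual, and that $II^{-1}\in\mathcal{F}_{\varpi}(A)$ (products of $\varpi$-fractional ideals are $\varpi$-fractional). Since trivially $A\subseteq A:_{A[\varpi^{-1}]}(II^{-1})$ and $(II^{-1})_{v}=A:_{A[\varpi^{-1}]}\bigl(A:_{A[\varpi^{-1}]}(II^{-1})\bigr)$, it suffices to prove the reverse inclusion $A:_{A[\varpi^{-1}]}(II^{-1})\subseteq A$. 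So let $f\in A:_{A[\varpi^{-1}]}(II^{-1})$, i.e.\ $fII^{-1}\subseteq A$. Rearranging this gives $fI\subseteq A:_{A[\varpi^{-1}]}I^{-1}=A:_{A[\varpi^{-1}]}\bigl(A:_{A[\varpi^{-1}]}I\bigr)=I_{v}=I$, the last equality because $I$ is $\varpi$-divisorial. Hence $f\in I:_{A[\varpi^{-1}]}I$.

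Now the hypothesis enters: by Lemma \ref{Complete integral closure and fractional ideals}, $I:_{A[\varpi^{-1}]}I\subseteq\mathcal{A}^{\circ}$, and since $A$ is completely integrally closed in $\mathcal{A}=A[\varpi^{-1}]$, Lemma \ref{Completely integrally closed} gives $A=\mathcal{A}^{\circ}$; hence $f\in A$. This proves $A:_{A[\varpi^{-1}]}(II^{-1})=A$, so $(II^{-1})_{v}=A:_{A[\varpi^{-1}]}A=A$, as claimed. Therefore every element of $D_{\varpi}(A)$ is invertible, and combined with the lattice-ordered-monoid structure from the previous proposition, $D_{\varpi}(A)$ is a lattice-ordered group.

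The only non-formal point — the main obstacle — is the implication that $fI\subseteq I$ forces $f\in A$, which is precisely where complete integral closure is used, via the identification $\mathcal{A}^{\circ}=\bigcup_{J\in\mathcal{F}_{\varpi}(A)}J:_{\mathcal{A}}J$ of Lemma \ref{Complete integral closure and fractional ideals} together with $A=\mathcal{A}^{\circ}$. Everything else is bookkeeping with $\varpi$-residuals and already-recorded facts ($I\sim_{\varpi}I_{v}$, $I_{v}$ is $\varpi$-divisorial, $\ddiv A$ is the monoid identity); one should also note the trivial observation that a lattice-ordered commutative monoid which happens to be a group is automatically a lattice-ordered group, the compatibility of the order with the operation being inherited unchanged.
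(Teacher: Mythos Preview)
Your proof is correct and follows essentially the same route as the paper's: both reduce to showing $A:_{A[\varpi^{-1}]}(I\cdot(A:_{A[\varpi^{-1}]}I))=A$ and finish via Lemma~\ref{Complete integral closure and fractional ideals} together with $A=\mathcal{A}^{\circ}$. The only cosmetic difference is that the paper rearranges $fII^{-1}\subseteq A$ as $fI^{-1}\subseteq I^{-1}$ (so it never needs to pass to a $\varpi$-divisorial representative), whereas you first take $I$ $\varpi$-divisorial and rearrange as $fI\subseteq I_{v}=I$; either way one lands in $J:_{\mathcal{A}}J$ for some $J\in\mathcal{F}_{\varpi}(A)$ and concludes.
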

\begin{proof}We follow the first paragraph of the proof of \cite{Fuchs-Salce}, Ch.~1, Theorem 3.12. Suppose that $A$ is completely integrally closed in $A[\varpi^{-1}]$. To prove that the monoid $D_{\varpi}(A)$ is a group, we show that $I(A:_{A[\varpi^{-1}]}I)\sim_{\varpi} A$ for every $I\in \mathcal{F}_{\varpi}(A)$. That is, for every $I\in \mathcal{F}_{\varpi}(A)$, we want to prove that $A:_{A[\varpi^{-1}]}(I(A:_{A[\varpi^{-1}]}I))=A$. The inclusion $A\subseteq A:_{A[\varpi^{-1}]}I(A:_{A[\varpi^{-1}]}I)$ is trivial. For the opposite inclusion, observe that $f\in A[\varpi^{-1}]$ satisfies $fI(A:_{A[\varpi^{-1}]}I)\subseteq A$ if and only if \begin{equation*}f(A:_{A[\varpi^{-1}]}I)\subseteq A:_{A[\varpi^{-1}]}I.\end{equation*}By Lemma \ref{Residual is a fractional ideal}, $A:_{A[\varpi^{-1}]}I$ is a $\varpi$-fractional ideal, so we can then apply Lemma \ref{Complete integral closure and fractional ideals} to see that \begin{equation*}A:_{A[\varpi^{-1}]}(I(A:_{A[\varpi^{-1}]}I))\subseteq A[\varpi^{-1}]^{\circ}.\end{equation*}But by Lemma \ref{Completely integrally closed}, the right hand side is equal to $A$ since $A$ was assumed to be completely integrally closed in $A[\varpi^{-1}]$.\end{proof}   
                 
\section{Characterizing the Shilov boundary}\label{sec:heart of the paper}

This section is the core of the paper. In it, we introduce a certain ring-theoretic property of a ring $A$ with a non-zero-divisor and non-unit $\varpi\in A$ (the property of being strongly $\varpi$-Shilov) and show that this property is actually equivalent to Berkovich's description of the Shilov boundary (Definition \ref{Berkovich's description of the Shilov boundary}) for the Tate ring $\mathcal{A}=A[\varpi^{-1}]$. We also find some sufficient conditions for a ring $A$ to be strongly $\varpi$-Shilov with respect to some non-zero-divisor and non-unit $\varpi$. 
\begin{mydef}[Weakly and strongly $\varpi$-Shilov rings]\label{Weakly and strongly Shilov}Let $A$ be a ring with a non-zero-divisor and non-unit $\varpi\in A$. We say that $A$ is weakly (respectively, strongly) $\varpi$-Shilov if for every minimal prime ideal $\mathfrak{p}$ of $\varpi$ in $A$ (respectively, for every weakly associated prime ideal $\mathfrak{p}$ of $\varpi$ in $A$) the $\varpi$-adic completion $\widehat{A_{\mathfrak{p}}}$ of the local ring $A_{\mathfrak{p}}$ is a valuation ring of rank $1$.

In other words, $A$ is weakly (respectively, strongly) $\varpi$-Shilov if for every minimal prime $\mathfrak{p}$ of $\varpi$ (respectively, for every minimal prime $\mathfrak{p}$ of $\varpi$) the quotient \begin{equation*}A_{\mathfrak{p}}/\bigcap_{n\geq1}\varpi^{n}A_{\mathfrak{p}}\end{equation*}is a valuation ring of rank $1$ (see Proposition \ref{Valuative rings and valuation rings} and Corollary \ref{Valuative rings and completion 2} for the equivalence of the two conditions). \end{mydef}
We also introduce somewhat stronger variants of the above two notions.
\begin{mydef}[Weakly and strongly $\varpi$-Krull rings]\label{Weakly and strongly Krull}Let $A$ be a ring and let $\varpi\in A$ be a non-zero-divisor and a non-unit. We call $A$ weakly (respectively, strongly) $\varpi$-Krull if for every minimal prime ideal $\mathfrak{p}$ of $\varpi$ in $A$ (respectively, for every weakly associated prime ideal $\mathfrak{p}$ of $\varpi$ in $A$) the local ring $A_{\mathfrak{p}}$ is a $\varpi$-adically separated valuation ring of rank one. 

In other words, $A$ is weakly (respectively, strongly) $\varpi$-Krull if $A$ is weakly (respectively, strongly) $\varpi$-Shilov and for every minimal (respectively, weakly associated) prime $\mathfrak{p}$ of $\varpi$ in $A$ the local ring $A_{\mathfrak{p}}$ is $\varpi$-adically separated. \end{mydef}
For example, all normal Noetherian domains and, more generally, all Krull domains are strongly $\varpi$-Krull rings for every non-zero non-unit $\varpi$, justifying our terminology.
\begin{lemma}\label{Krull domains}Every Krull domain $A$ is strongly $\varpi$-Krull for every non-zero non-unit $\varpi\in A$.\end{lemma}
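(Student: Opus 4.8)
The plan is to reduce the statement to a single claim about the geometry of weakly associated primes, and then quote the definition of a Krull domain. Since $A$ is a domain, the chosen element $\varpi$ is automatically a non-zero-divisor, and by hypothesis a non-unit, so the notion of weakly associated prime of $\varpi$ makes sense. By Definition \ref{Weakly and strongly Krull} I must show that for every $\mathfrak{p}\in\wAss(\varpi)$ the local ring $A_{\mathfrak{p}}$ is a $\varpi$-adically separated valuation ring of rank one. I claim it suffices to prove that every such $\mathfrak{p}$ has height one: indeed, if $\operatorname{ht}\mathfrak{p}=1$, then $A_{\mathfrak{p}}$ is a discrete valuation ring by the very definition of a Krull domain, hence a rank-one valuation ring, and it is $\varpi$-adically separated because $\bigcap_{n}\varpi^{n}A_{\mathfrak{p}}=0$ (as $\varpi$ is a non-unit of the DVR $A_{\mathfrak{p}}$, the $\mathfrak{p}$-adic valuation of $\varpi^{n}$ tends to infinity). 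So the entire content of the lemma is the height-one claim.

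To set this up, let $\mathfrak{p}\in\wAss(\varpi)$. By definition $\mathfrak{p}$ is a prime minimal over $(\varpi)_{A}:_{A}f$ for some $f\in A$, and necessarily $f\notin\varpi A$ (otherwise $(\varpi)_{A}:_{A}f=A$ has no prime over it). First I would localize at $\mathfrak{p}$: a localization of a Krull domain is a Krull domain, $\varpi$ remains a nonzero non-unit of $A_{\mathfrak{p}}$, one still has $f\notin\varpi A_{\mathfrak{p}}$, and, since colon ideals by a finitely generated ideal localize, $\mathfrak{p}A_{\mathfrak{p}}$ is the unique minimal prime over $(\varpi A_{\mathfrak{p}}):_{A_{\mathfrak{p}}}f$, so $\mathfrak{p}A_{\mathfrak{p}}=\sqrt{(\varpi A_{\mathfrak{p}}):_{A_{\mathfrak{p}}}f}$. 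Thus I am reduced to the following: $(A,\mathfrak{m})$ is a local Krull domain, $\varpi\neq 0$ is a non-unit, $f\notin\varpi A$, and $\mathfrak{m}=\sqrt{(\varpi)_{A}:_{A}f}$; show $\operatorname{ht}\mathfrak{m}=1$.

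For this I would use the essential-valuation representation $A=\bigcap_{\mathfrak{q}}A_{\mathfrak{q}}$, the intersection taken over the height-one primes $\mathfrak{q}$ of $A$ (all contained in $\mathfrak{m}$), each $A_{\mathfrak{q}}$ a DVR with valuation $v_{\mathfrak{q}}$. Since $f/\varpi\notin A$, there is a height-one prime $\mathfrak{q}$ with $v_{\mathfrak{q}}(f)<v_{\mathfrak{q}}(\varpi)$; for that $\mathfrak{q}$ one computes $\bigl((\varpi)_{A}:_{A}f\bigr)A_{\mathfrak{q}}=\mathfrak{q}^{\,v_{\mathfrak{q}}(\varpi)-v_{\mathfrak{q}}(f)}A_{\mathfrak{q}}\subseteq\mathfrak{q}A_{\mathfrak{q}}$, so $(\varpi)_{A}:_{A}f\subseteq\mathfrak{q}$ and hence $\mathfrak{m}=\sqrt{(\varpi)_{A}:_{A}f}\subseteq\mathfrak{q}\subseteq\mathfrak{m}$. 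Therefore $\mathfrak{q}=\mathfrak{m}$ has height one, as required.

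I expect the only real subtlety — and the place where a Noetherian reflex misleads — to be the need to rule out embedded weakly associated primes of the principal ideal $(\varpi)$ without assuming $A$ Noetherian; the computation above handles this directly. Alternatively, one can simply cite the classical fact that in a Krull domain $\wAss(A/\varpi A)$ equals the set of height-one primes containing $\varpi$ (e.g.\ Bourbaki, \emph{Algèbre commutative}, Ch.~VII, or Fossum's book on the divisor class group). Everything else is routine bookkeeping with essential valuations.
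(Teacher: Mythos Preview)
Your proof is correct and follows essentially the same approach as the paper: both reduce to showing that every weakly associated prime of $\varpi$ has height one, and both exploit the Krull-domain representation $A=\bigcap_{\height\mathfrak{q}=1}A_{\mathfrak{q}}$ to see that $f/\varpi\notin A$ forces $(\varpi):f$ into some height-one prime. The only organizational difference is that you localize at the weakly associated prime first and argue directly, whereas the paper argues globally by contradiction; your localization makes the radical equality $\mathfrak{m}=\sqrt{(\varpi):f}$ explicit and the conclusion immediate.
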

\begin{proof}By the definition of a Krull domain and of a strongly $\varpi$-Krull ring, we only have to show that every weakly associated prime ideal of $\varpi$ (i.e., every weakly associated prime ideal of the principal ideal $(\varpi)_{A}$) in $A$ is of height $1$. Let $\mathfrak{q}$ be a weakly associated prime ideal of $\varpi$ and choose $f\in A$ such that $\mathfrak{q}$ is a minimal prime ideal over $(\varpi)_{A}:_{A}f$. If $\mathfrak{q}$ is of height $>1$, then $\mathfrak{q}A_{\mathfrak{p}}=A_{\mathfrak{p}}$ for all prime ideals $\mathfrak{p}$ of height $1$. Since $\mathfrak{q}$ was minimal over $(\varpi)_{A}:_{A}f$, this means that $((\varpi)_{A}:_{A}f)A_{\mathfrak{p}}=A_{\mathfrak{p}}$ for every prime ideal $\mathfrak{p}$ of height $1$ and thus $\frac{f}{\varpi}\in A_{\mathfrak{p}}$ for every prime ideal $\mathfrak{p}$ of height $1$. Since for a Krull domain $A$ we have $A=\bigcap_{\height(\mathfrak{p})=1}A_{\mathfrak{p}}$, this implies $f\in (\varpi)_{A}$, contradicting the assumption that $\mathfrak{q}$ is a proper ideal of $A$. \end{proof}
\begin{rmk}\label{Serre's criterion}Let us add a few words about the meaning of Definition \ref{Weakly and strongly Shilov} and Definition \ref{Weakly and strongly Krull} in the Noetherian case. First of all, we note that for $A$ Noetherian, the weakly (respectively, strongly) $\varpi$-Shilov condition and the weakly (respectively, strongly) $\varpi$-Krull condition are equivalent, by Krull's intersection theorem. Moreover, if $A$ is Noetherian and weakly $\varpi$-Shilov, then the local rings of $A$ at prime ideals minimal over $\varpi$ are discrete valuation rings - that is, the weak $\varpi$-Shilov condition is a ``$\varpi$-local" analog of the condition known as being ``regular in codimension $1$" in algebraic geometry. In particular, a Noetherian ring $A$ is weakly $\varpi$-Krull (or, equivalently, weakly $\varpi$-Shilov) for all non-unit non-zero-divisors $\varpi\in A$ if and only if the affine scheme $\Spec(A)$ is regular in codimension $1$. Similarly, in the condition that all associated primes of $\varpi$ in $A$ be minimal, which separates the weak and strong $\varpi$-Shilov conditions from each other, one readily recognizes a ``$\varpi$-local" analog of Serre's condition S2. In particular, by Serre's criterion for normality (see, for example, \cite{Eisenbud}, Theorem 11.5), a Noetherian domain $A$ is normal if and only if it is strongly $\varpi$-Shilov for all non-zero non-units $\varpi\in A$ if and only if it strongly $\varpi$-Krull for all non-zero non-units $\varpi\in A$. By studying the strong $\varpi$-Shilov condition in more detail, we will obtain a ``$\varpi$-local" generalization of Serre's criterion (for integral domains), see Remark \ref{Serre's criterion} below.\end{rmk}   
On a related note, we have the following ``$\varpi$-local" result on Noetherian rings.
\begin{lemma}\label{Noetherian implies strongly Krull}Let $\varpi$ be a non-zero-divisor and non-unit in a Noetherian ring $A$. If $A$ is integrally closed in $A[\varpi^{-1}]$, then every (weakly) associated prime ideal of $\varpi$ in $A$ is of height $1$. In particular, $A$ is weakly $\varpi$-Shilov if and only if it is strongly $\varpi$-Krull.\end{lemma}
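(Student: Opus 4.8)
The plan is to localize at an associated prime and then play the hypothesis ``$A$ is integrally closed in $A[\varpi^{-1}]$'' against the determinant trick. Since $A$ is Noetherian, $\wAss(\varpi)=\Ass(A/\varpi A)$, so it suffices to treat a prime $\mathfrak{p}\in\Ass(A/\varpi A)$. Because integral closure commutes with localization, $A_{\mathfrak{p}}$ is integrally closed in $A_{\mathfrak{p}}[\varpi^{-1}]=(A[\varpi^{-1}])_{\mathfrak{p}}$, and $\mathfrak{p}A_{\mathfrak{p}}\in\Ass(A_{\mathfrak{p}}/\varpi A_{\mathfrak{p}})$; moreover if $A_{\mathfrak{p}}$ turns out to be a discrete valuation ring then $\height\mathfrak{p}=\dim A_{\mathfrak{p}}=1$. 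Hence I may replace $A$ by $A_{\mathfrak{p}}$ and reduce to the following claim: if $(A,\mathfrak{m})$ is Noetherian local, $\varpi\in\mathfrak{m}$ is a non-zero-divisor, $\mathfrak{m}\in\Ass(A/\varpi A)$, and $A$ is integrally closed in $A[\varpi^{-1}]$, then $A$ is a discrete valuation ring.

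To prove the claim, pick $f\in A$ with $\mathfrak{m}=(\varpi A:_{A}f)$ and set $g:=f/\varpi\in A[\varpi^{-1}]$; since $\mathfrak{m}$ is proper we have $f\notin\varpi A$, so $g\notin A$, whereas $mf\in\varpi A$ for $m\in\mathfrak{m}$ gives $g\mathfrak{m}\subseteq A$, so $g\mathfrak{m}$ is an ideal of $A$. There are two cases. If $g\mathfrak{m}\subseteq\mathfrak{m}$, then $\mathfrak{m}$ is a finitely generated, $\varpi$-adically open (it contains $\varpi$) $A$-submodule of $A[\varpi^{-1}]$ stable under multiplication by $g$, so $g$ is integral over $A$ by the determinant trick (\cite{Swanson-Huneke}, Lemma 2.1.8, as recalled just before Lemma \ref{Complete integral closure and fractional ideals}); since $A$ is integrally closed in $A[\varpi^{-1}]$ this forces $g\in A$, a contradiction. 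Therefore $g\mathfrak{m}\not\subseteq\mathfrak{m}$, and as $A$ is local the ideal $g\mathfrak{m}$ must then equal $A$; thus $\mathfrak{m}$ is an invertible ideal, hence principal, say $\mathfrak{m}=(h)$, and $h$ is a non-zero-divisor (if $ah=0$ then $a\mathfrak{m}=ahA=0$, whence $a=0$ because $\mathfrak{m}$ is an invertible, a fortiori faithful, module). Since $\mathfrak{m}\neq 0$ (as $\varpi\neq 0$), a Noetherian local ring whose maximal ideal is principal and generated by a non-zero-divisor is a one-dimensional regular local ring, i.e.\ a discrete valuation ring, the height computation $\height\mathfrak{m}=1$ following from Krull's principal ideal theorem together with $\mathfrak{m}\neq 0$.

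For the last assertion: the argument above shows that, under the stated hypotheses, $A_{\mathfrak{p}}$ is a DVR for \emph{every} weakly associated prime $\mathfrak{p}$ of $\varpi$, hence a $\varpi$-adically separated valuation ring of rank one, which is precisely the strongly $\varpi$-Krull condition. On the other hand every minimal prime of $\varpi$ is weakly associated to $\varpi$, so for such $\mathfrak{p}$ the completion $\widehat{A_{\mathfrak{p}}}$ of the DVR $A_{\mathfrak{p}}$ is again a rank-one valuation ring, which is the weakly $\varpi$-Shilov condition. Thus both conditions hold unconditionally here and are in particular equivalent; one may also simply note that strongly $\varpi$-Krull always implies weakly $\varpi$-Shilov since minimal primes of $\varpi$ lie among its weakly associated primes. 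The one genuinely non-formal step is the dichotomy in the second paragraph: recognizing that integral-closedness of $A$ in $A[\varpi^{-1}]$ excludes the case $g\mathfrak{m}\subseteq\mathfrak{m}$ and thereby forces $\mathfrak{m}$ to be invertible; everything after that — the DVR conclusion, the height computation, and the deduction of the final sentence — is routine.
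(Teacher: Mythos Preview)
Your proof is correct and follows essentially the same approach as the paper: localize at an associated prime, observe that $g=f/\varpi$ satisfies $g\mathfrak{m}\subseteq A$, and use the determinant trick to rule out $g\mathfrak{m}\subseteq\mathfrak{m}$, forcing $\mathfrak{m}$ to be invertible and hence principal. The minor differences are that the paper first passes to the reduced quotient (which you avoid by using invertibility of $\mathfrak{m}$ to see the generator is a non-zero-divisor) and only concludes $\height\mathfrak{p}=1$, whereas you push through to the slightly stronger statement that $A_{\mathfrak{p}}$ is a DVR; your deduction of the final sentence (both conditions hold outright) is also a bit more direct than the paper's, which argues weakly $\varpi$-Shilov $\Rightarrow$ strongly $\varpi$-Shilov $\Rightarrow$ strongly $\varpi$-Krull via Krull's intersection theorem.
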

\begin{proof}The second assertion follows from the first. Indeed, if $A$ is weakly $\varpi$-Shilov, then the first assertion immediately implies that $A$ is strongly $\varpi$-Shilov and then the Noetherian assumption implies that $A$ is strongly $\varpi$-Krull, by Krull's intersection theorem. It remains to prove the first assertion of the lemma. Since the hypothesis and conclusion are invariant under passage to the reduced quotient, we may assume that $A$ is reduced. Since $A$ is Noetherian, there is no distinction between weakly associated and associated prime ideals of $\varpi$ in $A$. Let $\mathfrak{p}$ be an associated prime ideal of $\varpi$ in $A$ and choose $f\in A$ with $\mathfrak{p}=(\varpi)_{A}:_{A}f$. Then $\mathfrak{p}A_{\mathfrak{p}}=(\varpi)_{A_{\mathfrak{p}}}:_{A_{\mathfrak{p}}}f$. We prove that $\mathfrak{p}A_{\mathfrak{p}}$ is a principal ideal of $A_{\mathfrak{p}}$, in which case $\mathfrak{p}$ must be of height $1$ by Krull's principal ideal theorem. By definition $\frac{f}{\varpi}\mathfrak{p}A_{\mathfrak{p}}\subseteq A_{\mathfrak{p}}$. If $\frac{f}{\varpi}\mathfrak{p}A_{\mathfrak{p}}\subseteq \mathfrak{p}A_{\mathfrak{p}}$, then, since $A_{\mathfrak{p}}$ is integrally closed in $A_{\mathfrak{p}}[\varpi^{-1}]$ and Noetherian, \cite{Swanson-Huneke}, Lemma 2.1.8, implies that $\frac{f}{\varpi}\in A_{\mathfrak{p}}$, in contradiction to $\mathfrak{p}A_{\mathfrak{p}}$ being a proper ideal. Thus $\frac{f}{\varpi}\mathfrak{p}A_{\mathfrak{p}}=A_{\mathfrak{p}}$, so there exists $z\in\mathfrak{p}A_{\mathfrak{p}}$ with $\frac{f}{\varpi}z=1$ in $A_{\mathfrak{p}}$. Let $g, h\in A_{\mathfrak{p}}$ with $fg=h\varpi$. Since $\varpi=fz$ is a non-zero-divisor in $A_{\mathfrak{p}}$, so are $z$ and $f$ (if $f$ was a zero-divisor, then, since $A_{\mathfrak{p}}$ is reduced, $f$ would belong to some minimal prime ideal of $A_{\mathfrak{p}}$ and then $\varpi=fz$ would also belong to that minimal prime ideal). We can rewrite the equation $fg=h\varpi$ as $fg=fhz$, so $g\in (z)_{A_{\mathfrak{p}}}$ since $f$ is a non-zero-divisor. It follows that $(z)_{A_{\mathfrak{p}}}=(\varpi)_{A_{\mathfrak{p}}}:_{A_{\mathfrak{p}}}f=\mathfrak{p}A_{\mathfrak{p}}$.  \end{proof}
We also introduce an intermediate condition between the weak and strong $\varpi$-Shilov properties.
\begin{mydef}[$\varpi$-Shilov rings and $\varpi$-Krull rings]Let $A$ be a ring and let $\varpi\in A$ be a non-zero-divisor and a non-unit. We say that $A$ is $\varpi$-Shilov if for every weakly associated prime ideal $\mathfrak{p}$ of $\varpi$ in $A$ the $\varpi$-adic completion of the local ring $A_{\mathfrak{p}}$ is a valuation ring. In other words, $A$ is $\varpi$-Shilov if for every weakly associated prime $\mathfrak{p}$ of $\varpi$ in $A$ the local ring $A_{\mathfrak{p}}$ is $\varpi$-valuative, i.e., the $\varpi$-adically separated quotient $A_{\mathfrak{p}}/\bigcap_{n\geq1}\varpi^{n}A_{\mathfrak{p}}$ is a valuation ring (cf.~Proposition \ref{Valuative rings and valuation rings} and Corollary \ref{Valuative rings and completion}). 

We say that $A$ is $\varpi$-Krull if, for every weakly associated prime ideal $\mathfrak{p}$ of $\varpi$ in $A$, the local ring $A_{\mathfrak{p}}$ itself is a $\varpi$-adically separated valuation ring.\end{mydef}
\begin{rmk}Note that, by Corollary \ref{Valuative rings and completion 2}, the $\varpi$-adic completion of the local ring $A_{\mathfrak{p}}$ at a prime ideal $\mathfrak{p}$ containing $\varpi$ is a valuation ring of rank $1$ if and only if the $\varpi$-adic completion is a valuation ring and the prime ideal $\mathfrak{p}$ is minimal over $\varpi$. In particular, every $\varpi$-Shilov (respectively, $\varpi$-Krull) ring is also weakly $\varpi$-Shilov (respectively, weakly $\varpi$-Krull). It is also clear that every strongly $\varpi$-Shilov ring (respectively, strongly $\varpi$-Krull ring) is $\varpi$-Shilov (respectively, $\varpi$-Krull).\end{rmk} 
\begin{lemma}\label{Completion and local rings}Let $\varpi$ be a non-zero-divisor and a non-unit in a ring $A$ and let $\widehat{A}$ be the $\varpi$-adic completion of $A$. For every prime ideal $\mathfrak{p}$ of $A$ containing $\varpi$, the canonical map $A\to \widehat{A}$ induces an isomorphism between the $\varpi$-adic completion $\widehat{A_{\mathfrak{p}}}$ of $A_{\mathfrak{p}}$ and the $\varpi$-adic completion of $\widehat{A}_{\mathfrak{p}\widehat{A}}$.\end{lemma}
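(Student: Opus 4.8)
The plan is to show that, for every $n\geq1$, the canonical maps induce a ring isomorphism $\widehat{A}_{\mathfrak{p}\widehat{A}}/\varpi^{n}\widehat{A}_{\mathfrak{p}\widehat{A}}\cong A_{\mathfrak{p}}/\varpi^{n}A_{\mathfrak{p}}$ compatible with the transition surjections, and then to pass to the inverse limit over $n$. Since the $\varpi$-adic completion of a ring $B$ is by definition $\varprojlim_{n} B/\varpi^{n}B$, this immediately yields the asserted isomorphism between $\widehat{A_{\mathfrak{p}}}$ and the $\varpi$-adic completion of $\widehat{A}_{\mathfrak{p}\widehat{A}}$.

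First I would record that $\mathfrak{p}\widehat{A}$ is in fact a prime ideal of $\widehat{A}$, so that localizing at it makes sense: since $(\varpi)$ is finitely generated, the standard fact that $\varpi$-adic completion does not alter the quotients by powers of $\varpi$ gives canonical isomorphisms $\widehat{A}/\varpi^{n}\widehat{A}\cong A/\varpi^{n}A$ for all $n$, and because $\varpi\in\mathfrak{p}$ the case $n=1$ identifies $\widehat{A}/\mathfrak{p}\widehat{A}$ with $(A/\varpi A)/(\mathfrak{p}/\varpi A)\cong A/\mathfrak{p}$, a domain. Next, using that localization commutes with passing to quotients, I would compute $\widehat{A}_{\mathfrak{p}\widehat{A}}/\varpi^{n}\widehat{A}_{\mathfrak{p}\widehat{A}}\cong(\widehat{A}/\varpi^{n}\widehat{A})_{\overline{\mathfrak{p}\widehat{A}}}$, where $\overline{\mathfrak{p}\widehat{A}}$ is the image of $\mathfrak{p}\widehat{A}$ under $\widehat{A}/\varpi^{n}\widehat{A}\cong A/\varpi^{n}A$. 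The crucial point, which is exactly where the hypothesis $\varpi\in\mathfrak{p}$ enters, is that $\mathfrak{p}+\varpi^{n}A=\mathfrak{p}$, so this image is the prime $\overline{\mathfrak{p}}_{n}\coloneqq\mathfrak{p}/\varpi^{n}A$ of $A/\varpi^{n}A$ and its complement is precisely the image of $A\setminus\mathfrak{p}$. Hence $(\widehat{A}/\varpi^{n}\widehat{A})_{\overline{\mathfrak{p}\widehat{A}}}\cong(A/\varpi^{n}A)_{\overline{\mathfrak{p}}_{n}}\cong A_{\mathfrak{p}}/\varpi^{n}A_{\mathfrak{p}}$, again because localization commutes with quotients. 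Since every isomorphism in this chain is induced by the canonical ring homomorphisms, they are compatible with the surjections $(-)/\varpi^{n+1}\to(-)/\varpi^{n}$, and taking $\varprojlim_{n}$ gives the result, visibly induced by the map $A\to\widehat{A}$.

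I do not anticipate a genuine obstacle here: the argument is a formal manipulation once the standard inputs are cited, namely $\widehat{A}/\varpi^{n}\widehat{A}\cong A/\varpi^{n}A$ for the principal ideal $(\varpi)$ and the commutation of localization with quotients. The only place requiring a little care is the bookkeeping in the middle step — verifying that $\mathfrak{p}\widehat{A}$ is prime and that it, together with the multiplicative set $A\setminus\mathfrak{p}$, descends correctly modulo $\varpi^{n}$ — and all of this rests on the single observation that $\varpi^{n}\in\mathfrak{p}$.
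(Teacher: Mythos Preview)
Your proposal is correct and follows essentially the same approach as the paper: reduce to showing $\widehat{A}_{\mathfrak{p}\widehat{A}}/\varpi^{n}\widehat{A}_{\mathfrak{p}\widehat{A}}\cong A_{\mathfrak{p}}/\varpi^{n}A_{\mathfrak{p}}$ for all $n\geq1$ via the chain $(\widehat{A}/\varpi^{n}\widehat{A})_{(\mathfrak{p}\widehat{A}/\varpi^{n}\widehat{A})}\cong (A/\varpi^{n}A)_{(\mathfrak{p}/\varpi^{n}A)}$, then pass to the inverse limit. You are simply more explicit than the paper about why $\mathfrak{p}\widehat{A}$ is prime and about compatibility with the transition maps, but the argument is the same.
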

\begin{proof}It suffices to prove that \begin{equation*}\widehat{A}_{\mathfrak{p}\widehat{A}}/\varpi^{n}\widehat{A}_{\mathfrak{p}\widehat{A}}\cong A_{\mathfrak{p}}/\varpi^{n}A_{\mathfrak{p}}\end{equation*}for all $n\geq1$ and all prime ideals $\mathfrak{p}$ of $A$ containing $\varpi$. We calculate: \begin{align*}\widehat{A}_{\mathfrak{p}\widehat{A}}/\varpi^{n}\widehat{A}_{\mathfrak{p}\widehat{A}}\cong (\widehat{A}/\varpi^{n}\widehat{A})_{(\mathfrak{p}\widehat{A}/\varpi^{n}\widehat{A})} \\ \cong (A/\varpi^{n}A)_{(\mathfrak{p}/\varpi^{n}A)}\cong A_{\mathfrak{p}}/\varpi^{n}A_{\mathfrak{p}}.\end{align*}\end{proof} 
\begin{lemma}\label{Weakly Krull rings and completion}Let $\varpi\in A$ be a non-zero-divisor and non-unit in a ring $A$. Then $A$ is weakly $\varpi$-Shilov if and only if the $\varpi$-adic completion $\widehat{A}$ of $A$ is weakly $\varpi$-Shilov.\end{lemma}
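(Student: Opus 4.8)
The plan is to reduce the statement to Lemma~\ref{Completion and local rings} together with the elementary fact that $\varpi$-adic completion leaves the quotients $A/\varpi^{n}A$ unchanged. First I would record two preliminary observations. (i) Since the canonical map $A\to\widehat{A}$ induces isomorphisms $A/\varpi^{n}A\xrightarrow{\sim}\widehat{A}/\varpi^{n}\widehat{A}$ for all $n\geq1$, the element $\varpi$ is still a non-unit in $\widehat{A}$ (its quotient by $\varpi$ is $A/\varpi A\neq0$) and still a non-zero-divisor in $\widehat{A}$: if $\varpi x=0$ for $x\in\widehat{A}$, represent $x$ by a $\varpi$-adic Cauchy sequence $(a_{k})$ in $A$; then for every $n$ one has $\varpi a_{k}\in\varpi^{n}A$ for $k\gg0$, hence $a_{k}\in\varpi^{n-1}A$ for $k\gg0$ because $\varpi$ is a non-zero-divisor in $A$, so $a_k\to0$ and $x=0$. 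Thus the notion ``weakly $\varpi$-Shilov'' is meaningful for $\widehat{A}$. (ii) Taking $n=1$, the isomorphism $A/\varpi A\cong\widehat{A}/\varpi\widehat{A}$ gives, by pullback of prime ideals, an inclusion-preserving bijection between the primes of $A$ containing $\varpi$ and the primes of $\widehat{A}$ containing $\varpi$; this bijection sends $\mathfrak{p}$ to $\mathfrak{p}\widehat{A}$ (the ideals $\mathfrak{p}\widehat A$ and the pulled-back prime both contain $\varpi\widehat A$ and have the same image in $\widehat A/\varpi\widehat A$), and it restricts to a bijection between the minimal primes of $\varpi$ in $A$ and the minimal primes of $\varpi$ in $\widehat{A}$.

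Next I would fix a minimal prime $\mathfrak{p}$ of $\varpi$ in $A$ and invoke Lemma~\ref{Completion and local rings}: the map $A\to\widehat{A}$ induces an isomorphism between the $\varpi$-adic completion $\widehat{A_{\mathfrak{p}}}$ of $A_{\mathfrak{p}}$ and the $\varpi$-adic completion of $\widehat{A}_{\mathfrak{p}\widehat{A}}$. By observation~(ii), $\mathfrak{p}\widehat{A}$ is precisely the corresponding minimal prime of $\varpi$ in $\widehat{A}$, and the $\varpi$-adic completion of $\widehat{A}_{\mathfrak{p}\widehat{A}}$ is exactly the ring required to be a rank-one valuation ring by Definition~\ref{Weakly and strongly Shilov} applied to $\widehat{A}$ at that prime. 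Hence $\widehat{A_{\mathfrak{p}}}$ is a valuation ring of rank one if and only if the $\varpi$-adic completion of $\widehat{A}_{\mathfrak{p}\widehat{A}}$ is a valuation ring of rank one.

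Finally I would quantify over all minimal primes: by observation~(ii) every minimal prime of $\varpi$ in $\widehat{A}$ has the form $\mathfrak{p}\widehat{A}$ for a unique minimal prime $\mathfrak{p}$ of $\varpi$ in $A$, so the previous paragraph shows that the defining condition for $A$ to be weakly $\varpi$-Shilov holds if and only if the defining condition for $\widehat{A}$ does. This gives the lemma. I do not expect a genuine obstacle: the argument is pure bookkeeping, and the only points that need care are checking that $\varpi$ remains a non-zero-divisor in $\widehat{A}$ (so that the statement is even well-posed) and correctly matching the completion of $\widehat{A}_{\mathfrak{p}\widehat{A}}$ produced by Lemma~\ref{Completion and local rings} with the local ring demanded by Definition~\ref{Weakly and strongly Shilov} for $\widehat{A}$.
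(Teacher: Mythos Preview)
Your proof is correct and follows essentially the same approach as the paper: both arguments use the bijection between minimal primes of $\varpi$ in $A$ and in $\widehat{A}$ (the paper phrases this as the homeomorphism $\Spf(\widehat{A})\simeq\Spf(A)$) and then invoke Lemma~\ref{Completion and local rings}. Your version is simply more explicit about the well-posedness check that $\varpi$ remains a non-zero-divisor in $\widehat{A}$, which the paper leaves implicit.
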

\begin{proof}The map $A\to \widehat{A}$ induces a homeomorphism $\Spf(\widehat{A})\simeq\Spf(A)$. In particular, every prime ideal $\mathfrak{p}$ of $\widehat{A}$ minimal over $\varpi$ is of the form $\mathfrak{p}=(\mathfrak{p}\cap A)\widehat{A}$ and $\mathfrak{p}\cap A$ is a prime ideal of $A$ minimal over $\varpi$. Hence the assertion follows from Lemma \ref{Completion and local rings}.\end{proof}
\begin{lemma}\label{Strongly Krull rings and completion}Let $\varpi\in A$ be a non-zero-divisor and non-unit in a ring $A$. Then $A$ is $\varpi$-Shilov (respectively, strongly $\varpi$-Shilov) if and only if the $\varpi$-adic completion $\widehat{A}$ of $A$ is $\varpi$-Shilov (respectively, strongly $\varpi$-Shilov).\end{lemma}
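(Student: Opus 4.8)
The plan is to mimic the proof of Lemma~\ref{Weakly Krull rings and completion}, replacing the minimal primes of $\varpi$ throughout by the weakly associated primes of $\varpi$. Two inputs are needed. First, that the canonical map $A\to\widehat{A}$ identifies the weakly associated primes of $\varpi$ in $A$ with those of $\varpi$ in $\widehat{A}$, compatibly with the homeomorphism $\Spf(\widehat{A})\simeq\Spf(A)$ used in Lemma~\ref{Weakly Krull rings and completion}; concretely, $\mathfrak{p}\mapsto\mathfrak{p}\widehat{A}$ should be a bijection from the set of weakly associated primes of $\varpi$ in $A$ onto the set of weakly associated primes of $\varpi$ in $\widehat{A}$, with inverse $\mathfrak{q}\mapsto\mathfrak{q}\cap A$. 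Second, Lemma~\ref{Completion and local rings}: for a prime $\mathfrak{p}$ of $A$ containing $\varpi$, the map $A\to\widehat{A}$ induces a ring isomorphism between $\widehat{A_{\mathfrak{p}}}$ and the $\varpi$-adic completion of $\widehat{A}_{\mathfrak{p}\widehat{A}}$. Granting these, for each weakly associated prime $\mathfrak{q}$ of $\varpi$ in $\widehat{A}$, writing $\mathfrak{p}=\mathfrak{q}\cap A$, the ring $\widehat{A_{\mathfrak{p}}}$ is isomorphic to the $\varpi$-adic completion of $\widehat{A}_{\mathfrak{q}}$; since being a valuation ring, and being a valuation ring of rank one, are invariant under ring isomorphism, the defining conditions of the $\varpi$-Shilov (respectively, strongly $\varpi$-Shilov) property for $A$ and for $\widehat{A}$ match up prime by prime, and both equivalences follow at once.

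The only step requiring real argument is the first input. Set $\bar{A}\coloneqq A/\varpi A$, which is identified with $\widehat{A}/\varpi\widehat{A}$; by definition $\Spf(A)=\Spec(\bar{A})=\Spf(\widehat{A})$, and this identifies primes of $A$ containing $\varpi$ with primes of $\widehat{A}$ containing $\varpi$ via $\mathfrak{p}\leftrightarrow\mathfrak{p}\widehat{A}$, both corresponding to the preimage of a prime $\bar{\mathfrak{p}}$ of $\bar{A}$. For any $f\in A$ one has $\varpi A\subseteq(\varpi)_{A}:_{A}f$ and $\bigl((\varpi)_{A}:_{A}f\bigr)/\varpi A=\Ann_{\bar{A}}(\bar{f})$, so $\mathfrak{p}$ is minimal over $(\varpi)_{A}:_{A}f$ if and only if $\bar{\mathfrak{p}}$ is minimal over $\Ann_{\bar{A}}(\bar{f})$. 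As $A\twoheadrightarrow\bar{A}$, every element of $\bar{A}$ arises as such an $\bar{f}$; hence $\mathfrak{p}$ is a weakly associated prime of $\varpi$ in $A$ if and only if $\bar{\mathfrak{p}}$ is minimal over $\Ann_{\bar{A}}(\bar{f})$ for some $\bar{f}\in\bar{A}$, a condition depending only on the ring $\bar{A}$. Running the identical computation for $\widehat{A}$ shows that a prime of $\widehat{A}$ containing $\varpi$ is a weakly associated prime of $\varpi$ in $\widehat{A}$ under exactly the same condition on its image in $\bar{A}$; this gives the asserted bijection.

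I expect the main obstacle to be precisely this point --- recognizing that the set of weakly associated primes of the ideal $(\varpi)$, viewed as a subset of the spectrum, depends only on the quotient ring $A/\varpi A$, together with the bookkeeping matching $\mathfrak{p}$ with $\mathfrak{p}\widehat{A}$ and with $\bar{\mathfrak{p}}$. (Alternatively one could localize first: $\mathfrak{p}$ is a weakly associated prime of $\varpi$ in $A$ iff $\mathfrak{p}A_{\mathfrak{p}}$ is one in $A_{\mathfrak{p}}$, and $A_{\mathfrak{p}}/\varpi^{n}A_{\mathfrak{p}}\cong\widehat{A}_{\mathfrak{p}\widehat{A}}/\varpi^{n}\widehat{A}_{\mathfrak{p}\widehat{A}}$ by Lemma~\ref{Completion and local rings}, so weak associatedness is detected after completion; but the quotient-ring formulation is cleaner and avoids a second appeal to that lemma.) Everything else is a direct application of Lemma~\ref{Completion and local rings} together with the isomorphism-invariance of the two valuation-theoretic conditions, exactly parallel to the proof of Lemma~\ref{Weakly Krull rings and completion}.
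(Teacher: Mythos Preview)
Your proof is correct and follows essentially the same strategy as the paper's: identify $\wAss_A(\varpi)$ with $\wAss_{\widehat{A}}(\varpi)$ under $\mathfrak{p}\leftrightarrow\mathfrak{p}\widehat{A}$ and then apply Lemma~\ref{Completion and local rings}. Your route to the bijection --- passing to $\bar A=A/\varpi A\cong\widehat{A}/\varpi\widehat{A}$ and using $\bigl((\varpi):_A f\bigr)/(\varpi)=\Ann_{\bar A}(\bar f)$ --- is a touch cleaner and more symmetric than the paper's (which approximates an element $f\in\widehat{A}$ by some $g\in A$ and identifies $(\varpi)_{\widehat{A}}:_{\widehat{A}}g$ with the $\varpi$-adic closure of $(\varpi)_A:_A g$, spelling out only one direction of the bijection explicitly), but the two arguments are equivalent in substance.
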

\begin{proof}Let $\mathfrak{p}$ be a weakly associated prime ideal of $\varpi$ in $\widehat{A}$. Let $f\in\widehat{A}$ such that $\mathfrak{p}$ is minimal over $J=(\varpi)_{\widehat{A}}:_{\widehat{A}}f$ in $\widehat{A}$. Choose $g\in A$ with $f-g\in (\varpi)_{\widehat{A}}$. Then $J=(\varpi)_{\widehat{A}}:_{\widehat{A}}g$. Moreover, since $J$ is open with respect to the $\varpi$-adic topology on $\widehat{A}$, it is equal to the (topological) closure in $\widehat{A}$ of \begin{equation*}J\cap A=((\varpi)_{\widehat{A}}\cap A):_{A}g=(\varpi)_{A}:_{A}g.\end{equation*}Since the restriction map $\Spf(\widehat{A})\to\Spf(A)$ is a homeomorphism, $\mathfrak{p}$ being minimal over $J$ implies that the prime ideal $\mathfrak{p}\cap A$ of $A$ is minimal over $J\cap A=(\varpi)_{A}:_{A}g$. Thus we have shown that the restriction $\mathfrak{p}\cap A$ to $A$ of any weakly associated prime of $\varpi$ in $\widehat{A}$ is a weakly associated prime of $\varpi$ in $A$. Since for any open prime ideal $\mathfrak{p}\subsetneq \widehat{A}$ we have $\mathfrak{p}=(\mathfrak{p}\cap A)\widehat{A}$, the assertion of the lemma follows from Lemma \ref{Completion and local rings}.\end{proof}
On the other hand, the weak and strong $\varpi$-Krull properties are not preserved under $\varpi$-adic completion, as is demonstrated by the following example due to Heitmann and Ma. 
\begin{example}[cf.~Heitmann-Ma, \cite{Heitmann-Ma25}, Proposition 4.10]\label{Example of Heitmann-Ma}Let $R$ be a complete Noetherian local domain of mixed characteristic $(0, p)$ of dimension $\dim(R)\geq2$ and let $\widehat{R^{+}}$ be the $p$-adic completion of the absolute integral closure $R^{+}$ of $R$. Heitmann proved in \cite{Heitmann22} that $\widehat{R^{+}}$ is an integral domain. Moreover, by Corollary \ref{Absolute integral closure} below, $R^{+}$ is strongly $p$-Krull and $\widehat{R^{+}}$ is strongly $p$-Shilov. In particular, it is weakly $p$-Shilov, i.e., for every minimal prime ideal $\mathfrak{p}$ of $p$ in $\widehat{R^{+}}$, the $p$-adic completion of the local ring at $\mathfrak{p}$ is a valuation ring of rank one (note that the weak $p$-Shilov property in this example can also be proved by elementary means, without reference to Corollary \ref{Absolute integral closure}). However, for the $p$-normalized rank one valuation $v$ corresponding to any minimal prime $\mathfrak{p}$ of $p$, Heitmann and Ma constructed a non-zero element $g\in \widehat{R^{+}}$ such that $v(g)=0$, see \cite{Heitmann-Ma25}, Proposition 4.10. This means that (the image of) $g$ is a non-zero element of $(\widehat{R^{+}})_{\mathfrak{p}}$ satisfying\begin{equation*}g\in \bigcap_{n\geq1}p^{n}(\widehat{R^{+}})_{\mathfrak{p}}.\end{equation*}In particular, the ring $\widehat{R^{+}}$ is not weakly $p$-Krull and, a fortiori, not strongly $p$-Krull.\end{example}        
\begin{lemma}\label{Finite products}Let $(A_{i})_{i=1}^{n}$ be a finite family of rings with non-zero-divisor and non-units $\varpi_{i}\in A_{i}$. Then the product $A=\prod_{i}A_{i}$ is weakly/strongly $\varpi$-Shilov (respectively, $\varpi$-Shilov, respectively, weakly/strongly $\varpi$-Krull, respectively, $\varpi$-Krull), with $\varpi=(\varpi_{1},\dots, \varpi_{n})$ if and only if each $A_{i}$ has the same property with respect to the element $\varpi_{i}$. If the local rings of $A_{i}$ at all prime ideals minimal over $\varpi_{i}$ (respectively, at all weakly associated prime ideals of $\varpi_{i}$) are discrete valuation rings, then the same is true of the local rings of $A$ at minimal (respectively, weakly associated prime ideals) of $\varpi$. \end{lemma}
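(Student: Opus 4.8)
The plan is to reduce everything to the well-known description of the prime spectrum of a finite product and then to check that each of the local conditions occurring in the various definitions transports along the resulting isomorphisms of localizations. Write $A=\prod_{i=1}^{n}A_{i}$ and $\varpi=(\varpi_{1},\dots,\varpi_{n})$; since every $\varpi_{i}$ is a non-zero-divisor and a non-unit in $A_{i}$, the element $\varpi$ is a non-zero-divisor and a non-unit in $A$, so all the relevant properties make sense. First I would recall the standard facts: every prime ideal of $A$ has the form $\mathfrak{p}^{(i)}(\mathfrak{p}_{i})\coloneqq A_{1}\times\cdots\times\mathfrak{p}_{i}\times\cdots\times A_{n}$ for a unique index $i$ and a prime ideal $\mathfrak{p}_{i}$ of $A_{i}$; primes attached to distinct indices are incomparable; every specialization takes place within a single index; and the projection $A\to A_{i}$ induces an isomorphism $A_{\mathfrak{p}^{(i)}(\mathfrak{p}_{i})}\cong(A_{i})_{\mathfrak{p}_{i}}$ carrying the image of $\varpi$ to the image of $\varpi_{i}$.

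Next I would identify the primes minimal over $\varpi$ and the weakly associated primes of $\varpi$ in $A$. For $f=(f_{1},\dots,f_{n})\in A$ one has $(\varpi)_{A}:_{A}f=\prod_{i}\bigl((\varpi_{i})_{A_{i}}:_{A_{i}}f_{i}\bigr)$, and in particular $(\varpi)_{A}=\prod_{i}(\varpi_{i})_{A_{i}}$. Combining this product decomposition with the incomparability of primes at distinct indices and the fact that specialization stays within an index, $\mathfrak{p}^{(i)}(\mathfrak{p}_{i})$ is minimal over $(\varpi)_{A}$ (respectively, over $(\varpi)_{A}:_{A}f$) if and only if $\mathfrak{p}_{i}$ is minimal over $(\varpi_{i})_{A_{i}}$ (respectively, over $(\varpi_{i})_{A_{i}}:_{A_{i}}f_{i}$). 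Consequently the set of primes of $A$ minimal over $\varpi$ (respectively, the set $\wAss(\varpi)$ of weakly associated primes of $\varpi$ in $A$) is the disjoint union, over $i$, of the set of primes of $A_{i}$ minimal over $\varpi_{i}$ (respectively, of $\wAss(\varpi_{i})$), under $\mathfrak{p}_{i}\mapsto\mathfrak{p}^{(i)}(\mathfrak{p}_{i})$.

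Finally I would conclude. Every local condition appearing in Definitions \ref{Weakly and strongly Shilov}, \ref{Weakly and strongly Krull} and in the statement "$A_{\mathfrak{p}}$ is a discrete valuation ring" is invariant under the isomorphism $A_{\mathfrak{p}^{(i)}(\mathfrak{p}_{i})}\cong(A_{i})_{\mathfrak{p}_{i}}$: the $\varpi$-adic completion is functorial, so it transports this isomorphism (which matches the $\varpi$-adic and $\varpi_{i}$-adic filtrations) to an isomorphism of completions; the rank of a valuation ring, the property of being a valuation ring or a discrete valuation ring, and $\varpi$-adic separatedness of the localization are all preserved. Combining this with the index decomposition of the previous paragraph, $A$ has any one of the six listed properties if and only if the corresponding local condition holds at every $\mathfrak{p}^{(i)}(\mathfrak{p}_{i})$ in the relevant index set, which happens if and only if each $A_{i}$ has the corresponding property with respect to $\varpi_{i}$; running the same argument with "discrete valuation ring" in place of the valuation-ring conditions gives the last assertion. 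The argument is essentially bookkeeping and I do not expect any serious obstacle; the only point needing care is the correct matching of "minimal over" and "weakly associated to" with the product decomposition of $(\varpi)_{A}:_{A}f$, which is exactly what the incomparability and within-a-component specialization of primes in $\prod_{i}A_{i}$ provide.
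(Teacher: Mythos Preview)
Your proposal is correct and follows essentially the same approach as the paper: both use that $\Spec(\prod_i A_i)$ is the disjoint union of the $\Spec(A_i)$, identify localizations $A_{\mathfrak{p}^{(i)}(\mathfrak{p}_i)}\cong (A_i)_{\mathfrak{p}_i}$, and match minimal and weakly associated primes of $\varpi$ with those of the $\varpi_i$. Your write-up is simply more explicit than the paper's two-sentence proof, particularly in spelling out the product decomposition of $(\varpi)_A:_A f$ to handle weakly associated primes.
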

\begin{proof}The spectrum of $A$, as a scheme, is the disjoint union of $\Spec(A_{i})$, so for every prime ideal $\mathfrak{p}$ the local ring $A_{\mathfrak{p}}$ coincides with the local ring of some $A_{i}$ at a corresponding prime ideal of $A_{i}$. Moreover, $A/(\varpi)_{A}=\prod_{i}A_{i}/(\varpi_{i})_{A}$, so a prime ideal $\mathfrak{p}$ of $A$ is minimal over $\mathfrak{p}$ (respectively, is a weakly associated prime of $\varpi$) if and only if it corresponds to a prime ideal of some $A_{i}$ which is minimal over $\varpi_{i}$ (respectively, a weakly associated prime of $\varpi_{i}$).\end{proof} 
\begin{lemma}\label{Weakly associated vs. minimal}Let $\varpi$ be a non-zero-divisor and non-unit in a ring $A$. If $A$ is strongly $\varpi$-Shilov, then every weakly associated prime ideal of $\varpi$ in $A$ is a minimal prime ideal over $\varpi$.\end{lemma}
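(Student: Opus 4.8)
The plan is to reduce everything to a local statement about $A_{\mathfrak{p}}$ and then invoke Corollary \ref{Valuative rings and completion 2}. So, fix a weakly associated prime ideal $\mathfrak{p}$ of $\varpi$ in $A$; by definition $\mathfrak{p}$ is minimal over $(\varpi)_{A}:_{A}f$ for some $f\in A$, and in particular $\varpi\in\mathfrak{p}$, so $\mathfrak{p}A_{\mathfrak{p}}$ is the maximal ideal of the local ring $A_{\mathfrak{p}}$ and it contains $\varpi$. Since $\varpi$ is a non-zero-divisor in $A$, its image in $A_{\mathfrak{p}}$ is again a non-zero-divisor, and it is a non-unit there because it lies in $\mathfrak{p}A_{\mathfrak{p}}$; hence the hypotheses of Corollary \ref{Valuative rings and completion 2} are satisfied for the pair $(A_{\mathfrak{p}},\varpi)$.

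By the definition of a strongly $\varpi$-Shilov ring, the $\varpi$-adic completion $\widehat{A_{\mathfrak{p}}}$ is a valuation ring of rank $1$. I would then apply Corollary \ref{Valuative rings and completion 2} directly to conclude that $\sqrt{(\varpi)_{A_{\mathfrak{p}}}}$ equals the maximal ideal $\mathfrak{p}A_{\mathfrak{p}}$ of $A_{\mathfrak{p}}$ (the $\varpi$-valuativity of $A_{\mathfrak{p}}$ is part of the same statement but is not needed for the conclusion).

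Finally, I would translate this back to $A$. Suppose $\mathfrak{q}$ is a prime ideal of $A$ with $(\varpi)_{A}\subseteq\mathfrak{q}\subseteq\mathfrak{p}$. Then $\mathfrak{q}A_{\mathfrak{p}}$ is a prime ideal of $A_{\mathfrak{p}}$ containing $\varpi$, hence it contains $\sqrt{(\varpi)_{A_{\mathfrak{p}}}}=\mathfrak{p}A_{\mathfrak{p}}$, forcing $\mathfrak{q}A_{\mathfrak{p}}=\mathfrak{p}A_{\mathfrak{p}}$ and therefore $\mathfrak{q}=\mathfrak{p}$ (primes contained in $\mathfrak{p}$ are in bijection with primes of $A_{\mathfrak{p}}$). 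Thus there is no prime strictly between $(\varpi)_{A}$ and $\mathfrak{p}$, i.e., $\mathfrak{p}$ is minimal over $\varpi$ in $A$, as desired.

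\textbf{Main obstacle.} There is essentially no deep obstacle here: the statement is a formal consequence of Corollary \ref{Valuative rings and completion 2} together with the description of the radical of a valuation ring's principal ideal. The only points that require a moment of care are verifying that the hypotheses ``$\varpi$ is a non-zero-divisor and a non-unit in a local ring'' persist after localizing at $\mathfrak{p}$, and the standard correspondence between primes of $A$ contained in $\mathfrak{p}$ and primes of $A_{\mathfrak{p}}$; both are routine.
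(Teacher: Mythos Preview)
Your proof is correct and follows essentially the same approach as the paper's: localize at $\mathfrak{p}$, use the strongly $\varpi$-Shilov hypothesis to get that $\widehat{A_{\mathfrak{p}}}$ is a rank-$1$ valuation ring, and then conclude that $\mathfrak{p}A_{\mathfrak{p}}$ is minimal over $\varpi$. The only cosmetic difference is that the paper cites \cite{FK}, Ch.~0, Proposition 6.7.3 directly and uses $A_{\mathfrak{p}}/\varpi A_{\mathfrak{p}}=\widehat{A_{\mathfrak{p}}}/\varpi\widehat{A_{\mathfrak{p}}}$ to transfer minimality, whereas you invoke Corollary~\ref{Valuative rings and completion 2}, which packages that same input.
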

\begin{proof}Let $\mathfrak{p}$ be a weakly associated prime ideal of $\varpi$. By hypothesis, the $\varpi$-adic completion $\widehat{A_{\mathfrak{p}}}$ of $A_{\mathfrak{p}}$ is a valuation ring of rank $1$. By \cite{FK}, Ch.~0, Proposition 6.7.3, the maximal ideal of $\widehat{A_{\mathfrak{p}}}$ is minimal over $\varpi$. Since $A_{\mathfrak{p}}/\varpi A_{\mathfrak{p}}=\widehat{A_{\mathfrak{p}}}/\varpi \widehat{A_{\mathfrak{p}}}$, this means that the maximal ideal $\mathfrak{p}A_{\mathfrak{p}}$ of $A_{\mathfrak{p}}$ is minimal over $\varpi$. The assertion follows. \end{proof}
Our interest in (strongly) $\varpi$-Shilov rings is motivated by the following property of weakly associated prime ideals.
\begin{prop}\label{Weakly associated prime ideals and boundaries}Let $\varpi$ be a non-zero-divisor and a non-unit in a ring $A$. Then, for every $n\geq0$, 
\begin{equation*}(\varpi^{n})_{A}=\bigcap_{\mathfrak{p}\in\wAss(\varpi)}\varphi_{\mathfrak{p}}^{-1}(\varpi^{n}A_{\mathfrak{p}})=\bigcap_{\mathfrak{p}\in\wAss(\varpi)}\widehat{\varphi_{\mathfrak{p}}}^{-1}(\varpi^{n}\widehat{A_{\mathfrak{p}}}),\end{equation*}where $\wAss(\varpi)$ is the set of weakly associated prime ideals of $\varpi$ and where, for every prime $\mathfrak{p}$, we denote by $\varphi_{\mathfrak{p}}$ and $\widehat{\varphi_{\mathfrak{p}}}$ the canonical maps $A[\varpi^{-1}]\to A_{\mathfrak{p}}[\varpi^{-1}]$ and $A[\varpi^{-1}]\to \widehat{A_{\mathfrak{p}}}[\varpi^{-1}]$, with $\widehat{A_{\mathfrak{p}}}$ being the $\varpi$-adic completion of $A_{\mathfrak{p}}$.\end{prop}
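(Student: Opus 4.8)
The plan is to deduce the first equality from the defining property of weakly associated primes, and then to reduce the second equality to a term-by-term statement about the $\varpi$-adic completions of the local rings $A_{\mathfrak p}$.

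For the first equality, the inclusion $(\varpi^{n})_{A}\subseteq\varphi_{\mathfrak p}^{-1}(\varpi^{n}A_{\mathfrak p})$ is clear for every $\mathfrak p$. For the reverse inclusion, let $x$ lie in the intersection and write $x=f/\varpi^{m}$ with $f\in A$ and $m\geq 0$; if $n+m=0$ then $x=f\in A=(\varpi^{0})_{A}$ and we are done, so assume $n+m\geq 1$. Since $\varpi$ is a non-zero-divisor in $A$ it remains one in each localization $A_{\mathfrak p}$, so $A_{\mathfrak p}\hookrightarrow A_{\mathfrak p}[\varpi^{-1}]$, and the hypothesis $\varphi_{\mathfrak p}(x)\in\varpi^{n}A_{\mathfrak p}$ translates into $f\in\varpi^{n+m}A_{\mathfrak p}$, i.e.\ $(\varpi^{n+m})_{A}:_{A}f\not\subseteq\mathfrak p$. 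By \cite{Brewer-Heinzer}, Theorem 3, we have $\wAss(\varpi)=\wAss(\varpi^{n+m})$; that is, the primes in $\wAss(\varpi)$ are precisely the weakly associated primes of the module $A/(\varpi^{n+m})_{A}$. If $f\notin(\varpi^{n+m})_{A}$, then $(\varpi^{n+m})_{A}:_{A}f=\Ann_{A}(\overline{f})$ is a proper ideal of $A$, and any prime minimal over it lies in $\wAss(\varpi^{n+m})=\wAss(\varpi)$ and contains $(\varpi^{n+m})_{A}:_{A}f$, a contradiction. Hence $f\in(\varpi^{n+m})_{A}$ and $x=f/\varpi^{m}\in(\varpi^{n})_{A}$; in particular the intersection, a priori only an $A$-submodule of $A[\varpi^{-1}]$, is automatically contained in $A$.

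For the second equality I would prove the stronger statement that $\widehat{\varphi_{\mathfrak p}}^{-1}(\varpi^{n}\widehat{A_{\mathfrak p}})=\varphi_{\mathfrak p}^{-1}(\varpi^{n}A_{\mathfrak p})$ for every prime $\mathfrak p$ (not just those in $\wAss(\varpi)$), after which the claim follows by intersecting. Writing $\widehat{\varphi_{\mathfrak p}}=\psi_{\mathfrak p}\circ\varphi_{\mathfrak p}$ with $\psi_{\mathfrak p}\colon A_{\mathfrak p}[\varpi^{-1}]\to\widehat{A_{\mathfrak p}}[\varpi^{-1}]$ the map induced by the completion $\lambda_{\mathfrak p}\colon A_{\mathfrak p}\to\widehat{A_{\mathfrak p}}$, it suffices to prove $\psi_{\mathfrak p}^{-1}(\varpi^{n}\widehat{A_{\mathfrak p}})=\varpi^{n}A_{\mathfrak p}$. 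Because $\varpi$ is a non-zero-divisor on $A_{\mathfrak p}$, the usual argument with the short exact sequences of inverse systems $0\to A_{\mathfrak p}/\varpi^{k-1}A_{\mathfrak p}\xrightarrow{\varpi}A_{\mathfrak p}/\varpi^{k}A_{\mathfrak p}\to A_{\mathfrak p}/\varpi A_{\mathfrak p}\to 0$ shows that $\varpi$ is a non-zero-divisor on $\widehat{A_{\mathfrak p}}$ and that $A_{\mathfrak p}/\varpi^{k}A_{\mathfrak p}\to\widehat{A_{\mathfrak p}}/\varpi^{k}\widehat{A_{\mathfrak p}}$ is an isomorphism for all $k\geq 1$ (alternatively one may invoke the corresponding statement from \cite{FK}). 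Granting this, an element $a/\varpi^{\ell}$ of $A_{\mathfrak p}[\varpi^{-1}]$ lies in $\psi_{\mathfrak p}^{-1}(\varpi^{n}\widehat{A_{\mathfrak p}})$ if and only if $\lambda_{\mathfrak p}(a)\in\varpi^{n+\ell}\widehat{A_{\mathfrak p}}$, i.e.\ the image of $a$ in $\widehat{A_{\mathfrak p}}/\varpi^{n+\ell}\widehat{A_{\mathfrak p}}\cong A_{\mathfrak p}/\varpi^{n+\ell}A_{\mathfrak p}$ vanishes, i.e.\ $a\in\varpi^{n+\ell}A_{\mathfrak p}$, i.e.\ $a/\varpi^{\ell}\in\varpi^{n}A_{\mathfrak p}$, as required.

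As for the main obstacle: there is none of real substance. The proposition is essentially a formal consequence of the definition of weakly associated primes together with \cite{Brewer-Heinzer}, Theorem 3, and elementary facts about $\varpi$-adic completion. The only thing that needs care is the bookkeeping involved in transporting the relation ``divisible by $\varpi^{n}$'' back and forth between $A[\varpi^{-1}]$, the localizations $A_{\mathfrak p}[\varpi^{-1}]$ and the completions $\widehat{A_{\mathfrak p}}[\varpi^{-1}]$, which is handled throughout by the observation that $\varpi$ stays a non-zero-divisor under both localization and $\varpi$-adic completion.
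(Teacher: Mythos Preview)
Your argument is correct and follows essentially the same route as the paper: both reduce to the observation that if $f\notin(\varpi^{n})_{A}$ then any prime minimal over $(\varpi^{n})_{A}:_{A}f$ is a weakly associated prime of $\varpi^{n}$, hence of $\varpi$ by \cite{Brewer-Heinzer}, Theorem~3, and both handle the completion part via $(\varpi^{n})_{\widehat{A_{\mathfrak p}}}\cap A_{\mathfrak p}=(\varpi^{n})_{A_{\mathfrak p}}$. Your write-up is slightly more detailed on the completion step, but the substance is the same.
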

\begin{proof}The case $n=0$ follows from the case $n\geq1$ since for any ring $B$ with a non-zero-divisor $\varpi\in B$ an element $\frac{f}{\varpi^{n}}\in B[\varpi^{-1}]$ (with $f\in B$, $n\geq1$) belongs to $B$ if and only if $f$ belongs to $(\varpi^{n})_{B}$. Thus, given $f\in A$, $n\geq1$ with $f\not\in(\varpi^{n})_{A}$, we want to prove that there exists some $\mathfrak{p}\in\wAss(\varpi)$ such that the image of $f$ in $A_{\mathfrak{p}}$ does not belong to $(\varpi^{n})_{A_{\mathfrak{p}}}$ (note that \begin{equation*}(\varpi^{n})_{\widehat{A_{\mathfrak{p}}}}\cap A_{\mathfrak{p}}=(\varpi^{n})_{A_{\mathfrak{p}}}\end{equation*}for all $n\geq1$, so it suffices to consider $A_{\mathfrak{p}}$ instead of $\widehat{A_{\mathfrak{p}}}$). If $f\not\in(\varpi^{n})_{A}$, then $(\varpi^{n})_{A}:_{A}f$ is a proper ideal of $A$. Let $\mathfrak{p}$ be a prime ideal of $A$ minimal over $(\varpi^{n})_{A}:_{A}f$. Then \begin{equation*}(\varpi^{n})_{A_{\mathfrak{p}}}:_{A_{\mathfrak{p}}}f\subseteq \mathfrak{p}A_{\mathfrak{p}},\end{equation*}so $(\varpi^{n})_{A_{\mathfrak{p}}}:_{A_{\mathfrak{p}}}f$ is a proper ideal of $A_{\mathfrak{p}}$. It follows that $f\not\in \varpi^{n}A_{\mathfrak{p}}$. By \cite{Brewer-Heinzer}, Theorem 3, the weakly associated prime ideal $\mathfrak{p}$ of $\varpi^{n}$ is also a weakly associated prime ideal of $\varpi$. The assertion follows.\end{proof}
The above proposition also shows that the strong $\varpi$-Shilov property imposes rather strong algebraic conditions on the ring $A$ in question (or, more precisely, on the ring extension $A\subseteq A[\varpi^{-1}]$), as is exemplified by the following corollary.  
\begin{cor}\label{Strongly Shilov implies completely integrally closed}Let $A$ be a ring which is strongly $\varpi$-Shilov for some non-zero-divisor and non-unit $\varpi$. Then $A$ is completely integrally closed in $A[\varpi^{-1}]$.\end{cor}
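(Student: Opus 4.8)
The plan is to reduce the statement to the criterion of Corollary \ref{Arc-closure of ideals and rings 2}, which says that $A$ is completely integrally closed in $\mathcal{A}=A[\varpi^{-1}]$ precisely when $(\varpi^{n})_{A}^{\arc_{\varpi}}=(\varpi^{n})_{A}$ for all (equivalently, some) integers $n\geq1$. Since the inclusion $(\varpi^{n})_{A}\subseteq(\varpi^{n})_{A}^{\arc_{\varpi}}$ always holds, it suffices to produce, for each $n$, enough points of $\mathcal{M}(\mathcal{A})$ to force the reverse inclusion, and for this the natural candidates are the valuations coming from the local rings $A_{\mathfrak{p}}$ at the weakly associated primes of $\varpi$.

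First I would attach to each weakly associated prime $\mathfrak{p}\in\wAss(\varpi)$ a point of the Berkovich spectrum. By the strong $\varpi$-Shilov hypothesis the $\varpi$-adic completion $\widehat{A_{\mathfrak{p}}}$ is a rank-$1$ valuation ring with pseudo-uniformizer $\varpi$ (in particular $\varpi$ lies in its maximal ideal, and $\widehat{A_{\mathfrak{p}}}[\varpi^{-1}]=\Frac(\widehat{A_{\mathfrak{p}}})$ by \cite{FK}, Ch.~0, Proposition 6.7.2). Let $v_{\mathfrak{p}}$ be the pullback along $\mathcal{A}\to\widehat{A_{\mathfrak{p}}}[\varpi^{-1}]$ of the $\varpi$-normalized rank-$1$ valuation determined by $\widehat{A_{\mathfrak{p}}}$. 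Then $v_{\mathfrak{p}}(f)\leq1$ for all $f\in A$ and $v_{\mathfrak{p}}(\varpi)=\frac{1}{2}$, so Lemma \ref{Continuous means normalized} places $v_{\mathfrak{p}}$ in $\mathcal{M}(\mathcal{A})$. A short computation using that $\widehat{A_{\mathfrak{p}}}$ is a valuation ring identifies $\widehat{\varphi_{\mathfrak{p}}}^{-1}(\varpi^{n}\widehat{A_{\mathfrak{p}}})$ with $(\varpi^{n})_{A}^{v_{\mathfrak{p}}}=\{f\in\mathcal{A}\mid v_{\mathfrak{p}}(f)\leq v_{\mathfrak{p}}(\varpi^{n})\}$ for every $n\geq1$ (the supremum $v_{\mathfrak{p}}(I)$ over $I=(\varpi^{n})_{A}$ is attained at $\varpi^{n}$ since $v_{\mathfrak{p}}\leq1$ on $A$).

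Then I would combine this with Proposition \ref{Weakly associated prime ideals and boundaries}. Since $\{v_{\mathfrak{p}}\mid\mathfrak{p}\in\wAss(\varpi)\}\subseteq\mathcal{M}(\mathcal{A})$, for every $n\geq1$ we get
\[
(\varpi^{n})_{A}^{\arc_{\varpi}}=\bigcap_{v\in\mathcal{M}(\mathcal{A})}(\varpi^{n})_{A}^{v}\subseteq\bigcap_{\mathfrak{p}\in\wAss(\varpi)}(\varpi^{n})_{A}^{v_{\mathfrak{p}}}=(\varpi^{n})_{A},
\]
where the middle term is rewritten as $\bigcap_{\mathfrak{p}}\widehat{\varphi_{\mathfrak{p}}}^{-1}(\varpi^{n}\widehat{A_{\mathfrak{p}}})$ by the identification of the previous paragraph and the last equality is Proposition \ref{Weakly associated prime ideals and boundaries}. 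Together with the trivial reverse inclusion this yields $(\varpi^{n})_{A}^{\arc_{\varpi}}=(\varpi^{n})_{A}$, and Corollary \ref{Arc-closure of ideals and rings 2} then gives that $A$ is completely integrally closed in $A[\varpi^{-1}]$.

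There is no serious obstacle here: the substance is already contained in Proposition \ref{Weakly associated prime ideals and boundaries} and the dictionary supplied by Lemma \ref{Continuous means normalized} and Corollary \ref{Arc-closure of ideals and rings 2}. The only points needing care are the bookkeeping that translates between membership in $\varpi^{n}\widehat{A_{\mathfrak{p}}}$, the inequality $v_{\mathfrak{p}}(f)\leq v_{\mathfrak{p}}(\varpi^{n})$ and the $\varpi$-normalization, and the verification that $\widehat{A_{\mathfrak{p}}}$ genuinely furnishes a point of $\mathcal{M}(\mathcal{A})$ and not merely a valuation of its fraction field. One could equivalently route the argument through Theorem \ref{Boundaries and arc-closure}, noting that the computation above shows $\{v_{\mathfrak{p}}\}$ is a boundary for $\mathcal{A}$ whose associated intersection of valuation rings recovers $A$, but the direct reduction to Corollary \ref{Arc-closure of ideals and rings 2} is the shortest.
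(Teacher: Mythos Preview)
Your proof is correct and follows essentially the same approach as the paper: both arguments rest on Proposition~\ref{Weakly associated prime ideals and boundaries} together with the observation that the rank-$1$ valuation rings $\widehat{A_{\mathfrak{p}}}$ for $\mathfrak{p}\in\wAss(\varpi)$ furnish points of $\mathcal{M}(\mathcal{A})$. The only cosmetic difference is in the last step: you route through Corollary~\ref{Arc-closure of ideals and rings 2} by showing $(\varpi^{n})_{A}^{\arc_{\varpi}}=(\varpi^{n})_{A}$, whereas the paper uses the $n=0$ case of Proposition~\ref{Weakly associated prime ideals and boundaries} and Berkovich's Theorem~1.3.1 directly to conclude that $A$ equals the closed unit ball of $\vert\cdot\vert_{\spc,\varpi}$ (which is automatically completely integrally closed).
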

\begin{proof}By the proposition, \begin{equation*}A=\bigcap_{\mathfrak{p}\in\wAss(\varpi)}\varphi_{\mathfrak{p}}^{-1}(\widehat{A_{\mathfrak{p}}}),\end{equation*}where $\varphi_{\mathfrak{p}}: A[\varpi^{-1}]\to \widehat{A_{\mathfrak{p}}}[\varpi^{-1}]$ are the canonical maps. By the hypothesis that $A$ is strongly $\varpi$-Shilov, the rings $\widehat{A_{\mathfrak{p}}}$ for $\mathfrak{p}\in\wAss(\varpi)$ are valuation rings of rank $1$, so $A$ is equal to the subring of $A[\varpi^{-1}]$ where the corresponding continuous rank $1$ valuations are $\leq1$. We conclude by applying \cite{Berkovich}, Theorem 1.3.1, that $A$ is equal to the closed unit ball for the spectral seminorm $\vert\cdot\vert_{\spc,\varpi}$ on $A[\varpi^{-1}]$ derived from the canonical extension of the $\varpi$-adic seminorm on $A$. \end{proof}
Proposition \ref{Weakly associated prime ideals and boundaries} should also be compared to the following fact.
\begin{prop}\label{Minimal prime ideals and boundaries}Let $\varpi$ be a non-zero-divisor and a non-unit in a ring $A$. Suppose that \begin{equation*}A=\bigcap_{\mathfrak{p}\in\Min(\varpi)}\varphi_{\mathfrak{p}}^{-1}(A_{\mathfrak{p}})\end{equation*}or \begin{equation*}A=\bigcap_{\mathfrak{p}\in\Min(\varpi)}\widehat{\varphi_{\mathfrak{p}}}^{-1}(\widehat{A_{\mathfrak{p}}}),\end{equation*}where $\Min(\varpi)$ denotes the set of prime ideals of $A$ minimal over $\varpi$ and where $\varphi_{\mathfrak{p}}$, $\widehat{\varphi_{\mathfrak{p}}}$ are as in Proposition \ref{Weakly associated prime ideals and boundaries}. Then every weakly associated prime ideal of $\varpi$ in $A$ is minimal over $\varpi$.\end{prop}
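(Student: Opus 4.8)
The plan is to argue by contradiction and to extract the contradiction from the hypothesis using a single well-chosen element of $A[\varpi^{-1}]$. First note that the two forms of the hypothesis coincide: for any prime $\mathfrak{p}$ containing $\varpi$ one has $\varpi^{n}\widehat{A_{\mathfrak{p}}}\cap A_{\mathfrak{p}}=\varpi^{n}A_{\mathfrak{p}}$, so $\widehat{\varphi_{\mathfrak{p}}}^{-1}(\widehat{A_{\mathfrak{p}}})=\varphi_{\mathfrak{p}}^{-1}(A_{\mathfrak{p}})$; I will work with the first form. Suppose some $\mathfrak{q}\in\wAss(\varpi)$ is not minimal over $\varpi$. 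By definition $\mathfrak{q}$ is minimal over $J:=(\varpi)_{A}:_{A}f$ for some $f\in A$, and $J$ is necessarily a proper ideal (no prime is minimal over the unit ideal), so $f\notin(\varpi)_{A}$, i.e.\ the element $f/\varpi$ lies in $A[\varpi^{-1}]\setminus A$.

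The easy observation is that $f/\varpi$ already lies in $A_{\mathfrak{p}}$ for every minimal prime $\mathfrak{p}$ of $\varpi$ contained in $\mathfrak{q}$: such a $\mathfrak{p}$ is \emph{strictly} contained in $\mathfrak{q}$ (since $\mathfrak{q}\notin\Min(\varpi)$), so minimality of $\mathfrak{q}$ over $J$ forces $J\not\subseteq\mathfrak{p}$, and choosing $g\in J\setminus\mathfrak{p}$ gives $gf\in\varpi A$ with $g$ a unit in $A_{\mathfrak{p}}$, whence $f/\varpi=(gf/\varpi)/g\in A_{\mathfrak{p}}$. In particular, if $A$ is local with maximal ideal $\mathfrak{q}$, then every $\mathfrak{p}\in\Min(\varpi)$ is of this kind, $f/\varpi$ lies in $\bigcap_{\mathfrak{p}\in\Min(\varpi)}\varphi_{\mathfrak{p}}^{-1}(A_{\mathfrak{p}})=A$, and we contradict $f\notin(\varpi)_{A}$. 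So the proposition is immediate in the local case.

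The plan in general is to reduce to this local case by localizing at $\mathfrak{q}$: weakly associated primes of $\varpi$ behave well under localization, so $\mathfrak{q}A_{\mathfrak{q}}$ is a weakly associated prime of $\varpi$ in $A_{\mathfrak{q}}$, it is the maximal ideal, and it is not minimal over $\varpi$ because $\mathfrak{q}$ is not. Granting that the hypothesis is inherited by $A_{\mathfrak{q}}$, the local case applied to $A_{\mathfrak{q}}$ gives $\mathfrak{q}A_{\mathfrak{q}}\in\Min(\varpi A_{\mathfrak{q}})$, i.e.\ $\mathfrak{q}\in\Min(\varpi)$, a contradiction. A more hands‑on variant avoids this descent: if $J\subseteq\mathfrak{p}$ for some $\mathfrak{p}\in\Min(\varpi)$ not contained in $\mathfrak{q}$, then $\mathfrak{p}$ is itself a minimal prime of $J$ and, $\mathfrak{p}$ being minimal over $\varpi$, $\mathfrak{p}A_{\mathfrak{p}}=\sqrt{\varpi A_{\mathfrak{p}}}$, so $t^{N}\in\varpi A_{\mathfrak{p}}$ for large $N$ and some $t\in\mathfrak{p}\setminus\mathfrak{q}$; replacing $f$ by $t^{N}f$ keeps $\mathfrak{q}$ minimal over the (now larger) colon ideal, makes $f/\varpi\in A_{\mathfrak{p}}$, and does not spoil the behaviour at the minimal primes below $\mathfrak{q}$, so iterating absorbs all such $\mathfrak{p}$ and the hypothesis again forces $f/\varpi\in A$.

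The step I expect to be the main obstacle, in either route, is the possibility that $\varpi$ has infinitely many minimal primes not lying in $\mathfrak{q}$: then the naive iteration of the modification trick need not terminate, and correspondingly the descent of the hypothesis to $A_{\mathfrak{q}}$ is not a formal consequence of localization commuting with intersections (which it does not, for infinite intersections). I would handle this by reducing the descent claim, via the ideal of denominators of an element of $A_{\mathfrak{q}}[\varpi^{-1}]$, to the assertion that a $\varpi$-adically open ideal of $A$ that is contained in $\mathfrak{q}$ but in no minimal prime of $\varpi$ below $\mathfrak{q}$ must equal $A$, and then proving that assertion by a careful bookkeeping of which weakly associated primes of $\varpi$ can occur below $\mathfrak{q}$, combined with the global form of the hypothesis.
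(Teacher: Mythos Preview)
Your local case is correct, and your worry about primes $\mathfrak{p}\in\Min(\varpi)$ \emph{not} contained in $\mathfrak{q}$ is exactly where the content lies. The paper does not reduce to the local case at all: it argues directly that $f/\varpi\in A_{\mathfrak{p}}$ for \emph{every} $\mathfrak{p}\in\Min(\varpi)$. The reasoning is that since $\mathfrak{q}$ contains $\varpi$ but is not minimal over it, $\mathfrak{q}\not\subseteq\mathfrak{p}$ for any $\mathfrak{p}\in\Min(\varpi)$, so $\mathfrak{q}A_{\mathfrak{p}}=A_{\mathfrak{p}}$; the paper then asserts that minimality of $\mathfrak{q}$ over $J=(\varpi)_{A}:_{A}f$ forces $JA_{\mathfrak{p}}=\mathfrak{q}A_{\mathfrak{p}}=A_{\mathfrak{p}}$, i.e.\ $J\not\subseteq\mathfrak{p}$, for every such $\mathfrak{p}$. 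This plus the hypothesis gives $f/\varpi\in A$, the contradiction. So the paper's route is much shorter: no localization at $\mathfrak{q}$, no iteration, no descent of the hypothesis.

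That said, the step you flagged is precisely the step the paper takes without further comment. The implication ``$\mathfrak{q}$ minimal over $J$ and $\mathfrak{q}A_{\mathfrak{p}}=A_{\mathfrak{p}}$ $\Rightarrow$ $JA_{\mathfrak{p}}=A_{\mathfrak{p}}$'' would be immediate if $\mathfrak{q}$ were the \emph{unique} minimal prime of $J$, but minimality alone only excludes primes strictly between $J$ and $\mathfrak{q}$; it does not by itself rule out that some $\mathfrak{p}\in\Min(\varpi)$ incomparable to $\mathfrak{q}$ is also a minimal prime of $J$ (note $\varpi\in J$, so such a $\mathfrak{p}$ would indeed be minimal over $J$). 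Your iteration idea---replace $f$ by $t^{N}f$ with $t\in\mathfrak{p}\setminus\mathfrak{q}$---does kill one such $\mathfrak{p}$ without introducing new ones and without losing $\mathfrak{q}$, so it succeeds whenever only finitely many $\mathfrak{p}\in\Min(\varpi)$ contain $J$; but your sketch for the infinite case is not an argument, and the descent of the intersection hypothesis to $A_{\mathfrak{q}}$ is genuinely not formal. In short: you have correctly isolated the delicate point, your proposal does not resolve it in general, and the paper's written proof passes over the same point rather than your having missed an available simplification.
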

\begin{proof}Note that the two equalities are actually equivalent, since $\widehat{A_{\mathfrak{p}}}\cap A_{\mathfrak{p}}[\varpi^{-1}]=A_{\mathfrak{p}}$ for every $\mathfrak{p}$. Let $\mathfrak{q}$ be a weakly associated prime of $\varpi$ in $A$ and let $f\in A$ be such that $\mathfrak{q}$ is minimal over $(\varpi)_{A}:_{A}f$. In particular, $\frac{f}{\varpi}\not\in A$. If $\mathfrak{q}$ is not minimal over $\varpi$, then $\mathfrak{q}$ is not contained in any prime ideal $\mathfrak{p}$ minimal over $\varpi$, so $\mathfrak{q}A_{\mathfrak{p}}=A_{\mathfrak{p}}$ for all $\mathfrak{p}\in\Min(\varpi)$. Since $\mathfrak{q}$ is minimal over $(\varpi)_{A}:_{A}f$ in $A$, we know that there exists no prime of $A_{\mathfrak{p}}$ containing $((\varpi)_{A}:_{A}f)A_{\mathfrak{p}}$ and strictly contained in $\mathfrak{q}A_{\mathfrak{p}}$, for any $\mathfrak{p}\in\Min(\varpi)$. It follows that \begin{equation*}(\varpi)_{A_{\mathfrak{p}}}:_{A_{\mathfrak{p}}}f=\mathfrak{q}A_{\mathfrak{p}}=A_{\mathfrak{p}}\end{equation*}for all $\mathfrak{p}\in\Min(\varpi)$ and, therefore, $\frac{f}{\varpi}\in A_{\mathfrak{p}}$ for all $\mathfrak{p}\in\Min(\varpi)$. But by our assumption on $A$ this implies $f\in(\varpi)_{A}$, a contradiction. Thus we have shown that every weakly associated prime of $\varpi$ in $A$ is a minimal prime over $\varpi$.  \end{proof}
The above results are already reminiscent of the description of the notion of boundary for a Tate ring $\mathcal{A}$ or a Tate Huber pair $(\mathcal{A}, \mathcal{A}^{+})$ which we gave in Section 3. However, before we describe the Shilov boundaries of Tate rings in this fashion, let us verify that strongly $\varpi$-Shilov rings are actually very common in commutative algebra. This is ensured by the following theorem, which is the first main result of this section.
\begin{thm}\label{Reduced Noetherian rings are strongly Shilov}Let $A\hookrightarrow B$ be an integral extension of integral domains, where $A$ is Noetherian and where $A$ and $B$ have the same field of fractions. Let $\varpi$ be a non-zero non-unit in $A$ and in $B$ such that $B$ is integrally closed in $B[\varpi^{-1}]$. Then $B$ is strongly $\varpi$-Shilov. More precisely, for every weakly associated prime ideal $\mathfrak{p}$ of $\varpi$ in $B$ the separated quotient $B_{\mathfrak{p}}/\bigcap_{n\geq1}\varpi^{n}B_{\mathfrak{p}}$ is a DVR with uniformizer $\varpi$.

If $B$ is moreover Noetherian, then the local rings $B_{\mathfrak{p}}$, for $\mathfrak{p}\in \wAss(\varpi)$, are themselves DVRs and thus $B$ is strongly $\varpi$-Krull.\end{thm}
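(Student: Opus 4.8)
The plan is to run the argument of Lemma~\ref{Noetherian implies strongly Krull} without the Noetherian hypothesis on $B$, with Lemma~\ref{Complete integral closure and fractional ideals} in the role played there by \cite{Swanson-Huneke}, Lemma~2.1.8. The enabling fact, where Noetherianity of $A$ is used, is that $B$ and \emph{each} localization $B_\mathfrak p$ is \emph{completely} integrally closed in the corresponding $\varpi$-inverted ring. Indeed, by the Mori--Nagata theorem the integral closure $\overline A$ of $A$ in $K:=\Frac(A)=\Frac(B)$ is a Krull domain, and since $B$ is integral over $A$ and contained in $K$ we have $A\subseteq B\subseteq \overline A$; for $S=B\setminus\mathfrak p$ the ring $S^{-1}\overline A$ is again Krull, hence completely integrally closed in $K$, and $B_\mathfrak p\subseteq S^{-1}\overline A$. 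Thus an element $x\in B_\mathfrak p[\varpi^{-1}]$ almost integral over $B_\mathfrak p$ is almost integral over $S^{-1}\overline A$, so lies in $S^{-1}\overline A$, so is integral over $A$ and therefore over $B_\mathfrak p$; since integral closure commutes with localization and $B$ is integrally closed in $B[\varpi^{-1}]$, we get $x\in B_\mathfrak p$. By Lemma~\ref{Completely integrally closed} this gives $B_\mathfrak p=(B_\mathfrak p[\varpi^{-1}])^\circ$, so in particular $B$ is completely integrally closed in $B[\varpi^{-1}]$ and, by Corollary~\ref{Arc-closure of ideals and rings 2}, every ideal $(\varpi^n)_{B_\mathfrak p}$ is $\arc_\varpi$-closed.

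Now fix a weakly associated prime ideal $\mathfrak p$ of $\varpi$ in $B$ and set $V:=B_\mathfrak p/\bigcap_{n\ge1}\varpi^nB_\mathfrak p$. Then $V$ is local with $\mathfrak m_V\ne 0$, $\varpi$ is a non-zero-divisor and non-unit in $V$, $V$ is $\varpi$-adically separated, and, being the quotient of the uniform Tate ring $B_\mathfrak p[\varpi^{-1}]$ by a closed ideal, $V$ is completely integrally closed in $V[\varpi^{-1}]$. I claim $\mathfrak m_V$ is a principal ideal. Granting this and writing $\mathfrak m_V=(z)$: since $\bigcap_nz^nV=\bigcap_n\varpi^nV=0$ we have $\varpi=z^e\cdot(\text{unit})$ for some $e\ge1$, $z$ is a non-zero-divisor, and every non-zero element of $V$ is $z^k$ times a unit, so $V$ is a discrete valuation ring with pseudo-uniformizer $\varpi$. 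Hence $\widehat{B_\mathfrak p}=\widehat V$ is a complete DVR, in particular a valuation ring of rank one, which is exactly the assertion that $B$ is strongly $\varpi$-Shilov (and $B_\mathfrak p/\bigcap_n\varpi^nB_\mathfrak p=V$ is the promised DVR). If $B$ is moreover Noetherian, then $\bigcap_n\varpi^nB_\mathfrak p=0$ by Krull's intersection theorem, so $B_\mathfrak p=V$ is itself a DVR and $B$ is strongly $\varpi$-Krull.

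It remains to prove that $\mathfrak m_V$ is principal, and this is the heart of the proof. I would follow Lemma~\ref{Noetherian implies strongly Krull}: choose $f\in B$ with $\mathfrak p$ minimal over $(\varpi)_B:_Bf$; then $\mathfrak m:=\mathfrak p B_\mathfrak p$ is the radical of the $\varpi$-adically open ideal $J:=(\varpi)_{B_\mathfrak p}:_{B_\mathfrak p}f$, and $g:=f/\varpi$ lies in $B_\mathfrak p[\varpi^{-1}]\setminus B_\mathfrak p$ with $gJ\subseteq B_\mathfrak p$. Since $J$ is open, the inclusion $gJ\subseteq J$ is impossible, for it would force $g$ to be power-bounded by Lemma~\ref{Complete integral closure and fractional ideals}, hence $g\in(B_\mathfrak p[\varpi^{-1}])^\circ=B_\mathfrak p$, a contradiction. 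From here one has to extract — using that $\mathfrak m$ is a $\varpi$-t-ideal (Lemma~\ref{Weakly associated primes and maximal t-ideals}), that the $\varpi$-divisors of $B_\mathfrak p$ form a lattice-ordered group (Theorem~\ref{Rings of power-bounded elements and divisors}), and that $(\varpi^n)_{B_\mathfrak p}$ is $\arc_\varpi$-closed — that $J$ is an invertible $\varpi$-fractional ideal, and hence principal by Proposition~\ref{Properties of invertible fractional ideals} since $B_\mathfrak p$ is local; this in turn forces $\mathfrak m_V$ to be principal.

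The main obstacle is exactly this last extraction. For a merely \emph{weakly} associated prime one knows only that $\mathfrak p$ is minimal over the colon ideal $(\varpi)_B:_Bf$, not equal to it, so the clean dichotomy ``$g\mathfrak m\subseteq\mathfrak m$ or $g\mathfrak m=B_\mathfrak p$'' that drives the Noetherian argument in Lemma~\ref{Noetherian implies strongly Krull} is not available, and the structural conclusions drawn there from Krull's principal ideal theorem and from \cite{Swanson-Huneke}, Lemma~2.1.8, must instead be recovered from the $\varpi$-valuative and $\varpi$-$\ast$-operation machinery of Sections~\ref{sec:valuative rings} and~\ref{sec:star-operations} together with complete integral closedness of $B_\mathfrak p$. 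Upgrading ``$J$ is $\varpi$-v-invertible'' (which is immediate from Theorem~\ref{Rings of power-bounded elements and divisors}) to ``$J$ is invertible'' is the delicate point I expect to require the most care.
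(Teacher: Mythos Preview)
Your proposal has a genuine gap exactly where you flag it: you never show how to upgrade ``$J$ is $\varpi$-v-invertible'' to ``$J$ is invertible,'' and nothing in the cited machinery does this for you without some finiteness input on $J$ or its residual. Lemma~\ref{Residual, t-ideals and invertible ideals} requires $A:_{A[\varpi^{-1}]}J$ to be finitely generated, and Theorem~\ref{Houston-Malik-Mott} requires a \emph{finitely generated} $\varpi$-v-inverse. For a merely weakly associated prime in a non-Noetherian $B$ there is no reason $J=(\varpi)_{B_\mathfrak p}:_{B_\mathfrak p}f$ or its residual should be finitely generated, so the route through invertibility of $J$ is genuinely blocked. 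Moreover, even granting $J$ principal, you have not explained why $\mathfrak m_V=\sqrt{J}$ would then be principal; this would still require knowing $\mathfrak p$ is minimal over $\varpi$.

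The paper avoids this entirely by reversing the order of attack. It first proves the statement only for \emph{minimal} primes $\mathfrak p$ of $\varpi$ in $B$: for such $\mathfrak p$, pick $\mathfrak q$ in $\overline A$ over $\mathfrak p$; then $\mathfrak q$ is minimal over $\varpi$, $\overline A_{\mathfrak q}$ is a DVR with uniformizer $\varpi$ (Mori--Nagata), and the injectivity of $B_\mathfrak p/(\varpi)\hookrightarrow \overline A_{\mathfrak q}/(\varpi)$ (from Proposition~\ref{Rings of integral elements} and \cite{Swanson-Huneke}, Proposition~1.6.1) forces $\mathfrak pB_\mathfrak p=(\varpi)$ directly. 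Then, instead of handling weakly associated primes one by one, it shows they are all minimal by verifying the intersection criterion of Proposition~\ref{Minimal prime ideals and boundaries}: using again $(\varpi)_B=(\varpi)_{\overline A}\cap B$ and the fact that the Krull domain $\overline A$ already satisfies $(\varpi)_{\overline A}=\bigcap_{\mathfrak q\in\Min_{\overline A}(\varpi)}\varpi\overline A_{\mathfrak q}\cap\overline A$, one pulls this back to $B$ via Lemma~\ref{Minimal primes and integral extensions}. This two-step decomposition (weak $\varpi$-Shilov first, then $\wAss(\varpi)=\Min(\varpi)$) is what replaces your attempted direct argument and sidesteps the invertibility issue completely.
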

\begin{proof}The second assertion follows from the first by Krull's intersection theorem. We first prove the first assertion for every prime ideal $\mathfrak{p}$ of $B$ which is minimal over $\varpi$ (so, in particular, we prove that $B$ is weakly $\varpi$-Shilov). Note that $B$ is a subextension of the normalization $\overline{A}$ of $A$ in its field of fractions. For a prime ideal $\mathfrak{p}$ of $B$ minimal over $\varpi$, choose a prime ideal $\mathfrak{q}$ of $\overline{A}$ lying over $\mathfrak{p}$. If there exists a prime ideal $\mathfrak{q}_{0}$ contained in $\mathfrak{q}$ and containing $\varpi$, then $\mathfrak{q}_{0}$ lies over $\mathfrak{p}$, by the minimality of $\mathfrak{p}$ over $\varpi$, and then necessarily $\mathfrak{q}=\mathfrak{q}_{0}$ (\cite{Cohen-Seidenberg}, Theorem 4). Thus $\mathfrak{q}$ is minimal over $\varpi$. By the Mori-Nagata Theorem (see, for example, \cite{Swanson-Huneke}, Theorem 4.10.5), $\overline{A}$ is a Krull domain. This implies that the local ring $\overline{A}_{\mathfrak{q}}$ is a DVR with uniformizer $\varpi$, so \begin{equation*}\mathfrak{q}\overline{A}_{\mathfrak{q}}=(\varpi)_{\overline{A}_{\mathfrak{q}}}.\end{equation*}Since this is true for every prime $\mathfrak{q}$ of $\overline{A}$ lying over $\mathfrak{p}$, the localization
\begin{equation*}\overline{A}_{\mathfrak{p}}=(B\setminus \mathfrak{p})^{-1}\overline{A}\end{equation*}
satisfies \begin{equation*}\mathfrak{p}\overline{A}_{\mathfrak{p}}=(\bigcap_{\mathfrak{q}\cap B=\mathfrak{p}}\mathfrak{q}\overline{A}_{\mathfrak{q}})\cap \overline{A}_{\mathfrak{p}}=(\bigcap_{\mathfrak{q}\cap B=\mathfrak{p}}\varpi \overline{A}_{\mathfrak{q}})\cap \overline{A}_{\mathfrak{p}}.\end{equation*}
By going up, all maximal ideals of $\overline{A}_{\mathfrak{p}}$ are of the form $\mathfrak{q}\overline{A}_{\mathfrak{p}}$ for some prime ideal $\mathfrak{q}$ of $\overline{A}$ lying over $\mathfrak{p}$. 
Hence the above equation entails \begin{equation*}\mathfrak{p}\overline{A}_{\mathfrak{p}}=(\varpi)_{\overline{A}_{\mathfrak{p}}}.\end{equation*}Now, $\overline{A}_{\mathfrak{p}}$ is integral over $B_{\mathfrak{p}}$ and, since $B$ is integrally closed in $B[\varpi^{-1}]$, the local ring $B_{\mathfrak{p}}$ is integrally closed in $B_{\mathfrak{p}}[\varpi^{-1}]$. Therefore, \begin{equation*}B_{\mathfrak{p}}/(\varpi)_{B_{\mathfrak{p}}}\to \overline{A}_{\mathfrak{p}}/(\varpi)_{\overline{A}_{\mathfrak{p}}}\end{equation*}is injective, by virtue of Proposition \ref{Rings of integral elements} and \cite{Swanson-Huneke}, Proposition 1.6.1. It follows that the maximal ideal $\mathfrak{p}B_{\mathfrak{p}}$ of $B_{\mathfrak{p}}$ is generated by $\varpi$. This implies that $B_{\mathfrak{p}}/\bigcap_{n\geq1}\varpi^{n}B_{\mathfrak{p}}$ is a DVR with uniformizer $\varpi$, as claimed. 

It remains to show that every weakly associated prime ideal of $\varpi$ in $B$ is a minimal prime ideal over $\varpi$. By Proposition \ref{Minimal prime ideals and boundaries}, it suffices to prove that \begin{equation*}B=(\bigcap_{\mathfrak{p}\in\Min_{B}(\varpi)}B_{\mathfrak{p}})\cap B[\varpi^{-1}],\end{equation*}or, equivalently, that \begin{equation*}(\varpi)_{B}=(\bigcap_{\mathfrak{p}\in\Min_{B}(\varpi)}\varpi B_{\mathfrak{p}})\cap B.\end{equation*}Since $\overline{A}$ is a Krull domain, we have \begin{equation*}(\varpi)_{\overline{A}}=\bigcap_{\mathfrak{q}\in \Min_{\overline{A}}(\varpi)}\varpi \overline{A}_{\mathfrak{q}}.\end{equation*}Hence \begin{equation*}\bigcap_{\mathfrak{p}\in\Min_{B}(\varpi)}\varpi B_{\mathfrak{p}}\subseteq \bigcap_{\mathfrak{p}\in\Min_{B}(\varpi)}\bigcap_{\mathfrak{q}\cap B=\mathfrak{p}}\varpi \overline{A}_{\mathfrak{q}}\subseteq \bigcap_{\mathfrak{q}\in\Min_{\overline{A}}(\varpi)}\varpi \overline{A}_{\mathfrak{q}}=(\varpi)_{\overline{A}}.\end{equation*}But by Proposition \ref{Rings of integral elements} and \cite{Swanson-Huneke}, Proposition 1.6.1, \begin{equation*}(\varpi)_{\overline{A}}\cap B=(\varpi)_{B},\end{equation*}whence \begin{equation*}(\bigcap_{\mathfrak{p}\in\Min_{B}(\varpi)}\varpi B_{\mathfrak{p}})\cap B\subseteq (\varpi)_{B},\end{equation*}as desired.\end{proof}
\begin{rmk}\label{Serre's criterion 2}By varying the non-zero non-unit $\varpi$ in Corollary \ref{Strongly Shilov implies completely integrally closed} and Theorem \ref{Reduced Noetherian rings are strongly Shilov}, we recover Serre's criterion for normality (\cite{Eisenbud}, Theorem 11.5) in the case of integral domains (cf.~Remark \ref{Serre's criterion}).\end{rmk}
We now relate the weakly and strongly $\varpi$-Shilov conditions to adic spectra (and Berkovich spectra) of the relevant Tate rings. By a generic point of a topological space we mean the generic point of some irreducible component of the space.
\begin{prop}\label{Weakly Shilov}Let $\varpi$ be a non-zero-divisor and non-unit in a ring $A$ and suppose that $A$ is integrally closed in $A[\varpi^{-1}]$. Consider the specialization map \begin{equation*}\spc: \Spa(A[\varpi^{-1}], A)\to \Spf(A), v\mapsto \spc(v):=\{\, f\in A\mid v(f)<1\,\}.\end{equation*}Then $A$ is weakly $\varpi$-Shilov if and only if for every generic point $\mathfrak{p}$ of $\Spf(A)$ the pre-image $\spc^{-1}(\{\mathfrak{p}\})$ consists of a single point. In this case, for every generic point $\mathfrak{p}\in\Spf(A)$, the map $A[\varpi^{-1}]\to \widehat{A_{\mathfrak{p}}}[\varpi^{-1}]$ induces a bijection \begin{equation*}\Spa(\widehat{A_{\mathfrak{p}}}[\varpi^{-1}], \widehat{A_{\mathfrak{p}}})\tilde{\to}\spc^{-1}(\{\mathfrak{p}\}).\end{equation*}\end{prop}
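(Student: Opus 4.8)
The plan is to work one irreducible component of $\Spf(A)$ at a time and translate the statement into a statement about the local ring $A_{\mathfrak{p}}$ and its $\varpi$-adic completion. First I would record the preliminary reductions: since $A$ is integrally closed in $A[\varpi^{-1}]$, Lemma \ref{Almost power-bounded} gives $\varpi\mathcal{A}^{\circ}\subseteq A$, hence $A=\mathcal{A}^{\circ}$ and the Tate ring $A[\varpi^{-1}]$ is uniform; this lets me invoke \cite{Dine22}, Lemma 2.24 and Corollary \ref{Arc-closure of ideals and rings 2} freely. The key geometric input is that the specialization map $\spc$ is compatible with the localization maps $A[\varpi^{-1}]\to A_{\mathfrak{p}}[\varpi^{-1}]\to\widehat{A_{\mathfrak{p}}}[\varpi^{-1}]$: a continuous valuation $v$ on $A[\varpi^{-1}]$ has $\spc(v)\subseteq\mathfrak{p}$ (equivalently, $\spc(v)$ specializes $\mathfrak{p}$, i.e. generizes to $\mathfrak{p}$ in $\Spf(A)$) if and only if $v$ has center on $A_{\mathfrak{p}}$, i.e. $v$ extends to a continuous valuation on $A_{\mathfrak{p}}[\varpi^{-1}]$, which in turn (by completeness and $\varpi$-adic density) is the same as extending to $\widehat{A_{\mathfrak{p}}}[\varpi^{-1}]$. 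When $\mathfrak{p}$ is a \emph{generic} point of $\Spf(A)$, i.e. minimal over $\varpi$, the condition $\spc(v)\subseteq\mathfrak{p}$ forces $\spc(v)=\mathfrak{p}$ since there is no smaller open prime; so $\spc^{-1}(\{\mathfrak{p}\})$ is identified, via restriction along $A[\varpi^{-1}]\to\widehat{A_{\mathfrak{p}}}[\varpi^{-1}]$, with the subset of $\Spa(\widehat{A_{\mathfrak{p}}}[\varpi^{-1}],\widehat{A_{\mathfrak{p}}})$ consisting of valuations whose specialization is the closed point; but every valuation in that adic spectrum has center contained in (in fact equal to, once $\widehat{A_{\mathfrak p}}$ is local Henselian-ish) the maximal ideal, and since $\widehat{A_{\mathfrak p}}$ is $\varpi$-adically complete with $\sqrt{(\varpi)}$ the maximal ideal, \emph{every} such valuation specializes to the closed point. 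This yields the stated bijection $\Spa(\widehat{A_{\mathfrak p}}[\varpi^{-1}],\widehat{A_{\mathfrak p}})\tilde\to\spc^{-1}(\{\mathfrak p\})$ as soon as $\spc^{-1}(\{\mathfrak p\})$ is a single point; and conversely the bijection holds in general, with the single-point condition on the left matching the single-point condition on the right.

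With this identification in hand, the proof reduces to the following purely local equivalence, which is exactly Lemma \ref{Valuative rings and adic spectrum}: for a local ring $\widehat{A_{\mathfrak p}}$ that is $\varpi$-adically complete, integrally closed in $\widehat{A_{\mathfrak p}}[\varpi^{-1}]$, with $\varpi$ a non-zero non-unit, the adic spectrum $\Spa(\widehat{A_{\mathfrak p}}[\varpi^{-1}],\widehat{A_{\mathfrak p}})$ is a single point if and only if $\widehat{A_{\mathfrak p}}$ is a valuation ring of rank one (here $\widehat{A_{\mathfrak p}}$ being $\varpi$-valuative is automatic once it is a complete valuation ring, and rank one is forced because $\mathfrak p$ is minimal over $\varpi$ so $\sqrt{(\varpi)}$ is the maximal ideal, using Corollary \ref{Valuative rings and completion 2}). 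So $A$ is weakly $\varpi$-Shilov — i.e. $\widehat{A_{\mathfrak p}}$ is a rank-one valuation ring for every minimal prime $\mathfrak p$ of $\varpi$ — if and only if $\spc^{-1}(\{\mathfrak p\})$ is a single point for every generic point $\mathfrak p$ of $\Spf(A)$. I should also check that $A$ being integrally closed in $A[\varpi^{-1}]$ descends to $A_{\mathfrak p}$ being integrally closed in $A_{\mathfrak p}[\varpi^{-1}]$ (localization of integral closure, \cite{Swanson-Huneke}, Proposition 1.6.2 together with Proposition \ref{Rings of integral elements}) and then passes to the completion $\widehat{A_{\mathfrak p}}$ — this last step uses that for a local ring the $\varpi$-adic completion of an integrally-closed-in-$[\varpi^{-1}]$ ring is again integrally closed in its own $[\varpi^{-1}]$, which is the argument already used in the proof of Lemma \ref{Valuative rings and adic spectrum}.

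The main obstacle I anticipate is making the identification $\spc^{-1}(\{\mathfrak p\})\cong\Spa(\widehat{A_{\mathfrak p}}[\varpi^{-1}],\widehat{A_{\mathfrak p}})$ genuinely bijective (not merely injective) and continuous in both directions: I must show that \emph{every} continuous valuation on $\widehat{A_{\mathfrak p}}[\varpi^{-1}]$ that is $\leq 1$ on $\widehat{A_{\mathfrak p}}$ restricts to a valuation on $A[\varpi^{-1}]$ landing in $\spc^{-1}(\{\mathfrak p\})$, and that restriction is injective — injectivity is clear since $A[\varpi^{-1}]\to\widehat{A_{\mathfrak p}}[\varpi^{-1}]$ has $\varpi$-adically dense image, while surjectivity needs the observation that a valuation $v$ with $\spc(v)=\mathfrak p$ has support contained in $\bigcap_n(\varpi^n)_A^{\arc_\varpi}$ localized at $\mathfrak p$ and center exactly $\mathfrak p A_{\mathfrak p}$, hence extends continuously and uniquely to the completion. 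The rest is assembling Lemma \ref{Valuative rings and adic spectrum}, Corollary \ref{Valuative rings and completion 2}, and Lemma \ref{Completion and local rings} (to commute $\varpi$-adic completion with localization), which are routine given the machinery already developed.
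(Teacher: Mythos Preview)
Your approach is essentially the paper's: establish the bijection $\Spa(\widehat{A_{\mathfrak p}}[\varpi^{-1}],\widehat{A_{\mathfrak p}})\tilde\to\spc^{-1}(\{\mathfrak p\})$ for each minimal prime $\mathfrak p$ of $\varpi$ (this is the content of Proposition \ref{Local rings and fibres of the specialization map} plus the inverse construction), then invoke Lemma \ref{Valuative rings and adic spectrum}. Two small corrections. First, Lemma \ref{Almost power-bounded} gives $\varpi\mathcal{A}^{\circ}\subseteq A$ and hence uniformity, but \emph{not} $A=\mathcal{A}^{\circ}$; integral closure in $A[\varpi^{-1}]$ is strictly weaker than complete integral closure, and the proposition does not need $A=\mathcal{A}^{\circ}$ anyway. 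Second, the paper's argument for the bijection is cleaner than your density-plus-support sketch: for $w\in\Spa(A_{\mathfrak p}[\varpi^{-1}],A_{\mathfrak p})$ and $s\in A\setminus\mathfrak p$ one has $w(s),w(s^{-1})\leq1$ so $w(s)=1$, whence $w=(w|_{A[\varpi^{-1}]})_{\mathfrak p}$ directly, and $\spc(w|_{A[\varpi^{-1}]})=\mathfrak p$ since the center is an open prime contained in $\mathfrak p$. You also need not worry about integral closedness of $\widehat{A_{\mathfrak p}}$; Lemma \ref{Valuative rings and adic spectrum} applies to the uncompleted local ring $A_{\mathfrak p}$, whose integral closedness in $A_{\mathfrak p}[\varpi^{-1}]$ follows from that of $A$ by localization.
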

We first establish the following general proposition.
\begin{prop}\label{Local rings and fibres of the specialization map}Let $A$ be a ring and let $\varpi\in A$ be a non-zero-divisor and non-unit such that $A$ is integrally closed in $A[\varpi^{-1}]$. Then the pre-image $\spc^{-1}(\{\mathfrak{p}\})$ of any $\mathfrak{p}\in\Spf(A)$ is contained in the image of the map \begin{equation*}\Spa(\widehat{A_{\mathfrak{p}}}[\varpi^{-1}], \widehat{A_{\mathfrak{p}}})\to \Spa(A[\varpi^{-1}], A)\end{equation*}induced by \begin{equation*}A[\varpi^{-1}]\to \widehat{A_{\mathfrak{p}}}[\varpi^{-1}].\end{equation*} \end{prop}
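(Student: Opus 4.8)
The plan is to show directly that any point $v \in \spc^{-1}(\{\mathfrak{p}\})$, regarded as a continuous valuation on $\mathcal{A} = A[\varpi^{-1}]$ which is $\leq 1$ on $A$ and whose support-type data over $A$ are concentrated at $\mathfrak{p}$, actually factors through the map $\mathcal{A} \to \widehat{A_{\mathfrak{p}}}[\varpi^{-1}]$. First I would unwind the hypothesis $v \in \spc^{-1}(\{\mathfrak{p}\})$: by definition of the specialization map, $\spc(v) = \{\, f \in A \mid v(f) < 1 \,\} = \mathfrak{p}$. In particular $v(f) = 1$ for every $f \in A \setminus \mathfrak{p}$, while $v(g) < 1$ for $g \in \mathfrak{p}$. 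Since $A$ is integrally closed in $\mathcal{A}$, Lemma \ref{Almost power-bounded} gives $\varpi\mathcal{A}^{\circ} \subseteq A$, so $\mathcal{A}$ is uniform and $v \in \mathcal{M}(\mathcal{A})$ is $\varpi$-normalized by Lemma \ref{Continuous means normalized}; this is the setting in which the argument runs.

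The key step is to check that $v$ extends continuously along the two ring maps $A \to A_{\mathfrak{p}}$ and then $A_{\mathfrak{p}} \to \widehat{A_{\mathfrak{p}}}$, compatibly with inverting $\varpi$. For the localization: every $s \in A \setminus \mathfrak{p}$ satisfies $v(s) = 1$, hence $v(s^{-1}) = v(s)^{-1} = 1$ as well (here $s$ is a unit in $\mathcal{A}$ because $s \notin \mathfrak{p} = \spc(v)$, but more simply one can argue inside the valuation ring $V_v$ attached to $v$ via Lemma \ref{Valuations and valuation rings}, where $s$ maps to a unit since $v(s) = 1$ forces $v(s), v(s^{-1}) \leq 1$). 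Therefore the valuation $v$ on $\mathcal{A}$, equivalently the corresponding local homomorphism $A \to V_v$ into a $\varpi$-adically separated valuation ring, sends $A \setminus \mathfrak{p}$ into units of $V_v$, so it factors uniquely through $A_{\mathfrak{p}} \to V_v$. Because $V_v$ is $\varpi$-adically separated and $\varpi$-adically complete valuation rings of rank one are complete (or: the valuation is continuous and $V_v$ receives a map from the $\varpi$-adic ring $A_{\mathfrak{p}}$ whose $\varpi$-adic topology it respects), this map extends further to $\widehat{A_{\mathfrak{p}}} \to V_v$, hence to $\widehat{A_{\mathfrak{p}}}[\varpi^{-1}] \to V_v[\varpi^{-1}] = \Frac(V_v)$. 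Pulling back the valuation of $\Frac(V_v)$ along this map yields a continuous valuation $\tilde v$ on $\widehat{A_{\mathfrak{p}}}[\varpi^{-1}]$ which is $\leq 1$ on $\widehat{A_{\mathfrak{p}}}$, i.e. a point of $\Spa(\widehat{A_{\mathfrak{p}}}[\varpi^{-1}], \widehat{A_{\mathfrak{p}}})$, and which restricts to $v$ along $\mathcal{A} \to \widehat{A_{\mathfrak{p}}}[\varpi^{-1}]$ by construction. This exhibits $v$ in the image of the induced map, as desired.

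I expect the main obstacle to be the extension across the $\varpi$-adic completion, i.e. verifying that the homomorphism $A_{\mathfrak{p}} \to V_v$ is $\varpi$-adically continuous so that it prolongs to $\widehat{A_{\mathfrak{p}}}$. The point is that continuity of $v$ on $\mathcal{A}$ (guaranteed since $v \in \mathcal{M}(\mathcal{A})$) plus $\varpi$-normalization means $v(\varpi^n) = 2^{-n} \to 0$, so $v$ is $\varpi$-adically continuous on $A$, hence $\varpi^n A$ maps into arbitrarily small neighborhoods of $0$ in $V_v$; since $V_v$ is $\varpi$-adically separated and $\varpi$-adic completion is functorial for $\varpi$-adically continuous maps, one gets $\widehat{A_{\mathfrak{p}}} \to \widehat{V_v} = V_v$ (the last equality because a $\varpi$-adically separated valuation ring with $\varpi$ in its maximal ideal and $\Frac(V_v) = V_v[\varpi^{-1}]$ is already $\varpi$-adically complete when its value group has rank one — and in general one may first complete and note the completion is still a valuation ring by Corollary \ref{Valuative rings and completion}, which does not disturb the induced valuation on the fraction field). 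The remaining bookkeeping — that $\tilde v$ is genuinely continuous for the $\varpi$-adic topology on $\widehat{A_{\mathfrak{p}}}[\varpi^{-1}]$ and that its restriction recovers $v$ on the nose, not merely up to equivalence — is routine given the explicit valuation-ring description from Lemma \ref{Valuations and valuation rings} and the compatibility of the various canonical maps $A[\varpi^{-1}] \to A_{\mathfrak{p}}[\varpi^{-1}] \to \widehat{A_{\mathfrak{p}}}[\varpi^{-1}]$.
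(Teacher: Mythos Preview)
Your approach is essentially the same as the paper's: extend $v$ first to $A_{\mathfrak{p}}[\varpi^{-1}]$ using that $v(s)=1$ for $s\in A\setminus\mathfrak{p}$, then to the $\varpi$-adic completion by continuity. The paper does this more directly by writing down the formula $v_{\mathfrak{p}}(f/s)=v(f)$ and checking continuity, whereas you route through the auxiliary valuation ring $V_v$; both work.

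Two remarks. First, the statement concerns $\Spa(A[\varpi^{-1}],A)$, not $\mathcal{M}(\mathcal{A})$, so $v$ need not be of rank one; your sentence ``$v\in\mathcal{M}(\mathcal{A})$ is $\varpi$-normalized by Lemma~\ref{Continuous means normalized}'' is an unnecessary restriction. Fortunately your valuation-ring argument goes through unchanged for arbitrary rank, since the key point $v(s)=1$ for $s\notin\mathfrak{p}$ is immediate from $\spc(v)=\mathfrak{p}$. Second, the parenthetical claim that a $\varpi$-adically separated rank-one valuation ring is automatically $\varpi$-adically complete is false (think of $\mathbb{Z}_{(p)}$); your fallback of replacing $V_v$ by its completion is the correct fix, but the paper avoids this detour entirely by using the standard identification $\Spa(A_{\mathfrak{p}}[\varpi^{-1}],A_{\mathfrak{p}})\simeq\Spa(\widehat{A_{\mathfrak{p}}}[\varpi^{-1}],\widehat{A_{\mathfrak{p}}})$ once $v_{\mathfrak{p}}$ is shown to be continuous.
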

\begin{proof}Let $v$ be a continuous valuation on $A[\varpi^{-1}]$ with $v(f)\leq1$ for all $f\in A$ and with $\spc(v)=\{\, f\in A\mid v(f)<1\,\}=\mathfrak{p}$. Then $v$ extends to a valuation $v_{\mathfrak{p}}$ on $A_{\mathfrak{p}}[\varpi^{-1}]$ by setting \begin{equation*}v_{\mathfrak{p}}(\frac{f}{s})=v(f)\end{equation*}for all $f\in A$, $s\in A\setminus\mathfrak{p}$ (this is well-defined since $v(s)=1$ for all $s\in A\setminus\mathfrak{p}$). If $\frac{f}{s}$ ($f\in A[\varpi^{-1}]$, $s\in A\setminus\mathfrak{p}$) belongs to $\varpi^{n}A_{\mathfrak{p}}$, then \begin{equation*}v_{\mathfrak{p}}(\frac{f}{s})=v(f)\leq v(\varpi^{n}),\end{equation*}showing that $v_{\mathfrak{p}}$ is a continuous valuation on the Tate ring $A_{\mathfrak{p}}[\varpi^{-1}]$. Thus $v_{\mathfrak{p}}$ extends to a continuous valuation on $\widehat{A_{\mathfrak{p}}}[\varpi^{-1}]$ which is bounded above by $1$ on $\widehat{A_{\mathfrak{p}}}$ and whose restriction along the map $A[\varpi^{-1}]\to \widehat{A_{\mathfrak{p}}}[\varpi^{-1}]$ is $v$.\end{proof}
\begin{proof}[Proof of Proposition \ref{Weakly Shilov}]By Lemma \ref{Valuative rings and adic spectrum}, it suffices to prove that for any $A$ and $\varpi$ such that $A$ is integrally closed in $A[\varpi^{-1}]$ there exists a canonical bijection \begin{equation*}\Spa(\widehat{A_{\mathfrak{p}}}[\varpi^{-1}], \widehat{A_{\mathfrak{p}}})\tilde{\to}\spc^{-1}(\{\mathfrak{p}\})\end{equation*}for every prime ideal $\mathfrak{p}$ of $A$ minimal over $\varpi$. 

Let $\mathfrak{p}$ be a prime ideal of $A$ minimal over $\varpi$ and consider the map \begin{equation*}\spc^{-1}(\{\mathfrak{p}\})\to \Spa(\widehat{A_{\mathfrak{p}}}[\varpi^{-1}], \widehat{A_{\mathfrak{p}}}), v\mapsto v_{\mathfrak{p}},\end{equation*}constructed in the proof of Proposition \ref{Local rings and fibres of the specialization map}. We claim that the image of the map\begin{equation*}\Spa(\widehat{A_{\mathfrak{p}}}[\varpi^{-1}], \widehat{A_{\mathfrak{p}}})\to \Spa(A[\varpi^{-1}], A), w\mapsto w\vert_{A[\varpi^{-1}]},\end{equation*}where $w\vert_{A[\varpi^{-1}]}$ denotes the restriction along $A[\varpi^{-1}]\to \widehat{A_{\mathfrak{p}}}[\varpi^{-1}]$ of a continuous valuation $w$ on $\widehat{A_{\mathfrak{p}}}[\varpi^{-1}]$, is $\spc^{-1}(\{\mathfrak{p}\,\})$ and that $w\mapsto w\vert_{A[\varpi^{-1}]}$ is inverse to the map $v\mapsto v_{\mathfrak{p}}$. To prove this, let \begin{equation*}w\in \Spa(A_{\mathfrak{p}}[\varpi^{-1}], A_{\mathfrak{p}})\simeq \Spa(\widehat{A_{\mathfrak{p}}}[\varpi^{-1}], \widehat{A_{\mathfrak{p}}})\end{equation*}and let $s\in A\setminus\mathfrak{p}$. Then $w(\frac{s}{1})\leq1$ since $\frac{s}{1}\in A_{\mathfrak{p}}$ and\begin{equation*}w(\frac{s}{1})^{-1}=w(\frac{1}{s})\leq1\end{equation*}since $\frac{1}{s}\in A_{\mathfrak{p}}$, so $w(\frac{s}{1})=1$. It follows that $w=v_{\mathfrak{p}}$ with $v=w\vert_{A[\varpi^{-1}]}$. Moreover, the center of $w$ on $A_{\mathfrak{p}}$ is contained in $\mathfrak{p}A_{\mathfrak{p}}$ and thus the center $\spc(v)$ of $v$ on $A$ is contained in $\mathfrak{p}$. However, the center of a $\varpi$-adically continuous valuation on $A$ is always a prime ideal containing $\varpi$ and thus $\spc(v)=\mathfrak{p}$, by the minimality of $\mathfrak{p}$ over $\varpi$. This concludes the proof.\end{proof}
We can now similarly characterize strongly $\varpi$-Shilov rings. In what follows, for any Tate Huber pair $(A, A^{+})$, we view the Berkovich spectrum of $\mathcal{M}(A)$, where $A$ is endowed with some seminorm defining its topology, as the set of rank $1$ points of the adic spectrum $\Spa(A, A^{+})$.  
\begin{thm}\label{Strongly Shilov}Let $\varpi$ be a non-zero-divisor and a non-unit in a ring $A$ and suppose that $A$ is integrally closed in $\mathcal{A}=A[\varpi^{-1}]$. The set $\spc^{-1}(\wAss(\varpi))$ of pre-images of weakly associated prime ideals of $\varpi$ is always a boundary for $(\mathcal{A}, A)$. The ring $A$ is strongly $\varpi$-Shilov if and only if $A$ is completely integrally closed in $\mathcal{A}$ (and thus $A=\mathcal{A}^{\circ}$), every generic point $x\in\Spf(A)$ has a unique pre-image under the specialization map \begin{equation*}\spc: \Spa(\mathcal{A}, A)\to \Spf(A)\end{equation*}and the set \begin{equation*}\spc^{-1}(\Spf(A)_{\gen})=\spc^{-1}(\Min_{A}(\varpi))\end{equation*}of pre-images of generic points of $\Spf(A)$ is a boundary for $\mathcal{A}$. In this case, $\mathcal{A}$ satisfies Berkovich's description of the Shilov boundary (in the sense of Definition \ref{Berkovich's description of the Shilov boundary}). \end{thm}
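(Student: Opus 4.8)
The plan is to reduce everything to the algebraic criterion already developed in Section \ref{sec:Rees valuations}, namely Theorem \ref{Boundaries and arc-closure}, together with the local description of $\varpi$-valuative rings from Section \ref{sec:valuative rings}. First I would prove the assertion that $\spc^{-1}(\wAss(\varpi))$ is always a boundary for $(\mathcal{A}, A)$. For each $\mathfrak{p}\in\wAss(\varpi)$ one picks, using Lemma \ref{Valuations and valuation rings} and Proposition \ref{Local rings and fibres of the specialization map}, a valuation ring $V_{\mathfrak{p}}$ over $A$ with pseudo-uniformizer $\varpi$ dominating $A_{\mathfrak{p}}$ and inducing a point of $\spc^{-1}(\{\mathfrak{p}\})$; then $\varphi_{\mathfrak{p}}^{-1}(\varpi^{n}V_{\mathfrak{p}})=\varpi^{n}A_{\mathfrak{p}}\cap A[\varpi^{-1}]$ by construction. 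Proposition \ref{Weakly associated prime ideals and boundaries} gives exactly $(\varpi^{n})_{A}=\bigcap_{\mathfrak{p}\in\wAss(\varpi)}\varphi_{\mathfrak{p}}^{-1}(\varpi^{n}A_{\mathfrak{p}})$, and since $A$ is integrally closed in $\mathcal{A}$ the integral closure $\overline{(\varpi^{n})_{A}}$ equals $(\varpi^{n})_{A}$ by Proposition \ref{Rings of integral elements}; hence condition (4) of Theorem \ref{Boundaries and integral closure} is satisfied and $\spc^{-1}(\wAss(\varpi))$ is a boundary for $(\mathcal{A}, A)$.

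Next I would prove the equivalence. For the forward direction, assume $A$ is strongly $\varpi$-Shilov. Corollary \ref{Strongly Shilov implies completely integrally closed} gives that $A$ is completely integrally closed in $\mathcal{A}$, so $A=\mathcal{A}^{\circ}$ and $\mathcal{A}$ is uniform. Lemma \ref{Weakly associated vs. minimal} gives $\wAss(\varpi)=\Min_{A}(\varpi)$, so $\spc^{-1}(\wAss(\varpi))=\spc^{-1}(\Min_{A}(\varpi))$; in particular this set, which lives inside $\mathcal{M}(\mathcal{A})$ once we know each $\widehat{A_{\mathfrak p}}$ has rank $1$, is a boundary for $\mathcal{A}$ by the previous paragraph (applied with the valuation rings $\widehat{A_{\mathfrak p}}$ themselves, which now works because by hypothesis these are rank-$1$ valuation rings, and by Lemma \ref{Valuative rings and adic spectrum} each $\spc^{-1}(\{\mathfrak p\})$ is a single rank-$1$ point). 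For the uniqueness of the pre-image of each generic point, apply Proposition \ref{Weakly Shilov}: $A$ strongly $\varpi$-Shilov implies $A$ weakly $\varpi$-Shilov (every minimal prime of $\varpi$ is weakly associated), so $\spc^{-1}(\{\mathfrak p\})$ is a single point for each generic $\mathfrak p\in\Spf(A)$, and that point is of rank one by Lemma \ref{Valuative rings and adic spectrum} applied to $A_{\mathfrak p}$. Conversely, suppose $A$ is completely integrally closed in $\mathcal{A}$, each generic point of $\Spf(A)$ has a unique pre-image, and $\spc^{-1}(\Min_{A}(\varpi))$ is a boundary for $\mathcal{A}$. By Lemma \ref{Valuative rings and adic spectrum} (using that $A$ is integrally closed in $\mathcal{A}$, hence $A_{\mathfrak p}$ is too, hence $\mathcal{A}$ locally uniform), the condition that $\spc^{-1}(\{\mathfrak p\})$ is a single point is equivalent, for $\mathfrak p\in\Min_A(\varpi)$, to $A_{\mathfrak p}$ being $\varpi$-valuative with $\widehat{A_{\mathfrak p}}$ a rank-$1$ valuation ring; but we must also upgrade "single point" to "single rank-$1$ point". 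Here one uses that $\spc^{-1}(\Min_A(\varpi))$ being a boundary for $\mathcal{A}$ forces, via Theorem \ref{Boundaries and arc-closure} and complete integral closedness (so that $(\varpi^n)_A^{\arc_\varpi}=(\varpi^n)_A$ by Corollary \ref{Arc-closure of ideals and rings 2}), that the rank-$1$ points over the minimal primes already cut out $A$; combined with Proposition \ref{Minimal prime ideals and boundaries} this shows $\wAss(\varpi)=\Min_A(\varpi)$, and then Lemma \ref{Valuative rings and adic spectrum} gives that each $\widehat{A_{\mathfrak p}}$ is a rank-$1$ valuation ring, i.e.\ $A$ is strongly $\varpi$-Shilov.

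Finally, once $A$ is strongly $\varpi$-Shilov I would verify Berkovich's description of the Shilov boundary in the sense of Definition \ref{Berkovich's description of the Shilov boundary}: each $\spc^{-1}(\{\mathfrak p\})$, $\mathfrak p$ a generic point of $\Spf(A)=\Spf(\mathcal{A}^{\circ})$, is a single rank-$1$ point (shown above), the set $\spc^{-1}(\Spf(\mathcal{A}^{\circ})_{\gen})=\spc^{-1}(\Min_A(\varpi))$ is a boundary, hence contains the Shilov boundary; and its closure is a closed boundary, hence contains the Shilov boundary, while minimality of the Rees-type valuations (or rather: no proper subset is a boundary, by Theorem \ref{Boundaries and arc-closure} together with Proposition \ref{Weakly associated prime ideals and boundaries} showing each $\mathfrak p$ is genuinely needed) forces the closure of $\spc^{-1}(\Min_A(\varpi))$ to be the Shilov boundary. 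The main obstacle I expect is the bookkeeping in the converse direction: carefully arranging that "the fibre over a generic point is a single point" plus "the minimal-prime locus is a boundary" together imply that these single points are of rank one and that there are no embedded (strictly non-minimal) weakly associated primes contributing; this is exactly where Proposition \ref{Minimal prime ideals and boundaries}, Corollary \ref{Arc-closure of ideals and rings 2} and Lemma \ref{Valuative rings and adic spectrum} must be combined with care, rather than any single deep new input.
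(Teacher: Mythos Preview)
Your overall architecture matches the paper's: the first assertion via Proposition~\ref{Weakly associated prime ideals and boundaries} and Theorem~\ref{Boundaries and integral closure}, the forward direction via Corollary~\ref{Strongly Shilov implies completely integrally closed}, Lemma~\ref{Weakly associated vs. minimal}, and Proposition~\ref{Weakly Shilov}, and the converse via Proposition~\ref{Weakly Shilov}, Theorem~\ref{Boundaries and arc-closure}, Corollary~\ref{Arc-closure of ideals and rings 2}, and Proposition~\ref{Minimal prime ideals and boundaries}. (Your worry about ``upgrading to rank~$1$'' in the converse is unnecessary: Proposition~\ref{Weakly Shilov} already packages that the unique fibre point comes from the rank-$1$ valuation ring $\widehat{A_{\mathfrak p}}$.)

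There is, however, a genuine gap in your final step, namely the claim that the closure of $\spc^{-1}(\Min_A(\varpi))$ \emph{equals} the Shilov boundary. Your argument is that ``each $\mathfrak p$ is genuinely needed'' by Theorem~\ref{Boundaries and arc-closure} together with Proposition~\ref{Weakly associated prime ideals and boundaries}, but neither result gives any minimality: Proposition~\ref{Weakly associated prime ideals and boundaries} only says the full family cuts out $(\varpi^n)_A$, not that no subfamily does, and even if $\spc^{-1}(\Min_A(\varpi))$ were a minimal boundary, that would not make its \emph{closure} a minimal \emph{closed} boundary. The paper instead argues directly that the closure $\mathcal{S}$ is a minimal closed boundary: if $\mathcal{S}'\subsetneq\mathcal{S}$ were a closed boundary, then some $v\in\spc^{-1}(\Spf(A)_{\gen})$ lies in $U=\mathcal{M}(\mathcal{A})\setminus\mathcal{S}'$; using that $\{v\}=\spc^{-1}(\{\spc(v)\})=\bigcap_{f\in A\setminus\spc(v)}\{w:w(f)=1\}$ and compactness of $\mathcal{M}(\mathcal{A})$, one extracts a single $f\in A\setminus\spc(v)$ with $\{w:w(f)=1\}\subseteq U$. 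Since $\spc(v)\supseteq\sqrt{(\varpi)_A}$, such an $f$ has $\vert f\vert_{\spc,\varpi}=1$, yet this maximum is not attained on $\mathcal{S}'$, contradicting that $\mathcal{S}'$ is a boundary. This compactness-and-separation step is the missing ingredient in your proposal.
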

\begin{proof}The assertion that $\spc^{-1}(\wAss(\varpi))$ is always a boundary is a direct consequence of Theorem \ref{Boundaries and integral closure} and Proposition \ref{Weakly associated prime ideals and boundaries}. Assume that $A$ is strongly $\varpi$-Shilov. In view of Proposition \ref{Weakly Shilov} and Corollary \ref{Strongly Shilov implies completely integrally closed}, we only need to show that $\spc^{-1}(\Spf(A)_{\gen})$ is the Shilov boundary for $\mathcal{A}$. Moreover, Proposition \ref{Weakly Shilov}, together with Lemma \ref{Valuative rings and adic spectrum}, implies that the pre-image of any $\mathfrak{p}\in\Spf(A)_{\gen}$ is a rank $1$ point. Therefore, $\spc^{-1}(\Spf(A)_{\gen})$ is a subset of $\mathcal{M}(\mathcal{A})$ and the question whether it is equal to the Shilov boundary of $\mathcal{M}(\mathcal{A})$ is well-posed. Since, by Lemma \ref{Weakly associated vs. minimal}, we know that every weakly associated prime of $\varpi$ is minimal over $\varpi$, the set $\spc^{-1}(\Spf(A)_{\gen})$ is clearly a boundary. Then the closure $\mathcal{S}$ of $\spc^{-1}(\Spf(A)_{\gen})$ in $\mathcal{M}(\mathcal{A})$ is also a boundary. It remains to prove that $\mathcal{S}$ is the Shilov boundary for $\mathcal{A}$. By \cite{Guennebaud}, Ch.~1, Prop.~4, it suffices to prove that $\mathcal{S}$ is a minimal closed boundary, i.e., that it does not properly contain any smaller closed boundary for $\mathcal{A}$.

By way of contradiction, assume that $\mathcal{S}'$ is a closed boundary for $\mathcal{A}$ which is properly contained in $\mathcal{S}$. If $\mathcal{S}'$ contains $\spc^{-1}(\Spf(A)_{\gen})$, then the fact that $\mathcal{S}'$ is closed implies $\mathcal{S}'=\mathcal{S}$, a contradiction. Hence there exists some $v\in\spc^{-1}(\Spf(A)_{\gen})$ which does not belong to $\mathcal{S}'$. Let \begin{equation*}U=\mathcal{M}(\mathcal{A})\setminus\mathcal{S}',\end{equation*}an open neighbourhood of $v$ in $\mathcal{M}(\mathcal{A})$. We prove that $\mathcal{S}'$ is not a boundary by proving the existence of an element $f\in A$ with $\vert f\vert_{\spc,\varpi}=1$ and \begin{equation*}\{\, w\in\mathcal{M}(\mathcal{A})\mid w(f)=1\,\}\subseteq U.\end{equation*}By Proposition \ref{Weakly Shilov}, we know that \begin{equation*}\{v\}=\spc^{-1}(\{\spc(v)\}),\end{equation*}so \begin{align*}U\supseteq \{v\}=\spc^{-1}(\bigcap_{f\in A\setminus\spc(v)}D(f))=\bigcap_{f\in A\setminus\spc(v)}\spc^{-1}(D(f))\\=\bigcap_{f\in A\setminus\spc(v)}\{\, w\in \mathcal{M}(A[\varpi^{-1}])\mid w(f)=1\,\}.\end{align*}By compactness of the Berkovich spectrum (\cite{Berkovich}, Theorem 1.2.1) and the fact that \begin{equation*}\bigcap_{i=1}^{n}\{\, w\in\mathcal{M}(\mathcal{A})\mid w(f_{i})=1\,\}=\{\, w\in\mathcal{M}(\mathcal{A})\mid w(f_1\cdot~\dots~\cdot f_{n})=1\,\}\end{equation*}for any finite family of elements $f_1,\dots, f_n\in A$, we can then find some $f\in A\setminus\spc(v)$ with $U\supseteq \{\, w\in\mathcal{M}(\mathcal{A})\mid w(f)=1\,\}$. But, since $\spc(v)$ is a prime ideal containing $\varpi$, the property $f\in A\setminus\spc(v)$ implies $f\in A\setminus\sqrt{(\varpi)_{A}}$, so $\vert f\vert_{\spc,\varpi}=1$. Thus we have proved that $\spc^{-1}(\Spf(A)_{\gen})$ is a boundary and its closure is the Shilov boundary for $\mathcal{A}$, assuming that $A$ is strongly $\varpi$-Shilov. 

Conversely, suppose that $A$ is completely integrally closed in $\mathcal{A}$, every generic point of $\Spf(A)$ has a unique pre-image under the map $\spc$ and that $\spc^{-1}(\Spf(A)_{\gen})$ is a boundary for $\mathcal{A}$. We want to prove that $A$ is strongly $\varpi$-Shilov. By Proposition \ref{Weakly Shilov}, the assumption that every generic point of $\Spf(A)$ has a unique pre-image is equivalent to $A$ being weakly $\varpi$-Shilov. That is, for every minimal prime ideal $\mathfrak{p}$ over $\varpi$ the $\varpi$-adically completed local ring $\widehat{A_{\mathfrak{p}}}$ is a valuation ring of rank $1$. It remains to prove that every weakly associated prime of $\varpi$ is minimal over $\varpi$. By Proposition \ref{Minimal prime ideals and boundaries}, it suffices to show that \begin{equation*}A=\bigcap_{\mathfrak{p}\in\Min(\varpi)}\widehat{\varphi}_{\mathfrak{p}}^{-1}(\widehat{A_{\mathfrak{p}}}),\end{equation*}where $\Min(\varpi)$ denotes the set of prime ideals of $A$ minimal over $\varpi$ and $\widehat{\varphi_{\mathfrak{p}}}$ denotes the canonical maps $A[\varpi^{-1}]\to \widehat{A_{\mathfrak{p}}}[\varpi^{-1}]$. Since $A$ is completely integrally closed, \begin{equation*}A=\mathcal{A}_{\vert\cdot\vert_{\spc,\varpi}\leq1}.\end{equation*}Hence the desired assertion follows from the assumption that $\spc^{-1}(\Spf(A)_{\gen})$ is a boundary and Theorem \ref{Boundaries and arc-closure}.
\end{proof} 
For algebras over the valuation ring of a sufficiently large nonarchimedean field, the weak and strong $\varpi$-Shilov conditions coincide. To formulate this result, we recall that for a seminormed ring $(\mathcal{A}, \lVert\cdot\rVert)$ we denote by $(\mathcal{A}, \lVert\cdot\rVert)^{\times, m}$ the group of units in $A$ which are multiplicative with respect to the seminorm $\lVert\cdot\rVert$. We simply write $\mathcal{A}^{\times,m}$ instead of $(\mathcal{A}, \lVert\cdot\rVert)^{\times,m}$ if the seminorm is understood from the context.
\begin{prop}\label{Large value groups and strong Shilov rings}Let $\varpi\in A$ be a non-unit and non-zero-divisor in a ring $A$ such that $A$ is completely integrally closed in $\mathcal{A}=A[\varpi^{-1}]$. Suppose that \begin{equation*}\vert\mathcal{A}\vert_{\spc,\varpi}\subseteq \sqrt{\vert\mathcal{A}^{\times,m}\vert_{\spc,\varpi}}\cup\{0\}.\end{equation*}Then $A$ is strongly $\varpi$-Shilov if and only if it is weakly $\varpi$-Shilov. \end{prop}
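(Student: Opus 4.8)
The reverse implication is immediate: every prime ideal minimal over $\varpi$ is a weakly associated prime of $\varpi$, so the strong $\varpi$-Shilov condition formally implies the weak one. Thus the content is the forward implication, and my plan is to reduce it to a single statement about the Berkovich spectrum via Theorem \ref{Strongly Shilov} and then establish that statement by a rescaling argument that exploits the hypothesis on the values of the spectral seminorm.

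Assume $A$ is weakly $\varpi$-Shilov. Since $A$ is completely integrally closed in $\mathcal{A}$ it is integrally closed, so Theorem \ref{Strongly Shilov} is available: it says $A$ is strongly $\varpi$-Shilov once we know (a) $A$ is completely integrally closed in $\mathcal{A}$, (b) every generic point of $\Spf(A)$ has a unique preimage under the specialization map $\spc$, and (c) $\spc^{-1}(\Min_{A}(\varpi))$ is a boundary for $\mathcal{A}$. Now (a) is given, and by Proposition \ref{Weakly Shilov} condition (b) is precisely the weak $\varpi$-Shilov hypothesis; the same proposition, together with Lemma \ref{Valuative rings and adic spectrum}, identifies, for each $\mathfrak{p}\in\Min_{A}(\varpi)$, the fibre $\spc^{-1}(\{\mathfrak{p}\})$ with the single rank-one point $v_{\mathfrak{p}}\in\mathcal{M}(\mathcal{A})$ obtained by pulling back to $\mathcal{A}$ the $\varpi$-normalized valuation of the rank-one valuation ring $\widehat{A_{\mathfrak{p}}}$, and $\spc(v_{\mathfrak{p}})=\mathfrak{p}$. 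So the whole proof reduces to proving (c): that every $f\in\mathcal{A}$ attains its spectral seminorm $\vert f\vert_{\spc,\varpi}=\max_{v\in\mathcal{M}(\mathcal{A})}v(f)$ at some $v_{\mathfrak{p}}$ with $\mathfrak{p}\in\Min_{A}(\varpi)$.

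To prove this I would first rescale: since $\varpi$ is multiplicative with value $\frac{1}{2}$ for $\vert\cdot\vert_{\spc,\varpi}$ and for every $v\in\mathcal{M}(\mathcal{A})$, replacing $f$ by a suitable $\varpi^{m}f$ reduces to the case $f\in A$, where $r:=\vert f\vert_{\spc,\varpi}\leq1$; if $r=0$ any $v_{\mathfrak{p}}$ works (and $\Min_{A}(\varpi)\ne\varnothing$), so assume $r\ne0$. Here the hypothesis enters: $r\in\sqrt{\vert\mathcal{A}^{\times,m}\vert_{\spc,\varpi}}$, so there are a seminorm-multiplicative unit $c\in\mathcal{A}^{\times,m}$ and an integer $k\geq1$ with $\vert c\vert_{\spc,\varpi}=r^{k}$. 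Set $h:=f^{k}c^{-1}$. Since $c$ is multiplicative for $\vert\cdot\vert_{\spc,\varpi}$ we have $\vert c^{-1}\vert_{\spc,\varpi}=r^{-k}$, hence $\vert h\vert_{\spc,\varpi}=1$, and since $A$ is completely integrally closed, $A=A^{\arc_{\varpi}}=\mathcal{A}_{\vert\cdot\vert_{\spc,\varpi}\leq1}$ (Example \ref{Complete integral closure and arc-closure}), so $h\in A$. A one-line estimate (if $h^{N}\in(\varpi)_{A}$ then $\vert h\vert_{\spc,\varpi}^{N}=\vert h^{N}\vert_{\spc,\varpi}\leq\vert\varpi\vert_{\spc,\varpi}=\frac{1}{2}$, contradicting $\vert h\vert_{\spc,\varpi}=1$) shows $h\notin\sqrt{(\varpi)_{A}}=\bigcap_{\mathfrak{q}\in\Min_{A}(\varpi)}\mathfrak{q}$, so I can choose $\mathfrak{p}\in\Min_{A}(\varpi)$ with $h\notin\mathfrak{p}$. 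Then $h\in A\setminus\mathfrak{p}=A\setminus\spc(v_{\mathfrak{p}})$ forces $v_{\mathfrak{p}}(h)=1$, while $v_{\mathfrak{p}}\leq\vert\cdot\vert_{\spc,\varpi}$ gives $v_{\mathfrak{p}}(c)\leq r^{k}$ and $v_{\mathfrak{p}}(c^{-1})\leq r^{-k}$, so $v_{\mathfrak{p}}(c)=r^{k}$. Hence $v_{\mathfrak{p}}(f)^{k}=v_{\mathfrak{p}}(f^{k})=v_{\mathfrak{p}}(hc)=v_{\mathfrak{p}}(h)v_{\mathfrak{p}}(c)=r^{k}$, i.e.\ $v_{\mathfrak{p}}(f)=r=\vert f\vert_{\spc,\varpi}$, which is exactly (c). Theorem \ref{Strongly Shilov} then yields that $A$ is strongly $\varpi$-Shilov.

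The one genuinely essential use of the hypothesis — and the point I expect to be the crux rather than an obstacle — is the rigidity of seminorm-multiplicative units: for $c\in\mathcal{A}^{\times,m}$ one has $v(c)=\vert c\vert_{\spc,\varpi}$ at \emph{every} $v\in\mathcal{M}(\mathcal{A})$, forced by the pair of inequalities $v(c)\leq\vert c\vert_{\spc,\varpi}$ and $v(c^{-1})\leq\vert c^{-1}\vert_{\spc,\varpi}=\vert c\vert_{\spc,\varpi}^{-1}$. This rigidity is what lets me transport the equality $v_{\mathfrak{p}}(h)=\vert h\vert_{\spc,\varpi}$, which holds for free at a valuation centered at a minimal prime of $\varpi$ away from the support of $h$, back to $v_{\mathfrak{p}}(f)=\vert f\vert_{\spc,\varpi}$; without a hypothesis of this type there is no reason the maximum of $\vert f\vert$ over $\mathcal{M}(\mathcal{A})$ should be attained on $\spc^{-1}(\Min_{A}(\varpi))$, which is precisely the gap between the weak and strong $\varpi$-Shilov conditions. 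The remaining points (that $\spc(v_{\mathfrak{p}})=\mathfrak{p}$, that $v_{\mathfrak{p}}\in\mathcal{M}(\mathcal{A})$, and that $A=\mathcal{A}_{\vert\cdot\vert_{\spc,\varpi}\leq1}$) are routine and already supplied by Proposition \ref{Weakly Shilov}, Lemma \ref{Continuous means normalized}, and Example \ref{Complete integral closure and arc-closure}.
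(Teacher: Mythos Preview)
Your proof is correct and follows essentially the same route as the paper's: reduce via Theorem \ref{Strongly Shilov} to showing $\spc^{-1}(\Min_{A}(\varpi))$ is a boundary, rescale by a seminorm-multiplicative unit to reduce to elements of spectral seminorm $1$, and then pick a minimal prime of $\varpi$ avoiding that element. Your write-up is slightly more explicit about a few points (e.g.\ invoking $A=\mathcal{A}_{\vert\cdot\vert_{\spc,\varpi}\leq1}$ from complete integral closure to place $h$ in $A$, and isolating the rigidity $v(c)=\vert c\vert_{\spc,\varpi}$), but the argument is the same.
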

\begin{proof}Suppose that $A$ is weakly $\varpi$-Shilov. By Theorem \ref{Strongly Shilov}, we have to prove that the set $\spc^{-1}(\Spf(A)_{\gen})$ of pre-images of generic points under the specialization map $\spc: \Spa(\mathcal{A}, A)\to \Spf(A)$ is a boundary. That is, we have to prove that every $f\in \mathcal{A}$ attains its maximum $\vert f\vert_{\spc,\varpi}$ on $\spc^{-1}(\Spf(A)_{\gen})$. Since $\vert\cdot\vert_{\spc,\varpi}$ is power-multiplicative, this is equivalent to proving that for every $f\in \mathcal{A}$ there exists an integer $n>0$ such that $f^{n}$ attains its maximum $\vert f^{n}\vert_{\spc,\varpi}=\vert f\vert_{\spc,\varpi}^{n}$ on $\spc^{-1}(\Spf(A)_{\gen})$. By hypothesis, for every $f\in \mathcal{A}$ with $\vert f\vert_{\spc,\varpi}\neq0$ we can find a seminorm-multiplicative (with respect to $\vert\cdot\vert_{\spc,\varpi}$) unit $c\in \mathcal{A}$ such that $\vert cf^{n}\vert_{\spc,\varpi}=1$ for some integer $n>0$. Since $c$ is seminorm-multiplicative with respect to the spectral seminorm $\vert\cdot\vert_{\spc,\varpi}$, we have $\vert c^{-1}\vert_{\spc,\varpi}=\vert c\vert_{\spc,\varpi}^{-1}$, so we obtain $v(c)^{-1}=v(c^{-1})\leq\vert c\vert_{\spc,\varpi}^{-1}$ and thus $v(c)=\vert c\vert_{\spc,\varpi}$ for all $v\in\mathcal{M}(\mathcal{A})$. Therefore, it suffices to prove that every $f\in \mathcal{A}$ with $\vert f\vert_{\spc,\varpi}=1$ attains its maximum on $\spc^{-1}(\Spf(A)_{\gen})$. But for any such $f$ there exists some prime ideal $\mathfrak{p}$ of $A$ minimal over $\varpi$ such that $f\not\in\mathfrak{p}$. The assertion follows from this by definition of the specialization map $\spc$.\end{proof}
\begin{cor}\label{Large value groups and strong Shilov rings 2}Let $K$ be a nonarchimedean field with pseudo-uniformizer $\varpi$ and let $A$ be a $\varpi$-torsion-free $K^{\circ}$-algebra such that $A$ is a completely integrally closed proper subring of $\mathcal{A}=A[\varpi^{-1}]$. Suppose that \begin{equation*}\vert \mathcal{A}\vert_{\spc,\varpi}\subseteq \sqrt{\vert K\vert}.\end{equation*}Then $A$ is strongly $\varpi$-Shilov if and only if it is weakly $\varpi$-Shilov.

In particular, if $K$ is such that $\vert K^{\times}\vert=\mathbb{R}_{>0}$, then the subring of power-bounded elements $\mathcal{A}^{\circ}$ of any uniform normed $K$-algebra $\mathcal{A}$ is strongly $\varpi$-Shilov if and only if it is weakly $\varpi$-Shilov.\end{cor}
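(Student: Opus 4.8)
The plan is to derive this corollary formally from Proposition \ref{Large value groups and strong Shilov rings}. The standing hypotheses of that proposition on the pair $(A,\varpi)$ hold here: $\varpi$ is a non-zero-divisor in $A$ because $A$ is $\varpi$-torsion-free, it is a non-unit because $A$ is a \emph{proper} subring of $A[\varpi^{-1}]$, and $A$ is completely integrally closed in $\mathcal{A}=A[\varpi^{-1}]$ by assumption. Thus the entire content is to show that the value-group hypothesis $\vert\mathcal{A}\vert_{\spc,\varpi}\subseteq\sqrt{\vert K\vert}$ of the corollary implies the hypothesis $(\ast)$ of that proposition, namely $\vert\mathcal{A}\vert_{\spc,\varpi}\subseteq\sqrt{\vert\mathcal{A}^{\times,m}\vert_{\spc,\varpi}}\cup\{0\}$; once this is in place, Proposition \ref{Large value groups and strong Shilov rings} immediately yields the equivalence of the weak and strong $\varpi$-Shilov properties for $A$.

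The key point is that $K^{\times}\subseteq\mathcal{A}^{\times,m}$ and that $\vert c\vert_{\spc,\varpi}=\vert c\vert$ for every $c\in K^{\times}$, where the absolute value of $K$ is normalized so that $\vert\varpi\vert=\tfrac12$, compatibly with $\vert\cdot\vert_{\spc,\varpi}$. Since $\varpi$ is a non-zero-divisor in $A$, the structure map $K^{\circ}\to A$ has no kernel (a nonzero ideal of the rank-one valuation ring $K^{\circ}$ contains a power of $\varpi$), so $K^{\circ}\subseteq A$; a direct check with the canonical extension of the $\varpi$-adic seminorm on $A$ then shows that the inclusion $K\hookrightarrow\mathcal{A}$ is submetric for that seminorm, and Lemma \ref{Bounded implies submetric} gives $\vert c\vert_{\spc,\varpi}\le\vert c\vert$ for all $c\in K$. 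Applying this to $c$ and to $c^{-1}$ for $c\in K^{\times}$ and using submultiplicativity of $\vert\cdot\vert_{\spc,\varpi}$, the chain $1\le\vert c\vert_{\spc,\varpi}\,\vert c^{-1}\vert_{\spc,\varpi}\le\vert c\vert\,\vert c^{-1}\vert=1$ forces $\vert c\vert_{\spc,\varpi}=\vert c\vert$ and $\vert c^{-1}\vert_{\spc,\varpi}=\vert c\vert_{\spc,\varpi}^{-1}$; the latter equality, in turn, forces $c$ to be multiplicative for $\vert\cdot\vert_{\spc,\varpi}$ (for a unit $c$ the relation $\vert c^{-1}\vert=\vert c\vert^{-1}$ always upgrades to full multiplicativity), so $c\in\mathcal{A}^{\times,m}$. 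Hence $\vert K^{\times}\vert=\vert K^{\times}\vert_{\spc,\varpi}\subseteq\vert\mathcal{A}^{\times,m}\vert_{\spc,\varpi}$, and therefore $\sqrt{\vert K\vert}=\sqrt{\vert K^{\times}\vert}\cup\{0\}\subseteq\sqrt{\vert\mathcal{A}^{\times,m}\vert_{\spc,\varpi}}\cup\{0\}$, so the corollary's hypothesis yields $(\ast)$.

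For the ``in particular'' part, apply the statement just proved with $A=\mathcal{A}^{\circ}$: a pseudo-uniformizer $\varpi$ of $\mathcal{A}$ is topologically nilpotent, so $\varpi^{-1}$ is not power-bounded and $\varpi$ is a non-unit of $\mathcal{A}^{\circ}$, whence $\mathcal{A}^{\circ}$ is a proper $K^{\circ}$-subalgebra of $\mathcal{A}=\mathcal{A}^{\circ}[\varpi^{-1}]$ in which $\varpi$ is a non-zero-divisor, and for a uniform $\mathcal{A}$ the subring $\mathcal{A}^{\circ}$ is completely integrally closed in $\mathcal{A}$ (\cite{Dine22}, Lemma 2.24; cf.~Remark \ref{Andre's example}); moreover, when $\vert K^{\times}\vert=\mathbb{R}_{>0}$ one has $\vert K\vert=\mathbb{R}_{\ge0}=\sqrt{\vert K\vert}$, so the value-group hypothesis is automatic and the first part applies. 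I expect the only genuinely delicate point to be the normalization bookkeeping behind the identification $\vert K^{\times}\vert=\vert K^{\times}\vert_{\spc,\varpi}$ (and the attendant submetricity of $K\hookrightarrow\mathcal{A}$); everything else is a formal manipulation of seminorms and radicals.
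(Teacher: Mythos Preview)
Your proof is correct and follows exactly the approach the paper intends (the paper gives no proof, treating the corollary as immediate from Proposition \ref{Large value groups and strong Shilov rings}); you have supplied the natural verification that $K^{\times}\subseteq\mathcal{A}^{\times,m}$ with $\vert c\vert_{\spc,\varpi}=\vert c\vert$.

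One small inaccuracy: the inclusion $K\hookrightarrow(\mathcal{A},\lVert\cdot\rVert_{A,\varpi})$ need not be literally submetric for the canonical extension of the $\varpi$-adic seminorm (e.g.\ if $\vert c\vert$ is not an integer power of $2$, then $\lVert c\rVert_{A,\varpi}$ is the next power of $2$ above $\vert c\vert$). It is, however, bounded, and that is all Lemma \ref{Bounded implies submetric} requires to yield $\vert c\vert_{\spc,\varpi}\le\vert c\vert$; your chain $1\le\vert c\vert_{\spc,\varpi}\vert c^{-1}\vert_{\spc,\varpi}\le\vert c\vert\vert c^{-1}\vert=1$ then forces equality and multiplicativity exactly as you say. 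So the argument stands with ``submetric'' replaced by ``bounded''.
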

In the same vein, we record the following result on uniform Tate rings $\mathcal{A}$ whose reduction $\mathcal{A}^{\circ}/\mathcal{A}^{\circ\circ}$ is an integral domain.
\begin{prop}\label{Multiplicative norms}Let $\mathcal{A}$ be a uniform Tate ring with topologically nilpotent unit $\varpi$. Suppose that \begin{equation*}\vert\mathcal{A}\vert_{\spc,\varpi}\subseteq \sqrt{\vert\mathcal{A}^{\times,m}\vert_{\spc,\varpi}}\cup\{0\}.\end{equation*}If $\mathcal{A}^{\circ}$ has a unique minimal prime ideal over $\varpi$ (or, equivalently, if $\mathcal{A}^{\circ}/\mathcal{A}^{\circ\circ}$ is an integral domain), then $\vert\cdot\vert_{\spc,\varpi}$ is multiplicative and is the unique pre-image under $\spc$ of the unique minimal prime ideal of $\varpi$ in $\mathcal{A}^{\circ}$. In particular, $\mathcal{A}$ satisfies Berkovich's description of the Shilov boundary in the strong sense.\end{prop}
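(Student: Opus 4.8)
Write $A:=\mathcal{A}^{\circ}$, so that $(A,\varpi)$ is a pair of definition of $\mathcal{A}$; by uniformity (\cite{Dine22}, Lemma 2.24, cf.\ Remark \ref{Andre's example}) $A$ is the closed unit ball $\mathcal{A}_{\vert\cdot\vert_{\spc,\varpi}\leq1}$ and is completely integrally closed in $\mathcal{A}$. I would first record that $\mathcal{A}^{\circ\circ}=\{f\in A\mid\vert f\vert_{\spc,\varpi}<1\}=\sqrt{(\varpi)_{A}}$: the inclusion $\sqrt{(\varpi)_A}\subseteq\mathcal{A}^{\circ\circ}$ is clear, and if $\vert f\vert_{\spc,\varpi}<1$ then $f^{m}/\varpi^{n}$ lies in the closed unit ball $A$ for suitable $n,m\geq1$, so $f\in\sqrt{(\varpi)_A}$. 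Thus the hypothesis that $\mathcal{A}^{\circ}/\mathcal{A}^{\circ\circ}$ be a domain is equivalent to $\sqrt{(\varpi)_A}$ being prime, i.e.\ to the existence of a unique minimal prime $\mathfrak{p}_0:=\mathcal{A}^{\circ\circ}=\sqrt{(\varpi)_A}$ of $\varpi$ in $A$; this is the parenthetical equivalence. The endgame will be: once $A$ is known to be weakly $\varpi$-Shilov, Proposition \ref{Large value groups and strong Shilov rings} (applicable since $A$ is completely integrally closed and $(\ast)$ holds) upgrades this to strongly $\varpi$-Shilov, Theorem \ref{Strongly Shilov} then yields Berkovich's description of the Shilov boundary, and since $\Spf(A)_{\gen}=\{\mathfrak{p}_0\}$ consists of a single point, $\spc^{-1}(\Spf(A)_{\gen})=\spc^{-1}(\{\mathfrak{p}_0\})$ is a single rank-one point which is both closed and equal to the Shilov boundary — giving the strong sense — and a one-point boundary $\{v_0\}$ forces $\vert f\vert_{\spc,\varpi}=v_0(f)$ for all $f$, so $\vert\cdot\vert_{\spc,\varpi}=v_0$ is multiplicative and is the unique $\spc$-preimage of $\mathfrak{p}_0$.

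The substantive content is therefore the weak $\varpi$-Shilov property, and the key preliminary step is to show $\vert\cdot\vert_{\spc,\varpi}$ is already multiplicative. For $f,g\in\mathcal{A}$ with $\vert f\vert_{\spc,\varpi},\vert g\vert_{\spc,\varpi}\neq0$ (the degenerate cases being immediate from submultiplicativity), condition $(\ast)$ lets me choose — after replacing $f,g$ by a common power and absorbing the corresponding seminorm-multiplicative units — elements $c,d\in\mathcal{A}^{\times,m}$ with $\vert c\vert_{\spc,\varpi}=\vert f\vert_{\spc,\varpi}^{m}$ and $\vert d\vert_{\spc,\varpi}=\vert g\vert_{\spc,\varpi}^{m}$. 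Then $f^{m}c^{-1}$ and $g^{m}d^{-1}$ have spectral seminorm $1$, hence lie in $A\setminus\mathcal{A}^{\circ\circ}=A\setminus\mathfrak{p}_0$; since $\mathfrak{p}_0$ is prime their product lies in $A\setminus\mathfrak{p}_0$, so $\vert f^{m}g^{m}\vert_{\spc,\varpi}=\vert c\vert_{\spc,\varpi}\vert d\vert_{\spc,\varpi}=\vert f\vert_{\spc,\varpi}^{m}\vert g\vert_{\spc,\varpi}^{m}$, and dividing through (using power-multiplicativity of $\vert\cdot\vert_{\spc,\varpi}$) gives $\vert fg\vert_{\spc,\varpi}=\vert f\vert_{\spc,\varpi}\vert g\vert_{\spc,\varpi}$. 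Hence $v_0:=\vert\cdot\vert_{\spc,\varpi}$ is a rank-one point of $\mathcal{M}(\mathcal{A})$ with $\spc(v_0)=\mathfrak{p}_0$.

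Now I localize. As $v_0(s)=1$ for every $s\in A\setminus\mathfrak{p}_0$, the seminorm $v_0$ extends to a power-multiplicative seminorm $v_0'$ on $\mathcal{A}_{\mathfrak{p}_0}:=A_{\mathfrak{p}_0}[\varpi^{-1}]$ by $v_0'(a/s)=v_0(a)$, and because $A=\mathcal{A}_{\vert\cdot\vert_{\spc,\varpi}\leq1}$ one checks that the closed unit ball of $v_0'$ is exactly $A_{\mathfrak{p}_0}$; in particular $A_{\mathfrak{p}_0}$ is a local ring in which $\varpi$ is a non-zero-divisor and non-unit, and it is completely integrally closed in $\mathcal{A}_{\mathfrak{p}_0}$ (a closed unit ball of a power-multiplicative seminorm is completely integrally closed, cf.\ Remark \ref{Andre's example}). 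Next I claim $\mathcal{M}(\mathcal{A}_{\mathfrak{p}_0})=\{v_0'\}$: restriction along $\mathcal{A}\to\mathcal{A}_{\mathfrak{p}_0}$ sends $\mathcal{M}(\mathcal{A}_{\mathfrak{p}_0})$ into $\mathcal{M}(\mathcal{A})$ and, since any such valuation has value $1$ on each $s\in A\setminus\mathfrak{p}_0$ and $\mathfrak{p}_0$ is minimal over $\varpi$, into $\spc^{-1}(\{\mathfrak{p}_0\})\cap\mathcal{M}(\mathcal{A})$; and this intersection is $\{v_0\}$, because any rank-one $v\in\mathcal{M}(\mathcal{A})$ with $\spc(v)=\mathfrak{p}_0$ satisfies $v\leq v_0$, agrees with $v_0$ (both equal $1$) on $A\setminus\mathfrak{p}_0$, and has $v(c)=v_0(c)$ for all $c\in\mathcal{A}^{\times,m}$ (from $v(c)\leq v_0(c)$ and $v(c^{-1})\leq v_0(c^{-1})$), so the very same reduction via $(\ast)$ used for multiplicativity yields $v=v_0$. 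Applying Lemma \ref{Valuative rings and Berkovich spectrum} to $A_{\mathfrak{p}_0}$ then shows $A_{\mathfrak{p}_0}$ is $\varpi$-valuative, and since $\sqrt{(\varpi)_{A_{\mathfrak{p}_0}}}=\mathfrak{p}_0A_{\mathfrak{p}_0}$ is the maximal ideal, Corollary \ref{Valuative rings and completion 2} gives that $\widehat{A_{\mathfrak{p}_0}}$ is a valuation ring of rank one; this is precisely the weak $\varpi$-Shilov property of $A$, and the endgame described above concludes.

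I expect the main obstacle to be the localization–completion bookkeeping that makes this clean: specifically, verifying that $A_{\mathfrak{p}_0}$ — and not some strictly larger ring — is the closed unit ball of $v_0'$ and is completely integrally closed in $A_{\mathfrak{p}_0}[\varpi^{-1}]$, which is exactly what licenses the application of Lemma \ref{Valuative rings and Berkovich spectrum} (this is also where ruling out higher-rank valuations centered at $\mathfrak{p}_0$, implicit in the ``unique preimage'' clause of Berkovich's description, gets handled). Within the multiplicativity and rank-one-uniqueness arguments, the delicate point is the disciplined use of $(\ast)$ together with the fact that seminorm-multiplicative units are respected by every $v\in\mathcal{M}(\mathcal{A})$ bounded by $\vert\cdot\vert_{\spc,\varpi}$; everything else is formal, once the equivalences of Section \ref{sec:valuative rings} and the reductions of Section \ref{sec:heart of the paper} are in place.
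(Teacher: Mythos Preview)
Your proof is correct, but it takes a substantially longer route than the paper's. The paper's argument is essentially a single paragraph: take any $v\in\spc^{-1}(\mathfrak{p})$, observe that $v(f)=1=\vert f\vert_{\spc,\varpi}$ for every $f\in\mathcal{A}^{\circ}\setminus\mathfrak{p}$ (by definition of $\spc$), then use $(\ast)$ to reduce every $f$ with nonzero spectral seminorm to this case, concluding $v=\vert\cdot\vert_{\spc,\varpi}$. This simultaneously shows that $\vert\cdot\vert_{\spc,\varpi}$ is multiplicative (being equal to a valuation) and that $\spc^{-1}(\mathfrak{p})$ consists of that single point; since a singleton in $\mathcal{M}(\mathcal{A})$ is closed and is trivially a boundary, Berkovich's description in the strong sense follows immediately.

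You actually carry out this very computation as a sub-step (your argument that ``any rank-one $v\in\mathcal{M}(\mathcal{A})$ with $\spc(v)=\mathfrak{p}_0$ satisfies \ldots\ $v=v_0$''), but you then use it only to identify $\mathcal{M}(\mathcal{A}_{\mathfrak{p}_0})$, feed that into Lemma~\ref{Valuative rings and Berkovich spectrum} to extract weak $\varpi$-Shilov, and finally invoke Proposition~\ref{Large value groups and strong Shilov rings} and Theorem~\ref{Strongly Shilov}. All of this machinery is unnecessary here: the direct uniqueness argument already gives the full conclusion. Your detour does buy one thing --- it makes the exclusion of higher-rank valuations in $\spc^{-1}(\mathfrak{p}_0)$ explicit via Proposition~\ref{Weakly Shilov} and Lemma~\ref{Valuative rings and adic spectrum} --- but the paper's direct argument handles this too (the relation $v(g)^{n}=v(c)$ for suitable $c\in\mathcal{A}^{\times,m}$ pins down $v$ up to equivalence, not just among rank-one valuations). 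Your separate proof of multiplicativity via primality of $\mathfrak{p}_0$ is also redundant once you know $\vert\cdot\vert_{\spc,\varpi}$ equals some $v\in\spc^{-1}(\mathfrak{p}_0)$.
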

\begin{proof}We follow the first part of the proof of \cite{Berkovich}, Proposition 2.4.4(ii). Let $\mathfrak{p}$ be the unique minimal prime ideal of $\varpi$ in $\mathcal{A}^{\circ}$. Then $\mathfrak{p}=\sqrt{(\varpi)_{\mathcal{A}^{\circ}}}=\mathcal{A}^{\circ\circ}$, so $\mathfrak{p}$ is the open unit ball for the spectral seminorm $\vert\cdot\vert_{\spc,\varpi}$. Let $v\in \spc^{-1}(\mathfrak{p})$. By the definition of $\spc$, we know that $v(f)=\vert f\vert_{\spc,\varpi}=1$ for all $f\in\mathcal{A}^{\circ}$ with $f\in\mathcal{A}^{\circ}\setminus\mathfrak{p}$, i.e., $v(f)=\vert f\vert_{\spc,\varpi}$ for all $f\in\mathcal{A}$ with $\vert f\vert_{\spc,\varpi}=1$. But by the assumption on the values of $\vert\cdot\vert_{\spc,\varpi}$, this implies $v(f)=\vert f\vert_{\spc,\varpi}$ for all $f\in\mathcal{A}$; in particular, $\vert\cdot\vert_{\spc,\varpi}=v$ is multiplicative and is the only point in $\spc^{-1}(\mathfrak{p})$. \end{proof}

\section{The $\varpi$-Shilov property and coherence}\label{sec:coherence}
   
To give additional sufficient conditions for a ring to be $\varpi$-Shilov, we leverage the formalism of $\varpi$-$\ast$-operations introduced in Section 5. The following definition is an analog in this context of the notion of a Prüfer v-multiplication domain (PvMD).
\begin{mydef}[$\ast$-multiplication ring]Let $\varpi$ be a non-zero-divisor and a non-unit in a ring $A$. Let $\ast: J_{\varpi}(A)\to J_{\varpi}(A)$ be a strict $\varpi$-$\ast$-operation and let $\ast_{f}$ be the finite-type companion of $\ast$. The ring $A$ is called a $\ast$-multiplication ring if for every $f\in A$ and every integer $n\geq1$ there exists a finitely generated $A$-submodule $J$ of $A[\varpi^{-1}]$ with $J\circ_{\ast}(f, \varpi^{n})_{A}=A$ (equivalently, with $J\circ_{\ast_{f}}(f, \varpi^{n})_{A}=A$).\end{mydef}
By analogy with the work of Houston, Malik and Mott \cite{Houston-Malik-Mott} on $\ast$-multiplication domains (for a usual $\ast$-operation on the domain), we characterize the notion of $\ast$-multiplication rings for a $\varpi$-$\ast$-operation $\ast$ in a number of ways. Hereby, the notion of $\varpi$-valuative rings introduced by Fujiwara-Kato in \cite{FK} and studied in Section 4 of the present paper takes on the role played in the classical case by valuation rings.

For a ring $A$ and an ideal $P$ of the polynomial ring $A[X]$ we write $c(P)\subseteq A$ for the content of $P$, the ideal of coefficients of elements of $P$. If $P$ is generated by a single polynomial $F\in A[X]$, then $c(P)$ is the ideal of $A$ generated by the coefficients of $F$ and we write $c(F)$ instead of $c(P)$.   
\begin{thm}[Analog of \cite{Houston-Malik-Mott}, Theorem 1.1]\label{Houston-Malik-Mott}Let $\varpi\in A$ be a non-zeo-divisor and non-unit in a ring $A$. Let $\ast$ be a strict $\varpi$-$\ast$-operation on $A$ and let $\ast_{f}$ be its companion of finite type. The following are equivalent: 
\begin{enumerate}[(1)]\item $A$ is a $\ast$-multiplication ring. \item $A$ is integrally closed in $A[\varpi^{-1}]$ and, for every $f\in A$, $n\geq1$, the ideal \begin{equation*}(\varpi^{n}X-f)A[\varpi^{-1}][X]\cap A[X]\end{equation*}of the polynomial ring $A[X]$ contains an element $F$ with $c(F)^{\ast}=A$. \item $A_{\mathfrak{p}}$ is a $\varpi$-valuative ring for every maximal $\ast_{f}$-ideal $\mathfrak{p}$. \item $A_{\mathfrak{p}}$ is a $\varpi$-valuative ring for every prime $\ast_{f}$-ideal $\mathfrak{p}$. \end{enumerate}\end{thm}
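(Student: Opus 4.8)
The plan is to prove the chain of implications $(1)\Rightarrow(2)\Rightarrow(3)\Rightarrow(4)\Rightarrow(1)$, following closely the strategy of Houston--Malik--Mott but substituting $\varpi$-valuative rings for valuation rings and the $\varpi$-$\ast$-operation formalism (with $A[\varpi^{-1}]$ playing the role of the fraction field) for the classical one. Throughout we will make systematic use of the two facts already available: first, that an $A$-submodule $I$ of $A[\varpi^{-1}]$ is $\ast$-invertible (with respect to a strict $\varpi$-$\ast$-operation $\ast$) precisely when its inverse can be taken to be $A:_{A[\varpi^{-1}]}I$, together with the content-ideal/Dedekind--Mertens type manipulations over the polynomial ring $A[X]$; and second, the characterization of $\varpi$-valuative local rings from Proposition \ref{Valuative rings and valuation rings} (for every $f\in A$, $n\geq 1$, either $\varpi^{n}\in(f)_{A}$ or $f\in(\varpi^{n})_{A}$), which is the localized analog of the classical ``for every pair $a,b$ one divides the other''.

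\textbf{The implications $(1)\Rightarrow(2)$ and the polynomial reformulation.} First I would handle $(1)\Rightarrow(2)$. Assuming $A$ is a $\ast$-multiplication ring, the $\ast$-invertibility of $(f,\varpi^{n})_{A}$ for all $f,n$ forces, via $\ast$-invertible modules being cancellative and hence $\arc_{\varpi}$-closed-type arguments, that $A$ is completely integrally closed and in particular integrally closed in $A[\varpi^{-1}]$ (compare Corollary \ref{Strongly Shilov implies completely integrally closed} in spirit). For the polynomial statement, the key is the content formula: for the generator $G=\varpi^{n}X-f$ one has $c(G)=(f,\varpi^{n})_{A}$, and an element $F\in (\varpi^{n}X-f)A[\varpi^{-1}][X]\cap A[X]$ of the form $F=GH$ with $H\in A[\varpi^{-1}][X]$ chosen so that $c(H)$ is, up to $\ast$, an inverse of $c(G)$, satisfies $c(F)^{\ast}=(c(G)c(H))^{\ast}=A$ by the Dedekind--Mertens lemma applied after applying $\ast$. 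Conversely, for $(2)\Rightarrow(3)$, localizing at a maximal $\ast_{f}$-ideal $\mathfrak{p}$ turns the hypothesis into: for every $f\in A$, $n\geq 1$, the ideal $(f,\varpi^{n})_{A_{\mathfrak{p}}}$ is principal (because a content ideal $c(F)$ with $c(F)^{\ast}=A$ cannot be contained in the $\ast_{f}$-ideal $\mathfrak{p}$, hence $c(F)A_{\mathfrak{p}}=A_{\mathfrak{p}}$, and then an element of $c(F)$ outside $\mathfrak{p}$ generates $(f,\varpi^{n})_{A_{\mathfrak{p}}}$ after the usual coefficient-extraction argument, cf.\ Proposition \ref{Properties of invertible fractional ideals}(3)); principality of all such $(f,\varpi^{n})_{A_{\mathfrak{p}}}$ is exactly the $\varpi$-valuative condition by Proposition \ref{Valuative rings and valuation rings}.

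\textbf{The remaining implications.} The implication $(3)\Rightarrow(4)$ is the easy direction, since every prime $\ast_{f}$-ideal is contained in a maximal $\ast_{f}$-ideal by Lemma \ref{Maximal t-ideals}, and localizing a $\varpi$-valuative ring at a prime still yields a $\varpi$-valuative ring (this uses that $\varpi$-valuativity is detected by the ``divisibility'' condition of Proposition \ref{Valuative rings and valuation rings}, which is inherited by further localization). Finally, for $(4)\Rightarrow(1)$, given $f\in A$ and $n\geq 1$, I would set $I=(f,\varpi^{n})_{A}$ and $J=A:_{A[\varpi^{-1}]}I$, and show $IJ$ is not contained in any maximal $\ast_{f}$-ideal $\mathfrak{p}$: locally at such $\mathfrak{p}$, $A_{\mathfrak{p}}$ being $\varpi$-valuative makes $I_{\mathfrak{p}}$ principal and therefore invertible, so $(IJ)A_{\mathfrak{p}}=A_{\mathfrak{p}}$; since $IJ\subseteq A$ is then contained in no maximal $\ast_{f}$-ideal, $(IJ)^{\ast_{f}}=A$ by Lemma \ref{Maximal t-ideals}, i.e.\ $J$ witnesses $\ast_{f}$-invertibility of $I$, and one passes to $\ast$-invertibility using that $I\circ_{\ast_{f}}J=A$ implies $I\circ_{\ast}J=A$ (here one should replace $J$ by a finitely generated submodule with the same property, using the finite-type nature of $\ast_{f}$ and Proposition \ref{Griffin's intersection property}). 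The main obstacle I anticipate is the careful bookkeeping in $(1)\Leftrightarrow(2)$: one must verify that the content ideal $c(F)$ of a suitable $F$ in the stated contracted ideal of $A[X]$ really does equal $(f,\varpi^{n})_{A}$ up to the $\ast$-operation, which requires a Dedekind--Mertens / Gauss-lemma argument valid over a general (non-domain) ring $A$ and compatible with the axioms of a $\varpi$-$\ast$-operation --- in particular one must be slightly careful about clearing denominators (powers of $\varpi$) when passing between $A[X]$ and $A[\varpi^{-1}][X]$, which is exactly where the hypothesis that $\varpi$ is a non-zero-divisor is used.
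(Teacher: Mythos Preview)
Your proposed cycle runs $(1)\Rightarrow(2)\Rightarrow(3)\Rightarrow(4)\Rightarrow(1)$, whereas the paper proves the opposite cycle $(1)\Rightarrow(4)\Rightarrow(3)\Rightarrow(2)\Rightarrow(1)$. Most of your steps are sound or easily repaired (your $(3)\Rightarrow(4)$ via further localization of a $\varpi$-valuative ring is correct; your $(4)\Rightarrow(1)$ is fine once you pass to a finitely generated sub-$J$, as you note; your $(1)\Rightarrow(2)$ can be made to work using $\ast$-cancellativity of $\ast$-invertibles together with the Dedekind--Mertens formula). But there is a genuine error in your $(2)\Rightarrow(3)$.

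You write that since $c(F)^{\ast}=A$ we have $c(F)\not\subseteq\mathfrak{p}$, hence $c(F)A_{\mathfrak{p}}=A_{\mathfrak{p}}$, ``and then an element of $c(F)$ outside $\mathfrak{p}$ generates $(f,\varpi^{n})_{A_{\mathfrak{p}}}$''. This is false as stated: an element of $c(F)$ outside $\mathfrak{p}$ is a \emph{unit} in $A_{\mathfrak{p}}$, so it generates $A_{\mathfrak{p}}$, not the (generally proper) ideal $(f,\varpi^{n})_{A_{\mathfrak{p}}}$. The coefficients of $F$ lie in $c(F)\subseteq A$, not in $(f,\varpi^{n})_{A}$; there is no reason a unit coefficient of $F$ should belong to, let alone generate, $(f,\varpi^{n})_{A_{\mathfrak{p}}}$. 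The reference to Proposition~\ref{Properties of invertible fractional ideals}(3) does not help either: that proposition tells you an \emph{already invertible} ideal of a local ring is principal, but invertibility of $(f,\varpi^{n})_{A_{\mathfrak{p}}}$ is precisely what you are trying to establish.

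What you actually have at this point is: for every $g=f/\varpi^{n}\in A_{\mathfrak{p}}[\varpi^{-1}]$ there exists $F\in A_{\mathfrak{p}}[X]$ with a unit coefficient and $F(g)=0$. This is condition~(6) in \cite{Knebusch-Zhang}, Ch.~I, Theorem~5.2, and the implication $(6)\Rightarrow(1)$ there gives that $A_{\mathfrak{p}}$ is Pr\"ufer in $A_{\mathfrak{p}}[\varpi^{-1}]$, hence $\varpi$-valuative by Proposition~\ref{Pruefer subrings and valuative rings}. So your route is salvageable, but only by invoking exactly the Knebusch--Zhang equivalence that the paper uses in the opposite direction $(3)\Rightarrow(2)$. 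Compared with the paper's cycle, yours front-loads more work into $(1)\Rightarrow(2)$ (Dedekind--Mertens plus $\ast$-cancellation, and a separate cancellation argument for integral closure), whereas the paper's $(1)\Rightarrow(4)$ is a one-line localization of the $\ast$-inverse and $(4)\Rightarrow(3)$ is trivial; the Dedekind--Mertens/integral-closure work in the paper is concentrated in $(2)\Rightarrow(1)$, where the integral-closure hypothesis of~(2) is consumed exactly once via \cite{Swanson-Huneke}, Lemma~2.1.8.
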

\begin{proof}(1)$\Rightarrow$(4): Let $\mathfrak{p}$ be a prime $\ast_{f}$-ideal of $A$. By virtue of Proposition \ref{Properties of invertible fractional ideals}(3) and Proposition \ref{Valuative rings and valuation rings}, to prove that $A_{\mathfrak{p}}$ is a $\varpi$-valuative ring, it suffices to let $f\in A_{\mathfrak{p}}$, $f\neq0$, $n\geq1$, and prove that the ideal $(f, \varpi^{n})_{A_{\mathfrak{p}}}$ of $A_{\mathfrak{p}}$ is invertible. For this it suffices to assume that $f\in A$. Since $A$ is a $\ast$-multiplication ring, there exists a finitely generated $A$-submodule $J$ of $A[\varpi^{-1}]$ with $((f, \varpi^{n})_{A}J)^{\ast}=A$. Since $J$ is finitely generated, this is equivalent to $((f, \varpi^{n})_{A}J)^{\ast_{f}}=A$. Since $\mathfrak{p}$ is a $\ast_{f}$-ideal, this implies \begin{equation*}(f, \varpi^{n})_{A}J\not\subseteq\mathfrak{p}\end{equation*}and thus $(f, \varpi^{n})_{A_{\mathfrak{p}}}J_{\mathfrak{p}}=A_{\mathfrak{p}}$. 

(4)$\Rightarrow$(3): Trivial in view of Lemma \ref{Maximal star-ideals are prime}.

(3)$\Rightarrow$(2): By Proposition \ref{Pruefer subrings and valuative rings}, (3) implies that $A_{\mathfrak{p}}$ is a Prüfer subring of $A_{\mathfrak{p}}[\varpi^{-1}]$ for every $\mathfrak{p}\in\Max_{\ast_{f}}(A)$. By \cite{Knebusch-Zhang}, Ch.~1, Theorem 5.2(1)$\Leftrightarrow$(4), this means in particular that $A_{\mathfrak{p}}$ is integrally closed in $A_{\mathfrak{p}}[\varpi^{-1}]$ for all $\mathfrak{p}\in\Max_{\ast_{f}}(A)$, so, by Proposition \ref{Griffin's intersection property}, $A$ is integrally closed in $A[\varpi^{-1}]$. Given $f\in A$, $f\neq0$, $n\geq1$, set \begin{equation*}P=(\varpi^{n}X-f)A[\varpi^{-1}][X]\cap A[X].\end{equation*}By \cite{Knebusch-Zhang}, Theorem 5.2(1)$\Leftrightarrow$(6), for every $\mathfrak{p}\in\Max_{\ast_{f}}(A)$ there exists a polynomial $F_{\mathfrak{p}}\in A_{\mathfrak{p}}[X]$ with at least one unit coefficient such that $F_{\mathfrak{p}}(\frac{f}{\varpi^{n}})=0$. For every $\mathfrak{p}\in\Max_{\ast_{f}}(A)$, set \begin{equation*}B_{\mathfrak{p}}=A_{\mathfrak{p}}/\bigcap_{k\geq1}\varpi^{k}A_{\mathfrak{p}}.\end{equation*}The image of $F_{\mathfrak{p}}$ in $B_{\mathfrak{p}}[X]$ is a polynomial with coefficients in $B_{\mathfrak{p}}$ which has a zero at (the image of) $\frac{f}{\varpi^{n}}$ and has at least one unit coefficient; in particular, this polynomial does not belong to $(\mathfrak{p}B_{\mathfrak{p}})[X]$. 

On the other hand, by Proposition \ref{Valuative rings and valuation rings}, for every $\mathfrak{p}\in \Max_{\ast_{f}}(A)$, the $\varpi$-adically separated local ring $B_{\mathfrak{p}}$ is a valuation ring and then, by \cite{FK}, Ch.~0, Proposition 6.7.2, $B_{\mathfrak{p}}[\varpi^{-1}]$ is a field. Thus, for every $\mathfrak{p}\in\Max_{\ast_{f}}(A)$, the ideal of $B_{\mathfrak{p}}[\varpi^{-1}][X]$ consisting of polynomials with a zero at $\frac{f}{\varpi^{n}}$ is equal to $(\varpi^{n}X-f)B_{\mathfrak{p}}[\varpi^{-1}][X]$. It follows that \begin{equation*}(\varpi^{n}X-f)B_{\mathfrak{p}}[\varpi^{-1}][X]\cap B_{\mathfrak{p}}[X]\not\subseteq (\mathfrak{p}B_{\mathfrak{p}})[X].\end{equation*}In other words, \begin{equation*}1\in (\varpi^{n}X-f)A_{\mathfrak{p}}[\varpi^{-1}][X]+(\bigcap_{k\geq1}\varpi^{k}A_{\mathfrak{p}})[X].\end{equation*}This means that $(\varpi^{n}X-f)A_{\mathfrak{p}}[\varpi^{-1}][X]\cap A_{\mathfrak{p}}[X]$ contains a polynomial whose constant coefficient lies in $1-\bigcap_{k\geq1}\varpi^{k}A_{\mathfrak{p}}$; in particular, the constant coefficient of this polynomial is a unit in $A_{\mathfrak{p}}$. In particular, \begin{equation*}(\varpi^{n}X-f)A_{\mathfrak{p}}[\varpi^{-1}][X]\cap A_{\mathfrak{p}}[X]\not\subseteq (\mathfrak{p}A_{\mathfrak{p}})[X]\end{equation*}for every $\mathfrak{p}\in\Max_{\ast_{f}}(A)$. Therefore, for each $\mathfrak{p}\in\Max_{\ast_{f}}(A)$ we have $c(P)\not\subseteq\mathfrak{p}$. Consequently, $c(P)^{\ast_{f}}=A$. Since $\ast_{f}$ is the finite-type companion of $\ast$, there exists a finitely generated subideal $J$ of $c(P)$ with $1\in J^{\ast}$, so that $J^{\ast}=A$. Let $G_1,\dots, G_n\in P$ be polynomials whose coefficients generate $J$. One can then construct a polynomial $F\in (G_1,\dots, G_n)_{A[X]}$ with $c(F)=\sum_{i=1}^{n}c(G_{i})$, so that $c(F)^{\ast}=A$.

(2)$\Rightarrow$(1): Let $f\in A$, $n\geq1$, and choose $F\in (\varpi^{n}X-f)A[\varpi^{-1}][X]\cap A[X]$ with $c(F)^{\ast}=A$. Write \begin{equation*}F=(\varpi^{n}X-f)H\end{equation*}for some $H\in A[\varpi^{-1}][X]$. Then $A=c(F)^{\ast}\subseteq (c(\varpi^{n}X-f)c(H))^{\ast}=((f, \varpi^{n})_{A}c(H))^{\ast}$. Hence it remains to show that $(f, \varpi^{n})_{A}c(H)\subseteq A$. By the Dedekind-Mertens Lemma, \begin{equation*}(f, \varpi^{n})_{A}c(H)(f, \varpi^{n})_{A}^{m}=c((\varpi^{n}X-f)H)(f, \varpi^{n})_{A}^{m}\subseteq (f, \varpi^{n})_{A}^{m},\end{equation*}where $m$ is the degree of the polynomial $H\in A[\varpi^{-1}][X]$. By the assumption that $A$ is integrally closed in $A[\varpi^{-1}]$ and \cite{Swanson-Huneke}, Lemma 2.1.8, this implies that $(f, \varpi^{n})_{A}c(H)\subseteq A$, as desired.\end{proof}
The importance of the notion of $\ast$-multiplication rings for our purposes arises from the following theorem.
\begin{thm}\label{Multiplication rings are strongly Shilov}Let $\varpi$ be a non-zero-divisor and non-unit in a reduced ring $A$ and let $\ast$ be a strict $\varpi$-$\ast$-operation on $J_{\varpi}(A)$. If $A$ is a $\ast$-multiplication ring, then $A$ is a $\varpi$-Shilov ring. \end{thm}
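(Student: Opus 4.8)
The plan is to deduce the statement directly from the characterization of $\ast$-multiplication rings in Theorem \ref{Houston-Malik-Mott}, once we have identified the weakly associated primes of $\varpi$ as prime $\ast_{f}$-ideals. Recall that, by Corollary \ref{Valuative rings and completion}, a local ring is $\varpi$-valuative if and only if its $\varpi$-adic completion is a valuation ring; hence $A$ is $\varpi$-Shilov precisely when $A_{\mathfrak{p}}$ is $\varpi$-valuative for every $\mathfrak{p}\in\wAss(\varpi)$. On the other hand, since $A$ is a $\ast$-multiplication ring, the implication (1)$\Rightarrow$(4) of Theorem \ref{Houston-Malik-Mott} tells us that $A_{\mathfrak{q}}$ is $\varpi$-valuative for every prime $\varpi$-$\ast_{f}$-ideal $\mathfrak{q}$ of $A$, where $\ast_{f}$ is the finite-type companion of $\ast$ from Proposition \ref{Companion of finite type}. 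So it suffices to prove the following: every $\mathfrak{p}\in\wAss(\varpi)$ is a prime $\varpi$-$\ast_{f}$-ideal of $A$.

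\textbf{Key step.} To see this, I would first note that strictness of $\ast$ forces every $\varpi$-divisorial ideal of $A$ to be a $\ast$-ideal: by Proposition \ref{v- and t-operations are star-operations}, any strict $\varpi$-$\ast$-operation satisfies $I^{\ast}\subseteq I_{v}$ for all $I\in J_{\varpi}(A)$, so if $I$ is $\varpi$-divisorial, i.e.\ $I=I_{v}$, then $I\subseteq I^{\ast}\subseteq I_{v}=I$ and hence $I^{\ast}=I$. Now let $\mathfrak{p}\in\wAss(\varpi)$. By the definition of weakly associated primes (and \cite{Brewer-Heinzer}, Theorem 3), $\mathfrak{p}$ is a prime ideal minimal over an ideal of the form $(\varpi)_{A}:_{A}f$ for some $f\in A$, and $(\varpi)_{A}:_{A}f=(A:_{A[\varpi^{-1}]}\tfrac{f}{\varpi})\cap A$ is $\varpi$-divisorial by Example \ref{Colon ideals are divisorial}. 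By the observation just made, $(\varpi)_{A}:_{A}f$ is therefore a $\varpi$-$\ast$-ideal, and then Proposition \ref{Prime ideals minimal over a star-ideal}, applied to $\ast$, shows that $\mathfrak{p}$ is a $\varpi$-$\ast_{f}$-ideal. Since $\mathfrak{p}$ is prime, it is a prime $\varpi$-$\ast_{f}$-ideal, and applying Theorem \ref{Houston-Malik-Mott} gives that $A_{\mathfrak{p}}$ is $\varpi$-valuative. Letting $\mathfrak{p}$ range over $\wAss(\varpi)$ yields that $A$ is $\varpi$-Shilov.

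\textbf{Remarks on difficulty.} The substantive input here is Theorem \ref{Houston-Malik-Mott} (the $\varpi$-local analog of the Houston--Malik--Mott characterization) together with the fact, recorded in Section \ref{sec:valuative rings}, that $\varpi$-valuativeness of a local ring is detected on its $\varpi$-adic completion; given these, the argument is essentially a formal chase inside the $\varpi$-$\ast$-operation formalism of Section \ref{sec:star-operations}. The only point requiring genuine care is the bridge "$\varpi$-divisorial $\Rightarrow$ $\varpi$-$\ast$-ideal", which is exactly where strictness of $\ast$ is used and without which the whole reduction to Theorem \ref{Houston-Malik-Mott}(4) would collapse. I do not anticipate any obstacle beyond making sure the hypotheses of the invoked results (strictness of $\ast$, integral closedness of $A$ in $A[\varpi^{-1}]$ — which is automatic for a $\ast$-multiplication ring by Theorem \ref{Houston-Malik-Mott}(2)) are in force.
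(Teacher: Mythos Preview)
Your proof is correct and follows essentially the same route as the paper's own argument: identify each $\mathfrak{p}\in\wAss(\varpi)$ as a prime $\ast_{f}$-ideal by noting that $(\varpi)_{A}:_{A}f$ is $\varpi$-divisorial (hence a $\ast$-ideal by strictness via Proposition \ref{v- and t-operations are star-operations}) and applying Proposition \ref{Prime ideals minimal over a star-ideal}, then conclude by Theorem \ref{Houston-Malik-Mott}(1)$\Rightarrow$(4) together with Corollary \ref{Valuative rings and completion}. The references you invoke and the logical flow match the paper almost verbatim.
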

\begin{proof}Let $\mathfrak{p}$ be a weakly associated prime ideal of $\varpi$ in $A$. Choose $f\in A$ with $\mathfrak{p}$ minimal over $(\varpi)_{A}:_{A}f=A:_{A}\frac{f}{\varpi}$. By Lemma \ref{Properties of divisorial ideals}, $(\varpi)_{A}:_{A}f$ is a $\varpi$-v-ideal ($\varpi$-divisorial ideal). Since by Proposition \ref{v- and t-operations are star-operations} the $\varpi$-v-operation is the coarsest strict $\varpi$-$\ast$-operation, $(\varpi)_{A}:_{A}f$ is also a $\ast$-ideal. By Proposition \ref{Prime ideals minimal over a star-ideal}, it follows that $\mathfrak{p}$ is a prime $\ast_{f}$-ideal. We conclude by Theorem \ref{Houston-Malik-Mott}(1)$\Rightarrow$(4) (in conjunction with Corollary \ref{Valuative rings and completion}) that $\widehat{A_{\mathfrak{p}}}$ is a valuation ring. \end{proof}    
To find more practical sufficient criteria for being a $\varpi$-Shilov ring, we use the following analogs of the notions of a finite conductor ring (FC ring) and of a v-coherent ring.  
\begin{mydef}[$\varpi$-FC ring]Let $\varpi$ be a non-zero-divisor and non-unit in a ring $A$. We say that $A$ is a $\varpi$-(finite conductor) ring, or $\varpi$-FC ring, if for every $f\in A$ and every integer $n\geq1$ the $\varpi$-residual $A:_{A[\varpi^{-1}]}(f, \varpi^{n})_{A}$ is finitely generated as an $A$-submodule of $A[\varpi^{-1}]$.\end{mydef}
\begin{mydef}[$\ast$-coherent ring]Let $\varpi$ be a non-zero-divisor and a non-unit in a ring $A$ and let $\ast: J_{\varpi}(A)\to J_{\varpi}(A)$ be a strict $\varpi$-$\ast$-operation. We say that the ring $A$ is $\ast$-coherent if for every $f\in A$ and every integer $n\geq1$ the $\varpi$-residual $A:_{A[\varpi^{-1}]}(f, \varpi^{n})_{A}$ is $\ast$-finite, i.e., there exits a finitely generated $A$-submodule $J$ of $A:_{A[\varpi^{-1}]}(f, \varpi^{n})_{A}$ such that $J^{\ast}=A:_{A[\varpi^{-1}]}(f, \varpi^{n})_{A}$. 

In particular, $A$ is called $\varpi$-v-coherent if for every $f\in A$ and every integer $n\geq1$ the $A$-submodule $A:_{A[\varpi^{-1}]}(f, \varpi^{n})_{A}$ of $A[\varpi^{-1}]$ is equal to $J_{v}=A:_{A[\varpi^{-1}]}(A:_{A[\varpi^{-1}]}J)$ for some finitely generated $A$-submodule $J$ of $A:_{A[\varpi^{-1}]}(f, \varpi^{n})_{A}$.\end{mydef}
It is clear that a $\varpi$-FC ring is $\ast$-coherent for every $\varpi$-$\ast$-operation $\ast$. On the other hand, the name '$\varpi$-(finite conductor) ring' is justified by the following lemma.
\begin{lemma}\label{Finite conductor property}Let $A$ be a ring such that $\Ann(f)$ is a finitely generated ideal of $A$ for every $f\in A$ and let $\varpi\in A$ be a non-zero-divisor and a non-unit in $A$. Then the following are equivalent: \begin{enumerate}[(1)]\item $A$ is a $\varpi$-FC ring. \item For every $f\in A$ and $n\geq1$ the conductor $A:_{A}\frac{f}{\varpi^{n}}=(\varpi^{n})_{A}:_{A}f$ of $\frac{f}{\varpi^{n}}\in A[\varpi^{-1}]$ in $A$ is a finitely generated ideal of $A$. \item For every $f\in A$ and $n\geq1$ the intersection $(f)_{A}\cap (\varpi^{n})_{A}$ is finitely generated.\end{enumerate}\end{lemma}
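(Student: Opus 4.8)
The plan is to prove the three-way equivalence $(1)\Leftrightarrow(2)\Leftrightarrow(3)$ by showing $(1)\Leftrightarrow(2)$ directly and $(2)\Leftrightarrow(3)$ directly, using the hypothesis that $\Ann(f)$ is finitely generated for all $f\in A$ to pass between colon ideals, colon submodules of $A[\varpi^{-1}]$, and intersections of principal ideals. Throughout, I abbreviate $\mathfrak{a}:=(\varpi^{n})_A:_A f$ and $I:=(f,\varpi^n)_A$, and I will freely use that $A:_{A[\varpi^{-1}]}(f,\varpi^n)_A = (A:_{A[\varpi^{-1}]}(f)_A)\cap (A:_{A[\varpi^{-1}]}(\varpi^n)_A) = (A:_{A[\varpi^{-1}]}(f)_A)\cap \varpi^{-n}A$, together with the fact that $A:_{A[\varpi^{-1}]}(\varpi^n f)_A = \varpi^{-n}(A:_{A[\varpi^{-1}]}(f)_A)$ since $\varpi$ is a unit in $A[\varpi^{-1}]$.

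For $(2)\Leftrightarrow(1)$, the key observation is that $A:_{A[\varpi^{-1}]}(f,\varpi^n)_A$ sits in an exact sequence relating it to $A:_A \frac{f}{\varpi^n}$. Concretely, an element $\frac{g}{\varpi^m}\in A[\varpi^{-1}]$ lies in $A:_{A[\varpi^{-1}]}(f,\varpi^n)_A$ iff $\frac{g}{\varpi^m}\varpi^n\in A$ and $\frac{g}{\varpi^m}f\in A$; after clearing denominators this says $g\varpi^{n-m}\in A$ (i.e.\ $m\le n$ or $g\in(\varpi^{m-n})_A$) and $gf\in(\varpi^m)_A$. Choosing $m=n$ gives a surjection from $\varpi^{-n}\bigl((\varpi^n)_A:_A f\bigr) = \varpi^{-n}\mathfrak{a}$ onto the ``level-$n$ part'' of the colon module, and using that $\Ann(f)$ is finitely generated one upgrades this to: $A:_{A[\varpi^{-1}]}(f,\varpi^n)_A$ is finitely generated over $A$ iff each ideal $(\varpi^{kn})_A:_A f$ (equivalently $\mathfrak{a}$, after a further $\Ann(f)$-argument, since $(\varpi^{kn})_A:_Af$ stabilizes up to $\Ann(f)$) is finitely generated. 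The honest way to organize this: show $A:_{A[\varpi^{-1}]}I$ is generated over $A$ by $\varpi^{-n}\mathfrak{a}$ together with $\varpi^{-1}$-powers that eventually land in $A$; more precisely $A:_{A[\varpi^{-1}]}I = \bigcup_{k\ge 0}\varpi^{-kn}\bigl((\varpi^{kn})_A:_A(f)\bigr)$-type description, and finite generation of the union is equivalent — given finitely generated annihilators — to finite generation of $\mathfrak{a}$. This gives $(1)\Leftrightarrow(2)$.

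For $(2)\Leftrightarrow(3)$, I would use the standard short exact sequence $0\to (f)_A\cap(\varpi^n)_A \to (f)_A\oplus(\varpi^n)_A \to (f,\varpi^n)_A\to 0$ together with the isomorphism-up-to-annihilator $(f)_A\cap(\varpi^n)_A \cong \bigl((\varpi^n)_A:_A f\bigr)$ obtained by the map $\varpi^n a\mapsto a$ (well-defined modulo $\Ann(\varpi^n)=0$, since $\varpi$ is a non-zero-divisor): explicitly, multiplication by $f$ gives $(\varpi^n)_A:_A f \xrightarrow{\sim} f\cdot\bigl((\varpi^n)_A:_A f\bigr) = (f)_A\cap(\varpi^n)_A$, where injectivity uses $\Ann(f)\cap\bigl((\varpi^n)_A:_Af\bigr)=0$ — and here again the finitely-generated-annihilator hypothesis enters: if $\Ann(f)$ is finitely generated then $(\varpi^n)_A:_A f$ is finitely generated iff its image $(f)_A\cap(\varpi^n)_A$ is, because the kernel $\Ann(f)$ of $(\varpi^n)_A:_A f \twoheadrightarrow (f)_A\cap(\varpi^n)_A$ is finitely generated (extension of finitely generated modules, and conversely a finitely generated module has finitely generated submodule-quotient relations only in the presence of such control). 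So $(3)$ holds iff $(f)_A\cap(\varpi^n)_A$ is finitely generated for all $f,n$ iff $(\varpi^n)_A:_A f$ is finitely generated for all $f,n$, which is $(2)$.

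The main obstacle I anticipate is the bookkeeping in $(1)\Leftrightarrow(2)$: the colon module $A:_{A[\varpi^{-1}]}(f,\varpi^n)_A$ is an increasing union over ``levels'' $\varpi^{-k}$ and one must verify that finite generation of this union is detected at a single finite level — this is exactly where $\Ann(f)$ finitely generated is needed (to rule out the pathology where $(\varpi^{kn})_A:_A f$ grows without bound in a non-finitely-generated way while its $\varpi^{-kn}$-rescaled pieces fail to amalgamate into a finitely generated module). The cleanest route is probably to prove the intermediate claim $A:_{A[\varpi^{-1}]}(f,\varpi^n)_A$ is finitely generated $\iff$ $(\varpi^n)_A:_A f$ is finitely generated \emph{for all} $n$ simultaneously (note the quantifier structure of (1) and (2) is ``for all $f$, all $n$'', so one may replace $n$ by $2n,3n,\dots$ freely), which removes the need to track a single level and reduces everything to: a submodule $M\subseteq\varpi^{-N}A$ of the form $\bigcup_k \varpi^{-kn}\mathfrak{a}_k$ with $\mathfrak{a}_k\subseteq\mathfrak{a}_{k+1}$ ideals and $\mathfrak{a}_{k+1}/\mathfrak{a}_k$ killed by $\varpi$-power, is finitely generated iff some (all) $\mathfrak{a}_k$ is. The rest is routine module theory over $A$.
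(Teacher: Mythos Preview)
Your $(2)\Leftrightarrow(3)$ is essentially correct and matches the paper: multiplication by $f$ gives a surjection $(\varpi^{n})_{A}:_{A}f\to (f)_{A}\cap(\varpi^{n})_{A}$ with kernel $\Ann(f)$ (not zero --- your first pass claims injectivity via ``$\Ann(f)\cap\mathfrak{a}=0$'', which is false since $\Ann(f)\subseteq\mathfrak{a}$; you silently fix this a line later). The short exact sequence
\[
0\to\Ann(f)\to(\varpi^{n})_{A}:_{A}f\xrightarrow{\cdot f}(f)_{A}\cap(\varpi^{n})_{A}\to 0
\]
then gives the equivalence once $\Ann(f)$ is finitely generated.

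Your $(1)\Leftrightarrow(2)$, however, is built on a wrong picture. There is no increasing union over ``levels'': the condition $x\varpi^{n}\in A$ in the definition of $A:_{A[\varpi^{-1}]}(f,\varpi^{n})_{A}$ already forces $x\in\varpi^{-n}A$. Writing $x=\varpi^{-n}h$ with $h\in A$, the remaining condition $xf\in A$ becomes $hf\in(\varpi^{n})_{A}$, so
\[
A:_{A[\varpi^{-1}]}(f,\varpi^{n})_{A}=\varpi^{-n}\bigl((\varpi^{n})_{A}:_{A}f\bigr),
\]
and multiplication by the unit $\varpi^{n}$ is an $A$-module isomorphism between the two sides. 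Thus $(1)\Leftrightarrow(2)$ is immediate and does not even use the annihilator hypothesis. Your proposed description $\bigcup_{k\geq 0}\varpi^{-kn}\bigl((\varpi^{kn})_{A}:_{A}f\bigr)$ is in general strictly larger than $A:_{A[\varpi^{-1}]}(f,\varpi^{n})_{A}$ (already for $f=0$ it is all of $A[\varpi^{-1}]$, not $\varpi^{-n}A$), so the ``bookkeeping obstacle'' you worry about is a phantom arising from a mis-identification of the module.

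The paper organizes the proof with $(3)$ as the pivot, writing two parallel short exact sequences (one with middle term $A:_{A[\varpi^{-1}]}(f,\varpi^{n})_{A}$, one with middle term $(\varpi^{n})_{A}:_{A}f$) both surjecting onto $(f)_{A}\cap(\varpi^{n})_{A}$ via multiplication by $\varpi^{n}f$ and by $f$ respectively, with finitely generated kernels. But once you see the isomorphism above, $(1)\Leftrightarrow(2)$ is a one-liner and only $(2)\Leftrightarrow(3)$ carries content.
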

\begin{proof}Follows from the two short exact sequences \begin{center}\begin{tikzcd}0\arrow{r} & \Ann(f)[\varpi^{-1}]=\Ann(\varpi^{n}f)[\varpi^{-1}]\arrow{r} & A:_{A[\varpi^{-1}]}(f, \varpi^{n})_{A}\arrow{r} & (f)_{A}\cap (\varpi^{n})_{A}\arrow{r} & 0\end{tikzcd}\end{center}and \begin{center}\begin{tikzcd}0\arrow{r} & \Ann(f)\arrow{r} & A:_{A}\frac{f}{\varpi^{n}}\arrow{r} & (f)_{A}\cap (\varpi^{n})_{A}\arrow{r} & 0,\end{tikzcd}\end{center}where the map $A:_{A[\varpi^{-1}]}(f, \varpi^{n})_{A}\to (f)_{A}\cap (\varpi^{n})_{A}$ is multiplication by $\varpi^{n}f$ and the map $A:_{A}\frac{f}{\varpi^{n}}\to (f)_{A}\cap (\varpi^{n})_{A}$ is multiplication by $f$ (to show that the two maps are surjective, choose $g\in (f)_{A}\cap (\varpi^{n})_{A}$ and write it as $g=hf=h'\varpi^{n}$ for some $h, h'\in A$; then $g=\frac{h}{\varpi^{n}}\varpi^{n}f$ and \begin{equation*}\frac{h}{\varpi^{n}}\in A:_{A[\varpi^{-1}]}(f, \varpi^{n})_{A}\end{equation*}as well as $h\in A:_{A}\frac{f}{\varpi^{n}}$).\end{proof}
\begin{cor}\label{Coherent}Every finite conductor ring $A$ (in the sense of \cite{Glaz00}) and, in particular, every coherent ring $A$, is a $\varpi$-FC ring, for every non-zero-divisor and non-unit $\varpi\in A$.\end{cor}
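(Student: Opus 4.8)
The statement to prove is Corollary \ref{Coherent}: every finite conductor ring, and in particular every coherent ring, is a $\varpi$-FC ring for every non-zero-divisor and non-unit $\varpi$. The plan is to deduce this directly from Lemma \ref{Finite conductor property}, which gives the equivalence of being a $\varpi$-FC ring with the condition that $(f)_A \cap (\varpi^n)_A$ is finitely generated for all $f \in A$ and $n \geq 1$ (and also with the conductor $(\varpi^n)_A :_A f$ being finitely generated). So the corollary reduces to checking that the relevant annihilators are finitely generated and that the finite-intersection condition holds.

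First I would recall the definition: a finite conductor ring (in the sense of \cite{Glaz00}) is a ring $A$ in which the intersection of any two principal ideals $(a)_A \cap (b)_A$ is finitely generated, and also $\Ann(a)$ is finitely generated, for all $a, b \in A$. (A coherent ring is a ring in which every finitely generated ideal is finitely presented; equivalently, finite intersections of finitely generated ideals are finitely generated and annihilators of elements are finitely generated, so every coherent ring is a finite conductor ring — this is the "in particular" clause.) Given this, the hypothesis of Lemma \ref{Finite conductor property} — that $\Ann(f)$ is finitely generated for every $f \in A$ — is automatically satisfied for a finite conductor ring. Applying the equivalence $(1) \Leftrightarrow (3)$ of that lemma, $A$ is a $\varpi$-FC ring if and only if $(f)_A \cap (\varpi^n)_A$ is finitely generated for all $f \in A$, $n \geq 1$; but $\varpi^n \in A$, so this is just the special case of the defining property of a finite conductor ring with $a = f$, $b = \varpi^n$. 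Hence $A$ is a $\varpi$-FC ring.

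For the coherent case, I would note (or cite \cite{Glaz00}) that a coherent ring satisfies: (i) for any $a \in A$, $\Ann(a)$ is finitely generated since it is the kernel of the multiplication-by-$a$ map $A \to A$ and $A$, being finitely presented over itself, has finitely presented (hence finitely generated) submodules arising as such kernels — more precisely, $(a)_A$ is finitely presented so $\Ann(a)$ is finitely generated; and (ii) the intersection of two finitely generated ideals is finitely generated, which is a standard consequence of coherence (from the exact sequence $0 \to I \cap J \to I \oplus J \to I + J \to 0$ and the fact that finitely generated submodules of finitely presented modules over a coherent ring are finitely presented). Both (i) and (ii) together say precisely that a coherent ring is a finite conductor ring, so the first part applies.

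There is essentially no obstacle here: the entire content has been front-loaded into Lemma \ref{Finite conductor property}, and the corollary is a matter of matching definitions. The only mild care needed is to make sure the hypothesis "$\Ann(f)$ is finitely generated for every $f \in A$" in Lemma \ref{Finite conductor property} is met — which is part of the definition of a finite conductor ring and a consequence of coherence — and to recall that $(\varpi^n)_A$ is a principal ideal so that the finite-conductor condition genuinely specializes to it. I would keep the proof to two or three sentences invoking Lemma \ref{Finite conductor property} and the definitions.
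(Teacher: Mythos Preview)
Your proposal is correct and matches the paper's intended argument: the corollary is stated without proof immediately after Lemma \ref{Finite conductor property}, and your deduction via condition (3) of that lemma together with the definition of a finite conductor ring is exactly the implicit reasoning.
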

\begin{lemma}\label{Finite conductor rings and localization}Let $A$ be a $\varpi$-FC ring, for some non-zero-divisor and non-unit $\varpi\in A$. If $S\subseteq A$ is a multiplicative subset containing $1$ and not containing any power of $\varpi$, then the localization $S^{-1}A$ is again a $\varpi$-FC ring.\end{lemma}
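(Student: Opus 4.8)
The plan is to reduce the $\varpi$-FC property of $S^{-1}A$ to that of $A$ by checking that localization commutes with all the operations involved. First I would observe that $\varpi$ remains a non-zero-divisor in $S^{-1}A$: if $\varpi(a/s)=0$ in $S^{-1}A$, then $t\varpi a=0$ for some $t\in S$, whence $ta=0$ because $\varpi$ is a non-zero-divisor in $A$, so $a/s=0$. If it should happen that $\varpi$ becomes a unit in $S^{-1}A$, there is nothing to prove, since then the unit ideal is the only $\varpi$-fractional ideal of $S^{-1}A$ (cf. the example following Definition \ref{Fractional ideals}); so I may assume throughout that $\varpi$ is also a non-unit of $S^{-1}A$, which is the setting in which the $\varpi$-FC property is defined.

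Next I would reduce an arbitrary element of $S^{-1}A$ to one coming from $A$. Given $h=g/s\in S^{-1}A$ with $g\in A$ and $s\in S$, the element $s/1$ is a unit of $S^{-1}A$, so $hS^{-1}A=gS^{-1}A$, and therefore
\[
(h,\varpi^{n})_{S^{-1}A}=gS^{-1}A+\varpi^{n}S^{-1}A=(g,\varpi^{n})_{S^{-1}A}=S^{-1}\big((g,\varpi^{n})_{A}\big),
\]
the last equality because localization commutes with finite sums of submodules. The key step is then that, since $(g,\varpi^{n})_{A}$ is a \emph{finitely generated} ideal of $A$, localization commutes with the $\varpi$-residual:
\[
S^{-1}A:_{(S^{-1}A)[\varpi^{-1}]}S^{-1}\big((g,\varpi^{n})_{A}\big)=S^{-1}\big(A:_{A[\varpi^{-1}]}(g,\varpi^{n})_{A}\big).
\]
To prove this I would write $(g,\varpi^{n})_{A}=(a_{1},\dots,a_{k})_{A}$, use the identification $(S^{-1}A)[\varpi^{-1}]=S^{-1}(A[\varpi^{-1}])$, and note that $A:_{A[\varpi^{-1}]}(g,\varpi^{n})_{A}=\bigcap_{i=1}^{k}\{x\in A[\varpi^{-1}]\mid a_{i}x\in A\}$; exactness of localization lets it pass through this finite intersection, and for a single generator one verifies directly that $S^{-1}\{x\in A[\varpi^{-1}]\mid a_{i}x\in A\}=\{y\in(S^{-1}A)[\varpi^{-1}]\mid a_{i}y\in S^{-1}A\}$ by clearing denominators.

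Finally, by the hypothesis that $A$ is a $\varpi$-FC ring, $A:_{A[\varpi^{-1}]}(g,\varpi^{n})_{A}$ is a finitely generated $A$-submodule of $A[\varpi^{-1}]$, so its localization is a finitely generated $S^{-1}A$-submodule of $(S^{-1}A)[\varpi^{-1}]$. Combining the two displays shows that $S^{-1}A:_{(S^{-1}A)[\varpi^{-1}]}(h,\varpi^{n})_{S^{-1}A}$ is finitely generated for every $h\in S^{-1}A$ and every $n\ge1$, i.e.\ $S^{-1}A$ is a $\varpi$-FC ring. There is no serious obstacle; the only points needing a little care are the commutation of the $\varpi$-residual with localization (which genuinely uses the finite generation of $(g,\varpi^{n})_{A}$, hence is where the $\varpi$-FC hypothesis, rather than a generic localization argument, enters) and the harmless degenerate case in which $\varpi$ becomes a unit in $S^{-1}A$.
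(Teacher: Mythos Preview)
Your proof is correct and follows essentially the same route as the paper: reduce to an element coming from $A$, then identify the residual over $S^{-1}A$ with the localization of the residual over $A$ (the paper does this by a direct element chase, you by invoking the standard fact that localization commutes with colon ideals against finitely generated modules). One small slip in your closing parenthetical: the finite generation of $(g,\varpi^{n})_{A}$ is automatic---it has two generators---and is not where the $\varpi$-FC hypothesis enters; that hypothesis is used only in your final step, to know that the residual $A:_{A[\varpi^{-1}]}(g,\varpi^{n})_{A}$ itself is finitely generated.
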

\begin{proof}By abuse of notation, we identify elements $f\in A$ with their images $\frac{f}{1}$ in $S^{-1}A$. For $f\in S^{-1}A$ and $n\geq1$, we have to prove that the $S^{-1}A$-submodule \begin{equation*}S^{-1}A:_{S^{-1}A[\varpi^{-1}]}(f, \varpi^{n})_{S^{-1}A}\end{equation*}is finitely generated. It suffices to assume that $f\in A$. In this case, let $f_1,\dots, f_n$ be generators of the $A$-submodule $A:_{A[\varpi^{-1}]}(f, \varpi^{n})_{A}$ of $A[\varpi^{-1}]$. We prove that (the images of) $f_1,\dots, f_n$ also generate $S^{-1}A:_{S^{-1}A[\varpi^{-1}]}(f, \varpi^{n})_{S^{-1}A}$ as an $S^{-1}A$-module. Let $g\in A[\varpi^{-1}]$, $t\in S$ be such that $\frac{g}{t}f, \frac{g}{t}\varpi^{n}\in S^{-1}A$. Then there exists some $t'\in S$ with $t'gf, t'g\varpi^{n}\in A$. Thus $t'g\in (f_1,\dots, f_n)_{A}$ and, consequently, $\frac{g}{t}\in (f_1,\dots, f_n)_{S^{-1}A}$. \end{proof}
We obtain the following sufficient criterion for being a $\varpi$-Shilov ring. 
\begin{thm}\label{Finite conductor implies strongly Shilov}Let $\varpi$ be a non-zero-divisor and a non-unit in a ring $A$ such that $A$ is integrally closed in the Tate ring $\mathcal{A}=A[\varpi^{-1}]$. If $A$ is a $\varpi$-FC ring, then $A$ is $\varpi$-Shilov.\end{thm}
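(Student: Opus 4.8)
The plan is to reduce the claim to the $\varpi$-v-multiplication property via Theorem~\ref{Multiplication rings are strongly Shilov} and Theorem~\ref{Houston-Malik-Mott}. Specifically, I would show that a $\varpi$-FC ring $A$ which is integrally closed in $\mathcal{A}=A[\varpi^{-1}]$ is a $v$-multiplication ring, where $v$ denotes the $\varpi$-v-operation (which is a strict $\varpi$-$\ast$-operation by Proposition~\ref{v- and t-operations are star-operations}), and then invoke Theorem~\ref{Multiplication rings are strongly Shilov} to conclude that $A$ is $\varpi$-Shilov. A preliminary reduction: I should first note that the problem localizes well. By the local nature of the $\varpi$-Shilov property (it is a statement about $A_{\mathfrak{p}}$ for $\mathfrak{p}\in\wAss(\varpi)$) and by Lemma~\ref{Finite conductor rings and localization} (the $\varpi$-FC property passes to localizations away from $\varpi$), together with the fact that "integrally closed in $A[\varpi^{-1}]$" also localizes, one reduces, using Proposition~\ref{Prime ideals minimal over a star-ideal} and Lemma~\ref{Maximal star-ideals are prime}, to showing that $A_{\mathfrak{p}}$ is $\varpi$-valuative for each prime $\varpi$-$v_f$-ideal (equivalently $\varpi$-t-ideal) $\mathfrak{p}$.

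The heart of the argument is then Theorem~\ref{Houston-Malik-Mott} applied to $\ast = v$ (the $\varpi$-v-operation), whose companion of finite type $\ast_f = v_f$ is the $\varpi$-t-operation. I would verify condition (1) of that theorem—that $A$ is a $v$-multiplication ring—which by the equivalence (1)$\Leftrightarrow$(4) then yields that $A_{\mathfrak{p}}$ is $\varpi$-valuative for every prime $\varpi$-t-ideal $\mathfrak{p}$; combined with Corollary~\ref{Valuative rings and completion} and Lemma~\ref{Weakly associated primes and maximal t-ideals} (weakly associated primes of $\varpi$ give rise to $\varpi$-t-ideals), this gives exactly that $\widehat{A_{\mathfrak{p}}}$ is a valuation ring for every $\mathfrak{p}\in\wAss(\varpi)$, i.e.\ $A$ is $\varpi$-Shilov. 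To check the $v$-multiplication property: given $f\in A$, $n\geq1$, consider $I=(f,\varpi^{n})_{A}$; by the $\varpi$-FC hypothesis, $J := A:_{A[\varpi^{-1}]} I$ is a finitely generated $A$-submodule of $A[\varpi^{-1}]$. I claim $J\circ_v I = (IJ)_v = A$. The inclusion $IJ\subseteq A$ is immediate from the definition of the $\varpi$-residual, so $(IJ)_v\subseteq A_v = A$ (using strictness of the $\varpi$-v-operation). For the reverse inclusion $A\subseteq (IJ)_v = A:_{A[\varpi^{-1}]}(A:_{A[\varpi^{-1}]}(IJ))$, i.e.\ $A:_{A[\varpi^{-1}]}(IJ)\subseteq A:_{A[\varpi^{-1}]}A = A$, suppose $g\in A[\varpi^{-1}]$ with $gIJ\subseteq A$, meaning $g(A:_{A[\varpi^{-1}]}I)\subseteq A:_{A[\varpi^{-1}]}I = J$; since $J$ is a finitely generated, $\varpi$-adically open $A$-submodule of $A[\varpi^{-1}]$ (it contains a power of $\varpi$ because $I$ does, so $I\subseteq \varpi^{-m}A$ for some $m$, hence $\varpi^{m}\in J$), Lemma~\ref{Complete integral closure and fractional ideals} shows $g\in\mathcal{A}^{\circ}$; but $A$ is integrally closed in $\mathcal{A}$, hence by Lemma~\ref{Almost power-bounded} (applied with the pair $(\mathcal{A},A)$, so $\mathcal{A}$ is uniform) combined with Remark~\ref{Andre's example} we get $\mathcal{A}^{\circ}=A$, so $g\in A$, as required.

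The step I expect to be the main obstacle is the identification $A = \mathcal{A}^{\circ}$, or more precisely controlling the interplay between "integrally closed in $\mathcal{A}$" and "completely integrally closed in $\mathcal{A}$", which in general are distinct (Remark~\ref{Andre's example}). The clean way around this is the observation that once $A$ is integrally closed in $A[\varpi^{-1}]$, the Tate ring $\mathcal{A}=A[\varpi^{-1}]$ is uniform by Lemma~\ref{Almost power-bounded} (every element of $\mathcal{A}^{\circ}$ satisfies $\varpi\cdot\mathcal{A}^{\circ}\subseteq A$, and $A$ being integrally closed forces power-bounded elements into $A$ after multiplication by $\varpi$; iterating and using boundedness gives $\mathcal{A}^{\circ}=A$), so Lemma~\ref{Complete integral closure and fractional ideals} applies directly and yields $g\in\mathcal{A}^{\circ}=A$. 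I should double-check that the finitely generated $A$-submodule $J$ produced by the $\varpi$-FC condition is genuinely $\varpi$-adically open so that Lemma~\ref{Complete integral closure and fractional ideals}'s hypothesis (that $J$ be an open ideal, or an open $A$-submodule) is met; this follows because $I=(f,\varpi^{n})_{A}$ is $\varpi$-adically open and bounded, hence a $\varpi$-fractional ideal, so by Lemma~\ref{Residual is a fractional ideal} its $\varpi$-residual $J$ is again a $\varpi$-fractional ideal, in particular open. Finally, I would remark that this result, combined with Corollary~\ref{Coherent}, shows that coherent rings integrally closed in $A[\varpi^{-1}]$ are $\varpi$-Shilov, recovering case (4) of Theorem~\ref{Main theorem 1} in the presence of the hypothesis $(\ast)$ on spectral values (which, via Proposition~\ref{Large value groups and strong Shilov rings}, upgrades $\varpi$-Shilov to the strong form of Berkovich's description).
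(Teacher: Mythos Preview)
Your overall strategy---show that $A$ is a $\varpi$-$v$-multiplication ring and then invoke Theorem~\ref{Houston-Malik-Mott} and Theorem~\ref{Multiplication rings are strongly Shilov}---is sound and close in spirit to the paper's argument, which instead localizes first and applies Lemma~\ref{Residual, t-ideals and invertible ideals} directly. Both routes hinge on the same computation: showing that $(IJ)_v=A$ for $I=(f,\varpi^n)_A$ and $J=A:_{A[\varpi^{-1}]}I$.

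There is, however, a genuine gap in your execution of that computation. From $gJ\subseteq J$ you correctly cite Lemma~\ref{Complete integral closure and fractional ideals} to obtain $g\in\mathcal{A}^\circ$, but then you assert $\mathcal{A}^\circ=A$. This is false in general: ``$A$ integrally closed in $A[\varpi^{-1}]$'' only gives $\varpi\mathcal{A}^\circ\subseteq A$ (Lemma~\ref{Almost power-bounded}), hence uniformity of $\mathcal{A}$, but the inclusion $A=\mathcal{A}^+\subseteq\mathcal{A}^\circ$ can be strict (cf.\ Remark~\ref{Andre's example}). Your ``iterating and using boundedness'' sketch does not close this gap, because almost integral elements need not become integral after any finite number of multiplications by $\varpi$.

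The fix is immediate and already available in the paper: since $J$ is \emph{finitely generated} (this is precisely what the $\varpi$-FC hypothesis buys you) and $\varpi$-adically open, the determinant trick (\cite{Swanson-Huneke}, Lemma~2.1.8, as recalled just before Lemma~\ref{Complete integral closure and fractional ideals}, or equivalently Lemma~\ref{Finite residuals and integral closure}) shows that $gJ\subseteq J$ forces $g$ to be \emph{integral} over $A$, not merely almost integral. Then $A$ being integrally closed in $A[\varpi^{-1}]$ gives $g\in A$ directly, and the rest of your argument goes through. You cited the wrong lemma: Lemma~\ref{Complete integral closure and fractional ideals} is the non-finitely-generated analogue and only yields power-boundedness, which is too weak here.
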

\begin{cor}\label{Coherent implies strongly Shilov}If $A$ is a coherent ring which is integrally closed in $\mathcal{A}=A[\varpi^{-1}]$ for some non-unit non-zero-divisor $\varpi\in A$, then $A$ is $\varpi$-Shilov.\end{cor}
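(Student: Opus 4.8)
The plan is to obtain this corollary as an immediate consequence of two results already established in the excerpt: Corollary \ref{Coherent}, which asserts that every coherent ring $A$ is a $\varpi$-FC ring for every non-zero-divisor and non-unit $\varpi\in A$, and Theorem \ref{Finite conductor implies strongly Shilov}, which asserts that a ring $A$ which is integrally closed in $\mathcal{A}=A[\varpi^{-1}]$ and is a $\varpi$-FC ring is automatically $\varpi$-Shilov. So the deduction is simply: apply Corollary \ref{Coherent} to conclude that the given coherent ring $A$ is a $\varpi$-FC ring, then feed this, together with the standing hypothesis that $A$ is integrally closed in $\mathcal{A}$, into Theorem \ref{Finite conductor implies strongly Shilov} to conclude that $A$ is $\varpi$-Shilov.

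For the first step I would unwind why Corollary \ref{Coherent} applies: a coherent ring has the property that every finitely generated ideal is finitely presented, so in particular $\Ann(f)$ is finitely generated for every $f\in A$ and every finite intersection of finitely generated ideals is finitely generated; hence by Lemma \ref{Finite conductor property} (equivalence of conditions (1) and (3)) the intersections $(f)_{A}\cap(\varpi^{n})_{A}$ being finitely generated forces $A:_{A[\varpi^{-1}]}(f,\varpi^{n})_{A}$ to be finitely generated, which is precisely the definition of a $\varpi$-FC ring. This is exactly the content packaged in Corollary \ref{Coherent}, so in the write-up it suffices to cite it.

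There is essentially no obstacle at the level of the corollary itself: it is a one-line combination of the cited statements, and the only thing to check is that the hypotheses match up (the corollary's "$A$ integrally closed in $A[\varpi^{-1}]$" is exactly the hypothesis of Theorem \ref{Finite conductor implies strongly Shilov}, and "coherent" implies "$\varpi$-FC" by Corollary \ref{Coherent}). All the genuine difficulty is located upstream in Theorem \ref{Finite conductor implies strongly Shilov} (whose proof presumably proceeds through $\varpi$-v-coherence, the characterization of $\ast$-multiplication rings in Theorem \ref{Houston-Malik-Mott}, and Theorem \ref{Multiplication rings are strongly Shilov}, together with Corollary \ref{Valuative rings and completion}), but none of that needs to be reproduced here; the corollary is purely a specialization.

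\begin{proof}By Corollary \ref{Coherent}, every coherent ring is a $\varpi$-FC ring for every non-zero-divisor and non-unit $\varpi$. Since $A$ is moreover assumed to be integrally closed in $\mathcal{A}=A[\varpi^{-1}]$, Theorem \ref{Finite conductor implies strongly Shilov} applies and shows that $A$ is $\varpi$-Shilov.\end{proof}
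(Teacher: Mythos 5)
Your proof is correct and is exactly the route the paper intends: the corollary is stated without proof precisely because it is the immediate combination of Corollary \ref{Coherent} (coherent $\Rightarrow$ $\varpi$-FC, via Lemma \ref{Finite conductor property}) with Theorem \ref{Finite conductor implies strongly Shilov}. Your unwinding of why coherence gives the $\varpi$-FC property is also consistent with the paper's argument.
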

\begin{cor}\label{Noetherian implies strongly Krull 2}If $A$ is a Noetherian ring which is integrally closed in $\mathcal{A}=A[\varpi^{-1}]$ for some non-unit non-zero-divisor $\varpi\in A$, then $A$ is strongly $\varpi$-Krull.\end{cor}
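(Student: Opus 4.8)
The plan is to deduce Corollary \ref{Noetherian implies strongly Krull 2} from results already in place, with essentially no new work. The main inputs are Corollary \ref{Coherent implies strongly Shilov} (or, equivalently, Theorem \ref{Finite conductor implies strongly Shilov} together with Corollary \ref{Coherent}) and Lemma \ref{Noetherian implies strongly Krull}.

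First I would note that a Noetherian ring is coherent, so the hypotheses of Corollary \ref{Coherent implies strongly Shilov} are met and hence $A$ is $\varpi$-Shilov; by the remark following the definition of $\varpi$-Shilov rings, every $\varpi$-Shilov ring is in particular weakly $\varpi$-Shilov, so $A$ is weakly $\varpi$-Shilov. Next I would invoke Lemma \ref{Noetherian implies strongly Krull}: since $A$ is Noetherian and integrally closed in $A[\varpi^{-1}]$, that lemma asserts that $A$ is weakly $\varpi$-Shilov if and only if it is strongly $\varpi$-Krull. Combining the two steps gives at once that $A$ is strongly $\varpi$-Krull.

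If one prefers to avoid routing through the biconditional in Lemma \ref{Noetherian implies strongly Krull}, the argument can be unwound directly. The first assertion of Lemma \ref{Noetherian implies strongly Krull} says that every weakly associated prime $\mathfrak{p}$ of $\varpi$ has height one, hence is minimal over $\varpi$ (as $\varpi$ is a non-zero-divisor and so lies in no minimal prime of $A$), so $\sqrt{(\varpi)_{A_{\mathfrak{p}}}}$ is the maximal ideal of $A_{\mathfrak{p}}$; since $A$ is $\varpi$-Shilov, $A_{\mathfrak{p}}$ is $\varpi$-valuative, and Corollary \ref{Valuative rings and completion 2} then shows that $\widehat{A_{\mathfrak{p}}}$ is a valuation ring of rank one, i.e.\ $A$ is strongly $\varpi$-Shilov. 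Finally, $A_{\mathfrak{p}}$ is a Noetherian local ring in which $\varpi$ is a non-zero-divisor contained in the maximal ideal, so Krull's intersection theorem gives $\bigcap_{n\geq 1}\varpi^{n}A_{\mathfrak{p}}=0$; therefore $A_{\mathfrak{p}}=A_{\mathfrak{p}}/\bigcap_{n\geq 1}\varpi^{n}A_{\mathfrak{p}}$ is itself a $\varpi$-adically separated valuation ring of rank one, which is exactly the strong $\varpi$-Krull property.

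I do not expect a genuine obstacle here: all the substance is already contained in Corollary \ref{Coherent implies strongly Shilov} and Lemma \ref{Noetherian implies strongly Krull}, and the passage from the completion being a rank-one valuation ring to $A_{\mathfrak{p}}$ itself being one is a routine application of Krull's intersection theorem in the Noetherian setting. The only point that warrants a moment of care is bookkeeping with the various $\varpi$-Shilov/$\varpi$-Krull notions — in particular using the implication "$\varpi$-Shilov $\Rightarrow$ weakly $\varpi$-Shilov" in the correct direction and keeping "weakly associated prime" consistent throughout.
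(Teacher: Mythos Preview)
Your proposal is correct and takes essentially the same approach as the paper, which simply says ``Combine Corollary \ref{Coherent implies strongly Shilov} and Lemma \ref{Noetherian implies strongly Krull}.'' You have merely spelled out the logical chain (Noetherian $\Rightarrow$ coherent $\Rightarrow$ $\varpi$-Shilov $\Rightarrow$ weakly $\varpi$-Shilov, then apply the biconditional) that the paper leaves implicit.
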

\begin{proof}Combine Corollary \ref{Coherent implies strongly Shilov} and Lemma \ref{Noetherian implies strongly Krull}. \end{proof}  
Our proof of Theorem \ref{Finite conductor implies strongly Shilov} is inspired by Zafrullah's proof of Theorem 2 in \cite{Zafrullah78} and, in particular, by the proof of Lemma 5 in that paper. We split most of the proof into a sequence of lemmas.
\begin{lemma}\label{Finite residuals and integral closure}Let $A$ be a ring with a non-zero-divisor and non-unit $\varpi\in A$ and let $\mathcal{A}^{+}$ be the integral closure of $A$ in $\mathcal{A}=A[\varpi^{-1}]$. If $I\subseteq A$ is a finitely generated $A$-submodule of $\mathcal{A}=A[\varpi^{-1}]$ with $A:_{A[\varpi^{-1}]}I$ finitely generated, then $A:_{A[\varpi^{-1}]}I(A:_{A[\varpi^{-1}]}I)\subseteq\mathcal{A}^{+}$.\end{lemma}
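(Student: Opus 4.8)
The plan is to reduce the statement to a single application of the determinant-trick lemma \cite{Swanson-Huneke}, Lemma 2.1.8, exactly in the spirit of the discussion preceding Lemma \ref{Complete integral closure and fractional ideals}. Write $J := A:_{A[\varpi^{-1}]}I$, which is finitely generated over $A$ by hypothesis, so that the module to be bounded is $A:_{A[\varpi^{-1}]}(IJ)$. First I would note that, since $I\subseteq A$, every element of $A$ carries $I$ into $A$, so $A\subseteq J$; in particular $1\in J$. Then, given an arbitrary $f\in A:_{A[\varpi^{-1}]}(IJ)$, i.e.\ $f\in A[\varpi^{-1}]$ with $fIJ\subseteq A$, I would observe that for each $j\in J$ the element $fj$ satisfies $(fj)I\subseteq A$, hence $fj\in A:_{A[\varpi^{-1}]}I=J$; that is, $fJ\subseteq J$.

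At this point $J$ is a module over the subring $A[f]\subseteq A[\varpi^{-1}]$, finitely generated over $A[f]$ (being already finitely generated over $A$), and faithful over $A[f]$ because $1\in J$ forces any element of $A[f]$ annihilating $J$ to be zero. Applying \cite{Swanson-Huneke}, Lemma 2.1.8, then produces an equation of integral dependence for $f$ over $A$, so $f\in\mathcal{A}^{+}$. Since $f$ was arbitrary, $A:_{A[\varpi^{-1}]}\bigl(I(A:_{A[\varpi^{-1}]}I)\bigr)\subseteq\mathcal{A}^{+}$, as claimed.

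I do not anticipate any genuine obstacle: the heart of the matter is the equivalence $fIJ\subseteq A\Leftrightarrow fJ\subseteq J$, and the only point deserving a sentence is the faithfulness of $J$ as an $A[f]$-module — which here is automatic from $A\subseteq J$, and so, unlike the typical use of Lemma 2.1.8 recalled before Lemma \ref{Complete integral closure and fractional ideals}, does not require $J$ to be $\varpi$-adically open. (Note that finite generation of $I$ itself is not used in this argument; it is presumably assumed with later applications in mind, e.g.\ to guarantee that $IJ$ is finitely generated.)
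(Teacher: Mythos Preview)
Your proof is correct and follows the same approach as the paper: both arguments rewrite the condition $f\in A:_{A[\varpi^{-1}]}\bigl(I(A:_{A[\varpi^{-1}]}I)\bigr)$ as $fJ\subseteq J$ for $J=A:_{A[\varpi^{-1}]}I$ and then invoke \cite{Swanson-Huneke}, Lemma~2.1.8. The only difference is in how faithfulness of $J$ over $A[f]$ is checked: the paper uses that $I$ is finitely generated, hence bounded, so $J$ contains some power of $\varpi$ (and $\varpi$ is a non-zero-divisor on $A[f]$), whereas you use the hypothesis $I\subseteq A$ to get $1\in J$ directly. Your observation that finite generation of $I$ is unnecessary under the stated hypothesis $I\subseteq A$ is correct; the paper's route, on the other hand, works for any finitely generated $A$-submodule $I\subseteq\mathcal{A}$ not necessarily contained in $A$.
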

\begin{proof}Note that $f\in A:_{A[\varpi^{-1}]}I(A:_{A[\varpi^{-1}]}I)$ if and only if \begin{equation*}f(A:_{A[\varpi^{-1}]}I)\subseteq A:_{A[\varpi^{-1}]}I.\end{equation*}Since $I$ is finitely generated, it is bounded, so $A:_{A[\varpi^{-1}]}I$ contains some power of $\varpi$. In particular, the finitely generated $A$-module $A:_{A[\varpi^{-1}]}I$ is also a faithful $A[f]$-module, $\varpi$ being a non-zero-divisor on $A[f]$. Hence the assertion follows from \cite{Swanson-Huneke}, Lemma 2.1.8. \end{proof}
\begin{lemma}\label{Residual, t-ideals and invertible ideals}Let $A$ be a local ring with a non-unit non-zero-divisor $\varpi$. Suppose that $A$ is integrally closed in $A[\varpi^{-1}]$ and that the maximal ideal $\mathfrak{m}$ of $A$ is a $\varpi$-t-ideal. Then a finitely generated ideal $I$ of $A$ is invertible if and only if $A:_{A[\varpi^{-1}]}I$ is finitely generated.\end{lemma}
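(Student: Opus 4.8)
The assertion is an equivalence, and the two directions are of unequal difficulty. The plan is to dispose of the ``only if'' direction quickly using the structure theory of invertible $\varpi$-fractional ideals, and to establish the ``if'' direction by combining Lemma \ref{Finite residuals and integral closure} with the hypotheses that $A$ is local and that $\mathfrak{m}$ is a $\varpi$-t-ideal.

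For the ``only if'' direction, suppose $I$ is a finitely generated invertible ideal of $A$. By Lemma \ref{Invertible implies regular}, $I$ is then a $\varpi$-fractional ideal, so Proposition \ref{Properties of invertible fractional ideals}(1) identifies the $\varpi$-residual $A:_{A[\varpi^{-1}]}I$ with the inverse $I^{-1}$. But $I^{-1}$ is itself an invertible $\varpi$-fractional ideal (its inverse being $I$), hence finitely generated by Proposition \ref{Properties of invertible fractional ideals}(2) --- indeed principal, since $A$ is local, by Proposition \ref{Properties of invertible fractional ideals}(3). This settles one implication.

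For the ``if'' direction, write $J := A:_{A[\varpi^{-1}]}I$ and assume $J$ is finitely generated. Since every element of $J$ carries $I$ into $A$, the product $IJ$ is a finitely generated $A$-submodule of $A$, i.e.\ a finitely generated ideal of $A$. First I would compute its double residual: by Lemma \ref{Finite residuals and integral closure},
\begin{equation*}A:_{A[\varpi^{-1}]}(IJ)=A:_{A[\varpi^{-1}]}\bigl(I(A:_{A[\varpi^{-1}]}I)\bigr)\subseteq \mathcal{A}^{+},\end{equation*}
and since $A$ is integrally closed in $A[\varpi^{-1}]$ we have $\mathcal{A}^{+}=A$; together with the obvious inclusion $A\subseteq A:_{A[\varpi^{-1}]}(IJ)$ (which holds because $IJ\subseteq A$), this gives $A:_{A[\varpi^{-1}]}(IJ)=A$, and hence $(IJ)_{v}=A:_{A[\varpi^{-1}]}A=A$. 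It remains to upgrade the equality $(IJ)_{v}=A$ to $IJ=A$. Here is where locality and the $\varpi$-t-ideal hypothesis intervene: if $IJ$ were a proper ideal it would lie in $\mathfrak{m}$, and then, being a finitely generated subideal of the $\varpi$-t-ideal $\mathfrak{m}$, it would satisfy $(IJ)_{v}\subseteq \mathfrak{m}_{t}=\mathfrak{m}\subsetneq A$, a contradiction. Therefore $IJ=A$, so $I$ is invertible in the sense of Definition \ref{Invertible submodule}.

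The proof involves no genuinely hard computation; the one point to watch is precisely the last step, namely that a finitely generated ideal whose $\varpi$-v-closure is the unit ideal must itself be the unit ideal in a local ring whose maximal ideal is a $\varpi$-t-ideal --- this is the structural input that makes the converse work. I would also check the degenerate cases (e.g.\ $I=0$, where $A:_{A[\varpi^{-1}]}I=A[\varpi^{-1}]$ is not finitely generated over $A$ since $\varpi$ is a non-unit, so both sides of the equivalence fail) and verify that finite generation is inherited by $IJ$ from $I$ and $J$.
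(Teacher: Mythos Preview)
Your proof is correct and follows essentially the same approach as the paper: the forward direction via Proposition \ref{Properties of invertible fractional ideals}(1)--(2), and the converse by using Lemma \ref{Finite residuals and integral closure} to get $(I(A:_{A[\varpi^{-1}]}I))_{v}=A$, then arguing that a finitely generated proper ideal would have $\varpi$-v-closure contained in the $\varpi$-t-ideal $\mathfrak{m}$. Your version is slightly more explicit (invoking Lemma \ref{Invertible implies regular} and checking the degenerate case $I=0$), but the argument is the same.
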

\begin{proof}If $I$ is invertible, then, by Proposition \ref{Properties of invertible fractional ideals}(1), $A:_{A[\varpi^{-1}]}I$ is its inverse. In particular, $A:_{A[\varpi^{-1}]}I$ is an invertible $\varpi$-fractional ideal and then $A:_{A[\varpi^{-1}]}I$ is finitely generated by Proposition \ref{Properties of invertible fractional ideals}(2). Conversely, suppose that $A:_{A[\varpi^{-1}]}I$ is finitely generated. By Lemma \ref{Finite residuals and integral closure}, this implies that $(I(A:_{A[\varpi^{-1}]}I))_{v}=A$. But this already means that $I(A:_{A[\varpi^{-1}]}I)=A$, since otherwise $I(A:_{A[\varpi^{-1}]}I)$ is a proper finitely generated ideal of $A$ and then $(I(A:_{A[\varpi^{-1}]}I))_{v}\subseteq \mathfrak{m}$ by the assumption that $\mathfrak{m}$ is a $\varpi$-t-ideal.\end{proof}
\begin{proof}[Proof of Theorem \ref{Finite conductor implies strongly Shilov}]It suffices to prove that for every weakly associated prime ideal $\mathfrak{p}$ of $\varpi$ in $A$ the local ring $A_{\mathfrak{p}}$ is $\varpi$-valuative. Let $\mathfrak{p}$ be a weakly associated prime of $\varpi$. By Proposition \ref{Properties of invertible fractional ideals}(3) and Proposition \ref{Valuative rings and valuation rings}, to prove that $A_{\mathfrak{p}}$ is $\varpi$-valuative, it suffices to prove that the ideal $(f, \varpi^{n})_{A_{\mathfrak{p}}}$ of $A_{\mathfrak{p}}$ is invertible for every $f\in A_{\mathfrak{p}}$ and every integer $n\geq1$. Fix some $f\in A_{\mathfrak{p}}$ and some integer $n\geq1$. Since $A$ is integrally closed in $A[\varpi^{-1}]$, the local ring $A_{\mathfrak{p}}$ is integrally closed in $A_{\mathfrak{p}}[\varpi^{-1}]$. Since $A$ is a $\varpi$-FC ring, $A_{\mathfrak{p}}$ is a $\varpi$-FC ring, by Lemma \ref{Finite conductor rings and localization}. By Lemma \ref{Weakly associated primes and maximal t-ideals}, the maximal ideal $\mathfrak{p}A_{\mathfrak{p}}$ of $A_{\mathfrak{p}}$ is a $\varpi$-t-ideal. We can then apply Lemma \ref{Residual, t-ideals and invertible ideals} to the finitely generated ideal $(f, \varpi^{n})_{A_{\mathfrak{p}}}$ to conclude.\end{proof}
We deduce the following consequence of Theorem \ref{Finite conductor implies strongly Shilov} for Noetherian rings (which also gives another proof of Lemma \ref{Noetherian implies strongly Krull}).
We supplement Theorem \ref{Finite conductor implies strongly Shilov} and Theorem \ref{Multiplication rings are strongly Shilov} with the following criterion for being a $\varpi$-v-multiplication ring.
\begin{thm}\label{Multiplication rings and v-coherent rings}Let $\varpi$ be a non-zero-divisor and a non-unit in a ring $A$. Suppose that $A$ is completely integrally closed in $A[\varpi^{-1}]$. Then $A$ is a $\varpi$-v-multiplication ring if and only if $A$ is $\varpi$-v-coherent.\end{thm}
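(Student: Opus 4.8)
The plan is to prove both implications by direct manipulation of $\varpi$-residuals, leaning on the residual identities from \cite{Knebusch-Zhang2}, Ch.~3, \S4 (in particular the formulas $A:_{A[\varpi^{-1}]}(IJ)=(A:_{A[\varpi^{-1}]}I):_{A[\varpi^{-1}]}J$ and $A:_{A[\varpi^{-1}]}N=A:_{A[\varpi^{-1}]}N_{v}$) together with Lemma \ref{Complete integral closure and fractional ideals} and Lemma \ref{Completely integrally closed}. Throughout I would fix $f\in A$, $n\geq1$ and set $I=(f,\varpi^{n})_{A}$, a $\varpi$-fractional ideal; here $A:_{A[\varpi^{-1}]}I$ is again a $\varpi$-fractional ideal (Lemma \ref{Residual is a fractional ideal}) and is $\varpi$-divisorial (Lemma \ref{Properties of divisorial ideals}(2)), hence equals its own $\varpi$-v-closure.

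For the forward implication I would start from a finitely generated $A$-submodule $J$ of $A[\varpi^{-1}]$ with $(IJ)_{v}=A$ and show that this same $J$ witnesses $\varpi$-v-coherence of $I$, i.e. that $A:_{A[\varpi^{-1}]}I=J_{v}$ with $J\subseteq A:_{A[\varpi^{-1}]}I$ finitely generated. One inclusion is immediate: $IJ\subseteq A$ forces $J\subseteq A:_{A[\varpi^{-1}]}I$, and since that module is $\varpi$-divisorial also $J_{v}\subseteq A:_{A[\varpi^{-1}]}I$. For the reverse inclusion I would take $g$ with $gI\subseteq A$ and $h$ with $hJ\subseteq A$, observe $gh\cdot IJ=(gI)(hJ)\subseteq A$, hence $gh\in A:_{A[\varpi^{-1}]}(IJ)=A:_{A[\varpi^{-1}]}(IJ)_{v}=A:_{A[\varpi^{-1}]}A=A$, which says precisely that $g\in A:_{A[\varpi^{-1}]}(A:_{A[\varpi^{-1}]}J)=J_{v}$. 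This direction does not use the complete-integral-closure hypothesis.

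For the converse I would take a finitely generated $J\subseteq A:_{A[\varpi^{-1}]}I$ with $J_{v}=A:_{A[\varpi^{-1}]}I$ (given by $\varpi$-v-coherence) and prove $(IJ)_{v}=A$; this exhibits $A$ as a $\varpi$-v-multiplication ring. The crux is the chain
\begin{align*}
A:_{A[\varpi^{-1}]}(IJ)&=(A:_{A[\varpi^{-1}]}I):_{A[\varpi^{-1}]}J=J_{v}:_{A[\varpi^{-1}]}J\\
&=J_{v}:_{A[\varpi^{-1}]}J_{v}=(A:_{A[\varpi^{-1}]}I):_{A[\varpi^{-1}]}(A:_{A[\varpi^{-1}]}I),
\end{align*}
where the interchange $J_{v}:_{A[\varpi^{-1}]}J=J_{v}:_{A[\varpi^{-1}]}J_{v}$ uses that $gJ_{v}\subseteq(gJ)_{v}$ (the weak form of axiom (4) in Definition \ref{Star operations}) together with $J_{v}$ being $\varpi$-divisorial. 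Once this identity is in hand, Lemma \ref{Complete integral closure and fractional ideals} shows the right-hand side lies in $\mathcal{A}^{\circ}$, which equals $A$ by Lemma \ref{Completely integrally closed} and the hypothesis; combined with the trivial inclusion $A\subseteq A:_{A[\varpi^{-1}]}(IJ)$ (valid since $IJ\subseteq A$) this gives $A:_{A[\varpi^{-1}]}(IJ)=A$ and hence $(IJ)_{v}=A:_{A[\varpi^{-1}]}(A:_{A[\varpi^{-1}]}(IJ))=A:_{A[\varpi^{-1}]}A=A$.

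The step I expect to need the most care is the converse's residual chain: rewriting the double $\varpi$-residual of the product $IJ$ as the ``conductor of $A:_{A[\varpi^{-1}]}I$ into itself'' so that Lemma \ref{Complete integral closure and fractional ideals} applies, and checking that each interchange ($A:_{A[\varpi^{-1}]}N=A:_{A[\varpi^{-1}]}N_{v}$, which relies on strictness of the $\varpi$-v-operation, and $J_{v}:_{A[\varpi^{-1}]}J=J_{v}:_{A[\varpi^{-1}]}J_{v}$) is legitimate, either from the axioms in Definition \ref{Star operations} or cited verbatim from \cite{Knebusch-Zhang2}, Ch.~3, \S4. Everything else is routine bookkeeping with $\varpi$-fractional ideals.
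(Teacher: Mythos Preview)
Your proposal is correct and follows essentially the same approach as the paper. The forward direction is the paper's argument unrolled (the paper cites \cite{Knebusch-Zhang2}, Ch.~3, Prop.~4.5a) directly, while you reprove it via residuals); for the converse, the paper invokes Theorem \ref{Rings of power-bounded elements and divisors} to get $((f,\varpi^{n})_{A}(A:_{A[\varpi^{-1}]}(f,\varpi^{n})_{A}))_{v}=A$ and then substitutes $J_{v}$ for $A:_{A[\varpi^{-1}]}I$, whereas you inline the proof of that theorem (which is exactly your residual chain landing in $(A:_{A[\varpi^{-1}]}I):_{A[\varpi^{-1}]}(A:_{A[\varpi^{-1}]}I)\subseteq\mathcal{A}^{\circ}=A$ via Lemma \ref{Complete integral closure and fractional ideals}). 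The only extra ingredient in your version is the harmless interchange $J_{v}:_{A[\varpi^{-1}]}J=J_{v}:_{A[\varpi^{-1}]}J_{v}$, which you justify correctly from axiom (4) of Definition \ref{Star operations}.
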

\begin{proof}Suppose that $A$ is a $\varpi$-v-multiplication ring. Let $f\in A\setminus\{0\}$ and $n\geq1$ and let $J$ be a finitely generated $A$-submodule of $A[\varpi^{-1}]$ with $((f, \varpi^{n})_{A}J)_{v}=A$. By \cite{Knebusch-Zhang2}, Ch.~3, Proposition 4.1a), this means that $((f, \varpi^{n})_{A}J_{v})_{v}=A$. By loc.~cit., Proposition 4.5a), this means that \begin{equation*}J_{v}=A:_{A[\varpi^{-1}]}(f, \varpi^{n})_{A},\end{equation*}showing that $A$ is $\varpi$-v-coherent.

Conversely, suppose that $A$ is $\varpi$-v-coherent. Let $f\in A\setminus\{0\}$ and let $n\geq1$ be an integer. By Theorem \ref{Rings of power-bounded elements and divisors}, the hypothesis that $A$ be completely integrally closed in $A[\varpi^{-1}]$ implies that \begin{equation*}((f, \varpi^{n})_{A}(A:_{A[\varpi^{-1}]}(f, \varpi^{n})_{A}))_{v}=A.\end{equation*}By the $\varpi$-v-coherence of $A$, there is a finitely generated submodule $J$ of $A:_{A[\varpi^{-1}]}(f, \varpi^{n})_{A}$ with $J_{v}=A:_{A[\varpi^{-1}]}(f, \varpi^{n})_{A}$. We conclude by means of \cite{Knebusch-Zhang2}, Ch.~3, Prop.~4.1a), that $((f, \varpi^{n})_{A}J)_{v}=A$.\end{proof}
We summarize some of our findings in the following theorem.
\begin{thm}\label{Big theorem}Let $\mathcal{A}$ be a uniform Tate ring with topologically nilpotent unit $\varpi$. Suppose that one of the following is true. \begin{enumerate}[(1)]\item The subring of power-bounded elements $\mathcal{A}^{\circ}$ is Noetherian. \item The subring of power-bounded elements $\mathcal{A}^{\circ}$ is $\varpi$-v-coherent and \begin{equation*}\vert\mathcal{A}\vert_{\spc,\varpi}\subseteq \sqrt{\vert \mathcal{A}^{\times,m}\vert_{\spc,\varpi}}\cup\{0\}.\end{equation*}\end{enumerate}Then $\mathcal{A}^{\circ}$ is strongly $\varpi$-Shilov and thus $\mathcal{A}$ satisfies Berkovich's description of the Shilov boundary. In case (1), $\mathcal{A}$ satisfies Berkovich's description of the Shilov boundary in the strong sense.\end{thm}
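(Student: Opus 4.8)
Set $A=\mathcal{A}^{\circ}$. Since $\mathcal{A}$ is uniform, $A$ is a ring of definition (Lemma~\ref{Uniform Tate rings}), so $\mathcal{A}=A[\varpi^{-1}]$, the element $\varpi$ is a non-zero-divisor and non-unit in $A$, and, by Remark~\ref{Andre's example}, $A$ is completely integrally closed in $\mathcal{A}$ (in particular integrally closed), while $A$ is reduced, being the ring of power-bounded elements of a uniform Tate ring. By Theorem~\ref{Strongly Shilov}, once we know that $A$ is strongly $\varpi$-Shilov it follows automatically that $\mathcal{A}$ satisfies Berkovich's description of the Shilov boundary: every generic point of $\Spf(A)$ has a unique, rank-one preimage under $\spc$, and the closure of $\spc^{-1}(\Spf(A)_{\gen})$ in $\mathcal{M}(\mathcal{A})$ is the Shilov boundary. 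So the plan is to prove the strong $\varpi$-Shilov property in each of the two cases, and then, in case~(1), to upgrade the conclusion to the strong sense.

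In case~(1) the ring $A$ is Noetherian and integrally closed in $A[\varpi^{-1}]$, so Corollary~\ref{Noetherian implies strongly Krull 2} immediately gives that $A$ is strongly $\varpi$-Krull, hence strongly $\varpi$-Shilov. To pass to the strong sense I would show that $\spc^{-1}(\Spf(A)_{\gen})$ is already closed: $A/\varpi A$ is Noetherian, so $\Spf(A)=\Spec(A/\varpi A)$ is a Noetherian topological space with only finitely many irreducible components, and since each generic point of $\Spf(A)$ has a single preimage under $\spc$ (Proposition~\ref{Weakly Shilov}, which applies as $A$ is in particular weakly $\varpi$-Shilov), the set $\spc^{-1}(\Spf(A)_{\gen})$ is finite, hence closed in the Hausdorff space $\mathcal{M}(\mathcal{A})$. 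By the definition of Berkovich's description in the strong sense, this concludes case~(1).

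In case~(2) one has that $A$ is completely integrally closed in $A[\varpi^{-1}]$ and $\varpi$-v-coherent, so Theorem~\ref{Multiplication rings and v-coherent rings} shows that $A$ is a $\varpi$-v-multiplication ring. The $\varpi$-v-operation is a strict $\varpi$-$\ast$-operation (Proposition~\ref{v- and t-operations are star-operations}) and $A$ is reduced, so Theorem~\ref{Multiplication rings are strongly Shilov}, applied with $\ast$ equal to the $\varpi$-v-operation, yields that $A$ is $\varpi$-Shilov, hence weakly $\varpi$-Shilov. Finally, the hypothesis $\vert\mathcal{A}\vert_{\spc,\varpi}\subseteq\sqrt{\vert\mathcal{A}^{\times,m}\vert_{\spc,\varpi}}\cup\{0\}$ together with complete integral closedness lets one invoke Proposition~\ref{Large value groups and strong Shilov rings}, which upgrades weak $\varpi$-Shilov to strong $\varpi$-Shilov; Theorem~\ref{Strongly Shilov} then gives Berkovich's description of the Shilov boundary (we do not claim the strong sense in this case).

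The parts of the argument that demand actual verification rather than bookkeeping are, first, checking that the standing hypotheses of the cited results genuinely hold for $A=\mathcal{A}^{\circ}$ --- reducedness, integral closedness, and (in case~(2)) complete integral closedness in $\mathcal{A}$ --- all of which come from uniformity via Remark~\ref{Andre's example}; and, second, the passage in case~(2) from the $\varpi$-Shilov property (each completed local ring $\widehat{A_{\mathfrak{p}}}$ at a weakly associated prime of $\varpi$ is merely a valuation ring, a priori of arbitrary rank) to the strong $\varpi$-Shilov property (rank one throughout). I expect this last passage to be the main obstacle; it is exactly where the value-group condition $(\ast)$ enters, via Proposition~\ref{Large value groups and strong Shilov rings}, the point being that $(\ast)$ forces every weakly associated prime of $\varpi$ to be minimal over $\varpi$, and hence, by Corollary~\ref{Valuative rings and completion 2}, forces the rank to be one. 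Everything else is an assembly of the results of Sections~\ref{sec:heart of the paper} and~\ref{sec:coherence}.
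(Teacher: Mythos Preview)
Your proposal is correct and follows essentially the same route as the paper: case~(1) via Corollary~\ref{Noetherian implies strongly Krull 2} plus finiteness of $\Min(\varpi)$ to get the strong sense, and case~(2) via Theorem~\ref{Multiplication rings and v-coherent rings}, Theorem~\ref{Multiplication rings are strongly Shilov}, and then Proposition~\ref{Large value groups and strong Shilov rings}. One small caveat: your claim that $\mathcal{A}^{\circ}$ is automatically reduced for a uniform Tate ring is not true in general (nilpotents of $\mathcal{A}$ lie in $\mathcal{A}^{\circ\circ}$), but this is harmless since the reducedness hypothesis in Theorem~\ref{Multiplication rings are strongly Shilov} is not actually used in its proof, and the paper's own argument does not invoke it either.
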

\begin{proof}(1) By Corollary \ref{Noetherian implies strongly Krull 2}, $\mathcal{A}^{\circ}$ is strongly $\varpi$-Shilov (and even strongly $\varpi$-Krull), so $\mathcal{A}$ satisfies Berkovich's description of the Shilov boundary, by Theorem \ref{Strongly Shilov}. But the Shilov boundary for $\mathcal{A}$ is necessarily finite (apply Corollary \ref{Finite Shilov boundary}, or use that $\mathcal{A}^{\circ}/(\varpi)_{\mathcal{A}^{\circ}}$ has only finitely many minimal prime ideals). Therefore, every subset of the Shilov boundary is closed and thus $\mathcal{A}$ must also satisfy Berkovich's description of the Shilov boundary in the strong sense.

(2) It follows from Theorem \ref{Multiplication rings are strongly Shilov} and Theorem \ref{Multiplication rings and v-coherent rings} that $\mathcal{A}$ is $\varpi$-Shilov. It then follows from Proposition \ref{Large value groups and strong Shilov rings} that $A$ is also strongly $\varpi$-Shilov under the given assumption on the spectral seminorm.\end{proof}
\begin{rmk}\label{Comparison with Berkovich's result}By the Reduced Fiber Theorem for affinoid algebras (see, for example, \cite{Luetkebohmert}, Theorem 3.4.2), the subring of power-bounded elements $\mathcal{A}^{\circ}$ of any reduced affinoid algebra $\mathcal{A}$ (in the sense of Tate) over a separably closed nonarchimedean field $K$ is topologically of finite type over the valuation ring $K^{\circ}$, while, by a result of Fujiwara-Gabber-Kato (\cite{FGK}, Corollary 7.3.6), every $K^{\circ}$-algebra topologically of finite type is a coherent ring. Thus, Theorem \ref{Big theorem}(2) immediately recovers Berkovich's original result (Proposition \ref{Berkovich's result}) for Tate-affinoid algebras over a separably closed nonarchimedean field $K$. \end{rmk}

\section{Stability under integral extensions}\label{sec:integral extensions}

In this section we consider stability of Berkovich's description of the Shilov boundary under completed integral extensions. By Theorem \ref{Strongly Shilov}, this amounts to checking stability under integral extensions for the strong $\varpi$-Shilov property. The first step is studying the analogous question for the weak $\varpi$-Shilov property.  
\begin{prop}\label{Integral extensions and valuation rings of rank one, first result}Let $A\hookrightarrow B$ be an integral ring extension and let $\varpi\in A$ be a non-zero-divisor and a non-unit in $A$ and in $B$ such that $B$ is integrally closed in $B[\varpi^{-1}]$.\begin{enumerate}[(1)]\item If $A$ is $\varpi$-valuative, then so is $B$. \item If $A$ is a local ring and the $\varpi$-adic completion of $A$ is a valuation ring of rank $1$, then, for every prime ideal $\mathfrak{q}$ of $B$ containing $\varpi$, the $\varpi$-adic completion of the local ring $B_{\mathfrak{q}}$ is a valuation ring of rank $1$. \item If $A$ is a $\varpi$-adically separated valuation ring of rank $1$ and $B$ is reduced, then, for every prime ideal $\mathfrak{q}$ of $B$ containing $\varpi$, the local ring $B_{\mathfrak{q}}$ is a $\varpi$-adically separated valuation ring of rank $1$. \end{enumerate}\end{prop}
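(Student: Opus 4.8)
\textbf{Proof proposal for Proposition \ref{Integral extensions and valuation rings of rank one, first result}.}

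The plan is to prove the three parts in order, deducing the later ones from the earlier. For part (1), I would reduce to a statement about finitely generated open ideals of $B$ being invertible. Given such an ideal $I$ of $B$, containing some power $\varpi^{m}$, I would consider its generators and use that $B$ is integral over $A$: each generator $b$ satisfies a monic equation over $A$, and so $B$ is a filtered colimit (union) of module-finite $A$-subalgebras $A'$ each containing $\varpi$. The point is that being $\varpi$-valuative is detected locally (Corollary \ref{Valuative rings via invertible submodules} reduces it to invertibility of finitely generated open submodules), and invertibility can be checked after localizing at maximal ideals (Lemma \ref{Locally invertible implies invertible}). So I may assume $A$ and $B$ are local. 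Then $A$ being $\varpi$-valuative and local means, by Proposition \ref{Valuative rings and valuation rings}, that for all $f\in A$ and $n\ge1$ either $\varpi^{n}\in(f)_{A}$ or $f\in(\varpi^{n})_{A}$; equivalently $A/\bigcap_{k}\varpi^{k}A$ is a valuation ring. I want the same dichotomy for $B$. The key input should be that $B$ is integrally closed in $B[\varpi^{-1}]$: given $b\in B$, $n\ge1$, the element $b/\varpi^{n}\in B[\varpi^{-1}]$ either lies in $B$ or not; if $b\notin(\varpi^{n})_{B}$ I must produce $\varpi^{n}\in(b)_{B}$, i.e.\ $\varpi^{n}/b\in B$. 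Since $B$ is integral over $A$ and $b/\varpi^{n}$ (or $\varpi^{n}/b$) lies in $B[\varpi^{-1}]\subseteq A[\varpi^{-1}]^{?}$—here one must be careful because $B[\varpi^{-1}]$ need not be a localization of $A[\varpi^{-1}]$ in an obvious way, but $B[\varpi^{-1}]$ \emph{is} integral over $A[\varpi^{-1}]$—I would use Proposition \ref{Pruefer subrings and valuative rings} together with Corollary \ref{Integral closure as intersection of valuative rings}: $A$ being $\varpi$-valuative means $A$ is a Prüfer subring of $A[\varpi^{-1}]$, and the theory of Prüfer extensions (Knebusch-Zhang) is stable under integral extensions inside a fixed overring. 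More precisely, I would invoke that if $A$ is Prüfer in $R=A[\varpi^{-1}]$ and $B$ is the integral closure of $A$ in $B[\varpi^{-1}]$, then $B$ is Prüfer in $B[\varpi^{-1}]$; this is essentially \cite{Knebusch-Zhang} (Prüfer extensions behave well under integral ring extensions), and then Proposition \ref{Pruefer subrings and valuative rings} gives that $B$ is $\varpi$-valuative.

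For part (2): if the $\varpi$-adic completion $\widehat{A}$ is a valuation ring of rank $1$, then by Corollary \ref{Valuative rings and completion 2} the ring $A$ is $\varpi$-valuative and $\sqrt{(\varpi)_{A}}$ is the maximal ideal of $A$ (after noting $A$ is local since $\widehat{A}$ is). By part (1), $B$ is $\varpi$-valuative. Since $B$ is assumed local, Corollary \ref{Valuative rings and completion} gives that $\widehat{B}$ is a valuation ring. To get rank $1$, by Corollary \ref{Valuative rings and completion 2} it suffices to check that $\sqrt{(\varpi)_{B}}$ is the maximal ideal $\mathfrak{m}_{B}$ of $B$. But $B$ is integral over $A$, so $B/\varpi B$ is integral over $A/\varpi A$, hence $\dim(B/\varpi B)=\dim(A/\varpi A)=0$; thus every prime of $B$ containing $\varpi$ is maximal, and in a local ring that forces $\sqrt{(\varpi)_{B}}=\mathfrak{m}_{B}$. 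This yields that $\widehat{B}$ is a rank-$1$ valuation ring.

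For part (3): if $A$ is itself a $\varpi$-adically separated valuation ring of rank $1$, then $A$ is $\varpi$-valuative with $\widehat{A}$ a rank-$1$ valuation ring, so part (2) applies and $\widehat{B}$ is a rank-$1$ valuation ring; equivalently $B$ is $\varpi$-valuative and $\sqrt{(\varpi)_{B}}=\mathfrak{m}_{B}$. It remains to upgrade from ``$\widehat{B}$ a valuation ring'' to ``$B$ itself a valuation ring'', i.e.\ to show $B$ is $\varpi$-adically separated. Here I would argue that since $A$ is $\varpi$-adically separated and of rank $1$, it is in particular one-dimensional, and $B$, being integral over $A$ and a domain-like extension, is torsion-free and $\varpi$-adically separated: concretely, $\bigcap_{k}\varpi^{k}B$ is an ideal stable under multiplication by $\varpi^{-1}$ in $B[\varpi^{-1}]$, and one shows it is zero using that $B$ is integral over the valuation ring $A$ and $A\hookrightarrow B$ is injective, so any $b\in\bigcap_{k}\varpi^{k}B$ satisfies $b=\varpi^{k}b_{k}$ for all $k$; feeding a monic equation for $b/\varpi$ (which lies in $B[\varpi^{-1}]$, integral over $B$, hence in $B$ once we know $B$ is $\varpi$-valuative and integrally closed) leads to $b\in\bigcap_k \varpi^k A=0$. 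Alternatively, and more cleanly, since $A$ is a valuation ring of rank $1$ and $B$ is integral over $A$ inside the field $B[\varpi^{-1}]=\Frac$, $B$ is itself an intersection of valuation rings of $B[\varpi^{-1}]$ containing $A$, each of rank $\le1$ by the going-up/incomparability for integral extensions over a $1$-dimensional base; being local forces $B$ to be a single such valuation ring, which is automatically $\varpi$-adically separated because it has rank $1$.

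\textbf{Main obstacle.} The delicate point is part (1): transferring the $\varpi$-valuative property across the integral extension. The completions $\widehat{A}$ and $\widehat{B}$ interact well (Lemma \ref{Completion and local rings}), but $\widehat{B}$ is generally \emph{not} integral over $\widehat{A}$, so one cannot simply complete and reduce to valuation rings; one must work with $B$ itself. The cleanest route is via Prüfer extensions: $A$ $\varpi$-valuative $\iff$ $A$ Prüfer in $A[\varpi^{-1}]$ (Proposition \ref{Pruefer subrings and valuative rings}), and then cite the stability of Prüferness under passing to the integral closure in a larger overring from \cite{Knebusch-Zhang}; but I would need to check that the hypothesis ``$B$ integrally closed in $B[\varpi^{-1}]$'' is exactly what makes $B$ equal to the integral closure of $A$ in $B[\varpi^{-1}]$, which it is since $B$ is integral over $A$. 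So the real content is locating and correctly applying the Knebusch-Zhang stability statement; everything else is bookkeeping with the equivalences from Sections \ref{sec:valuative rings} and the dimension count $\dim(B/\varpi B)=\dim(A/\varpi A)=0$.
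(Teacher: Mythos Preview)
Your approach to parts (1) and (2) is essentially the paper's: for (1) the paper also observes that $B$ is the integral closure of $A$ in $B[\varpi^{-1}]$ and then invokes Proposition~\ref{Pruefer subrings and valuative rings} together with the stability of Pr\"ufer extensions under integral closure from \cite{Knebusch-Zhang}, Ch.~I, Theorem~5.9; for (2) the paper likewise applies (1), reduces via Corollary~\ref{Valuative rings and completion 2} to showing $\mathfrak{m}_B$ is minimal over $\varpi$, and uses that $\mathfrak{m}_B$ lies over $\mathfrak{m}_A$ which is minimal over $\varpi$---your dimension count $\dim(B/\varpi B)=\dim(A/\varpi A)=0$ is the same argument phrased slightly differently.

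For part (3) your first argument (feeding monic equations to force $b\in\bigcap_k\varpi^kA$) does not work as written: there is no reason the coefficients of a monic equation for $b$ should themselves lie in arbitrarily high powers of $\varpi$. Your alternative argument correctly identifies the crucial fact that $B[\varpi^{-1}]$ is a field, but then takes an unnecessarily long detour through intersections of valuation rings. The paper's proof is a one-liner once you have that fact: since $A$ is a $\varpi$-adically separated rank-$1$ valuation ring, $A[\varpi^{-1}]$ is a field (\cite{FK}, Ch.~0, Proposition~6.7.2), hence its integral extension $B[\varpi^{-1}]$ is a field; but $\bigcap_{n\ge1}\varpi^nB$ is $\varpi$-divisible, so it equals $(\bigcap_{n\ge1}\varpi^nB)[\varpi^{-1}]$, which is a proper ideal of the field $B[\varpi^{-1}]$ and therefore zero.
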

\begin{proof}(1) Note that $B$ is the integral closure of $A$ inside $B[\varpi^{-1}]$. If $A$ is $\varpi$-valuative, then so is $B$, by Proposition \ref{Pruefer subrings and valuative rings} and \cite{Knebusch-Zhang}, Ch.~I, Theorem 5.9. 

(2) By Corollary \ref{Valuative rings and completion 2}, $A$ is a $\varpi$-valuative local ring and its maximal ideal is minimal over $\varpi$. By (1), $B$ is $\varpi$-valuative. By \cite{FK}, Ch.~0, Proposition 8.7.2, this implies that the local ring $B_{\mathfrak{q}}$ is $\varpi$-valuative for every prime ideal $\mathfrak{q}$ of $B$. Every maximal ideal of $B$ restricts to the unique maximal ideal of $A$, so all maximal ideals of $B$ are minimal prime ideals over $\varpi$. It follows that every prime ideal $\mathfrak{q}$ of $B$ containing $\varpi$ is minimal over $\varpi$. That is, the maximal ideal $\mathfrak{q}B_{\mathfrak{q}}$ of $B_{\mathfrak{q}}$ is minimal over $\varpi$. We conclude by Corollary \ref{Valuative rings and completion 2} that the $\varpi$-adic completion of $B_{\mathfrak{q}}$ is a valuation ring of rank $1$ for every prime ideal $\mathfrak{q}\subsetneq B$ with $\varpi\in\mathfrak{q}$.
 
(3) Now assume that $A$ is a $\varpi$-adically separated valuation ring of rank $1$. By (1), $B$ is a $\varpi$-valuative ring, so its local rings are $\varpi$-valuative by \cite{FK}, Ch.~0, Proposition 8.7.2. By loc.~cit., Proposition 8.7.7, it suffices to prove that $B_{\mathfrak{q}}[\varpi^{-1}]$ is a field for every prime ideal $\mathfrak{q}$ of $B$ containing $\varpi$.
The assumption that $A$ is a $\varpi$-adically separated valuation ring of rank $1$ implies that $A[\varpi^{-1}]$ is a field (\cite{FK}, Ch.~0, Proposition 6.7.2). Thus the integral extension $B[\varpi^{-1}]$ of $A[\varpi^{-1}]$ is a zero-dimensional reduced ring (von Neumann regular ring), and then the same is true for $B_{\mathfrak{q}}[\varpi^{-1}]$, where $\mathfrak{q}$ is any prime ideal of $B$ with $\varpi\in\mathfrak{q}$. Recall that every principal ideal in a von Neumann regular ring is generated by an idempotent element. If $B_{\mathfrak{q}}[\varpi^{-1}]$ is not a field, then it contains (a lot of) non-trivial idempotent elements. Since $B$ is integrally closed in $B[\varpi^{-1}]$, the local ring $B_{\mathfrak{q}}$ is also integrally closed in $B_{\mathfrak{q}}[\varpi^{-1}]$, so every idempotent element of $B_{\mathfrak{q}}[\varpi^{-1}]$ must belong to $B_{\mathfrak{q}}$. But a local ring cannot contain any non-trivial idempotent elements, a contradiction. This shows that $B_{\mathfrak{q}}[\varpi^{-1}]$ is indeed a field and thus $B_{\mathfrak{q}}$ is a $\varpi$-adically separated valuation ring of rank $1$, as claimed.\end{proof}
\begin{rmk}Note that in the above proposition we did not assume that $B$ is a normal domain, in which case, to prove (3), we could also use the more classical special case of \cite{Knebusch-Zhang}, Ch.~I, Theorem 5.9, saying that the integral closure of a Prüfer domain inside an algebraic extension of its fraction field is a Prüfer domain.\end{rmk} 
\begin{prop}\label{Integral extensions, first result}Let $A\hookrightarrow B$ be a torsion-free integral ring extension, where $A$ is a normal domain, and let $\varpi\in A$ be a non-zero non-unit in $A$ such that $B$ is integrally closed in $B[\varpi^{-1}]$. If $A$ is weakly $\varpi$-Shilov, then so is $B$. If $A$ is weakly $\varpi$-Krull and $B$ is reduced, then $B$ is weakly $\varpi$-Krull.\end{prop}
\begin{proof}Suppose that $A$ is weakly $\varpi$-Shilov. Let $\mathfrak{q}$ be a prime ideal of $B$ minimal over $\varpi$. By ``going down" (\cite{Cohen-Seidenberg}, Theorem 5), $\mathfrak{p}=\mathfrak{q}\cap A$ is a minimal prime ideal over $\varpi$ in $A$. By Proposition \ref{Integral extensions and valuation rings of rank one, first result}(2) applied to the integral extension \begin{equation*}A_{\mathfrak{p}}\hookrightarrow B_{\mathfrak{p}}=(A\setminus \mathfrak{p})^{-1}B,\end{equation*}we see that the $\varpi$-adic completion of $B_{\mathfrak{q}}$ is a valuation ring of rank $1$. Since $\mathfrak{q}$ was an arbitrary minimal prime over $\varpi$, we conclude that $B$ is weakly $\varpi$-Shilov. 

Finally, if $A$ is weakly $\varpi$-Krull, then for every minimal prime $\mathfrak{q}$ over $\varpi$ in $B$ and $\mathfrak{p}=\mathfrak{q}\cap A$, the local ring $A_{\mathfrak{p}}$ is a $\varpi$-adically separated valuation ring of rank $1$ (we again use ``going down" to make sure that the prime ideal $\mathfrak{p}$ of $A$ is minimal over $\varpi$). If moreover $B$ is reduced, then the integral extension $B_{\mathfrak{p}}=(A\setminus \mathfrak{p})^{-1}B$ of $A_{\mathfrak{p}}$ satisfies the hypothesis of Proposition \ref{Integral extensions and valuation rings of rank one, first result}(3). We conclude that $B_{\mathfrak{q}}$ is a $\varpi$-adically separated valuation ring of rank $1$, so $B$ is weakly $\varpi$-Krull.\end{proof}
We deduce from Proposition \ref{Integral extensions, first result} another sufficient condition for a Tate ring $\mathcal{A}$ to satisfy Berkovich's description of the Shilov boundary. Note that this settles case (6) of Theorem \ref{Main theorem 1} in the introduction.
\begin{prop}\label{Large value groups and integral extensions}Let $\mathcal{A}$ be a uniform Tate ring, with topologically nilpotent unit $\varpi$, and let $\mathcal{B}$ be a uniform Tate ring which is an integral extension of $\mathcal{A}$ such that $\mathcal{B}^{\circ}$ is an integral extension of $\mathcal{A}^{\circ}$. Suppose that the spectral seminorm $\vert\cdot\vert_{\spc,\varpi}$ on $\mathcal{B}$ satisfies \begin{equation*}\vert\mathcal{B}\vert_{\spc,\varpi}\subseteq \sqrt{\vert\mathcal{B}^{\times,m}\vert_{\spc,\varpi}}\cup\{0\}.\end{equation*}Suppose further that $\mathcal{A}$ is a normal domain and $\mathcal{B}$ is torsion-free over $\mathcal{A}$. If $\mathcal{A}$ satisfies Berkovich's description of the Shilov boundary, then so does $\mathcal{B}$ (and hence so does its completion $\widehat{\mathcal{B}}$).\end{prop}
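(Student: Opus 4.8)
The strategy is to reduce the statement to the strong $\varpi$-Shilov property via the characterization of Theorem~\ref{Strongly Shilov} (equivalently Theorem~\ref{Main theorem 0.5}), and then invoke the stability result for the weak $\varpi$-Shilov property under integral extensions proved in Proposition~\ref{Integral extensions, first result}, upgrading from weak to strong using the hypothesis $(\ast)$ on the spectral seminorm via Proposition~\ref{Large value groups and strong Shilov rings}. Concretely: since $\mathcal{A}$ and $\mathcal{B}$ are uniform, by Lemma~\ref{Uniform Tate rings} the spectral seminorm $\vert\cdot\vert_{\spc,\varpi}$ defines the topology on each, and by Remark~\ref{Andre's example} (or Lemma~\ref{Completely integrally closed} together with \cite{Dine22}, Lemma 2.24) both $\mathcal{A}^{\circ}$ and $\mathcal{B}^{\circ}$ are completely integrally closed in $\mathcal{A}$, $\mathcal{B}$ respectively; in particular $\mathcal{B}^{\circ}$ is integrally closed in $\mathcal{B}=\mathcal{B}^{\circ}[\varpi^{-1}]$, which is one of the running hypotheses of Proposition~\ref{Integral extensions, first result}.

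First I would unwind the assumption that $\mathcal{A}$ satisfies Berkovich's description of the Shilov boundary. By Theorem~\ref{Strongly Shilov}, this is equivalent to $\mathcal{A}^{\circ}$ being strongly $\varpi$-Shilov. Since every strongly $\varpi$-Shilov ring is in particular weakly $\varpi$-Shilov (trivially, as weakly associated primes include minimal primes of $\varpi$), $\mathcal{A}^{\circ}$ is weakly $\varpi$-Shilov. Now apply Proposition~\ref{Integral extensions, first result} to the integral extension $\mathcal{A}^{\circ}\hookrightarrow \mathcal{B}^{\circ}$: under the hypothesis that either $\mathcal{A},\mathcal{B}$ are domains or $\mathcal{A}^{\circ}\hookrightarrow\mathcal{B}^{\circ}$ is flat, and since $\mathcal{B}^{\circ}$ is integrally closed in $\mathcal{B}[\varpi^{-1}]$ as noted above, it follows that $\mathcal{B}^{\circ}$ is weakly $\varpi$-Shilov. (One should check that $\varpi$ is a non-zero-divisor and non-unit in both $\mathcal{A}^{\circ}$ and $\mathcal{B}^{\circ}$: it is a non-zero-divisor because these are subrings of the Tate rings where $\varpi$ is a unit, and a non-unit because the Tate rings are not discrete; uniformity and $\mathcal{A}\neq\mathcal{A}^{\circ}[\varpi^{-1}]$ being a proper localization handle this.)

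Next I would upgrade from weakly $\varpi$-Shilov to strongly $\varpi$-Shilov for $\mathcal{B}^{\circ}$. This is exactly the content of Proposition~\ref{Large value groups and strong Shilov rings}: since $\mathcal{B}^{\circ}$ is completely integrally closed in $\mathcal{B}=\mathcal{B}^{\circ}[\varpi^{-1}]$ and the spectral seminorm satisfies $\vert\mathcal{B}\vert_{\spc,\varpi}\subseteq\sqrt{\vert\mathcal{B}^{\times,m}\vert_{\spc,\varpi}}\cup\{0\}$, the weak and strong $\varpi$-Shilov conditions for $\mathcal{B}^{\circ}$ coincide. Hence $\mathcal{B}^{\circ}$ is strongly $\varpi$-Shilov, and by Theorem~\ref{Strongly Shilov} again, $\mathcal{B}$ satisfies Berkovich's description of the Shilov boundary. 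Finally, the parenthetical claim about the completion $\widehat{\mathcal{B}}$ follows because $(\widehat{\mathcal{B}})^{\circ}=\widehat{\mathcal{B}^{\circ}}$ (the completion of a uniform Tate ring is uniform with ring of power-bounded elements the completion of the original), and by Lemma~\ref{Strongly Krull rings and completion} the strong $\varpi$-Shilov property is preserved under $\varpi$-adic completion; applying Theorem~\ref{Strongly Shilov} one last time gives the conclusion for $\widehat{\mathcal{B}}$.

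The main obstacle, and the place requiring genuine care rather than bookkeeping, is verifying that the hypotheses of Proposition~\ref{Integral extensions, first result} are actually met for the pair $\mathcal{A}^{\circ}\hookrightarrow\mathcal{B}^{\circ}$ — in particular that $\mathcal{B}^{\circ}$ is \emph{integrally closed} (not merely completely integrally closed) in $\mathcal{B}[\varpi^{-1}]$. For uniform Tate rings this follows from \cite{Dine22}, Lemma 2.24, identifying $\mathcal{B}^{\circ}$ with the closed unit ball of a power-multiplicative seminorm, which is automatically integrally closed; I would make this explicit. A secondary subtlety: the flatness or domain hypothesis must be transported correctly — if $\mathcal{A},\mathcal{B}$ are domains then so are $\mathcal{A}^{\circ},\mathcal{B}^{\circ}$ (being subrings), and the extension $\mathcal{A}^{\circ}\hookrightarrow\mathcal{B}^{\circ}$ is then automatically injective and the "domains" branch of Proposition~\ref{Integral extensions, first result} applies; if instead $\mathcal{A}^{\circ}\hookrightarrow\mathcal{B}^{\circ}$ is flat, that branch applies directly. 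Everything else is a direct chaining of the cited results.
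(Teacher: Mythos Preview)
Your proposal is correct and follows essentially the same approach as the paper: convert Berkovich's description for $\mathcal{A}$ into the strong $\varpi$-Shilov property of $\mathcal{A}^{\circ}$ via Theorem~\ref{Strongly Shilov}, push the weak $\varpi$-Shilov property to $\mathcal{B}^{\circ}$ via Proposition~\ref{Integral extensions, first result}, upgrade to strong via Proposition~\ref{Large value groups and strong Shilov rings}, and conclude with Theorem~\ref{Strongly Shilov} again. Your additional care in verifying the hypotheses of Proposition~\ref{Integral extensions, first result} (integral closedness of $\mathcal{B}^{\circ}$ in $\mathcal{B}$, the domain/flat dichotomy, and the non-zero-divisor non-unit conditions on $\varpi$) and in handling the completion via Lemma~\ref{Strongly Krull rings and completion} is warranted and only makes the argument more complete than the paper's terse version.
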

\begin{proof}By the hypothesis, Theorem \ref{Strongly Shilov} and Proposition \ref{Large value groups and strong Shilov rings}, we only need to prove that $\mathcal{B}^{\circ}$ is weakly $\varpi$-Shilov. But $\mathcal{B}^{\circ}$ is integral and torsion-free over $\mathcal{A}^{\circ}$ and $\mathcal{A}^{\circ}$ is a normal domain which is weakly (and even strongly) $\varpi$-Shilov by Theorem \ref{Strongly Shilov}. We conclude by applying Proposition \ref{Integral extensions, first result}.\end{proof} 
Next, we prove a very general result on preservation of the strong $\varpi$-Shilov property for Noetherian domains.
\begin{thm}\label{Integral extensions of Noetherian rings}Let $A$ be a Noetherian domain with a non-zero non-unit $\varpi$. Let $A\hookrightarrow B$ be an integral extension of domains with $B$ normal. Then $B$ is strongly $\varpi$-Krull. In particular, the $\varpi$-adic completion $\widehat{B}$ of $B$ is strongly $\varpi$-Shilov. \end{thm}
\begin{proof}If either $A$ or $B$ is normal, the normalization $\overline{A}$ is contained in $B$. The normalization $\overline{A}$ is a Krull domain (by the Mori-Nagata Theorem, \cite{Swanson-Huneke}, Theorem 4.10.5) and, in particular, strongly $\varpi$-Krull. We already know (by Proposition \ref{Integral extensions, first result}) that the integral extension $B$ of the normal domain $\overline{A}$ is weakly $\varpi$-Krull, so it remains to prove that every weakly associated prime ideal of $\varpi$ in $B$ is a minimal prime of $\varpi$. 
By Proposition \ref{Minimal prime ideals and boundaries}, it then suffices to prove that \begin{equation*}(\varpi)_{B}=(\bigcap_{\mathfrak{q}\in \Min_{B}(\varpi)}\varpi B_{\mathfrak{q}})\cap B.\end{equation*}To this end, let $f\in B$ such that $f\in \varpi B_{\mathfrak{q}}$ for every minimal prime $\mathfrak{q}$ of $\varpi$ in $B$ and choose an $A$-subalgebra $C$ of $B$ finite over $A$ such that $f\in C$. Let $\overline{C}$ be the normalization of $C$, again a Krull domain. Let $\mathfrak{p}$ be a minimal prime of $\varpi$ in $\overline{C}$. Every prime ideal $\mathfrak{q}$ of $B$ lying over $\mathfrak{p}$ must be minimal over $\varpi$, so $f\in B_{\mathfrak{q}}$. Since every maximal ideal of $B_{\mathfrak{p}}=(\overline{C}\setminus\mathfrak{p})^{-1}B$ is of the form $\mathfrak{q}B_{\mathfrak{p}}$ for a prime ideal $\mathfrak{q}$ of $B$ lying over $\mathfrak{p}$, we see that $f\in \varpi (B_{\mathfrak{p}})_{\mathfrak{m}}$ for every maximal ideal $\mathfrak{m}$ of $B_{\mathfrak{p}}$ and, consequently, $f\in \varpi B_{\mathfrak{p}}$. By Proposition \ref{Rings of integral elements} and \cite{Swanson-Huneke}, Proposition 1.6.1, this implies $f\in \overline{C}_{\mathfrak{p}}$. Since this holds for every minimal prime ideal $\mathfrak{p}$ of $\varpi$ in $\overline{C}$ and $\overline{C}$ is a Krull domain, we conclude that $f\in (\varpi)_{\overline{C}}\subseteq (\varpi)_{B}$. This finishes the proof.\end{proof}
As an immediate consequence of Theorem \ref{Integral extensions of Noetherian rings}, we obtain a result on the completed absolute integral closure of any Noetherian domain.
\begin{cor}\label{Absolute integral closure}For every Noetherian domain $R$ and for every non-zero non-unit $\varpi\in R$, the absolute integral closure $R^{+}$ of $R$ is strongly $\varpi$-Krull and its $\varpi$-adic completion $\widehat{R^{+}}$ is strongly $\varpi$-Shilov.\end{cor}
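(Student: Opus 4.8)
The plan is to deduce the corollary from Theorem~\ref{Integral extensions of Noetherian rings}, supplemented by the Mori--Nagata theorem, the stability result of Proposition~\ref{Integral extensions, first result}, and the elementary Lemma~\ref{Weakly associated vs. minimal}. Write $\overline{R}$ for the integral closure of $R$ in its fraction field $\Frac(R)$, so that we have integral extensions of domains $R\hookrightarrow\overline{R}\hookrightarrow R^{+}$.

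First I would record the hypotheses needed by the cited results. Each of $R$, $\overline{R}$, $R^{+}$ is an integral domain, and $\varpi$ is a non-zero-divisor (being nonzero) and a non-unit in each of them: non-unitness passes from $R$ to integral overrings by the standard monic-equation manipulation. The key point is that $R^{+}$, being absolutely integrally closed, is integrally closed in its own fraction field $\Frac(R^{+})$, hence in the subring $R^{+}[\varpi^{-1}]\subseteq\Frac(R^{+})$; this is exactly the condition "$B$ integrally closed in $B[\varpi^{-1}]$" required to apply Theorem~\ref{Integral extensions of Noetherian rings} and Proposition~\ref{Integral extensions, first result} with $B=R^{+}$.

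Next, applying Theorem~\ref{Integral extensions of Noetherian rings} to the integral extension $R\hookrightarrow R^{+}$ of domains (with the Noetherian domain $R$ playing the role of $A$) immediately gives that $R^{+}$ is strongly $\varpi$-Shilov and that its $\varpi$-adic completion $\widehat{R^{+}}$ is strongly $\varpi$-Shilov. This proves the assertion about $\widehat{R^{+}}$ and part of the assertion about $R^{+}$; it remains to upgrade "strongly $\varpi$-Shilov" to "strongly $\varpi$-Krull" for $R^{+}$, i.e.\ to show that $(R^{+})_{\mathfrak{p}}$ is $\varpi$-adically separated for every weakly associated prime $\mathfrak{p}$ of $\varpi$ in $R^{+}$. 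For this I bring in $\overline{R}$: by the Mori--Nagata theorem (\cite{Swanson-Huneke}, Theorem 4.10.5) $\overline{R}$ is a Krull domain, hence strongly $\varpi$-Krull by Lemma~\ref{Krull domains}, in particular weakly $\varpi$-Krull. Since $\overline{R}\hookrightarrow R^{+}$ is an integral extension of domains with $R^{+}$ integrally closed in $R^{+}[\varpi^{-1}]$, Proposition~\ref{Integral extensions, first result} yields that $R^{+}$ is weakly $\varpi$-Krull, i.e.\ $(R^{+})_{\mathfrak{p}}$ is a $\varpi$-adically separated valuation ring of rank one for every $\mathfrak{p}\in\Min_{R^{+}}(\varpi)$. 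On the other hand, because $R^{+}$ is strongly $\varpi$-Shilov, Lemma~\ref{Weakly associated vs. minimal} gives that every weakly associated prime of $\varpi$ in $R^{+}$ is already minimal over $\varpi$. Combining these two facts, $(R^{+})_{\mathfrak{p}}$ is a $\varpi$-adically separated valuation ring of rank one for every weakly associated prime $\mathfrak{p}$ of $\varpi$, which is precisely the statement that $R^{+}$ is strongly $\varpi$-Krull.

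Since all the substantive work is packaged into the results cited, the proof is short, and I do not foresee a deep obstacle. The only points requiring genuine (if routine) care are the verification that $R^{+}$ is integrally closed in $R^{+}[\varpi^{-1}]$ — which rests on the normality of an absolutely integrally closed domain — and the bookkeeping that $\varpi$ remains a non-zero non-unit as one passes from $R$ to $\overline{R}$ and to $R^{+}$; the logical step turning "strongly $\varpi$-Shilov plus weakly $\varpi$-Krull" into "strongly $\varpi$-Krull" is then immediate from the definitions together with Lemma~\ref{Weakly associated vs. minimal}.
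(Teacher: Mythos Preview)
Your proof is correct and follows the paper's intended route: the paper presents the corollary as an immediate consequence of Theorem~\ref{Integral extensions of Noetherian rings}, and your first step (applying that theorem to $R\hookrightarrow R^{+}$) is exactly that. The only subtlety is the strongly $\varpi$-Krull assertion: the ``moreover'' clause of Theorem~\ref{Integral extensions of Noetherian rings} requires $A$ to be integrally closed in $A[\varpi^{-1}]$, which an arbitrary Noetherian domain $R$ need not satisfy, so one cannot literally read off the Krull conclusion from the theorem applied to $A=R$. Your detour through Mori--Nagata (so that $\overline{R}$ is a Krull domain, hence strongly $\varpi$-Krull by Lemma~\ref{Krull domains}), Proposition~\ref{Integral extensions, first result} (giving $R^{+}$ weakly $\varpi$-Krull), and Lemma~\ref{Weakly associated vs. minimal} (collapsing weakly associated primes to minimal ones) is a clean and correct way to close this small gap; these are precisely the ingredients already used inside the proofs of Theorems~\ref{Reduced Noetherian rings are strongly Shilov} and~\ref{Integral extensions of Noetherian rings}, so your argument is in the same spirit as the paper's, just made explicit.
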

\begin{rmk}By contrast, as we explained in Example \ref{Example of Heitmann-Ma}, the ring $\widehat{R^{+}}$, for $R$ a complete Noetherian local domain of mixed characteristic $(0, p)$ and $\varpi=p$, is not strongly $\varpi$-Krull, by a result of Heitmann and Ma, \cite{Heitmann-Ma25}, Proposition 4.10.\end{rmk}  
Of course, Theorem \ref{Integral extensions of Noetherian rings} implies that completed normal integral extensions of uniform Tate rings with Noetherian ring of definition satisfy Berkovich's description of the Shilov boundary. With some more work, we can deduce from Theorem \ref{Integral extensions of Noetherian rings} that such completed integral extensions actually satisfy Berkovich's description of the Shilov boundary in the strong sense.
\begin{thm}\label{Integral extensions of Noetherian rings 2}Let $\mathcal{A}$ be a uniform Tate domain, with topologically nilpotent unit $\varpi$, admitting a Noetherian ring of definition containing $\varpi$. Suppose that $\mathcal{A}\hookrightarrow\mathcal{B}$ is an integral extension of uniform Tate domains such that $\mathcal{B}^{\circ}$ is integral over $\mathcal{A}^{\circ}$. If $\mathcal{B}$ is normal, then $\mathcal{B}$ (and, consequently, also its completion $\widehat{\mathcal{B}}$) satisfies Berkovich's description of the Shilov boundary in the strong sense.\end{thm}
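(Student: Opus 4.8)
The plan has two parts: (i) deduce from Theorem~\ref{Integral extensions of Noetherian rings} that $\mathcal{B}^{\circ}$ is strongly $\varpi$-Shilov, which by Theorem~\ref{Strongly Shilov} already yields Berkovich's description of the Shilov boundary for $\mathcal{B}$; and (ii) exhibit the candidate Shilov boundary $\mathcal{S}:=\spc^{-1}(\Spf(\mathcal{B}^{\circ})_{\gen})$ as a filtered inverse limit of the \emph{finite} Shilov boundaries of suitable Tate subrings of $\mathcal{B}$ having Noetherian rings of definition, so that $\mathcal{S}$ is closed and hence \emph{is} the Shilov boundary.

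For (i), let $\mathcal{A}_{0}\ni\varpi$ be the given Noetherian ring of definition of $\mathcal{A}$. By Lemma~\ref{Completely integrally closed}, $\mathcal{A}^{\circ}$ is the complete integral closure of $\mathcal{A}_{0}$ in $\mathcal{A}=\mathcal{A}_{0}[\varpi^{-1}]$; since a submodule of a finitely generated module over the Noetherian ring $\mathcal{A}_{0}$ is finitely generated, every almost-integral element of $\mathcal{A}$ is integral over $\mathcal{A}_{0}$, so $\mathcal{A}^{\circ}$ is integral over $\mathcal{A}_{0}$. Hence $\mathcal{B}^{\circ}$, being integral over $\mathcal{A}^{\circ}$, is integral over the Noetherian domain $\mathcal{A}_{0}$. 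As $\mathcal{B}$ is a uniform Tate domain, $\mathcal{B}^{\circ}$ is an integral domain integrally closed in $\mathcal{B}=\mathcal{B}^{\circ}[\varpi^{-1}]$, so Theorem~\ref{Integral extensions of Noetherian rings} (with $A=\mathcal{A}_{0}$, $B=\mathcal{B}^{\circ}$) gives that $\mathcal{B}^{\circ}$ is strongly $\varpi$-Shilov. By Theorem~\ref{Strongly Shilov}, $\mathcal{S}$ is then a boundary for $\mathcal{B}$ whose closure in $\mathcal{M}(\mathcal{B})$ is the Shilov boundary; so it remains to prove $\mathcal{S}$ is closed.

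For (ii), I would let $R$ range over the directed family of module-finite $\mathcal{A}_{0}$-subalgebras of $\mathcal{B}^{\circ}$. Since every element of $\mathcal{B}^{\circ}$ is integral over $\mathcal{A}_{0}$, one has $\mathcal{B}^{\circ}=\bigcup_{R}R$, hence $\mathcal{B}=\varinjlim_{R}\mathcal{C}_{R}$ with $\mathcal{C}_{R}:=R[\varpi^{-1}]$ given the $(R,\varpi)$-adic topology. Each $\mathcal{C}_{R}$ is a Tate domain with Noetherian ring of definition $R$, and, exactly as in (i), $\mathcal{C}_{R}^{\circ}$ equals the integral closure $\overline{R}$ of $R$ in $\mathcal{C}_{R}$, which is a Krull domain (Mori--Nagata, plus compatibility of integral closure with localization) and lies inside $\mathcal{B}^{\circ}$ (as $\mathcal{B}^{\circ}$ is integrally closed in $\mathcal{B}$ and $R\subseteq\mathcal{B}^{\circ}$). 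Applying Theorem~\ref{Reduced Noetherian rings are strongly Shilov} to $R\hookrightarrow\overline{R}$ (these share a field of fractions) shows $\overline{R}=\mathcal{C}_{R}^{\circ}$ is strongly $\varpi$-Shilov; as a Krull domain has only finitely many primes minimal over $\varpi$, Theorem~\ref{Strongly Shilov} shows $\mathcal{C}_{R}$ satisfies Berkovich's description in the strong sense, with finite Shilov boundary $\mathcal{S}_{R}=\spc^{-1}(\Spf(\overline{R})_{\gen})\subseteq\mathcal{M}(\mathcal{C}_{R})$. Since $\mathcal{C}_{R}^{\circ}\subseteq\mathcal{B}^{\circ}$ for all $R$ and $\varpi$ is multiplicative with value $\tfrac{1}{2}$ for the relevant seminorms, restriction of continuous valuations identifies $\mathcal{M}(\mathcal{B})$ topologically with $\varprojlim_{R}\mathcal{M}(\mathcal{C}_{R})$ (both with the topology of pointwise convergence).

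It then remains to check $\mathcal{S}=\varprojlim_{R}\mathcal{S}_{R}$ under this identification. For $v\in\mathcal{M}(\mathcal{B})$ with $\mathfrak{q}:=\spc(v)$ one has $\spc(v|_{\mathcal{C}_{R}})=\mathfrak{q}\cap\overline{R}$: if $\mathfrak{q}$ is minimal over $\varpi$ in $\mathcal{B}^{\circ}$ then, $\mathcal{B}^{\circ}$ being integral over the domain $\overline{R}$, Lemma~\ref{Minimal primes and integral extensions} gives that $\mathfrak{q}\cap\overline{R}$ is minimal over $\varpi$ in $\overline{R}$; conversely, if $\mathfrak{q}$ is not minimal over $\varpi$, pick a prime $\mathfrak{q}'\subsetneq\mathfrak{q}$ with $\varpi\in\mathfrak{q}'$ and an element $g\in\mathfrak{q}\setminus\mathfrak{q}'$, choose $R$ with $g\in\overline{R}$, and note $\mathfrak{q}'\cap\overline{R}\subsetneq\mathfrak{q}\cap\overline{R}$ still contains $\varpi$, so $\spc(v|_{\mathcal{C}_{R}})$ is not minimal over $\varpi$ in $\overline{R}$. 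Each $\mathcal{S}_{R}$ is finite, hence closed in $\mathcal{M}(\mathcal{C}_{R})$, so $\varprojlim_{R}\mathcal{S}_{R}$ is closed in $\varprojlim_{R}\mathcal{M}(\mathcal{C}_{R})=\mathcal{M}(\mathcal{B})$; thus $\mathcal{S}$ is closed and therefore equals the Shilov boundary of $\mathcal{B}$, i.e. $\mathcal{B}$ satisfies Berkovich's description in the strong sense. The statement for $\widehat{\mathcal{B}}$ follows because Berkovich's description is preserved under completion. The step I expect to require the most care is (ii): pinning down that $\mathcal{C}_{R}^{\circ}=\overline{R}\subseteq\mathcal{B}^{\circ}$ and that $\mathcal{M}(\mathcal{B})\simeq\varprojlim_{R}\mathcal{M}(\mathcal{C}_{R})$; once this is in place, the profinite-limit description is precisely what forces $\mathcal{S}$ to be closed.
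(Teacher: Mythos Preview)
Your approach is essentially the paper's own: part (i) matches exactly, and part (ii) reproduces the content of Lemmas~\ref{Minimal primes and integral extensions 2} and~\ref{Closed subsets of the Berkovich spectrum} and~\ref{Finitely many minimal primes} (the paper packages the closedness argument as ``the preimage in $\mathcal{M}(\mathcal{B})$ of each finite set $\mathcal{S}_{R}$ is closed, and $\mathcal{S}$ is their intersection'', which is your inverse-limit formulation).

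One correction: your claim that $\overline{R}=\mathcal{C}_{R}^{\circ}$ is a Krull domain is not right in general. The ring $\overline{R}$ is the integral closure of $R$ in $R[\varpi^{-1}]$, not in $\Frac(R)$, so it need not be normal; e.g.\ for $R=k[x^{2},x^{3},y]$ and $\varpi=y$ one checks $\overline{R}=R$, which is not integrally closed. What you actually need is only that $\overline{R}$ has finitely many primes minimal over $\varpi$, and this follows (as in Lemma~\ref{Finitely many minimal primes}) because $\overline{R}$ embeds in the full normalization $\widetilde{R}$ of $R$, which \emph{is} Krull by Mori--Nagata, and every minimal prime of $\varpi$ in $\overline{R}$ lies under one in $\widetilde{R}$. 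With that fix, your argument goes through and coincides with the paper's.
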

We will deduce Theorem \ref{Integral extensions of Noetherian rings 2} from a series of lemmas.
\begin{lemma}\label{Minimal primes and integral extensions 2}Let $A\hookrightarrow B$ be an integral ring extension between integral domains, let $\varpi\in A$ be a non-zero non-unit in $A$ and in $B$ and let $(C_{i})_{i}$ be a direct system of subextensions of $B$ with \begin{equation*}B=\varinjlim_{i}C_{i}\end{equation*}such that each $C_{i}$ is a normal domain. Consider the Tate rings $\mathcal{B}=B[\varpi^{-1}]$ and $\mathcal{C}_{i}=C_{i}[\varpi^{-1}]$. An element $v\in\mathcal{M}(\mathcal{B})$ belongs to $\spc^{-1}(\Min_{B}(\varpi))$ if and only if $v\vert_{\mathcal{C}_{i}}\in \spc^{-1}(\Min_{C_{i}}(\varpi))$ for every $i$ (or, equivalently, for some $i$).\end{lemma}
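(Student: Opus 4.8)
The plan is to rewrite the statement purely in terms of the integral extensions $C_i\hookrightarrow B$ and then feed it to two standard facts: Lemma~\ref{Minimal primes and integral extensions} for one implication, and the incomparability (``INC'') property of integral extensions for the other. The direct-system structure itself is essentially a red herring here; what matters is only that each $C_i$ sits between $A$ and $B$.

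I would begin by recording the relevant dictionary. For every index $i$ the inclusion $C_i\subseteq B$ is an integral ring extension, since $B$ is integral over $A$ and $A\subseteq C_i\subseteq B$, and $\varpi$ is a non-zero-divisor and a non-unit in $C_i$ (it is one in the larger ring $B$, and $\varpi^{-1}\notin B\supseteq C_i$). Given $v\in\mathcal{M}(\mathcal{B})$ one has $v(f)\le 1$ for all $f\in C_i\subseteq B$ and $v(\varpi)=\tfrac12$, so by Lemma~\ref{Continuous means normalized} the restriction $v\vert_{\mathcal{C}_i}$ along $\mathcal{C}_i=C_i[\varpi^{-1}]\hookrightarrow B[\varpi^{-1}]=\mathcal{B}$ belongs to $\mathcal{M}(\mathcal{C}_i)$; moreover, straight from the definition of the specialization map, $\spc(v\vert_{\mathcal{C}_i})=\{\,f\in C_i\mid v(f)<1\,\}=\spc(v)\cap C_i$. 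Setting $\mathfrak{q}=\spc(v)$ and $\mathfrak{p}_i=\mathfrak{q}\cap C_i$ --- all of which are primes containing $\varpi$ --- the three assertions to be compared become: (a) $\mathfrak{q}$ is minimal over $\varpi$ in $B$; (b) $\mathfrak{p}_i$ is minimal over $\varpi$ in $C_i$ for every $i$; (c) $\mathfrak{p}_i$ is minimal over $\varpi$ in $C_i$ for some $i$. I would then establish the cycle $(a)\Rightarrow(b)\Rightarrow(c)\Rightarrow(a)$.

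For $(a)\Rightarrow(b)$: fix $i$ and apply Lemma~\ref{Minimal primes and integral extensions} to the integral extension $C_i\hookrightarrow B$. Its hypotheses are met because, by assumption, either $A$ and $B$ are domains --- whence $C_i$, being a subring of the domain $B$, is a domain --- or the map $C_i\hookrightarrow B$ is flat; the lemma then gives that $\mathfrak{p}_i=\mathfrak{q}\cap C_i$ is minimal over $\varpi$ in $C_i$. The step $(b)\Rightarrow(c)$ is immediate, the indexing set being non-empty. For $(c)\Rightarrow(a)$: assume $\mathfrak{p}_i$ is minimal over $\varpi$ in $C_i$ and suppose for contradiction that there is a prime $\mathfrak{q}'\subsetneq\mathfrak{q}$ of $B$ with $\varpi\in\mathfrak{q}'$. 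Since $B$ is integral over $C_i$, two comparable primes of $B$ contracting to the same prime of $C_i$ must coincide, so $\mathfrak{q}'\cap C_i\subsetneq\mathfrak{q}\cap C_i=\mathfrak{p}_i$; but $\varpi\in\mathfrak{q}'\cap C_i$, contradicting minimality of $\mathfrak{p}_i$. Hence $\mathfrak{q}$ is minimal over $\varpi$ in $B$, i.e. $v\in\spc^{-1}(\Min_B(\varpi))$.

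I do not expect a real obstacle. The only content-bearing step is $(a)\Rightarrow(b)$, which is precisely where the ``domains, or $C_i\hookrightarrow B$ flat'' hypothesis is used (through Lemma~\ref{Minimal primes and integral extensions}); the converse $(c)\Rightarrow(a)$ needs nothing beyond integrality of $C_i\hookrightarrow B$. I would add a remark that the direct system enters only in the applications --- for instance in the proof of Theorem~\ref{Integral extensions of Noetherian rings 2}, where the $C_i$ are taken to be the finitely generated $A$-subalgebras of $B$, so that each $C_i$ inherits Noetherianity from $A$.
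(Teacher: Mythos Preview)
Your proof is correct and follows essentially the same approach as the paper: the forward direction is Lemma~\ref{Minimal primes and integral extensions} applied to $C_i\hookrightarrow B$, and the converse is the elementary incomparability argument you spell out (which the paper simply asserts without justification). Your added observation that the domain/flatness hypothesis is used only in the $(a)\Rightarrow(b)$ direction, while $(c)\Rightarrow(a)$ needs nothing beyond integrality, is accurate and a useful clarification.
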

\begin{proof}Suppose that $v\in \mathcal{M}(\mathcal{B})$ is an element of the Berkovich spectrum $\mathcal{M}(\mathcal{B})$ such that $\spc(v)$ is a minimal prime of $\varpi$ in $B$. By going down, \begin{equation*}\spc(v\vert_{\mathcal{C}_{i}})=\spc(v)\cap C_{i}\end{equation*}is a minimal prime of $\varpi$ in $C_{i}$. Conversely, if $\spc(v\vert_{\mathcal{C}_{i}})=\spc(v)\cap C_{i}$ is minimal over $\varpi$ in $C_{i}$ for some $i$, then $\spc(v)$ itself is minimal over $\varpi$ in $B$. \end{proof}
\begin{lemma}\label{Closed subsets of the Berkovich spectrum}Let $A\hookrightarrow B$ be an injective ring map and let $\varpi\in A$ be a non-zero-divisor in $A$ and in $B$. Let $(C_{i})_{i}$ be a direct system of $A$-subalgebras of $B$ with injective transition maps such that $B=\varinjlim_{i}C_{i}$. Consider the Tate rings $\mathcal{B}=B[\varpi^{-1}]$, $\mathcal{C}_{i}=C_{i}[\varpi^{-1}]$, and, for every $i$, let $\mathcal{S}_{i}\subseteq \mathcal{M}(\mathcal{C}_{i})$ be a closed subset of $\mathcal{M}(\mathcal{C}_{i})$. Then the subset \begin{equation*}\mathcal{S}=\{\, v\in\mathcal{M}(\mathcal{B})\mid v\vert_{\mathcal{C}_{i}}\in\mathcal{S}_{i}\,\}\end{equation*}is a closed subset of $\mathcal{M}(\mathcal{B})$.\end{lemma}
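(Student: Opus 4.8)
The plan is to realize $\mathcal{S}$ as an intersection of preimages of closed sets under continuous restriction maps, and then invoke that a preimage of a closed set under a continuous map is closed, and that intersections of closed sets are closed. First I would record that for every index $i$ the element $\varpi\in A\subseteq C_{i}\subseteq B$ is a non-zero-divisor and (as in the ambient setup) a non-unit in $C_{i}$, both properties being inherited from $B$, so that $\mathcal{C}_{i}=C_{i}[\varpi^{-1}]$ is a Tate ring with pair of definition $(C_{i},\varpi)$ and its Berkovich spectrum $\mathcal{M}(\mathcal{C}_{i})$ relative to $\varpi$ is defined. Next I would check that restriction of valuations gives a well-defined map
\begin{equation*}
r_{i}\colon \mathcal{M}(\mathcal{B})\to \mathcal{M}(\mathcal{C}_{i}),\qquad v\mapsto v\vert_{\mathcal{C}_{i}}.
\end{equation*}
Indeed, given $v\in\mathcal{M}(\mathcal{B})$, Lemma \ref{Continuous means normalized} says that $v$ is a rank $1$ valuation on $\mathcal{B}$ with $v(f)\leq1$ for all $f\in B$ and $v(\varpi)=\frac{1}{2}$; hence $v\vert_{\mathcal{C}_{i}}$ is a valuation on $\mathcal{C}_{i}$ of rank $\leq1$ which is nontrivial because $v(\varpi)=\frac12\notin\{0,1\}$, so it is of rank exactly $1$, and it satisfies $v\vert_{\mathcal{C}_{i}}(f)\leq1$ for all $f\in C_{i}$ and $v\vert_{\mathcal{C}_{i}}(\varpi)=\frac{1}{2}$. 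Applying the implication $(1)\Rightarrow(3)$ of Lemma \ref{Continuous means normalized} now to the ring $C_{i}$ and the element $\varpi$ yields $v\vert_{\mathcal{C}_{i}}\in\mathcal{M}(\mathcal{C}_{i})$, as needed.

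I would then verify that each $r_{i}$ is continuous. By Definition \ref{Berkovich spectrum}, the topology on $\mathcal{M}(\mathcal{C}_{i})$ is the coarsest making all evaluation maps $\ev_{g}$, for $g\in\mathcal{C}_{i}$, continuous, so it suffices to observe that for $g\in\mathcal{C}_{i}\subseteq\mathcal{B}$ the composite $\ev_{g}\circ r_{i}$ coincides with the evaluation map $\ev_{g}$ on $\mathcal{M}(\mathcal{B})$, which is continuous by the definition of the topology on $\mathcal{M}(\mathcal{B})$. Consequently $r_{i}^{-1}(\mathcal{S}_{i})$ is a closed subset of $\mathcal{M}(\mathcal{B})$ for each $i$, since $\mathcal{S}_{i}$ is closed in $\mathcal{M}(\mathcal{C}_{i})$.

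Finally, unwinding the definition of $\mathcal{S}$ gives
\begin{equation*}
\mathcal{S}=\{\,v\in\mathcal{M}(\mathcal{B})\mid v\vert_{\mathcal{C}_{i}}\in\mathcal{S}_{i}\ \text{for all}\ i\,\}=\bigcap_{i}r_{i}^{-1}(\mathcal{S}_{i}),
\end{equation*}
an intersection of closed subsets of $\mathcal{M}(\mathcal{B})$, hence closed. I do not anticipate any genuine obstacle; the only step requiring a moment's care is the well-definedness of the restriction maps $r_{i}$, i.e.\ that restricting a $\varpi$-normalized continuous rank $1$ valuation on $\mathcal{B}$ to $\mathcal{C}_{i}$ again lands in the Berkovich spectrum, and this is exactly what Lemma \ref{Continuous means normalized} provides. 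I would also note that the hypothesis $B=\varinjlim_{i}C_{i}$ is not actually used in this argument (it matters only for the companion statements, such as Lemma \ref{Minimal primes and integral extensions 2}, describing how $\spc^{-1}(\Min(\varpi))$ behaves along such extensions).
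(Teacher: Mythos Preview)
Your proof is correct and rests on the same underlying idea as the paper's: the restriction maps $r_{i}\colon \mathcal{M}(\mathcal{B})\to\mathcal{M}(\mathcal{C}_{i})$ are continuous, so $\mathcal{S}=\bigcap_{i}r_{i}^{-1}(\mathcal{S}_{i})$ is closed. The paper's argument phrases this via sequences (if $v_{n}\to v$ in $\mathcal{M}(\mathcal{B})$ with $v_{n}\in\mathcal{S}$, then $v_{n}\vert_{\mathcal{C}_{i}}\to v\vert_{\mathcal{C}_{i}}$, whence $v\vert_{\mathcal{C}_{i}}\in\mathcal{S}_{i}$), which is morally the same computation but, strictly speaking, presupposes that sequential closedness suffices in $\mathcal{M}(\mathcal{B})$. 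Your formulation via the initial-topology characterization of $\mathcal{M}(\mathcal{C}_{i})$ and the identity $\ev_{g}\circ r_{i}=\ev_{g}$ sidesteps that issue entirely and is the cleaner way to write it. Your observation that the hypothesis $B=\varinjlim_{i}C_{i}$ is not used here is also accurate.
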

\begin{proof}Let $(v_{n})_{n}$ be a sequence in $\mathcal{S}$ with limit $v$ in $\mathcal{M}(\mathcal{B})$. Thus $v_{n}(f)\to v(f)$ for all $f\in \mathcal{B}$. In particular, for every $i$, we have $v_{n}(f)\to v(f)$ for all $f\in \mathcal{C}_{i}$. This means that $v_{n}\vert_{\mathcal{C}_{i}}\to v\vert_{\mathcal{C}_{i}}$ in $\mathcal{M}(\mathcal{C}_{i})$. This means that $v\vert_{\mathcal{C}_{i}}\in \mathcal{S}_{i}$ for every $i$, since $\mathcal{S}_{i}$ is closed in $\mathcal{M}(\mathcal{C}_{i})$ for every $i$. In other words, $v\in\mathcal{S}$. \end{proof}
\begin{lemma}\label{Finitely many minimal primes}For any reduced Noetherian ring $A$ and any non-zero-divisor and non-unit $\varpi\in A$, the integral closure $\mathcal{A}^{+}$ of $A$ inside $\mathcal{A}=A[\varpi^{-1}]$ has only finitely many minimal prime ideals of $\varpi$.\end{lemma}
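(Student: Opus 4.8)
The plan is to deduce the statement from the Mori--Nagata theorem after a short reduction. The crucial observation is that, because $\varpi$ is a non-zero-divisor, the localization $\mathcal{A}=A[\varpi^{-1}]$ is a subring of the total ring of fractions $Q(A)$ of $A$; consequently the integral closure $\mathcal{A}^{+}$ of $A$ inside $\mathcal{A}$ is contained in the integral closure $\overline{A}$ of $A$ inside $Q(A)$, and one obtains a tower $A\subseteq \mathcal{A}^{+}\subseteq \overline{A}$ in which $\overline{A}$ is integral over $A$, hence over $\mathcal{A}^{+}$. One also notes along the way that $\varpi$ remains a non-unit (and non-zero-divisor) in both $\mathcal{A}^{+}$ and $\overline{A}$, since an element $u$ with $\varpi u=1$ that is integral over $A$ would force $\varpi$ to be a unit in $A$; were $\varpi$ a unit there would be nothing to prove.

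The first step is to control $\overline{A}$. Since $A$ is reduced and Noetherian with minimal primes $\mathfrak{p}_{1},\dots,\mathfrak{p}_{r}$, its total ring of fractions is the finite product $Q(A)=\prod_{i=1}^{r}K_{i}$ with $K_{i}=\Frac(A/\mathfrak{p}_{i})$, and a standard ``common monic polynomial'' argument identifies $\overline{A}$ with $\prod_{i=1}^{r}\overline{A/\mathfrak{p}_{i}}$, where $\overline{A/\mathfrak{p}_{i}}$ is the normalization of the Noetherian domain $A/\mathfrak{p}_{i}$. By the Mori--Nagata theorem (\cite{Swanson-Huneke}, Theorem 4.10.5) each $\overline{A/\mathfrak{p}_{i}}$ is a Krull domain. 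Since $\varpi$ is a non-zero-divisor, its image $\varpi_{i}$ in $\overline{A/\mathfrak{p}_{i}}$ is nonzero; in a Krull domain a nonzero element lies in only finitely many height-one primes, and the primes minimal over a principal ideal are precisely the height-one primes containing it. Hence each factor $\overline{A/\mathfrak{p}_{i}}$ has only finitely many primes minimal over $\varpi_{i}$ (and none if $\varpi_{i}$ happens to be a unit). As $\Spec(\overline{A})$ is the disjoint union of the $\Spec(\overline{A/\mathfrak{p}_{i}})$, the ring $\overline{A}$ has only finitely many primes minimal over $\varpi$.

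The second step transfers this finiteness down the integral extension $\mathcal{A}^{+}\hookrightarrow\overline{A}$. Let $\mathfrak{q}$ be any prime of $\mathcal{A}^{+}$ minimal over $\varpi$. By lying-over there is a prime $\mathfrak{Q}'$ of $\overline{A}$ with $\mathfrak{Q}'\cap \mathcal{A}^{+}=\mathfrak{q}$, and $\varpi\in\mathfrak{Q}'$; applying Zorn's lemma to the set of primes of $\overline{A}$ that contain $\varpi$ and are contained in $\mathfrak{Q}'$, ordered by reverse inclusion, one finds a prime $\mathfrak{Q}\subseteq\mathfrak{Q}'$ which is minimal over $\varpi$ in $\overline{A}$. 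Then $\mathfrak{Q}\cap\mathcal{A}^{+}$ is a prime of $\mathcal{A}^{+}$ containing $\varpi$ and contained in $\mathfrak{q}$, so minimality of $\mathfrak{q}$ forces $\mathfrak{Q}\cap\mathcal{A}^{+}=\mathfrak{q}$. Thus every prime of $\mathcal{A}^{+}$ minimal over $\varpi$ is the contraction of one of the finitely many primes of $\overline{A}$ minimal over $\varpi$, and the lemma follows.

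I do not expect a genuine obstacle: the only substantive input is the Mori--Nagata theorem, and the rest is elementary commutative algebra. The point requiring a little care is that $\mathcal{A}^{+}$ need not be Noetherian, so the argument must not use Noetherianity of $\mathcal{A}^{+}$; the lying-over/minimality argument above uses only that $\overline{A}$ is integral over $\mathcal{A}^{+}$ and that minimal primes over an ideal always exist (by Zorn), so this is fine. Verifying the decompositions $Q(A)=\prod_{i}K_{i}$ and $\overline{A}=\prod_{i}\overline{A/\mathfrak{p}_{i}}$ for reduced Noetherian $A$, and that $\varpi$ stays a non-zero-divisor and non-unit along the tower, are the routine checks that remain.
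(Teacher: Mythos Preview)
Your proposal is correct and follows essentially the same approach as the paper: embed $\mathcal{A}^{+}$ in the normalization $\overline{A}$ of $A$ in its total ring of fractions, invoke Mori--Nagata to see that $\overline{A}$ is a finite product of Krull domains with only finitely many minimal primes over $\varpi$, and then observe that every minimal prime of $\varpi$ in $\mathcal{A}^{+}$ is the contraction of one in $\overline{A}$. Your write-up is simply more detailed on the routine steps (the product decomposition of $\overline{A}$ and the lying-over/Zorn argument) that the paper leaves implicit.
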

\begin{proof}The ring $\mathcal{A}^{+}$ is integral over $A$ with the same total ring of fractions, so it is contained in the normalization $\overline{A}$ of the reduced Noetherian ring $A$ in its total ring of fractions. By the Nagata-Mori Theorem (\cite{Swanson-Huneke}, Theorem 4.10.5), $\overline{A}$ is a finite product of Krull domains, so there exist only finitely many minimal primes of $\varpi$ in $\overline{A}$. But any minimal prime of $\varpi$ in $\mathcal{A}^{+}$ is the restriction of a minimal prime of $\varpi$ in its integral extension $\overline{A}$. \end{proof}
\begin{lemma}\label{Integral extensions and closed subsets}Let $A\hookrightarrow B$ be an integral extension of integral domains, where $A$ is Noetherian, and let $\varpi\in A$ be a non-zero non-unit in $A$ and in $B$ such that $B$ is integrally closed in $\mathcal{B}=B[\varpi^{-1}]$. Then $\spc^{-1}(\Min_{B}(\varpi))$ is closed in $\mathcal{M}(\mathcal{B})$.\end{lemma}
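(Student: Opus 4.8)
The plan is to exhibit $B$ as a filtered union of rings to which Theorem~\ref{Reduced Noetherian rings are strongly Shilov} and Lemma~\ref{Finitely many minimal primes} apply, to deduce closedness of $\spc^{-1}(\Min(\varpi))$ at that finite level, and then to transport this along restriction maps by means of Lemma~\ref{Minimal primes and integral extensions 2} and Lemma~\ref{Closed subsets of the Berkovich spectrum}. The crucial choice is to use \emph{normalizations} of the finite subalgebras rather than the finite subalgebras themselves.

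First I would let $(C_i)_i$ be the directed system of $A$-subalgebras of $B$ that are finite as $A$-modules, so that $B=\varinjlim_i C_i$ with injective transition maps; each $C_i$ is a Noetherian domain with $\Frac(C_i)$ a finite extension of $\Frac(A)$, in which $\varpi$ is a non-zero non-unit (a unit in $C_i$ would be a unit in $B$). Set $\mathcal{C}_i=C_i[\varpi^{-1}]$ and let $\mathcal{C}_i^{+}$ be the integral closure of $C_i$ in $\mathcal{C}_i$; then $\mathcal{C}_i^{+}[\varpi^{-1}]=\mathcal{C}_i$ and $\mathcal{C}_i^{+}$ is integrally closed in $\mathcal{C}_i$. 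The key observation is that $\mathcal{C}_i^{+}\subseteq B$: indeed $\mathcal{C}_i^{+}\subseteq\mathcal{C}_i\subseteq\mathcal{B}$, and every element of $\mathcal{C}_i^{+}$ is integral over $C_i\subseteq B$, hence lies in $B$ because $B$ is integrally closed in $\mathcal{B}$. Since moreover $\mathcal{C}_i^{+}\subseteq\mathcal{C}_j^{+}$ whenever $C_i\subseteq C_j$, the rings $\mathcal{C}_i^{+}$ form a directed system of $A$-subalgebras of $B$ with injective transition maps, sandwiched between $C_i$ and $B$, so $B=\varinjlim_i\mathcal{C}_i^{+}$ as well.

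Next I would record the relevant properties at finite level. For each $i$, the ring $\mathcal{C}_i^{+}$ is an integral extension of the Noetherian domain $C_i$ with the same fraction field and is integrally closed in $\mathcal{C}_i^{+}[\varpi^{-1}]=\mathcal{C}_i$, so Theorem~\ref{Reduced Noetherian rings are strongly Shilov} shows that $\mathcal{C}_i^{+}$ is strongly $\varpi$-Shilov; by Corollary~\ref{Strongly Shilov implies completely integrally closed} it is then completely integrally closed in $\mathcal{C}_i$, so $\mathcal{C}_i$ is a uniform Tate ring with $\mathcal{C}_i^{\circ}=\mathcal{C}_i^{+}$. By Lemma~\ref{Finitely many minimal primes}, $\mathcal{C}_i^{+}$ has only finitely many minimal primes over $\varpi$, and by Theorem~\ref{Strongly Shilov} together with Proposition~\ref{Weakly Shilov} and Lemma~\ref{Valuative rings and adic spectrum} each such prime has a single, rank-one preimage under $\spc\colon\mathcal{M}(\mathcal{C}_i)\to\Spf(\mathcal{C}_i^{+})$. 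Hence $\spc^{-1}(\Min_{\mathcal{C}_i^{+}}(\varpi))$ is a finite subset of the Hausdorff space $\mathcal{M}(\mathcal{C}_i)$, and therefore closed in $\mathcal{M}(\mathcal{C}_i)$.

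Finally I would assemble the pieces. Applying Lemma~\ref{Minimal primes and integral extensions 2} to the directed system $(\mathcal{C}_i^{+})_i$ (all the rings involved being domains in which $\varpi$ is a non-zero non-unit), an element $v\in\mathcal{M}(\mathcal{B})$ lies in $\spc^{-1}(\Min_B(\varpi))$ if and only if $v|_{\mathcal{C}_i}\in\spc^{-1}(\Min_{\mathcal{C}_i^{+}}(\varpi))$ for every $i$. Then Lemma~\ref{Closed subsets of the Berkovich spectrum}, applied with $\mathcal{S}_i=\spc^{-1}(\Min_{\mathcal{C}_i^{+}}(\varpi))$ (closed in $\mathcal{M}(\mathcal{C}_i)$, as just shown), tells us that $\{\,v\in\mathcal{M}(\mathcal{B})\mid v|_{\mathcal{C}_i}\in\mathcal{S}_i\text{ for all }i\,\}=\spc^{-1}(\Min_B(\varpi))$ is closed in $\mathcal{M}(\mathcal{B})$, which is the assertion. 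The main obstacle is essentially organizational: closedness of $\spc^{-1}(\Min(\varpi))$ at finite level genuinely requires integral closedness in $\mathcal{C}_i$, which the $C_i$ need not have, forcing the passage to the normalizations $\mathcal{C}_i^{+}$; the only delicate verification is that $\varinjlim_i\mathcal{C}_i^{+}=B$, i.e.\ that these normalizations do not overshoot $B$, which is exactly where the hypothesis that $B$ is integrally closed in $\mathcal{B}$ is used.
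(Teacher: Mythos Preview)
Your proof is correct and follows essentially the same route as the paper's: write $B$ as a filtered union of finite $A$-subalgebras $C_i$, pass to their integral closures $\mathcal{C}_i^{+}$ in $\mathcal{C}_i$ (using that $B$ is integrally closed in $\mathcal{B}$ to get $\varinjlim_i\mathcal{C}_i^{+}=B$), invoke Theorem~\ref{Reduced Noetherian rings are strongly Shilov} and Lemma~\ref{Finitely many minimal primes} to see that $\spc^{-1}(\Min_{\mathcal{C}_i^{+}}(\varpi))$ is finite, hence closed, and conclude via Lemmas~\ref{Minimal primes and integral extensions 2} and~\ref{Closed subsets of the Berkovich spectrum}. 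Your write-up is in fact more detailed than the paper's in justifying why $\mathcal{C}_i^{+}\subseteq B$ and why the directed limit of the $\mathcal{C}_i^{+}$ recovers $B$.
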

\begin{proof}Write $B$ as a direct limit of finite subextensions $C_{i}$ of $A$. Let $\mathcal{C}_{i}=C_{i}[\varpi^{-1}]$ and let $\mathcal{C}_{i}^{+}$ denote the integral closure of $C_{i}$ in $\mathcal{C}_{i}$. By Lemma \ref{Finitely many minimal primes}, $\mathcal{C}_{i}^{+}$ has only finitely many minimal prime ideals of $\varpi$. By Theorem \ref{Reduced Noetherian rings are strongly Shilov}, $\mathcal{C}_{i}^{+}$ is strongly $\varpi$-Shilov, so, in particular, every minimal prime of $\varpi$ in $\mathcal{C}_{i}^{+}$ has a unique pre-image under the specialization map $\spc: \Spa(\mathcal{C}_{i}, \mathcal{C}_{i}^{+})\to \Spf(\mathcal{C}_{i}^{+})$. It follows that the set $\spc^{-1}(\Min_{\mathcal{C}_{i}^{+}}(\varpi))$ is finite and hence a closed subset of the Berkovich spectrum $\mathcal{M}(\mathcal{C}_{i})$. But \begin{equation*}B=\varinjlim_{i}\mathcal{C}_{i}^{+},\end{equation*}the ring $B$ being integrally closed in $\mathcal{B}$, so, by Lemma \ref{Minimal primes and integral extensions 2} and Lemma \ref{Closed subsets of the Berkovich spectrum}, $\spc^{-1}(\Min_{B}(\varpi))$ is closed in $\mathcal{M}(\mathcal{B})$. \end{proof}
\begin{rmk}\label{Integral extensions and closed subsets 2}Note that we could prove the above lemma without using Theorem \ref{Reduced Noetherian rings are strongly Shilov} if we assumed $B$ to be normal, by replacing the domains $\mathcal{C}_{i}^{+}$ with the normalizations $\overline{C}_{i}$ of $C_{i}$, which are Krull domains and thus obviously strongly $\varpi$-Shilov.\end{rmk} 
\begin{proof}[Proof of Theorem \ref{Integral extensions of Noetherian rings 2}]By Theorem \ref{Integral extensions of Noetherian rings} and Theorem \ref{Strongly Shilov}, we know that $\mathcal{B}$ satisfies Berkovich's description of the Shilov boundary, so every minimal prime of $\varpi$ in $\mathcal{B}^{\circ}$ has a unique pre-image in $\mathcal{M}(\mathcal{B})$ under $\spc$ and the Shilov boundary for $\mathcal{B}$ is given by the closure of $\spc^{-1}(\Min_{\mathcal{B}^{\circ}}(\varpi))$. But by Lemma \ref{Integral extensions and closed subsets}, the subset $\spc^{-1}(\Min_{\mathcal{B}^{\circ}}(\varpi))$ of $\mathcal{M}(\mathcal{B})$ is already closed.\end{proof}
\begin{rmk}This finishes the proof of case (3) of Theorem \ref{Main theorem 1} in the introduction, so now the entirety of Theorem \ref{Main theorem 1} is proved.\end{rmk}        
We now want to remove the Noetherian assumption in Theorem \ref{Integral extensions of Noetherian rings}, which we are able to do at the cost of assuming that the domain $A$ (or, equivalently, the Tate domain $\mathcal{A}=A[\varpi^{-1}]$) is normal (see Theorem \ref{Strongly Shilov rings and integral extensions}; on the other hand, note that we no longer assume that $B$ is normal in that theorem). We begin with some general observations concerning seminorms on integral ring extensions. For a ring $A$ and a non-zero-divisor $\varpi\in A$ we denote by $\vert\cdot\vert_{\spc,A,\varpi}$ the spectral seminorm on $\mathcal{A}=A[\varpi^{-1}]$ derived from the canonical extension $\lVert\cdot\rVert_{A,\varpi}$ of the $\varpi$-adic seminorm on $A$ (but recall from Lemma \ref{Boundedness vs. continuity 2} that this depends only on the topologically nilpotent unit $\varpi\in\mathcal{A}$, not on the choice of ring of definition $A$).
\begin{lemma}\label{Spectral isometries}Let $A\hookrightarrow B$ be an integral extension of rings, let $\varpi\in A$ be a non-unit and a non-zero-divisor in both $A$ and $B$ and suppose that $A$, $B$ are integrally closed in $A[\varpi^{-1}]$, $B[\varpi^{-1}]$. Then the map $A[\varpi^{-1}]\to B[\varpi^{-1}]$ is an isometry when the source and target are endowed with their respective spectral seminorms $\vert\cdot\vert_{\spc,A,\varpi}$ and $\vert\cdot\vert_{\spc,B,\varpi}$.\end{lemma}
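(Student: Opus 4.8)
The plan is to show that the two power-multiplicative seminorms $\vert\cdot\vert_{\spc,A,\varpi}$ and the restriction of $\vert\cdot\vert_{\spc,B,\varpi}$ to $A[\varpi^{-1}]$ coincide by comparing their closed unit balls. Write $\mathcal{A}=A[\varpi^{-1}]$ and $\mathcal{B}=B[\varpi^{-1}]$; the map $A\to B$ being injective (it is an extension) and $\varpi$ being a non-zero-divisor in $B$, the induced map $\mathcal{A}\to\mathcal{B}$ is injective. Both seminorms are power-multiplicative by construction, and both satisfy $\nu(\varpi^{n}g)=2^{-n}\nu(g)$ for every $g$ and every $n\in\mathbb{Z}$: indeed $\varpi$ is multiplicative for the canonical extensions $\lVert\cdot\rVert_{A,\varpi}$ and $\lVert\cdot\rVert_{B,\varpi}$ of the respective $\varpi$-adic seminorms and has value $\tfrac12$ there, hence the same holds for the derived spectral seminorms (cf.\ the argument in the proof of Lemma \ref{Uniform Tate rings}). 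Since the closed unit balls in question are subrings of $\mathcal{A}$, hence submonoids of its multiplicative monoid, Proposition \ref{Description of seminorms} reduces the lemma to the single equality of closed unit balls $\mathcal{A}_{\vert\cdot\vert_{\spc,A,\varpi}\leq1}=\mathcal{B}_{\vert\cdot\vert_{\spc,B,\varpi}\leq1}\cap\mathcal{A}$. One containment is immediate: because $\varpi^{n}A\subseteq\varpi^{n}B$ we have $\lVert g\rVert_{B,\varpi}\leq\lVert g\rVert_{A,\varpi}$ for all $g\in\mathcal{A}$, so the inclusion $\mathcal{A}\hookrightarrow\mathcal{B}$ is submetric and Lemma \ref{Bounded implies submetric} gives $\vert g\vert_{\spc,B,\varpi}\leq\vert g\vert_{\spc,A,\varpi}$ on $\mathcal{A}$, i.e.\ $\mathcal{A}_{\vert\cdot\vert_{\spc,A,\varpi}\leq1}\subseteq\mathcal{B}_{\vert\cdot\vert_{\spc,B,\varpi}\leq1}\cap\mathcal{A}$.

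For the converse I would first identify the closed unit balls with rings of power-bounded elements. Since $A$ is integrally closed in $\mathcal{A}$, the pair $(\mathcal{A},A)$ is a Tate Huber pair, so Lemma \ref{Almost power-bounded} gives $\varpi\mathcal{A}^{\circ}\subseteq A$; hence $\mathcal{A}^{\circ}\subseteq\varpi^{-1}A$ is bounded and $\mathcal{A}$ is uniform. By \cite{Dine22}, Lemma 2.24 (or by Example \ref{Complete integral closure and arc-closure} together with uniformity), the closed unit ball of $\vert\cdot\vert_{\spc,A,\varpi}$ equals $\mathcal{A}^{\circ}$, and the same reasoning applied to $B$ shows that the closed unit ball of $\vert\cdot\vert_{\spc,B,\varpi}$ equals $\mathcal{B}^{\circ}$. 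Thus the remaining point is the equality $\mathcal{A}^{\circ}=\mathcal{B}^{\circ}\cap\mathcal{A}$.

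The crux is the identity $B\cap\mathcal{A}=A$: an element of $B$ that lies in $\mathcal{A}$ is integral over $A$ (as $B$ is integral over $A$) and therefore lies in $A$, since $A$ is integrally closed in $\mathcal{A}$. Granting this, let $f\in\mathcal{A}$. By Lemma \ref{Completely integrally closed} and the description of almost integral elements preceding it, $f\in\mathcal{B}^{\circ}$ if and only if there is an integer $m\geq0$ with $\varpi^{m}f^{n}\in B$ for all $n\geq0$; each such $\varpi^{m}f^{n}$ already lies in $\mathcal{A}$, so this is equivalent to $\varpi^{m}f^{n}\in B\cap\mathcal{A}=A$ for all $n$, i.e.\ to $f\in\mathcal{A}^{\circ}$. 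Combined with the obvious inclusion $\mathcal{A}^{\circ}\subseteq\mathcal{B}^{\circ}$ coming from $A\subseteq B$, this yields $\mathcal{A}^{\circ}=\mathcal{B}^{\circ}\cap\mathcal{A}$ and finishes the proof.

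I expect the only genuinely delicate step to be the passage to uniform Tate rings that allows one to replace the closed unit ball of the spectral seminorm by the ring of power-bounded elements; here both hypotheses that $A$ and $B$ be integrally closed in their Tate rings are used (via Lemma \ref{Almost power-bounded}), together with the identity $B\cap\mathcal{A}=A$. If one wished to avoid invoking \cite{Dine22}, a more hands-on alternative, which I would keep in reserve, is to establish the nontrivial inequality directly by showing that the restriction map $\mathcal{M}(\mathcal{B})\to\mathcal{M}(\mathcal{A})$ is surjective: given $v\in\mathcal{M}(\mathcal{A})$, realize $v$ by a rank-one valuation ring $V$ over $A$ with pseudo-uniformizer $\varpi$ (Lemma \ref{Valuations and valuation rings}), pick a prime $\mathfrak{q}$ of $B$ lying over $\ker v$, extend $V$ to a valuation ring $W$ of the algebraic field extension $\Frac(B/\mathfrak{q})$ of $\Frac(A/\ker v)$ by Chevalley's extension theorem, note that $W$ is still of rank one and still has $\varpi$ as a pseudo-uniformizer, and pull back the valuation of $W$, normalized so that $\varpi\mapsto\tfrac12$, to obtain $w\in\mathcal{M}(\mathcal{B})$ with $w\vert_{\mathcal{A}}=v$; the care needed there is precisely in checking preservation of rank one and of the $\varpi$-normalization under the extension.
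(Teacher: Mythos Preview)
Your argument is correct, but the paper's own proof is considerably shorter and more direct. Rather than working at the level of the spectral seminorms, the paper shows that already the \emph{$\varpi$-adic} seminorms $\lVert\cdot\rVert_{A,\varpi}$ and $\lVert\cdot\rVert_{B,\varpi}$ agree on $\mathcal{A}$: by Proposition~\ref{Rings of integral elements} the hypotheses that $A$, $B$ be integrally closed in $\mathcal{A}$, $\mathcal{B}$ mean $(\varpi^{n})_{A}$, $(\varpi^{n})_{B}$ are integrally closed ideals, and then \cite{Swanson-Huneke}, Proposition~1.6.1 (contraction of integral closure along an integral extension) gives $(\varpi^{n})_{B}\cap A=(\varpi^{n})_{A}$ for every $n$. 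The spectral seminorms are then equal on $\mathcal{A}$ simply because $\vert f\vert_{\spc}=\lim_{m}\lVert f^{m}\rVert^{1/m}$ depends only on the seminorm. Your route---reducing via Proposition~\ref{Description of seminorms} to comparing closed unit balls, identifying those with $\mathcal{A}^{\circ}$ and $\mathcal{B}^{\circ}$ via uniformity, and then proving $\mathcal{B}^{\circ}\cap\mathcal{A}=\mathcal{A}^{\circ}$ from the key identity $B\cap\mathcal{A}=A$---uses essentially the same underlying fact (integrality plus the integral-closure hypothesis forces the right contraction), but packages it through the machinery of generalized gauges and power-bounded elements rather than going straight to the $\varpi$-adic order. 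The alternative you sketch via surjectivity of $\mathcal{M}(\mathcal{B})\to\mathcal{M}(\mathcal{A})$ would also work and is a genuinely different, more geometric, approach, though it is not the one the paper takes.
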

\begin{proof}Since $A$, $B$ are integrally closed in $A[\varpi^{-1}]$, $B[\varpi^{-1}]$, Proposition \ref{Rings of integral elements} and \cite{Swanson-Huneke}, Proposition 1.6.1, entail that $A\to B$ is an isometry for the respective $\varpi$-adic seminorms. This formally implies that $A[\varpi^{-1}]\to B[\varpi^{-1}]$ is an isometry for the canonical extensions $\lVert\cdot\rVert_{A,\varpi}$ and $\rVert\cdot\rVert_{B,\varpi}$ of the $\varpi$-adic seminorms of $A$ and $B$. We conclude by Lemma \ref{Bounded implies submetric}. \end{proof}
For an integral extension $A\hookrightarrow B$ and $\varpi\in A$ as above, the lemma allows us to denote both spectral seminorms $\vert\cdot\vert_{\spc,A,\varpi}$ and $\vert\cdot\vert_{\spc,B,\varpi}$ by the same symbol $\vert\cdot\vert_{\spc,\varpi}$.  
\begin{lemma}\label{Integral extensions and the Berkovich spectrum}Let $\varpi\in A$ be a non-zero-divisor and a non-unit in a ring $A$ and let $A\hookrightarrow B$ be an integral ring extension. For a multiplicative seminorm $w$ on $B[\varpi^{-1}]$, the following are equivalent: \begin{enumerate}[(1)] \item $w$ is bounded with respect to $\vert\cdot\vert_{\spc,\varpi}$; \item the restriction $w\vert_{A[\varpi^{-1}]}$ of $w$ to $A[\varpi^{-1}]$ is bounded with respect to $\vert\cdot\vert_{\spc,\varpi}$.\end{enumerate}\end{lemma}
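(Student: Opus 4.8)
The plan is to reduce both boundedness conditions to pointwise inequalities and then exploit integrality. Note first that the two spectral seminorms occurring here, $\vert\cdot\vert_{\spc,B,\varpi}$ and $\vert\cdot\vert_{\spc,A,\varpi}$, are power-multiplicative; so, by the argument in the proof of Lemma~\ref{Bounded implies submetric} (applied to the identity map), a multiplicative seminorm is bounded with respect to such a seminorm exactly when it is pointwise dominated by it. Thus (1) is equivalent to $w\leq\vert\cdot\vert_{\spc,B,\varpi}$ on $B[\varpi^{-1}]$ and (2) to $w|_{A[\varpi^{-1}]}\leq\vert\cdot\vert_{\spc,A,\varpi}$ on $A[\varpi^{-1}]$. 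The implication $(1)\Rightarrow(2)$ is then immediate: the canonical map $A[\varpi^{-1}]\to B[\varpi^{-1}]$ is submetric for the canonical extensions $\lVert\cdot\rVert_{A,\varpi}$, $\lVert\cdot\rVert_{B,\varpi}$ of the $\varpi$-adic seminorms (since $a\in\varpi^{k}A$ forces $a\in\varpi^{k}B$), so Lemma~\ref{Bounded implies submetric} gives $\vert f\vert_{\spc,B,\varpi}\leq\vert f\vert_{\spc,A,\varpi}$ for $f\in A[\varpi^{-1}]$, and I combine this with $w\leq\vert\cdot\vert_{\spc,B,\varpi}$.

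For $(2)\Rightarrow(1)$ I would show directly that $w(f)\leq\vert f\vert_{\spc,B,\varpi}$ for every $f\in B[\varpi^{-1}]$. From $w|_{A[\varpi^{-1}]}\leq\vert\cdot\vert_{\spc,A,\varpi}$ one reads off that $w(a)\leq\lVert a\rVert_{A,\varpi}\leq1$ for all $a\in A$ and that $w(\varpi)=\tfrac{1}{2}$ (because $w(\varpi)\leq\vert\varpi\vert_{\spc,A,\varpi}=\tfrac{1}{2}$ while $w(\varpi)^{-1}=w(\varpi^{-1})\leq\vert\varpi^{-1}\vert_{\spc,A,\varpi}=2$). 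Next, for $b\in B$, I pick an equation of integral dependence $b^{d}+a_{1}b^{d-1}+\cdots+a_{d}=0$ with $a_{i}\in A$; applying $w$ and using the nonarchimedean triangle inequality and multiplicativity gives $w(b)^{d}\leq\max_{i}w(a_{i})w(b)^{d-i}\leq\max_{i}w(b)^{d-i}$, which is impossible if $w(b)>1$, so $w(b)\leq1$ for all $b\in B$. Finally I sharpen this using the definition of the spectral seminorm: given $\varepsilon>0$, choose $m\geq1$ and $k\in\mathbb{Z}$ with $b^{m}\in\varpi^{k}B$ and $2^{-k}<(\vert b\vert_{\spc,B,\varpi}+\varepsilon)^{m}$, write $b^{m}=\varpi^{k}b'$ with $b'\in B$, and compute $w(b)^{m}=w(\varpi)^{k}w(b')=2^{-k}w(b')\leq2^{-k}<(\vert b\vert_{\spc,B,\varpi}+\varepsilon)^{m}$; letting $\varepsilon\to0$ yields $w(b)\leq\vert b\vert_{\spc,B,\varpi}$. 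Since every element of $B[\varpi^{-1}]$ is of the form $\varpi^{-n}b$ with $b\in B$ and $\varpi$ is multiplicative for both $w$ and $\vert\cdot\vert_{\spc,B,\varpi}$ (immediate from the definition of the spectral seminorm), this extends to $w\leq\vert\cdot\vert_{\spc,B,\varpi}$ on all of $B[\varpi^{-1}]$, i.e. to (1).

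The only point needing a little care is the final step of $(2)\Rightarrow(1)$, namely passing from the crude bound $w|_{B}\leq1$ to the sharp bound $w|_{B}\leq\vert\cdot\vert_{\spc,B,\varpi}$; but this is merely an unwinding of the fact that $\vert\cdot\vert_{\spc,B,\varpi}$ is derived from the canonical extension of the $\varpi$-adic seminorm on $B$, so no serious obstacle arises. (If one additionally assumes, as elsewhere in this section, that $\varpi$ is a non-zero-divisor in $B$, the proof can be packaged more compactly: Lemma~\ref{Continuous means normalized}, applied over $A$ and over $B$, identifies membership in $\mathcal{M}(A[\varpi^{-1}])$ resp.\ $\mathcal{M}(B[\varpi^{-1}])$ with the conditions ``$w\leq1$ on $A$ resp.\ $B$ and $w(\varpi)=\tfrac{1}{2}$'', and the integral-dependence computation above then shows at once that the condition over $A$ implies the one over $B$.)
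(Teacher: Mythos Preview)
Your proof is correct and proceeds along essentially the same lines as the paper's: the heart of both arguments is the observation that $w(\varpi)=\tfrac{1}{2}$ together with the integral-dependence computation forcing $w(b)\leq1$ for all $b\in B$. The only difference is cosmetic: where you unwind the definition of $\vert\cdot\vert_{\spc,B,\varpi}$ by hand to conclude $w\leq\vert\cdot\vert_{\spc,B,\varpi}$, the paper instead observes that $w\leq1$ on $B$ and $w(\varpi)=\tfrac{1}{2}$ make $w$ continuous for the $(B,\varpi)$-topology and then invokes Lemma~\ref{Boundedness vs. continuity} (exactly the alternative you sketch in your closing parenthetical). For $(1)\Rightarrow(2)$ the paper cites Lemma~\ref{Spectral isometries}, which literally requires the integral-closure hypotheses; your submetric argument is cleaner here and works without them.
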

\begin{proof}By Lemma \ref{Spectral isometries}, we only need to prove (2)$\Rightarrow$(1). Suppose that $v=w\vert_{A[\varpi^{-1}]}$ is bounded with respect to $\vert\cdot\vert_{\spc,\varpi}$. Then $w(\varpi)\leq\vert\varpi\vert_{\spc,\varpi}=\frac{1}{2}$ and $w(\varpi)^{-1}=w(\varpi^{-1})\leq\vert\varpi^{-1}\vert_{\spc,A,\varpi}=2$. Thus, by Lemma \ref{Boundedness vs. continuity}, it suffices to prove that $w$ is continuous with respect to the canonical extension $\lVert\cdot\rVert_{B,\varpi}$ of the $\varpi$-adic seminorm on $B$. Thus we have to prove that for every $r>0$ the subset $\{\, f\in B\mid w(f)<r\,\}$ is open with respect to the topology on $B[\varpi^{-1}]$ defined by the pair of definition $(B, \varpi)$. Given $r>0$, choose $n\geq1$ such that $\lVert\varpi^{n}\rVert_{A,\varpi}=2^{-n}<r$. Then $w(\varpi^{n})<r$, so \begin{equation*}\varpi^{n}B\subseteq \{\, f\in B\mid w(f)<r\,\},\end{equation*}provided that $w(f)\leq1$ for all $f\in B$. 

Hence it remains to prove that $w(f)\leq1$ for all $f\in B$. To this end, let \begin{equation*}f^{n}+a_{1}f^{n-1}+\dots+a_{n-1}f+a_{n}=0\end{equation*}be an equation of integral dependence of $f\in B$ over $A$. Then \begin{equation*}w(f^{n})\leq\max_{1\leq i\leq n}w(a_{i})w(f^{n-i}).\end{equation*}Choose $i\in\{\,1,\dots, n\,\}$ such that $w(f)^{n}=w(f^{n})\leq w(a_{i})w(f^{n-i})=w(a_{i})w(f)^{n-i}$, so that \begin{equation*}w(f)\leq w(a_{i})^{1/i}.\end{equation*}Since $w\vert_{A[\varpi^{-1}]}$ is bounded with respect to $\vert\cdot\vert_{\spc,A,\varpi}$, we have $w(a_{i})\leq1$ for all $i$. It follows that $w(f)\leq1$, as desired. \end{proof}
We deduce from the above lemma the following more general statement. 
\begin{lemma}\label{Integral extensions and the Berkovich spectrum 2}In the situation of the above lemma, the following are equivalent for a power-multiplicative seminorm $\vert\cdot\vert$ on $B[\varpi^{-1}]$: \begin{enumerate}[(1)]\item $\vert\cdot\vert$ is bounded with respect to $\vert\cdot\vert_{\spc,\varpi}$; \item the restriction of $\vert\cdot\vert$ to $A[\varpi^{-1}]$ is bounded with respect to $\vert\cdot\vert_{\spc,A,\varpi}$.\end{enumerate}\end{lemma}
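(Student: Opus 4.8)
The plan is to reduce the statement about a general power-multiplicative seminorm $\vert\cdot\vert$ on $B[\varpi^{-1}]$ to the already-established Lemma \ref{Integral extensions and the Berkovich spectrum} about \emph{multiplicative} seminorms, via Berkovich's spectral description (Theorem \ref{Spectral seminorm 2}) applied to $(B[\varpi^{-1}], \vert\cdot\vert)$. The implication $(1)\Rightarrow(2)$ is immediate: restriction of a bounded seminorm along the submetric map $A[\varpi^{-1}]\to B[\varpi^{-1}]$ (submetric by Lemma \ref{Spectral isometries}) stays bounded, since $\vert\cdot\vert_{\spc,A,\varpi}$ is the restriction of $\vert\cdot\vert_{\spc,B,\varpi}$. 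So the content is $(2)\Rightarrow(1)$.

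For $(2)\Rightarrow(1)$, assume the restriction $\vert\cdot\vert\big|_{A[\varpi^{-1}]}$ is bounded with respect to $\vert\cdot\vert_{\spc,A,\varpi}$. First I would handle $\varpi$ directly: since $\varpi\in A$, boundedness of the restriction gives $\vert\varpi\vert\le\vert\varpi\vert_{\spc,\varpi}=\tfrac12$, and since $\vert\cdot\vert$ is power-multiplicative hence $\varpi^{-1}$ lies in $B[\varpi^{-1}]$ with $\vert\varpi^{-1}\vert=\vert\varpi\vert^{-1}$ (because $\vert\varpi\vert\,\vert\varpi^{-1}\vert\ge\vert 1\vert=1$ and $\le \vert\varpi\vert_{\spc,\varpi}\vert\varpi^{-1}\vert_{\spc,\varpi}$... actually one gets $\vert\varpi^{-1}\vert\ge 2$ only after also using the reverse; the clean way is: $\vert\varpi^{-1}\vert = \vert\varpi^{-1}\vert_{\spc,A,\varpi}\ge$ bound, plus $\vert\varpi\vert\vert\varpi^{-1}\vert\ge1$), so that $\vert\varpi\vert=\tfrac12$ and $\varpi$ behaves as a pseudo-uniformizer. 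Next, by Theorem \ref{Spectral seminorm 2} applied to the seminormed ring $(B[\varpi^{-1}], \vert\cdot\vert)$, we have $\vert f\vert = \sup_{\phi\in\mathcal{M}((B[\varpi^{-1}],\vert\cdot\vert))}\phi(f)$ for all $f$, where each $\phi$ is a bounded multiplicative ring seminorm on $B[\varpi^{-1}]$, hence $\phi\le\vert\cdot\vert$ (Lemma \ref{Bounded implies submetric}). The restriction $\phi\big|_{A[\varpi^{-1}]}$ is then bounded by $\vert\cdot\vert\big|_{A[\varpi^{-1}]}$, which by hypothesis is bounded with respect to $\vert\cdot\vert_{\spc,A,\varpi}$; so $\phi\big|_{A[\varpi^{-1}]}$ is bounded with respect to $\vert\cdot\vert_{\spc,A,\varpi}$. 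Now Lemma \ref{Integral extensions and the Berkovich spectrum}, $(2)\Rightarrow(1)$, applies to the multiplicative seminorm $\phi$ and yields that $\phi$ itself is bounded with respect to $\vert\cdot\vert_{\spc,\varpi}$, i.e. $\phi\le\vert\cdot\vert_{\spc,B,\varpi}$. Taking the supremum over $\phi$ gives $\vert f\vert\le\vert f\vert_{\spc,B,\varpi}$ for all $f\in B[\varpi^{-1}]$, which is $(1)$.

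The one point requiring care is that Lemma \ref{Integral extensions and the Berkovich spectrum} is stated for multiplicative seminorms $w$, and its hypothesis (2) reads "the restriction $w\big|_{A[\varpi^{-1}]}$ is bounded with respect to $\vert\cdot\vert_{\spc,\varpi}$"; we need to make sure our $\phi\big|_{A[\varpi^{-1}]}$ meets exactly that hypothesis, which it does since $\phi\big|_{A[\varpi^{-1}]}\le\vert\cdot\vert\big|_{A[\varpi^{-1}]}\le C\,\vert\cdot\vert_{\spc,A,\varpi}$ for a suitable constant $C$ (and boundedness of a seminorm with respect to a power-multiplicative seminorm automatically upgrades to $\le$ by Lemma \ref{Bounded implies submetric}). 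I would also remark that the supremum argument is legitimate because $\mathcal{M}((B[\varpi^{-1}],\vert\cdot\vert))$ is non-empty and compact (Theorem \ref{The Berkovich spectrum is compact}), so it is actually a maximum, but even the supremum suffices for the inequality. This really is the whole argument; the main (minor) obstacle is just bookkeeping the passage between "bounded" and "$\le$" and checking the hypotheses of the cited multiplicative-case lemma line up, rather than any genuine new difficulty.
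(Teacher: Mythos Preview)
Your proof is correct and follows essentially the same route as the paper: reduce to the multiplicative case via Berkovich's spectral description (Theorem~\ref{Spectral seminorm 2}, which applies directly since $\vert\cdot\vert$ is power-multiplicative and hence equals its own spectral seminorm), then invoke Lemma~\ref{Integral extensions and the Berkovich spectrum} for each $\phi$ in the Berkovich spectrum and take the supremum. The paragraph about pinning down $\vert\varpi\vert=\tfrac12$ is unnecessary and a bit tangled; you never use it, and the paper's proof omits it entirely.
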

\begin{proof}By Lemma \ref{Bounded implies submetric}, a power-multiplicative seminorm $\vert\cdot\vert$ on a ring is bounded with respect to some seminorm $\lVert\cdot\rVert$ if and only if it is bounded above by $\lVert\cdot\rVert$, i.e., if and only if $\vert\cdot\vert\leq\lVert\cdot\rVert$. By virtue of \cite{Berkovich}, Theorem 1.3.1, a power-multiplicative seminorm $\vert\cdot\vert$ on a ring is bounded above by some seminorm $\lVert\cdot\rVert$ if and only if every multiplicative seminorm bounded above by $\vert\cdot\vert$ is bounded with respect to $\lVert\cdot\rVert$. Hence the assertion follows from Lemma \ref{Integral extensions and the Berkovich spectrum}. \end{proof}
\begin{lemma}\label{Integral extensions and additive subgroups}Let $A\hookrightarrow B$ be an integral ring extension and let $\varpi\in A$ be a non-zero-divisor in $A$ and in $B$. Suppose that $A$, $B$ are integrally closed in $\mathcal{A}=A[\varpi^{-1}]$, $\mathcal{B}=B[\varpi^{-1}]$. Set $U=\mathcal{B}^{\circ\circ}\cap \mathcal{A}$ and let $U'$ be the subset of elements of $\mathcal{B}$ which satisfy an equation of integral dependence with coefficients in $U$. Let $\mathcal{B}^{+}$ be the integral closure of $\mathcal{A}^{\circ}$ inside $\mathcal{B}$. Then $U=\sqrt{(\varpi)_{\mathcal{A}^{\circ}}}$ and $U'=\sqrt{(\varpi)_{\mathcal{B}^{+}}}$.\end{lemma}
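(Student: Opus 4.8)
The plan is to reduce both identities to the standard description of the topologically nilpotent elements of a uniform Tate ring in terms of its spectral seminorm, combined with the isometry of Lemma \ref{Spectral isometries} for the statement about $U$ and with the classical Cohen--Seidenberg description of the integral closure of an ideal (already used in the proof of Proposition \ref{Properties of the generalized gauge 3}) for the statement about $U'$. As a preliminary step I would note that $\mathcal{A}$ and $\mathcal{B}$ are uniform: applying Lemma \ref{Almost power-bounded} with the ring of integral elements taken to be $A$, respectively $B$, gives $\varpi\mathcal{A}^{\circ}\subseteq A$ and $\varpi\mathcal{B}^{\circ}\subseteq B$, so $\mathcal{A}^{\circ}$ and $\mathcal{B}^{\circ}$ are bounded; hence, by \cite{Dine22}, Lemma 2.24, they are the closed unit balls of the respective spectral seminorms $\vert\cdot\vert_{\spc,\varpi}$, which by Lemma \ref{Uniform Tate rings} define the topologies and are power-multiplicative with $\varpi$ seminorm-multiplicative and $\vert\varpi\vert_{\spc,\varpi}=\tfrac12$. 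In particular $\mathcal{A}^{\circ\circ}$ and $\mathcal{B}^{\circ\circ}$ are the open unit balls $\{\,\vert\cdot\vert_{\spc,\varpi}<1\,\}$.

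For the first identity, Lemma \ref{Spectral isometries} tells us that $\mathcal{A}\hookrightarrow\mathcal{B}$ is an isometry for the spectral seminorms, so $U=\mathcal{B}^{\circ\circ}\cap\mathcal{A}=\{\,f\in\mathcal{A}\mid\vert f\vert_{\spc,\varpi}<1\,\}=\mathcal{A}^{\circ\circ}$, and it remains to check that $\mathcal{A}^{\circ\circ}=\sqrt{(\varpi)_{\mathcal{A}^{\circ}}}$. If $f\in\mathcal{A}^{\circ}$ and $f^{n}\in\varpi\mathcal{A}^{\circ}$, then $\vert f\vert_{\spc,\varpi}^{n}=\vert f^{n}\vert_{\spc,\varpi}\leq\vert\varpi\vert_{\spc,\varpi}=\tfrac12<1$, so $f\in\mathcal{A}^{\circ\circ}$. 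Conversely, if $\vert f\vert_{\spc,\varpi}<1$, choose $n$ with $\vert f^{n}\vert_{\spc,\varpi}<\tfrac12=\vert\varpi\vert_{\spc,\varpi}$; since $\varpi$ is seminorm-multiplicative this yields $\vert\varpi^{-1}f^{n}\vert_{\spc,\varpi}<1$, hence $\varpi^{-1}f^{n}\in\mathcal{A}^{\circ}$ and $f^{n}\in(\varpi)_{\mathcal{A}^{\circ}}$.

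For the second identity, I would first observe that since the coefficients of the integral-dependence equations defining $U'$ lie in $U=\sqrt{(\varpi)_{\mathcal{A}^{\circ}}}\subseteq\mathcal{A}^{\circ}$, every element of $U'$ is integral over $\mathcal{A}^{\circ}$ and therefore lies in $\mathcal{B}^{+}$; conversely, any element of $\mathcal{B}^{+}$ satisfying a monic equation with coefficients in $U$ belongs to $U'$. Thus $U'$ is exactly the integral closure of the ideal $I:=\sqrt{(\varpi)_{\mathcal{A}^{\circ}}}$ of $\mathcal{A}^{\circ}$ inside the integral extension $\mathcal{A}^{\circ}\hookrightarrow\mathcal{B}^{+}$, so by \cite{Cohen-Seidenberg}, Lemma 1, $U'=\sqrt{I\mathcal{B}^{+}}$. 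Now for any ideal $J$ of $\mathcal{A}^{\circ}$ one has $\sqrt{J\mathcal{B}^{+}}=\sqrt{\sqrt{J}\,\mathcal{B}^{+}}$ (because $(x\beta)^{k}=x^{k}\beta^{k}\in J\mathcal{B}^{+}$ whenever $x^{k}\in J$ and $\beta\in\mathcal{B}^{+}$, and radical ideals are closed under sums), so $\sqrt{I\mathcal{B}^{+}}=\sqrt{(\varpi)_{\mathcal{A}^{\circ}}\mathcal{B}^{+}}$; and $(\varpi)_{\mathcal{A}^{\circ}}\mathcal{B}^{+}=\varpi\mathcal{A}^{\circ}\mathcal{B}^{+}=\varpi\mathcal{B}^{+}=(\varpi)_{\mathcal{B}^{+}}$ since $\mathcal{A}^{\circ}$ is a subring of $\mathcal{B}^{+}$ containing $1$. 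Hence $U'=\sqrt{(\varpi)_{\mathcal{B}^{+}}}$.

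The argument is essentially bookkeeping, and I do not anticipate a serious obstacle; the points that require care are keeping $\mathcal{A}^{\circ}$ (the complete integral closure, a priori larger than $A$) distinct from $A$ throughout, and verifying that the integral closure of $U$ computed inside $\mathcal{B}$ coincides with the one computed inside $\mathcal{B}^{+}$, which is what licenses the application of the Cohen--Seidenberg lemma to the integral extension $\mathcal{A}^{\circ}\hookrightarrow\mathcal{B}^{+}$.
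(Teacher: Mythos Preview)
Your proof is correct and follows essentially the same route as the paper's: uniformity of $\mathcal{A}$ and $\mathcal{B}$ via Lemma~\ref{Almost power-bounded}, the isometry Lemma~\ref{Spectral isometries} to identify $U$ with $\mathcal{A}^{\circ\circ}=\sqrt{(\varpi)_{\mathcal{A}^{\circ}}}$, and then Cohen--Seidenberg's Lemma~1 applied to the integral extension $\mathcal{A}^{\circ}\hookrightarrow\mathcal{B}^{+}$ to identify $U'$. You simply spell out two steps the paper leaves implicit, namely the equality $\mathcal{A}^{\circ\circ}=\sqrt{(\varpi)_{\mathcal{A}^{\circ}}}$ and the radical simplification $\sqrt{\sqrt{(\varpi)_{\mathcal{A}^{\circ}}}\,\mathcal{B}^{+}}=\sqrt{(\varpi)_{\mathcal{B}^{+}}}$.
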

\begin{proof}Since $A$, $B$ are integrally closed in $\mathcal{A}$, $\mathcal{B}$, we know from Lemma \ref{Almost power-bounded} that the Tate rings $\mathcal{A}$, $\mathcal{B}$ is uniform, so $\vert\cdot\vert_{\spc,\varpi}$ is a seminorm defining the topology on each of them (see Lemma \ref{Uniform Tate rings} and its proof). It follows from this and Lemma \ref{Spectral isometries} that $\mathcal{B}^{\circ\circ}\cap \mathcal{A}=\mathcal{A}^{\circ\circ}=\sqrt{(\varpi)_{\mathcal{A}^{\circ}}}$. Consequently, $U'$ is the subset of $\mathcal{B}^{+}$ consisting of elements satisfying an equation of integral dependence with coefficients in $\sqrt{(\varpi)_{\mathcal{A}^{\circ}}}$. By \cite{Cohen-Seidenberg}, Lemma 1, this means that $U'=\sqrt{(\varpi)_{\mathcal{B}^{+}}}$.\end{proof}
\begin{prop}[Analog of \cite{Guennebaud}, Ch.~II, Prop.~2]\label{Spectral seminorms on integral extensions}Let $A\hookrightarrow B$ be an integral extension of rings, let $\varpi\in A$ be a non-unit and a non-zero-divisor in both $A$ and $B$ and suppose that $A$, $B$ are integrally closed in $\mathcal{A}=A[\varpi^{-1}]$, $\mathcal{B}=B[\varpi^{-1}]$. Then for all $f\in \mathcal{B}$ the spectral seminorm $\vert f\vert_{\spc,\varpi}$ of $f$ has the form \begin{equation*}\vert f\vert_{\spc,\varpi}=\inf(\max_{1\leq i\leq n}\vert a_{i}\vert_{\spc,\varpi}^{1/i}),\end{equation*}where the infimum is taken over all equations of integral dependence \begin{equation*}X^{n}+a_{1}X^{n-1}+\dots+a_{n-1}X+a_{n}=0,\end{equation*}$a_{i}\in \mathcal{A}$, satisfied by $f$.\end{prop}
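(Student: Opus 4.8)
The plan is to show the two inequalities separately. One direction is essentially formal: any monic polynomial relation $f^n + a_1 f^{n-1} + \dots + a_n = 0$ with $a_i \in \mathcal{A}$ forces, for every $v \in \mathcal{M}(\mathcal{B})$, the estimate $v(f) \le \max_i v(a_i)^{1/i}$ (this is the standard archimedean-style argument already used in the proof of Lemma \ref{Integral extensions and the Berkovich spectrum}: choose the index $i$ realizing $v(f^n) \le v(a_i) v(f)^{n-i}$). Taking the supremum over $v \in \mathcal{M}(\mathcal{B})$ and using \cite{Berkovich}, Theorem 1.3.1 together with Lemma \ref{Spectral seminorms on integral extensions}'s ambient convention that $\vert\cdot\vert_{\spc,\varpi}$ on $\mathcal{A}$ and on $\mathcal{B}$ agree on $\mathcal{A}$ (Lemma \ref{Spectral isometries}), we get $\vert f\vert_{\spc,\varpi} \le \max_i \vert a_i\vert_{\spc,\varpi}^{1/i}$ for each such equation, hence $\vert f\vert_{\spc,\varpi} \le \inf_{\text{equations}} \max_i \vert a_i\vert_{\spc,\varpi}^{1/i}$.

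The reverse inequality is the substantive part, and here I would follow Guennebaud's strategy (\cite{Guennebaud}, Ch.~II, Prop.~2), replacing his use of the gauge/Minkowski-functional machinery over a nonarchimedean base field by the generalized-gauge formalism of Section \ref{sec:power-multiplicative seminorms}. Fix $f \in \mathcal{B}$ and write $s = \vert f \vert_{\spc,\varpi}$; we want, for every $r > s$, an equation of integral dependence $X^n + a_1 X^{n-1} + \dots + a_n = 0$ for $f$ over $\mathcal{A}$ with $\max_i \vert a_i\vert_{\spc,\varpi}^{1/i} \le r$. After replacing $f$ by $\varpi^{-k} f$ for a suitable rational power (using that $\varpi$ is seminorm-multiplicative and that $\vert \mathcal{B}\vert_{\spc,\varpi}$ is dense enough — or more cleanly, after passing to a power $f^m$ and a power $\varpi^\ell$ to reduce to the case $\vert f\vert_{\spc,\varpi} \le 1$, i.e. $f \in \mathcal{B}^\circ$), the claim becomes: an element $f$ of $\mathcal{B}^\circ = \mathcal{B}^+$ (the integral closure of $\mathcal{A}^\circ$ in $\mathcal{B}$, by uniformity and Lemma \ref{Spectral isometries}/Lemma \ref{Integral extensions and additive subgroups}) satisfies a monic equation over $\mathcal{A}^\circ$, which is immediate from the definition of integral closure. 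The real work is tracking how the bound $r$ on the spectral norm of the coefficients degrades under the two reductions (passing to $f^m$ multiplies exponents, passing to $\varpi^\ell f$ introduces powers of $\varpi$), and checking that in the limit one recovers exactly $\max_i \vert a_i\vert_{\spc,\varpi}^{1/i} \le r$ for arbitrary $r > s$. Concretely: given $r > s$ pick integers $m, \ell > 0$ with $2^{-\ell/m} \in (s, r)$; then $\vert f^m\vert_{\spc,\varpi} = s^m < 2^{-\ell} = \vert \varpi^\ell\vert_{\spc,\varpi}$, so $\varpi^{-\ell} f^m$ lies in the closed unit ball of $\vert\cdot\vert_{\spc,\varpi}$ on $\mathcal{B}$, which by uniformity equals $\mathcal{B}^\circ$ (Lemma \ref{Uniform Tate rings}, \cite{Dine22}, Lemma 2.24), hence in $\mathcal{B}^+$; thus $\varpi^{-\ell} f^m$ satisfies a monic equation $Y^d + b_1 Y^{d-1} + \dots + b_d = 0$ with $b_j \in \mathcal{A}^\circ$, i.e. $\vert b_j\vert_{\spc,\varpi} \le 1$. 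Substituting $Y = \varpi^{-\ell} f^m$ and clearing denominators gives $(f^m)^d + (\varpi^\ell b_1)(f^m)^{d-1} + \dots + \varpi^{\ell d} b_d = 0$, an equation for $f^m$ over $\mathcal{A}$; composing with the monic polynomial $X^m$ (viewing it as an equation for $f$ of degree $n = md$) produces coefficients $a_i$ which are either $0$ or of the form $\varpi^{\ell j} b_j$ sitting in degree $i = mj$, so $\vert a_i\vert_{\spc,\varpi}^{1/i} \le (2^{-\ell j})^{1/(mj)} = 2^{-\ell/m} < r$.

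Combining the two inequalities yields the stated formula. The main obstacle, as indicated, is purely bookkeeping: verifying that the exponents line up so that $\max_i \vert a_i\vert_{\spc,\varpi}^{1/i} < r$ in the construction above (in particular that the "padding" zero coefficients $a_i$ for $i$ not a multiple of $m$ cause no trouble, which they do not since they contribute nothing to the max), and that the reduction to the unit-ball case is legitimate, for which the key input is the uniformity of $\mathcal{B}$ — guaranteed by Lemma \ref{Almost power-bounded} since $B$ is integrally closed in $\mathcal{B}$ — together with \cite{Dine22}, Lemma 2.24 identifying $\mathcal{B}^\circ$ with the closed unit ball of $\vert\cdot\vert_{\spc,\varpi}$. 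I do not expect to need the generalized-gauge results of Section \ref{sec:power-multiplicative seminorms} in their full strength for this particular proposition (they enter more essentially in the companion statement \ref{Spectral seminorms and the minimal polynomial 2} about the minimal polynomial, referenced in the outline), but if one prefers to mirror Guennebaud's argument line by line rather than exploit uniformity, one would invoke Proposition \ref{Description of seminorms} to realize $\vert\cdot\vert_{\spc,\varpi}$ on $\mathcal{B}$ as the generalized gauge $p_{\mathcal{B}^\circ, \varpi}$ and argue directly with the defining infimum.
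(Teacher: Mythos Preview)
Your approach is correct and genuinely more elementary than the paper's. The paper mirrors Guennebaud closely: it introduces the auxiliary function $\nu(f)=\inf\max_i\vert a_i\vert_{\spc,\varpi}^{1/i}$, sets $U'=\sqrt{(\varpi)_{\mathcal{B}^{+}}}$ (the open-unit-ball analogue), proves via Proposition~\ref{Properties of the generalized gauge 3} that the generalized gauge $p_{U',\varpi}$ is a \emph{ring} seminorm, establishes $\nu\le p_{U',\varpi}$ using Lemma~\ref{Explicit power-multiplicative seminorms 1}, and finally bounds $p_{U',\varpi}\le\vert\cdot\vert_{\spc,\varpi}$ by invoking Guennebaud's constructible spectrum (Theorem~\ref{Guennebaud's theorem}) together with Lemma~\ref{Integral extensions and the Berkovich spectrum}. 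Your argument bypasses the gauge machinery entirely: you directly manufacture, for each $r>\vert f\vert_{\spc,\varpi}$, an integral equation with $\max_i\vert a_i\vert^{1/i}<r$ by pushing $f$ into $B$ via $\varpi^{-\ell}f^m$. This is shorter and shows that the Section~\ref{sec:power-multiplicative seminorms} machinery is not strictly needed for this particular proposition (the paper itself hints at this in the outline, reserving the heavier tools for Proposition~\ref{Spectral seminorms and the minimal polynomial 2}).

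Two small points deserve tightening. First, $\ell$ must be allowed to range over $\mathbb{Z}$, not just $\mathbb{Z}_{>0}$, to handle $\vert f\vert_{\spc,\varpi}\ge 1$. Second, and more substantively, your claim ``$\mathcal{B}^{\circ}=\mathcal{B}^{+}$'' is not what the cited lemmas say, and asserting $\mathcal{B}^{\circ}\subseteq\mathcal{B}^{+}$ (closed unit ball integral over $\mathcal{A}^{\circ}$) is dangerously close to the statement being proved. Fortunately you do not need it: your inequality is \emph{strict}, $\vert\varpi^{-\ell}f^m\vert_{\spc,\varpi}<1$, so $\varpi^{-\ell}f^m$ lies in the \emph{open} unit ball $\mathcal{B}^{\circ\circ}$. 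Since some power of it then lies in $\varpi B\subseteq B$ and $B$ is integrally closed in $\mathcal{B}$, you get $\varpi^{-\ell}f^m\in B$; now $B$ is integral over $A$, so the coefficients $b_j$ may be taken in $A$ itself, whence $\vert b_j\vert_{\spc,\varpi}\le 1$ automatically. With that adjustment the argument is complete.
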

\begin{proof}Our proof follows the strategy of the proof of \cite{Guennebaud}, Ch.~II, Proposition 2, using our results on generalized gauges from Section 2 in place of the more classical results on gauges on $K$-vector spaces over a nonarchimedean field $K$ used by Guennebaud in loc.~cit.  

Define a function \begin{align*}\nu: \mathcal{B}\to \mathbb{R}_{\geq0}, \\ \nu(f)=\inf(\max_{1\leq i\leq n}\vert a_{i}\vert_{\spc,\varpi}^{1/i}),\end{align*}where the infimum is taken over all equations of integral dependence \begin{equation*}f^{n}+a_{1}f^{n-1}+\dots+a_{n-1}f+a_{n}=0\end{equation*}with $a_{i}\in \mathcal{A}$ satisfied by $f$. It is readily seen that $\vert\cdot\vert_{\spc,\varpi}\leq\nu$. Indeed, if $f^{n}+a_{1}f^{n-1}+\dots+a_{n-1}f+a_{n}=0$ is an equation of integral dependence over $A[\varpi^{-1}]$, we have \begin{equation*}\vert f\vert_{\spc,\varpi}^{n}=\vert f^{n}\vert_{\spc,\varpi}\leq\max_{0\leq i\leq n-1}\vert a_{n-i}f^{i}\vert_{\spc,\varpi}\leq\max_{0\leq i\leq n-1}\vert a_{n-i}\vert_{\spc,\varpi}\vert f\vert_{\spc,\varpi}^{i},\end{equation*}so there exists some $i\in\{0,\dots,n-1\}$ with $\vert f\vert_{\spc,\varpi}^{n-i}\leq \vert a_{n-i}\vert_{\spc,\varpi}$. Since $\mathcal{A}\to \mathcal{B}$ is submetric with respect to the two spectral seminorms, it follows that \begin{equation*}\vert f\vert_{\spc,\varpi}\leq \max_{1\leq i\leq n}\vert a_{i}\vert_{\spc,\varpi}^{1/i},\end{equation*}as claimed. 

Let $U=\mathcal{B}^{\circ\circ}\cap \mathcal{A}=\mathcal{A}^{\circ\circ}$ and let $U'$ be the set of $f\in \mathcal{B}$ which satisfy an equation of integral dependence with coefficients in $U$. In other words, $U'=B[\varpi^{-1}]_{\nu<1}$. Then, by Lemma \ref{Integral extensions and additive subgroups}, $U'$ is a radical ideal in the integrally closed open subring $\mathcal{B}^{+}$ of $\mathcal{B}$. In particular, $(U', \varpi)$ is a weak pair of definition of the multiplicative monoid of $\mathcal{B}$, so we can consider the generalized gauge $p_{U',\varpi}$ of $(U', \varpi)$ (see Definition \ref{Generalized gauge}). Since $U'$ is also an integrally closed subset of $\mathcal{B}$, all hypotheses of Proposition \ref{Properties of the generalized gauge 3} are satisfied by the subring $\mathcal{B}^{+}$ of $\mathcal{B}$ and its ideal $U'$, and we deduce from that proposition that the generalized gauge $p_{U',\varpi}$ of $(U', \varpi)$ is a ring seminorm on $\mathcal{B}$. In the next step, we prove that the function $\nu$ satisfies the hypotheses of Lemma \ref{Explicit power-multiplicative seminorms 1}, so that $\nu\leq p_{U',\varpi}$ by that lemma.

To verify that $\nu(\varpi^{m}f)\leq 2^{-m}\nu(f)=\vert\varpi^{m}\vert_{\spc,\varpi}\nu(f)$ for all $m\in\mathbb{Z}$, $f\in \mathcal{B}$ (and hence actually $\nu(\varpi^{m}f)=\vert\varpi^{m}\vert_{\spc,\varpi}\nu(f)$ for all $m\in\mathbb{Z}$, $f\in \mathcal{B}$), let $f\in \mathcal{B}$, $m\geq1$ and let $f^{n}+a_{1}f^{n-1}+\dots+a_{n-1}f+a_{n}=0$ be an equation of integral dependence of $f$ over $\mathcal{A}$. Then \begin{equation*}(\varpi^{m}f)^{n}+a_{1}\varpi^{m}(\varpi^{m}f)^{n-1}+\dots+a_{n-1}\varpi^{m(n-1)}(\varpi^{m}f)+a_{n}\varpi^{mn}=0\end{equation*}is an equation of integral dependence over $\mathcal{A}$ satisfied by $\varpi^{m}f$. Consequently, \begin{align*}\nu(\varpi^{m}f)\leq\max_{1\leq i\leq n}\vert \varpi^{mi}a_{i}\vert_{\spc,\varpi}^{1/i}\\ \leq \max_{1\leq i\leq n}\vert \varpi^{mi}\vert_{\spc,\varpi}^{1/i}\vert a_{i}\vert_{\spc,\varpi}^{1/i}=\vert\varpi^{m}\vert_{\spc,\varpi}\max_{1\leq i\leq n}\vert a_{i}\vert_{\spc,\varpi}^{1/i}.\end{align*}Since this holds for every equation of integral dependence of $f$ over $\mathcal{A}$, we conclude that $\nu(\varpi^{m}f)\leq\vert\varpi^{m}\vert_{\spc,\varpi}\nu(f)$ for all $f\in \mathcal{B}$, $m\in\mathbb{Z}$, as desired. 

Now we verify, following the last part of the proof of \cite{Guennebaud}, Ch.~II, Proposition 2, that $\nu$ satisfies the second condition in Lemma \ref{Explicit power-multiplicative seminorms 1}, i.e., we prove that $\nu(f)^{m}\leq\nu(f^{m})$ for all $f\in \mathcal{B}$, $m\in\mathbb{Z}_{>0}$. Given an equation of integral dependence \begin{equation*}(f^{m})^{n}+a_{1}(f^{m})^{n-1}+\dots+a_{n-1}f^{m}+a_{n}=0\end{equation*}of $f^{m}$ over $\mathcal{A}$, we can rewrite this equation as \begin{equation*}f^{mn}+b_{m}f^{mn-m}+\dots+b_{nm-m}f^{m}+b_{nm}=0\end{equation*}with $b_{m(n-i)}=a_{n-i}$ for all $1\leq i\leq n$. We obtain: \begin{equation*}\nu(f)\leq\max_{1\leq i\leq n}\vert b_{m(n-i)}\vert_{\spc,\varpi}^{1/m(n-i)}=\max_{1\leq i\leq n}\vert a_{n-i}\vert_{\spc,\varpi}^{1/m(n-i)}=\max_{1\leq i\leq n}\vert a_{i}\vert_{\spc,\varpi}^{1/mi}.\end{equation*}Thus \begin{equation*}\nu(f)^{m}\leq\max_{1\leq i\leq n}\vert a_{i}\vert_{\spc,\varpi}^{1/i}.\end{equation*}Since this holds for every equation of integral dependence of $f^{m}$ over $\mathcal{A}$, we get the desired inequality $\nu(f)^{m}\leq\nu(f^{m})$. This finally allows us to apply Lemma \ref{Explicit power-multiplicative seminorms 1} to the function $\nu$ and conclude that $\nu\leq p_{U',\varpi}$. 

Therefore, to finish the proof of the equality $\nu=\vert\cdot\vert_{\spc,\varpi}$ it suffices to prove that $p_{U',\varpi}\leq\vert\cdot\vert_{\spc,\varpi}$. Note that, since $U$ is an integrally closed subset of $\mathcal{A}$, we have $U'\cap \mathcal{A}=U$, so that \begin{equation*}p_{U',\varpi}\vert_{\mathcal{A}}=p_{U,\varpi}=\vert\cdot\vert_{\spc,\varpi},\end{equation*}where for the last equality we used Proposition \ref{Description of seminorms}. By Proposition \ref{Properties of the generalized gauge} and Proposition \ref{Properties of the generalized gauge 2}, the function $p_{U',\varpi}: \mathcal{B}\to \mathbb{R}_{\geq0}$ is submultiplicative and power-multiplicative, so, by Theorem \ref{Guennebaud's theorem} (\cite{Guennebaud}, Ch.~I, Cor.~du Théorème 1), we see that \begin{equation*}p_{U',\varpi}(f)=\max_{v\in\Min(\mathcal{B}, p_{U',\varpi})}v(f)\end{equation*}for all $f\in \mathcal{B}$, where $(\mathcal{B}, p_{U',\varpi})$ is the seminormed monoid obtained by equipping the underlying multiplicative monoid of $\mathcal{B}$ with the submultiplicative function $p_{U',\varpi}$ and where $\Min(\mathcal{B}, p_{U',\varpi})$ is the constructible spectrum of $(\mathcal{B}, p_{U',\varpi})$ (see Definition \ref{Constructible spectrum}). Thus it suffices to prove that every \begin{equation*}v\in \Min(\mathcal{B}, p_{U',\varpi})\end{equation*}is bounded above by $\vert\cdot\vert_{\spc,\varpi}$. Since we have seen $p_{U',\varpi}$ to be a ring seminorm, every element of $\Min(\mathcal{B}, p_{U',\varpi})$ is a multiplicative ring seminorm bounded above by $p_{U',\varpi}$, by Proposition \ref{Constructible spectrum and Berkovich spectrum}. Therefore, by Lemma \ref{Integral extensions and the Berkovich spectrum} and Lemma \ref{Bounded implies submetric}, to prove that every element of $\Min(\mathcal{B}, p_{U',\varpi})$ is bounded above by $\vert\cdot\vert_{\spc,\varpi}$, it suffices to prove that the restriction to $\mathcal{A}$ of any $v\in\Min(\mathcal{B}, p_{U',\varpi})$ is bounded above by $\vert\cdot\vert_{\spc,\varpi}$. However, this immediately follows from the observation that both $\vert\cdot\vert_{\spc,\varpi}$ and $p_{U',\varpi}$ have the same restriction $p_{U,\varpi}=\vert\cdot\vert_{\spc,\varpi}$ to $\mathcal{A}$.\end{proof}
Recall from \cite{Atiyah-MacDonald}, Proposition 5.15, that for any integral extension of domains $A\hookrightarrow B$ with $A$ a normal domain the coefficients of the minimal polynomial (over the field of fractions of $A$) of any $f\in B$ lie in $A$. 
\begin{lemma}\label{Spectral seminorms and the minimal polynomial}Let $A\hookrightarrow B$ be an integral extension of domains with $A$ normal. Let $\varpi\in A$ be a non-zero non-unit in both $A$ and $B$ such that $B$ is integrally closed in $\mathcal{B}$ (since $A$ is normal, it is automatically integrally closed in $\mathcal{A}=A[\varpi^{-1}]$). If $f\in \mathcal{B}$ satisfies $\vert f\vert_{\spc,\varpi}\leq1$, then the coefficients $a_{1},\dots, a_{n}$ of the minimal polynomial of $f$ over the fraction field of $A$ satisfy $\vert a_{i}\vert_{\spc,\varpi}\leq1$ for all $i=1,\dots, n$.\end{lemma}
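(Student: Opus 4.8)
The plan is to derive the lemma from Proposition \ref{Spectral seminorms on integral extensions}, by showing that among all equations of integral dependence of $f$ over $\mathcal{A}$ the minimal polynomial $m_{f}$ is, in a suitable sense, the most economical one. First I would record that the coefficients $a_{1},\dots,a_{n}$ actually lie in $\mathcal{A}$, so that $\vert a_{i}\vert_{\spc,\varpi}$ is defined: the ring $\mathcal{A}=A[\varpi^{-1}]$ is a localization of the normal domain $A$, hence itself a normal domain with $\Frac(\mathcal{A})=\Frac(A)$, and $f\in\mathcal{B}$ is integral over $\mathcal{A}$ since integrality is preserved by localization. Thus \cite{Atiyah-MacDonald}, Proposition 5.15, applied to the integral extension $\mathcal{A}\hookrightarrow\mathcal{B}$, gives $m_{f}(X)=X^{n}+a_{1}X^{n-1}+\dots+a_{n}\in\mathcal{A}[X]$. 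By Lemma \ref{Spectral isometries} the restriction of $\vert\cdot\vert_{\spc,\varpi}$ to $\mathcal{A}$ is the spectral seminorm of $\mathcal{A}$, so $\vert a\vert_{\spc,\varpi}=\sup_{v\in\mathcal{M}(\mathcal{A})}v(a)$ for every $a\in\mathcal{A}$ by \cite{Berkovich}, Theorem 1.3.1.

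For a monic polynomial $g(X)=X^{d}+b_{1}X^{d-1}+\dots+b_{d}$ with coefficients in $\mathcal{A}$, set $N(g)=\max_{1\le i\le d}\vert b_{i}\vert_{\spc,\varpi}^{1/i}$; since positive powers are monotone and a finite maximum commutes with a supremum, $N(g)=\sup_{v\in\mathcal{M}(\mathcal{A})}\max_{i}v(b_{i})^{1/i}$. The heart of the argument will be the inequality $N(m_{f})\le N(p)$ for every equation of integral dependence $p\in\mathcal{A}[X]$ satisfied by $f$. Granting this, Proposition \ref{Spectral seminorms on integral extensions} gives $N(m_{f})\le\inf_{p}N(p)=\vert f\vert_{\spc,\varpi}\le 1$, and therefore $\vert a_{i}\vert_{\spc,\varpi}\le N(m_{f})^{i}\le 1$ for all $i$, which is the assertion.

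To prove $N(m_{f})\le N(p)$ I would argue as follows. Since $m_{f}$ is monic and divides $p$ in $\Frac(\mathcal{A})[X]$, performing the Euclidean division of $p$ by $m_{f}$ over the ring $\mathcal{A}$ and comparing with the (zero-remainder) division over $\Frac(\mathcal{A})$ shows that $p=m_{f}q$ already in $\mathcal{A}[X]$. Now fix $v\in\mathcal{M}(\mathcal{A})$, reduce modulo $\ker v$, pass to the fraction field, complete to obtain the complete rank-one non-archimedean valued field $\mathcal{H}(v)$, and extend the absolute value uniquely to an algebraic closure $\overline{\mathcal{H}(v)}$. The identity $p=m_{f}q$ is respected by the resulting ring homomorphism $\mathcal{A}[X]\to\mathcal{H}(v)[X]$, so the multiset of roots of the image of $m_{f}$ in $\overline{\mathcal{H}(v)}$ is contained in that of the image of $p$. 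A standard Newton-polygon computation then identifies $\max_{i}v(b_{i})^{1/i}$ with the largest absolute value of a root of the image of $g$ in $\overline{\mathcal{H}(v)}$ (if $r$ is that maximum, attained by exactly $k$ roots, then the $k$-th elementary symmetric function of the roots has absolute value exactly $r^{k}$, while the $i$-th elementary symmetric function has absolute value at most $r^{i}$ by ultrametricity). Hence $\max_{i}v(a_{i})^{1/i}\le\max_{i}v(c_{i})^{1/i}$, where the $c_{i}$ are the coefficients of $p$, and taking the supremum over $v\in\mathcal{M}(\mathcal{A})$ yields $N(m_{f})\le N(p)$.

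The localization--completion--base change bookkeeping in the last step is routine. I expect the only genuinely delicate points to be the Newton-polygon identity relating $\max_{i}v(b_{i})^{1/i}$ to the root sizes, and the observation that the cofactor $q=p/m_{f}$ has coefficients in $\mathcal{A}$, which is precisely what allows the divisibility $m_{f}\mid p$ to descend to the residue field at each point of $\mathcal{M}(\mathcal{A})$; beyond these, the statement follows formally from Proposition \ref{Spectral seminorms on integral extensions} and \cite{Berkovich}, Theorem 1.3.1.
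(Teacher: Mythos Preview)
Your argument is correct and takes a genuinely different route from the paper's. The paper first treats the strict case $\vert f\vert_{\spc,\varpi}<1$ by observing that Proposition~\ref{Spectral seminorms on integral extensions} yields an integral equation for $f$ with coefficients in $\mathcal{A}^{\circ\circ}$, whence the minimal polynomial has coefficients in the normal ring $\mathcal{A}^{\circ}$; it then reduces the boundary case $\vert f\vert_{\spc,\varpi}=1$ to the strict one by a scaling trick, multiplying $f$ by a seminorm-multiplicative element $\alpha$ with $\vert\alpha\vert_{\spc,\varpi}$ just below $1$, adjoining such an $\alpha$ via a Gauss norm on $\mathcal{A}[X]$ when none is available in $\mathcal{A}$. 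Your approach instead proves directly, pointwise on $\mathcal{M}(\mathcal{A})$, that the functional $N(g)=\max_{i}\vert b_{i}\vert_{\spc,\varpi}^{1/i}$ is monotone under monic divisibility in $\mathcal{A}[X]$, by identifying $\max_{i}v(b_{i})^{1/i}$ with the largest root of $g$ in $\overline{\mathcal{H}(v)}$ via the Newton polygon. This is more conceptual and in fact yields $N(m_{f})=\vert f\vert_{\spc,\varpi}$ at once, i.e., it essentially proves Proposition~\ref{Spectral seminorms and the minimal polynomial 2} in a single stroke without the intermediate Lemma~\ref{Spectral seminorms and the minimal polynomial} and without the Gauss-norm adjunction. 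The price you pay is the excursion through completions of residue fields and their algebraic closures, together with the Newton-polygon identity; the paper's proof stays entirely within elementary manipulations of seminorms and integral equations.
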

\begin{proof}We follow the first paragraphs of the proof of \cite{Guennebaud}, Ch.~II, Proposition 3. If $f\in\mathcal{B}$ satisfies $\vert f\vert_{\spc,\varpi}<1$, then by Proposition \ref{Spectral seminorms on integral extensions} there exists an equation of integral dependence \begin{equation*}f^{m}+b_{1}f^{m-1}+\dots+b_{m-1}f+b_{m}=0\end{equation*}with $b_{i}\in \mathcal{A}$ and $\vert b_{i}\vert_{\spc,\varpi}<1$ for all $i$. In particular, $f$ belongs to the integral closure $\mathcal{B}^{+}$ of $\mathcal{A}^{\circ}$ inside $\mathcal{B}$, so the coefficients of the minimal polynomial of $f$ over the field of fractions of $\mathcal{A}$ belong to the normal domain $\mathcal{A}^{\circ}$. 

It remains to prove the assertion for $f\in\mathcal{B}$ with $\vert f\vert_{\spc,\varpi}=1$. Suppose that there exists an index $i_{0}\in\{1,\dots, n\}$ with $\vert a_{i_{0}}\vert_{\spc,\varpi}>1$. First assume that there exists an element $\alpha\in \mathcal{A}$ such that $\alpha$ is multiplicative with respect to $\vert\cdot\vert_{\spc,\varpi}$ and \begin{equation*}1>\vert\alpha\vert_{\spc,\varpi}>\vert a_{i_{0}}\vert_{\spc,\varpi}^{-1/i_{0}}.\end{equation*}Then, on one hand, we have $\vert \alpha f\vert_{\spc,\varpi}=\vert\alpha\vert_{\spc,\varpi}\vert f\vert_{\spc,\varpi}<1$ and, on the other hand, $\alpha^{i_{0}}a_{i_{0}}$ is a coefficient of the minimal polynomial of $\alpha f$ with $\vert \alpha^{i_{0}}a_{i_{0}}\vert_{\spc,\varpi}>1$, which contradicts what we proved in the last paragraph.

In the general case, we can adjoin an element $\alpha$ with the above properties by the following procedure. For any real number $r>0$ we can define a seminormed $\mathcal{A}$-algebra \begin{equation*}\mathcal{A}[r^{-1}X]=(\mathcal{A}[X], \vert\cdot\vert_{r})\end{equation*}by letting $\vert\cdot\vert_{r}$ be the $r$-Gauss norm \begin{equation*}\vert\sum_{j=0}^{n}c_{j}X^{j}\vert_{r}=\max_{0\leq j\leq n}\vert c_{j}\vert_{\spc,\varpi}r^{j},\end{equation*}and similarly for $(\mathcal{B}, \vert\cdot\vert_{\spc,\varpi})$. By construction, the maps $\mathcal{A}\to \mathcal{A}[r^{-1}X]$ and $\mathcal{B}\to \mathcal{B}[r^{-1}X]$ are isometries (when their sources are endowed with their respective spectral seminorms) and $X$ is a seminorm-multiplicative element in $\mathcal{A}[r^{-1}X]$ and in $\mathcal{B}[r^{-1}X]$ with seminorm $\vert X\vert_{r}=r$. Since $A$ is a normal domain, so is $A[X]$. Since $B$ is a domain integrally closed in $\mathcal{B}$, the polynomial ring $B[X]$ is also a domain integrally closed in $\mathcal{B}[X]$. Moreover, $A[X]\hookrightarrow B[X]$ is an integral extension and for $f\in \mathcal{B}$ the minimal polynomial of $f$ over the field of fractions $\Frac(A)$ of $A$ is equal to the minimal polynomial of $f$, viewed as an element of $\mathcal{B}[X]$, over the field of fractions of $A[X]$. It follows that, to prove the inequality $\vert a_{i}\vert_{\spc,\varpi}\leq1$ for the coefficients of the minimal polynomial of some $f\in \mathcal{B}$, it suffices to prove the analogous assertion for the minimal polynomial of $f$ regarded as an element of $\mathcal{B}[r^{-1}X]$ for some choice of $r>0$. Choosing $r$ with $1>r>\vert a_{i_{0}}\vert_{\spc,\varpi}^{-1/i_{0}}$, we can thus apply the argument in the preceding paragraph to conclude.\end{proof}   
\begin{prop}[Analog of \cite{Guennebaud}, Ch.~II. Proposition 3]\label{Spectral seminorms and the minimal polynomial 2}Let $A\hookrightarrow B$ be an integral extension of integral domains and suppose that $A$ is a normal domain. Set $\mathcal{A}=A[\varpi^{-1}]$, $\mathcal{B}=B[\varpi^{-1}]$. Let $\varpi\in A$ be a non-zero non-unit in both $A$ and $B$ and suppose that $B$ is integrally closed in $\mathcal{B}$. Then for every $f\in \mathcal{B}$ the spectral seminorm of $\vert f\vert_{\spc,\varpi}$ can be described as \begin{equation*}\vert f\vert_{\spc,\varpi}=\max_{1\leq i\leq n}\vert a_{i}\vert_{\spc,\varpi}^{1/i},\end{equation*}where $X^{n}+a_{1}X^{n-1}+\dots+a_{n-1}X+a_{n}$ is the minimal polynomial of $f$ over the fraction field of $A$. \end{prop}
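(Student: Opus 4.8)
The plan is to combine the two inequalities $\vert f\vert_{\spc,\varpi}\leq \nu(f)$ (already contained in Proposition \ref{Spectral seminorms on integral extensions}) with a sharp lower bound coming from the minimal polynomial. Write $P(X)=X^{n}+a_{1}X^{n-1}+\dots+a_{n}$ for the minimal polynomial of $f$ over $\Frac(A)$; since $A$ is normal, the coefficients $a_{i}$ lie in $A[\varpi^{-1}]=\mathcal{A}$ (cf.\ \cite{Atiyah-MacDonald}, Proposition 5.15, together with the fact that $f\in\mathcal{B}$ is integral over $\mathcal{A}$). Thus $P$ is one of the equations of integral dependence over which the infimum in Proposition \ref{Spectral seminorms on integral extensions} is taken, and this immediately gives
\begin{equation*}\vert f\vert_{\spc,\varpi}\leq \max_{1\leq i\leq n}\vert a_{i}\vert_{\spc,\varpi}^{1/i}.\end{equation*}
So the whole content is the reverse inequality $\vert f\vert_{\spc,\varpi}\geq \max_{1\leq i\leq n}\vert a_{i}\vert_{\spc,\varpi}^{1/i}$.

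For the reverse inequality I would first reduce to the normalized case. If $\vert f\vert_{\spc,\varpi}=0$ then $f$ is topologically nilpotent, hence integral over $\mathcal{A}^{\circ\circ}$; by the argument in the first paragraph of the proof of Lemma \ref{Spectral seminorms and the minimal polynomial} (using Proposition \ref{Spectral seminorms on integral extensions} to produce an equation of integral dependence with coefficients of spectral seminorm $<1$, then Lemma \ref{Integral extensions and additive subgroups} which identifies $U'=\sqrt{(\varpi)_{\mathcal{B}^{+}}}$) one sees every $\vert a_{i}\vert_{\spc,\varpi}<1$, and in fact one can squeeze them to be arbitrarily small — I would make this precise by replacing $f$ with $\varpi^{m}f$ for large $m$ and using that the minimal polynomial of $\varpi^{m}f$ has coefficients $\varpi^{mi}a_{i}$, so $\vert\varpi^{mi}a_{i}\vert_{\spc,\varpi}=2^{-mi}\vert a_{i}\vert_{\spc,\varpi}\to 0$; this forces $\vert a_i\vert_{\spc,\varpi}=0$ for all $i$ in the limit, handling the case $\vert f\vert_{\spc,\varpi}=0$. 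For $\vert f\vert_{\spc,\varpi}>0$, scaling by a suitable power $\varpi^{m}$ (which multiplies both sides of the claimed equality by $2^{-m}$, since $\vert\varpi^{m}f\vert_{\spc,\varpi}=2^{-m}\vert f\vert_{\spc,\varpi}$ and the minimal polynomial transforms as above) and possibly passing to the Gauss-norm extension $\mathcal{A}[r^{-1}X]\hookrightarrow\mathcal{B}[r^{-1}X]$ exactly as in the last paragraph of the proof of Lemma \ref{Spectral seminorms and the minimal polynomial} — which is an isometric integral extension of normal domains, under which the minimal polynomial is unchanged — one can assume there is a seminorm-multiplicative element available to rescale $f$ so that $\vert f\vert_{\spc,\varpi}\leq 1$, which is precisely the hypothesis of Lemma \ref{Spectral seminorms and the minimal polynomial}. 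That lemma then yields $\vert a_{i}\vert_{\spc,\varpi}\leq 1$ for all $i$, i.e.\ $\max_i\vert a_i\vert_{\spc,\varpi}^{1/i}\leq 1=\vert f\vert_{\spc,\varpi}$ in the rescaled situation; undoing the scaling gives $\max_i\vert a_i\vert_{\spc,\varpi}^{1/i}\leq\vert f\vert_{\spc,\varpi}$ in general.

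Putting the two inequalities together gives the asserted equality $\vert f\vert_{\spc,\varpi}=\max_{1\leq i\leq n}\vert a_{i}\vert_{\spc,\varpi}^{1/i}$. The main obstacle — and the reason the normality hypothesis on $A$ is needed rather than just integral-closedness in $\mathcal{A}$ — is that without the minimal polynomial having coefficients in $\mathcal{A}$ one only controls \emph{some} integral equation (the infimum in Proposition \ref{Spectral seminorms on integral extensions}), not the canonical one; normality is what lets us single out $P(X)$, and it is also what makes the Gauss-norm trick work, since $A[X]$ is normal when $A$ is. A secondary technical point to be careful about is that the reduction via $\varpi^{m}$ and the Gauss-norm extension must be arranged so that the element used to rescale $f$ down to the unit ball is genuinely seminorm-multiplicative for $\vert\cdot\vert_{\spc,\varpi}$ on the enlarged ring — this is guaranteed for $X$ in $\mathcal{B}[r^{-1}X]$ by construction of the Gauss norm — and that the minimal polynomial of $f$ regarded in $\mathcal{B}[r^{-1}X]$ over $\Frac(A[X])$ coincides with $P$, which is the standard fact that $\Frac(A)[X]\to\Frac(A[X])$ does not change minimal polynomials of elements algebraic over $\Frac(A)$.
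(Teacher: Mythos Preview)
Your overall strategy matches the paper's, but the argument for the case $\vert f\vert_{\spc,\varpi}=0$ is broken as written. Replacing $f$ by $\varpi^{m}f$ gives a minimal polynomial with coefficients $\varpi^{mi}a_{i}$, and the observation $\vert\varpi^{mi}a_{i}\vert_{\spc,\varpi}=2^{-mi}\vert a_{i}\vert_{\spc,\varpi}\to 0$ is trivially true for \emph{any} values of $\vert a_{i}\vert_{\spc,\varpi}$; it tells you nothing about the $a_{i}$ themselves. The correct move (and the paper's) is to scale the other way: since $\vert f\vert_{\spc,\varpi}=0$, also $\vert\varpi^{-m}f\vert_{\spc,\varpi}=0\leq 1$ for every $m\geq 1$, so Lemma \ref{Spectral seminorms and the minimal polynomial} applied to $\varpi^{-m}f$ yields $\vert\varpi^{-mi}a_{i}\vert_{\spc,\varpi}\leq 1$, i.e.\ $\vert a_{i}\vert_{\spc,\varpi}\leq 2^{-mi}$ for all $m$, forcing $\vert a_{i}\vert_{\spc,\varpi}=0$.

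A smaller point: in the nonzero case you write ``rescale $f$ so that $\vert f\vert_{\spc,\varpi}\leq 1$'' but then use $\vert f\vert_{\spc,\varpi}=1$ in the next line. You need equality here, and the Gauss-norm trick delivers it: take $r=\vert f\vert_{\spc,\varpi}^{-1}$ so that $\alpha=X$ is seminorm-multiplicative with $\vert\alpha\vert_{\spc,\varpi}=\vert f\vert_{\spc,\varpi}^{-1}$ and $\vert\alpha f\vert_{\spc,\varpi}=1$ exactly. Then Lemma \ref{Spectral seminorms and the minimal polynomial} gives $\max_{i}\vert a_{i}\vert_{\spc,\varpi}^{1/i}\leq 1=\vert f\vert_{\spc,\varpi}$ after rescaling, which is the reverse inequality. (The paper phrases this last step as a contradiction --- if the maximum were strictly less than $1$, Proposition \ref{Spectral seminorms on integral extensions} would force $\vert f\vert_{\spc,\varpi}<1$ --- but your separation into two inequalities is equally valid.)
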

\begin{proof}We continue following the proof of \cite{Guennebaud}, Ch.~II, Proposition 3. Suppose first that $\vert f\vert_{\spc,\varpi}=0$. Then $\vert f\vert_{\spc,\varpi}\leq \vert\varpi^{m}\vert_{\spc,\varpi}$ for all $m\geq1$. In other words, $\vert\varpi^{-m}f\vert_{\spc,\varpi}\leq1$ for all $m\geq1$. For any $m\geq1$, the coefficients of the minimal polynomial of $\varpi^{-m}f$ are $\varpi^{-mi}a_{i}$, so $\vert\varpi^{-mi}a_{i}\vert_{\spc,\varpi}\leq1$ for all $i=1,\dots, n$ and all $m\geq1$, by Lemma \ref{Spectral seminorms and the minimal polynomial}. It follows that $\vert a_{i}\vert_{\spc,\varpi}=0$ for all $i=1,\dots, n$, which proves the proposition for elements $f\in \mathcal{B}$ with $\vert f\vert_{\spc,\varpi}=0$. 

Now assume that $\vert f\vert_{\spc,\varpi}\neq0$. By the same procedure as in the last paragraph of the proof of Lemma \ref{Spectral seminorms and the minimal polynomial}, we may assume that there exists an element $\alpha\in \mathcal{A}$ which is multiplicative with respect to the seminorm $\vert\cdot\vert_{\spc,\varpi}$ and $\vert\alpha\vert_{\spc,\varpi}=\vert f\vert_{\spc,\varpi}^{-1}$. Under this assumption, the assertion of the proposition reduces to the special case when $\vert f\vert_{\spc,\varpi}=1$. In that case, we know that the coefficients $a_{i}$ of the minimal polynomial of $f$ satisfy $\vert a_{i}\vert_{\spc,\varpi}\leq1$, by Lemma \ref{Spectral seminorms and the minimal polynomial}. But if $\max_{1\leq i\leq n}\vert a_{i}\vert_{\spc,\varpi}<1$, then, by Prop.~\ref{Spectral seminorms on integral extensions}, $\vert f\vert_{\spc,\varpi}<1$, a contradiction. It follows that $\max_{1\leq i\leq n}\vert a_{i}\vert_{\spc,\varpi}=1$, as desired.\end{proof}
\begin{prop}\label{Complete integral closure and integral extensions}Let $A\hookrightarrow B$ be an integral extension of integral domains with $A$ normal and let $\varpi\in A$ be a non-zero non-unit. If $B$ is integrally closed in $\mathcal{B}=B[\varpi^{-1}]$ and $A$ is completely integrally in $\mathcal{A}=A[\varpi^{-1}]$, then $B$ is actually completely integrally closed in $\mathcal{B}$.\end{prop}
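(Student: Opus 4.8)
The plan is to show that every $f\in\mathcal{B}$ which is almost integral over $B$ — equivalently, by Lemma \ref{Completely integrally closed}, every power-bounded element of the Tate ring $\mathcal{B}$ — already lies in $B$. Since $B$ is integrally closed in $\mathcal{B}$ by hypothesis, it suffices to prove that any power-bounded $f\in\mathcal{B}$ satisfies an equation of integral dependence over $B$; in fact it is enough to show that $f$ is integral over $A$ with coefficients that are themselves power-bounded, because then those coefficients lie in the normal domain $A$'s complete integral closure, which equals $A$ by hypothesis, and hence the integral equation has coefficients in $A\subseteq B$. The key input making this work is the description of the spectral seminorm on the integral extension $\mathcal{A}\hookrightarrow\mathcal{B}$ via minimal polynomials, namely Proposition \ref{Spectral seminorms and the minimal polynomial 2}, which applies precisely because $A$ is normal and $B$ is integrally closed in $\mathcal{B}$.

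First I would recall from Remark \ref{Andre's example} (or directly from the definition) that the closed unit ball of a power-multiplicative seminorm is completely integrally closed, and that $f\in\mathcal{B}$ is power-bounded if and only if $\vert f\vert_{\spc,\varpi}\le 1$ after rescaling — more precisely, for $f\in\mathcal{B}$ there is an integer $m$ with $\varpi^{m}f^{n}\in B$ for all $n\ge 0$, i.e.\ $\vert\varpi^{m}f\vert_{\spc,\varpi}\le 1$ (here I use that, since $B$ is integrally closed in $\mathcal{B}$, Lemma \ref{Almost power-bounded} gives that $\mathcal{B}$ is uniform, so $\mathcal{B}^{\circ}$ is the closed unit ball of $\vert\cdot\vert_{\spc,\varpi}$ by \cite{Dine22}, Lemma 2.24, and $\mathcal{B}^{\circ}$ is also the complete integral closure of $B$ in $\mathcal{B}$ by Lemma \ref{Completely integrally closed}). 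Thus reducing to the case $\vert f\vert_{\spc,\varpi}\le 1$, I let $X^{n}+a_{1}X^{n-1}+\dots+a_{n}$ be the minimal polynomial of $f$ over $\Frac(A)$; since $A$ is normal, \cite{Atiyah-MacDonald}, Proposition 5.15, gives $a_{i}\in A$. By Proposition \ref{Spectral seminorms and the minimal polynomial 2} we have $\vert a_{i}\vert_{\spc,\varpi}=\vert a_{i}\vert_{\spc,\varpi}\le \vert f\vert_{\spc,\varpi}^{i}\le 1$ for each $i$ (more precisely $\max_{i}\vert a_{i}\vert_{\spc,\varpi}^{1/i}=\vert f\vert_{\spc,\varpi}\le 1$, hence each $\vert a_{i}\vert_{\spc,\varpi}\le 1$).

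Next I would carry out the reduction to the power-bounded case: for an arbitrary power-bounded $f\in\mathcal{B}$, write $\vert\varpi^{m}f\vert_{\spc,\varpi}\le 1$ for suitable $m\ge 0$. The minimal polynomial of $\varpi^{m}f$ over $\Frac(A)$ is $X^{n}+\varpi^{m}a_{1}X^{n-1}+\dots+\varpi^{mn}a_{n}$, so by the previous paragraph $\vert\varpi^{mi}a_{i}\vert_{\spc,\varpi}\le 1$ for all $i$, i.e.\ $\varpi^{mi}a_{i}\in A$ lies in the closed unit ball of $\vert\cdot\vert_{\spc,\varpi}$ on $\mathcal{A}$. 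Now the coefficients $a_{i}$ themselves: since $\vert\varpi^{mi}a_{i}\vert_{\spc,\varpi}\le 1$, the element $a_{i}\in\mathcal{A}$ satisfies $\vert\varpi^{mi}a_{i}^{k}\vert_{\spc,\varpi}\le 1$ for all $k\ge 0$ when $\vert a_{i}\vert_{\spc,\varpi}\le 1$; but a priori $\vert a_{i}\vert_{\spc,\varpi}$ could exceed $1$. Here is where I must be careful — this is the step I expect to be the main obstacle. The cleanest route is: $f$ power-bounded means $A[f]$ lies in a $\varpi$-fractional ideal $J$ of $B$, hence of the subring $C=$ integral closure of $A$ in $\mathcal{A}$ adjoin $f$... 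Actually the right move is to invoke Lemma \ref{Complete integral closure and integral extensions}-style bookkeeping differently: let $g=\varpi^{m}f$, so $g$ lies in the closed unit ball $\mathcal{B}_{\vert\cdot\vert_{\spc,\varpi}\le 1}=\mathcal{B}^{\circ}$, which is integral over $\mathcal{A}^{\circ}$ by Proposition \ref{Spectral seminorms on integral extensions} (applied to $A$, $B$, both integrally closed in their Tate rings — for $A$ this is the hypothesis that $A$ is completely integrally closed, which a fortiori makes it integrally closed). Thus $g$ satisfies an equation of integral dependence over $\mathcal{A}^{\circ}$, and since $\mathcal{A}^{\circ}$ is the complete integral closure of $A$ in $\mathcal{A}$, which by hypothesis equals $A$, we get $g$ integral over $A$, hence $g\in B$ as $B$ is integrally closed in $\mathcal{B}$; but we want $f\in\mathcal{B}^{\circ}$ itself. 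Since $\mathcal{B}^{\circ}$ is already the closed unit ball and contains every power-bounded element, and we want to conclude $\mathcal{B}^{\circ}=B$: the point is that $\mathcal{A}^{\circ}=A$ forces, via Proposition \ref{Spectral seminorms on integral extensions}, that every $f\in\mathcal{B}^{\circ}$ is integral over $A$ — indeed Proposition \ref{Spectral seminorms on integral extensions} with $\vert f\vert_{\spc,\varpi}\le 1$ produces an integral equation over $\mathcal{A}$ with all coefficients of spectral seminorm $\le 1$, i.e.\ with coefficients in $\mathcal{A}^{\circ}=A$, whence $f$ is integral over $A\subseteq B$ and therefore $f\in B$ since $B$ is integrally closed in $\mathcal{B}$. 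So the genuine obstacle is only the careful invocation of Proposition \ref{Spectral seminorms on integral extensions} to produce an integral equation over $\mathcal{A}$ whose coefficients have spectral seminorm at most $1$ (rather than over $B$), and then the identification $\mathcal{A}^{\circ}=A$ closes the loop. I would spell out that $\mathcal{A}^{\circ}=A$ is exactly Lemma \ref{Completely integrally closed} given the hypothesis that $A$ is completely integrally closed in $\mathcal{A}$. This completes the argument: $\mathcal{B}^{\circ}\subseteq B$, and the reverse inclusion is trivial, so $\mathcal{B}^{\circ}=B$, i.e.\ $B$ is completely integrally closed in $\mathcal{B}$.
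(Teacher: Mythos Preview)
Your proposal is correct and follows essentially the same approach as the paper: use uniformity of $\mathcal{B}$ (via Lemma \ref{Almost power-bounded}) and \cite{Dine22}, Lemma 2.24, to identify $\mathcal{B}^{\circ}$ with the closed unit ball, then apply the minimal-polynomial bound (Lemma \ref{Spectral seminorms and the minimal polynomial} or Proposition \ref{Spectral seminorms and the minimal polynomial 2}) to see that any $f$ with $\vert f\vert_{\spc,\varpi}\le 1$ has minimal-polynomial coefficients in $\mathcal{A}_{\vert\cdot\vert_{\spc,\varpi}\le 1}=A$, whence $f$ is integral over $A\subseteq B$ and so $f\in B$. Two minor cleanups: the entire middle paragraph about $g=\varpi^{m}f$ is superfluous once you have identified $\mathcal{B}^{\circ}$ with the closed unit ball (so ``power-bounded'' already means $\vert f\vert_{\spc,\varpi}\le 1$), and in your closing sentence you want Lemma \ref{Spectral seminorms and the minimal polynomial} or Proposition \ref{Spectral seminorms and the minimal polynomial 2} rather than Proposition \ref{Spectral seminorms on integral extensions}, since the latter only gives an infimum over integral equations and does not by itself produce one with coefficients of seminorm $\le 1$.
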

\begin{proof}Since $A$, $B$ are integrally closed in $\mathcal{A}$, $\mathcal{B}$, Lemma \ref{Almost power-bounded} tells us that the Tate rings $\mathcal{A}$, $\mathcal{B}$ are uniform. By \cite{Dine22}, Lemma 2.24, we then know that $A$ (respectively, $B$) is completely integrally closed in $\mathcal{A}$ (respectively, in $\mathcal{B}$) if and only if it is equal to the closed unit ball with respect to $\vert\cdot\vert_{\spc,\varpi}$. We conclude by applying Lemma \ref{Spectral seminorms and the minimal polynomial} or Proposition \ref{Spectral seminorms and the minimal polynomial 2}. \end{proof}
\begin{rmk}\label{Krull's result}The above proposition can be thought of as a ``$\varpi$-local" analog of the following classical theorem of Krull (\cite{Krull36}, Satz 11): If $A$ is a completely integrally closed integral domain and $B$ is the integral closure of $A$ in an algebraic extension of its fraction field, then $B$ is completely integrally closed. The author is grateful to Kazuma Shimomoto for bringing this analogy to his attention.\end{rmk} 
We can now prove our main theorem on (completed) integral extensions.
\begin{thm}\label{Strongly Shilov rings and integral extensions}Let $A\hookrightarrow B$ be an integral extension of domains with $A$ a normal domain and let $\varpi\in A$ be a non-zero non-unit in $A$ and in $B$. Set $\mathcal{A}=A[\varpi^{-1}]$ and $\mathcal{B}=B[\varpi^{-1}]$. If $A$ is strongly $\varpi$-Shilov (respectively, strongly $\varpi$-Krull) and $B$ is integrally closed in $\mathcal{B}$, then $B$ is strongly $\varpi$-Shilov (respectively, strongly $\varpi$-Krull). In particular, the $\varpi$-adic completion $\widehat{B}$ of $B$ is strongly $\varpi$-Shilov.\end{thm}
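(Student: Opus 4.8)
The plan is to reduce, by the algebraic criteria developed in Sections~6--8, to an estimate on the spectral seminorm of $\mathcal{B}=B[\varpi^{-1}]$ coming from minimal polynomials over $\mathcal{A}=A[\varpi^{-1}]$, and then to feed in the strong $\varpi$-Shilov property of $A$ through a valuation-extension argument. First I would record two preliminary reductions. Since $A$ is strongly $\varpi$-Shilov it is completely integrally closed in $\mathcal{A}$ by Corollary~\ref{Strongly Shilov implies completely integrally closed}, and as $A$ is normal (hence integrally closed in $\mathcal{A}$) and $B$ is integrally closed in $\mathcal{B}$, Proposition~\ref{Complete integral closure and integral extensions} shows $B$ is completely integrally closed in $\mathcal{B}$; thus $B=\mathcal{B}^{\circ}$ (Lemma~\ref{Completely integrally closed}), and since $\mathcal{B}$ is uniform (Lemma~\ref{Almost power-bounded}), $B$ equals the closed unit ball of $\vert\cdot\vert_{\spc,\varpi}$ by \cite{Dine22}, Lemma~2.24. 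Second, Proposition~\ref{Integral extensions, first result}, in the case of domains, shows $B$ is weakly $\varpi$-Shilov (respectively weakly $\varpi$-Krull). In view of the definitions and Proposition~\ref{Minimal prime ideals and boundaries}, it now suffices to prove the intersection formula $B=\bigcap_{\mathfrak{q}\in\Min_{B}(\varpi)}\widehat{\varphi_{\mathfrak{q}}}^{-1}(\widehat{B_{\mathfrak{q}}})$: it then follows that every weakly associated prime of $\varpi$ in $B$ is minimal over $\varpi$, and together with weak $\varpi$-Shilov (respectively weak $\varpi$-Krull) this yields the strong $\varpi$-Shilov (respectively strong $\varpi$-Krull) property. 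Note each $\widehat{B_{\mathfrak{q}}}$ with $\mathfrak{q}\in\Min_{B}(\varpi)$ is a rank-one valuation ring, so $\widehat{\varphi_{\mathfrak{q}}}^{-1}(\widehat{B_{\mathfrak{q}}})=\{\,g\in\mathcal{B}\mid v_{\mathfrak{q}}(g)\leq1\,\}$ for the associated rank-one valuation $v_{\mathfrak{q}}\in\mathcal{M}(\mathcal{B})$, which is the unique $\spc$-preimage of $\mathfrak{q}$ by Proposition~\ref{Weakly Shilov}.

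For the nontrivial inclusion I would take $g=b/\varpi^{k}\in\mathcal{B}$ with $b\in B$ and $v_{\mathfrak{q}}(b)\leq2^{-k}$ for all $\mathfrak{q}\in\Min_{B}(\varpi)$, and show $g\in B$. Since $\mathcal{A}$ is a localization of the normal domain $A$, it is normal, so the minimal polynomial $X^{m}+c_{1}X^{m-1}+\dots+c_{m}$ of $b$ over $\Frac(\mathcal{A})$ has coefficients $c_{i}\in\mathcal{A}$, and in fact $c_{i}\in A$ since $\vert b\vert_{\spc,\varpi}\leq1$ forces $\vert c_{i}\vert_{\spc,\varpi}\leq1$ by Proposition~\ref{Spectral seminorms and the minimal polynomial 2} and $A=\mathcal{A}_{\vert\cdot\vert_{\spc,\varpi}\leq1}$. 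Applying Proposition~\ref{Spectral seminorms and the minimal polynomial 2} to $g$, whose minimal polynomial has coefficients $c_{i}/\varpi^{ki}$, gives $\vert g\vert_{\spc,\varpi}=2^{k}\max_{i}\vert c_{i}\vert_{\spc,\varpi}^{1/i}$, so $g\in B$ if and only if $\vert c_{i}\vert_{\spc,\varpi}\leq2^{-ki}$ for all $i$, equivalently $c_{i}\in(\varpi^{ki})_{A}^{\arc_{\varpi}}=(\varpi^{ki})_{A}$ for all $i$ (Proposition~\ref{Arc-closure and the spectral seminorm}, Corollary~\ref{Arc-closure of ideals and rings 2}). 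Because $A$ is strongly $\varpi$-Shilov, its weakly associated primes of $\varpi$ are minimal over $\varpi$ (Lemma~\ref{Weakly associated vs. minimal}), so Proposition~\ref{Weakly associated prime ideals and boundaries} gives $(\varpi^{ki})_{A}=\bigcap_{\mathfrak{r}\in\Min_{A}(\varpi)}\{\,h\in\mathcal{A}\mid w_{\mathfrak{r}}(h)\leq2^{-ki}\,\}$, where $w_{\mathfrak{r}}$ is the rank-one valuation attached to $\widehat{A_{\mathfrak{r}}}$. Hence the problem reduces to showing $w_{\mathfrak{r}}(c_{i})\leq2^{-ki}$ for every $\mathfrak{r}\in\Min_{A}(\varpi)$ and every $i$.

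Fix $\mathfrak{r}$ and write $w=w_{\mathfrak{r}}$. The heart of the matter is to extend $w$ to a valuation $v\in\spc^{-1}(\Min_{B}(\varpi))\subseteq\mathcal{M}(\mathcal{B})$ with $v(b)=\max_{i}w(c_{i})^{1/i}$. Since $\mathcal{A}$ is integrally closed in $\Frac(\mathcal{A})$ and the minimal polynomial $P$ of $b$ is monic, $\mathcal{A}[b]\cong\mathcal{A}[X]/(P)$. Let $K_{w}=\Frac(\widehat{A_{\mathfrak{r}}})$ be the complete rank-one valued field receiving $\mathcal{A}$ and carrying $w$; by Newton-polygon theory the maximal valuation of a root $\rho$ of $P$ over $\overline{K_{w}}$ equals $\max_{i}w(c_{i})^{1/i}$, and the composite $\mathcal{A}[b]\to\mathcal{A}[b]\otimes_{\mathcal{A}}K_{w}=K_{w}[X]/(P)\twoheadrightarrow K_{w}(\rho)$ sending $b\mapsto\rho$ pulls the unique valuation of $K_{w}(\rho)$ back to a rank-one valuation $v_{0}$ on $\mathcal{A}[b]$ with $v_{0}\vert_{\mathcal{A}}=w$ and $v_{0}(b)=\max_{i}w(c_{i})^{1/i}$. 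By Chevalley's extension theorem $v_{0}$ extends along the integral map $\mathcal{A}[b]\hookrightarrow\mathcal{B}$ to a rank-one valuation $v$ with $v\vert_{\mathcal{A}[b]}=v_{0}$; it is $\varpi$-normalized, and bounded with respect to $\vert\cdot\vert_{\spc,\varpi}$ by Lemma~\ref{Integral extensions and the Berkovich spectrum}, so $v\in\mathcal{M}(\mathcal{B})$. Its center $\spc(v)$ is an open prime of $B$ containing $\varpi$ and contracting to $\spc(w)=\mathfrak{r}\in\Min_{A}(\varpi)$; by incomparability for the integral extension $A\hookrightarrow B$ together with minimality of $\mathfrak{r}$, such a prime is minimal over $\varpi$ in $B$, whence $v=v_{\spc(v)}$ by weak $\varpi$-Shilov. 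Therefore $\max_{i}w(c_{i})^{1/i}=v(b)\leq2^{-k}$, i.e.\ $w(c_{i})\leq2^{-ki}$ for all $i$, which finishes the proof that $g\in B$ and hence that $B$ is strongly $\varpi$-Shilov; since we have shown $\wAss_{B}(\varpi)=\Min_{B}(\varpi)$, weak $\varpi$-Krull for $B$ upgrades to strong $\varpi$-Krull. Finally $\widehat{B}$ is strongly $\varpi$-Shilov by Lemma~\ref{Strongly Krull rings and completion}.

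The main obstacle is exactly this last valuation-extension step with precise control of $v(b)$: one is forced to work over the complete residue field $K_{w}$ of $w$ (since $w$ may have non-trivial kernel on $\mathcal{A}$, so $\Frac(\mathcal{A})$ is not available), produce there a root of the minimal polynomial of maximal valuation, and transport the resulting valuation back through the integral extension to $\mathcal{B}$ while keeping it of rank one and bounded; once this is in place, everything else is bookkeeping with the results of Sections~3--8 and standard valuation theory. (The $\varpi$-Krull statement could alternatively be deduced by applying Proposition~\ref{Integral extensions and valuation rings of rank one, first result}(3) to each $A_{\mathfrak{r}}\hookrightarrow B_{\mathfrak{q}}$ with $\mathfrak{r}=\mathfrak{q}\cap A$.)
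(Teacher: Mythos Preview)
Your proof is correct and follows essentially the same architecture as the paper's: the preliminary reductions ($B$ completely integrally closed via Proposition~\ref{Complete integral closure and integral extensions}, $B$ weakly $\varpi$-Shilov/Krull via Proposition~\ref{Integral extensions, first result}) are identical, and both proofs then reduce to showing that $\spc^{-1}(\Min_{B}(\varpi))$ is a boundary for $\mathcal{B}$ (you phrase this as the intersection formula and invoke Proposition~\ref{Minimal prime ideals and boundaries}, the paper invokes Theorem~\ref{Strongly Shilov}; these are equivalent via Theorem~\ref{Boundaries and arc-closure}). Both proofs then use Proposition~\ref{Spectral seminorms and the minimal polynomial 2} to express $\vert f\vert_{\spc,\varpi}$ in terms of the coefficients of the minimal polynomial over $\Frac(A)$.

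The only real divergence is in the valuation-extension step. You build the extension of $w_{\mathfrak{r}}$ to $\mathcal{B}$ by hand: pass to the complete field $K_{w}$, pick a root of maximal valuation via the Newton polygon, and then extend from $\mathcal{A}[b]$ to $\mathcal{B}$ by a going-up/Chevalley argument (which, as you note, requires a word about why the extension stays rank one). The paper avoids this detour: it simply chooses any prime $\mathfrak{q}$ of $B$ over $\mathfrak{p}=\spc(w_{\mathfrak{r}})$, observes via Proposition~\ref{Integral extensions and valuation rings of rank one, first result}(2) that $\widehat{B_{\mathfrak{q}}}$ is a rank-one valuation ring (so $\mathfrak{q}$ is automatically minimal over $\varpi$), and then \emph{re-applies} Proposition~\ref{Spectral seminorms and the minimal polynomial 2} to the localized integral extension $A_{\mathfrak{p}}\hookrightarrow B_{\mathfrak{q}}$. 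Since the spectral seminorms on $A_{\mathfrak{p}}[\varpi^{-1}]$ and $B_{\mathfrak{q}}[\varpi^{-1}]$ are the singleton valuations $v_{\mathfrak{p}}=w_{\mathfrak{r}}$ and $w_{\mathfrak{q}}$, this local application gives $w_{\mathfrak{q}}(f)=\max_{i}v_{\mathfrak{p}}(a_{i})^{1/i}$ directly, which is exactly your Newton-polygon identity. The paper's route is slightly more economical and self-contained (no external valuation-extension lemma), while yours makes the mechanism more explicit; the mathematical content is the same.
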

\begin{proof}By Corollary \ref{Strongly Shilov implies completely integrally closed}, the assumption that $A$ is strongly $\varpi$-Shilov implies that $A$ is completely integrally closed in $\mathcal{A}$. By Proposition \ref{Complete integral closure and integral extensions}, this implies that $B$ is completely integrally closed in $\mathcal{B}$. By Proposition \ref{Integral extensions, first result}, $B$ is weakly $\varpi$-Shilov. By Proposition \ref{Weakly Shilov} and Theorem \ref{Strongly Shilov}, to prove that $B$ is strongly $\varpi$-Shilov, it only remains to prove that the set of pre-images under $\spc$ of generic points of $\Spf(B)$ is a boundary for $\mathcal{B}$. Let $\mathcal{S}=\spc^{-1}(\Spf(B)_{\gen})$ and $f\in \mathcal{B}$. We want to prove that $\vert f\vert_{\spc,\varpi}=\max_{w\in\mathcal{S}}w(f)$. Let $a_{1},\dots, a_n$ be the non-leading coefficients of the minimal polynomial of $f$ over the fraction field of $A$. Choose $i_{0}$ such that \begin{equation*}\vert a_{i_{0}}\vert_{\spc,\varpi}^{1/i_{0}}=\max_{1\leq i\leq n}\vert a_{i}\vert_{\spc,\varpi}^{1/i}.\end{equation*}By Proposition \ref{Spectral seminorms and the minimal polynomial 2}, $\vert f\vert_{\spc,\varpi}=\vert a_{i_{0}}\vert_{\spc,\varpi}^{1/i_{0}}$. Since $A$ is strongly $\varpi$-Shilov, Theorem \ref{Strongly Shilov} guarantees the existence of some $v\in\mathcal{M}(\mathcal{A})$ with $\mathfrak{p}=\spc(v)$ a minimal prime ideal over $\varpi$ such that $\vert a_{i_{0}}\vert_{\spc,\varpi}=v(a_{i_{0}})$. 

Since $\widehat{A_{\mathfrak{p}}}$ is a valuation ring of rank $1$, the Berkovich spectrum $\mathcal{M}(A_{\mathfrak{p}}[\varpi^{-1}])$ consists of a single point $v_{\mathfrak{p}}$ (Lemma \ref{Valuative rings and adic spectrum}) and this is precisely the valuation corresponding to $v$. By \cite{Berkovich}, Theorem 1.3.1, this implies \begin{equation*}v_{\mathfrak{p}}=\vert\cdot\vert_{\spc,A_{\mathfrak{p}},\varpi}.\end{equation*}The localization $B_{\mathfrak{p}}=(A\setminus \mathfrak{p})^{-1}B$ is integral over $A_{\mathfrak{p}}$ and, since $B$ is integrally closed in $\mathcal{B}=B[\varpi^{-1}]$, it is integrally closed in $B_{\mathfrak{p}}[\varpi^{-1}]$. By Lemma \ref{Spectral isometries}, the spectral seminorm $\vert\cdot\vert_{\spc,B_{\mathfrak{p}},\varpi}$ on $B_{\mathfrak{p}}[\varpi^{-1}]$ restricts to $v_{\mathfrak{p}}$ on $A_{\mathfrak{p}}[\varpi^{-1}]$. 

If $F$ is the minimal polynomial of $f$ over $\Frac(A)$, then $F$ is also the minimal polynomial of $\frac{f}{1}\in B_{\mathfrak{p}}$ over $\Frac(A)=\Frac(A_{\mathfrak{p}})$. By Proposition \ref{Spectral seminorms and the minimal polynomial 2} applied to the integral extension $A_{\mathfrak{p}}\hookrightarrow B_{\mathfrak{p}}$ and the spectral seminorms $\vert\cdot\vert_{\spc,A_{\mathfrak{p}},\varpi}=v_{\mathfrak{p}}$ and $\vert\cdot\vert_{\spc,B_{\mathfrak{p}},\varpi}$, we see that \begin{equation*}\vert\frac{f}{1}\vert_{\spc,B_{\mathfrak{p}},\varpi}=\max_{1\leq i\leq n}v_{\mathfrak{p}}(\frac{a_{i}}{1})^{1/i}=\max_{1\leq i\leq n}v(a_{i})^{1/i}\geq v(a_{i_{0}})^{1/i_{0}}.\end{equation*}But $v(a_{i_{0}})^{1/i_{0}}=\vert a_{i_{0}}\vert_{\spc,\varpi}^{1/i_{0}}=\vert f\vert_{\spc,\varpi}$, by our choice of $i_{0}$ and $v$. Hence, by applying \cite{Berkovich}, Theorem 1.3.1, to the spectral seminorm $\vert\cdot\vert_{\spc,B_{\mathfrak{p}},\varpi}$ on $B_{\mathfrak{p}}[\varpi^{-1}]$, we see that there exists some $w_{\mathfrak{p}}\in\mathcal{M}(B_{\mathfrak{p}}[\varpi^{-1}])$ with $w_{\mathfrak{p}}(\frac{f}{1})\geq \vert f\vert_{\spc,\varpi}$. Let $w\in\mathcal{M}(\mathcal{B})$ be the restriction of $w_{\mathfrak{p}}$ to $\mathcal{B}=B[\varpi^{-1}]$. Then the center $\mathfrak{q}=\spc(w)$ on $B$ corresponds to a prime ideal of $B_{\mathfrak{p}}$ containing $\varpi$. In particular, $\mathfrak{q}\cap A=\mathfrak{p}$, since $\mathfrak{p}$ is minimal over $\varpi$. If $\mathfrak{q}'$ is a prime ideal of $B$ containing $\varpi$ and contained in $\mathfrak{q}$, then, by the minimality of $\mathfrak{p}$ over $\varpi$, we have $\mathfrak{q}'\cap A=\mathfrak{p}$, and then $\mathfrak{q}'=\mathfrak{q}$ by ``incomparability" (\cite{Cohen-Seidenberg}, Theorem 4). Thus $\mathfrak{q}$ is a minimal prime ideal over $\varpi$, and we have found an element $w\in \spc^{-1}(\Spf(B)_{\gen})$ such that \begin{equation*}w(f)=w_{\mathfrak{p}}(\frac{f}{1})\geq \vert f\vert_{\spc,\varpi},\end{equation*}proving that $\spc^{-1}(\Spf(B)_{\gen})$ is a boundary for $\mathcal{B}$. This proves that $B$ is strongly $\varpi$-Shilov if $B$ is integrally closed in $\mathcal{B}$ and $A$ is strongly $\varpi$-Shilov.

It remains to prove that $B$ is strongly $\varpi$-Krull if $B$ is integrally closed in $\mathcal{B}$ and $A$ is strongly $\varpi$-Krull. By what we have proved above, we already know that $B$ is strongly $\varpi$-Shilov. But $B$ is also weakly $\varpi$-Krull, by Proposition \ref{Integral extensions, first result}, so it must be strongly $\varpi$-Krull.\end{proof}
\begin{cor}\label{Integral extensions}Let \begin{equation*}\mathcal{A}\subseteq\mathcal{B}\end{equation*}be an extension of uniform complete Tate rings such that there exists a dense subring $\mathcal{B}'$ of $\mathcal{B}$ containing $\mathcal{A}$ which is a normal domain and is integral over $\mathcal{A}$. Suppose that the integral closure $\mathcal{B}'^{+}$ of $\mathcal{A}^{\circ}$ inside $\mathcal{B}'$ is an open subring of $\mathcal{B}'$. If $\mathcal{A}$ is an affinoid algebra in the sense of Tate over some nonarchimedean field, then $\mathcal{B}$ satisfies Berkovich's description of the Shilov boundary in the strong sense.\end{cor}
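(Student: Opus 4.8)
The plan is to reduce the statement to Theorem \ref{Strongly Shilov rings and integral extensions} combined with Noether normalization for affinoid algebras. First I would fix a pseudo-uniformizer $\varpi$ of $\mathcal{A}$ and note that it is also a topologically nilpotent unit of $\mathcal{B}$ (and of $\mathcal{B}'$), so $\varpi\in\mathcal{A}^{\circ}$ is a non-zero-divisor and non-unit in $\mathcal{A}^{\circ}$, in $\mathcal{B}'^{+}$ and in $\mathcal{B}^{\circ}$. By Noether normalization, $\mathcal{A}$ admits a finite injection $K\langle T_{1},\dots,T_{d}\rangle\hookrightarrow\mathcal{A}$ from a Tate algebra over the nonarchimedean field $K$. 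The Tate algebra $K\langle T_{1},\dots,T_{d}\rangle$ is a Noetherian normal domain whose ring of power-bounded elements $K^{\circ}\langle T_{1},\dots,T_{d}\rangle$ is Noetherian; in particular, by Corollary \ref{Noetherian implies strongly Krull 2} (applied to $K^{\circ}\langle T\rangle$ which is integrally closed in its localization by normality of the Tate algebra) the ring $K^{\circ}\langle T_{1},\dots,T_{d}\rangle$ is strongly $\varpi$-Krull, hence in particular strongly $\varpi$-Shilov, and $K\langle T\rangle$ satisfies Berkovich's description of the Shilov boundary in the strong sense.

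Next I would pass to the ring of power-bounded elements. Since $\mathcal{A}$ is a uniform Tate ring, $\mathcal{A}^{\circ}$ is the integral closure of (the image of) $K^{\circ}\langle T_{1},\dots,T_{d}\rangle$ inside $\mathcal{A}=\mathcal{A}^{\circ}[\varpi^{-1}]$: indeed $\mathcal{A}^{\circ}$ is integral over $K^{\circ}\langle T\rangle$ by the finiteness of the extension of Tate algebras together with uniformity, and it is integrally closed in $\mathcal{A}$ because $\mathcal{A}$ is uniform (Lemma \ref{Almost power-bounded} shows $\mathcal{A}$ is uniform whenever $\mathcal{A}^{\circ}$ is integrally closed in $\mathcal{A}$; conversely for a uniform Tate ring $\mathcal{A}^{\circ}$ is completely integrally closed in $\mathcal{A}$, cf.\ Remark \ref{Andre's example}, hence a fortiori integrally closed). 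Thus we are in the situation of an integral extension of domains $K^{\circ}\langle T\rangle\hookrightarrow\mathcal{A}^{\circ}$ with $K^{\circ}\langle T\rangle$ normal, $K^{\circ}\langle T\rangle$ strongly $\varpi$-Shilov, and $\mathcal{A}^{\circ}$ integrally closed in $\mathcal{A}^{\circ}[\varpi^{-1}]$. By Theorem \ref{Strongly Shilov rings and integral extensions}, $\mathcal{A}^{\circ}$ is strongly $\varpi$-Shilov. (When $\mathcal{A}$ is not a domain one first notes that a reduced affinoid algebra is a finite product of affinoid domains and applies Lemma \ref{Finite products}; a non-reduced $\mathcal{A}$ cannot be uniform, so this case does not arise.)

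Now I would bootstrap along the chain $\mathcal{A}^{\circ}\hookrightarrow\mathcal{B}'^{+}\hookrightarrow\mathcal{B}'^{+}$. By hypothesis $\mathcal{B}'$ is the integral closure of $\mathcal{A}$ in $\mathcal{B}$, and $\mathcal{B}'^{+}$, the integral closure of $\mathcal{A}^{\circ}$ inside $\mathcal{B}'$, is an open subring of $\mathcal{B}'$; therefore $\mathcal{B}'^{+}[\varpi^{-1}]=\mathcal{B}'$ and the pair $(\mathcal{B}'^{+},\varpi)$ is a pair of definition of the Tate ring $\mathcal{B}'$, which is thus a uniform Tate domain with $(\mathcal{B}')^{\circ}=\mathcal{B}'^{+}$ (the latter because $\mathcal{B}'^{+}$ is integrally closed in $\mathcal{B}'$ by construction, and equals the power-bounded subring by Lemma \ref{Completely integrally closed} once one checks $\mathcal{B}'^{+}$ is completely integrally closed in $\mathcal{B}'$ — which follows from $\mathcal{A}^{\circ}$ being completely integrally closed in $\mathcal{A}$, Proposition \ref{Complete integral closure and integral extensions}, applied to the normal domain $\mathcal{A}^{\circ}$). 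Since $\mathcal{A}^{\circ}$ is strongly $\varpi$-Shilov and $\mathcal{B}'^{+}$ is an integral extension of $\mathcal{A}^{\circ}$ which is integrally closed in $\mathcal{B}'^{+}[\varpi^{-1}]$, Theorem \ref{Strongly Shilov rings and integral extensions} again (now with the normal domain $\mathcal{A}^{\circ}$ in the role of $A$ — normality holds because $\mathcal{A}^{\circ}$ is integrally closed in $\mathcal{A}=\mathrm{Frac}(\mathcal{A}^{\circ})$, as $\mathcal{A}$ is a domain whose localization at $\varpi$ is a field by the affinoid structure) gives that $\mathcal{B}'^{+}$ is strongly $\varpi$-Shilov. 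By Theorem \ref{Strongly Shilov}, $\mathcal{B}'$ satisfies Berkovich's description of the Shilov boundary in the strong sense. Finally, since $\mathcal{B}'$ is dense in $\mathcal{B}$ and both are uniform Tate rings, $\widehat{\mathcal{B}'}=\widehat{\mathcal{B}}=\mathcal{B}$ (a uniform Tate ring that is a completion of a uniform dense subring equals that completion), and Berkovich's description of the Shilov boundary — including the strong form — is preserved under completion as remarked after Corollary \ref{Corollary 2 of Main theorem 1}; hence $\mathcal{B}$ satisfies it in the strong sense.

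The main obstacle I anticipate is the bookkeeping in the middle step: verifying carefully that $\mathcal{A}^{\circ}$ (respectively $\mathcal{B}'^{+}$) is the integral closure of the Tate-algebra ring of definition (respectively of $\mathcal{A}^{\circ}$) inside the corresponding localization, and that it is normal — these are exactly the hypotheses Theorem \ref{Strongly Shilov rings and integral extensions} demands, and they rest on the interplay of uniformity (Lemma \ref{Almost power-bounded}, Lemma \ref{Uniform Tate rings}, Remark \ref{Andre's example}), on $\mathcal{A}$ being a domain with $\mathrm{Frac}(\mathcal{A}^{\circ})=\mathcal{A}$, and on the openness hypothesis on $\mathcal{B}'^{+}$. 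Once these identifications are in place, the result is a two-fold application of Theorem \ref{Strongly Shilov rings and integral extensions} plus stability under completion, with the affinoid base case supplied by Corollary \ref{Noetherian implies strongly Krull 2} and Noether normalization.
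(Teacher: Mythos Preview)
Your overall strategy---reduce to the Tate algebra via Noether normalization, then apply Theorem \ref{Strongly Shilov rings and integral extensions}---matches the paper's. But there are two problems, one minor and one genuine.

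\textbf{Minor.} Your justification that $\mathcal{A}^{\circ}$ is a normal domain is wrong: you claim $\mathcal{A}=\Frac(\mathcal{A}^{\circ})$, but for an affinoid domain such as $K\langle T\rangle$ this is false. What you actually need is to replace $\mathcal{A}$ by $K\langle T_{1},\dots,T_{d}\rangle$ from the start (since $\mathcal{B}'$ is also the integral closure of $K\langle T\rangle$ in $\mathcal{B}$), so that $\mathcal{A}^{\circ}=K^{\circ}\langle T\rangle$ really is a normal domain. This is precisely what the paper does in one line; your two-step bootstrap $K^{\circ}\langle T\rangle\to\mathcal{A}^{\circ}\to\mathcal{B}'^{+}$ is unnecessary and is where the error creeps in.

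\textbf{Genuine gap.} You conclude ``in the strong sense'' directly from Theorem \ref{Strongly Shilov}, but that theorem only yields Berkovich's description of the Shilov boundary, not the strong form: it shows the \emph{closure} of $\spc^{-1}(\Spf(B)_{\gen})$ is the Shilov boundary, not that $\spc^{-1}(\Spf(B)_{\gen})$ is itself closed. Since $\mathcal{B}'^{+}$ may have infinitely many minimal primes over $\varpi$, closedness is not automatic. The paper supplies the missing argument: write $\mathcal{B}'^{+}=\varinjlim_{i}\mathcal{C}_{i}^{+}$ over finite $\mathcal{A}^{\circ}$-subextensions, observe each $\mathcal{C}_{i}=C_{i}[\varpi^{-1}]$ is itself an affinoid algebra with $\spc^{-1}(\Min_{\mathcal{C}_{i}^{+}}(\varpi))$ finite (hence closed) by Berkovich's original result, and then invoke Lemmas \ref{Minimal primes and integral extensions 2} and \ref{Closed subsets of the Berkovich spectrum} to deduce that $\spc^{-1}(\Min_{\mathcal{B}'^{+}}(\varpi))$ is closed in $\mathcal{M}(\mathcal{B}')=\mathcal{M}(\mathcal{B})$. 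Without this step you have proved only the weak form.
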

\begin{proof}Using Noether normalization for affinoid algebras, we may assume $\mathcal{A}$ is a normal domain. Let $\varpi\in\mathcal{A}$ be a topologically nilpotent unit. Note that \begin{equation*}\mathcal{B}^{+}:=\widehat{\mathcal{B}'^{+}}\end{equation*}($\varpi$-adic completion) is an open and integrally closed subring of $\mathcal{B}$ (it is open since $\mathcal{B}'^{+}$ was assumed to be open in $\mathcal{B}'$). Since $\mathcal{B}$ is uniform, this implies that $\mathcal{B}^{+}$ is a ring of definition of $\mathcal{B}$. By Theorem \ref{Strongly Shilov} and Berkovich's \cite{Berkovich}, Proposition 2.4.4, $\mathcal{A}^{\circ}$ is strongly $\varpi$-Shilov. By Theorem \ref{Strongly Shilov rings and integral extensions}, $\mathcal{B}^{+}$ is strongly $\varpi$-Shilov. By Theorem \ref{Strongly Shilov} once again, this implies that $\mathcal{B}$ satisfies Berkovich's description of the Shilov boundary.

It remains to prove that $\spc^{-1}(\Min_{\mathcal{B}'^{+}}(\varpi))$ is closed in $\mathcal{M}(\mathcal{B}')=\mathcal{M}(\mathcal{B})$. Write $\mathcal{B}'^{+}$ as a direct limit of finite subextensions $C_{i}$ of $\mathcal{A}^{\circ}$. By \cite{Kedlaya17}, Corollary 1.1.15, each of the finitely generated $\mathcal{A}$-modules $\mathcal{C}_{i}=C_{i}[\varpi^{-1}]$ is complete for its natural topology. Since $C_{i}$ is finitely generated as an $\mathcal{A}^{\circ}$-module, the natural topology on $\mathcal{C}_{i}$ coincides with the topology given by the pair of definition $(C_{i}, \varpi)$. Thus each $\mathcal{C}_{i}$, with topology defined by the pair $(C_{i}, \varpi)$, is an affinoid algebra in the sense of Tate. Let $\overline{C_{i}}$ be the normalization of $C_{i}$ in its field of fractions, in which case $\overline{\mathcal{C}_{i}}=\overline{C_{i}}[\varpi^{-1}]$ is the normalization of $\mathcal{C}_{i}$. Note that $\overline{C_{i}}\subseteq \mathcal{B}'^{+}$ for all $i$ since $\mathcal{B}'^{+}$ is a normal domain. Since all affinoid algebras in the sense of Tate which are integral domains are Japanese (\cite{BGR}, Proposition 6.1.2/4), $\overline{\mathcal{C}_{i}}$ is finite over $\mathcal{A}$ for every $i$. In particular, $\overline{\mathcal{C}_{i}}$, for every $i$, is again an affinoid algebra in the sense of Tate. Since $\overline{C_{i}}$ is the integral closure of $\mathcal{A}^{\circ}$ inside $\overline{\mathcal{C}_{i}}$, we know from Proposition \ref{Complete integral closure and integral extensions}, or from \cite{BGR}, Theorem 6.3.5/1, that $\overline{C_{i}}=\mathcal{C}_{i}^{\circ}$. By Berkovich's description of the Shilov boundary for affinoid algebras (\cite{Berkovich}, Proposition 2.4.4), $\spc^{-1}(\Min_{\overline{C_{i}}}(\varpi))$ is equal to the Shilov boundary for $\overline{\mathcal{C}_{i}}$ and is thus closed. By Lemma \ref{Minimal primes and integral extensions 2} and Lemma \ref{Closed subsets of the Berkovich spectrum}, this implies that $\spc^{-1}(\Min_{\mathcal{B}'^{+}}(\varpi))$ is closed in $\mathcal{M}(\mathcal{B}')$, as required. \end{proof}
\begin{cor}\label{Integral extensions 2}Let $\mathcal{A}\subseteq\mathcal{B}$ be an extension of uniform Tate rings, with topologically nilpotent unit $\varpi\in\mathcal{A}$, such that there exists a dense subring $\mathcal{B}'$ in $\mathcal{B}$ containing $\mathcal{A}$ which is an integral domain and is integral over $\mathcal{A}$. Suppose that the integral closure $\mathcal{B}'^{+}$ of $\mathcal{A}^{\circ}$ inside $\mathcal{B}'$ is an open subring of $\mathcal{B}'$. Suppose that $\mathcal{A}$ is a normal domain, that $\mathcal{A}^{\circ}$ is coherent and that \begin{equation*}\vert \mathcal{A}\vert_{\spc,\varpi}\subseteq \sqrt{\vert\mathcal{A}^{\times,m}\vert_{\spc,\varpi}}\cup\{0\}.\end{equation*}Then $\mathcal{B}$ satisfies Berkovich's description of the Shilov boundary.\end{cor}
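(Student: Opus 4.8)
The plan is to follow the proof of Corollary \ref{Integral extensions} almost verbatim, with Theorem \ref{Big theorem}(2) playing the role that Berkovich's Proposition \ref{Berkovich's result} played there. The first step is to observe that the hypotheses force $\mathcal{A}^{\circ}$ to be strongly $\varpi$-Shilov. Since $\mathcal{A}$ is uniform, $\mathcal{A}^{\circ}$ is integrally closed (indeed completely integrally closed) in $\mathcal{A}=\mathcal{A}^{\circ}[\varpi^{-1}]$ by Remark \ref{Andre's example}; and since $\mathcal{A}$ is a normal domain with $\Frac(\mathcal{A})=\Frac(\mathcal{A}^{\circ})$, the ring $\mathcal{A}^{\circ}$ is itself a normal domain. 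Coherence of $\mathcal{A}^{\circ}$ gives, by Corollary \ref{Coherent}, that $\mathcal{A}^{\circ}$ is a $\varpi$-FC ring; since $\mathcal{A}^{\circ}:_{\mathcal{A}}(f,\varpi^{n})_{\mathcal{A}^{\circ}}$ is automatically $\varpi$-divisorial (Lemma \ref{Properties of divisorial ideals}(2)), a $\varpi$-FC ring is in particular $\varpi$-v-coherent, as one may take the finitely generated module $J$ in the definition to be this residual itself. Together with the hypothesis $(\ast)$ on the values of $\vert\cdot\vert_{\spc,\varpi}$, Theorem \ref{Big theorem}(2) then yields that $\mathcal{A}^{\circ}$ is strongly $\varpi$-Shilov.

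The second step transfers this to $\mathcal{B}$ along the integral extension, exactly as in the proof of Corollary \ref{Integral extensions}. The $\varpi$-adic completion $\mathcal{B}^{+}:=\widehat{\mathcal{B}'^{+}}$ is an open, integrally closed subring of $\mathcal{B}$, hence a ring of definition by uniformity of $\mathcal{B}$, and $\mathcal{B}'^{+}[\varpi^{-1}]=\mathcal{B}'$ since $\mathcal{B}'$ is integral over $\mathcal{A}$ and $\varpi^{-1}\in\mathcal{A}\subseteq\mathcal{B}'$. Thus $\mathcal{A}^{\circ}\hookrightarrow\mathcal{B}'^{+}$ is an integral extension of domains with $\mathcal{A}^{\circ}$ a normal domain, $\varpi$ a non-zero non-unit in both rings, and $\mathcal{B}'^{+}$ integrally closed in $\mathcal{B}'^{+}[\varpi^{-1}]$, so Theorem \ref{Strongly Shilov rings and integral extensions} shows that $\mathcal{B}'^{+}$ is strongly $\varpi$-Shilov and hence so is $\mathcal{B}^{+}=\widehat{\mathcal{B}'^{+}}$ by Lemma \ref{Strongly Krull rings and completion}. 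By Corollary \ref{Strongly Shilov implies completely integrally closed} together with Lemma \ref{Completely integrally closed}, $\mathcal{B}^{+}$ is completely integrally closed in $\mathcal{B}=\mathcal{B}^{+}[\varpi^{-1}]$, so $\mathcal{B}^{+}=\mathcal{B}^{\circ}$; and then Theorem \ref{Strongly Shilov} applied to the strongly $\varpi$-Shilov ring $\mathcal{B}^{\circ}$ gives that $\mathcal{B}$ satisfies Berkovich's description of the Shilov boundary, as desired.

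Most of the remaining work is routine bookkeeping: verifying that $\mathcal{A}^{\circ}$ is a normal domain, that $\varpi$-FC implies $\varpi$-v-coherent, and that the extension $\mathcal{A}^{\circ}\hookrightarrow\mathcal{B}'^{+}$ together with the subsequent completion and identification steps meet all the hypotheses of the cited results. I expect the only delicate point to be the domain hypothesis on $\mathcal{B}'$ (equivalently on $\mathcal{B}'^{+}$): it is what licenses the use of Theorem \ref{Strongly Shilov rings and integral extensions}, and — as already in Corollary \ref{Integral extensions} — it must be carried as a standing assumption; in its absence one would instead fall back on Proposition \ref{Large value groups and integral extensions} under an additional flatness hypothesis on $\mathcal{A}^{\circ}\hookrightarrow\mathcal{B}'^{+}$ (which, however, would require $(\ast)$ for $\mathcal{B}$ rather than for $\mathcal{A}$). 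Beyond that, essentially all of the conceptual content is packaged inside Theorem \ref{Big theorem}(2) and Theorem \ref{Strongly Shilov rings and integral extensions}, so once these are in hand there is no substantial obstacle, the point of the corollary being precisely to record that normality of $\mathcal{A}$ is the extra input needed to run the integral-extension step without a Noetherian hypothesis.
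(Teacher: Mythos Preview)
Your approach is exactly the paper's: reduce to showing $\mathcal{B}^{+}=\widehat{\mathcal{B}'^{+}}$ is strongly $\varpi$-Shilov, get $\mathcal{A}^{\circ}$ strongly $\varpi$-Shilov from Theorem~\ref{Big theorem}(2), and transfer via Theorem~\ref{Strongly Shilov rings and integral extensions}. The paper's proof is a two-sentence sketch invoking precisely these two theorems, so your expanded version is faithful to it; your observation that the domain hypothesis on $\mathcal{B}'$ is needed to invoke Theorem~\ref{Strongly Shilov rings and integral extensions} is well taken and indeed matches the explicit hypothesis in the parallel Corollary~\ref{Integral extensions}, though it is not stated here.
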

\begin{proof}We see as in the first paragraph of the proof of Corollary \ref{Integral extensions} that the $\varpi$-adic completion $\mathcal{B}^{+}$ of $\mathcal{B}'^{+}$ is a ring of definition of $\mathcal{B}$, so the assertion is equivalent to $\mathcal{B}'^{+}$ being strongly $\varpi$-Shilov. We conclude by combining Theorem \ref{Big theorem} and Theorem \ref{Strongly Shilov rings and integral extensions}. \end{proof}\noindent{\textbf{Competing interest.} The author has no competing interest to declare.}

\bibliographystyle{plain} 
\bibliography{Bib}

\begin{thebibliography}{10}

\bibitem{Andre18-2}
Y.~Andr{\'e}.
\newblock La conjecture du facteur direct.
\newblock {\em Publ. Math. IH{\'E}S}, 127:71--93, 2018.

\bibitem{Andre18}
Y.~Andr{\'e}.
\newblock Le lemme d'{A}bhyankar perfecto{\"\i}de.
\newblock {\em Publ. Math. IH{\'E}S}, 127:1--70, 2018.

\bibitem{Atiyah-MacDonald}
M.~Atiyah and I.~MacDonald.
\newblock {\em Introduction to commutative algebra}.
\newblock CRC Press, 1969.

\bibitem{Benito-Bravo-Encinas}
A.~Benito, A.~Bravo, and S.~Encinas.
\newblock The asymptotic {S}amuel function and invariants of singularities.
\newblock {\em Revista Matem{\'a}tica Complutense}, 37:603--652, 2024.

\bibitem{Berkovich}
V.~Berkovich.
\newblock {\em Spectral Theory and Analytic Geometry over Non-Archimedean
  Fields}, volume~33 of {\em Surveys and Monographs}.
\newblock Amer. Math. Soc., Providence, 1990.

\bibitem{Bhatt}
B.~Bhatt.
\newblock On the direct summand conjecture and its derived variant.
\newblock {\em Inventiones mathematicae}, 212(2):297--317, 2018.

\bibitem{Bhatt-Hansen}
B.~Bhatt and D.~Hansen.
\newblock The six functors for {Z}ariski-constructible sheaves in rigid
  geometry.
\newblock {\em Compositio Math.}, 158(2):437--482, 2022.

\bibitem{Bhatt-Mathew21}
B.~Bhatt and A.~Mathew.
\newblock The $\arc$-topology.
\newblock {\em Duke Mathematical Journal}, 170(9):1899--1988, 2021.

\bibitem{BGR}
S.~Bosch, U.~G{\"u}ntzer, and R.~Remmert.
\newblock {\em Non-Archimedean analysis. A systematic approach to
  rigid-analytic geometry}, volume 261 of {\em Grundlehren der Math. Wiss.}
\newblock Springer-Verlag, Berlin, 1984.

\bibitem{Brewer-Heinzer}
J.~Brewer and W.~Heinzer.
\newblock Associated primes of principal ideals.
\newblock {\em Duke Mathematical Journal}, 41:1--7, 1974.

\bibitem{Cohen-Seidenberg}
I.~S. Cohen and A.~Seidenberg.
\newblock Prime ideals and integral dependence.
\newblock {\em Bull. Amer. Math. Soc.}, 52:252--261, 1946.

\bibitem{Stacks}
A.~J. de~Jong~et al.
\newblock {\em Stacks Project}.
\newblock http://stacks.math.columbia.edu, 2024.

\bibitem{Dine22}
D.~Dine.
\newblock Topological spectrum and perfectoid {T}ate rings.
\newblock {\em Algebra Number Theory}, 16(6):1463--1500, 2022.

\bibitem{Eisenbud}
D.~Eisenbud.
\newblock {\em Commutative Algebra with a View Toward Algebraic Geometry},
  volume 150 of {\em Graduate Texts in Mathematics}.
\newblock Springer-Verlag, Berlin-Heidelberg, 1994.

\bibitem{EM1}
A.~Escassut and N.~Ma{\"\i}netti.
\newblock Shilov boundary for ultrametric algebras.
\newblock In {\em p-adic Numbers in Number Theory, Analytic Geometry and
  Functional Analysis}, pages 81--89. Belgian Math. Soc., 2002.

\bibitem{Fuchs-Salce}
L.~Fuchs and L.~Salce.
\newblock {\em Modules over Non-Noetherian Domains}, volume~84 of {\em
  Mathematical Surveys and Monographs}.
\newblock Amer. Math. Soc., 2001.

\bibitem{FGK}
K.~Fujiwara, O.~Gabber, and F.~Kato.
\newblock On {Hausdorff} completions of commutative rings in rigid geometry.
\newblock {\em Journal of Algebra}, 332:293--321, 2011.

\bibitem{FK}
K.~Fujiwara and F.~Kato.
\newblock {\em Foundations of Rigid Geometry I}, volume~7 of {\em EMS
  Monographs in Mathematics}.
\newblock European Math. Soc., 2018.

\bibitem{Gelfand-Raikov-Shilov}
I.~Gelfand, D.~Raikov, and G.~Shilov.
\newblock {\em Commutative Normed Rings}.
\newblock Chelsea Co., New York, 1964.

\bibitem{Glaz00}
S.~Glaz.
\newblock Finite conductor rings.
\newblock {\em Proc. Amer. Math. Soc.}, 129(10):2833--2843, 2000.

\bibitem{Griffin67}
M.~Griffin.
\newblock Some results on $v$-multiplication rings.
\newblock {\em Canadian J. Math.}, 19:710--722, 1967.

\bibitem{Guennebaud}
B.~Guennebaud.
\newblock {\em Sur une notion de spectre pour les alg{\`e}bres norm{\'e}es
  ultram{\'e}triques}.
\newblock PhD thesis, Universit{\'e} de Poitiers, 1973.

\bibitem{Hedstrom-Houston}
J.~Hedstrom and E.~Houston.
\newblock Some remarks on star-operations.
\newblock {\em Journal of Pure and Applied Algebra}, 18:37--44, 1980.

\bibitem{Heitmann-Ma25}
R.~Heitmann and L.~Ma.
\newblock On the complete integral closedness of the $p$-adic completion of
  absolute integral closure.
\newblock arXiv preprint, https://arxiv.org/pdf/2506.19148, 2025.

\bibitem{Heitmann22}
R.~C. Heitmann.
\newblock $\widehat{R^{+}}$ is surprisingly an integral domain.
\newblock {\em Journal of Pure and Applied Algebra}, 226(1):106809, 2022.

\bibitem{Houston-Malik-Mott}
E.~Houston, S.~Malik, and J.~Mott.
\newblock Characterizations of $\ast$-multiplication domains.
\newblock {\em Canadian Math. Bull.}, 27(1):48--52, 1984.

\bibitem{Ishizuka25}
R.~Ishizuka.
\newblock Perfectoid towers generated from prisms.
\newblock arXiv preprint, https://arxiv.org/pdf/2409.15785, 2025.

\bibitem{Kedlaya18}
K.S. Kedlaya.
\newblock On commutative nonarchimedean {Banach} fields.
\newblock {\em Documenta Mathematica}, 23:171--188, 2018.

\bibitem{Kedlaya17}
K.S. Kedlaya.
\newblock Sheaves, stacks and shtukas.
\newblock In {\em Perfectoid {S}paces: {L}ectures from the 2017 Arizona Winter
  School}, volume 242 of {\em Mathematical Surveys and Monographs}, pages
  58--205. Amer. Math. Soc., 2019.

\bibitem{Kedlaya-Liu}
K.S. Kedlaya and R.~Liu.
\newblock {\em Relative $p$-adic {H}odge theory: Foundations}, volume 371.
\newblock Ast{\'e}risque, 2015.

\bibitem{Knebusch-Zhang2}
M.~Knebusch and T.~Kaiser.
\newblock {\em Manis Valuations and Pr{\"u}fer extensions II}, volume 2103 of
  {\em Lecture Notes in Mathematics}.
\newblock Springer-Verlag, Berlin, 2014.

\bibitem{Knebusch-Zhang}
M.~Knebusch and D.~Zhang.
\newblock {\em Manis Valuations and Pr{\"u}fer extensions I}, volume 1791 of
  {\em Lecture Notes in Mathematics}.
\newblock Springer-Verlag, Berlin, 2002.

\bibitem{Krull36}
W.~Krull.
\newblock Beitr{\"a}ge zur {A}rithmetik kommutativer {I}ntegrit{\"a}tsbereiche.
  {II}. v-{I}deale und vollst{\"a}ndig ganz abgeschlossene
  {I}ntegrit{\"a}tsbereiche.
\newblock {\em Mathematische Zeitschrift}, 41:665--679, 1936.

\bibitem{Lourenco17}
J.~Louren{\c c}o.
\newblock The {R}iemannian {H}ebbarkeitss{\"a}tze for pseudorigid spaces.
\newblock arXiv preprint, https://arxiv.org/pdf/1711.06903, 2017.

\bibitem{Luetkebohmert}
W.~L{\"u}tkebohmert.
\newblock {\em Rigid {G}eometry of {C}urves and {T}heir {J}acobians}, volume~61
  of {\em Ergebnisse der Mathematik und Ihrer Grenzgebiete 3. Folge}.
\newblock Springer-Verlag, Berlin, 2016.

\bibitem{Picavet23}
G.~Picavet and M.~Picavet-L'Hermitte.
\newblock Around {P}r{\"u}fer extensions of rings.
\newblock In J.-L. Chabert, M.~Fontana, S.~Frisch, S.~Glaz, and K.~Johnson,
  editors, {\em Algebraic, {N}umber-{T}heoretic, and {T}opological {A}spects of
  {R}ing {T}heory}, pages 351--382. Springer Cham, 2023.

\bibitem{Scholze}
P.~Scholze.
\newblock Perfectoid spaces.
\newblock {\em Publ. Math. IH{\'E}S}, 116:245--313, 2012.

\bibitem{Swanson-Huneke}
I.~Swanson and C.~Huneke.
\newblock {\em Integral {C}losure of {I}deals, {R}ings, and {M}odules}, volume
  336 of {\em London Mathematical Society Lecture Note Series}.
\newblock Cambridge University Press, 2006.

\bibitem{Tchamna20}
S.~Tchamna.
\newblock On ring extensions satisfying the star-hash property.
\newblock {\em Commun. Algebra}, 48(5):2081--2091, 2020.

\bibitem{Temkin04}
M.~Temkin.
\newblock On local properties of non-archimedean analytic spaces {I}{I}.
\newblock {\em Israel Journal of Mathematics}, 140:1--27, 2004.

\bibitem{Zafrullah78}
M.~Zafrullah.
\newblock On finite conductor domains.
\newblock {\em Manuscripta Math.}, 24:191--204, 1978.

\end{thebibliography}
\

\textsc{Department of Mathematics, University of California San Diego, La Jolla, CA 92093, United States} \newline 

E-mail address: \textsf{ddine@ucsd.edu}

\end{document}